\declaretheoremstyle[
spaceabove=\topsep, spacebelow=\topsep,
headfont=\scshape,
notefont=\mdseries, notebraces={(}{)},
bodyfont=\itshape,
headpunct=.~---,
postheadspace=0.5em,
qed=$\blacktriangle$
]{mythmstyle}
\declaretheorem[style=mythmstyle, name=Theorem, numberwithin=subsection]{thm}
\declaretheorem[style=mythmstyle, name=Lemma, sibling=thm]{lem}
\declaretheorem[style=mythmstyle, name=Proposition, sibling=thm]{prop}
\declaretheorem[style=mythmstyle, name=Proposition-Definition, sibling=thm]{propdef}
\declaretheorem[style=mythmstyle, name=Corollary, sibling=thm]{cor}
\declaretheoremstyle[
spaceabove=\topsep, spacebelow=\topsep,
headfont=\scshape,
notefont=\mdseries, notebraces={(}{)},
bodyfont=\normalfont,
headpunct=.~---,
postheadspace=0.5em,
qed=$\blacktriangle$
]{mydefstyle}
\declaretheorem[style=mydefstyle, name=Definition, sibling=thm]{defin}
\declaretheorem[style=mydefstyle, name=Example, sibling=thm]{ex}
\declaretheorem[style=mydefstyle, name=Examples, sibling=thm]{exs}
\declaretheorem[style=mydefstyle, name=Remark, sibling=thm]{rk}
\declaretheorem[style=mydefstyle, name=Remarks, sibling=thm]{rks}
\declaretheorem[style=mydefstyle, name=Notation, sibling=thm]{nb}
\declaretheorem[style=mydefstyle, name=Notations, sibling=thm]{nbs}
\declaretheorem[style=mydefstyle, name=Convention, sibling=thm]{conv}
\declaretheoremstyle[
spaceabove=\topsep, spacebelow=\topsep,
headfont=\scshape,
notefont=\mdseries, notebraces={(}{)},
bodyfont=\normalfont,
headpunct=.,
postheadspace=0.5em,
qed=$\blacktriangle$
]{mynoheadstyle}
\declaretheorem[style=mynoheadstyle, name={}, sibling=thm]{noh}
\numberwithin{equation}{section}
\newcommand{\M}{\mathcal{M}}
\newcommand{\B}{\mathcal{B}}
\newcommand{\K}{\mathcal{K}}
\newcommand{\Lin}{\mathcal{L}}
\newcommand{\s}[1]{\mathscr{{#1}}}
\newcommand{\f}[1]{\mathfrak{{#1}}}
\newcommand{\GR}{{\mathbb R}}
\newcommand{\GC}{{\mathbb C}}
\newcommand{\QG}{\mathbb{G}}
\newcommand{\QH}{\mathbb{H}}
\newcommand{\C}{\mathcal{C}}
\newcommand{\tens}{\otimes}
\newcommand{\cst}[5]{{#1}\underset{{#3}}{_{#2}\tens_{#4}}{#5}}
\newcommand{\reltens}[4]{{#1}{\,_{#2}\tens_{#3}\,}{#4}}
\newcommand{\fprod}[4]{{#1}{\,_{#2}\star_{#3}\,}{#4}}
\newcommand{\nsf}{n.s.f.~}
\newcommand{\GNS}{G.N.S.~}
\newcommand{\ext}{\mathrm{ext}}
\newcommand{\id}{\mathrm{id}}
\newcommand{\restr}[2]{{#1}\!\!\restriction_{#2}}
\newcommand{\ind}{{\rm Ind}_{\QG_1}^{\QG_2}}
\newcommand{\iind}{{\rm Ind}_{\QG_2}^{\QG_1}}
\newcommand{\eo}{{\cal E}_0}
\newcommand{\er}{{\cal E}_{A,R}}
\newcommand{\delr}{{\delta_{\er}}}
\newcommand{\delo}{{\delta_{\eo}}}
\newcommand{\betr}{{\beta_{\er}}}
\newcommand{\kk}{{\sf KK}}
\newcommand{\raisemath}[1]{\mathpalette{\raisem@th{#1}}}
\newcommand{\raisem@th}[3]{\raisebox{#1}{$#2#3$}}
\DeclareSymbolFont{rsfso}{U}{rsfso}{m}{n}
\DeclareSymbolFontAlphabet{\mathscr}{rsfso}
\begin{document}

\pagestyle{scrheadings}

\clearscrheadfoot
\cohead{\footnotesize{{\sc ACTIONS OF MEASURED QUANTUM GROUPOIDS ON A FINITE BASIS}}}
\rohead{\thepage}
\cehead{\footnotesize{{\sc J.\ CRESPO}}}
\lehead{\thepage}


\setcounter{tocdepth}{2} 

    \title{\rmfamily\normalfont\spacedallcaps{Actions of measured quantum groupoids on a finite basis}}
    \author{\small{by}\\[1em] \spacedlowsmallcaps{jonathan crespo}}
    \date{}
    \maketitle
	\thispagestyle{empty}

\vspace{-3em}

    \begin{abstract}
        \noindent In this article, we generalize to the case of measured quantum groupoids on a finite basis some important results concerning actions of locally compact quantum groups on C*-algebras \cite{BSV}. Let $\cal G$ be a measured quantum groupoid on a finite basis. We prove that if $\cal G$ is regular, then any weakly continuous action of $\cal G$ on a C*-algebra is necessarily strongly continuous. Following \cite{BS1}, we introduce and investigate a notion of $\cal G$-equivariant Hilbert C$^*$-modules. By applying the previous results and a version of the Takesaki-Takai duality theorem obtained in \cite{BC} for actions of $\cal G$, we obtain a canonical equivariant Morita equivalence between a given $\cal G$-C$^*$-algebra $A$ and the double crossed product $(A\rtimes{\cal G})\rtimes\widehat{\cal G}$.

\medbreak        
        
        \noindent{\bf Keywords} Locally compact quantum groups, measured quantum groupoids, monoidal equivalence, (semi-)regularity, Hilbert C*-modules, Morita equivalence.\newline
        \noindent{\bf AMS classification} 46L55, 16T99, 46L89.
    \end{abstract}
     
    \tableofcontents

    
\section*{Introduction}\addcontentsline{toc}{section}{Introduction}

The notion of monoidal equivalence of compact quantum groups has been introduced by Bichon, De Rijdt and Vaes in \cite{BRV}. Two compact quantum groups $\QG_1$ and $\QG_2$ are said to be monoidally equivalent if their categories of representations are equivalent as monoidal C*-categories. They have proved that $\QG_1$ and $\QG_2$ are monoidally equivalent if and only if there exists a unital C*-algebra equipped with commuting continuous ergodic actions of full multiplicity of $\QG_1$ on the left and of $\QG_2$ on the right.\hfill\break
Many crucial results of the geometric theory of free discrete quantum groups rely on the monoidal equivalence of their dual compact quantum groups. Among the applications of monoidal equivalence to this theory, we mention the contributions to randow walks and their associated boundaries \cite{VV,RV}, CCAP property and Haagerup property \cite{DFY}, the Baum-Connes conjecture and K-amenability \cite{V1,VeVo}.

\medskip

In his Ph.D.~thesis \cite{DC}, De Commer has extended the notion of monoidal equivalence to the locally compact case. Two locally compact quantum groups $\QG_1$ and $\QG_2$ (in the sense of Kustermans and Vaes \cite{KV2}) are said to be monoidally equivalent if there exists a von Neumann algebra equipped with a left Galois action of $\QG_1$ and a right Galois action of $\QG_2$ that commute. He proved that this notion is completely encoded by a measured quantum groupoid (in the sense of Enock and Lesieur \cite{E08}) on the basis $\GC^2$. Such a groupoid is called a colinking measured quantum groupoid.\hfill\break
The measured quantum groupoids  have been introduced and studied by Lesieur and Enock (see \cite{E08,Le}). Roughly speaking, a measured quantum groupoid (in the sense of Enock-Lesieur) is an octuple ${\cal G}=(N,M,\alpha,\beta,\Gamma,T,T',\nu)$, where $N$ and $M$ are von Neumann algebras (the basis $N$ and $M$ are the algebras of the groupoid corresponding respectively to the space of units and the total space for a classical groupoid), $\alpha$ and $\beta$ are faithful normal *-homomorphisms from $N$ and $N^{\rm o}$ (the opposite algebra) to $M$ (corresponding to the source and target maps for a classical groupoid) with commuting ranges, $\Gamma$ is a coproduct taking its values in a certain fiber product, $\nu$ is a normal semi-finite weight on $N$ and $T$ and $T'$ are operator-valued weights satisfying some axioms. \hfill\break
In the case of a finite-dimensional basis $N$, the definition has been greatly simplified by De Commer \cite{DC2,DC} and we will use this point of view in this article. Broadly speaking, we can take for $\nu$ the non-normalized Markov trace on the C*-algebra $N=\bigoplus_{1\leqslant l\leqslant k}{\rm M}_{n_l}(\GC)$\index[symbol]{ma@${\rm M}_{n_l}(\GC)$, square matrices of order $n_l$ with entries in $\GC$}. The relative tensor product of Hilbert spaces (resp.\ the fiber product of von Neumann algebras) is replaced by the ordinary tensor product of Hilbert spaces (resp.\  von Neumann algebras). The coproduct takes its values in $M\tens M$ but is no longer unital. In the following, these objects will be referred to as 'measured quantum groupoids on a finite basis'.

\medskip

In \cite{BC}, the authors introduce a notion of (strongly) continuous actions on C*-algebras of measured quantum groupoids on a finite basis. They extend the construction of the crossed product, the dual action and give a version of the Takesaki-Takai duality generalizing the Baaj-Skandalis duality theorem \cite{BS1} in this setting.\hfill\break
If a colinking measured quantum groupoid ${\cal G}$, associated with a monoidal equivalence of two locally compact quantum groups $\QG_1$ and $\QG_2$, acts (strongly) continuously on a C*-algebra $A$, then $A$ splits up as a direct sum $A=A_1\oplus A_2$ of C*-algebras and the action of ${\cal G}$ on $A$ restricts to an action of $\QG_1$ (resp.\ $\QG_2$) on $A_1$ (resp.\ $A_2$).\hfill\break
They also extend the induction procedure to the case of monoidally equivalent regular locally compact quantum groups. To any continuous action of $\QG_1$ on a C*-algebra $A_1$, they associate canonically a C*-algebra $A_2$ endowed with a continuous action of $\QG_2$. As important consequences of this construction, we mention the following:
\begin{itemize}
	\item a one-to-one functorial correspondence between the continuous actions of the quantum groups $\QG_1$ and $\QG_2$, which generalizes the compact case \cite{RV} and the case of deformations by a 2-cocycle \cite{NT14};
	\item a complete description of the continuous actions of colinking measured quantum groupoids;
	\item the equivalence of the categories $\kk_{\QG_1}$ and $\kk_{\QG_2}$, which generalizes to the regular locally compact case a result of Voigt \cite{V1}.
\end{itemize}
The proofs of the above results rely crucially on the regularity of the quantum groups $\QG_1$ and $\QG_2$. They prove that the regularity of $\QG_1$ and $\QG_2$ is equivalent to the regularity in the sense of \cite{E05} (see also \cite{Tim1,Tim2}) of the associated colinking measured quantum groupoid. By pas\-sing, this result solves some questions raised in \cite{NT14} in the case of deformations by a 2-cocycle.

\medskip

In this article, we generalize to the case of (semi-)regular measured quantum groupoid on a finite basis some important properties of (semi-)regular locally compact quantum groups \cite{BS2,Ba95}. This work will give us enough formulas to generalize some crucial results of \cite{BSV} concerning actions of (semi-)regular locally compact quantum groups.\newline
More precisely, if $\cal G$ is a semi-regular measured quantum groupoid on a finite basis, then the space consisting of the continuous elements of any action of $\cal G$ is a C*-algebra. Moreover, if $\cal G$ is regular we prove that any weakly continuous action of $\cal G$ is necessarily continuous in the strong sense.\newline
We introduce a notion of action of $\cal G$ on Hilbert C*-modules in line with the corresponding notion for quantum groups \cite{BS1}. A $\cal G$-equivariant Hilbert C*-module is a Hilbert C*-module endowed with a continuous action (in a sense that will be specified). By using the previous result, if $\cal G$ is regular we prove that any action of $\cal G$ on a Hilbert C*-module is necessarily continuous. We are able to define the notion of $\cal G$-equivariant Morita equivalence of $\cal G$-C*-algebras. By applying a version of the Takesaki-Takai duality theorem obtained in \cite{BC}, we prove that any $\cal G$-C*-algebra $A$ is $\cal G$-equivariantly Morita equivalent to its double crossed product $(A\rtimes{\cal G})\rtimes\widehat{\cal G}$ in a canonical way.\newline

This article is organized as follows.\newline
$\bullet$ {\it Chapter 1.} We recall the general conventions and notations used throughout this paper.\newline
$\bullet$ {\it Chapter 2.} We make an overview of the theory of locally compact quantum groups (cf.\ \cite{KV2} and \cite{BS2}). We recall the construction of the Hopf C*-algebra associated with a locally compact quantum group and the notion of action of locally compact quantum groups in the C*-algebraic setting. We also recall the notion of equivariant Hilbert C*-modules (cf.\ \cite{BS1}).\newline
$\bullet$ {\it Chapter 3.} We make a very brief survey of the theory of measured quantum groupoid (cf.\ \cite{Le,E08}) and we recall the simplified definition in the case where the basis is finite-dimensional and the associated C*-algebraic structure provided by De Commer in \cite{DC2,DC}. In the last section, we make an outline of the reflection technique across a Galois object provided by De Commer (cf.\ \cite{DC,DC3}), the construction and the structure of the colinking measured quantum groupoid associated with monoidally equivalent locally compact quantum groups. We also recall the precise description of the C*-algebraic structure of colinking measured quantum groupoids (cf.\ \cite{BC}).\newline
$\bullet$ {\it Chapter 4.} In this chapter, we make a review of the notions of regularity and semi-regularity for measured quantum groupoids on a finite basis (cf.\ \cite{E05,Tim1,Tim2,BC}) and we obtain new relations equivalent to the (semi-)regularity generalizing some results of Baaj and Skandalis \cite{BS2,Ba95}. Given a (semi-)regular measured quantum groupoid, we derive new relations that will play a crucial role in the subsequent chapters.\newline
$\bullet$ {\it Chapter 5.} In the first section of this chapter, we recall the definitions and the main results of \cite{BC} concerning the notion of (strongly) continuous action of measured quantum groupoids on a finite basis on C*-algebras. We also recall the version of the Takesaki-Takai duality theorem obtained in \cite{BC} in this framework. The second section is dedicated to a brief overview of  C*-algebras acted upon by a colinking measured quantum groupoid (cf.\ \cite{BC}). In the last section, we generalize to the setting of measured quantum groupoids on a finite basis the results of Baaj, Skandalis and Vaes \cite{BSV} concerning the notion of weakly/strongly continuous action of (semi-)regular locally compact quantum groups.\newline
$\bullet$ {\it Chapter 6.} We introduce the notion of action of measured quantum groupoid on a finite basis on Hilbert C*-module and we investigate in detail the case of a colinking measured quantum groupoid. In the last paragraph, we provide a direct approach of the induction procedure for equivariant Hilbert C*-modules equivalent to that obtained in \cite{BC}. In particular, if ${\cal G}_{\QG_1,\QG_2}$ is a colinking measured quantum groupoid associated with two monoidally equivalent regular locally compact groups $\QG_1$ and $\QG_2$ we obtain one-to-one correspondences between the actions of $\QG_1$, $\QG_2$ and ${\cal G}_{\QG_1,\QG_2}$ on Hilbert C*-modules.\newline
$\bullet$ {\it Chapter 7.} In this chapter, we introduce and discuss the notion of equivariant Morita equivalence. Given a $\cal G$-C*-algebra $A$, we prove that $A$ and its double crossed product $(A\rtimes{\cal G})\rtimes\widehat{\cal G}$ are $\cal G$-equivariantly Morita equivalent in a canonical way.\newline
$\bullet$ {\it Chapter 8.} In the appendix of this article, we have assembled a very brief review of the main notions and notations of the non-commutative measure and integration theory. We can also find some notations and important results used throughout this paper.

\medskip

In a forthcoming article \cite{C2}, we use the results of this paper to generalize those of Baaj and Skandalis concerning the equivariant Kasparov theory (cf.\ \S 6 \cite{BS1} and 7.7 b) \cite{BS2}).

\begin{center}
{\bf Acknowledgements}
\end{center}

Some of the results of this article were part of the author's Ph.D.\ thesis and he wishes
to thank his advisor Prof.\ S.\ Baaj for his supervision. The author is also very grateful to Prof.\ K.\ De Commer for fruitful discussions on measured quantum groupoids and for the financial support of the F.W.O.

\section{Preliminary notations}\label{sectionNotations}

We specify here some elementary notations and conventions used in this article. For more notations, we refer the reader to the appendix and the index of this article.

\medskip

$\bullet$ For all subset $X$ of a normed vector space $E$, we denote $\langle X\rangle$ (resp.\ $[X]$) the linear span (resp.\ closed linear span) of $X$ in $E$. If $X,\,Y\subset E$, we denote $XY:=\{xy\,;\, x\in X,\, y\in Y\}$, where $xy$ denotes the product/composition of $x$ and $y$ or the evaluation of $x$ at $y$ (when these operations make sense). If $X$ is a subset of a *-algebra $A$, we denote by $X^*$ the subset $\{x^*\,;\, x\in X\}$ of $A$.\hfill\break
$\bullet$ We denote by $\tens$ the tensor product of Hilbert spaces, the tensor product of von Neumann algebras, the minimal tensor product of C*-algebras or the external tensor product of Hilbert C*-modules. We also denote by $\odot$ (resp.\ $\odot_A$) the algebraic tensor product over the field of complex numbers $\GC$ (resp.\ an algebra $A$).\newline
$\bullet$ Let $A$ and $B$ be C*-algebras. We denote by $\M(A)$ (resp.\ $\widetilde{A}$) the C*-algebra consisting of the multipliers of $A$ (resp.\ the C*-algebra obtained from $A$ by adjunction of a unit element). We denote by $\widetilde{\M}(A\tens B)$ (or $\widetilde{\M}_B(A\tens B)$ in case of ambiguity, \S 1 \cite{BS1}) the $B$-relative multiplier algebra, {\it i.e.\ }the C*-algebra consisting of the elements $m$ of $\M(\widetilde{A}\tens B)$ such that the relations $(\widetilde{A}\tens B)m\subset A\tens B$ and $m(\widetilde{A}\tens B)\subset A\tens B$ hold.\newline
Let $\pi:A\rightarrow\M(B)$ be a (possibly degenerate) *-homomorphism. For all C*-algebra $D$, there exists a unique strictly continuous *-homomorphism $\pi\tens\id_D:\widetilde{\M}(A\tens D)\rightarrow\M(B\tens D)$ satisfying the relation
$
(\pi\tens\id_D)(x)(1_B\tens d)=(\pi\tens\id_D)(x(1_A\tens d))
$ 
for all $x\in\widetilde{\M}(A\tens D)$ and $d\in D$.
Indeed, denote by $\widetilde{\pi}$ the unital extension of $\pi$ to $\widetilde{A}$. The non-degenerate *-homomorphism $\widetilde{\pi}\tens\id_D:\widetilde{A}\tens D\rightarrow\M(B\tens D)$ uniquely extends to $\M(\widetilde{A}\tens D)$. By restricting to $\widetilde{\M}(A\tens D)$, we obtain the desired extension of $\pi\tens\id_D$ (\S 1 \cite{BS1}).\newline
$\bullet$ If $x$ and $y$ are two elements of an algebra $A$, we denote by $[x,\,y]$ the commutator of $x$ and $y$, {\it i.e.} the element of $A$ defined by $[x,\,y]:=xy-yx$.

\medskip

Let $\s H$ and $\s K$ be Hilbert spaces (all inner products are assumed to be anti-linear in the first variable and linear in the second variable).\newline
$\bullet$ We denote by $\B(\s H,\s K)$ (resp.\ $\K(\s H,\s K)$) the Banach space of bounded (resp.\ compact) linear operators from $\s H$ to $\s K$. For all $\xi\in\s K$ and $\eta\in\s H$, we denote by $\theta_{\xi,\eta}\in\B(\s H,\s K)$ the rank-one operator defined by $\theta_{\xi,\eta}(\zeta):=\langle\eta,\,\zeta\rangle\xi$ for all $\zeta\in\s H$. We have the relation $\K(\s H,\s K)=[\theta_{\xi,\eta}\,;\,\xi\in\s K,\,\eta\in\s K]$. Denote by $\B(\s H):=\B(\s H,\s H)$ (resp.\ $\K(\s H):=\K(\s H,\s H)$) the C*-algebra of bounded (resp.\ compact) linear operators on $\s H$. Recall that $\K(\s H)$ is a closed two-sided ideal of $\B(\s H)$ and $\B(\s H)=\M(\K(\s H))$.\newline 
$\bullet$ We denote by $\Sigma_{\s K\tens\s H}$ (or simply $\Sigma$) the flip map, that is to say the unitary operator $\s K\tens\s H\rightarrow\s H\tens\s K\,;\, \xi\tens\eta\mapsto\eta\tens\xi$.\hfill\break
$\bullet$ For $u\in\B(\s H)$, we denote by ${\rm Ad}_u$ the bounded operator on $\B(\s H)$ defined for all $x\in\B(\s H)$ by ${\rm Ad}_u(x)=uxu^*$.

\medskip

In this article, we will use the notion of (right) Hilbert C*-module over a C*-algebra and their tensor products (internal and external). All the definitions and conventions are those of \cite{Kas1}. In particular, let $\s E$ and $\s F$ be two Hilbert C*-modules over a C*-algebra $A$.\hfill\break
\noindent$\bullet$ We denote by $\Lin(\s E,\s F)$ the Banach space consisting of all adjointable operators from $\s E$ to $\s F$ and $\Lin(\s E)$ the C*-algebra $\Lin(\s E,\s E)$.\hfill\break
$\bullet$ For $\xi\in\s F$ and $\eta\in\s E$, we denote by $\theta_{\xi,\eta}$ the elementary operator of $\Lin(\s E,\s F)$ defined by $\theta_{\xi,\eta}(\zeta):=\xi\langle\eta,\,\zeta\rangle_A$ for all $\zeta\in\s E$. Let $\K(\s E,\s F):=[\theta_{\xi,\eta}\,;\,\xi\in\s F,\,\eta\in\s E]$ be the Banach space of ``compact'' adjointable operators. Denote by $\K(\s E)$ the C*-algebra $\K(\s E,\s E)$ consisting of the compact adjointable operators of $\Lin(\s E)$. Recall that $\K(\s E)$ is a closed two-sided ideal of $\Lin(\s E)$ and $\Lin(\s E)=\M(\K(\s E))$.\hfill\break
$\bullet$ Let $\s E^*:=\K(\s E,A)$. We have $\s E^*=\{T\in\Lin(\s E,A)\,;\,\exists\,\xi\in\s E,\,\forall\,\eta\in\s E,\, T(\eta)=\langle\xi,\,\eta\rangle_A\}$. We will identify $\s E=\K(A,\s E)\subset\Lin(A,\s E)$. Up to this identification, for $\xi\in\s E$ the operator $\xi^*\in\s E^*$ satisfies $\xi^*(\eta)=\langle\xi,\,\eta\rangle_A$ for all $\eta\in\s E$. We recall that $\s E^*$ is a Hilbert $\K(\s E)$-module for the inner product defined by $\langle T,\, T'\rangle_{\K(\s E)}:=T^*\circ T'$ for $T,T'\in\s E^*$ and the right action is defined by the composition of maps.

\medskip

In this article, we will also use the leg numbering notation. Let $\s H$ be a Hilbert space and $T\in\B(\s H\tens\s H)$. We define the operators $T_{12},T_{13},T_{23}\in\B(\s H\tens\s H)$ by setting $T_{12}:=T\tens 1$, $T_{23}:=1\tens T$ and $T_{13}:=(\Sigma\tens 1)(1\tens T)(\Sigma\tens 1)$. We can generalize the leg numbering notation for operators acting on any tensor product of Hilbert spaces and for adjointable operators acting on any external tensor product of Hilbert C*-modules over possibly different C*-algebras.

\section{Locally compact quantum groups}

\setcounter{thm}{0}

\numberwithin{thm}{section}
\numberwithin{prop}{section}
\numberwithin{lem}{section}
\numberwithin{cor}{section}
\numberwithin{propdef}{section}
\numberwithin{nb}{section}
\numberwithin{nbs}{section}
\numberwithin{rk}{section}
\numberwithin{rks}{section}
\numberwithin{defin}{section}
\numberwithin{ex}{section}
\numberwithin{exs}{section}
\numberwithin{noh}{section}
\numberwithin{conv}{section}

For the notations and conventions used in this article concerning the non-commutative integration theory and the canonical objects of the Tomita-Takesaki theory, we refer the reader to the appendix of this article (cf.\ \S\ref{integration}).
\begin{defin}\cite{KV2}
A locally compact quantum group\index[notion]{locally compact quantum group} is a pair $\QG=({\rm L}^{\infty}(\QG),\Delta)$, where ${\rm L}^{\infty}(\QG)$ is a von Neumann algebra and $\Delta:{\rm L}^{\infty}(\QG)\rightarrow{\rm L}^{\infty}(\QG)\tens{\rm L}^{\infty}(\QG)$ is a unital normal *-homomorphism satisfying the following conditions:
\begin{enumerate}
\item $(\Delta\tens\id)\Delta=(\id\tens\Delta)\Delta$;
\item there exist \nsf weights $\varphi$ and $\psi$ on ${\rm L}^{\infty}(\QG)$ such that:
\begin{enumerate}
\item $\varphi$ is left invariant, {\it i.e.}\ $\varphi((\omega\tens\id)\Delta(x))=\varphi(x)\omega(1)$, for all $\omega\in{\rm L}^{\infty}(\QG)_*^+$ and $x\in{\f M}_{\varphi}^+$,
\item $\psi$ is right inveriant, {\it i.e.}\ $\psi((\id\tens\omega)\Delta(x))=\psi(x)\omega(1)$, for all $\omega\in{\rm L}^{\infty}(\QG)_*^+$ and $x\in{\f M}_{\psi}^+$.
\end{enumerate}
\end{enumerate}
A left (resp.\ right) invariant \nsf weight on ${\rm L}^{\infty}(\QG)$ is called a left (resp.\ right) Haar weight on $\QG$.
\end{defin}

\begin{noh}
For a locally compact quantum group $\QG$, there exists a unique left (resp.\ right) Haar weight on $\QG$ up to a positive scalar \cite{KV2}. Let us fix a locally compact quantum group $\QG:=({\rm L}^{\infty}(\QG),\Delta)$. Let us fix a left Haar weight $\varphi$ on $\QG$. Let $({\rm L}^2(\QG),\pi,\Lambda)$ be the \GNS construction for $({\rm L}^{\infty}(\QG),\varphi)$. The left regular representation of $\QG$ is the multiplicative unitary \cite{KV2,BS2} $W\in\B({\rm L}^2(\QG)\tens {\rm L}^2(\QG))$ defined by
\[
W^*(\Lambda(x)\tens\Lambda(y))=(\Lambda\tens\Lambda)(\Delta(y)(x\tens 1)), \quad \text{for all } x,\, y\in{\f N}_{\varphi}.
\]
By identifying ${\rm L}^{\infty}(\QG)$ with its image by the \GNS representation $\pi$, we obtain:
\begin{itemize}
\item ${\rm L}^{\infty}(\QG)$ is the strong closure of the algebra $\{(\id\tens\omega)(W)\,;\, \omega\in\B({\rm L}^2(\QG))_*\}$;
\item $\Delta(x)=W^*(1\tens x)W$, for all $x\in{\rm L}^{\infty}(\QG)$.\qedhere
\end{itemize}
\end{noh}

\begin{noh}
The Hopf-von Neumann algebra $({\rm L}^{\infty}(\QG),\Delta)$ admits \cite{KV2} a unitary antipode map $R_{\QG}:{\rm L}^{\infty}(\QG)\rightarrow{\rm L}^{\infty}(\QG)$ and we can choose for right Haar weight on $\QG$ the weight $\psi$ defined by $\psi(x):=\varphi(R_{\QG}(x))$, for all $x\in {\rm L}^{\infty}(\QG)_+$. The Connes cocycle derivative \cite{Co2,Vaes1} of $\psi$ with respect to $\varphi$ is given by 
\[
(D\psi\,:\,D\varphi)_t:=\nu^{\,{\rm i}t^2/2}d^{\,{\rm i}t},\quad \text{for all } t\in\GR,
\] 
where $\nu>0$ is the scaling constant of $\QG$ and the operator $d\eta M$ is the modular element of $\QG$ \cite{KV2}. Let 
$
{\f N}_{\varphi}^d:=\{x\in M\,;\, xd^{1/2} \text{ is bounded and its closure } \overline{x d^{1/2}} \text{ belongs to } {\f N}_{\varphi}\}.
$
The \GNS construction \cite{Vaes1} for $({\rm L}^{\infty}(\QG),\psi)$ is given by $({\rm L}^2(\QG),\id,\Lambda_{\psi})$, where $\Lambda_{\psi}$ is the closure of the map ${\f N}_{\varphi}^d\rightarrow{\rm L}^2(\QG)\,;\,x\mapsto\Lambda(\overline{xd^{1/2}})$. We recall that $J_{\psi}=\nu^{\,{\rm i}/4}J$, where $J$ denotes the modular conjugation for $\varphi$.
\end{noh}

\begin{noh}
The right regular representation of the quantum group $\QG$ is the multiplicative unitary $V\in\B({\rm L}^2(\QG)\tens{\rm L}^2(\QG))$ defined by
\[
V(\Lambda_{\psi}(x)\tens\Lambda_{\psi}(y))=(\Lambda_{\psi}\tens\Lambda_{\psi})(\Delta(x)(1\tens y)),\quad \text{for all } x,\, y\in{\f N}_{\psi}.\qedhere
\]
\end{noh}

\begin{defin}
The quantum group $\widehat{\QG}$ dual of $\QG$ is defined by the Hopf-von Neumann algebra $({\rm L}^{\infty}(\widehat{\QG}),\widehat{\Delta})$, where:
\begin{itemize}
\item ${\rm L}^{\infty}(\widehat{\QG})$ is the strong closure of the algebra $\{(\id\tens\omega)(V)\,;\, \omega\in\B({\rm L}^2(\QG)\}$;
\item the coproduct $\widehat{\Delta}:{\rm L}^{\infty}(\widehat{\QG})\rightarrow{\rm L}^{\infty}(\widehat{\QG})\tens{\rm L}^{\infty}(\widehat{\QG})$ is defined by $\widehat{\Delta}(x):=V^*(1\tens x)V$ for all $x\in{\rm L}^{\infty}(\widehat{\QG})$.
\end{itemize} 
The quantum group $\widehat{\QG}$ admits left and right Haar weights \cite{KV2} and we can take the Hilbert space ${\rm L}^2(\QG)$ for \GNS space. We denote by $\widehat{J}$ the modular conjugation of the left Haar weight on $\widehat{\QG}$.
\end{defin}

		\subsection{Hopf C*-algebras associated with a quantum group}
	
\setcounter{thm}{0}

\numberwithin{thm}{subsection}
\numberwithin{prop}{subsection}
\numberwithin{lem}{subsection}
\numberwithin{cor}{subsection}
\numberwithin{propdef}{subsection}
\numberwithin{nb}{subsection}
\numberwithin{nbs}{subsection}
\numberwithin{rk}{subsection}
\numberwithin{rks}{subsection}
\numberwithin{defin}{subsection}
\numberwithin{ex}{subsection}
\numberwithin{exs}{subsection}
\numberwithin{noh}{subsection}
\numberwithin{conv}{subsection}	

We associate \cite{BS2,KV2} with the quantum group $\QG$ two Hopf C*-algebras $(S,\delta)$ and $(\widehat{S},\widehat{\delta})$ defined by:
\begin{itemize}
\item $S$ (resp.\ $\widehat{S}$) is the norm closure of the algebra $\{(\omega\tens\id)(V)\,;\,\omega\in\B({\rm L}^2(\QG))_*\}$ (resp.\ $\{(\id\tens\omega)(V)\,;\,\omega\in\B({\rm L}^2(\QG))_*\}$);
\item the coproduct $\delta:S\rightarrow\M(S\tens S)$ (resp.\ $\widehat\delta:\widehat{S}\rightarrow\M(\widehat{S}\tens \widehat{S})$) is given by: 
\[
\delta(x):=V(x\tens 1)V^*,\quad \text{for all } x\in S \quad \text{{\rm(}resp.\ }\widehat{\delta}(x):=V^*(1\tens x)V,\quad \text{for all } x\in\widehat{S}\text{{\rm)}}.
\]
\end{itemize}
We call $(S,\delta)$ (resp.\ $(\widehat{S},\widehat{\delta})$) the Hopf C*-algebra (resp.\ dual Hopf C*-algebra) associated with $\QG$. We can also denote by ${\rm C}_0(\QG):=S$ the Hopf C*-algebra of $\QG$. Note that ${\rm C}_0(\widehat{\QG})=\widehat{S}$. 

\begin{nbs}
\begin{itemize}
\item Consider the unitary operator $U:=\widehat{J}J\in\B({\rm L}^2(\QG))$. Since $U=\nu^{\,{\rm i}/4}J\widehat{J}$, we have $U^*=\nu^{-{\rm i}/4}U$. In particular, ${\rm Ad}_U={\rm Ad}_{U^*}$ on $\B({\rm L}^2(\QG))$.
\item We have the following non-degenerate faithful representation of the C*-algebra $S$ (resp.\ $\widehat{S}$):
\begin{align*}
L & :S\rightarrow\B({\rm L}^2(\QG))\; ;\; y\mapsto y;\quad R:S\rightarrow\B({\rm L}^2(\QG))\;;\;y\mapsto UL(y)U^*\\
\text{{\rm(}resp.\ }\rho & :\widehat{S}\rightarrow\B({\rm L}^2(\QG))\; ; \; x\mapsto x;\quad \lambda:\widehat{S}\rightarrow\B({\rm L}^2(\QG))\;;\;x\mapsto U\rho(x)U^*\text{{\rm)}}.\qedhere
\end{align*}
\end{itemize}
\end{nbs}
		
It follows from 2.15 \cite{KV2} that 
$
W=\Sigma(U\tens 1)V(U^*\tens 1)\Sigma
$
and $[W_{12},\,V_{23}]=0$. The right regular representation of $\widehat{\QG}$ is the multiplicative unitary $\widetilde{V}:=\Sigma(1\tens U)V(1\tens U^*)\Sigma$.

\begin{nb}\label{notC(V)}
Let $\s H$ be a Hilbert space and $X\in\B(\s H\tens\s H)$. We denote by ${\cal C}(X)$\index[symbol]{ca@${\cal C}(-)$} the norm closure of the subspace $\{(\id\tens\omega)(\Sigma X)\,;\, \omega\in\B(\s H)_*\}$ of $\B(\s H)$. If $X$ is a multiplicative unitary, then $\{(\id\tens\omega)(\Sigma X)\,;\, \omega\in\B(\s H)_*\}$ is a subalgebra of $\B(\s H)$ \cite{BS2}.
\end{nb}

\begin{defin}
\cite{BS2,Ba95} The quantum group $\QG$ is said to be regular (resp.\ semi-regular), if $\K({\rm L}^2(\QG))={\cal C}(V)$ (resp.\ $\K({\rm L}^2(\QG))\subset{\cal C}(V)$).
\end{defin}

Note that $\QG$ is regular (resp.\ semi-regular) if, and only if, $\K({\rm L}^2(\QG))={\cal C}(W)$ (resp.\ $\K({\rm L}^2(\QG))\subset{\cal C}(W)$).

		\subsection{Continuous actions of locally compact quantum groups}
		
We use the notations introduced in the previous paragraph. Let $A$ be a C*-algebra.

\begin{defin}
\begin{enumerate}
\item An action of the quantum group $\QG$ on $A$ is a non-degenerate *-homomorphism $\delta_A:A\rightarrow\M(A\tens S)$ satisfying $(\delta_A\tens\id_S)\delta_A=(\id_A\tens\delta)\delta_A$.
\item An action $\delta_A$ of $\QG$ on $A$ is said to be strongly (resp.\ weakly) continuous if 
\[
[\delta_A(A)(1_A\tens S)]=A\tens S \quad (\text{resp.\ } A=[(\id_A\tens\omega)\delta_A(A)\,;\,\omega\in\B({\rm L}^2(\QG))_*]).
\]
\item A $\QG$-C*-algebra is a pair $(A,\delta_A)$, where $A$ is a C*-algebra and $\delta_A:A\rightarrow\M(A\tens S)$ is a strongly continuous action of $\QG$ on $A$.\qedhere
\end{enumerate}
\end{defin}

If $\QG$ is regular, any weakly continuous action of $\QG$ is necessarily continuous in the strong sense, cf.\ 5.8 \cite{BSV}.

\begin{nbs}
Let $\delta_A:A\rightarrow\M(A\tens S)$ (resp.\ $\delta_A:A\rightarrow\M(A\tens\widehat{S})$) be a strongly continuous action of $\QG$ (resp.\ $\widehat{\QG}$) on the C*-algebra $A$. Denote by $\pi_L$ (resp.\ $\widehat{\pi}_{\lambda}$) the representation of $A$ on the Hilbert $A$-module $A\tens{\rm L}^2(\QG)$ defined by $\pi_L:=(\id_A\tens L)\delta_A$ (resp.\ $\widehat{\pi}_{\lambda}:=(\id_A\tens\lambda)\delta_A$).
\end{nbs}

\begin{defin}(cf.\ 7.1 \cite{BS2})
Let $(A,\delta_A)$ be a $\QG$-C*-algebra (resp.\ $\widehat{\QG}$-C*-algebra). We call (reduced) crossed product of $A$ by $\QG$ (resp.\ $\widehat{\QG}$), the C*-subalgebra $A\rtimes\QG$ (resp.\ $A\rtimes\widehat{\QG}$) of $\Lin(A\tens{\rm L}^2(\QG))$ generated by the products $\pi_L(a)(1_A\tens\rho(x))$ (resp.\ $\widehat{\pi}_{\lambda}(a)(1_A\tens L(x))$) for $a\in A$ and $x\in\widehat S$ (resp.\ $x\in S$).
\end{defin}

The crossed product $A\rtimes\QG$ (resp.\ $A\rtimes\widehat{\QG}$) is endowed with a strongly continuous action of $\widehat{\QG}$ (resp.\ $\QG$), cf.\ 7.3 \cite{BS2}. If $\QG$ is regular, then the Takesaki-Takai duality extends to this setting, cf.\ 7.5 \cite{BS2}.

\begin{defin}
Let $A$ and $B$ be two C*-algebras. Let $\delta_A:A\rightarrow\M(A\tens S)$ and $\delta_B:B\rightarrow\M(B\tens S)$ be two actions of $\QG$ on $A$ and $B$ respectively. A non-degenerate *-homomorphism $f:A\rightarrow\M(B)$ is said to be $\QG$-equivariant if $(f\tens\id_S)\delta_A=\delta_B\circ f$. We denote by ${\sf Alg}_{\QG}$ the category whose objects are the $\QG$-C*-algebras and the morphisms are the $\QG$-equivariant non-degenerate *-homomorphisms.
\end{defin}

		\subsection{Equivariant Hilbert C*-modules and bimodules}

\paragraph{Preliminaries.} In this paragraph, we briefly recall some classical notations and elementary facts concerning Hilbert C*-modules. Let $A$ be a C*-algebra and $\s E$ a Hilbert $A$-module.

\begin{nbs}\index[symbol]{ia@$\iota_A,\,\iota_{\s E},\,\iota_{\s E^*},\,\iota_{\K(\s E)}$, canonical morphisms}
Let us consider the following maps:
\begin{itemize}
\item $\iota_A:A\rightarrow\K(\s E\oplus A)$, the *-homomorphism given by $\iota_A(a)(\xi\oplus b)=0\oplus ab$ for all $a,b\in A$ and $\xi\in\s E$;
\item $\iota_{\s E}:\s E\rightarrow\K(\s E\oplus A)$, the bounded linear map given by $\iota_{\s E}(\xi)(\eta\oplus a)=\xi a\oplus 0$ for all $a\in A$ and $\xi,\eta\in\s E$;
\item $\iota_{\s E^*}:\s E^*\rightarrow\K(\s E\oplus A)$, the bounded linear map given by $\iota_{\s E^*}(\xi^*)(\eta\oplus a)=0\oplus\xi^*\eta$ for all $\xi,\eta\in\s E$ and $a\in A$;
\item $\iota_{\K(\s E)}:\K(\s E)\rightarrow\K(\s E\oplus A)$, the *-homomorphism given by $\iota_{\K(\s E)}(k)(\eta\oplus a)=k\eta\oplus 0$ for all $k\in\K(\s E)$, $\eta\in\s E$ and $a\in A$.\qedhere
\end{itemize}
\end{nbs}

The result below follows from straightforward computations.

\begin{prop}\label{prop32}
We have the following statements:
\begin{enumerate}
\item $\iota_{\s E}(\xi a)=\iota_{\s E}(\xi)\iota_A(a)$ and $\iota_A(a)\iota_{\s E^*}(\xi^*)=\iota_{\s E^*}(a\xi^*)$ for all $\xi\in\s E$ and $a\in A$;
\item $\iota_{\s E^*}(\xi^*)=\iota_{\s E}(\xi)^*$ and $\iota_{\K(\s E)}(\theta_{\xi,\eta})=\iota_{\s E}(\xi)\iota_{\s E}(\eta)^*$ for all $\xi,\eta\in\s E$;
\item $\iota_{\s E}(\xi)^*\iota_{\s E}(\eta)=\iota_A(\langle\xi,\,\eta\rangle)$ for all $\xi,\eta\in\s E$;
\item $\K(\s E\oplus A)$ is the C*-algebra generated by the set $\iota_A(A)\cup\iota_{\s E}(\s E)$.\qedhere
\end{enumerate}
\end{prop}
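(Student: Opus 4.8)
The plan is to prove all four claims by evaluating both sides on an arbitrary vector of $\s E\oplus A$, using only the definitions of the four maps $\iota_A,\iota_{\s E},\iota_{\s E^*},\iota_{\K(\s E)}$ together with the inner product $\langle\xi\oplus a,\,\eta\oplus b\rangle=\langle\xi,\,\eta\rangle_A+a^*b$ on $\s E\oplus A$. Implicitly one is using the standard identification $\K(\s E\oplus A)$ with the $2\times2$ ``matrix'' object having corners $\K(\s E)$, $\K(A,\s E)\cong\s E$, $\K(\s E,A)\cong\s E^*$ and $\K(A)\cong A$, but no machinery beyond that is needed.

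For statement (1), applying $\iota_{\s E}(\xi)\iota_A(a)$ to $\eta\oplus b$ gives $\iota_{\s E}(\xi)(0\oplus ab)=(\xi a)b\oplus 0=\iota_{\s E}(\xi a)(\eta\oplus b)$, and applying $\iota_A(a)\iota_{\s E^*}(\xi^*)$ to $\eta\oplus b$ gives $\iota_A(a)(0\oplus\langle\xi,\eta\rangle_A)=0\oplus a\langle\xi,\eta\rangle_A=0\oplus(a\xi^*)(\eta)=\iota_{\s E^*}(a\xi^*)(\eta\oplus b)$. For statement (2), the identity $\iota_{\s E^*}(\xi^*)=\iota_{\s E}(\xi)^*$ comes from the computation $\langle\iota_{\s E}(\xi)(\eta\oplus a),\,\zeta\oplus b\rangle=a^*\langle\xi,\zeta\rangle_A=\langle\eta\oplus a,\,\iota_{\s E^*}(\xi^*)(\zeta\oplus b)\rangle$; in particular $\iota_{\s E}(\xi)$ is adjointable, which (with $\iota_A$ visibly a $*$-homomorphism) is what guarantees all four maps really take values in $\K(\s E\oplus A)\subset\Lin(\s E\oplus A)$. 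The second half of (2) is then $\iota_{\s E}(\xi)\iota_{\s E}(\eta)^*(\zeta\oplus a)=\iota_{\s E}(\xi)(0\oplus\langle\eta,\zeta\rangle_A)=\xi\langle\eta,\zeta\rangle_A\oplus 0=\theta_{\xi,\eta}(\zeta)\oplus 0=\iota_{\K(\s E)}(\theta_{\xi,\eta})(\zeta\oplus a)$, and statement (3) is the analogous one-liner $\iota_{\s E}(\xi)^*\iota_{\s E}(\eta)(\zeta\oplus a)=\iota_{\s E^*}(\xi^*)(\eta a\oplus 0)=0\oplus\langle\xi,\eta\rangle_A\,a=\iota_A(\langle\xi,\eta\rangle)(\zeta\oplus a)$.

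For statement (4), let $B$ be the C*-subalgebra of $\K(\s E\oplus A)$ generated by $\iota_A(A)\cup\iota_{\s E}(\s E)$. By (2), $B$ contains $\iota_{\s E}(\s E)^*=\iota_{\s E^*}(\s E^*)$ and every $\iota_{\s E}(\xi)\iota_{\s E}(\eta)^*=\iota_{\K(\s E)}(\theta_{\xi,\eta})$; since $\K(\s E)=[\theta_{\xi,\eta}\,;\,\xi,\eta\in\s E]$ and $B$ is closed, $\iota_{\K(\s E)}(\K(\s E))\subset B$. Finally one checks that the closed linear span of $\iota_A(A)\cup\iota_{\s E}(\s E)\cup\iota_{\s E^*}(\s E^*)\cup\iota_{\K(\s E)}(\K(\s E))$ is all of $\K(\s E\oplus A)$: this is the decomposition of the compacts on a direct sum into their four corner subspaces, which follows by writing a rank-one operator as $\theta_{\xi\oplus a,\,\eta\oplus b}=\iota_{\K(\s E)}(\theta_{\xi,\eta})+\iota_{\s E}(\xi b^*)+\iota_{\s E^*}(a\eta^*)+\iota_A(ab^*)$ and taking closed linear spans. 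Hence $B=\K(\s E\oplus A)$.

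There is no genuine obstacle here: the whole proposition is bookkeeping around the matrix picture of $\K(\s E\oplus A)$. The only point deserving a moment's attention is the adjointability checked in (2), since it is what legitimizes the claim that $\iota_{\s E}$ and $\iota_{\s E^*}$ land in $\K(\s E\oplus A)$ at all; everything else is a direct evaluation.
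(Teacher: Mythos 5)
Your proof is correct and is exactly the "straightforward computations" the paper invokes without writing out: the paper gives no proof beyond that remark, and your direct evaluations on vectors of $\s E\oplus A$, together with the rank-one decomposition $\theta_{\xi\oplus a,\,\eta\oplus b}=\iota_{\K(\s E)}(\theta_{\xi,\eta})+\iota_{\s E}(\xi b^*)+\iota_{\s E^*}(a\eta^*)+\iota_A(ab^*)$ for statement 4, supply precisely the intended argument.
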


\begin{rks}\label{rk3}
\begin{enumerate}
\item For $a\in A$, $\xi\in\s E$ and $k\in\K(\s E)$, the operators $\iota_A(a)$, $\iota_{\s E}(\xi)$, $\iota_{\s E^*}(\xi^*)$ and $\iota_{\K(\s E)}(k)$ can be represented by 2-by-2 matrices acting on the Hilbert $A$-module $\s E\oplus A$ as follows:
\[
\iota_A(a)=\begin{pmatrix}0 & 0\\0 & a\end{pmatrix}\!;\quad \iota_{\s E}(\xi)=\begin{pmatrix}0 & \xi\\0 & 0\end{pmatrix}\!;\quad \iota_{\s E^*}(\xi^*)=\begin{pmatrix}0 & 0\\\xi^* & 0\end{pmatrix}\!;\quad \iota_{\K(\s E)}(k)=\begin{pmatrix}k & 0\\0 & 0\end{pmatrix}\!.
\]
Moreover, any operator $x\in\K(\s E\oplus A)$ can be written in a unique way as follows:
\[
x=\begin{pmatrix}k & \xi\\ \eta^* & a\end{pmatrix}\!, \quad \text{with } k\in\K(\s E),\ \xi,\eta\in\s E \; \text{and}\; a\in A.
\]
\item Note that $\iota_A$ and $\iota_{\K(\s E)}$ extend uniquely to strictly/*-strongly continuous unital *-homomor\-phisms $\iota_{A}:\M(A)\rightarrow\Lin(\s E\oplus A)$ and $\iota_{\K(\s E)}:\Lin(\s E)\rightarrow\Lin(\s E\oplus A)$. Besides, we have 
$
\iota_{A}(m)(\xi\oplus a)=0\oplus ma
$
and
$
\iota_{\K(\s E)}(T)(\xi\oplus a)=T\xi\oplus 0
$
for all $m\in\M(A)$, $T\in\Lin(\s E)$, $\xi\in\s E$ and $a\in A$. 
\item $\iota_{\s E^*}$ admits an extension to a bounded linear map $\iota_{\s E^*}:\Lin(\s E,A)\rightarrow\Lin(\s E\oplus A)$ in a straightforward way. Similarly, up to the identification $\s E=\K(A,\s E)$, we can also extend $\iota_{\s E}$ to a bounded linear map $\iota_{\s E}:\Lin(A,\s E)\rightarrow\Lin(\s E\oplus A)$.
\item As in 1, we can represent the operators $\iota_{A}(m)$, $\iota_{\K(\s E)}(T)$, $\iota_{\s E^*}(S)$ and $\iota_{\s E}(S^*)$, for $m\in\M(A)$, $T\in\Lin(\s E)$ and $S\in\Lin(A,\s E)$, by 2-by-2 matrices. Moreover, any operator $x\in\Lin(\s E\oplus A)$ can be written in a unique way as follows:
\[
x=\begin{pmatrix}T & S'\\ S^* & m\end{pmatrix}\!, \quad \text{with } T\in\Lin(\s E),\ S,S'\in\Lin(A,\s E) \; \text{and} \; m\in\M(A).\qedhere
\]
\end{enumerate}
\end{rks}

By using the matrix notations described above, we derive easily the following useful technical lemma:

\begin{lem}\label{ehmlem1}
Let $x\in\mathcal{L}(\s{E}\oplus A)$ {\rm(}resp.\ $x\in\K(\s E\oplus A))$. We have:
\begin{enumerate}
 \item $x\in\iota_{\s E}(\mathcal{L}(A,\s{E}))$ {\rm(}resp.\ $\iota_{\s E}(\s E))$ if, and only if, $x\iota_\s{E}(\xi)=0$ for all $\xi\in \s{E}$ and $\iota_A(a)x=0$ for all $a\in A$; in that case, we have $\iota_{A}(m)x=0$ for all $m\in\M(A)$;
 \item $x\in\iota_{\K(\s{E})}(\mathcal{L}(\s{E}))$ {\rm(}resp.\ $\iota_{\K(\s E)}(\K(\s E)))$ if, and only if, $x\iota_A(a)=0$ and $\iota_A(a)x=0$ for all $a\in A$; in that case, we have $x\iota_{A}(m)=\iota_{A}(m)x=0$ for all $m\in\M(A)$.\qedhere
\end{enumerate}
\end{lem}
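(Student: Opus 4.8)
The plan is to run the whole argument inside the $2\times 2$ matrix calculus of Remarks~\ref{rk3}, checking each assertion corner by corner. Recall the normal forms: every $x\in\Lin(\s E\oplus A)$ is uniquely $x=\left(\begin{smallmatrix}T&S'\\S^*&m\end{smallmatrix}\right)$ with $T\in\Lin(\s E)$, $S,S'\in\Lin(A,\s E)$, $m\in\M(A)$, and every $x\in\K(\s E\oplus A)$ is uniquely $x=\left(\begin{smallmatrix}k&\xi\\\eta^*&a\end{smallmatrix}\right)$ with $k\in\K(\s E)$, $\xi,\eta\in\s E$, $a\in A$; moreover $\iota_{\K(\s E)}$ takes values in the upper-left corner, $\iota_{\s E}$ in the upper-right corner, $\iota_{\s E^*}$ in the lower-left corner, $\iota_A$ in the lower-right corner, and composition of operators on $\s E\oplus A$ is computed by the naive matrix product. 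Throughout I would fix a bounded approximate unit $(u_\lambda)$ of $A$ and use $\s E=[\s E A]$ and $A=[AA]$ to pass from the vanishing of all ``slices'' of a matrix entry to the vanishing of that entry; e.g.\ $T(\xi a)=0$ for all $\xi\in\s E,\,a\in A$ forces $T=0$, and $am=0$ for all $a\in A$ forces $m=0$ in $\M(A)$.

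For item 1, write $x=\left(\begin{smallmatrix}T&S'\\S^*&m\end{smallmatrix}\right)$. A direct multiplication gives, for $\xi\in\s E$ (viewed in $\Lin(A,\s E)$ via $\s E=\K(A,\s E)$) and $a\in A$,
\[
x\,\iota_{\s E}(\xi)=\begin{pmatrix}0&T\xi\\0&S^*\xi\end{pmatrix},\qquad \iota_A(a)\,x=\begin{pmatrix}0&0\\aS^*&am\end{pmatrix}.
\]
Hence $x\,\iota_{\s E}(\xi)=0$ for all $\xi$ forces $T=0$ and $S^*=0$ (so $S=0$), while $\iota_A(a)\,x=0$ for all $a$ forces $m=0$; thus $x=\left(\begin{smallmatrix}0&S'\\0&0\end{smallmatrix}\right)=\iota_{\s E}(S')$, and the same computation applied to $x=\left(\begin{smallmatrix}k&\xi\\\eta^*&a\end{smallmatrix}\right)$ in the compact case yields $k=0$, $\eta=0$, $a=0$, i.e.\ $x=\iota_{\s E}(\xi)\in\iota_{\s E}(\s E)$. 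The converse implications, and the last clause $\iota_A(m)x=0$ for $m\in\M(A)$, are then read off the matrix forms.

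Item 2 is entirely parallel: with $x=\left(\begin{smallmatrix}T&S'\\S^*&m\end{smallmatrix}\right)$ one computes
\[
x\,\iota_A(a)=\begin{pmatrix}0&S'a\\0&ma\end{pmatrix},\qquad \iota_A(a)\,x=\begin{pmatrix}0&0\\aS^*&am\end{pmatrix},
\]
so the vanishing of both for all $a\in A$ forces $S'=0$, $S=0$ and $m=0$, leaving $x=\left(\begin{smallmatrix}T&0\\0&0\end{smallmatrix}\right)=\iota_{\K(\s E)}(T)$ (with $k\in\K(\s E)$ in place of $T$ in the compact case); the converse and the relations $x\,\iota_A(m)=\iota_A(m)\,x=0$ are again visible on the matrices.

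There is no real obstacle, only bookkeeping: the points needing a little care will be confirming that the operator product of two ``mixed'' operators on $\s E\oplus A$ is genuinely given by matrix multiplication in each of the four corner cases (so that Remarks~\ref{rk3} can be invoked rather than re-derived), and applying $\s E=[\s E A]$, $A=[AA]$ and the approximate unit uniformly to upgrade the vanishing of all slices to the vanishing of $T,S,S',m$ — in particular to get $m=0$ in $\M(A)$ from $am=0$ for all $a\in A$.
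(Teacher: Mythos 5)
Your proof is correct and is exactly the argument the paper intends: the paper states the lemma without proof, remarking only that it is derived "by using the matrix notations described above," and your corner-by-corner matrix computation, together with $\s E=[\s E A]$ and the fact that $A$ is essential in $\M(A)$, is that derivation. Nothing is missing.
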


Let us recall the notion of relative multiplier module, cf.\ 2.1 \cite{BS1}.

\begin{defin}
Let $A$ and $B$ be two C*-algebras and let $\s E$ be a Hilbert C*-module over $A$. Up to the identification $\s E\tens B=\K(A\tens B,\s E\tens B)$, we define $\widetilde{\M}(\s E\tens B)$\index[symbol]{mb@$\widetilde{\M}(\s E\tens B)$, relative multiplier module} (or $\widetilde{\M}_B(\s E\tens B)$ in case of ambiguity) to be the following subspace of $\Lin(A\tens B,\s E\tens B)$:
\[\{T\in\Lin(A\tens B,\s E\tens B) \; ; \; \forall x\in B, \; (1_{\s E}\tens x)T\in\s E\tens B\;\;\mathrm{and} \;\; T(1_A\tens x)\in\s E\tens B\}.\]
Note that $\widetilde{\M}(\s E\tens B)$ is a Hilbert C*-module over $\widetilde{\M}(A\tens B)$, whose $\widetilde{\M}(A\tens B)$-valued inner product is given by: 
\[
\langle\xi,\eta\rangle:=\xi^*\circ\eta,\quad \text{for all }\xi,\eta\in\widetilde{\M}(\s E\tens B)\subset\Lin(A\tens B,\s E\tens B).
\]
Note also that we have $\K(\widetilde{\M}(\s E\tens B))\subset\widetilde{\M}(\K(\s E)\tens B)$.
\end{defin}

\begin{propdef}\label{not4}
Let $B\subset\B(\s K)$ be a C*-algebra of operators on a Hilbert space $\s K$. For all $T\in\Lin(A\tens B,\s E\tens B)$ and $\omega\in\B(\s K)_*$, there exists a unique $(\id_{\s E}\tens\omega)(T)\in\Lin(A,\s E)$ such that
\[
\iota_{\s E}(\id_{\s E}\tens\omega)(T)=(\id_{\K(\s E\oplus A)}\tens\omega)(\iota_{\s E\tens B}(T))\in\Lin(\s E\oplus A),
\]
where $\iota_{\s E\tens B}:\Lin(A\tens B,\s E\tens B)\rightarrow\Lin((\s E\tens B)\oplus(A\tens B))=\M(\K(\s E\oplus A)\tens B)$. If $B\subset\B(\s K)$ is non-degenerate and $T\in\widetilde{\M}(\s E\tens B)$, then we have $(\id_{\s E}\tens\omega)(T)\in\s E$.
\end{propdef}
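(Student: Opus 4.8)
\textbf{Plan of proof for Proposition-Definition~\ref{not4}.}

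The plan is to build $(\id_{\s E}\tens\omega)(T)$ directly from its mandated defining property, using Lemma~\ref{ehmlem1} to recognize the resulting operator as an element of $\iota_{\s E}(\Lin(A,\s E))$. First I would note that $\iota_{\s E\tens B}$ is a well-defined bounded linear map $\Lin(A\tens B,\s E\tens B)\to\Lin((\s E\tens B)\oplus(A\tens B))$, simply the analogue for Hilbert modules over $A\tens B$ of the map $\iota_{\s E}$ of the preliminary paragraph, composed with the identification $(\s E\tens B)\oplus(A\tens B)=(\s E\oplus A)\tens B$, whence $\Lin((\s E\tens B)\oplus(A\tens B))=\M(\K(\s E\oplus A)\tens B)$; under this identification $\iota_{\s E\tens B}$ restricted to $\s E\tens B=\K(A\tens B,\s E\tens B)$ is just $\iota_{(\s E\oplus A)}\tens\id_B$ followed by the inclusion $\K(\s E\oplus A)\tens B\subset\M(\K(\s E\oplus A)\tens B)$. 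Then, since $B\subset\B(\s K)$, the slice map $\id_{\K(\s E\oplus A)}\tens\omega:\M(\K(\s E\oplus A)\tens B)\to\Lin(\s E\oplus A)$ makes sense for every $\omega\in\B(\s K)_*$ (it is the restriction of $\id\tens\omega:\M(\K(\s E\oplus A)\tens\K(\s K))\to\Lin(\s E\oplus A)$), so the right-hand side $y:=(\id_{\K(\s E\oplus A)}\tens\omega)(\iota_{\s E\tens B}(T))\in\Lin(\s E\oplus A)$ is a well-defined element.

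The crux is to show $y\in\iota_{\s E}(\Lin(A,\s E))$, so that $(\id_{\s E}\tens\omega)(T):=\iota_{\s E}^{-1}(y)$ is well-defined and unique ($\iota_{\s E}$ being injective). By Lemma~\ref{ehmlem1}(1) — extended via Remark~\ref{rk3}(3) to the case $x\in\Lin(\s E\oplus A)$, which is where the matrix picture of Remark~\ref{rk3}(4) is used — it suffices to check $y\,\iota_{\s E}(\xi)=0$ for all $\xi\in\s E$ and $\iota_A(a)\,y=0$ for all $a\in A$. For this I would apply the slicing map $\id_{\K(\s E\oplus A)}\tens\omega$ to the two identities $\iota_{\s E\tens B}(T)\,(\iota_{\s E}\tens\id_B)(\xi\tens b)=0$ and $(\iota_A\tens\id_B)(a\tens b)\,\iota_{\s E\tens B}(T)=0$, valid for all $b\in B$ because $T$ maps into $\s E\tens B$ and $T^*$ maps into $A\tens B$ (equivalently: these are the matrix positions "$\begin{pmatrix}0&T\\0&0\end{pmatrix}$" of $\iota_{\s E\tens B}(T)$, whose product with $\iota_{\s E}(\xi)$-type resp.\ $\iota_A(a)$-type operators vanishes by the same matrix multiplication that proves Lemma~\ref{ehmlem1}). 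Since such operators $(\iota_{\s E}\tens\id_B)(\xi\tens b)$, $(\iota_A\tens\id_B)(a\tens b)$ densely span (in the strict topology, using nondegeneracy is \emph{not} needed here, only $B\subset\B(\s K)$ so that $b\omega$, $\omega b\in\B(\s K)_*$) enough room to conclude $y\,\iota_{\s E}(\xi)=(\id\tens b\omega)(\iota_{\s E\tens B}(T)(\iota_{\s E}\tens 1)(\xi\tens 1_{\M B}))$-type expressions vanish — more carefully, I would use $\iota_{\s E}(\xi)=(\id_{\K(\s E\oplus A)}\tens\omega')(\iota_{\s E\tens B}(\xi\tens b))$ for a suitable net and pass $\omega$ through the multiplication, reducing everything to the already-established vanishing at the level of $\iota_{\s E\tens B}(T)$ multiplied by genuine elements of $\s E\tens B$ and $A\tens B$. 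The resulting identities $y\iota_{\s E}(\xi)=0$ and $\iota_A(a)y=0$ then give $y\in\iota_{\s E}(\Lin(A,\s E))$ by Lemma~\ref{ehmlem1}(1).

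For the last sentence, assume $B\subset\B(\s K)$ is nondegenerate and $T\in\widetilde{\M}(\s E\tens B)$; I must upgrade $(\id_{\s E}\tens\omega)(T)\in\Lin(A,\s E)$ to $(\id_{\s E}\tens\omega)(T)\in\s E=\K(A,\s E)$. The point is that $\widetilde{\M}(\s E\tens B)$ consists of those $T$ with $(1_{\s E}\tens x)T\in\s E\tens B$ and $T(1_A\tens x)\in\s E\tens B$ for all $x\in B$; under $\iota_{\s E\tens B}$ this says $\iota_{\s E\tens B}(T)$ has its off-diagonal block in $\widetilde{\M}(\K(\s E\oplus A)\tens B)$ rather than merely in $\M$. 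Using nondegeneracy of $B$, one can then write $\iota_{\s E\tens B}(T)=\text{(strict-limit of elements of $\K(\s E\oplus A)\tens B$)}$ after multiplying by an approximate unit of $B$ on either side while staying in $\s E\tens B$-block positions, and slicing by $\omega$ — since $\omega=b'\omega''b''$ can be arranged with $b',b''\in B$ by Cohen factorization in $\B(\s K)_*$ over the nondegenerate $B$ — produces an element of $\iota_{\s E}(\s E)$; here I invoke Lemma~\ref{ehmlem1}'s "compact" variant (the resp.\ clauses) together with Proposition~\ref{prop32}(4). Hence $(\id_{\s E}\tens\omega)(T)=\iota_{\s E}^{-1}(y)\in\s E$.

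\textbf{Main obstacle.} The delicate point is the middle step: rigorously passing the functional $\omega$ through products of the form $\iota_{\s E\tens B}(T)\cdot(\iota_{\s E}(\xi)\tens b)$ so as to reduce the vanishing conditions of Lemma~\ref{ehmlem1} to the manifest relations $T(1_A\tens b)\in\s E\tens B$, $(1_{\s E}\tens b)T\in\s E\tens B$ (and $(1_A\tens b)T^*$ for the other side). This requires being careful that $\iota_{\s E\tens B}(T)$ lies in $\M(\K(\s E\oplus A)\tens B)$ only (not in $\K\tens B$), so that the slice map $\id\tens\omega$ is a priori only strictly continuous, and that the factorizations $\omega=b\omega'$, $\omega=\omega''b$ one wants are exactly what nondegeneracy of $B$ buys in the last part, whereas for the first (well-definedness) part one must instead exploit that $\iota_{\s E}(\xi)$ and $\iota_A(a)$ themselves \emph{are} slices of genuine elements of $\s E\tens B$ resp.\ $A\tens B$ against suitable functionals, so the product already lives in $\K(\s E\oplus A)\tens B$ before slicing by $\omega$. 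Organizing these two slightly different arguments cleanly — well-definedness for general $B\subset\B(\s K)$, membership in $\s E$ for nondegenerate $B$ and $T\in\widetilde{\M}$ — is the bulk of the work; everything else is the matrix bookkeeping already set up in Proposition~\ref{prop32} and Remarks~\ref{rk3}.
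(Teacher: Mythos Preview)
Your approach is correct and is exactly the paper's (one-line) proof: apply Lemma~\ref{ehmlem1}(1) to $y=(\id_{\K(\s E\oplus A)}\tens\omega)(\iota_{\s E\tens B}(T))$, and for the last clause use that $\iota_{\s E\tens B}(T)\in\widetilde{\M}(\K(\s E\oplus A)\tens B)$ whenever $T\in\widetilde{\M}(\s E\tens B)$. Your ``main obstacle'' dissolves once you use the elementary slice identity $(\id\tens\omega)(X)\cdot z=(\id\tens\omega)(X(z\tens 1_B))$ and $z\cdot(\id\tens\omega)(X)=(\id\tens\omega)((z\tens 1_B)X)$ for $z\in\Lin(\s E\oplus A)$: the vanishings $y\,\iota_{\s E}(\xi)=0$ and $\iota_A(a)\,y=0$ then follow immediately from the matrix form of $\iota_{\s E\tens B}(T)$, with no nets or factorizations needed.
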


\begin{proof}
This is a direct consequence of \ref{ehmlem1} 1 and the fact that $\iota_{\s E\tens B}(T)\in\widetilde{\M}(\K(\s E\oplus A)\tens B)$ if $T\in\widetilde{\M}(\s E\tens B)$.
\end{proof}

\paragraph{Notion of equivariant Hilbert C*-module.}

In this paragraph, we recall the notion of equivariant Hilbert C*-module through the three equivalent pictures developed in \S 2 \cite{BS1}. Let us fix a $\QG$-C*-algebra $(A,\delta_A)$ and a Hilbert $A$-module $\s E$.

\begin{defin}
An action of the locally compact quantum group $\QG$ on $\s E$ is a linear map $\delta_{\s E}:\s E\rightarrow\widetilde{\M}(\s E\tens S)$ such that:
\begin{enumerate}
\item $\delta_{\s E}(\xi)\delta_A(a)=\delta_{\s E}(\xi a)$ and $\delta_A(\langle\xi,\,\eta\rangle)=\langle\delta_{\s E}(\xi),\,\delta_{\s E}(\eta)\rangle$, for all $a\in A$ and $\xi,\,\eta\in\s E$;
\item $[\delta_{\s E}(\s E)(A\tens S)]=\s E\tens S$;
\item the linear maps $\delta_{\s E}\tens\id_S$ and $\id_{\s E}\tens\delta$ extend to linear maps from $\Lin(A\tens S,\s E\tens S)$ to $\Lin(A\tens S\tens S,\s E\tens S\tens S)$ and we have $(\delta_{\s E}\tens\id_S)\delta_{\s E}=(\id_{\s E}\tens\delta)\delta_{\s E}$.
\end{enumerate}
An action $\delta_{\s E}$ of $\QG$ on $\s E$ is said to be continuous if we have 
$
[(1_{\s E}\tens S)\delta_{\s E}(\s E)]=\s E\tens S.
$
A $\QG$-equivariant Hilbert $A$-module is a Hilbert $A$-module $\s E$ endowed with a continuous action $\delta_{\s E}:\s E\rightarrow\widetilde{\M}(\s E\tens S)$ of $\QG$ on $\s E$.
\end{defin}

\begin{noh}
The datum of a continuous action of $\QG$ on $\s E$ is equivalent to that of a continuous action $\delta_{\K(\s E\oplus A)}:\K(\s E\oplus A)\rightarrow\M(\K(\s E\oplus A)\tens S)$ of $\QG$ on the linking C*-algebra $\K(\s E\oplus A)$ such that the *-homomorphism $\iota_A:A\rightarrow\K(\s E\oplus A)$ is $\QG$-equivariant, cf.\ 2.7 \cite{BS1}.
\end{noh}

\begin{noh}\label{unitary}
If $\delta_{\s E}$ is an action of $\QG$ on $\s E$, we have the unitary operator $\s V\in\Lin(\s E\tens_{\delta_A}(A\tens S),\s E\tens S)$ defined by $\s V(\xi\tens_{\delta_A}x):=\delta_{\s E}(\xi)x$ for all $\xi\in\s E$ and $x\in A\tens S$. This unitary satisfies the relation 
\[
(\s V\tens_{\GC}\id_S)(\s V\tens_{\delta_A\tens\id_S}1)=\s V\tens_{\id_A\tens\,\delta}1 \quad \text{in} \quad \Lin(\s E\tens_{\delta_A^2}(A\tens S\tens S),\s E\tens S\tens S), 
\]
where $\delta_A^2:=(\delta_A\tens\id_S)\delta_A=(\id_A\tens\delta)\delta_A$, cf.\ 2.3 and 2.4 (a) \cite{BS1} for the details. Conversely, if there exists a unitary operator $\s V\in\Lin(\s E\tens_{\delta_A}(A\tens S),\s E\tens S)$ satisfying the above relation and the fact that $\s VT_{\xi}\in\widetilde{\M}(\s E\tens S)$ for all $\xi\in\s E$, where $T_{\xi}\in\Lin(A\tens S,\s E\tens_{\delta_A}(A\tens S))$ is defined by $T_{\xi}(x):=\xi\tens_{\delta_A}x$ for all $x\in A\tens S$, then the map $\delta_{\s E}:\s E\rightarrow\widetilde{\M}(\s E\tens S)\,;\,\xi\mapsto\s VT_{\xi}$ is an action of $\QG$ on $\s E$, cf.\ 2.4 (b) \cite{BS1}.
\end{noh}

\begin{noh}
An action of $\QG$ on $\s E$ determines an action $\delta_{\K(\s E)}:\K(\s E)\rightarrow\widetilde{\M}(\K(\s E)\tens S)$ of $\QG$ on $\K(\s E)$ defined by $\delta_{\K(\s E)}(k)=\s V(k\tens_{\delta_A}1)\s V^*$ for all $k\in\K(\s E)$, where $\s V$ is the unitary operator associated to the action, cf.\ 2.8 \cite{BS1}. Moreover, if $\s E$ is a $\QG$-equivariant Hilbert module, then $\K(\s E)$ turns into a $\QG$-C*-algebra.
\end{noh}


\section{Measured quantum groupoids}

\setcounter{thm}{0}

\numberwithin{thm}{section}
\numberwithin{prop}{section}
\numberwithin{lem}{section}
\numberwithin{cor}{section}
\numberwithin{propdef}{section}
\numberwithin{nb}{section}
\numberwithin{nbs}{section}
\numberwithin{rk}{section}
\numberwithin{rks}{section}
\numberwithin{defin}{section}
\numberwithin{ex}{section}
\numberwithin{exs}{section}
\numberwithin{noh}{section}
\numberwithin{conv}{section}

For reminders concerning the relative tensor product of Hilbert spaces and the fiber product of von Neumann algebras, we refer the reader to the appendix of this article (cf.\ \ref{tensorproduct}).

\begin{defin}(cf.\ 3.7 \cite{E08}, 4.1 \cite{Le})
We call a measured quantum groupoid an octuple ${\cal G}=(N, M, \alpha, \beta, \Gamma, T, T', \nu)$, where:
\begin{itemize}
	\item $M$ and $N$ are von Neumann algebras;
	\item $\Gamma:M\rightarrow\fprod{M}{\beta}{\alpha}{M}$ is a faithful normal unital *-homomorphism, called the coproduct;
	\item $\alpha:N\rightarrow M$ and $\beta:N^{\rm o}\rightarrow M$ are faithful normal unital *-homormorphisms, called the range and source maps of $\cal G$;
	\item $T:M_+\rightarrow\alpha(N)_+^{\rm ext}$ and $T':M_+\rightarrow\beta(N^{\rm o})_+^{\rm ext}$ are \nsf operator-valued weights;
	\item $\nu$ is a \nsf weight on $N$;
\end{itemize}
such that the following conditions are satisfied:
\begin{enumerate}
	\item $[\alpha(n'),\,\beta(n^{\rm o})]=0$, for all $n,n'\in N$;
	\item $\Gamma(\alpha(n))=\reltens{\alpha(n)}{\beta}{\alpha}{1}$ and $\Gamma(\beta(n^{\rm o}))=\reltens{1}{\beta}{\alpha}{\beta(n^{\rm o})}$, for all $n\in N$;
	\item $\Gamma$ is coassociative, {\it i.e.\ }$(\fprod{\Gamma}{\beta}{\alpha}{\id})\Gamma=(\fprod{\id}{\beta}{\alpha}{\Gamma})\Gamma$;
	\item the \nsf weights $\varphi$ and $\psi$ on $M$ given by $\varphi=\nu\circ\alpha^{-1}\circ T$ and $\psi=\nu\circ\beta^{-1}\circ T'$ satisfy:
		\begin{itemize}
			\item $\forall x\in\f M_T^+, \, T(x)=(\fprod{\id}{\beta}{\alpha}{\varphi})\Gamma(x), \quad \forall x\in\f M_{T'}^+,\,T'(x)=(\fprod{\psi}{\beta}{\alpha}{\id})\Gamma(x)$,
			\item $\sigma^{\varphi}_t$ and $\sigma^{\psi}_s$ commute for all $s,t\in\GR$.\qedhere
		\end{itemize}
\end{enumerate}
\end{defin}

Let ${\cal G}=(N, M, \alpha, \beta, \Gamma, T, T', \nu)$ be a measured quantum groupoid. We denote by $(\s H,\pi,\Lambda)$ the \GNS construction for $(M,\varphi)$ where $\varphi:=\nu\circ\alpha^{-1}\circ T$. Let $(\sigma_t)_{t\in\GR}$, $\nabla$ and $J$ be respectively the modular automorphism group, the modular operator and the modular conjugation for $\varphi$. In the following, we identify $M$ with its image by $\pi$ in $\B(\s H)$. 
\begin{itemize}
	\item We have a coinvolutive *-antiautomomorphism $R_{\cal G}:M\rightarrow M$\index[symbol]{r@$R_{\cal G}$, unitary coinverse} such that $R_{\cal G}^2=\id_M$ (cf.\ 3.8 \cite{E08}).
\end{itemize}
From now on, we will assume that $T'=R_{\cal G}\circ T\circ R_{\cal G}$ and then also $\psi=\varphi\circ R_{\cal G}$.
\begin{itemize}
	\item There exist self-adjoint positive non-singular operators $\lambda$ and $d$ respectively affiliated to ${\cal Z}(M)$\index[symbol]{z@${\cal Z}(-)$, center} and $M$ such that
	 $
	 (D\psi:D\varphi)_t=\lambda^{{\rm i}t^2/2}d^{\,{\rm i}t}
	 $
for all $t\in\GR$. The operators $\lambda$ and $d$ are respectively called the scaling operator and the modular operator of $\cal G$. 
	\item The \GNS construction for $(M,\psi)$ is given by $(\s H,\pi_{\psi},\Lambda_{\psi})$, where: $\Lambda_{\psi}$ is the closure of the operator which sends any element $x\in M$ such that $x d^{1/2}$ is closable and its closure $\overline{x d^{1/2}}\in\f N_{\varphi}$ to $\Lambda_{\varphi}(\overline{x d^{1/2}})$; $\pi_{\psi}:M\rightarrow\B(\s H)$ is given by the formula $\pi_{\psi}(a)\Lambda_{\psi}(x)=\Lambda_{\psi}(ax)$.
	\item The modular conjugation $J_{\psi}$ for $\psi$ is given by $J_{\psi}=\lambda^{{\rm i}/4}J$.
	\item We will denote by
	$
	\s W_{\cal G}:\reltens{\s H}{\beta}{\alpha}{\s H}\rightarrow\reltens{\s H}{\alpha}{\widehat{\beta}}{\s H}
	$
	the pseudo-multiplicative unitary of $\cal G$\index[symbol]{wa@$\s W_{\cal G}$, pseudo-multiplicative unitary} (cf.\ 3.3.4 \cite{Val}, 3.6 \cite{E08}). 
\end{itemize}

\begin{propdef}(cf.\ 3.10 \cite{E08}) We define the (Pontryagin) dual of $\cal G$ to be the measured quantum groupoid $\widehat{\cal G}:=(N,\widehat{M},\alpha,\widehat{\beta},\widehat{\Gamma},\widehat{T},\widehat{R}\circ\widehat{T}\circ\widehat{R},\nu)$, where:
\begin{itemize}
	\item $\widehat{M}$ is the von Neumann algebra generated by 
	$
	\{(\omega\star\id)(\s W_{\cal G})\,;\,\omega\in\B(\s H)_*\}\subset\B(\s H);
	$
	\item $\widehat\beta:N^{\rm o}\rightarrow\widehat{M}$ is given by $\widehat\beta(n^{\rm o}):=J\alpha(n^*)J$ for all $n\in N$;
	\item $\widehat{\Gamma}:\widehat{M}\rightarrow\fprod{\widehat{M}}{\widehat\beta}{\alpha}{\widehat{M}}$ is given for all $x\in\widehat{M}$ by
	$
	\widehat{\Gamma}(x):=\sigma_{\alpha\widehat\beta}\s W_{\cal G}(\reltens{x}{\beta}{\alpha}{1})\s W_{\cal G}^*\sigma_{\widehat\beta\alpha};
	$
	\item there exists a unique \nsf weight $\widehat\varphi$ on $\widehat{M}$ whose \GNS construction is $(\s H,\id,\Lambda_{\widehat\varphi})$, where $\Lambda_{\widehat\varphi}$ is the closure of the operator 
	$
	(\omega\star\id)(\s W_{\cal G})\mapsto a_{\varphi}(\omega)
	$
defined for normal linear forms $\omega$ in a dense subspace of 
$
{\s I}_{\varphi}=\{\omega\in\B(\s H)_*\,;\,\exists k\in\GR_+,\,\forall x\in\f N_{\varphi},\, |\omega(x^*)|^2\leqslant k\varphi(x^*x)\}
$
and $a_{\varphi}(\omega)\in\s H$ satisfies
	$
	\omega(x^*)=\langle\Lambda_{\varphi}(x),\,a_{\varphi}(\omega)\rangle
	$
	for all $x\in\f N_{\varphi}$;
	\item $\widehat{T}$ is the unique \nsf operator-valued weight from $\widehat{M}$ to $\alpha(N)$ such that $\widehat\varphi=\nu\circ\alpha^{-1}\circ\widehat{T}$ and $\widehat{T}'=R_{\widehat{\cal G}}\circ\widehat{T}\circ R_{\widehat{\cal G}}$, where $R_{\widehat{\cal G}}:\widehat{M}\rightarrow\widehat{M}$ is given by $R_{\widehat{\cal G}}(x):=Jx^*J$ for all $x\in\widehat{M}$.
\end{itemize}
The pseudo-multiplicative unitary $\s W_{\widehat{\cal G}}$ of $\widehat{\cal G}$ is given by 
$
\s W_{\widehat{\cal G}}=\sigma_{\beta\alpha}\s W_{\cal G}^*\sigma_{\widehat\beta\alpha}.
$
\end{propdef}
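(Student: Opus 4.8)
The entire analytic content of a measured quantum groupoid is, at this stage, concentrated in the pseudo-multiplicative unitary $\s W_{\cal G}:\reltens{\s H}{\beta}{\alpha}{\s H}\to\reltens{\s H}{\alpha}{\widehat\beta}{\s H}$ built from $\varphi$ and its \GNS data (cf.\ 3.6 \cite{E08}, 3.3.4 \cite{Val}), and it satisfies a pentagon-type relation together with compatibility relations with $\alpha$ and $\beta$. The plan is to mimic the Pontryagin duality of locally compact quantum groups: build $\widehat{\cal G}$ out of the ``other leg'' of $\s W_{\cal G}$ and verify the axioms using those relations. First I would record that $\widehat M$, the von Neumann algebra generated by the right legs $(\omega\star\id)(\s W_{\cal G})$ ($\omega\in\B(\s H)_*$), is a genuine von Neumann algebra, and that $\widehat\beta={\rm Ad}_J\circ\alpha\circ(\cdot)^*:N^{\rm o}\to\B(\s H)$ is a faithful normal unital $*$-homomorphism (faithfulness and normality being inherited from $\alpha$ and from $J$ being an anti-unitary involution) whose image lies in $\widehat M$ and commutes with $\alpha(N)$; this yields axiom (1) for $\widehat{\cal G}$ and produces its source and range maps.

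Next, for the coproduct $\widehat\Gamma(x):=\sigma_{\alpha\widehat\beta}\s W_{\cal G}(\reltens{x}{\beta}{\alpha}{1})\s W_{\cal G}^*\sigma_{\widehat\beta\alpha}$, I would check that conjugation by $\s W_{\cal G}$ respects the relevant relative tensor products so that $\widehat\Gamma(x)$ indeed lands in $\fprod{\widehat M}{\widehat\beta}{\alpha}{\widehat M}$; granting this, normality, unitality and multiplicativity of $\widehat\Gamma$ are immediate, and the intertwining relations $\widehat\Gamma(\alpha(n))=\reltens{\alpha(n)}{\widehat\beta}{\alpha}{1}$ and $\widehat\Gamma(\widehat\beta(n^{\rm o}))=\reltens{1}{\widehat\beta}{\alpha}{\widehat\beta(n^{\rm o})}$ follow from the $\alpha,\beta$-equivariance of $\s W_{\cal G}$. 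Coassociativity $(\fprod{\widehat\Gamma}{\widehat\beta}{\alpha}{\id})\widehat\Gamma=(\fprod{\id}{\widehat\beta}{\alpha}{\widehat\Gamma})\widehat\Gamma$ is precisely the pseudo-multiplicativity (pentagon) identity for $\s W_{\cal G}$ read for the second leg; this is the purely formal part of Enock's 3.10.

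The analytic core, and the \emph{main obstacle}, is the construction of the left Haar weight $\widehat\varphi$ and the proof of its invariance. One must show that the densely defined map $(\omega\star\id)(\s W_{\cal G})\mapsto a_{\varphi}(\omega)$, with $\omega$ ranging over a suitable dense subspace of $\s I_{\varphi}$ and $a_{\varphi}(\omega)\in\s H$ the Riesz-type vector implementing $\omega$ against $\Lambda_{\varphi}$, is closable and that its closure $\Lambda_{\widehat\varphi}$ is the \GNS map of a \nsf weight $\widehat\varphi$ on $\widehat M$ with \GNS triple $(\s H,\id,\Lambda_{\widehat\varphi})$; this rests on the full Tomita--Takesaki machinery for weights and on the good behaviour (``manageability'') of $\s W_{\cal G}$ encoded in axiom (4), the modular objects of $\widehat\varphi$ being obtained from those of $\varphi$ and from $\lambda,d$. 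Left-invariance, i.e.\ $\widehat T(x)=(\fprod{\id}{\widehat\beta}{\alpha}{\widehat\varphi})\widehat\Gamma(x)$ for $x\in\f M_{\widehat T}^+$, is then derived from a Fourier-inversion identity for $\s W_{\cal G}$ applied to the vectors $a_{\varphi}(\omega)$; here $\widehat T:\widehat M_+\to\alpha(N)_+^{\rm ext}$ is the unique \nsf operator-valued weight with $\widehat\varphi=\nu\circ\alpha^{-1}\circ\widehat T$ (uniqueness coming from the modular description of $\alpha(N)$ inside $\widehat M$), and $\widehat T':=R_{\widehat{\cal G}}\circ\widehat T\circ R_{\widehat{\cal G}}$ with $R_{\widehat{\cal G}}={\rm Ad}_J\circ(\cdot)^*$. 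Finally the commutation of $\sigma^{\widehat\varphi}$ and $\sigma^{\widehat\psi}$ is transported from the commutation of $\sigma^{\varphi}$ and $\sigma^{\psi}$ through $\s W_{\cal G}$. This establishes that $\widehat{\cal G}$ is a measured quantum groupoid; the detailed verifications are those of 3.10 \cite{E08} (and \cite{Le}).

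For the last assertion, $\s W_{\widehat{\cal G}}=\sigma_{\beta\alpha}\s W_{\cal G}^*\sigma_{\widehat\beta\alpha}$: since the \GNS space of $\widehat\varphi$ is again $\s H$, $\s W_{\widehat{\cal G}}$ is by definition manufactured from $(\widehat M,\widehat\Gamma,\widehat\varphi)$ exactly as $\s W_{\cal G}$ was from $(M,\Gamma,\varphi)$. I would then check that the candidate operator $\sigma_{\beta\alpha}\s W_{\cal G}^*\sigma_{\widehat\beta\alpha}$ is unitary between the appropriate relative tensor products, implements $\widehat\Gamma$ by conjugation, is compatible with $\alpha$ and $\widehat\beta$, and satisfies the pseudo-multiplicativity relation; by the uniqueness of the pseudo-multiplicative unitary attached to a measured quantum groupoid it must then coincide with $\s W_{\widehat{\cal G}}$. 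This is the groupoid counterpart of the identity $\widehat W=\Sigma W^*\Sigma$ for locally compact quantum groups and follows the same bookkeeping, the one delicate point being to keep straight which $\sigma$-flips relate which relative tensor products, where the identity $\widehat\beta(n^{\rm o})=J\alpha(n^*)J$ is used.
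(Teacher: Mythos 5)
The paper gives no proof of this statement: it is recalled verbatim from Enock (3.10 of \cite{E08}) as part of the background, and your outline faithfully reproduces the architecture of Enock's construction — dual algebra from the second leg of $\s W_{\cal G}$, coproduct by conjugation, coassociativity from the pentagon, the dual weight as the genuinely analytic step, and $\s W_{\widehat{\cal G}}=\sigma_{\beta\alpha}\s W_{\cal G}^*\sigma_{\widehat\beta\alpha}$ as the groupoid analogue of $\widehat W=\Sigma W^*\Sigma$. Your sketch is correct and takes the same route as the cited source; the hard verifications you defer (closability of $\Lambda_{\widehat\varphi}$, left-invariance of $\widehat T$) are exactly the ones the paper also defers to \cite{E08}.
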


We will denote by $\widehat{J}$ the modular conjugation for $\widehat\varphi$. Note that the scaling operator of $\widehat{\cal G}$ is $\lambda^{-1}$. In particular, we have $\lambda^{{\rm i}t}\in{\cal Z}(M)\cap{\cal Z}(\widehat{M})$ for all $t\in\GR$. 
\begin{itemize}
\item Let $\widehat\alpha(n):=J\beta(n^{\rm o})^*J=\widehat{J}\,\widehat\beta(n^{\rm o})^*\widehat{J}$ for $n\in N$. We recall the following relations (cf.\ 3.11 (v) \cite{E08}):
	$
	M\cap\widehat{M}=\alpha(N)$, $M\cap\widehat{M}'=\beta(N^{\rm o})$,
	$M'\cap\widehat{M}=\widehat\beta(N^{\rm o})$ and $M'\cap\widehat{M}'=\widehat\alpha(N)$.
\item Let $U:=\widehat{J}J\in\B(\s H)$\index[symbol]{u@$U$}. Then, $U^*=\lambda^{-i/4}U$ and $U^2=\lambda^{i/4}$ (cf.\ 3.11 (iv) \cite{E08}). In particular, $U$ is unitary. We have
$
\widehat\alpha(n)=U\alpha(n)U^*
$
and
$
\widehat\beta(n^{\rm o})=U\beta(n^{\rm o})U^*
$
for all $n\in N$. Since $\lambda^{-{\rm i}/4}\in{\cal Z}(M)$, we also have 
$
\widehat\alpha(n)=U^*\alpha(n)U
$
and
$
\widehat\beta(n^{\rm o})=U^*\beta(n^{\rm o})U
$
for all $n\in N$.
\end{itemize}

\begin{propdef}(cf.\ 3.12 \cite{E08})
\begin{itemize}
\item The octuple $(N^{\rm o},M,\beta,\alpha,\varsigma_{\beta\alpha}\circ\Gamma,R_{\cal G}\circ T\circ R_{\cal G},T,\nu^{\rm o})$ is a measured quantum groupoid denoted by $\cal G^{\rm o}$ and called the opposite of $\cal G$. The pseudo-multiplicative unitary of $\cal G^{\rm o}$ is given by $\s W_{\cal G^{\rm o}}=(_{\beta}\reltens{\widehat J}{\alpha}{\widehat\alpha}{\widehat J}_{\widehat\beta})\s W_{\cal G}(_{\beta}\reltens{\widehat J}{\alpha}{\alpha}{\widehat J}_{\beta})$.
\item  Let $C_M:M\rightarrow M'$ be the canonical *-antihomomorphism given by $C_M(x):=Jx^*J$ for all $x\in M$. Let us define:
	\[
	\Gamma^{\rm c}:=(\fprod{C_M}{\beta}{\alpha}{C_M})\circ\Gamma\circ C_M^{-1};\quad R_{\cal G}^{\rm c}:=C_M\circ R_{\cal G}\circ C_M^{-1};\quad T^{\rm c}=C_M\circ T\circ C_M^{-1}.
	\]
	Then, the octuple $(N^{\rm o},M',\widehat\beta,\widehat\alpha,\Gamma^{\rm c},T^{\rm c},R_{\cal G}^{\rm c}T^{\rm c}R_{\cal G}^{\rm c},\nu^{\rm o})$ is a measured quantum groupoid denoted by ${\cal G}^{\rm c}$ and called the commutant of $\cal G$. The pseudo-multiplicative unitary $\s W_{{\cal G}^{\rm c}}$ of ${\cal G}^{\rm c}$ is given by
	$
	\s W_{{\cal G}^{\rm c}}=(_{\widehat{\beta}}\reltens{J}{\alpha}{\alpha}{J}_{\widehat\beta})\s W_{\cal G}({}_{\beta}\reltens{J}{\widehat\alpha}{\alpha}{J}_{\widehat\beta}).
	$\qedhere
\end{itemize}
\end{propdef}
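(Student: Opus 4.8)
The statement is a recollection of 3.12 \cite{E08}; accordingly, the plan is to reduce the claims to that reference while indicating why each part holds. First I would verify that the octuple ${\cal G}^{\rm o}$ satisfies conditions (1)--(4) in the definition of a measured quantum groupoid. Condition (1) is symmetric in $\alpha$ and $\beta$, hence unchanged. Condition (2) is obtained by applying the flip $\varsigma_{\beta\alpha}$ to the two identities $\Gamma(\alpha(n))=\reltens{\alpha(n)}{\beta}{\alpha}{1}$ and $\Gamma(\beta(n^{\rm o}))=\reltens{1}{\beta}{\alpha}{\beta(n^{\rm o})}$ and using $\varsigma_{\beta\alpha}(\reltens{a}{\beta}{\alpha}{b})=\reltens{b}{\alpha}{\beta}{a}$. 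Coassociativity of $\varsigma_{\beta\alpha}\circ\Gamma$ follows from that of $\Gamma$ together with the standard compatibility of the flip maps with the threefold fiber product. Condition (4) holds because $\psi=\varphi\circ R_{\cal G}$ is precisely the right Haar weight attached to $T'=R_{\cal G}\circ T\circ R_{\cal G}$, so that passing to the opposite merely interchanges the pair $(\varphi,\psi)$ and replaces $\nu$ by $\nu^{\rm o}$, while the modular commutation $[\sigma^{\varphi}_t,\sigma^{\psi}_s]=0$ is symmetric; hence the invariance identities of condition (4) carry over verbatim.

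For the commutant ${\cal G}^{\rm c}$, the observation I would lean on is that $C_M$ is a normal *-anti-isomorphism of $M$ onto $M'$ which is moreover an involution, and that $\widehat\beta=C_M\circ\alpha$ and $\widehat\alpha=C_M\circ\beta$ up to the canonical identification of $N^{\rm o}$ with $N$ (immediate from $\widehat\beta(n^{\rm o})=J\alpha(n^*)J$ and $\widehat\alpha(n)=J\beta(n^{\rm o})^*J$). Thus ${\cal G}^{\rm c}$ is exactly the transport of ${\cal G}$ along $C_M$: a normal *-anti-isomorphism, together with the functoriality of the fiber product under such maps, carries the whole structure over, and this is precisely what produces the basis $N^{\rm o}$ and the passage from $\alpha,\beta$ to $\widehat\beta,\widehat\alpha$. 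So conditions (1)--(4) are preserved; the careful leg-by-leg bookkeeping is done in 3.12 \cite{E08}.

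Next I would identify the two pseudo-multiplicative unitaries. The tool is the characterization of $\s W_{\cal G}$ by its action on the relevant \GNS vectors, equivalently the relation $\Gamma(x)=\sigma_{\alpha\widehat\beta}\s W_{\cal G}(\reltens{x}{\beta}{\alpha}{1})\s W_{\cal G}^*\sigma_{\widehat\beta\alpha}$ used above to construct $\widehat{\cal G}$. For ${\cal G}^{\rm c}$ the \GNS data of $(M',\varphi\circ C_M^{-1})$ are realized on the same Hilbert space $\s H$ by twisting those of $(M,\varphi)$ with the modular conjugation $J$; substituting this into the defining relation for $\s W_{{\cal G}^{\rm c}}$ and comparing with $\s W_{\cal G}$ should produce $\s W_{\cal G}$ conjugated by the relative version of $J\tens J$, that is, the stated formula. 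For ${\cal G}^{\rm o}$, whose left Haar weight is $\psi$, one runs the analogous computation, now using the interrelations $J_{\psi}=\lambda^{{\rm i}/4}J$, $U=\widehat J J$, $U^2=\lambda^{{\rm i}/4}$ and $\widehat\alpha(n)=U\alpha(n)U^*$; the central factors $\lambda^{{\rm i}t}\in{\cal Z}(M)\cap{\cal Z}(\widehat M)$ get absorbed and one is left with $\s W_{\cal G}$ conjugated by the relative version of $\widehat J\tens\widehat J$.

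The main obstacle, as usual in this theory, is not a conceptual one but the combinatorics of the relative tensor products and fiber products — keeping track of which leg carries which of $\alpha,\beta,\widehat\alpha,\widehat\beta$ after the flips and the conjugations by $J$ and $\widehat J$, and checking the compatibility of $\varsigma$ with the threefold fiber product. I would not spell this out in detail and would refer throughout to 3.12 \cite{E08}.
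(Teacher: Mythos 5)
Your proposal is correct and is essentially the same approach as the paper's, which offers no proof at all for this statement but simply recalls it from Enock's work (Proposition 3.12 of \cite{E08}); your outline — verifying the axioms by symmetry/flip for the opposite and by transport along the *-anti-isomorphism $C_M$ for the commutant, then identifying the pseudo-multiplicative unitaries via the twisted \GNS data — is a faithful sketch of the arguments carried out in that reference. No gaps to report.
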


\begin{nbs}\label{pseudomult}
For a given measured quantum groupoid $\cal G$, we will need the following pseudo-multiplicative unitaries:\index[symbol]{va@$\widehat{\s V}$, $\s V$, $\widetilde{\s V}$}
 \[
 \widehat{\s V}  := {\s W}_{\cal G}; \quad   {\s V}  := {\s W}_{\widehat{({\cal G}^{\rm o})}} =   {\s W}_{(\widehat{\cal G})^{\rm c}}; \quad 
  \widetilde{\s V}  := {\s W}_{({\cal G}^{\rm o})^{\rm c}}.\qedhere
 \]
\end{nbs}

\begin{conv}\label{rkconv} 
Henceforth, we will refer to $(\widehat{\cal G})^{\rm c}$ instead of $\widehat{\cal G}$ as the dual of ${\cal G}$ since this groupoid is better suited for right actions of $\cal G$. We have
\[
(\widehat{\cal  G})^{\rm c} = (N^{\rm o}, \widehat M',  \beta, \widehat\alpha, \widehat\Gamma^{\rm c}, \widehat T^{\rm c}, \widehat T^{\rm c}{'}, \nu^{\rm o}),
\]
where the coproduct and the operator-valued weights are given by:
\begin{itemize}
	\item $\widehat\Gamma^{\rm c}(x) = ({\s W}_{(\widehat{\cal  G} )^{\rm c}})^* (\reltens{1}{\beta}{\alpha}{x}) {\s W}_{(\widehat{\cal  G} )^{\rm c}}$, for all $x\in \widehat M'$;
	\item $\widehat T^{\rm c}  = C_{\widehat M} \circ \widehat T \circ C_{\widehat M}^{-1}$, where 
$C_{\widehat M}  : \widehat M  \rightarrow \widehat M'$ ; $x \mapsto \widehat J x^* \widehat J$;
	\item $\widehat T^{\rm c}{'}  = R_{(\widehat{\cal  G} )^{\rm c}} \circ  \widehat T^{\rm c} \circ R_{(\widehat{\cal  G} )^{\rm c}}$.
\end{itemize}
Note also that the commutant weight  $\widehat{\varphi}^{\rm c} := \nu^{\rm o} \circ \beta^{-1} \circ \widehat T^{\rm c}$ derived from the weight $\widehat\varphi$ is left invariant for the coproduct $\widehat\Gamma^{\rm c}$.
In the following, we will simply denote by $\widehat{\cal G}$ the dual groupoid of $\cal G$ (since no ambiguity will arise with the Pontryagin dual). Note that the bidual groupoid is $({\cal G}^{\rm o})^{\rm c}=({\cal G}^{\rm c})^{\rm o}$. 
\end{conv} 

		\subsection{Case where the basis is finite-dimensional}\label{MQGfinitebasis}
	
\setcounter{thm}{0}

\numberwithin{thm}{subsection}
\numberwithin{prop}{subsection}
\numberwithin{lem}{subsection}
\numberwithin{cor}{subsection}
\numberwithin{propdef}{subsection}
\numberwithin{nb}{subsection}
\numberwithin{nbs}{subsection}
\numberwithin{rk}{subsection}
\numberwithin{rks}{subsection}
\numberwithin{defin}{subsection}
\numberwithin{ex}{subsection}
\numberwithin{exs}{subsection}
\numberwithin{noh}{subsection}
\numberwithin{conv}{subsection}
	
In \cite{DC}, De Commer provides an equivalent definition of a measured quantum groupoid on a finite basis. This definition is far more tractable since it allows us to circumvent the use of relative tensor products and fiber products.

\medskip
 
In the following, we fix a finite-dimensional C*-algebra
$
N:=\bigoplus_{1\leqslant l\leqslant k}{\rm M}_{n_l}(\GC)
$
endowed with the non-normalized Markov trace $\epsilon:=\bigoplus_{1\leqslant l\leqslant k}n_l\cdot{\rm Tr}_l$, where ${\rm Tr}_l$ denotes the non-normalized trace on ${\rm M}_{n_l}(\GC)$.\index[symbol]{ta@${\rm Tr}_l$, non-normalized Markov trace on ${\rm M}_{n_l}(\GC)$}

\medskip

We refer to \S \ref{tensorproduct} of the appendix for the definitions of $v_{\beta\alpha}$ and $q_{\beta\alpha}$.
Let us a fix a measured quantum groupoid ${\cal G}=(N,M,\alpha,\beta,\Gamma,T,T',\epsilon)$. We have a unital normal *-isomorphism
$
\fprod{M}{\beta}{\alpha}{M} \rightarrow q_{\beta\alpha}(M\tens M)q_{\beta\alpha}\,;\,
x \mapsto v_{\beta\alpha}^*xv_{\beta\alpha}
$
(cf.\ \ref{propcoiso}). Let $\Delta:M\rightarrow M\tens M$ be the (non necessarily unital) faithful normal *-homomor\-phism given by
$
\Delta(x)=v_{\beta\alpha}^*\Gamma(x)v_{\beta\alpha}
$
for all $x\in M$. We have $\Delta(1)=q_{\beta\alpha}$. This has led De Commer to the following equivalent definition of a measured quantum groupoid on a finite basis.

\begin{defin}\label{defMQG}(cf.\ 11.1.2 \cite{DC})
A measured quantum groupoid on the finite-dimen\-sional basis 
$
N
$
is an octuple ${\cal G}=(N,M,\alpha,\beta,\Delta,T,T',\epsilon)$, where:
\begin{itemize}
	\item $M$ is a von Neumann algebra, $\alpha:N\rightarrow M$ and $\beta:N^{\rm o}\rightarrow M$ are unital faithful normal *-homomorphisms;
	\item $\Delta:M\rightarrow M\tens M$ is a faithful normal *-homomorphism;
	\item $T:M_+\rightarrow\alpha(N)_+^{\ext}$ and $T':M_+\rightarrow\beta(N^{\rm o})_+^{\ext}$ are \nsf operator-valued weights;
\end{itemize}
such that the following conditions are satisfied:
\begin{enumerate}
	\item $[\alpha(n'),\,\beta(n^{\rm o})]=0$, for all $n,n'\in N$;
	\item $\Delta(1)=q_{\beta\alpha}$;
	\item $(\Delta\tens\id)\Delta=(\id\tens\Delta)\Delta$;
	\item $\Delta(\alpha(n))=\Delta(1)(\alpha(n)\tens 1)$ and $\Delta(\beta(n^{\rm o}))=\Delta(1)(1\tens\beta(n^{\rm o}))$, for all $n\in N$;
	\item the \nsf weights $\varphi$ and $\psi$ on $M$ given by $\varphi:=\epsilon\circ\alpha^{-1}\circ T$ and $\psi:=\epsilon\circ\beta^{-1}\circ T'$ satisfy:
		\[
		T(x)=(\id\tens\varphi)\Delta(x)\quad\text{for all } x\in\f M_T^+,\quad T'(x)=(\psi\tens\id)\Delta(x),\quad\text{for all } x\in\f M_{T'}^+;
		\]
	\item $\sigma_t^T\circ\beta=\beta$ and $\sigma_t^{T'}\circ\alpha=\alpha$, for all $t\in\GR$.\qedhere
\end{enumerate}
\end{defin}		

Let us fix a measured quantum groupoid ${\cal G}=(N,M,\alpha,\beta,\Delta,T,T',\epsilon)$.

\begin{nbs}
Let us consider the injective bounded linear map 
\[
\iota_{\,\widehat\alpha\alpha}^{\beta}:\B(\reltens{\s H}{\widehat\alpha}{\beta}{\s H},\reltens{\s H}{\beta}{\alpha}{\s H})  \rightarrow\B(\s H\tens\s H)\quad ;\quad
X  \mapsto v_{\beta\alpha}^*Xv_{\widehat\alpha\beta}.
\]
Similarly, we also define $\iota_{\!\beta\widehat\beta}^{\alpha}$ and $\iota_{\widehat\beta\beta}^{\widehat\alpha}$. Let\index[symbol]{vb@$V$, $W$, $\widetilde{V}$, multiplicative partial isometries}
\[
V:=\iota_{\,\widehat\alpha\alpha}^{\beta}(\s V),\quad W:=\iota_{\!\beta\widehat\beta}^{\alpha}(\widehat{\s V})\quad \text{and} \quad
\widetilde{V}:=\iota_{\widehat\beta\beta}^{\widehat\alpha}(\widetilde{\s V}),
\]
where $\s V=\s W_{\widehat{\cal G}}$, $\widehat{\s V}=\s  W_{\cal G}$ and $\widetilde{\s V}=\s W_{({\cal G}^{\rm o})^{\rm c}}$ (cf.\ \ref{pseudomult}).
\end{nbs}

In what follows, we recall the main properties satisfied by $V$, $W$ and $\widetilde{V}$. The proof of the results below are derived from the properties satisfied by the pseudo-multiplicative unitaries $\s V$, $\widehat{\s V}$ and $\widetilde{\s V}$ (cf.\ \cite{E08}, \S 11 \cite{DC} and \S 2 \cite{BC}).

\begin{prop}\label{inifinproj}(cf.\ 3.11 (iii), 3.12 (v), (vi) \cite{E08}, 2.2 \cite{BC}) 
The operators $V,W$ and $\widetilde{V}$ are multiplicative partial isometries acting on $\s H\tens\s H$ such that:
\begin{enumerate}
\item $W = \Sigma (U \otimes 1) V (U^* \otimes 1)\Sigma$,\quad $\widetilde V = \Sigma (1 \otimes U) V (1 \otimes U^*)\Sigma=(U\tens U)W(U^*\tens U^*)$;
\item $V^* = (J \otimes \widehat J) V (J \otimes \widehat J)$,\quad  $W^* = (\widehat J \otimes J) W (\widehat J \otimes J)$;
\item the initial and final projections are given by
\[
V^* V =q_{\widehat\alpha\beta}=\widetilde V \widetilde V^*,\quad W^*W  =q_{\beta\alpha}= V V^*,\quad W W^* = q_{\alpha\widehat\beta} \quad \text{and} \quad \widetilde V^*\widetilde V  = q_{\widehat\beta \widehat\alpha}.\qedhere
\]
\end{enumerate}
\end{prop}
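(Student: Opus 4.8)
The plan is to deduce every assertion from the corresponding well-known facts about the pseudo-multiplicative unitaries $\s V=\s W_{(\widehat{\cal G})^{\rm c}}$, $\widehat{\s V}=\s W_{\cal G}$ and $\widetilde{\s V}=\s W_{({\cal G}^{\rm o})^{\rm c}}$ (see \cite{E08}, \S 11 \cite{DC}, \S 2 \cite{BC}) by transferring them through the maps $\iota_{\,\widehat\alpha\alpha}^{\beta}$, $\iota_{\!\beta\widehat\beta}^{\alpha}$, $\iota_{\widehat\beta\beta}^{\widehat\alpha}$. The key point is that each transfer map is a double conjugation $X\mapsto v^{*}X v'$ by two of the partial isometries $v_{\beta\alpha},v_{\widehat\alpha\beta},v_{\alpha\widehat\beta},v_{\widehat\beta\widehat\alpha}$, and that the pseudo-multiplicative unitaries are genuine unitaries between the relevant relative tensor products: $\s V\colon\reltens{\s H}{\widehat\alpha}{\beta}{\s H}\to\reltens{\s H}{\beta}{\alpha}{\s H}$, $\widehat{\s V}\colon\reltens{\s H}{\beta}{\alpha}{\s H}\to\reltens{\s H}{\alpha}{\widehat\beta}{\s H}$, $\widetilde{\s V}\colon\reltens{\s H}{\widehat\beta}{\widehat\alpha}{\s H}\to\reltens{\s H}{\widehat\alpha}{\beta}{\s H}$. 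The only properties of the $v$'s I need are $v^{*}v=q$ (the appropriate projection) and $vv^{*}=1$ (the identity of the target relative tensor product), together with their intertwining relations with $\Sigma$ and with the antiunitaries $J,\widehat J$.

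Part (3) is then immediate. From $V=v_{\beta\alpha}^{*}\s V v_{\widehat\alpha\beta}$ one gets $V^{*}V=v_{\widehat\alpha\beta}^{*}\s V^{*}(v_{\beta\alpha}v_{\beta\alpha}^{*})\s V v_{\widehat\alpha\beta}=v_{\widehat\alpha\beta}^{*}v_{\widehat\alpha\beta}=q_{\widehat\alpha\beta}$ and, symmetrically, $VV^{*}=q_{\beta\alpha}$; the same one-line computation applied to $W=v_{\alpha\widehat\beta}^{*}\widehat{\s V}v_{\beta\alpha}$ and $\widetilde V=v_{\widehat\alpha\beta}^{*}\widetilde{\s V}v_{\widehat\beta\widehat\alpha}$ gives $W^{*}W=q_{\beta\alpha}$, $WW^{*}=q_{\alpha\widehat\beta}$, $\widetilde V^{*}\widetilde V=q_{\widehat\beta\widehat\alpha}$, $\widetilde V\widetilde V^{*}=q_{\widehat\alpha\beta}$, and matching the repeated projections yields the displayed chains $V^{*}V=q_{\widehat\alpha\beta}=\widetilde V\widetilde V^{*}$ and $W^{*}W=q_{\beta\alpha}=VV^{*}$. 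That $V,W,\widetilde V$ are multiplicative partial isometries is obtained the same way, by pushing the pentagon relations for $\s V,\widehat{\s V},\widetilde{\s V}$ through the $\iota$-maps.

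For (1) and (2) I would start from the three relations among pseudo-multiplicative unitaries already recalled: the dual formula $\s W_{\widehat{\cal G}}=\sigma_{\beta\alpha}\s W_{\cal G}^{*}\sigma_{\widehat\beta\alpha}$, the opposite formula $\s W_{{\cal G}^{\rm o}}=(_{\beta}\reltens{\widehat J}{\alpha}{\widehat\alpha}{\widehat J}_{\widehat\beta})\s W_{\cal G}(_{\beta}\reltens{\widehat J}{\alpha}{\alpha}{\widehat J}_{\beta})$ and the commutant formula $\s W_{{\cal G}^{\rm c}}=(_{\widehat\beta}\reltens{J}{\alpha}{\alpha}{J}_{\widehat\beta})\s W_{\cal G}(_{\beta}\reltens{J}{\widehat\alpha}{\alpha}{J}_{\widehat\beta})$. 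Composing the opposite and commutant constructions expresses $\widetilde{\s V}=\s W_{({\cal G}^{\rm o})^{\rm c}}$, while the dual and commutant formulas express $\s V=\s W_{(\widehat{\cal G})^{\rm c}}$, both in terms of $\widehat{\s V}=\s W_{\cal G}$ as conjugations by relative flips and by $J$'s and $\widehat J$'s. It then remains to push these through the $\iota$-maps, using that (i) conjugation by the $v$'s transfers the relative flip to the ordinary flip, $v_{\alpha\beta}^{*}\sigma_{\beta\alpha}v_{\beta\alpha}=\Sigma q_{\beta\alpha}=q_{\alpha\beta}\Sigma$, and (ii) it transfers the relative antiunitary conjugations by $\widehat J$ and by $J$ into conjugations by $\widehat J\tens J$, $J\tens\widehat J$ and, after composition, $U\tens U$ on $\s H\tens\s H$ — here one uses $U=\widehat J J$, $\widehat\alpha={\rm Ad}_{U}\circ\alpha$, $\widehat\beta={\rm Ad}_{U}\circ\beta$, $U^{*}=\lambda^{-{\rm i}/4}U$, $U^{2}=\lambda^{{\rm i}/4}\in{\cal Z}(M)\cap{\cal Z}(\widehat M)$, and the intertwiners between the $v$'s and these antiunitaries. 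This produces $W=\Sigma(U\tens 1)V(U^{*}\tens 1)\Sigma$, $\widetilde V=\Sigma(1\tens U)V(1\tens U^{*})\Sigma$ and the two identities of (2); the remaining formula $\widetilde V=(U\tens U)W(U^{*}\tens U^{*})$ then follows by substituting the first formula of (1) into the second and absorbing the central factor $\lambda^{{\rm i}/4}\tens 1$, which commutes with $(1\tens U)V(1\tens U^{*})$ since $\lambda^{{\rm i}t}\in{\cal Z}(M)\cap{\cal Z}(\widehat M)$.

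The algebraic manipulations are routine once this dictionary is set up; the real work, and the main obstacle, is assembling the dictionary itself: keeping track of precisely which relative tensor product each $\s W_{(\cdot)}$ connects as the duality, opposite and commutant constructions are applied and composed, and verifying the intertwining relations of $v_{\beta\alpha}$ and its siblings with $\Sigma$, $J$, $\widehat J$, $U$, so that the conjugations by $J$'s and $\widehat J$'s on the relative tensor products turn into exactly the conjugations by $J\tens\widehat J$, $\widehat J\tens J$ and $U\tens U$ that appear in (1) and (2).
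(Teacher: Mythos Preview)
Your proposal is correct and follows exactly the approach the paper indicates: the paper does not supply a detailed proof but states that ``the proof of the results below are derived from the properties satisfied by the pseudo-multiplicative unitaries $\s V$, $\widehat{\s V}$ and $\widetilde{\s V}$'' with references to \cite{E08}, \S 11 \cite{DC} and \S 2 \cite{BC}. Your plan of transferring the unitarity, the pentagon relations, and the duality/opposite/commutant formulas for $\s W_{(\cdot)}$ through the maps $\iota_{\,\widehat\alpha\alpha}^{\beta}$, $\iota_{\!\beta\widehat\beta}^{\alpha}$, $\iota_{\widehat\beta\beta}^{\widehat\alpha}$ using $v_{\gamma\pi}v_{\gamma\pi}^{*}=1$ and $v_{\gamma\pi}^{*}v_{\gamma\pi}=q_{\gamma\pi}$ is precisely this derivation made explicit.
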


\begin{prop}(cf.\ 3.8, 3.12 \cite{E08})
\begin{enumerate}
\item The von Neumann algebra $M$ {\rm(}resp.\ $\widehat{M}${\rm)} is the weak closure of
$\{(\id\tens\omega)(W)\,;\,\omega\in\B(\s H)_*\}$ {\rm(}resp.\ $\{(\omega\tens\id)(W)\,;\,\omega\in\B(\s H)_*\}${\rm)}.
\item We have
$
W\in M\tens\widehat{M}
$, 
$
V\in\widehat{M}'\tens M,
$
and
$
\widetilde{V}\in M'\tens\widehat{M}'.
$
In particular, we have the commutation relations $[W_{12},\, V_{23}] =  0$ and $[V_{12},\, \widetilde V_{23}] =  0$.
\item The coproduct $\Delta:M\rightarrow M\tens M$ of $\cal G$ (resp.\ $\widehat{\Delta}:\widehat{M}'\rightarrow\widehat{M}'\tens \widehat{M}'$ of $\widehat{\cal G}$) satisfies 
\begin{align*}
\Delta(x) &= V (x \otimes 1) V^* = W^* (1 \otimes x) W,\quad \text{for all } x\in M \\
\text{{\rm(}resp.\ } \widehat{\Delta}(x) &=V^* (1\otimes x) V = \widetilde V (x \otimes 1) \widetilde V^*,\quad \text{for all } x\in\widehat{M}'\text{{\rm)}}.\qedhere
\end{align*}
\end{enumerate}
\end{prop}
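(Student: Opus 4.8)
The plan is to obtain the three assertions by transporting the corresponding statements for the pseudo-multiplicative unitary $\s W_{\cal G}$ (and for $\s V=\s W_{\widehat{\cal G}}$, $\widetilde{\s V}=\s W_{({\cal G}^{\rm o})^{\rm c}}$) established in \cite{E08} (3.8, 3.10, 3.12) along the $*$-isomorphisms $\fprod{M}{\beta}{\alpha}{M}\cong q_{\beta\alpha}(M\tens M)q_{\beta\alpha}$ and their $\widehat\alpha$-, $\widehat\beta$-variants. Recall from the definitions that $W=\iota_{\!\beta\widehat\beta}^{\alpha}(\s W_{\cal G})=v_{\alpha\widehat\beta}^*\,\s W_{\cal G}\,v_{\beta\alpha}$ and $V=\iota_{\,\widehat\alpha\alpha}^{\beta}(\s V)=v_{\beta\alpha}^*\,\s V\,v_{\widehat\alpha\beta}$, and that by \ref{inifinproj} the operators $W$, $V$, $\widetilde V$ are partial isometries with $W^*W=q_{\beta\alpha}$, $WW^*=q_{\alpha\widehat\beta}$, $V^*V=q_{\widehat\alpha\beta}$, $VV^*=q_{\beta\alpha}$; I shall also use freely that $\widehat\beta(N^{\rm o})\subset M'$ and $\widehat\alpha(N)\subset M'$ (from $M'\cap\widehat M=\widehat\beta(N^{\rm o})$ and $M'\cap\widehat M'=\widehat\alpha(N)$), so that $\reltens{1}{\alpha}{\widehat\beta}{x}$ and $\reltens{x}{\widehat\alpha}{\beta}{1}$ make sense for every $x\in M$. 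For assertion (1): by 3.8 of \cite{E08} the algebra $M$ is the weak closure of the set of left slices of $\s W_{\cal G}$, while by the very definition of the dual (3.10 of \cite{E08}) the algebra $\widehat M$ is generated by its right slices; using the computations of \S 11 of \cite{DC} and \S 2 of \cite{BC}, which express a slice of $W$ over one leg as a slice of $\s W_{\cal G}$ over the corresponding leg up to precomposition with conjugation by the (finitely supported) operators $v_{\cdot\cdot}$, one concludes that $\langle(\id\tens\omega)(W)\,;\,\omega\in\B(\s H)_*\rangle$ and $\langle(\omega\tens\id)(W)\,;\,\omega\in\B(\s H)_*\rangle$ are weakly dense $*$-subalgebras of $M$ and of $\widehat M$ respectively.

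For assertion (3), I would start from the formulas $\Gamma(x)=\s W_{\cal G}^*(\reltens{1}{\alpha}{\widehat\beta}{x})\s W_{\cal G}=\s V(\reltens{x}{\widehat\alpha}{\beta}{1})\s V^*$, valid for $x\in M$ (3.8 and 3.12 of \cite{E08}). Conjugating by $v_{\beta\alpha}$, using $W=v_{\alpha\widehat\beta}^*\s W_{\cal G}v_{\beta\alpha}$, $V=v_{\beta\alpha}^*\s V v_{\widehat\alpha\beta}$ and the intertwining relations $v_{\alpha\widehat\beta}(1\tens x)v_{\alpha\widehat\beta}^*=\reltens{1}{\alpha}{\widehat\beta}{x}$, $v_{\widehat\alpha\beta}(x\tens 1)v_{\widehat\alpha\beta}^*=\reltens{x}{\widehat\alpha}{\beta}{1}$ (cf.\ \S\ref{tensorproduct}), one gets
\[
\Delta(x)=v_{\beta\alpha}^*\,\Gamma(x)\,v_{\beta\alpha}=W^*(1\tens x)W=V(x\tens 1)V^*,\qquad x\in M,
\]
the corner projections entering along the way being exactly those prescribed by the initial and final projections of $W$ and $V$ recorded in \ref{inifinproj}. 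The two formulas for $\widehat\Delta$ follow in the same way applied to $\widehat{\cal G}$: the first, $\widehat\Delta(x)=V^*(1\tens x)V$, is the identity $\widehat\Gamma^{\rm c}(x)=\s V^*(\reltens{1}{\beta}{\alpha}{x})\s V$ of \ref{rkconv} transported through $v_{\widehat\alpha\beta}$ (note $\widehat\Delta(1)=V^*V=q_{\widehat\alpha\beta}$), and the second, $\widehat\Delta(x)=\widetilde V(x\tens 1)\widetilde V^*$, is the left coproduct formula for $\widehat{\cal G}$ expressed through the pseudo-multiplicative unitary $\widetilde{\s V}=\s W_{({\cal G}^{\rm o})^{\rm c}}$ of its dual, transported likewise.

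For assertion (2): from (1), $(\id\tens\omega)(W)\in M$ and $(\omega\tens\id)(W)\in\widehat M$ for every $\omega\in\B(\s H)_*$, and since $M\tens\widehat M=(M\tens\B(\s H))\cap(\B(\s H)\tens\widehat M)$ by the commutation theorem (the left slices of an element of $M\tens\B(\s H)$ lie in $M$, and dually), this forces $W\in M\tens\widehat M$. Next one records that $UMU^*=M'$ and $U\widehat M U^*=\widehat M'$: as $U=\widehat J J$, the relations $JMJ=M'$, $\widehat J\widehat M\widehat J=\widehat M'$ are Tomita--Takesaki, while $\widehat J M\widehat J=M$ and $J\widehat M J=\widehat M$ are the formulas $R_{\cal G}(\cdot)=\widehat J(\cdot)^*\widehat J$, $R_{\widehat{\cal G}}(\cdot)=J(\cdot)^*J$ for the unitary coinverses (equivalently, they follow from (1) and $W^*=(\widehat J\tens J)W(\widehat J\tens J)$ of \ref{inifinproj}). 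Feeding these into $V=(U^*\tens 1)\Sigma W\Sigma(U\tens 1)$ and $\widetilde V=(U\tens U)W(U^*\tens U^*)$ (both from \ref{inifinproj}) yields $V\in\widehat M'\tens M$ and $\widetilde V\in M'\tens\widehat M'$. Finally $[W_{12},V_{23}]=0$ because $W_{12}\in M\tens\widehat M\tens\GC$ and $V_{23}\in\GC\tens\widehat M'\tens M$ have a common nontrivial leg only in position $2$, where $\widehat M$ and $\widehat M'$ commute; and $[V_{12},\widetilde V_{23}]=0$ because $V_{12}\in\widehat M'\tens M\tens\GC$ and $\widetilde V_{23}\in\GC\tens M'\tens\widehat M'$ have a common nontrivial leg only in position $2$, where $M$ and $M'$ commute.

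The main obstacle is the index bookkeeping underlying assertions (1) and (3): matching the legs of the relative tensor products $\reltens{\s H}{\beta}{\alpha}{\s H}$, $\reltens{\s H}{\alpha}{\widehat\beta}{\s H}$, $\reltens{\s H}{\widehat\alpha}{\beta}{\s H}$ with the corners $q_{\cdot\cdot}(\s H\tens\s H)$, keeping track of which $v_{\cdot\cdot}$ enters at each step, and checking that the corner projections produced by the partial isometries $W$, $V$, $\widetilde V$ are exactly the ones needed for the coproduct identities to be preserved under the isomorphisms; once the precise shape of the formulas of \cite{E08} is fixed, the remaining verifications are routine.
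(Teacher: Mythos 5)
Your argument is correct and is precisely the route the paper itself indicates (it gives no written proof, only the citation of 3.8, 3.10, 3.12 of \cite{E08} together with the remark that the properties of $V$, $W$, $\widetilde{V}$ are derived from those of the pseudo-multiplicative unitaries via the coisometries $v_{\cdot\cdot}$ and the projections $q_{\cdot\cdot}$). Your transport of the slice descriptions, of the coproduct formulas through the intertwining relations $v_{\alpha\widehat\beta}(1\tens x)v_{\alpha\widehat\beta}^*=\reltens{1}{\alpha}{\widehat\beta}{x}$, and the deduction of $V\in\widehat{M}'\tens M$, $\widetilde{V}\in M'\tens\widehat{M}'$ from $W\in M\tens\widehat{M}$ together with ${\rm Ad}_U(M)=M'$, ${\rm Ad}_U(\widehat{M})=\widehat{M}'$ all check out.
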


\begin{prop}\label{prop34}(cf.\ 3.2.\ (i), 3.6.\ (ii) \cite{E08} and 11.1.2 \cite{DC})
For all $n\in N$, we have:
\begin{enumerate}
	\item $[V, \alpha(n) \otimes 1] =0$,\quad $[V, \widehat \beta(n^{\rm o}) \otimes 1] = 0$,\quad   $[V, 1 \otimes \widehat\alpha(n)] = 0$,\quad $[V,  1 \otimes \widehat \beta(n^{\rm o})] = 0$;
	\item $V(1 \otimes \alpha(n)) =  (\widehat\alpha(n) \otimes 1) V$,\quad $V(\beta(n^{\rm o}) \otimes 1) =  (1 \otimes \beta(n^{\rm o}))V$;
	\item $[W, \widehat \beta(n^{\rm o}) \otimes 1] = 0$,\quad $[W, \widehat\alpha(n) \otimes 1] = 0$,\quad $[W, 1 \otimes \beta(n^{\rm o})]= 0$,\quad $[W, 1 \otimes \widehat\alpha(n)] = 0$;
	\item $W(1 \otimes \widehat \beta(n^{\rm o})) =  (\beta(n^{\rm o}) \otimes 1) W$,\quad $W(\alpha(n) \otimes 1) =  (1 \otimes \alpha(n))W$;
	\item $[\widetilde V, \alpha(n) \otimes 1] = 0$,\quad $[\widetilde V, \beta(n^{\rm o}) \otimes 1] = 0$,\quad $[\widetilde V, 1 \otimes \alpha(n)] = 0$,\quad $[\widetilde V, 1 \otimes \widehat \beta(n^{\rm o})] =  0$;
	\item $\widetilde V(1 \otimes \beta(n^{\rm o})) =  (\widehat \beta(n^{\rm o}) \otimes 1) \widetilde V$,\quad $\widetilde V(\widehat\alpha(n) \otimes 1) =  (1 \otimes \widehat\alpha(n))\widetilde V$.\qedhere
\end{enumerate}
\end{prop}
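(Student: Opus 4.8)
The plan is to prove the statements for $V$ (items~1 and~2) directly from the structure of the pseudo-multiplicative unitary, and then to deduce the statements for $W$ and $\widetilde V$ (items~3--6) from them by conjugating with the unitary $U$. For $V$, the six identities are precisely the intertwining properties of the pseudo-multiplicative unitary $\s V={\s W}_{(\widehat{\cal G})^{\rm c}}$ (cf.\ 3.2~(i), 3.6~(ii) \cite{E08}, 3.3.4 \cite{Val}), read off the relative tensor product picture via the defining formula $V=\iota_{\,\widehat\alpha\alpha}^{\beta}(\s V)=v_{\beta\alpha}^{*}\s V v_{\widehat\alpha\beta}$. The isometries $v_{\beta\alpha}$ and $v_{\widehat\alpha\beta}$ are, by construction, compatible with the $N$- and $N^{\rm o}$-module structures implemented on the various legs by $\alpha,\beta,\widehat\alpha,\widehat\beta$, so that each intertwining relation of $\s V$ between relative tensor products of copies of $\s H$ transports to exactly one of the four commutation relations of item~1 or one of the two mixed relations of item~2, the partial-isometry defect being absorbed by $V^{*}V=q_{\widehat\alpha\beta}$ and $VV^{*}=q_{\beta\alpha}$ (cf.\ \ref{inifinproj}). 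Keeping track of Convention~\ref{rkconv} and of 3.10--3.12 \cite{E08}, the range and source maps of the groupoid $(\widehat{\cal G})^{\rm c}$ underlying $\s V$ are $\widehat\alpha$ and $\beta$, which is what puts $\widehat\alpha$ on the right leg and $\beta$ on the left leg of $V$ in the statement. (Alternatively, items~1--2 can be extracted from the explicit formulas of \S 11 \cite{DC} and \S 2 \cite{BC} relating $V$, $W$, $\widetilde V$ to the pseudo-multiplicative unitaries; this is the route indicated by the references attached to the statement.)

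Granting items~1--2, the relations for $W$ follow formally from $W=\Sigma(U\otimes 1)V(U^{*}\otimes 1)\Sigma$ (cf.\ \ref{inifinproj}.1) together with the conjugation rules $U\alpha(n)U^{*}=\widehat\alpha(n)$, $U\beta(n^{\rm o})U^{*}=\widehat\beta(n^{\rm o})$, $U\widehat\alpha(n)U^{*}=\alpha(n)$ and $U\widehat\beta(n^{\rm o})U^{*}=\beta(n^{\rm o})$, all of which are consequences of $\widehat\alpha(n)=U\alpha(n)U^{*}=U^{*}\alpha(n)U$, $\widehat\beta(n^{\rm o})=U\beta(n^{\rm o})U^{*}=U^{*}\beta(n^{\rm o})U$ and $U^{2}=\lambda^{{\rm i}/4}\in{\cal Z}(M)\cap{\cal Z}(\widehat M)$. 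For instance, conjugating $[V,1\otimes\widehat\beta(n^{\rm o})]=0$ gives $[W,\widehat\beta(n^{\rm o})\otimes 1]=0$ and conjugating $[V,1\otimes\widehat\alpha(n)]=0$ gives $[W,\widehat\alpha(n)\otimes 1]=0$, while $[V,\widehat\beta(n^{\rm o})\otimes 1]=0$ and $[V,\alpha(n)\otimes 1]=0$ yield $[W,1\otimes\beta(n^{\rm o})]=0$ and $[W,1\otimes\widehat\alpha(n)]=0$ respectively (one only needs to move the factors $U\otimes 1$ and $U^{*}\otimes 1$ past the relevant $\alpha,\beta,\widehat\alpha,\widehat\beta$ using the rules above). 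Similarly the two mixed relations of item~2 transform into those of item~4, e.g.\ $V(\beta(n^{\rm o})\otimes 1)=(1\otimes\beta(n^{\rm o}))V$ becomes $W(1\otimes\widehat\beta(n^{\rm o}))=(\beta(n^{\rm o})\otimes 1)W$, and $V(1\otimes\alpha(n))=(\widehat\alpha(n)\otimes 1)V$ becomes $W(\alpha(n)\otimes 1)=(1\otimes\alpha(n))W$.

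The relations for $\widetilde V$ (items~5--6) are obtained from those for $W$ by the same mechanism, using $\widetilde V=(U\otimes U)W(U^{*}\otimes U^{*})$ (cf.\ \ref{inifinproj}.1) and the same conjugation rules: for example $[W,\widehat\alpha(n)\otimes 1]=0$ and $[W,1\otimes\widehat\alpha(n)]=0$ become $[\widetilde V,\alpha(n)\otimes 1]=0$ and $[\widetilde V,1\otimes\alpha(n)]=0$, $[W,\widehat\beta(n^{\rm o})\otimes 1]=0$ becomes $[\widetilde V,\beta(n^{\rm o})\otimes 1]=0$, and $W(1\otimes\widehat\beta(n^{\rm o}))=(\beta(n^{\rm o})\otimes 1)W$ becomes $\widetilde V(1\otimes\beta(n^{\rm o}))=(\widehat\beta(n^{\rm o})\otimes 1)\widetilde V$, and so on for the remaining entries.

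The only genuinely delicate point is the first paragraph: setting up the dictionary between the relative tensor product picture for ${\s W}_{(\widehat{\cal G})^{\rm c}}$ (where one must correctly identify which leg carries which of $\alpha,\beta,\widehat\alpha,\widehat\beta$ after passing to the dual, the opposite and the commutant) and the ordinary tensor product picture for $V$. Once that bookkeeping is in place, the transport to $V$, and then the mixed relations of item~2 as well as all of the $W$- and $\widetilde V$-relations, are routine conjugation computations.
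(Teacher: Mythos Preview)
Your proposal is correct and essentially follows the route the paper itself indicates: the paper does not spell out a proof but simply records before this block of results that ``the proof of the results below are derived from the properties satisfied by the pseudo-multiplicative unitaries $\s V$, $\widehat{\s V}$ and $\widetilde{\s V}$'' and points to \cite{E08}, \S 11 \cite{DC} and \S 2 \cite{BC}. Your treatment of items~1--2 via $V=v_{\beta\alpha}^{*}\s V v_{\widehat\alpha\beta}$ is exactly this derivation, and your conjugation shortcut for items~3--6 (using $W=\Sigma(U\otimes 1)V(U^{*}\otimes 1)\Sigma$ and $\widetilde V=(U\otimes U)W(U^{*}\otimes U^{*})$ together with $U\alpha U^{*}=\widehat\alpha$, $U\beta U^{*}=\widehat\beta$) is a clean and correct alternative to redoing the same transport separately for $\widehat{\s V}$ and $\widetilde{\s V}$.
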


\begin{prop}\label{NonComRel}(cf.\ 11.1.4 \cite{DC})
For all $n\in N$, we have:
\begin{enumerate}
	\item $W(\beta(n^{\rm o})\tens 1)=W(1\tens\alpha(n))$,\quad $(1\tens\widehat{\beta}(n^{\rm o}))W=(\alpha(n)\tens 1)W$;
	\item $V(1\tens\beta(n^{\rm o}))=V(\widehat{\alpha}(n)\tens 1)$,\quad $(1\tens\alpha(n))V=(\beta(n^{\rm o})\tens 1)V$;
	\item $\widetilde{V}(\widehat{\beta}(n^{\rm o})\tens 1)=\widetilde{V}(1\tens\widehat{\alpha}(n))$,\quad $(1\tens\beta(n^{\rm o}))\widetilde{V}=(\widehat{\alpha}(n)\tens 1)\widetilde{V}$.\qedhere
\end{enumerate}
\end{prop}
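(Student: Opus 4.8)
The plan is to deduce the six identities from just two facts, along the lines of 11.1.4 \cite{DC}. The first is the description of the initial and final projections of the multiplicative partial isometries $V$, $W$, $\widetilde V$ given in \ref{inifinproj} (3): since $W^*W=q_{\beta\alpha}$ and $WW^*=q_{\alpha\widehat\beta}$, one has $W=WW^*W=Wq_{\beta\alpha}=q_{\alpha\widehat\beta}W$, and likewise $V=Vq_{\widehat\alpha\beta}=q_{\beta\alpha}V$ and $\widetilde V=\widetilde V q_{\widehat\beta\widehat\alpha}=q_{\widehat\alpha\beta}\widetilde V$. The second is the balancing property of the isometries $v_{\gamma\delta}$ recalled in \S\ref{tensorproduct}: in each relevant case the $N$-action carried by the first leg coincides, after composition with $v_{\gamma\delta}$, with the one carried by the second leg. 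For the pair $(\beta,\alpha)$ this reads $v_{\beta\alpha}(\beta(n^{\rm o})\tens 1)=v_{\beta\alpha}(1\tens\alpha(n))$, and multiplying by $v_{\beta\alpha}^*$ --- on the left, or on the right after taking adjoints and replacing $n$ by $n^*$ --- yields
\[
q_{\beta\alpha}(\beta(n^{\rm o})\tens 1)=q_{\beta\alpha}(1\tens\alpha(n)),\qquad (\beta(n^{\rm o})\tens 1)q_{\beta\alpha}=(1\tens\alpha(n))q_{\beta\alpha}.
\]
The same holds for the pairs $(\alpha,\widehat\beta)$, $(\widehat\alpha,\beta)$ and $(\widehat\beta,\widehat\alpha)$ attached to the projections $q_{\alpha\widehat\beta}$, $q_{\widehat\alpha\beta}$ and $q_{\widehat\beta\widehat\alpha}$ respectively.

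Granting this, each of the six identities is a one-line check. The first relation of (1) follows from $W=Wq_{\beta\alpha}$ and the balancing identity for $q_{\beta\alpha}$:
\[
W(\beta(n^{\rm o})\tens 1)=Wq_{\beta\alpha}(\beta(n^{\rm o})\tens 1)=Wq_{\beta\alpha}(1\tens\alpha(n))=W(1\tens\alpha(n)),
\]
and the second relation of (1) follows from $W=q_{\alpha\widehat\beta}W$ and the balancing identity for $q_{\alpha\widehat\beta}$ in the form $(\alpha(n)\tens 1)q_{\alpha\widehat\beta}=(1\tens\widehat\beta(n^{\rm o}))q_{\alpha\widehat\beta}$:
\[
(1\tens\widehat\beta(n^{\rm o}))W=(1\tens\widehat\beta(n^{\rm o}))q_{\alpha\widehat\beta}W=(\alpha(n)\tens 1)q_{\alpha\widehat\beta}W=(\alpha(n)\tens 1)W.
\]
The two relations of (2) are obtained in the same way, using $V=Vq_{\widehat\alpha\beta}$ together with the balancing for $q_{\widehat\alpha\beta}$ for the first and $V=q_{\beta\alpha}V$ together with the balancing for $q_{\beta\alpha}$ for the second; and the two relations of (3) using $\widetilde V=\widetilde V q_{\widehat\beta\widehat\alpha}$ with the balancing for $q_{\widehat\beta\widehat\alpha}$, resp.\ $\widetilde V=q_{\widehat\alpha\beta}\widetilde V$ with the balancing for $q_{\widehat\alpha\beta}$. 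The guiding rule is simply: when the extra factor sits to the right of the partial isometry, absorb the initial projection $X^*X$ and use the balancing identity with the projection on the left; when it sits to the left, absorb the final projection $XX^*$ and use the balancing identity with the projection on the right.

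There is no genuine conceptual difficulty here; the work is entirely bookkeeping. The only point requiring care is to match, for each of the six identities, the correct projection ($X^*X$ versus $XX^*$, read off from \ref{inifinproj} (3)) with the side on which the factor $\beta(n^{\rm o})\tens 1$, $1\tens\alpha(n)$, $1\tens\widehat\beta(n^{\rm o})$, $\widehat\alpha(n)\tens 1$, \dots\ appears, and to keep the $N$ versus $N^{\rm o}$ variances and the adjoint/$n\mapsto n^*$ manipulations straight in the balancing relations. Once the dictionary between the four projections $q_{\beta\alpha}$, $q_{\alpha\widehat\beta}$, $q_{\widehat\alpha\beta}$, $q_{\widehat\beta\widehat\alpha}$ and their balancing identities is written out, all six statements drop out at once.
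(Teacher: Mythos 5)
Your proof is correct and is essentially the argument the paper delegates to 11.1.4 of \cite{DC}: absorb the appropriate initial or final projection from \ref{inifinproj} (3) and apply the balancing identity of the corresponding $q_{\gamma\pi}$. The only point worth making explicit is why the balancing identities $q_{\beta\alpha}(\beta(n^{\rm o})\tens 1)=q_{\beta\alpha}(1\tens\alpha(n))$, etc., hold with no modular twist: the general relation $\reltens{\gamma(y^{\rm o})\eta}{\gamma}{\pi}{\xi}=\reltens{\eta}{\gamma}{\pi}{\pi(\sigma_{-{\rm i}/2}^{\varphi}(y))\xi}$ involves $\sigma_{-{\rm i}/2}^{\varphi}$, which is trivial here because the non-normalized Markov trace $\epsilon$ is a trace (equivalently, one can verify the identities directly from the explicit expressions for $q_{\beta\alpha}$, $q_{\alpha\widehat\beta}$, $q_{\widehat\alpha\beta}$, $q_{\widehat\beta\widehat\alpha}$ in \ref{Projection} and \ref{ProjectionBis}).
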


		\subsection{Weak Hopf C*-algebras associated with a measured quantum groupoid on a finite basis}\label{WHC*A}
		
We recall the definitions and the main results concerning the weak Hopf C*-algebras associated with a measured quantum groupoid on a finite basis, cf.\ \S 11.2 \cite{DC} (with different notations and conventions, cf.\ \S 2.3 \cite{BC}). Let us fix a measured quantum groupoid ${\cal G}=(N,M,\alpha,\beta,\Delta,T,T',\epsilon)$ on the finite-dimensional basis $N=\bigoplus_{1\leqslant l\leqslant k}{\rm M}_{n_l}(\GC)$.

\begin{nbs}
With the notations of \S\ref{MQGfinitebasis}, we denote by $S$ (resp.\ $\widehat{S}$) the norm closure of the subalgebra\index[symbol]{sb@$S$, $\widehat{S}$, weak Hopf C*-algebras}
\[
\{(\omega\tens\id)(V)\,;\,\omega \in \B(\s H)_\ast\} \quad (\text{resp.}\,\{(\id\tens\omega)(V)\, ;\, \omega \in \B(\s H)_\ast\}).
\]
According to \S 11.2 \cite{DC}, we have the following statements:
\begin{itemize}
\item the Banach space $S$ (resp.\ $\widehat{S}$) is a non-degenerate C*-subalgebra of $\B(\s H)$, weakly dense in $M$ (resp.\ $\widehat{M}'$);
\item the C*-algebra $S$ (resp.\ $\widehat{S}$) is endowed with the faithful non-degenerate *-representa\-tions:\index[symbol]{l@$L$, $R$, $\rho$, $\lambda$, canonical representations of $S$ and $\widehat{S}$} 
\begin{align*}
L&:S\rightarrow\B(\s H)\,;\,x\mapsto x;\quad R:S\rightarrow\B(\s H)\,;\,x\mapsto UL(x)U^* \\
\text{{\rm(}resp.\ }\rho&:\widehat{S}\rightarrow\B(\s H)\,;\,x\mapsto x;\quad \lambda:\widehat{S}\rightarrow\B(\s H)\,;\,x\mapsto U\rho(x)U^*\text{{\rm)}};
\end{align*}
\item $\alpha(N)\subset\M(S)$, $\beta(N^{\rm o})\subset\M(S)$, $\beta(N^{\rm o})\subset\M(\widehat{S})$ and $\widehat{\alpha}(N)\subset\M(\widehat{S})$; 
\item $V\in\M( \widehat S \otimes S)$, $W \in \M( S \otimes \lambda(\widehat S))$ and $\widetilde V \in \M(R(S) \otimes \widehat S)$;
\item $\Delta$ {\rm (}resp.\ $\widehat{\Delta}${\rm )} restricts to a strictly continuous *-homomorphism $\delta:S\rightarrow\M(S\tens S)$ (resp.\ $\widehat{\delta}:\widehat{S}\rightarrow\M(\widehat{S}\tens\widehat{S})$), which uniquely extends to a strictly continuous *-homomorphism $\delta:\M(S)\rightarrow\M(S\tens S)$ (resp.\ $\widehat{\delta}:\M(\widehat{S})\rightarrow\M(\widehat{S}\tens\widehat{S})$) satisfying $\delta(1_S)=q_{\beta\alpha}$ (resp.\ $\widehat{\delta}(1_{\widehat{S}})=q_{\widehat\alpha\beta}$);
\item $\delta$ (resp.\ $\widehat{\delta}$) is coassociative and satisfies  
$
[\delta(S)(1_S \otimes S)] = \delta(1_S) (S \otimes S)=[\delta(S)(S \otimes 1_S)]
$
(resp.\ $[\widehat\delta(\widehat S)(1_{\widehat S} \otimes \widehat S)]  = \widehat\delta(1_{\widehat S}) (\widehat S \otimes \widehat S) = [\widehat\delta(\widehat S)(\widehat S \otimes 1_{\widehat S})]$);
\item the unital faithful *-homomorphisms $\alpha:N\rightarrow\M(S)$ and $\beta:N^{\rm o}\rightarrow\M(S)$ satisfy
	\[
	\delta(\alpha(n))=\delta(1_S)(\alpha(n)\tens 1_S) \; \text{ and } \; \delta(\beta(n^{\rm o}))=\delta(1_S)(1_S\tens\beta(n^{\rm o})),\; \text{ for all } n\in N;
	\]
\item the unital faithful *-homomorphisms $\beta:N^{\rm o}\rightarrow\M(\widehat{S})$ and $\widehat{\alpha}:N\rightarrow\M(\widehat{S})$ satisfy
	\[
	\widehat{\delta}(\beta(n^{\rm o}))=\widehat{\delta}(1_{\widehat{S}})(\beta(n^{\rm o})\tens 1_{\widehat{S}})\; \text{ and } \; \widehat{\delta}(\widehat{\alpha}(n))=\widehat{\delta}(1_{\widehat{S}})(1_{\widehat{S}}\tens\widehat{\alpha}(n)),\; \text{ for all } n\in N.\qedhere
	\]
\end{itemize}
\end{nbs}

\begin{defin}
With the above notations, we call the pair $(S,\delta)$ {\rm(}resp.\ $(\widehat{S},\widehat{\delta})${\rm)} the weak Hopf C*-algebra {\rm(}resp.\ dual weak Hopf C*-algebra{\rm)} associated with the measured quantum groupoid ${\cal G}$.
\end{defin}

\begin{rk}
With the notations of the above definition, the pair $(\widehat{S},\widehat{\delta})$ is the weak Hopf C*-algebra of $\widehat{\cal G}$ while its dual weak Hopf C*-algebra is the pair $(R(S),\delta_R)$, where $R(S)=USU^*$ and the coproduct $\delta_R$ is given by $\delta_R(y):=\widetilde{V}^*(1\tens y)\widetilde{V}$ for all $y\in R(S)$.
\end{rk}

		\subsection{Measured quantum groupoid associated with a monoidal equivalence}\label{sectionColinking}

We will recall the construction of the measured quantum groupoid associated with a monoidal equivalence between two locally compact quantum groups provided by De Commer \cite{DC,DC3}. First of all, we will need to recall the definitions and the crucial results of De Commer \cite{DC,DC3}.

\begin{defin}
Let $\QG$ be a locally compact quantum group. A right (resp.\ left) Galois action of $\QG$ on a von Neumann algebra $N$ is an ergodic integrable right (resp.\ left) action $\alpha_N:N\rightarrow N\tens {\rm L}^{\infty}(\QG)$ $($resp.\ $\gamma_N:N\rightarrow {\rm L}^{\infty}(\QG)\tens N)$ such that the crossed product $N\rtimes_{\alpha_N}\QG$ $($resp.\ $\QG\;_{\gamma_N}\!\!\ltimes N)$ is a type I factor. Then, the pair $(N,\alpha_N)$ (resp. $(N,\gamma_N)$) is called a right $($resp.\ left$)$ Galois object for $\QG$.
\end{defin}

Let $\QG$ be a locally compact quantum group and let us fix a right Galois object $(N,\alpha_N)$ for $\QG$. In his thesis, De Commer was able to build a locally compact quantum group $\QH$ equipped with a left Galois action $\gamma_N$ on $N$ commuting with $\alpha_N$, {\it i.e.\ }$(\id\tens\alpha_N)\gamma_N=(\gamma_N\tens\id)\alpha_N$. This construction is called the \textit{reflection technique} and $\QH$ is called the \textit{reflected locally compact quantum group across} $(N,\alpha_N)$.\hfill\break
In a canonical way, he was also able to associate a right Galois object $(O,\alpha_O)$ for $\QH$ and a left Galois action $\gamma_O:O\rightarrow {\rm L}^{\infty}(\QG)\tens O$ of $\QG$ on $O$ commuting with $\alpha_O$. Finally, De Commer has built a measured quantum groupoid 
\[
{\cal G}_{\QH,\QG}=(\GC^2,M,\alpha,\beta,\Delta,T,T',\epsilon)
\]
where: $M={\rm L}^{\infty}(\QH)\oplus N\oplus O\oplus {\rm L}^{\infty}(\QG)$; $\Delta:M\rightarrow M\tens M$ is made up of the coactions and coproducts of the constituents of $M$; the operator-valued weights $T$ and $T'$ are given by the invariants weights; the non-normalized Markov trace $\epsilon$ on $\GC^2$ is simply given by $\epsilon(a,b)=a+b$ for all $(a,b)\in\GC^2$. Moreover, the source and target maps $\alpha$ and $\beta$ have range in ${\cal Z}(M)$ and generate a copy of $\GC^4$.\hfill\break
Conversely, if ${\cal G}=(\GC^2,M,\alpha,\beta,\Delta,T,T',\epsilon)$ is a measured quantum groupoid whose source and target maps have range in ${\cal Z}(M)$ and generate a copy of $\GC^4$, then ${\cal G}$ is of the form ${\cal G}_{\QH,\QG}$ in a unique way, where $\QH$ and $\QG$ are locally compact quantum groups canonically associated with $\cal G$.

\medskip

In what follows, we fix a measured quantum groupoid ${\cal G}=(\GC^2,M,\alpha,\beta,\Delta,T,T',\epsilon)$ whose source and target maps have range in ${\cal Z}(M)$ and generate a copy of $\GC^4$. It is worth noticing that for such a groupoid we have:

\begin{lem}(cf.\ 2.21 \cite{BC})
$\widehat\alpha=\beta$ and $\widehat{\beta}=\alpha$.
\end{lem}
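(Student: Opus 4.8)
The plan is to reduce the statement to the single fact that the modular conjugation $J$ of $\varphi$ fixes every central projection of $M$; once this is in hand, one only has to unwind the definitions $\widehat\beta(n^{\rm o})=J\alpha(n^*)J$ and $\widehat\alpha(n)=J\beta(n^{\rm o})^*J$ and use $\GC$-linearity.

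\emph{Step 1 (the only real point).} I would first prove that a central projection $z\in{\cal Z}(M)$ satisfies $JzJ=z$. Recall that $J\nabla^{1/2}\Lambda(y)=\Lambda(y^*)$ for $y\in\f N_\varphi\cap\f N_\varphi^*$, the corresponding vectors forming a core. For such $y$, since $z$ is a central projection one has $zy,zy^*\in\f N_\varphi\cap\f N_\varphi^*$, $\Lambda(zy)=z\Lambda(y)$, $\Lambda(zy^*)=z\Lambda(y^*)$ and $(zy)^*=zy^*$, whence
\[
J\nabla^{1/2}\bigl(z\Lambda(y)\bigr)=J\nabla^{1/2}\Lambda(zy)=\Lambda\bigl((zy)^*\bigr)=\Lambda(zy^*)=z\Lambda(y^*)=z\,J\nabla^{1/2}\Lambda(y).
\]
Hence the bounded operator $z$ commutes with $J\nabla^{1/2}$ on a core, so with $J\nabla^{1/2}$ itself; being self-adjoint it then commutes with $(J\nabla^{1/2})^*=\nabla^{1/2}J$, hence with $\nabla$, with $\nabla^{1/2}$, and finally, from $zJ\nabla^{1/2}=Jz\nabla^{1/2}$ and the density of the range of $\nabla^{1/2}$, with $J$; thus $JzJ=z$. (Alternatively, this is the standard fact that $\sigma^\varphi$ acts trivially on ${\cal Z}(M)$, a central projection splitting $\varphi$ as a direct sum of weights, so that $z\in M^\varphi$.)

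\emph{Step 2 (bookkeeping).} Let $e_1=(1,0)$, $e_2=(0,1)$ be the minimal projections of $N=\GC^2$, so that $\{e_1,e_2\}$ is a $\GC$-basis of $N=N^{\rm o}$ made of self-adjoint elements. By hypothesis $\alpha(e_1),\alpha(e_2),\beta(e_1^{\rm o}),\beta(e_2^{\rm o})$ are central projections of $M$, hence fixed by the conjugate-linear map $x\mapsto JxJ$ (Step 1). For $n=(c_1,c_2)\in N$ we have $n^*=(\overline{c_1},\overline{c_2})$, hence
\[
\widehat\beta(n^{\rm o})=J\alpha(n^*)J=c_1\,J\alpha(e_1)J+c_2\,J\alpha(e_2)J=c_1\alpha(e_1)+c_2\alpha(e_2)=\alpha(n),
\]
and, since $\beta(n^{\rm o})^*=\beta\bigl((n^*)^{\rm o}\bigr)$,
\[
\widehat\alpha(n)=J\beta(n^{\rm o})^*J=c_1\,J\beta(e_1^{\rm o})J+c_2\,J\beta(e_2^{\rm o})J=c_1\beta(e_1^{\rm o})+c_2\beta(e_2^{\rm o})=\beta(n^{\rm o}).
\]
Under the canonical identification $N^{\rm o}=N$ this says $\widehat\beta=\alpha$ and $\widehat\alpha=\beta$.

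I expect Step 1 to carry all the weight; Step 2 is routine. A variant closer to the framework of the paper uses the relations $\alpha(N)=M\cap\widehat M$, $\beta(N^{\rm o})=M\cap\widehat M'$, $\widehat\beta(N^{\rm o})=M'\cap\widehat M$, $\widehat\alpha(N)=M'\cap\widehat M'$: from $\alpha(N),\beta(N^{\rm o})\subset{\cal Z}(M)\subset M'$ one gets $\alpha(N)\subset\widehat\beta(N^{\rm o})$ and $\beta(N^{\rm o})\subset\widehat\alpha(N)$, and a dimension count over $N=\GC^2$ forces equality of the ranges — but identifying the maps themselves (and excluding an exchange of the two minimal central projections) leads back once more to $JzJ=z$.
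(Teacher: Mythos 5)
Your argument is correct. The paper does not reproduce a proof of this lemma (it only cites 2.21 of \cite{BC}), so there is nothing in-text to compare against, but your reduction is exactly the natural one: the whole content is that $JzJ=z$ for a self-adjoint central element $z\in{\cal Z}(M)$, and your Step~1 derives this standard Tomita--Takesaki fact correctly (commutation with $S=J\nabla^{1/2}$ on a core, hence with $S^*$, hence with $\nabla$ and finally with $J$). Step~2 then uses only that $\alpha(\varepsilon_i)$ and $\beta(\varepsilon_j^{\rm o})$ are central projections --- which is precisely the defining hypothesis on a colinking measured quantum groupoid --- together with the conjugate-linearity of ${\rm Ad}_J$ and the definitions $\widehat\beta(n^{\rm o})=J\alpha(n^*)J$, $\widehat\alpha(n)=J\beta(n^{\rm o})^*J$; the bookkeeping is right, including the implicit identification $N^{\rm o}=N$ for $N=\GC^2$. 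Your closing remark is also well taken: the intersection relations $\alpha(N)=M\cap\widehat M$, $\widehat\beta(N^{\rm o})=M'\cap\widehat M$, etc., only identify the ranges, not the maps, so they cannot by themselves exclude the permutation of the two minimal projections; the pointwise identity really does require $JzJ=z$.
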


Following the notations introduced in \cite{DC}, we recall the precise description of the left and right regular representations $W$ and $V$ of $\cal G$ introduced in the previous section. We identify $M$ with its image by $\pi$ in $\B(\s H)$, where $(\s H,\pi,\Lambda)$ is the \GNS construction for $M$ endowed with the \nsf weight $\varphi=\epsilon\circ\alpha^{-1}\circ T$. We also consider the \nsf weight $\psi=\epsilon\circ\beta^{-1}\circ T'$. Denote by $(\varepsilon_1,\varepsilon_2)$ the standard basis of the vector space $\GC^2$.\index[symbol]{eaa@$(\varepsilon_1,\varepsilon_2)$, standard basis of $\GC^2$}  

\begin{nbs}\label{not10} Let us introduce some useful notations and make some remarks concerning them.
\begin{itemize}
	\item For $i,j=1,2$, we define the following nonzero central self-adjoint projection of $M$:\index[symbol]{pa@$p_{ij}$}
	\[
	p_{ij}:=\alpha(\varepsilon_i)\beta(\varepsilon_j).
	\]
	It follows from $\beta(\varepsilon_1)+\beta(\varepsilon_2)=1_M$ and $\alpha(\varepsilon_1)+\alpha(\varepsilon_2)=1_M$ that
	\[
	\alpha(\varepsilon_i)=p_{i1}+p_{i2} \quad \text{and} \quad \beta(\varepsilon_j)=p_{1j}+p_{2j},\quad \text{for all }i,j=1,2.
	\]
	\item We have 
	\begin{center}	
	$\Delta(1)=\alpha(\varepsilon_1)\tens \beta(\varepsilon_1)+\alpha(\varepsilon_2)\tens \beta(\varepsilon_2)$ \; and \; $\widehat{\Delta}(1)=\beta(\varepsilon_1)\tens \beta(\varepsilon_1)+\beta(\varepsilon_2)\tens \beta(\varepsilon_2)$
	\end{center} 
	since $\widehat{\alpha}=\beta$.
	\item Let $M_{ij}:=p_{ij}M$, for $i,j=1,2$. Then, $M_{ij}$ is a nonzero von Neumann subalgebra of $M$.\index[symbol]{mc@$M_{ij}$}
	\item Let $\s H_{ij}:=p_{ij}\s H$, for $i,j=1,2$. Then, $\s H_{ij}$ is a nonzero Hilbert subspace of $\s H$ for all $i,j=1,2$.\index[symbol]{h@$\s H_{ij}$}
	\item Let $\varphi_{ij}:=\varphi$\,$\restriction_{(M_{ij})_+}$ and $\psi_{ij}:=\psi$\,$\restriction_{(M_{ij})_+}$, for $i,j=1,2$. Then, $\varphi_{ij}$ and $\psi_{ij}$ are \nsf weights on $M_{ij}$.\index[symbol]{pb@$\varphi_{ij}$, $\psi_{ij}$}
	\item For all $i,j,k=1,2$, we denote by $\Delta_{ij}^k:M_{ij}\rightarrow M_{ik}\tens M_{kj}$ the unital normal *-homomorphism given by\index[symbol]{da@$\Delta_{ij}^k$} 
	\[
	\Delta_{ij}^k(x):=(p_{ik}\tens p_{kj})\Delta(x),\quad \text{for all } x\in M_{ij}.
	\]
	\item We have $Jp_{kl}=p_{kl}J$, $\widehat{J}p_{kl}=p_{lk}\widehat{J}$ and $Up_{kl}=p_{lk}U$ for all $k,l=1,2$. We define the anti-unitaries $J_{kl}:\s H_{kl}\rightarrow\s H_{kl}$, $\widehat{J}_{kl}:\s H_{kl}\rightarrow\s H_{lk}$ and the unitary $U_{kl}:\s H_{kl}\rightarrow\s H_{lk}$ by setting $J_{kl}=p_{kl}Jp_{kl}$, $\widehat{J}_{kl}=p_{lk}\widehat{J}p_{kl}$ and $U_{kl}=p_{lk}Up_{kl}=\widehat{J}_{kl}J_{kl}$.
	\item For $i,j,k,l=1,2$, let $\Sigma_{ij\tens kl}:=\Sigma_{\s H_{ij}\tens\s H_{kl}}:\s H_{ij}\tens\s H_{kl}\rightarrow\s H_{kl}\tens\s H_{ij}$.\qedhere
\end{itemize}
\end{nbs}

We readily obtain:
\[
M=\bigoplus_{i,j=1,2}M_{ij};\quad \s H=\bigoplus_{i,j=1,2}\s H_{ij};\quad \Delta(p_{ij})=p_{i1}\tens p_{1j}+p_{i2}\tens p_{2j},\text{ for all }i,j=1,2.
\]
Note that in terms of the parts $\Delta_{ij}^k$ of $\Delta$, the coassociativity condition reads as follows:
\[
(\Delta_{ik}^l\tens\id_{M_{kj}})\Delta_{ij}^k=(\id_{M_{il}}\tens\Delta_{lj}^k)\Delta_{ij}^l,\quad \text{for all } i,j,k,l=1,2.
\]
The \GNS representation for $(M_{ij},\varphi_{ij})$ is obtained by restriction of the \GNS representation of $(M,\varphi)$ to $M_{ij}$. In particular, the \GNS space $\s H_{\varphi_{ij}}$ is identified with $\s H_{ij}$.

\begin{prop}\label{prop35} For all $i,j,k,l=1,2$, we have:
\begin{align*}
(p_{ij} \otimes 1) V (p_{kl} \otimes 1) & = \delta_k^i (p_{ij} \otimes p_{jl}) V (p_{il} \otimes p_{jl});\\
(1 \otimes p_{ij}) W (1 \otimes p_{kl}) & = \delta_j^l (p_{ik} \otimes p_{ij}) W (p_{ik} \otimes p_{kj});\\
(1\tens p_{ji})\widetilde{V}(1\tens p_{lk}) & =\delta_j^l (p_{ki}\tens p_{ji})\widetilde{V}(p_{ki}\tens p_{jk}).\qedhere
\end{align*}
\end{prop}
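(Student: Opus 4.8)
The plan is to prove the identity for $V$ first, and then transport it to $W$ and $\widetilde V$ by conjugating with $U$ and the flip, using the relations $W=\Sigma(U\tens 1)V(U^*\tens 1)\Sigma$ and $\widetilde V=(U\tens U)W(U^*\tens U^*)$ of \ref{inifinproj} together with $Up_{rs}=p_{sr}U$ and $U^*p_{rs}=p_{sr}U^*$ (cf.\ \ref{not10}).

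The heart of the matter is the ``block relation''
\[
V(p_{ab}\tens p_{cd})=\delta_b^d\,(p_{ac}\tens p_{cb})\,V\qquad(a,b,c,d\in\{1,2\}).
\]
To obtain it I would first show $V(p_{ab}\tens 1)=\Delta(p_{ab})V$: by the identity $\Delta(x)=V(x\tens 1)V^*$ one has $\Delta(p_{ab})V=V(p_{ab}\tens 1)V^*V$, and $V^*V$ commutes with $p_{ab}\tens 1$ — indeed $V^*V=q_{\widehat\alpha\beta}=\beta(\varepsilon_1)\tens\beta(\varepsilon_1)+\beta(\varepsilon_2)\tens\beta(\varepsilon_2)$ by \ref{inifinproj} and \ref{not10}, while $p_{ab}\in{\cal Z}(M)$ commutes with $\beta(N)$ — so, using $V=V(V^*V)$, one gets $V(p_{ab}\tens 1)V^*V=V(V^*V)(p_{ab}\tens 1)=V(p_{ab}\tens 1)$. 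With $\Delta(p_{ab})=p_{a1}\tens p_{1b}+p_{a2}\tens p_{2b}$ this gives $V(p_{ab}\tens 1)=\sum_m(p_{am}\tens p_{mb})V$. For the second leg I would use $\widehat\alpha=\beta$ and \ref{prop34}, namely $[V,1\tens\beta(\varepsilon_d)]=0$ and $V(1\tens\alpha(\varepsilon_c))=(\beta(\varepsilon_c)\tens 1)V$; writing $p_{cd}=\alpha(\varepsilon_c)\beta(\varepsilon_d)$ and combining these with $p_{am}\beta(\varepsilon_c)=\delta_m^c p_{ac}$ and $p_{mb}\beta(\varepsilon_d)=\delta_b^d p_{mb}$ yields the block relation. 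Granting it, $\sum_{c,d}p_{cd}=1$ gives $V(p_{kl}\tens 1)=\sum_c(p_{kc}\tens p_{cl})V$, whence by orthogonality $p_{rs}p_{tu}=\delta_r^t\delta_s^u p_{rs}$,
\[
(p_{ij}\tens 1)V(p_{kl}\tens 1)=\sum_c(p_{ij}p_{kc}\tens p_{cl})V=\delta_i^k\,(p_{ij}\tens p_{jl})V;
\]
finally, multiplying $(p_{ij}\tens p_{jl})V$ on the right by $\sum_{a,b,c,d}p_{ab}\tens p_{cd}=1\tens 1$ and applying the block relation once more, only the summand $(a,b,c,d)=(i,l,j,l)$ survives, so $(p_{ij}\tens p_{jl})V=(p_{ij}\tens p_{jl})V(p_{il}\tens p_{jl})$, which is the stated formula for $V$.

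For $W$, multiply $W=\Sigma(U\tens 1)V(U^*\tens 1)\Sigma$ on the left by $1\tens p_{ij}$ and on the right by $1\tens p_{kl}$; using $\Sigma(1\tens p)=(p\tens 1)\Sigma$ and $Up_{rs}=p_{sr}U$, $U^*p_{rs}=p_{sr}U^*$, the expression becomes $\Sigma(U\tens 1)(p_{ji}\tens 1)V(p_{lk}\tens 1)(U^*\tens 1)\Sigma$. Inserting the formula for $V$ just obtained, then carrying the projections back through $U$, $U^*$, $\Sigma$ and re-using $\Sigma(U\tens 1)V(U^*\tens 1)\Sigma=W$, one lands exactly on $\delta_j^l(p_{ik}\tens p_{ij})W(p_{ik}\tens p_{kj})$; the formula for $\widetilde V$ is obtained in the same way from $\widetilde V=(U\tens U)W(U^*\tens U^*)$ and the formula for $W$. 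The only genuine work is the index bookkeeping through these conjugations and selecting the right entries of \ref{prop34} after the substitutions $\widehat\alpha=\beta$, $\widehat\beta=\alpha$; I expect that — rather than any conceptual point — to be the main obstacle, the one subtlety worth care being that $p_{ab}\tens 1$ merely \emph{commutes} with $V^*V$ (it is not dominated by it), which is precisely what makes the passage $V(p_{ab}\tens 1)=\Delta(p_{ab})V$ legitimate.
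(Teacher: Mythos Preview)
Your proof is correct. The paper states Proposition~\ref{prop35} without proof, so there is nothing to compare against; your argument fills this gap cleanly. The key step---establishing the block relation $V(p_{ab}\tens p_{cd})=\delta_b^d(p_{ac}\tens p_{cb})V$ via $\Delta(p_{ab})V=V(p_{ab}\tens 1)V^*V=V(p_{ab}\tens 1)$ and the commutation rules of \ref{prop34} specialized with $\widehat\alpha=\beta$---is exactly the natural one, and the transport to $W$ and $\widetilde V$ through the conjugation formulas of \ref{inifinproj} and $Up_{rs}=p_{sr}U$ is routine index bookkeeping which you have carried out correctly (the substitution $(i,j,k,l)\mapsto(j,i,l,k)$ in the $V$-formula when passing to $W$ gives $\delta_l^j=\delta_j^l$, matching the statement).
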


\begin{nbs} The operators $V$, $W$ and $\widetilde{V}$ each splits up into eight unitaries\index[symbol]{vb@$V_{jl}^i$, $W_{ik}^j$, $\widetilde{V}_{ki}^j$} 
\[
V_{jl}^i : \s H_{il} \otimes \s H _{jl} \rightarrow \s H_{ij} \otimes \s H_{jl},\; 
W_{ik}^j : \s H_{ik} \otimes \s H_{kj} \rightarrow \s H_{ik} \otimes \s H_{ij} \text{ and }
\widetilde{V}_{ki}^j : \s H_{ki}\tens\s H_{jk}\rightarrow\s H_{ki}\tens\s H_{ji}
\] 
for $i,j,k,l=1,2$, given by $V_{jl}^i=(p_{ij} \otimes p_{jl}) V (p_{il} \otimes p_{jl})$, $ W_{ik}^j=(p_{ik} \otimes p_{ij}) W (p_{ik} \otimes p_{kj})$ and $\widetilde{V}_{ki}^j=(p_{ki}\tens p_{ji})\widetilde{V}(p_{ki}\tens p_{jk})$.
\end{nbs}

Let $i,j,k,l,l'=1,2$. These unitaries are related to each other by the following relations (cf.\ \ref{inifinproj}):
\[
W_{ik}^j=\Sigma_{ij\tens ik}(U_{ji}\tens 1)V_{ik}^j(U_{jk}^*\tens 1)\Sigma_{ik\tens kj};\;
\widetilde{V}_{ki}^j=\Sigma_{ji\tens ki}(1\tens U_{ik})V_{ik}^j(\tens U_{ik}^*)\Sigma_{ki\tens jk};
\]
\[
\widetilde{V}_{ki}^j=(U_{ik}\tens U_{ij})W_{ik}^j(U_{ik}^*\tens U_{kj}^*).
\]
Furthermore, we also have:
\[
(V_{jl}^i)^*=(J_{il}\tens\widehat{J}_{lj})V_{lj}^i(J_{ij}\tens\widehat{J}_{jl}) \quad \text{and} \quad 
(W_{ik}^l)^*=(\widehat{J}_{ki}\tens J_{kj})W_{ki}^j(\widehat{J}_{ik}\tens J_{ij}).
\]
Moreover, these unitaries satisfy the following pentagonal equations:
\begin{equation*}\label{penteq}
\begin{split}
(V_{jk}^i)_{12} (V_{kl}^i)_{13} (V_{kl}^j)_{23} = (V_{kl}^j)_{23} (V_{jl}^i)_{12};&\quad
(W_{ij}^k)_{12} (W_{ij}^l)_{13} (W_{jk}^l)_{23} = (W_{ik}^l)_{23}(W_{ij}^k)_{12};\\
(\widetilde{V}_{ji}^k)_{12}(\widetilde{V}_{ji}^l)_{13}&(\widetilde{V}_{kj}^l)_{23}=(\widetilde{V}_{ki}^l)_{23}(\widetilde{V}_{ji}^k)_{12}.
\end{split}
\end{equation*}
We also have the following commutation relations:
\begin{equation*}\label{comrel}
(V_{kj}^l)_{23}(W_{ll'}^j)_{12} = (W_{ll'}^k)_{12}(V_{kj}^{l'})_{23};\quad 
(V_{ki}^l)_{12}(\widetilde{V}_{ki}^{j})_{23}=(\widetilde{V}_{ki}^{j})_{23}(V_{ki}^l)_{12}.
\end{equation*}
Furthermore, we have
\begin{equation*}
\Delta_{ij}^k(x)=(W_{ik}^j)^*(1\tens x)W_{ik}^j=V_{kj}^i(x\tens 1)(V_{kj}^i)^*,\quad\text{for all } x\in M_{ij}.
\end{equation*}
Note that for all $\omega\in\B(\s H)_*$ we have:
\begin{equation*}\label{eqdefpi}
\begin{split}
({\rm id}  \otimes p_{jl}\omega p_{jl}) (V_{jl}^i) = p_{ij} ({\rm id}  \otimes \omega)(V) p_{il};& \quad (p_{ik}\omega p_{ik} \otimes {\rm id} )(W_{ik}^j) = 
p_{ij} (\omega \otimes {\rm id} )(W) p_{kj};\\
(p_{ki}\omega p_{ki} \tens \id)(\widetilde{V}_{ki}^j)&=p_{ji}(\id\tens\omega)(\widetilde{V})p_{jk}.
\end{split}
\end{equation*}

\begin{prop}\label{prop36}
Let $i , j=1,2$ such that $i\neq j$. With the notations of \ref{not10}, we have:
\begin{enumerate}
	\item $\QG_i:=(M_{ii} , \Delta_{ii}^i , \varphi_{ii}, \psi_{ii})$ is a locally compact quantum group whose left {\rm(}resp.\ right\,{\rm)} regular representation is $W_{ii}^i$ {\rm(}resp.\ $V_{ii}^i)$;
	\item $(M_{ij} , \Delta_{ij}^j)$ is a right Galois object for $\QG_j$ whose canonical implementation is $V_{jj}^i$;
	\item $(M_{ij} , \Delta_{ij}^i)$ is a left Galois object for $\QG_i$ whose canonical implementation is $W_{ii}^j$;
	\item the actions $\Delta_{ij}^j$ and $\Delta_{ij}^i$ on $M_{ij}$ commute;
	\item the Galois isometry associated with the right Galois object $(M_{ij},\Delta_{ij}^j)$ for $\QG_j$ (cf.\ 6.4.1, 6.4.2 \cite{DC}) is the unitary $\Sigma_{ij\tens jj}(W_{ij}^j)^*\Sigma_{ij\tens ij}$.\qedhere
\end{enumerate}
\end{prop}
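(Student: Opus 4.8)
The plan is to read this statement off as a translation, into the corner notation of \S\ref{MQGfinitebasis}, of De Commer's reflection construction (\cite{DC}, Chapters~6--8, and \cite{DC3}) together with the explicit description of the regular representations recorded in \cite{BC}. The first move is to invoke the converse structural statement recalled just above: since the source and target maps of $\cal G$ are central and generate a copy of $\GC^4$, the groupoid $\cal G$ is of the form ${\cal G}_{\QH,\QG}$ for canonically associated locally compact quantum groups, so that, up to relabelling of $\{1,2\}$, the diagonal corners $M_{11},M_{22}$ are the von Neumann algebras of two locally compact quantum groups, the off-diagonal corners $M_{12},M_{21}$ are the associated right/left Galois objects, and the blocks $\Delta_{ij}^k$ of $\Delta$ are precisely the coproducts and the Galois coactions. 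The task then reduces to matching this abstract description with the concrete objects $\Delta_{ii}^i,\Delta_{ij}^j,\Delta_{ij}^i,V_{jl}^i,W_{ik}^j$ appearing in the statement.

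The algebraic layer I would dispatch first and directly. Each $\Delta_{ij}^k:M_{ij}\rightarrow M_{ik}\tens M_{kj}$ is a normal $*$-homomorphism, and a short computation using $\Delta(p_{ij})=p_{i1}\tens p_{1j}+p_{i2}\tens p_{2j}$ gives $\Delta_{ii}^i(p_{ii})=p_{ii}\tens p_{ii}$, so $\Delta_{ii}^i$ is unital. The coassociativity of $\Delta_{ii}^i$ in~1, the coaction identities for $\Delta_{ij}^j$ and $\Delta_{ij}^i$ in~2 and~3, and the commutation of these two coactions in~4 are then nothing but the specialisations $(k,l)=(i,i)$, $(j,j)$, $(i,i)$ and $(j,i)$ of the coassociativity identity $(\Delta_{ik}^l\tens\id_{M_{kj}})\Delta_{ij}^k=(\id_{M_{il}}\tens\Delta_{lj}^k)\Delta_{ij}^l$. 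Likewise, specialising $\Delta_{ij}^k(x)=(W_{ik}^j)^*(1\tens x)W_{ik}^j=V_{kj}^i(x\tens 1)(V_{kj}^i)^*$ to $k=i=j$, to $k=j$, and to $k=i$ shows that $W_{ii}^i$ and $V_{ii}^i$ implement $\Delta_{ii}^i$, that $V_{jj}^i$ implements $\Delta_{ij}^j$, and that $W_{ii}^j$ implements $\Delta_{ij}^i$; these operators are unitary by \ref{inifinproj}, and the pentagonal equations of \S\ref{sectionColinking} with all indices equal to~$i$ make $W_{ii}^i$ and $V_{ii}^i$ multiplicative unitaries. Finally, the corner identities $(\id\tens p_{jl}\omega p_{jl})(V_{jl}^i)=p_{ij}(\id\tens\omega)(V)p_{il}$ (and its $W$-analogue), combined with the weak density of the slices of $W$ in $M$ and of $V$ in $\widehat M$, give that $M_{ii}$ is the weak closure of $\{(\id\tens\omega)(W_{ii}^i)\,;\,\omega\in\B(\s H)_*\}$ and of $\{(\omega\tens\id)(V_{ii}^i)\,;\,\omega\in\B(\s H)_*\}$.

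What remains are the genuinely analytic points, and there I would lean on \cite{DC} and \cite{BC}. For~1, the left invariance of $\varphi_{ii}$ for $\Delta_{ii}^i$ and the right invariance of $\psi_{ii}$ should come from restricting conditions (5)--(6) of Definition~\ref{defMQG} to the central corner $M_{ii}$, using $p_{ii}\alpha(N)=p_{ii}\beta(N^{\rm o})=\GC p_{ii}$ so that $T$ and $T'$ become there the scalar weights $\varphi_{ii}$ and $\psi_{ii}$; then $(M_{ii},\Delta_{ii}^i,\varphi_{ii},\psi_{ii})$ satisfies the Kustermans--Vaes axioms, and $W_{ii}^i$ (resp.\ $V_{ii}^i$) is its left (resp.\ right) regular representation because $W$ (resp.\ $V$) is built from the pseudo-multiplicative unitary $\s W_{\cal G}$ (resp.\ $\s W_{\widehat{\cal G}}$), whose defining formula in terms of $\Lambda_{\varphi}$ (resp.\ $\Lambda_{\psi}$) restricts to the corresponding formula on $\s H_{\varphi_{ii}}=\s H_{ii}$. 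The ergodicity and integrability of $\Delta_{ij}^j$ and $\Delta_{ij}^i$ and the type~I factor property of the crossed products $M_{ij}\rtimes\QG_j$ and $\QG_i\ltimes M_{ij}$ --- i.e.\ that $(M_{ij},\Delta_{ij}^j)$ and $(M_{ij},\Delta_{ij}^i)$ really are Galois objects --- I would import from De Commer's reflection theorem; similarly, the identification of $V_{jj}^i$ and $W_{ii}^j$ with the \emph{canonical} implementations and of $\Sigma_{ij\tens jj}(W_{ij}^j)^*\Sigma_{ij\tens ij}$ with the Galois isometry of $(M_{ij},\Delta_{ij}^j)$ is a matter of unwinding the formulas of \cite{DC}, \S 6.4, and comparing them with the relations linking $W$, $V$ and $U$, in particular $W_{ik}^j=\Sigma_{ij\tens ik}(U_{ji}\tens 1)V_{ik}^j(U_{jk}^*\tens 1)\Sigma_{ik\tens kj}$. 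The hard part is precisely this last layer: the Galois-object axioms (ergodic, integrable, type~I factor crossed product) will not fall out of index-chasing and must be taken from \cite{DC}, and pinning down the ``canonical implementation'' and the ``Galois isometry'' requires care with De Commer's conventions in \S 6.4.
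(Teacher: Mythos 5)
The paper offers no proof of this proposition: the whole of \S\ref{sectionColinking} is explicitly a recollection of De Commer's construction, and the statement is imported from \cite{DC} (item~5 even carries the citation 6.4.1, 6.4.2 \cite{DC} in its text). Your proposal is consistent with this — the algebraic bookkeeping you carry out (unitality of $\Delta_{ii}^i$, the specialisations of the coassociativity identity and of $\Delta_{ij}^k(x)=(W_{ik}^j)^*(1\tens x)W_{ik}^j=V_{kj}^i(x\tens 1)(V_{kj}^i)^*$) is correct, and you rightly identify that the substantive analytic content (invariance of the weights, the Galois-object axioms, the identification of the canonical implementations and of the Galois isometry) must be taken from \cite{DC}, which is exactly what the paper does.
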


\begin{defin}
A measured quantum groupoid $(\GC^2,M,\alpha,\beta,\Delta,T,T',\epsilon)$ such that the sour\-ce and 
target maps have range in ${\cal Z}(M)$ and generate a copy of $\GC^4$ will be denoted by ${\cal G}
_{\QG_1,\QG_2}$, where $\QG_i=(M_{ii},\Delta_{ii}^i,\varphi_{ii},\psi_{ii})$ (cf.\ \ref{prop36}) and will be called a colinking measured quantum groupoid.
\end{defin}

\begin{defin}
Let $\QG$ and $\QH$ be two locally compact quantum groups. We say that $\QG$ and $\QH$ are monoidally equivalent if there exists a colinking measured quantum groupoid ${\cal G}_{\QG_1,\QG_2}$ between two locally compact quantum groups $\QG_1$ and $\QG_2$ such that $\QH$ $($resp.\ $\QG)$ is isomorphic to $\QG_1$ $($resp.\ $\QG_2)$.\index[symbol]{g@${\cal G}_{\QG_1,\QG_2}$, colinking measured quantum groupoid}
\end{defin}

Let $(S,\delta)$ be the weak Hopf C*-algebra associated with $\cal G$. Note that 
\[
p_{ij}=\alpha(\varepsilon_i)\beta(\varepsilon_j)\in{\cal Z}(\M(S)),\quad\text{for all } i,j=1,2.
\]

\begin{nbs}
Let us recall the notations below (cf.\ 2.26 \cite{BC}).
\begin{enumerate}
	\item Let $S_{ij}:=p_{ij}S$, for $i,j=1,2$. Then, $S_{ij}$ is a C*-algebra (actually a closed two-sided ideal) of $S$ weakly dense in $M_{ij}$.\index[symbol]{sc@$S_{ij}$}
	\item For $i,j,k=1,2$, let 
	$
	\iota_{ij}^k:\M(S_{ik}\tens S_{kj})\rightarrow\M(S\tens S)
	$
	\index[symbol]{ib@$\iota_{ij}^k$}be the unique strictly continuous extension of the inclusion map $S_{ik}\tens S_{kj}\subset S\tens S$ satisfying $\iota_{ij}^k(1_{S_{ik}\tens S_{kj}})=p_{ik}\tens p_{kj}$. 
	\item Let $\delta_{ij}^k:S_{ij}\rightarrow\M(S_{ik}\tens S_{kj})$ be the unique *-homomorphism such that\index[symbol]{db@$\delta_{ij}^k$} 
\[
\iota_{ij}^k\circ\delta_{ij}^k(x)=(p_{ik}\tens p_{kj})\delta(x),\quad\text{for all }x\in S_{ij}.\qedhere
\]
\end{enumerate}
\end{nbs}

With these notations, we have:

\begin{prop}\label{prop5}(cf.\ 7.4.13, 7.4.14 \cite{DC}, 2.27 \cite{BC})
Let $i,j,k,l=1,2$.
\begin{enumerate}
	\item $(\delta_{ik}^l\tens\id_{S_{kj}})\delta_{ij}^k=(\id_{S_{il}}\tens\delta_{lj}^k)				\delta_{ij}^l$.
	\item $\delta_{ij}^k(x)=(W_{ik}^j)^*(1_{\s H_{ik}}\tens x)W_{ik}^j=V_{kj}^i(x\tens 1_{\s H_{kj}})(V_{kj}^i)^*$, for all $x\in S_{ij}$.
	\item $[\delta_{ij}^k(S_{ij})(1_{S_{ik}}\tens S_{kj})]=S_{ik}\tens S_{kj}=[\delta_{ij}				^k(S_{ij})(S_{ik}\tens 1_{S_{kj}})]$. In particular, we have 
	\[
	S_{kj}=[(\id_{S_{ik}}\tens\omega)\delta_{ij}^k(x)\,;\,x\in S_{ij},\,\omega\in\B(\s H_{kj})_*].
	\]
	\item The pair $(S_{jj},\delta_{jj}^j)$ is the Hopf C*-algebra associated with $\QG_j$.\qedhere
\end{enumerate}
\end{prop}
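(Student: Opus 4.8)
The plan is to read off all four statements from the corresponding ``global'' facts about the weak Hopf C$^*$-algebra $(S,\delta)$ and the multiplicative partial isometries $W$, $V$ collected in \S\ref{WHC*A} and \S\ref{sectionColinking}, by cutting everything down with the central projections $p_{ik}\tens p_{kj}\in{\cal Z}(\M(S\tens S))$. The basic bookkeeping device is the defining relation $\iota_{ij}^k\circ\delta_{ij}^k(x)=(p_{ik}\tens p_{kj})\delta(x)$ together with the decomposition $\delta(p_{ab})=p_{a1}\tens p_{1b}+p_{a2}\tens p_{2b}$ from \ref{not10}: since $\delta(x)$ commutes with the central projections $p_{am}\tens p_{mb}$ and is dominated by $\delta(p_{ij})$ when $x\in S_{ij}$, one automatically has $(p_{ik}\tens p_{kj})\delta(x)=(p_{ik}\tens p_{kj})\delta(x)(p_{ik}\tens p_{kj})$, so $\delta$ is ``block diagonal'' with respect to the $p_{ab}$, and each $\iota_{ij}^k$ (and each threefold iterate of such maps over three legs) is injective. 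It therefore suffices to verify every identity after applying these embeddings, i.e.\ for operators on tensor powers of $\s H$.

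I would do (2) first, as it is the computational heart and is reused in (4). Start from $\delta(x)=W^*(1\tens x)W=V(x\tens 1)V^*$ for $x\in S$ (the restriction to $S$ of the formula on $M$ in \S\ref{MQGfinitebasis}) and cut by $p_{ik}\tens p_{kj}$. Using $W_{ik}^j=(p_{ik}\tens p_{ij})W(p_{ik}\tens p_{kj})$ and $x=p_{ij}xp_{ij}$ one gets $(W_{ik}^j)^*(1_{\s H_{ik}}\tens x)W_{ik}^j=(p_{ik}\tens p_{kj})W^*(p_{ik}\tens x)W(p_{ik}\tens p_{kj})$; the extra $p_{ik}$ in the middle is harmless because \ref{prop35} (together with $x=p_{ij}x$) gives $(p_{ik}^\perp\tens p_{ij})W(p_{ik}\tens p_{kj})=0$, whence $(p_{ik}\tens p_{kj})W^*(p_{ik}^\perp\tens x)W(p_{ik}\tens p_{kj})=0$. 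Combined with the block‑diagonality of $\delta$ this identifies $(W_{ik}^j)^*(1_{\s H_{ik}}\tens x)W_{ik}^j$ with $\iota_{ij}^k(\delta_{ij}^k(x))$; the same argument with $V_{kj}^i=(p_{ik}\tens p_{kj})V(p_{ij}\tens p_{kj})$ and the first identity of \ref{prop35} handles the $V$‑formula. I expect the only genuine difficulty of the whole proposition to be here: keeping the many indices and leg positions straight while peeling off exactly the right projections from \ref{prop34}, \ref{NonComRel} and \ref{prop35}.

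Statement (1) then follows by cutting the global coassociativity $(\delta\tens\id)\delta=(\id\tens\delta)\delta$ of \S\ref{WHC*A} by $p_{il}\tens p_{lk}\tens p_{kj}$: applying the relation $\iota\circ\delta_{\cdot}^{\cdot}=(p\tens p)\delta$ twice on each side (once for the outer $\delta$, once for the $\delta$ on a single leg) and using $\delta(p_{ab})=\sum_m p_{am}\tens p_{mb}$, the left-hand side becomes the image of $(\delta_{ik}^l\tens\id_{S_{kj}})\delta_{ij}^k$ and the right-hand side the image of $(\id_{S_{il}}\tens\delta_{lj}^k)\delta_{ij}^l$ under the injective threefold embedding, so the two coincide. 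For (3), cut the global density relations $[\delta(S)(1_S\tens S)]=\delta(1_S)(S\tens S)=[\delta(S)(S\tens 1_S)]$ of \S\ref{WHC*A} by $p_{ik}\tens p_{kj}$: the outer terms become $[\delta_{ij}^k(S_{ij})(1_{S_{ik}}\tens S_{kj})]$ and $[\delta_{ij}^k(S_{ij})(S_{ik}\tens 1_{S_{kj}})]$, while $(p_{ik}\tens p_{kj})\delta(1_S)(S\tens S)(p_{ik}\tens p_{kj})=S_{ik}\tens S_{kj}$ because $(p_{ik}\tens p_{kj})\delta(1_S)=p_{ik}\tens p_{kj}$; the ``in particular'' is obtained by slicing the first leg of the second equality with the normal functionals on $\s H_{ik}$, using that $S_{ik}\subset\B(\s H_{ik})$ acts non-degenerately.

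Finally, for (4): by \ref{prop36}(1) the quadruple $\QG_j=(M_{jj},\Delta_{jj}^j,\varphi_{jj},\psi_{jj})$ is a locally compact quantum group with \GNS space $\s H_{jj}$ and right regular representation $V_{jj}^j$, so by the very definition of the Hopf C$^*$-algebra of a locally compact quantum group recalled in \S\ref{WHC*A}'s analogue for groups, $\rmc_0(\QG_j)$ is the norm closure of $\{(\omega\tens\id)(V_{jj}^j)\,;\,\omega\in\B(\s H_{jj})_*\}$ equipped with the coproduct $y\mapsto V_{jj}^j(y\tens 1_{\s H_{jj}})(V_{jj}^j)^*$. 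Since $p_{jj}$ is central in $\M(S)$ one has $S_{jj}=p_{jj}Sp_{jj}$ and, by \ref{prop35}, $V_{jj}^j=(p_{jj}\tens 1)V(p_{jj}\tens 1)=(p_{jj}\tens p_{jj})V(p_{jj}\tens p_{jj})$; slicing the first leg of $V$ by functionals supported on $\s H_{jj}$ and comparing with $S=[(\omega\tens\id)(V)\,;\,\omega\in\B(\s H)_*]$ identifies that norm closure with $S_{jj}$, and by (2) the coproduct is precisely $\delta_{jj}^j$. Hence $(S_{jj},\delta_{jj}^j)$ is the Hopf C$^*$-algebra of $\QG_j$.
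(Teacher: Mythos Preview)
The paper does not give its own proof of this proposition: it is stated with the citation ``(cf.\ 7.4.13, 7.4.14 \cite{DC}, 2.27 \cite{BC})'' and closed with \verb|\qedhere|, so the result is quoted from the references. Your reconstruction is correct and is exactly the natural argument one would expect in those sources: each item is obtained by compressing the corresponding global identity for $(S,\delta)$ --- the implementing formulas $\delta(x)=W^*(1\otimes x)W=V(x\otimes 1)V^*$, coassociativity, and the density relation $[\delta(S)(1_S\otimes S)]=\delta(1_S)(S\otimes S)=[\delta(S)(S\otimes 1_S)]$ --- with the central projection $p_{ik}\otimes p_{kj}$, using the block decomposition of $W$ and $V$ from \ref{prop35} and the formula $\delta(p_{ij})=\sum_m p_{im}\otimes p_{mj}$. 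Your treatment of the extra middle projection in~(2) via \ref{prop35} is the right manoeuvre, and (4) is indeed immediate from \ref{prop36}~(1) once~(2) is in hand.

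One minor remark on the ``in particular'' clause of~(3): as printed, $(\id_{S_{ik}}\otimes\omega)\delta_{ij}^k(x)$ with $\omega\in\B(\s H_{kj})_*$ lands in $\M(S_{ik})$, so the left-hand side should read $S_{ik}$ (or, equivalently, the slice should be $(\omega\otimes\id_{S_{kj}})$ with $\omega\in\B(\s H_{ik})_*$ to obtain $S_{kj}$). You have tacitly adopted the second reading, which is fine; the argument you give --- slicing the first leg of the second equality in~(3) --- is correct for that version.
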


\section{Contributions to the notions of semi-regularity and regularity}\label{RegMQG}

\setcounter{thm}{0}

\numberwithin{thm}{section}
\numberwithin{prop}{section}
\numberwithin{lem}{section}
\numberwithin{cor}{section}
\numberwithin{propdef}{section}
\numberwithin{nb}{section}
\numberwithin{nbs}{section}
\numberwithin{rk}{section}
\numberwithin{rks}{section}
\numberwithin{defin}{section}
\numberwithin{ex}{section}
\numberwithin{exs}{section}
\numberwithin{noh}{section}
\numberwithin{conv}{section}

The notion of regular measured quantum groupoid has been introduced in \cite{E05} and stu\-died in the compact case. Note that this notion has been generalized in the setting of pseudo-multiplicative unitaries, cf.\ \cite{Tim1,Tim2}. The notion of semi-regular measured quantum groupoid has been introduced in \cite{BC,C}, where the notions of regularity and semi-regularity have been studied in the case of a finite-dimensional basis.

\medbreak

In this chapter, we fix a measured quantum groupoid ${\cal G}=(N,M,\alpha,\beta,\Delta,T,T',\epsilon)$ on the finite-dimensional basis $N=\bigoplus_{1\leqslant l\leqslant k}\,{\rm M}_{n_l}(\GC)$ and we use all the notations introduced in \S\S \ref{MQGfinitebasis}, \ref{WHC*A}. In the appendix (cf.\ \ref{defLeftBoundedVector}), for any $\xi\in\s H$ we have given the definition of the operator 
\begin{center}
$R^{\alpha}_{\xi}\in\B(\s H_{\epsilon},\s H)$ \quad (resp.\ $L^{\beta}_{\xi}\in\B(\s H_{\epsilon},\s H)$) 
\end{center} 
and the definition of the weakly dense ideal of $\alpha(N)'$ (resp.\ $\beta(N^{\rm o})'$) 
\begin{center}
$\K_{\alpha}:=[R^{\alpha}_{\xi}(R^{\alpha}_{\eta})^*\,;\,\xi,\,\eta\in\s H]$ \quad (resp.\ $\K_{\beta}:=[L^{\beta}_{\xi}(L^{\beta}_{\eta})^*\,;\,\xi,\,\eta\in\s H]$). 
\end{center}
Note that $\K_{\alpha}$ and $\K_{\beta}$ are C*-subalgebras of $\K:=\K(\s H)$\index[symbol]{ke@$\K$, C*-algebra of compact operators on the \GNS space ${\rm L}^2({\cal G})$}.

\medbreak

We first recall the following important consequence of the irreducibility (cf.\ 2.13 \cite{BC}) of $\cal G$.

\begin{prop}(cf.\ 2.15 \cite{BC})
The Banach spaces $[S\widehat{S}]$ and $\C(V)$ (cf.\ \ref{notC(V)}) are C*-algebras and we have $[S\widehat{S}]=U\C(V)U^*$.
\end{prop}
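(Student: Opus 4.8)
The plan is to reduce the statement to the corresponding facts about the multiplicative unitary $W$ and the locally compact quantum group theory recalled earlier, using the unitary $U$ as a bridge. First I would recall from \ref{inifinproj} that $W=\Sigma(U\otimes 1)V(U^*\otimes 1)\Sigma$, so that conjugation by $U$ on $\B(\s H)$ interchanges the ``legs'' of $V$ with those of $W$. Concretely, applying $(\id\otimes\omega)$ to $\Sigma V$ versus $\Sigma W$ and tracking the $U$'s, one gets $U\,\C(V)\,U^*=\C(W)$ (this is the groupoid analogue of the quantum-group computation behind the remark after the definition of regularity in \S2.1). Since $\C(W)$ is built from $(\id\otimes\omega)(\Sigma W)$ and $W\in\M(S\otimes\lambda(\widehat S))$, I expect $\C(W)$ to be expressible, up to norm closure, in terms of products of elements of $\rho(\widehat S)$-type and $S$-type factors; more precisely the natural guess is $\C(W)=[\widehat{S}S]$ or $[S\widehat S]$ (one must be careful which order and which of $S,\widehat S,\lambda(\widehat S)$ appears), whence $U\C(V)U^*=[S\widehat S]$ after using $U\widehat S U^*$-type identities and the relation $W=\Sigma(U\otimes 1)V(U^*\otimes 1)\Sigma$ together with $V\in\M(\widehat S\otimes S)$.

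The core computation I would carry out is the following. For $\omega,\omega'\in\B(\s H)_*$ consider $(\id\otimes\omega)(\Sigma W)$ and expand using the pentagonal/defining relation for $W$: since $W\in\M(S\otimes\lambda(\widehat S))$, slicing the second leg produces elements of $S$, while the ``$\Sigma$'' twist combined with a second slice of $V$ (via $W=\Sigma(U\otimes1)V(U^*\otimes1)\Sigma$ and $V\in\M(\widehat S\otimes S)$) produces elements of $U\widehat S U^*$. Collecting the span and taking norm closure, one identifies $\C(V)$ with $U^*[S\widehat S]U$ up to a reshuffling, i.e. $[S\widehat S]=U\C(V)U^*$. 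The fact that $[S\widehat S]$ is a C*-algebra then follows because $\C(V)$ is: indeed $\Sigma V$ is a multiplicative (partial) isometry, so $\{(\id\otimes\omega)(\Sigma V)\}$ is already an algebra by the groupoid analogue of Baaj--Skandalis (Notation \ref{notC(V)} records exactly this for genuine multiplicative unitaries; for the partial-isometry case one uses that $VV^*=q_{\beta\alpha}$ is a unit-like projection for the relevant slices, referenced via 2.13--2.15 \cite{BC}), hence its closure $\C(V)$ is a C*-algebra, and $U(\cdot)U^*$ transports this C*-algebra structure to $[S\widehat S]$. Closedness of $[S\widehat S]$ under the $*$-operation can be double-checked directly from $S^*=S$, $\widehat S^*=\widehat S$ and $\C(V)^*=\C(V)$ (the last because $V^*=(J\otimes\widehat J)V(J\otimes\widehat J)$ by \ref{inifinproj}(2), which conjugates $\Sigma V$ into a multiplicative isometry of the same type up to the anti-unitaries $J,\widehat J$, and these implement anti-automorphisms preserving the relevant algebras).

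The main obstacle I anticipate is bookkeeping with the partial-isometry nature of $V$ and $W$ (the projections $q_{\beta\alpha}$, $q_{\widehat\alpha\beta}$, etc.) and with the precise placement of the representations $L,R,\rho,\lambda$ and the conjugations by $U$: unlike the genuine-quantum-group case, slicing does not automatically land in a unital algebra, so one must invoke the irreducibility of $\cal G$ (2.13 \cite{BC}) to guarantee that the relevant spans are nondegenerate and that the closures are honest C*-algebras rather than merely closed subspaces. Once that structural input is in place, the identity $[S\widehat S]=U\C(V)U^*$ and the C*-algebra property are a matter of assembling the leg-numbering identities from \ref{inifinproj} and the commutation relations $[W_{12},V_{23}]=0$, $[V_{12},\widetilde V_{23}]=0$. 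I would organize the write-up as: (i) show $\C(V)$ is a C*-algebra via the multiplicative-partial-isometry pentagon equation and irreducibility; (ii) establish $U\C(V)U^*=[S\widehat S]$ by the slicing computation above; (iii) conclude that $[S\widehat S]$ is a C*-algebra by transport of structure.
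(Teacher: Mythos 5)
There is a genuine error at the core of your reduction. Note first that the paper itself gives no proof of this statement: it is recalled verbatim from 2.15 of \cite{BC} as a consequence of the irreducibility of $\cal G$, so your argument has to stand alone — and its first step does not. The identity $U\,\C(V)\,U^*=\C(W)$ is false for a general measured quantum groupoid on a finite basis, and is contradicted by the paper's own regularity characterizations. Indeed, conjugation by $U$ carries $L^{\beta}_{\xi}$ to $L^{\widehat\beta}_{U\xi}$ (because $\widehat\beta(n^{\rm o})=U\beta(n^{\rm o})U^*$), hence carries $\K_{\beta}$ onto $\K_{\widehat\beta}$; so if $\cal G$ is regular, the proposition recalled just before \ref{propReg4} gives $U\C(V)U^*=U\K_{\beta}U^*=\K_{\widehat\beta}$, which by \ref{propReg4} equals $[\widehat S S]$, whereas $\C(W)=\K_{\alpha}=[R(S)\widehat S]$. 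Now $\K_{\alpha}$ and $\K_{\widehat\beta}$ are weakly dense ideals of $\alpha(N)'$ and $\widehat\beta(N^{\rm o})'$ respectively, so they coincide only when $\alpha(N)=\widehat\beta(N^{\rm o})$, i.e.\ only when $\alpha(N)\subset{\cal Z}(M)$ — true for colinking groupoids, false in general. Your intuition is imported from the quantum-group case, where regularity forces $\C(V)=\C(W)=\K$ and the distinction disappears; in the finite-basis setting the whole point is that $\K_{\alpha},\K_{\beta},\K_{\widehat\alpha},\K_{\widehat\beta}$ are genuinely different, and the correct identity relates $U\C(V)U^*$ to $[S\widehat S]$ directly, not via $\C(W)$.

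The second gap is the C*-algebra property. Knowing that $\{(\id\tens\omega)(\Sigma V)\}$ is an algebra makes $\C(V)$ a closed algebra, not a C*-algebra: one still needs stability under the adjoint, and for general multiplicative unitaries this is precisely what can fail. Your proposed checks are circular. From $S^*=S$ and $\widehat S^*=\widehat S$ one only gets $[S\widehat S]^*=[\widehat S S]$, and the exchange relation $[S\widehat S]=[\widehat S S]$ is essentially equivalent to the proposition itself (once it holds, both $*$-stability and $[S\widehat S][S\widehat S]\subset[S\widehat S]$ follow at once). Likewise $\C(V)^*=\C(V)$ does not follow from $V^*=(J\tens\widehat J)V(J\tens\widehat J)$ alone: $(\id\tens\omega)(\Sigma V)^*=(\id\tens\overline\omega)(V^*\Sigma)$ is a slice of $\Sigma(\Sigma V\Sigma)^*$, i.e.\ of the opposite isometry, and identifying its slices with those of $\Sigma V$ is the substantive step. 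This is exactly where the irreducibility of $\cal G$ (2.13 of \cite{BC}) must be used — not merely, as you suggest, to guarantee non-degeneracy of the spans. As written, the proposal would need to be rebuilt around a direct pentagon computation expressing products $s\widehat s$ ($s\in S$, $\widehat s\in\widehat S$) as slices of $U(\Sigma V)U^*$-type expressions, with irreducibility supplying the exchange relation.
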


\begin{defin}(cf.\ 4.7 \cite{E05}, 2.37 \cite{BC})
The groupoid $\cal G$ is said to be semi-regular (resp.\ regular) if we have $\K_{\beta}\subset{\cal C}(V)$ (resp.\ $\K_{\beta}={\cal C}(V)$).
\end{defin}

\begin{prop}(cf.\ 2.8 \cite{BC}, 3.2.8 \cite{C})
The following statements are equivalent:
\begin{enumerate}[label=(\roman*)]
	\item $\cal G$ is semi-regular {\rm(}resp.\ regular{\rm)}, {\it i.e.} $\K_{\alpha}\subset \C(W)$ {\rm(}resp.\ $\K_{\alpha}=\C(W)${\rm)}; $\phantom{\K_{\beta}}$
	\item $\widehat{\cal G}$ is semi-regular {\rm(}resp.\ regular{\rm)}, {\it i.e.} $\K_{\beta}\subset \C(V)$ {\rm(}resp.\ $\K_{\beta}=\C(V)${\rm)};
	\item $({\cal G}^{\rm o})^{\rm c}$ is semi-regular {\rm(}resp.\ regular{\rm)}, {\it i.e.} $\K_{\widehat{\alpha}}\subset \C(\widetilde{V})$ {\rm(}resp.\ $\K_{\widehat{\alpha}}=\C(\widetilde{V})${\rm)}.\qedhere
\end{enumerate}
\end{prop}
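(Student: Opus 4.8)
The plan is to reduce the whole chain of equivalences — at once in the regular and in the semi-regular case — to two conjugation identities. Each of (i), (ii), (iii) has the shape $\mathcal I\subset\C(X)$ (resp.\ $\mathcal I=\C(X)$), with $\mathcal I$ one of the ideals $\K_\alpha,\K_\beta,\K_{\widehat\alpha}$ of $\K$ and $X$ one of $W,V,\widetilde V$. Conjugation by a fixed unitary or self-adjoint anti-unitary involution of $\B(\s H)$ is a $*$-automorphism of $\B(\s H)$ that maps $\K$ onto $\K$, hence preserves both inclusions and equalities of subsets of $\K$; so it suffices to produce, for each of the two transitions, one such operator $z$ with simultaneously $z\,\K_\alpha\,z^*=\K_\beta$ and $z\,\C(W)\,z^*=\C(V)$ (resp.\ $z=U$, $z\,\K_\alpha\,z^*=\K_{\widehat\alpha}$ and $z\,\C(W)\,z^*=\C(\widetilde V)$). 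The operators doing this are $U=\widehat J J$ and the self-adjoint anti-unitary $JU=U^*J$ (which differs from $\widehat J$ by $\lambda^{-{\rm i}/4}\in{\cal Z}(M)\cap{\cal Z}(\widehat M)$).

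\textbf{The $\C$-side.} I would establish $\C(\widetilde V)=U\,\C(W)\,U^*$ and $\C(W)=\widehat J\,\C(V)\,\widehat J$ by slicing the identities of \ref{inifinproj}. From $\widetilde V=(U\tens U)W(U^*\tens U^*)$ one gets $\Sigma\widetilde V=(U\tens U)(\Sigma W)(U^*\tens U^*)$, and applying $\id\tens\omega$, $\omega\in\B(\s H)_*$, yields $\C(\widetilde V)=U\,\C(W)\,U^*$ at once. From $W=\Sigma(U\tens 1)V(U^*\tens 1)\Sigma$ one rewrites $\Sigma W=(U\tens 1)(V\Sigma)(1\tens U^*)$ and slices the last leg, getting $\C(W)=U\cdot[(\id\tens\mu)(V\Sigma)\,;\,\mu\in\B(\s H)_*]$; the remaining span is identified by passing to adjoints, using $(V\Sigma)^*=\Sigma V^*=(\widehat J\tens J)(\Sigma V)(J\tens\widehat J)$ — which comes from $V^*=(J\tens\widehat J)V(J\tens\widehat J)$ of \ref{inifinproj}(2) — together with the anti-unitary slicing rules and the fact recalled above that $\C(V)$ is a C*-algebra, hence $*$-stable: one finds $[(\id\tens\mu)(V\Sigma)\,;\,\mu]=J\,\C(V)\,\widehat J$, so that $\C(W)=UJ\,\C(V)\,\widehat J=\widehat J\,\C(V)\,\widehat J$ since $UJ=\widehat J$. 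In particular $\C(W)$ and $\C(\widetilde V)$ are again C*-algebras.

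\textbf{The $\K$-side.} The ideals $\K_\alpha,\K_\beta,\K_{\widehat\alpha}$ are generated by $R^\alpha_\xi(R^\alpha_\eta)^*$, $L^\beta_\xi(L^\beta_\eta)^*$, $R^{\widehat\alpha}_\xi(R^{\widehat\alpha}_\eta)^*$ respectively. Since $\widehat\alpha(n)=U\alpha(n)U^*$, the defining property of $\alpha$-bounded vectors gives $R^{\widehat\alpha}_\xi=U R^\alpha_{U^*\xi}$, hence $\K_{\widehat\alpha}=U\,\K_\alpha\,U^*$. Since $\widehat\beta(n^{\rm o})=J\alpha(n^*)J=U\beta(n^{\rm o})U^*$, one gets $\beta(n^{\rm o})=(JU)\alpha(n^*)(JU)$, and comparing the defining properties of $L^\beta_\xi$ and $R^\alpha_\xi$ recorded in the appendix yields in the same way $\K_\beta=(JU)\,\K_\alpha\,(JU)$. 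Now conjugation by $U$ carries $(\K_\alpha,\C(W))$ to $(\K_{\widehat\alpha},\C(\widetilde V))$, giving (i) $\Leftrightarrow$ (iii); conjugation by $JU$ carries $(\K_\alpha,\C(W))$ to $(\K_\beta,\,(JU)\,\C(W)\,(JU))$, and $(JU)\,\C(W)\,(JU)=(JU)\widehat J\,\C(V)\,\widehat J(JU)=\lambda^{-{\rm i}/4}\,\C(V)\,\lambda^{{\rm i}/4}=\C(V)$ because $\lambda^{{\rm i}t}\in{\cal Z}(M)\cap{\cal Z}(\widehat M)$ commutes with $S$ and $\widehat S$, hence with $[S\widehat S]$ and with $\C(V)=U^*[S\widehat S]U$; this gives (i) $\Leftrightarrow$ (ii). The same conjugations handle the "resp." statements, with equalities in place of inclusions.

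The step I expect to be the real work is the identity $\C(W)=\widehat J\,\C(V)\,\widehat J$: the flip $\Sigma$ in $W=\Sigma(U\tens 1)V(U^*\tens 1)\Sigma$ interchanges the two legs of $V$, so a naive slicing only relates $\C(W)$ to the ``other'' leg of $V\Sigma$, and pulling this back to $\C(V)$ forces one through $V^*$, the $*$-stability of $\C(V)$, and the anti-unitary slicing rules, keeping the bookkeeping of $J,\widehat J,U$ and $\lambda^{{\rm i}t}$ consistent with the $\K$-side. This is the measured-quantum-groupoid counterpart of the Baaj--Skandalis computation $\C(\widehat{\mathbb V})=\C(\mathbb V)^*$ for a multiplicative unitary.
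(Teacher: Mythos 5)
Your argument is correct; note that the paper itself gives no proof of this proposition (it is quoted from 2.8 of \cite{BC} and 3.2.8 of \cite{C}), so there is no internal proof to compare against. That said, your route is exactly in the spirit of how the paper passes between statements about $W$, $V$ and $\widetilde V$ elsewhere (the proofs of \ref{propReg3} and \ref{corReg} invoke precisely the formulas $W=\Sigma(U\tens 1)V(U^*\tens 1)\Sigma$, $\widetilde V=(U\tens U)W(U^*\tens U^*)$, $\widehat\alpha={\rm Ad}_U\circ\alpha$, $\widehat\beta={\rm Ad}_U\circ\beta$ to transfer conclusions), and all the individual identities you use check out: $\C(\widetilde V)=U\,\C(W)\,U^*$ and $\K_{\widehat\alpha}=U\K_\alpha U^*$ are immediate; the explicit formulas of \ref{lemReg} confirm $(JU)\K_\alpha(JU)=\K_\beta$ (since $(JU)\alpha(\varepsilon_I)(JU)=\beta(\varepsilon_{\overline I}^{\,\rm o})$ and $K\theta_{\xi,\eta}K=\theta_{K\xi,K\eta}$, with the reindexing $I\mapsto\overline I$ absorbing the order reversal); and the one genuinely delicate identity, $\C(W)=\widehat J\,\C(V)\,\widehat J$, does come out of the adjoint manoeuvre you describe, using $(V\Sigma)^*=(\widehat J\tens J)(\Sigma V)(J\tens\widehat J)$, the anti-unitary slicing rule $(\id\tens\omega_{\eta_1,\eta_2})((K\tens K')T(L\tens L'))=K(\id\tens\omega_{K'\eta_1,L'\eta_2})(T)L$, the $*$-stability of $\C(V)$, and $UJ=\widehat J$. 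The final normalization $\lambda^{-{\rm i}/4}\,\C(V)\,\lambda^{{\rm i}/4}=\C(V)$ is also fine, and can even be seen directly from $\lambda^{{\rm i}t}\in{\cal Z}(M)\cap{\cal Z}(\widehat M)$ and $V\in\widehat M'\tens M$ without passing through $[S\widehat S]$. Two cosmetic caveats: conjugation by an anti-unitary is a conjugate-linear (not linear) $*$-automorphism, which still preserves closed linear subspaces of $\K$ and hence inclusions and equalities, and the self-adjointness of $JU$ is never actually needed — only that it is an anti-unitary involution.
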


\begin{prop}\label{propReg4}(cf.\ 2.8 \cite{BC}, 3.2.9 \cite{C})
The following statements are equivalent:
\begin{enumerate}[label=(\roman*)]
	\item $\cal G$ is semi-regular {\rm(}resp.\ regular{\rm)}; $\phantom{\K_{\widehat{\beta}}}$
	\item $\K_{\widehat{\beta}}\subset[\widehat{S}S]$ {\rm(}resp.\ $\K_{\widehat{\beta}}=[\widehat{S}S]${\rm)};
	\item $\K_{\alpha}\subset [R(S)\widehat{S}]$ {\rm(}resp.\ $\K_{\alpha}=[R(S)\widehat{S}]${\rm)}; $\phantom{\K_{\widehat{\beta}}}$
	\item $\K_{\widehat{\alpha}}\subset [S\lambda(\widehat{S})]$ {\rm(}resp.\ $\K_{\widehat{\alpha}}=[S\lambda(\widehat{S})]${\rm)}.
\end{enumerate}
In particular, if $\cal G$ is regular we have $[\widehat{S}S]\subset\K$, $[R(S)\widehat{S}]\subset\K$ and $[S\lambda(\widehat{S})]\subset\K$ {\rm(}and also $\C(V)\subset\K$, $\C(W)\subset\K$ and $\C(\widetilde{V})\subset\K${\rm)}.
\end{prop}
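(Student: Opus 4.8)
The plan is to derive the statement from the preceding proposition — which characterises semi-regularity (resp.\ regularity) of ${\cal G}$, $\widehat{\cal G}$ and $({\cal G}^{\rm o})^{\rm c}$ by the inclusions $\K_{\alpha}\subseteq\C(W)$, $\K_{\beta}\subseteq\C(V)$ and $\K_{\widehat\alpha}\subseteq\C(\widetilde V)$ (resp.\ by the corresponding equalities) — by rewriting $\C(V)$ and $\C(W)$ as products of (the canonical images of) the weak Hopf C*-algebras and transporting the ideals $\K_{\bullet}$. The only inputs are the identity $[S\widehat S]=U\C(V)U^{*}$ recalled above, the relations between $V$, $W$ and $\widetilde V$ of Proposition~\ref{inifinproj}, and the behaviour of the objects $\alpha,\beta,\widehat\alpha,\widehat\beta$, $S,\widehat S,R(S),\lambda(\widehat S)$ and $\K_{\bullet}$ under conjugation by $U$. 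Depending on how the preceding proposition was established, several of these rewritings may already be available in its proof, so that the present statement is in part a matter of collecting them.

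First I would record the conjugation formulas for the ideals. Since $\widehat\beta(n^{\rm o})=U\beta(n^{\rm o})U^{*}$ and $\widehat\alpha(n)=U\alpha(n)U^{*}$ for all $n\in N$, conjugation by $U$ carries $\beta(N^{\rm o})'$ onto $\widehat\beta(N^{\rm o})'$ and $\alpha(N)'$ onto $\widehat\alpha(N)'$; applying this to the left/right-bounded-vector operators of the appendix gives $U\K_{\beta}U^{*}=\K_{\widehat\beta}$ and $U\K_{\alpha}U^{*}=\K_{\widehat\alpha}$ (and the same with $U$ and $U^{*}$ interchanged, since $U^{*}=\lambda^{-{\rm i}/4}U$ with $\lambda^{{\rm i}t}$ central). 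In particular $\K_{\widehat\alpha}$ and $\K_{\widehat\beta}$ are C*-subalgebras of $\K$, just like $\K_{\alpha}$ and $\K_{\beta}$.

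The heart of the matter is the pair of ``bridging'' identities
\[
[\widehat S\,S]=U\C(V)U^{*},\qquad [R(S)\,\widehat S]=\C(W),\qquad [S\,\lambda(\widehat S)]=U^{*}\C(W)U.
\]
The first is free: $[S\widehat S]$ is a C*-algebra, hence self-adjoint, so $[S\widehat S]=[\widehat S\,S]$, and the identity recalled above applies. For the second I would start from $\Sigma W=(U\otimes1)V\Sigma(1\otimes U^{*})$ (a rewriting of $W=\Sigma(U\otimes1)V(U^{*}\otimes1)\Sigma$), slice, and use $[S\widehat S]=U\C(V)U^{*}$ together with $V\in\M(\widehat S\otimes S)$, $W\in\M(S\otimes\lambda(\widehat S))$, $R(S)=USU^{*}$, $\lambda(\widehat S)=U\widehat S U^{*}$ and the centrality of $U^{2}=\lambda^{{\rm i}/4}$ in $M$ and $\widehat M$ (so that conjugations by $U$ and $U^{*}$ differ only by a central unitary and commute with the algebras in sight) to recognise the outcome as $[R(S)\,\widehat S]$; the same ingredients give $[R(S)\,\widehat S]=U[S\,\lambda(\widehat S)]U^{*}$, whence the third identity. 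I expect this recognition to be the only genuine difficulty: one has to keep track of which leg of $V$, $W$, $\widetilde V$ carries which weak Hopf C*-algebra, of the order of the factors (so that it is $[\widehat S\,S]$, not $[S\widehat S]$, that appears), and of which of $U$, $J$, $\widehat J$ implements each passage to the dual or bidual — the rest being formal.

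Granting these, the equivalences follow immediately. ${\cal G}$ is semi-regular iff $\K_{\beta}\subseteq\C(V)=U^{*}[\widehat S\,S]U$, i.e.\ iff $\K_{\widehat\beta}=U\K_{\beta}U^{*}\subseteq[\widehat S\,S]$, which is (ii); ${\cal G}$ is semi-regular iff $\K_{\alpha}\subseteq\C(W)=[R(S)\,\widehat S]$, which is (iii); and ${\cal G}$ is semi-regular iff $\K_{\alpha}\subseteq\C(W)$, i.e.\ iff $\K_{\widehat\alpha}=U^{*}\K_{\alpha}U\subseteq U^{*}\C(W)U=[S\,\lambda(\widehat S)]$, which is (iv). Replacing each inclusion by an equality throughout gives the ``regular'' versions. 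Finally, if ${\cal G}$ is regular, then (ii)--(iv) become $[\widehat S\,S]=\K_{\widehat\beta}$, $[R(S)\,\widehat S]=\K_{\alpha}$ and $[S\,\lambda(\widehat S)]=\K_{\widehat\alpha}$, all contained in $\K$ by the first step; and $\C(V)=\K_{\beta}\subseteq\K$ by the definition of regularity, while $\C(W)=\K_{\alpha}\subseteq\K$ and $\C(\widetilde V)=\K_{\widehat\alpha}\subseteq\K$ by the preceding proposition. This proves the last assertion.
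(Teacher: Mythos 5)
You should first note that the paper does not prove this proposition at all: it is recalled from the references (2.8 of [BC], 3.2.9 of [C]), so your attempt can only be judged on its own terms. Its architecture is sound. The conjugation formulas $\K_{\widehat\alpha}=U\K_{\alpha}U^{*}=U^{*}\K_{\alpha}U$ and $\K_{\widehat\beta}=U\K_{\beta}U^{*}$ are correct (one computes $R^{\widehat\alpha}_{\xi}=UR^{\alpha}_{U^{*}\xi}$, etc., from $\widehat\alpha=\mathrm{Ad}_U\circ\alpha=\mathrm{Ad}_{U^{*}}\circ\alpha$), and together with Lemma \ref{lemReg} they give the containments in $\K$ for the last assertion. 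The step (i)$\Leftrightarrow$(ii) via $[S\widehat S]=U\C(V)U^{*}$ and self-adjointness is clean, and the reduction of (iii) and (iv) to the two bridging identities is valid: indeed $U^{*}[R(S)\widehat S]U=[S\,\lambda(\widehat S)]$ because $U^{*}R(S)U=S$ and $U^{*}\widehat SU=\lambda^{-{\rm i}/4}\lambda(\widehat S)\lambda^{{\rm i}/4}=\lambda(\widehat S)$ (here the centrality of $\lambda^{{\rm i}t}$ in $\widehat M\supset\lambda(\widehat S)$ is genuinely used), so the second bridging identity follows from the first.

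The gap is that the first bridging identity, $[R(S)\widehat S]=\C(W)$, is never established, and the recipe you propose for it does not close with the ingredients you list. Slicing $\Sigma W=(U\tens 1)V\Sigma(1\tens U^{*})$ in the second leg gives $\C(W)=U\cdot[\{(\id\tens\omega)(V\Sigma)\,;\,\omega\in\B(\s H)_{*}\}]$, and the space $[\{(\id\tens\omega)(V\Sigma)\}]$ is neither $\C(V)$ nor $U^{*}[\widehat SS]U$: using $V^{*}=(J\tens\widehat J)V(J\tens\widehat J)$ one finds it equals $J\,\C(V)\,\widehat J$, whence $\C(W)=\widehat J\,\C(V)\,\widehat J=K[S\widehat S]K$ with $K:=\widehat JJ\widehat J$. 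To recognise this as $[R(S)\widehat S]$ one still needs $KSK=R(S)$ and $K\widehat SK=\widehat S$, i.e.\ the invariance of $S$ under $x\mapsto\widehat Jx^{*}\widehat J$ (the unitary coinverse of $\cal G$) and of $\widehat S$ under $x\mapsto Kx^{*}K$ (that of $\widehat{\cal G}$). None of the ingredients you invoke ($V\in\M(\widehat S\tens S)$, $W\in\M(S\tens\lambda(\widehat S))$, $R(S)=USU^{*}$, centrality of $U^{2}$) supplies these relations; they are the actual content of the step, which you have flagged as ``the only genuine difficulty'' without resolving it. The alternative route — applying the identity $[S\widehat S]=U\C(V)U^{*}$ to $\widehat{\cal G}$ and to $({\cal G}^{\rm o})^{\rm c}$ — would also work, but then you must identify the operator $U$ of each of those groupoids (a product of the relevant modular conjugations), which is the same bookkeeping in another guise. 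Until one of these computations is carried out, the equivalences (i)$\Leftrightarrow$(iii) and (i)$\Leftrightarrow$(iv) are not proved.
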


The semi-regularity and the regularity of colinking measured quantum groupoids have been treated in detail in \S 2.5 \cite{BC}.

\begin{thm}\label{theo7}(cf.\ 2.45 \cite{BC}) Let ${\cal G}_{\QG_1,\QG_2}$ be a colinking measured quantum groupoid associated with two monoidally equivalent locally compact quantum groups $\QG_1$ and $\QG_2$. The groupoid ${\cal G}_{\QG_1,\QG_2}$ is semi-regular (resp.\ regular) if, and only if, $\QG_1$ and $\QG_2$ are semi-regular (resp.\ regular).
\end{thm}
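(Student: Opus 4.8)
The plan is to test semi-regularity and regularity through the definition recalled above (cf.\ 2.37 \cite{BC}): the groupoid ${\cal G}_{\QG_1,\QG_2}$ is semi-regular (resp.\ regular) if and only if $\K_\beta\subset\C(V)$ (resp.\ $\K_\beta=\C(V)$), and to analyse both sides along the central decomposition furnished by the projections $p_{ij}=\alpha(\varepsilon_i)\beta(\varepsilon_j)$. Writing $\s H=\bigoplus_{i,j=1,2}\s H_{ij}$ and $\s H_{\cdot l}:=\s H_{1l}\oplus\s H_{2l}$, the algebra $\beta(\GC^2)$ is generated by the two projections onto $\s H_{\cdot 1}$ and $\s H_{\cdot 2}$, so $\beta(N^{\rm o})'=\B(\s H_{\cdot 1})\oplus\B(\s H_{\cdot 2})$ and, the basis being $\GC^2$, the ideal $\K_\beta$ identifies with $\K(\s H_{\cdot 1})\oplus\K(\s H_{\cdot 2})$. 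On the other side, the block relations of Proposition \ref{prop35} give $V=\bigoplus_{i,j,l}V_{jl}^i$, so $\Sigma V$ respects the grading and
\[
\C(V)=\bigoplus_{i,j,l=1,2}\C_{jl}^i,\quad\text{where}\quad\C_{jl}^i:=\bigl[\,(\id\otimes\theta)(\Sigma V_{jl}^i)\,;\,\theta\in\B(\s H_{jl},\s H_{ij})_*\,\bigr]\subset\B(\s H_{il},\s H_{jl}).
\]
Since $\B(\s H_{il},\s H_{jl})\subset\B(\s H_{\cdot l})$, the groupoid ${\cal G}_{\QG_1,\QG_2}$ is regular (resp.\ semi-regular) if and only if, for each $l=1,2$, $\bigoplus_{i,j}\C_{jl}^i=\K(\s H_{\cdot l})$ (resp.\ $\K(\s H_{\cdot l})\subset\bigoplus_{i,j}\C_{jl}^i$).

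Now $\s H_{ll}$ is the GNS space of $\QG_l$ and, by Proposition \ref{prop36}(1), $V_{ll}^l$ is the right regular representation of $\QG_l$; hence $\C_{ll}^l=\C(V_{ll}^l)$ and $\QG_l$ is regular (resp.\ semi-regular) precisely when $\C(V_{ll}^l)=\K(\s H_{ll})$ (resp.\ $\K(\s H_{ll})\subset\C(V_{ll}^l)$). In particular, one direction is immediate: if ${\cal G}_{\QG_1,\QG_2}$ is regular (resp.\ semi-regular), comparing the $(l,l,l)$-corner of $\C(V)$ with the corresponding corner $\K(\s H_{ll})$ of $\K_\beta$ gives $\C(V_{ll}^l)=\K(\s H_{ll})$ (resp.\ $\K(\s H_{ll})\subset\C(V_{ll}^l)$), so $\QG_1$ and $\QG_2$ are regular (resp.\ semi-regular).

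The converse is the substance of the statement and requires controlling the off-diagonal corners $\C_{jl}^i$ with $(i,j)\neq(l,l)$. The relevant input is the Galois structure of Proposition \ref{prop36}: the unitaries $V_{ll}^i$ are the canonical implementations of the right Galois objects $(M_{il},\Delta_{il}^l)$ for $\QG_l$, and, via the pentagonal equations together with the commutation and unitary-equivalence relations among the $V_{jl}^i$ and the $W_{ij}^l$ recalled in \S\ref{sectionColinking}, every unitary of the ``$l$-family'' $\{V_{jl}^i\}_{i,j}$ is built from $V_{ll}^l$ and the (unitary, hence irrelevant to regularity) Galois data. The crux is therefore the following principle: for a regular (resp.\ semi-regular) locally compact quantum group $\QH$, the canonical implementation of any right Galois object for $\QH$ has ``$\C$-space'' equal to (resp.\ containing) the full space of compact operators between the relevant GNS spaces; equivalently, (semi-)regularity propagates to all the Galois data of a monoidal equivalence, the comparison being implemented by the Galois isometries. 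Granting this, $\QG_l$ regular forces $\C_{jl}^i=\K(\s H_{il},\s H_{jl})$ for all $i,j$ (and $\QG_l$ semi-regular forces $\K(\s H_{il},\s H_{jl})\subset\C_{jl}^i$), hence $\bigoplus_{i,j}\C_{jl}^i=\K(\s H_{\cdot l})$ (resp.\ $\supset$), which closes the block reduction. I expect the main obstacle to be exactly this Galois principle: proving that the ``$\C$-space'' of a Galois object's canonical implementation fills up the compacts (when $\QH$ is regular) forces one to unwind the Galois isometry of Proposition \ref{prop36}(5) and to manipulate the pentagon and commutation relations for the rectangular multiplicative unitaries $W_{ij}^j$ and $V_{jl}^i$ --- the bookkeeping for these non-square unitaries, and checking that the requisite density statements survive, is where the real work lies.
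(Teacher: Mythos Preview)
The paper does not give a self-contained proof of this theorem: it is merely quoted from \cite{BC}, Theorem~2.45. So there is no ``paper's own proof'' to compare against beyond that citation.

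Your block decomposition is correct and is indeed the natural route: since $\beta(\GC^2)$ is central, $\K_\beta=\K(\s H_{\cdot 1})\oplus\K(\s H_{\cdot 2})$, and the matrix decomposition of $V$ from Proposition~\ref{prop35} splits $\C(V)$ into the corners $\C_{jl}^i\subset\B(\s H_{il},\s H_{jl})$. The easy direction (groupoid regular $\Rightarrow$ each $\QG_l$ regular, by reading off the $(l,l,l)$-corner) is exactly right.

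However, your proposal is not a proof: for the converse you invoke a ``Galois principle'' --- that (semi-)regularity of $\QG_l$ forces $\C_{jl}^i=\K(\s H_{il},\s H_{jl})$ for all $i,j$ --- and you explicitly flag this as the step where ``the real work lies.'' That is precisely the content of \cite{BC}, 2.45, and you have not supplied it. The pentagon and commutation relations you list do not by themselves give the density statement; one genuinely needs to exploit the Galois isometry (your Proposition~\ref{prop36}(5)) together with an argument showing that the $\C$-space of the Galois implementation $V_{ll}^i$ coincides with the compacts when $\QG_l$ is regular. So your proposal is an accurate roadmap, matching the architecture of the argument in \cite{BC}, but it stops exactly where the substantive estimate begins.
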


In the following, we use the multi-index notation introduced in the appendix \S \ref{appendix} of this article (cf.\ \ref{not13}, \ref{not14} and \ref{rk14}) with $\gamma:=\alpha$ and $\pi:=\beta$.

\begin{lem}\label{lemReg} For all $\xi,\eta\in\s H$, we have 
\[
R_{\xi}^{\alpha}(R_{\eta}^{\alpha})^*=\sum_{I\in\s I}n_I^{-1}\cdot e_{\vphantom{\overline{I}}I}\theta_{\xi,\eta}e_{\overline{I}}
\quad \text{and} \quad
L_{\xi}^{\beta}(L_{\eta}^{\beta})^*=\sum_{I\in\s I}n_I^{-1}\cdot f_{\vphantom{\overline{I}}I}\theta_{\xi,\eta}f_{\overline{I}}.
\qedhere
\]
\end{lem}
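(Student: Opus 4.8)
The plan is to reduce the claim to a one-line computation with rank-one operators. The two ingredients are the definition of $R^{\alpha}_{\xi}$ (resp.\ $L^{\beta}_{\xi}$) recalled in \ref{defLeftBoundedVector} and the elementary fact that, since $\epsilon$ is the non-normalised Markov trace on $N$, a suitably normalised system of matrix units of $N$ gives an orthonormal basis of the \GNS space $\s H_{\epsilon}$.

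Concretely, I would first fix the system $(u_I)_{I\in\s I}$ of matrix units of $N$ (resp.\ of $N^{\rm o}$) underlying the multi-index notation of \ref{not13}, \ref{not14} and \ref{rk14} used here with $\gamma=\alpha$, $\pi=\beta$; so $e_I$ (resp.\ $f_I$) is $\alpha$ (resp.\ $\beta$) of such a matrix unit, $n_I$ is the dimension of the block containing $I$, the bar operation is the transpose of indices, and consequently $u_{\overline I}=u_I^*$, $e_{\overline I}=e_I^*$, $f_{\overline I}=f_I^*$. From $\epsilon=\bigoplus_{l}n_l\cdot{\rm Tr}_l$ one obtains $\langle\Lambda_{\epsilon}(u_I),\Lambda_{\epsilon}(u_J)\rangle=\epsilon(u_{\overline I}u_J)=\delta_{I,J}\,n_I$, so that $\{n_I^{-1/2}\Lambda_{\epsilon}(u_I)\}_{I\in\s I}$ is an orthonormal basis of $\s H_{\epsilon}$. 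Feeding this resolution of the identity into the defining relation $R^{\alpha}_{\xi}\Lambda_{\epsilon}(n)=\alpha(n)\xi$ of \ref{defLeftBoundedVector} yields immediately
\[
R^{\alpha}_{\xi}=\sum_{I\in\s I}n_I^{-1}\,\theta_{e_I\xi,\,\Lambda_{\epsilon}(u_I)}\quad\text{in}\quad\B(\s H_{\epsilon},\s H),\qquad\text{hence}\qquad (R^{\alpha}_{\eta})^*=\sum_{I\in\s I}n_I^{-1}\,\theta_{\Lambda_{\epsilon}(u_I),\,e_I\eta}.
\]

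It then suffices to compose the two expansions: by orthonormality $\theta_{e_I\xi,\,\Lambda_{\epsilon}(u_I)}\,\theta_{\Lambda_{\epsilon}(u_J),\,e_J\eta}=\langle\Lambda_{\epsilon}(u_I),\Lambda_{\epsilon}(u_J)\rangle\,\theta_{e_I\xi,\,e_J\eta}=\delta_{I,J}\,n_I\,\theta_{e_I\xi,\,e_I\eta}$, and $\theta_{e_I\xi,\,e_I\eta}=e_I\theta_{\xi,\eta}e_I^*=e_I\theta_{\xi,\eta}e_{\overline I}$, so that $R^{\alpha}_{\xi}(R^{\alpha}_{\eta})^*=\sum_{I\in\s I}n_I^{-1}\,e_I\theta_{\xi,\eta}e_{\overline I}$, which is the first identity. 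The identity for $L^{\beta}$ is proved verbatim, replacing the triple $(\alpha,N,e_I)$ by $(\beta,N^{\rm o},f_I)$ throughout, starting from $L^{\beta}_{\xi}\Lambda_{\epsilon}(n^{\rm o})=\beta(n^{\rm o})\xi$ and $f_{\overline I}=f_I^*$.

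The calculation is entirely routine; the only point calling for care is bookkeeping — matching exactly the normalisations of \ref{defLeftBoundedVector} and of the multi-index notation, and in particular checking that no modular factor of $\epsilon$ intervenes, which it does not since $\epsilon$ is a trace on $N$ (so $\sigma^{\epsilon}=\id$), the "$n_I^{-1}$" coming solely from the non-normalised Markov normalisation of $\epsilon$. Once these conventions are pinned down, the two displayed lines above are the whole argument.
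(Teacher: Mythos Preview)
Your proof is correct and follows essentially the same approach as the paper: both exploit that $\{n_I^{-1/2}\Lambda_{\epsilon}(\varepsilon_I)\}_{I\in\s I}$ is an orthonormal basis of $\s H_{\epsilon}$ together with the defining relation $R^{\alpha}_{\xi}\Lambda_{\epsilon}(x)=\alpha(x)\xi$. The only organisational difference is that you express $R^{\alpha}_{\xi}$ directly as a finite sum of rank-one operators and compose, whereas the paper first computes $(R^{\alpha}_{\eta})^*\zeta$ pointwise (finding an explicit $X_{\eta,\zeta}\in N$ with $(R^{\alpha}_{\eta})^*\zeta=\Lambda_{\epsilon}(X_{\eta,\zeta})$) and then applies $R^{\alpha}_{\xi}$; the two computations are equivalent, yours being slightly more streamlined.
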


\begin{proof}
For $\eta,\zeta\in\s H$, let $X_{\eta,\zeta}\in N$ be defined by $X_{\eta,\zeta}:=\sum_{I\in\s I} n_I^{-1}\langle e_I\eta,\,\zeta\rangle \cdot\varepsilon_I$. For all $x\in N$ and $\eta\in\s H$, we have $R^{\alpha}_{\eta}\Lambda_{\epsilon}(x)=\alpha(x)\eta=\sum_{I\in\s I} x_I \cdot e_I\eta$. $\vphantom{\delta_{\overline{I}}^J}$Let $\eta,\zeta\in\s H$ and $I\in\s I$. 
$
\langle (R_{\eta}^{\alpha})^*\zeta,\, \Lambda_{\epsilon}(\varepsilon_I)\rangle=\langle \zeta,\, e_I\eta\rangle.
$
By disjunction elimination, we prove that $\epsilon(\varepsilon_{JI})=\delta_{\overline{I}}^J n_I$ for all $I,J\in\s I$. On the other hand, we have$\vphantom{\delta_{\overline{I}}^J}$
\begin{align*}
\langle\Lambda_{\epsilon}(X_{\eta,\zeta}),\,\Lambda_{\epsilon}(\varepsilon_I)\rangle &= \epsilon(X_{\eta,\zeta}^*\varepsilon_I) \\
&=\epsilon\left(\sum_{J\in\s I} n_J^{-1}\overline{\langle e_{\overline{J}}\eta,\,\zeta\rangle} \cdot\varepsilon_{JI}\right)\\
&=\sum_{J\in\s I} n_J^{-1}\langle \zeta,\, e_{\overline{J}}\eta\rangle\epsilon(\varepsilon_{JI})\\
&=\langle \zeta,\, e_{I}\eta\rangle.
\end{align*}
Hence, $\langle (R_{\eta}^{\alpha})^*\zeta,\, \Lambda_{\epsilon}(\varepsilon_I)\rangle=\langle\Lambda_{\epsilon}(X_{\eta,\zeta}),\,\Lambda_{\epsilon}(\varepsilon_I)\rangle$. Hence, $(R_{\eta}^{\alpha})^*\zeta=\Lambda_{\epsilon}(X_{\eta,\zeta})$ for all $\eta,\zeta\in\s H$. Let $\xi,\eta\in\s H$. For all $\zeta\in\s H$, we have
\[
R_{\xi}^{\alpha}(R_{\eta}^{\alpha})^*\zeta=R_{\xi}^{\alpha}\Lambda_{\epsilon}(X_{\eta,\zeta})=\sum_{I\in\s I}(X_{\eta,\zeta})_I\cdot e_I\xi=\sum_{I\in\s I}n_I^{-1}\langle \eta,\,e_{\overline{I}}\zeta\rangle\cdot e_I\xi=\sum_{I\in\s I}n_I^{-1}\theta_{e_I\xi,\,e_I\eta}(\zeta).
\]
Hence, $R_{\xi}^{\alpha}(R_{\eta}^{\alpha})^*=\sum_{I\in\s I}n_I^{-1}\cdot e_{\vphantom{\overline{I}}I}\theta_{\xi,\eta}e_{\overline{I}}$. The second formula is proved in a similar way.
\end{proof}

We refer to \ref{propdef5} and \ref{ProjectionBis} for the definition of the operators $q_{\alpha}$, $q_{\beta}$ and $q_{\widehat{\alpha}}$. The propositions \ref{propReg1}, \ref{propReg2} and \ref{propReg3} below have to be compared with their corresponding statements in the quantum group case, cf.\ 3.2 b), 3.6 b) and 3.6 d) \cite{BS2}.

\begin{prop}\label{propReg1} The following statements are equivalent:
\begin{enumerate}[label=(\roman*)]
\item ${\cal G}$ is regular {\rm(}resp.\ semi-regular{\rm)};
\item $[(\K\tens 1)W(1\tens\K)]=[(\K\tens 1)q_{\alpha}(1\tens\K)]$ {\rm(}resp.\ $\supset[(\K\tens 1)q_{\alpha}(1\tens\K)]${\rm)}; $\phantom{\widetilde{V}q_{\beta}}$
\item $[(\K\tens 1)V(1\tens\K)]=[(\K\tens 1)q_{\beta}(1\tens\K)]$ {\rm(}resp.\ $\supset[(\K\tens 1)q_{\beta}(1\tens\K)]${\rm)}; $\phantom{\widetilde{V}}$
\item $[(\K\tens 1)\widetilde{V}(1\tens\K)]=[(\K\tens 1)q_{\widehat{\alpha}}(1\tens\K)]$ {\rm(}resp.\ $\supset[(\K\tens 1)q_{\widehat{\alpha}}(1\tens\K)]${\rm)}.\qedhere
\end{enumerate}
\end{prop}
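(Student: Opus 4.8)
The plan is to reduce the whole equivalence to one computational identity, together with the characterizations of (semi-)regularity already recalled in this chapter. The key remark is that for \emph{every} operator $X\in\B(\s H\tens\s H)$ one has
\[
[(\K\tens 1)X(1\tens\K)]=\Sigma\,\big(\C(X)\tens\K\big)
\]
(closed linear span on the left; $\C(X)$ as in \ref{notC(V)}; $\Sigma$ the flip of $\s H\tens\s H$). Granting this for a moment, and granting that $\C(q_{\alpha})=\K_{\alpha}$, $\C(q_{\beta})=\K_{\beta}$, $\C(q_{\widehat\alpha})=\K_{\widehat\alpha}$, the proposition follows formally. Since $\Sigma$ is a fixed unitary and, for closed subspaces $A,B\subset\B(\s H)$, one has $A\tens\K\subset B\tens\K\Leftrightarrow A\subset B$ (slice the $\K$-leg by a normal functional equal to $1$ on a rank-one projection), condition (ii) with equality (resp.\ with $\supset$) is equivalent to $\C(W)=\K_{\alpha}$ (resp.\ $\C(W)\supset\K_{\alpha}$), which is exactly the regularity (resp.\ semi-regularity) of $\cal G$ by the equivalences recalled just before \ref{propReg4}. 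Likewise (iii) is equivalent to $\C(V)=\K_{\beta}$ (resp.\ $\supset$), which is the very definition of (semi-)regularity, and (iv) to $\C(\widetilde V)=\K_{\widehat\alpha}$ (resp.\ $\supset$), again equivalent to (semi-)regularity of $\cal G$.

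To prove the identity I would compute both sides on elementary tensors. For $\xi_1,\xi_2,\eta_1,\eta_2,\zeta,\rho\in\s H$,
\[
(\theta_{\xi_1,\xi_2}\tens 1)\,X\,(1\tens\theta_{\eta_1,\eta_2})(\zeta\tens\rho)=\langle\eta_2,\rho\rangle\ \xi_1\tens\big[(\id\tens\omega_{\xi_2,\eta_1})(\Sigma X)\,\zeta\big],
\]
where $\omega_{\xi_2,\eta_1}\in\B(\s H)_*$ is given by $\omega_{\xi_2,\eta_1}(T)=\langle\xi_2,T\eta_1\rangle$; equivalently,
\[
(\theta_{\xi_1,\xi_2}\tens 1)\,X\,(1\tens\theta_{\eta_1,\eta_2})=\Sigma\,\big((\id\tens\omega_{\xi_2,\eta_1})(\Sigma X)\tens\theta_{\xi_1,\eta_2}\big).
\]
Taking closed linear spans over all $\xi_i,\eta_i$: the operators $\theta_{\xi_1,\eta_2}$ span a dense subspace of $\K$, the operators $(\id\tens\omega_{\xi_2,\eta_1})(\Sigma X)$ span a dense subspace of $\C(X)$, and the left-hand side spans $[(\K\tens 1)X(1\tens\K)]$; this gives the identity.

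To see that $\C(q_{\alpha})=\K_{\alpha}$ (and similarly for $q_{\beta}$ and $q_{\widehat\alpha}$) I would use the explicit form of $q_{\alpha}$ from the appendix (\ref{propdef5}, \ref{ProjectionBis}) together with Lemma \ref{lemReg}: a direct computation shows $(\id\tens\omega_{\eta,\xi})(\Sigma q_{\alpha})=R^{\alpha}_{\xi}(R^{\alpha}_{\eta})^{*}$ for all $\xi,\eta\in\s H$, so that $\C(q_{\alpha})=[R^{\alpha}_{\xi}(R^{\alpha}_{\eta})^{*}\,;\,\xi,\eta\in\s H]=\K_{\alpha}$; the analogous formulas with $L^{\beta}_{\xi}$ (resp.\ with the operators defining $\K_{\widehat\alpha}$) give $\C(q_{\beta})=\K_{\beta}$ and $\C(q_{\widehat\alpha})=\K_{\widehat\alpha}$. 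Then one applies the master identity to $X\in\{W,V,\widetilde V,q_{\alpha},q_{\beta},q_{\widehat\alpha}\}$ and concludes as explained in the first paragraph.

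No step here is deep. The points requiring care are the bookkeeping with the flip $\Sigma$ and the (anti-)linearity conventions in the slice maps in the master identity, and checking that the normalization of $q_{\alpha},q_{\beta},q_{\widehat\alpha}$ fixed in the appendix matches the normalization of $R^{\alpha}_{\xi},L^{\beta}_{\xi}$ in \ref{lemReg}, so that the scalars $n_I^{-1}$ cancel exactly and one obtains $\C(q_{\gamma})=\K_{\gamma}$ on the nose, with no spurious constant.
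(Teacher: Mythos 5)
Your argument is correct and follows essentially the same route as the paper: your ``master identity'' $[(\K\tens 1)X(1\tens\K)]=\Sigma(\C(X)\tens\K)$ is exactly the relation the paper quotes from 3.1 of \cite{BS2} (applied there to $X=W$), and your identification $\C(q_{\alpha})=\K_{\alpha}$ via Lemma \ref{lemReg} is precisely the paper's computation $\Sigma(R_{\xi}^{\alpha}(R_{\eta}^{\alpha})^*\tens\theta_{\zeta,\chi})=(\theta_{\zeta,\eta}\tens 1)q_{\alpha}(1\tens\theta_{\xi,\chi})$, repackaged as the same identity applied to $X=q_{\alpha}$. The only, immaterial, organizational difference is that you treat (ii), (iii) and (iv) uniformly, whereas the paper proves only (i)$\Leftrightarrow$(ii) and disposes of (iii) and (iv) by invoking the equivalence of the (semi-)regularity of $\cal G$, $\widehat{\cal G}$ and $({\cal G}^{\rm o})^{\rm c}$.
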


\begin{proof} It is known that ${\cal G}$ is regular (resp.\ semi-regular) if, and only if, $\widehat{\cal G}$ is regular (resp.\ semi-regular). Therefore, it suffices to prove that (i) is equivalent to (ii). We have 
\[
[(\K\tens 1)W(1\tens\K)]=\Sigma({\cal C}(W)\tens\K) \quad \text{(cf.\ 3.1 \cite{BS2}}\text{{\rm)}}
\]
(${\cal C}(W)$ is a C$^*$-algebra regardless of the regularity of $\cal G$). Note that $\cal G$ is regular (resp.\ semi-regular) if, and only if, $\Sigma({\cal C}(W)\tens\K)=\Sigma(\K_{\alpha}\tens\K)$ (resp.\ $\Sigma({\cal C}(W)\tens\K)\supset\Sigma(\K_{\alpha}\tens\K)$). Let $\xi,\eta,\zeta,\chi\in\s H$. We have $e_{\vphantom{\overline{I}}I}\theta_{\xi,\,\eta}e_{\overline{I}}\tens\theta_{\zeta,\,\chi}=\theta_{e_I\xi,\, e_I\eta}\tens\theta_{\zeta,\, \chi}=\theta_{e_I\xi\tens\zeta,\, e_I\eta\tens\chi}$, for all $I\in\s I$. Hence, $\Sigma(e_{\vphantom{\overline{I}}I}\theta_{\xi,\, \eta}e_{\overline{I}}\tens\theta_{\zeta,\, \chi})=\theta_{\zeta\tens e_I\xi,\, e_I\eta\tens\chi}=\theta_{\zeta,\, e_I\eta}\tens\theta_{e_I\xi,\, \chi}=\theta_{\zeta,\, \eta}e_{\overline{I}}\tens e_{\vphantom{\overline{I}}I}\theta_{\xi,\, \chi}$ for all $I\in\s I$.
By Lemma \ref{lemReg}, we obtain
\begin{equation*}
\Sigma(R_{\xi}^{\alpha}(R_{\eta}^{\alpha})^*\tens\theta_{\zeta,\chi})=(\theta_{\zeta,\eta}\tens 1)q_{\alpha}(1\tens\theta_{\xi,\chi}).
\end{equation*}
Hence, $\Sigma(\K_{\alpha}\tens\K)=[(\K\tens 1)q_{\alpha}(1\tens\K)]$ and the equivalence $({\rm (i)}\Leftrightarrow{\rm (ii)})$ is proved.
\end{proof}

\begin{prop}\label{propReg2} If $\cal G$ is regular (resp.\ semi-regular), we have:
\begin{enumerate}
\item $[(S\tens 1)W(1\tens\K)]=[(S\tens 1)q_{\alpha}(1\tens\K)]$ {\rm(}resp.\ $\supset[(S\tens 1)q_{\alpha}(1\tens\K)]${\rm)};
$\phantom{q_{\widehat{\alpha}}q_{\beta}}$
\item $[(\K\tens 1)V(1\tens S)]=[(\K\tens 1)q_{\beta}(1\tens S)]$ {\rm(}resp.\ $\supset[(\K\tens 1)q_{\beta}(1\tens S)]${\rm)};
$\phantom{\widetilde{V}}$
\item $[(R(S)\tens 1)\widetilde{V}(1\tens\K)]=[(R(S)\tens 1)q_{\widehat{\alpha}}(1\tens\K)]$ {\rm(}resp.\ $\supset[(R(S)\tens 1)q_{\widehat{\alpha}}(1\tens\K)]${\rm)}.\qedhere
\end{enumerate} 
\end{prop}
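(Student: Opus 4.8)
The plan is to imitate the proof of Proposition~\ref{propReg1}, keeping $\K$ in one leg and placing $S$ (for~(1) and~(2)) or $R(S)$ (for~(3)) in the other. The three statements are equivalent, so it is enough to establish~(1): the relations $W=\Sigma(U\tens 1)V(U^*\tens 1)\Sigma$ and $\widetilde V=(U\tens U)W(U^*\tens U^*)$ of Proposition~\ref{inifinproj}, the identities $R(S)=USU^*$ and $\lambda(\widehat S)=U\widehat SU^*$, and the corresponding relations between the projections $q_\alpha$, $q_\beta$, $q_{\widehat\alpha}$ (cf.~\ref{propdef5}, \ref{ProjectionBis}) carry the identity in~(1) into those in~(2) and~(3); no regularity hypothesis is lost in this reduction, since $\K$ is stable under $X\mapsto\Sigma X\Sigma$ and $X\mapsto UXU^*$. (Alternatively,~(2) and~(3) are proved directly in the same fashion, working with $V$, $\widetilde V$ and $q_\beta$, $q_{\widehat\alpha}$ instead of $W$ and $q_\alpha$.)

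For~(1) it suffices to prove the inclusion $[(S\tens 1)W(1\tens\K)]\supseteq[(S\tens 1)q_\alpha(1\tens\K)]$ (this already yields the semi-regular assertion) and, when $\cal G$ is regular, the reverse inclusion. The starting point is Proposition~\ref{propReg1}(ii), which gives the analogous statement with $\K$ in \emph{both} legs. Recall from the proof of~\ref{propReg1} and Lemma~\ref{lemReg} that $q_\alpha$ is assembled from the minimal projections $e_I$, $I\in\s I$, of $N$ acting through $\alpha$, so that $q_\alpha\in\M(S)\tens\B(\s H)$ and $[(S\tens 1)q_\alpha(1\tens\K)]$ is genuinely a subspace of $S\tens\K$. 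To insert $S$ into the first leg one combines: the non-degeneracy relations $[S\K]=[\K S]=\K$; the multiplier property $W\in\M(S\tens\lambda(\widehat S))$ together with $[\lambda(\widehat S)\K]=\K$; the commutation relations of~\ref{prop34} and~\ref{NonComRel} (which permit rewritings such as $(S\tens 1)W=[(S\tens 1)(1\tens\widehat\beta(N^{\rm o}))W]$); and the Podle\'s-type density identities $[\delta(S)(1_S\tens S)]=\delta(1_S)(S\tens S)=[\delta(S)(S\tens 1_S)]$ of~\S\ref{WHC*A}, bearing in mind $\delta(x)=W^*(1\tens x)W=V(x\tens 1)V^*$. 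For the regular case one additionally uses $[(S\tens 1)W(1\tens\K)]\subseteq S\tens\K$, which follows from $W\in\M(S\tens\lambda(\widehat S))$ and the inclusion $[S\lambda(\widehat S)]\subseteq\K$ of Proposition~\ref{propReg4}.

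The main obstacle is precisely this passage between ``$\K$ in the first leg'' (supplied by~\ref{propReg1}) and ``$S$ in the first leg'' (wanted here): since $S\not\subseteq\K$ in general, one cannot cancel an extra copy of $\K$ by approximating with an approximate unit, and it is exactly here that the multiplier property of $W$ (resp.\ $V$, $\widetilde V$) and the density relations of~\S\ref{WHC*A} must be used with care. A secondary technical point, which accounts for the projections $q_\alpha$, $q_\beta$, $q_{\widehat\alpha}$ appearing on the right-hand sides in place of $1$, is the bookkeeping of the initial and final projections $W^*W=q_{\beta\alpha}$, $WW^*=q_{\alpha\widehat\beta}$ and their analogues for $V$ and $\widetilde V$ recorded in~\ref{inifinproj}.
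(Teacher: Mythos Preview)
Your proposal lists plausible ingredients but does not contain the actual computation, and the key idea is missing. The paper's proof is a three-leg argument using the pentagonal equation for $W$: one writes a generic element of $S$ as a slice $y=(\id\tens a\omega)(W)$ with $a\in\K$, $\omega\in\B(\s H)_*$, expands
\[
(y\tens 1)W(1\tens b)=(\id\tens\omega\tens\id)\bigl(W_{12}W_{13}(1\tens a\tens b)\bigr),
\]
and uses $W_{12}W_{13}=W_{23}W_{12}W_{23}^*$ together with $W^*(\K\tens\K)=q_{\beta\alpha}(\K\tens\K)$ and $[W_{12},q_{\beta\alpha,23}]=0$ to rewrite $[(S\tens 1)W(1\tens\K)]$ as the closed span of elements of the form $(\id\tens\omega\tens\id)\bigl(((a\tens 1)W(1\tens b))_{23}W_{12}\bigr)$ with $a,b\in\K$. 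At this point Proposition~\ref{propReg1} applies in the $23$-legs, replacing $W$ by $q_\alpha$; one finishes with the intertwining relation $q_{\alpha,23}W_{12}=W_{12}q_{\alpha,13}$ (from \ref{prop34}~4) to bring $q_\alpha$ back to legs $13$. None of the tools you list---non-degeneracy of $S$ in $\K$, $W\in\M(S\tens\lambda(\widehat S))$, Podle\'s density for $\delta$---gives a mechanism for passing from ``$\K$ in both legs'' to ``$S$ in the first leg'' without this multiplicativity step; your own observation that one ``cannot cancel an extra copy of $\K$ by approximate units'' identifies the obstacle, but you have not supplied a substitute.

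A secondary issue: your reduction of (2) and (3) to (1) via $W=\Sigma(U\tens 1)V(U^*\tens 1)\Sigma$ is not correct as stated, because conjugation by $U\tens 1$ sends $\alpha$ to $\widehat\alpha$ in only one leg, so $q_\alpha$ is not carried to $q_\beta$. The paper does not claim any such formal equivalence; it proves (1) and says the others are obtained ``by similar arguments'', i.e.\ by running the same pentagonal computation with $V$ and $\widetilde V$ in place of $W$.
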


\begin{proof} Assume that $\cal G$ is regular (resp.\ semi-regular). Let us prove the first statement. The others will be obtained by using similar arguments. Let $a,b\in\K$, $\omega\in\B(\s H)_*$ and $y=(\id\tens a\omega)(W)$. We have 
\begin{align*}
(y\tens 1)W(1\tens b)&=(\id\tens\omega\tens\id)(W_{12}W_{13}(1\tens a\tens b))\\
&=(\id\tens\omega\tens\id)(W_{23}W_{12}(1\tens W^*(a\tens b))).
\end{align*}
However, $W^*(\K\tens\K)=q_{\beta\alpha}(\K\tens\K)$. Moreover, since $[W,\,1\tens\beta(n^{\rm o})]=0$ for all $n\in N$, we have $[W_{12},q_{\beta\alpha,23}]=0$. Hence, $W_{23}W_{12}q_{\beta\alpha,23}=W_{23}W_{12}$. We obtain (cf.\ \ref{propReg1})
\begin{align*}
[(S\tens 1)W(1\tens\K)] &=[(\id\tens a\omega\tens\id)(W_{23}W_{12}(1\tens 1\tens b))\,;\,\omega\in\B(\s H)_*,\, a,\,,b\in\K]\\
&=[(\id\tens\omega a\tens\id)(W_{23}W_{12}(1\tens 1\tens b))\,;\,\omega\in\B(\s H)_*,\, a,b\in\K]\\
&=[(\id\tens\omega\tens\id)(((a\tens 1)W(1\tens b))_{23}W_{12})\,;\,\omega\in\B(\s H)_*,\, a,b\in\K]\\
&=[(\id\tens\omega\tens\id)(((a\tens 1)q_{\alpha}(1\tens b))_{23}W_{12})\,;\,\omega\in\B(\s H)_*,\, a,b\in\K] \\
(\text{resp.\ }&\supset [(\id\tens\omega\tens\id)(((a\tens 1)q_{\alpha}(1\tens b))_{23}W_{12})\,;\,\omega\in\B(\s H)_*,\, a,b\in\K]).
\end{align*}
However, for all $\omega\in\B(\s H)_*$ and $a,b\in\K$ we have
\[
(\id\tens\omega\tens\id)(((a\tens 1)q_{\alpha}(1\tens b))_{23}W_{12})=(\id\tens\omega a\tens\id)(q_{\alpha,23}W_{12}(1\tens 1\tens b)).
\]
Since $(1\tens\alpha(n))W=W(\alpha(n)\tens 1)$ for all $n\in N$, we have $q_{\alpha,23}W_{12}=W_{12}q_{\alpha,13}$. Hence, 
\[
(\id\tens\omega\tens\id)(((a\tens 1)q_{\alpha}(1\tens b))_{23}W_{12})=
((\id\tens\omega a)(W)\tens 1)q_{\alpha}(1\tens b)
\]
and the result is proved.
\end{proof}	

\begin{prop}\label{propReg3} 
If $\cal G$ is regular {\rm(}resp.\ semi-regular{\rm)}, then we have:
\begin{enumerate}
\item $[(S\tens 1)W(1\tens\lambda(\widehat{S}))]=[(S\tens 1)q_{\alpha}(1\tens\lambda(\widehat{S}))]$ {\rm(}resp.\ $\supset[(S\tens 1)q_{\alpha}(1\tens\lambda(\widehat{S}))]${\rm)}; $\phantom{q_{\beta}}$
\item $[(\widehat{S}\tens 1)V(1\tens S)]=[(\widehat{S}\tens 1)q_{\beta}(1\tens S)]$ {\rm(}resp.\ $\supset[(\widehat{S}\tens 1)q_{\beta}(1\tens S)]${\rm)};
\item $[(R(S)\tens 1)\widetilde{V}(1\tens\widehat{S})]=[(R(S)\tens 1)q_{\widehat{\alpha}}(1\tens\widehat{S})]$ {\rm(}resp.\ $\supset[(R(S)\tens 1)q_{\widehat{\alpha}}(1\tens\widehat{S})]${\rm)}.
\end{enumerate}
In particular, we have $[(S\tens 1)W(1\tens\lambda(\widehat{S}))]\subset S\tens\lambda(\widehat{S})$, $[(\widehat{S}\tens 1)V(1\tens S)]\subset \widehat{S}\tens S$ and $[(R(S)\tens 1)\widetilde{V}(1\tens\widehat{S})]\subset R(S)\tens\widehat{S}$.
\end{prop}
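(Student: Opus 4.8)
The plan is to rerun the argument used to prove Proposition~\ref{propReg2}, this time carrying through each step the weak Hopf $C^*$-algebra that occurs in the last leg --- $\lambda(\widehat S)$ for the first relation, $S$ for the second and $\widehat S$ for the third --- instead of all of $\K$. I would prove the first relation in detail and then obtain the second and third by the same computation applied to $V$ and to $\widetilde V$; alternatively one may transport the first relation through the identities $W=\Sigma(U\otimes 1)V(U^*\otimes 1)\Sigma$ and $\widetilde V=(U\otimes U)W(U^*\otimes U^*)$ of \ref{inifinproj}, using $\lambda(\widehat S)=U\widehat S U^*$ and $R(S)=USU^*$.

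For the first relation: fix $a\in\K$, $b\in\lambda(\widehat S)$ and $\omega\in\B(\s H)_*$, and set $y:=(\id\otimes a\omega)(W)\in S$; as $a$ and $\omega$ vary, these span $S$. Exactly as in \ref{propReg2}, the pentagon equation $W_{12}W_{13}W_{23}=W_{23}W_{12}$, together with $Wq_{\beta\alpha}=W$, $[W_{12},q_{\beta\alpha,23}]=0$, and $q_{\alpha,23}W_{12}=W_{12}q_{\alpha,13}$ (which follows from $(1\otimes\alpha(n))W=W(\alpha(n)\otimes 1)$, cf.\ \ref{prop34}), allows one to rewrite $(y\otimes 1)W(1\otimes b)$ and then, invoking \ref{propReg1}, to replace the factor $W$ by $q_\alpha$. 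The one place where the reasoning must genuinely change is that the identity $W^*(\K\otimes\K)=q_{\beta\alpha}(\K\otimes\K)$ used in \ref{propReg2} rests on $\K(\s H\otimes\s H)$ being an ideal of $\B(\s H\otimes\s H)$, and this is no longer available once one of the copies of $\K$ is shrunk to $\lambda(\widehat S)$. Instead one keeps that leg inside $\lambda(\widehat S)$ by using $W\in\M(S\otimes\lambda(\widehat S))$ and the density relations of \S\ref{WHC*A}: in particular the Podle\'s-type identity $[W(1\otimes\lambda(\widehat S))]=q_{\alpha\widehat\beta}(S\otimes\lambda(\widehat S))$, which follows from $[\delta(S)(1_S\otimes S)]=\delta(1_S)(S\otimes S)$ written through $\delta(x)=W^*(1\otimes x)W$ after multiplying on the left by $W$ and using $Wq_{\beta\alpha}=W$ and $WW^*=q_{\alpha\widehat\beta}$. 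Carrying this through yields $[(S\otimes 1)W(1\otimes\lambda(\widehat S))]=[(S\otimes 1)q_\alpha(1\otimes\lambda(\widehat S))]$, with ``$\supset$'' when ${\cal G}$ is only semi-regular.

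Finally, the ``in particular'' inclusions come for free. Since $q_{\alpha\widehat\beta}=WW^*\in\M(S\otimes\lambda(\widehat S))$ and $[W(1\otimes\lambda(\widehat S))]=q_{\alpha\widehat\beta}(S\otimes\lambda(\widehat S))\subset S\otimes\lambda(\widehat S)$, we get, using $S\otimes 1\subset\M(S\otimes\lambda(\widehat S))$ and $\M(S\otimes\lambda(\widehat S))\cdot(S\otimes\lambda(\widehat S))\subset S\otimes\lambda(\widehat S)$,
\[
[(S\otimes 1)W(1\otimes\lambda(\widehat S))]\subset[(S\otimes 1)(S\otimes\lambda(\widehat S))]\subset S\otimes\lambda(\widehat S),
\]
and symmetrically for $V$ and $\widetilde V$ from $V\in\M(\widehat S\otimes S)$ and $\widetilde V\in\M(R(S)\otimes\widehat S)$. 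I expect the main difficulty to be precisely the loss of the ideal property in the last leg: the soft argument of \ref{propReg2} has to be replaced throughout by careful bookkeeping with the multiplier conditions on $W$, $V$, $\widetilde V$ and the coproduct density identities of \S\ref{WHC*A}, verifying at each step that the intermediate expressions still lie in the correct spatial tensor product.
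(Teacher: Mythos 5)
Your overall strategy (pentagon equation, plus the previous propositions, plus the commutation relations of \ref{prop34}) is the right one, but you attack the wrong relation directly, and this opens a genuine gap. Run your computation for $[(S\tens 1)W(1\tens\lambda(\widehat S))]$: after $W_{12}W_{13}=W_{23}W_{12}W_{23}^*$ and the bookkeeping with $\omega$, the factor that must be replaced using (semi-)regularity is $((a\tens 1)W(1\tens b))_{23}$ with $a$ coming out of $\omega$ and $b\in\lambda(\widehat S)$ the original element. If $a\in\K$ this requires $[(\K\tens 1)W(1\tens\lambda(\widehat S))]=[(\K\tens 1)q_{\alpha}(1\tens\lambda(\widehat S))]$, which is neither \ref{propReg1} (both legs in $\K$) nor \ref{propReg2}~1 (legs $S$ and $\K$), and cannot be extracted from them since $\lambda(\widehat S)$ is not an ideal of $\B(\s H)$; if instead $a\in S$ you are quoting the statement you are trying to prove. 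The same mixed-leg problem hits $W^*(a\tens b)$: the multiplier argument for the partial isometry $W\in\M(S\tens\lambda(\widehat S))$ needs the first leg in $S$, while the regularity step needs it in $\K$. This mismatch is exactly why the paper proves relation~2 first: for $V$ the inserted leg-two element may be taken in $\widehat S$, so $V^*(\widehat S\tens S)=q_{\widehat\alpha\beta}(\widehat S\tens S)$ is available from $V\in\M(\widehat S\tens S)$, and the factor to be replaced is $(a\tens 1)V(1\tens y)$ with $a\in\K$, $y\in S$, which is precisely \ref{propReg2}~2. Relation~3 then follows by applying relation~2 to $\widehat{\cal G}$, and relation~1 by conjugating relation~3 with $U$ --- your ``alternative'' transport, but it must be run starting from the one relation that can actually be proved directly, not from relation~1.

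Two further problems. The Podle\'s-type identity $[W(1\tens\lambda(\widehat S))]=q_{\alpha\widehat\beta}(S\tens\lambda(\widehat S))$ does not follow from your derivation: multiplying $[\delta(S)(1_S\tens S)]=\delta(1_S)(S\tens S)$ on the left by $W$ and using $WW^*=q_{\alpha\widehat\beta}$ and $[1\tens S,\,q_{\alpha\widehat\beta}]=0$ gives $[(1\tens S)W(1\tens S)]=[W(S\tens S)]$, a statement with $S$ --- not $\lambda(\widehat S)$ --- in the second leg; and even granted, it only controls the range projection $WW^*=q_{\alpha\widehat\beta}$, whereas the step where regularity must enter is the replacement of $W$ by $q_{\alpha}=q_{\alpha\alpha}$, a different projection. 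Finally, the ``in particular'' inclusions are not free: they follow from the proved equalities together with $\alpha(N)\subset\M(S)\cap\M(\lambda(\widehat S))$, $\beta(N^{\rm o})\subset\M(\widehat S)\cap\M(S)$ and $\widehat\alpha(N)\subset\M(R(S))\cap\M(\widehat S)$, so that for instance $(S\tens 1)q_{\alpha}(1\tens\lambda(\widehat S))\subset S\tens\lambda(\widehat S)$; they cannot come from multiplier bookkeeping on $W$ alone, since that would make them hold without any regularity hypothesis.
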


\begin{proof} We have the pentagonal equation $V_{12}V_{13}=V_{23}V_{12}V_{23}^*$. Since $V\in\M(\widehat{S}\tens S)$ is a partial isometry, we have $V^*(\widehat{S}\tens S)=q_{\widehat{\alpha}\beta}(\widehat{S}\tens S)$. Since $[V,\,1\tens\widehat{\alpha}(n)]=0$ for all $n\in N$, we have $[V_{12},\,q_{\widehat{\alpha}\beta,23}]=0$. Hence, $V_{23}V_{12}q_{\widehat{\alpha}\beta,23}=V_{23}V_{12}$. Hence,
\begin{align*}
[(\widehat{S}\tens 1)V(1\tens S)]&=[((\id\tens\omega)(V)\tens 1)V(1\tens y)\,;\,\omega\in\B(\s H)_*,\, y\in S] \\
&=[(\id\tens\omega\tens \id)(V_{12}V_{13}(1\tens 1\tens y))\,;\,\omega\in\B(\s H)_*,\, y\in S]   \phantom{\widehat{S}}\\
&=[(\id\tens\omega\tens \id)(V_{23}V_{12}V_{23}^*(1\tens x\tens y))\,;\,\omega\in\B(\s H)_*,\, y\in S,\,x\in\widehat{S}]   \phantom{\widehat{S}}\\
&=[(\id\tens\omega\tens \id)(V_{23}V_{12}(1\tens x\tens y))\,;\, \omega\in\B(\s H)_*,\, x\in\widehat{S},\, y\in S]   \phantom{\widehat{S}}\\
&=[(\id\tens\omega\tens \id)(V_{23}(1\tens 1\tens y)V_{12})\,;\, \omega\in\B(\s H)_*,\, y\in S]  \phantom{\widehat{S}}\\
&=[(\id\tens\omega\tens \id)(((a\tens 1)V(1\tens y))_{23}V_{12})\,;\,\omega\in\B(\s H)_*,\, a\in\K,\,y\in S].   \phantom{\widehat{S}}\\[-.5em]
\intertext{Let $X:=[(\id\tens\omega\tens \id)(((a\tens 1)q_{\beta}(1\tens y))_{23}V_{12})\,;\,\omega\in\B(\s H)_*,\, a\in\K,\,y\in S]$. Since $\cal G$ is regular (resp.\ semi-regular), it follows from \ref{propReg2} that 
\begin{center}
$[(\widehat{S}\tens 1)V(1\tens S)]=X$ \quad (resp.\ $[(\widehat{S}\tens 1)V(1\tens S)]\supset X$).
\end{center} 
However, since $(1\tens\beta(n^{\rm o}))V=V(\beta(n^{\rm o})\tens 1)$ for all $n\in N$, we have}~\\[-2em]
X&=[(\id\tens\omega a\tens \id)(q_{\beta,23}V_{12}(1\tens 1\tens y))\,;\,\omega\in\B(\s H)_*,\, a\in\K,\,y\in S]   \phantom{\widehat{S}}\\
&=[(\id\tens\omega\tens \id)(V_{12}q_{\beta,13}(1\tens 1\tens y))\,;\,\omega\in\B(\s H)_*,\,y\in S]   \phantom{\widehat{S}}\\
&=[((\id\tens\omega)(V)\tens 1)q_{\beta}(1\tens y)\,;\,\omega\in\B(\s H)_*,\,y\in S]   \phantom{\widehat{S}}\\
&=[(\widehat{S}\tens 1)q_{\beta}(1\tens S)].
\end{align*}
The second statement is proved and the third one follows by applying it to $\widehat{\cal G}$. We obtain the first statement by combining the third one with the formulas $W=(U^*\tens U^*)\widetilde{V}(U\tens U)$ and $\widehat{\alpha}={\rm Ad}_U\circ\alpha$. Finally, the last statement follows from the inclusions $\beta(N^{\rm o})\subset\M(S)$, $\widehat{\beta}(N^{\rm o})\subset\M(\widehat{S})$, $\alpha(N)\subset\M(S)$ and $\widehat{\alpha}(N)\subset\M(\widehat{S})$.
\end{proof}

In the result below, we refer again to \ref{propdef5} and \ref{ProjectionBis} for the definition of the operators $q_{\beta\widehat{\beta}}$, $q_{\widehat{\alpha}\alpha}$ and $q_{\widehat{\beta}\beta}$.

\begin{cor}\label{corReg}
If $\cal G$ is regular (resp.\ semi-regular), then we have:
\begin{enumerate}
\item $[(1\tens\lambda(\widehat{S}))W(S\tens 1)]=[(1\tens\lambda(\widehat{S}))q_{\beta\widehat{\beta}}(S\tens 1)]$ {\rm(}resp.\ $\supset[(1\tens\lambda(\widehat{S}))q_{\beta\widehat{\beta}}(S\tens 1)]${\rm)};
\item $[(1\tens S)V(\widehat{S}\tens 1)]=[(1\tens S)q_{\widehat{\alpha}\alpha}(\widehat{S}\tens 1)]$ {\rm(}resp.\ $\supset[(1\tens S)q_{\widehat{\alpha}\alpha}(\widehat{S}\tens 1)]${\rm)};  
$\phantom{q_{\beta\widehat{\beta}}}$
\item $[(1\tens\widehat{S})\widetilde{V}(R(S)\tens 1)]=[(1\tens\widehat{S})q_{\widehat{\beta}\beta}(R(S)\tens 1)]$ {\rm(}resp.\ $\supset[(1\tens\widehat{S})q_{\widehat{\beta}\beta}(R(S)\tens 1)]${\rm)}.
\end{enumerate}
If $\cal G$ is regular, then we have $[(1\tens\lambda(\widehat{S}))W(S\tens 1)]\subset S\tens\lambda(\widehat{S})$, $[(1\tens S)V(\widehat{S}\tens 1)]\subset \widehat{S}\tens S$ and $[(1\tens\widehat{S})\widetilde{V}(R(S)\tens 1)]\subset R(S)\tens\widehat{S}$.
\end{cor}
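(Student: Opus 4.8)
The three identities are the ``reversed-leg'' versions of those of Proposition \ref{propReg3}: the latter controls closed linear spans of the form $[(A\tens 1)X(1\tens B)]$ with $X\in\{W,V,\widetilde V\}$, whereas the former controls those of the form $[(1\tens B)X(A\tens 1)]$. The plan is to deduce the Corollary from Proposition \ref{propReg3} by conjugating each identity by the flip $\Sigma$ and then absorbing the auxiliary factors of $U$ that appear, using \ref{inifinproj}\,(1), which gives $\Sigma W\Sigma=(U\tens 1)V(U^*\tens 1)$ and hence $\Sigma V\Sigma=(1\tens U^*)W(1\tens U)$, together with $\Sigma\widetilde V\Sigma=(1\tens U)V(1\tens U^*)$; the identities $\lambda={\rm Ad}_U\circ\rho$, $R={\rm Ad}_U\circ L$, $\widehat\alpha={\rm Ad}_U\circ\alpha$, $\widehat\beta={\rm Ad}_U\circ\beta$; and the transformation rules for the projections. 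Since the flip turns a $V$-identity into a $W$-identity and conversely, statement~1 (in $W$) will come from \ref{propReg3}\,(2) (in $V$), and statement~2 (in $V$) from \ref{propReg3}\,(1) (in $W$).

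For statement~1, conjugating \ref{propReg3}\,(2) by $\Sigma$ gives
\[
\Sigma\bigl([(\widehat S\tens 1)V(1\tens S)]\bigr)\Sigma=[(1\tens\widehat S)(1\tens U^*)W(1\tens U)(S\tens 1)]=(1\tens U^*)\bigl[(1\tens\lambda(\widehat S))W(S\tens 1)\bigr](1\tens U),
\]
the second equality using $\widehat S\,U^*=U^*\lambda(\widehat S)$ and that $(1\tens U)$ commutes with $(S\tens 1)$. Replacing the left-hand side by $\Sigma([(\widehat S\tens 1)q_\beta(1\tens S)])\Sigma$ via \ref{propReg3}\,(2), then using that $q_\beta$ is invariant under $\Sigma\cdot\Sigma$ and that $(1\tens U)q_\beta(1\tens U^*)=q_{\beta\widehat\beta}$ (both clear from the explicit descriptions of $q_\beta$, $q_{\beta\widehat\beta}$ in \ref{propdef5}, \ref{ProjectionBis} and $\widehat\beta={\rm Ad}_U\circ\beta$), one obtains $[(1\tens\lambda(\widehat S))W(S\tens 1)]=[(1\tens\lambda(\widehat S))q_{\beta\widehat\beta}(S\tens 1)]$. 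Statement~2 is obtained in exactly the same way from \ref{propReg3}\,(1), using $\Sigma W\Sigma=(U\tens 1)V(U^*\tens 1)$, $U^*\lambda(\widehat S)=\widehat S\,U^*$ and $(U^*\tens 1)q_\alpha(U\tens 1)=q_{\widehat\alpha\alpha}$ (via $\widehat\alpha={\rm Ad}_U\circ\alpha$).

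For statement~3 the flip does not help directly, since conjugating any identity of Proposition \ref{propReg3} by $\Sigma$ never produces $\widetilde V$ (the flip sends $\widetilde V$ to $V$); instead we argue by duality. As recalled in the proof of \ref{propReg1}, $\cal G$ is regular (resp.\ semi-regular) if and only if $\widehat{\cal G}$ is, so statement~2 applies to $\widehat{\cal G}$. Under the canonical identifications of \S\ref{WHC*A} and Convention \ref{rkconv}, the multiplicative partial isometry of $\widehat{\cal G}$ playing the role of $V$ is $\widetilde V$, and the pair $(\widehat S,R(S))$ plays the role of $(S,\widehat S)$; statement~2 for $\widehat{\cal G}$ then reads $[(1\tens\widehat S)\widetilde V(R(S)\tens 1)]=[(1\tens\widehat S)q_{\widehat\beta\beta}(R(S)\tens 1)]$, which is statement~3. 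The ``resp.\ $\supset$'' semi-regular versions follow line by line, since flipping, conjugating by $1\tens U$ or $U\tens 1$, and multiplying on the left/right by fixed C$^*$-algebras all preserve inclusions. Finally, the ``in particular'' inclusions are immediate: $q_{\beta\widehat\beta}$ is a finite sum of elementary tensors $\beta(\cdot)\tens\widehat\beta(\cdot)$ with $\beta(N^{\rm o})\subset\M(S)$ and $\widehat\beta(N^{\rm o})={\rm Ad}_U(\beta(N^{\rm o}))\subset\M(\lambda(\widehat S))$, whence $[(1\tens\lambda(\widehat S))q_{\beta\widehat\beta}(S\tens 1)]\subset S\tens\lambda(\widehat S)$, and likewise for the other two using $\alpha(N)\subset\M(S)$, $\widehat\alpha(N)\subset\M(\widehat S)$ and $\widehat\alpha(N)\subset\M(R(S))$.

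The main obstacle is bookkeeping rather than anything conceptual: one must track carefully which of $W$, $V$, $\widetilde V$ appears after each flip, how the auxiliary factors of $U$ are absorbed into $\lambda(\widehat S)$ and $R(S)$ and into the projections (turning $q_\alpha$ into $q_{\widehat\alpha\alpha}$ and $q_\beta$ into $q_{\beta\widehat\beta}$, and, for the dual argument, the relevant projection of $\widehat{\cal G}$ into $q_{\widehat\beta\beta}$ of $\cal G$), and the precise index conventions for these projections; beyond Propositions \ref{propReg1}--\ref{propReg3} and \ref{inifinproj} no new ingredient is needed.
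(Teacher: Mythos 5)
Your argument is correct and is essentially the paper's own proof: the paper derives the corollary directly from Proposition \ref{propReg3} via the formulas $W=\Sigma(U\tens 1)V(U^*\tens 1)\Sigma$, $\widetilde V=\Sigma(1\tens U)V(1\tens U^*)\Sigma$, $\widehat\alpha={\rm Ad}_U\circ\alpha$ and $\widehat\beta={\rm Ad}_U\circ\beta$, which is exactly your flip-and-absorb-$U$ computation (and your duality detour for statement~3 is the same device the paper uses to prove \ref{propReg3}\,(3) from \ref{propReg3}\,(2)). The only quibble is in the final inclusions: for statement~3 the relevant fact is $\widehat\beta(N^{\rm o})={\rm Ad}_U(\beta(N^{\rm o}))\subset\M(R(S))$ rather than $\widehat\alpha(N)\subset\M(R(S))$, but this follows by the same one-line conjugation you already used.
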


\begin{proof}
This is a direct consequence of Proposition \ref{propReg3} and the formulas $\widehat\beta={\rm Ad}_U\circ\beta$, $\widehat\alpha={\rm Ad}_U\circ\alpha$, $W=\Sigma(U\tens 1)V(U^*\tens 1)\Sigma$  and $\widetilde{V}=\Sigma(1\tens U)V(1\tens U^*)\Sigma$. The second statement follows from the inclusions $\beta(N^{\rm o})\subset\M(S)$, $\widehat{\beta}(N^{\rm o})\subset\M(\widehat{S})$, $\alpha(N)\subset\M(S)$ and $\widehat{\alpha}(N)\subset\M(\widehat{S})$.
\end{proof}

\section{Measured quantum groupoids on a finite basis in action}

\setcounter{thm}{0}

\numberwithin{thm}{subsection}
\numberwithin{prop}{subsection}
\numberwithin{lem}{subsection}
\numberwithin{cor}{subsection}
\numberwithin{propdef}{subsection}
\numberwithin{nb}{subsection}
\numberwithin{nbs}{subsection}
\numberwithin{rk}{subsection}
\numberwithin{rks}{subsection}
\numberwithin{defin}{subsection}
\numberwithin{ex}{subsection}
\numberwithin{exs}{subsection}
\numberwithin{noh}{subsection}
\numberwithin{conv}{subsection}

	\subsection{Continuous actions, crossed product and biduality}\label{subsectionBiduality}

In this section, we fix a measured quantum groupoid ${\cal G}=(N,M,\alpha,\beta,\Delta,T,T',\epsilon)$ on the finite-dimensional basis $N=\bigoplus_{1\leqslant l\leqslant k}\,{\rm M}_{n_l}(\GC)$ and we use all the notations introduced in \S\S\ \ref{MQGfinitebasis}, \ref{WHC*A}. In the following, we recall the definitions, notations and results of \S\S\ 3.1, 3.2.1, 3.2.2 and 3.3.1 \cite{BC} (see also \cite{C} chapter 4).

		\subsubsection{Notion of actions of measured quantum groupoids on a finite basis}\label{CoAction}

\begin{lem}\label{lem19}
Let $A$ and $B$ be two C*-algebras, $f:A\rightarrow\M(B)$ a *-homomorphism and $e\in\M(B)$. The following statements are equivalent:
\begin{enumerate}[label=(\roman*)]
	\item there exists an approximate unit $(u_{\lambda})_{\lambda}$ of $A$ such that $f(u_{\lambda})\rightarrow e$ with respect to the strict topology;
	\item $f$ extends to a strictly continuous *-homomorphism $f:\M(A)\rightarrow\M(B)$, necessarily unique, such that $f(1_A)=e$;
	\item $[f(A)B]=eB$.
\end{enumerate}
In that case, $e$ is a self-adjoint projection, for all approximate unit $(v_{\mu})_{\mu}$ of $A$ we have $f(v_{\mu})\rightarrow e$ with respect to the strict topology and $[Bf(A)]=Be$.
\end{lem}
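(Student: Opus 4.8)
\emph{Proof plan.} I would prove the cycle $(ii)\Rightarrow(i)\Rightarrow(iii)\Rightarrow(ii)$ and read off the assertions of the final sentence along the way. The implication $(ii)\Rightarrow(i)$ is immediate: every approximate unit $(u_\lambda)$ of $A$ converges strictly to $1_A$ in $\M(A)$, so the strictly continuous extension sends it to $f(1_A)=e$ strictly. Once the equivalence is available, this same remark, applied to an arbitrary approximate unit $(v_\mu)$ of $A$, yields $f(v_\mu)\to e$ strictly, and the uniqueness of the extension in $(ii)$ follows from the strict density of $A$ in $\M(A)$.

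For $(i)\Rightarrow(iii)$, fix the approximate unit $(u_\lambda)$ of $A$ with $f(u_\lambda)\to e$ strictly (taken positive and contractive). For $a\in A$ and $b\in B$ one has $f(a)b\in B$, hence $f(u_\lambda)\bigl(f(a)b\bigr)\to ef(a)b$ in norm; on the other hand $f(u_\lambda)f(a)b=f(u_\lambda a)b\to f(a)b$ in norm because $u_\lambda a\to a$. Therefore $ef(a)b=f(a)b$ for every $b\in B$, and since $B$ is an essential ideal of $\M(B)$ this gives $ef(a)=f(a)$; the symmetric argument applied to $bf(au_\lambda)$ gives $f(a)e=f(a)$. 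Specialising to $a=u_\lambda$ we obtain $ef(u_\lambda)=f(u_\lambda)$, whence $e(eb)=\lim_\lambda ef(u_\lambda)b=\lim_\lambda f(u_\lambda)b=eb$ for all $b$, i.e.\ $e^2=e$; moreover $e=e^*$ because the adjoint is strictly continuous and $f(u_\lambda)^*=f(u_\lambda)$. Thus $e$ is a self-adjoint projection. Finally, $f(a)b=ef(a)b\in eB$ yields $[f(A)B]\subseteq eB$, while $eb=\lim_\lambda f(u_\lambda)b\in[f(A)B]$ yields the reverse inclusion; since $e$ is a projection, $eB$ is closed, so $[f(A)B]=eB$, and $[Bf(A)]=Be$ is proved in the same way.

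The substantive step is $(iii)\Rightarrow(ii)$. First, from $f(a)B\subseteq[f(A)B]=eB$ together with the essentiality of $B$ in $\M(B)$, one extracts that $e$ is a self-adjoint projection with $ef(a)=f(a)=f(a)e$ for all $a\in A$, so that $B=eB\oplus(1-e)B$. Then, for each $m\in\M(A)$, one defines a bounded operator $\tilde f(m)$ on $B$ by setting it equal to $0$ on $(1-e)B$ and, on the dense subspace $\langle f(A)B\rangle$ of $eB=[f(A)B]$, by $\tilde f(m)\bigl(\textstyle\sum_i f(a_i)b_i\bigr):=\sum_i f(ma_i)b_i$. Both the well-definedness of this formula and the bound $\|\tilde f(m)\xi\|\le\|m\|\,\|\xi\|$ follow from the identity $\sum_i f(ma_i)b_i=\lim_\lambda f(mu_\lambda)\bigl(\sum_i f(a_i)b_i\bigr)$, valid for an approximate unit $(u_\lambda)$ of $A$ because $mu_\lambda\to m$ strictly in $\M(A)$ forces $mu_\lambda a_i\to ma_i$ in norm, together with $\|f(mu_\lambda)\|\le\|m\|$. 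One checks that $m\mapsto\tilde f(m)$ is then a $*$-homomorphism $\M(A)\to\M(B)$ extending $f$, with $\tilde f(1_A)=e$, and that it is strictly continuous (for fixed $b\in B$, $\tilde f(m)b$ and $b\tilde f(m)$ depend norm-continuously on $m$ in the strict topology, by the same approximation and the uniform bound). This is $(ii)$. The principal obstacle lies precisely here: recovering the structural properties of $e$ from the bare relation $[f(A)B]=eB$, and then verifying that the formula defining $\tilde f$ is well defined, bounded, multiplicative, $*$-preserving, and strictly continuous — all of which rest on the approximate-unit manipulations above.
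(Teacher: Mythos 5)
The paper recalls this lemma from \cite{BC} without reproducing a proof, so there is no in-text argument to compare yours against; judged on its own terms, your cycle $(ii)\Rightarrow(i)\Rightarrow(iii)\Rightarrow(ii)$ is the natural one, and the steps $(ii)\Rightarrow(i)$ and $(i)\Rightarrow(iii)$ (including the derivation that $e$ is a self-adjoint projection and that $[Bf(A)]=Be$) are correct. The genuine gap is at the opening of $(iii)\Rightarrow(ii)$, where you assert that from $[f(A)B]=eB$ ``together with the essentiality of $B$ in $\M(B)$, one extracts that $e$ is a self-adjoint projection with $ef(a)=f(a)=f(a)e$''. No such extraction exists: the bare relation $[f(A)B]=eB$ does not determine $e$ and does not force it to be a projection. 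Take $A=B=\GC$, $f=\id$ and $e=2$: then $[f(A)B]=\GC=eB$, yet $e$ is not a projection and neither $(i)$ nor $(ii)$ holds for this $e$, since every approximate unit of $\GC$ converges to $1\neq 2$. So the implication $(iii)\Rightarrow(ii)$ is false as literally stated, and the sentence in which you claim to recover the structural properties of $e$ — which you yourself identify as ``the principal obstacle'' — is exactly where any proof must break down.

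The defect is as much in the statement as in your write-up: the implication $(iii)\Rightarrow(i),(ii)$ needs the a priori hypothesis that $e$ is a self-adjoint projection (which is how the lemma is invoked throughout the paper, where $e$ is always one of the projections $q_{\beta_A\alpha}$, $q_{\alpha_B\beta}$, etc.). But you should have flagged this rather than asserting an impossible deduction. Once $e=e^*=e^2$ is granted, your argument does go through: $f(a)b\in eB$ for all $b$ gives $ef(a)=f(a)$ by essentiality of $B$ in $\M(B)$, taking adjoints gives $f(a)e=f(a)$, hence $B=eB\oplus(1-e)B$ with $eB=[f(A)B]$, and your construction of $\tilde f(m)$ — zero on $(1-e)B$ and $\sum_i f(a_i)b_i\mapsto\sum_i f(ma_i)b_i$ on the dense subspace of $eB$, with well-definedness and the bound $\|\tilde f(m)\|\leqslant\|m\|$ coming from $\tilde f(m)\xi=\lim_\lambda f(mu_\lambda)\xi$ — is the standard one; adjointability of $\tilde f(m)$ with adjoint $\tilde f(m^*)$ makes it a multiplier, and multiplicativity, $\tilde f(1_A)=e$ and strict continuity (on bounded sets, which is what the uniqueness assertion requires) follow as you indicate.
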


\begin{defin}\label{defactmqg}
An action of $\cal G$ on a C*-algebra $A$ is a pair $(\beta_A,\delta_A)$ consisting of a non-degenerate *-homomorphism $\beta_A:N^{\rm o}\rightarrow\M(A)$ and a faithful *-homomorphism $\delta_A:A\rightarrow\M(A\tens S)$ such that:
\begin{enumerate}
\item $\delta_A$ extends to a strictly continuous *-homomorphism $\delta_A:\M(A)\rightarrow\M(A\tens S)$ such that $\delta_A(1_A)=q_{\beta_A\alpha}$ (cf.\ \ref{ProjectionCAlg});
\item $(\delta_A\tens\id_S)\delta_A=(\id_A\tens\delta)\delta_A$;
$\phantom{q_{\beta_A\alpha}\ref{rk12}}$ 
\item $\delta_A(\beta_A(n^{\rm o}))=q_{\beta_A\alpha}(1_A\tens\beta(n^{\rm o}))$, for all $n\in N$.
\end{enumerate}
We say that the action $(\beta_A,\delta_A)$ is strongly continuous if we have
\[
[\delta_A(A)(1_A\tens S)]=q_{\beta_A\alpha}(A\tens S).
\]
If that case, we say that the triple $(A,\beta_A,\delta_A)$ is a $\cal{G}$-C*-algebra. 
\end{defin}

\begin{rks}\label{rk12}
\begin{itemize}
\item By \ref{lem19}, the condition 1 is equivalent to requiring that for some (and then any) approximate unit $(u_{\lambda})$ of $A$, we have $\delta_A(u_{\lambda})\rightarrow q_{\beta_A\alpha}$ with respect to the strict topology of $\M(A\tens S)$. It is also equivalent to $[\delta_A(A)(A\tens S)]=q_{\beta_A\alpha}(A\tens S)$.
	\item Condition 1 implies that the *-homomorphisms $\delta_A\tens\id_S$ and $\id_A\tens\delta$ extend uniquely to strictly continuous *-homomorphisms from $\M(A\tens S)$ to $\M(A\tens S\tens S)$ such that 
$
(\delta_A\tens\id_S)(1_{A\tens S})=q_{\beta_A\alpha,12}
$
and 
$
(\id_A\tens\delta)(1_{A\tens S})=q_{\beta\alpha,23}.
$ In particular, condition 2 does make sense and we denote by $\delta_A^2:=(\delta_A\tens\id_S)\delta_A:A\rightarrow\M(A\tens S\tens S)$\index[symbol]{dca@$\delta_A^2$, iterated coaction map} the iterated coaction map.\qedhere
\end{itemize}
\end{rks}

\begin{exs}\label{ex2} Let us give two basic examples.
\begin{itemize}
\item $(S,\beta,\delta)$ is a $\cal{G}$-C*-algebra.
\item Let $\beta_{N^{\rm o}}:=\id_{N^{\rm o}}$. Let $\delta_{N^{\rm o}}:N^{\rm o}\rightarrow\M(N^{\rm o}\tens S)$ be the faithful unital *-homomorphism given by $\delta_{N^{\rm o}}(n^{\rm o}):=q_{\beta_{N^{\rm o}}\alpha}(1_{N^{\rm o}}\tens\beta(n^{\rm o}))$ for all $n\in N$. Then, the pair $(\beta_{N^{\rm o}},\delta_{N^{\rm o}})$ is an action of $\cal{G}$ on $N^{\rm o}$ called the trivial action.\qedhere
\end{itemize}
\end{exs}

\begin{prop}\label{prop39}
Let $(\delta_A,\beta_A)$ be an action of $\cal G$ on $A$. We have the following statements:
\begin{enumerate}
\item the iterated coaction map $\delta_A^2$ extends uniquely to a strictly continuous *-homomorphism $\delta_A^2:\M(A)\rightarrow\M(A\tens S\tens S)$ such that $\delta_A^2(1_A)=q_{\beta_A\alpha,12}q_{\beta\alpha,23}$; moreover, we have $\delta_A^2(m)=(\delta_A\tens\id_S)\delta_A(m)=(\id_A\tens\delta)\delta_A(m)$ for all $m\in\M(A)$;
\item for all $n\in N$, we have $\delta_A(\beta_A(n^{\rm o}))=(1_A\tens\beta(n^{\rm o}))q_{\beta_A\alpha}$;
\item if $(\beta_A,\delta_A)$ is strongly continuous, then we have $[(1_A\tens S)\delta_A(A)]=(A\tens S)q_{\beta_A\alpha}$.\qedhere
\end{enumerate}
\end{prop}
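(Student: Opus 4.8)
The three assertions are of increasing formality; only the first requires a genuine computation, and even that one reduces quickly to a manipulation of the projection $q_{\beta_A\alpha}$. The plan is as follows.

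For (1), I would build the extension by composing maps that are already known to extend. Condition~1 of Definition~\ref{defactmqg} gives the strictly continuous $*$-homomorphism $\delta_A\colon\M(A)\to\M(A\tens S)$ with $\delta_A(1_A)=q_{\beta_A\alpha}$, and Remark~\ref{rk12} gives strictly continuous extensions $\delta_A\tens\id_S,\ \id_A\tens\delta\colon\M(A\tens S)\to\M(A\tens S\tens S)$ with $(\delta_A\tens\id_S)(1_{A\tens S})=q_{\beta_A\alpha,12}$ and $(\id_A\tens\delta)(1_{A\tens S})=q_{\beta\alpha,23}$. Hence $(\delta_A\tens\id_S)\circ\delta_A$ and $(\id_A\tens\delta)\circ\delta_A$ are strictly continuous $*$-homomorphisms $\M(A)\to\M(A\tens S\tens S)$; they coincide on $A$ by condition~2 of Definition~\ref{defactmqg}, hence on all of $\M(A)$ by strict density of $A$, and this common map (unique as a strictly continuous extension of $\delta_A^2$ off the strictly dense subalgebra $A$) is $\delta_A^2$. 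It then remains to identify $\delta_A^2(1_A)=(\delta_A\tens\id_S)(q_{\beta_A\alpha})$. Writing $q_{\beta_A\alpha}$ in the explicit form furnished by \ref{ProjectionCAlg}, namely as a finite sum of elements $\beta_A((\cdot)^{\rm o})\tens\alpha(\cdot)$ indexed by a system of matrix units of $N$, and using $\delta_A(1_A)=q_{\beta_A\alpha}$ together with condition~3 of Definition~\ref{defactmqg} in the form $(\delta_A\tens\id_S)(\beta_A(n^{\rm o})\tens 1)=q_{\beta_A\alpha,12}(1_A\tens\beta(n^{\rm o})\tens 1)$, a short computation gives $(\delta_A\tens\id_S)(q_{\beta_A\alpha})=q_{\beta_A\alpha,12}q_{\beta\alpha,23}$; here the two factors commute because their only common leg, the middle one, carries $\alpha(N)$ coming from the first factor and $\beta(N^{\rm o})$ coming from the second, and $[\alpha(N),\beta(N^{\rm o})]=0$.

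For (2), I would simply take adjoints in condition~3 of Definition~\ref{defactmqg}: since $\delta_A$, $\beta_A$ and $\beta$ are $*$-homomorphisms and $q_{\beta_A\alpha}$ is a self-adjoint projection, the identity $\delta_A(\beta_A(n^{\rm o}))=q_{\beta_A\alpha}(1_A\tens\beta(n^{\rm o}))$ yields, on passing to adjoints and replacing $n$ by $n^*$, the identity $\delta_A(\beta_A(n^{\rm o}))=(1_A\tens\beta(n^{\rm o}))q_{\beta_A\alpha}$; equivalently $q_{\beta_A\alpha}$ commutes with $1_A\tens\beta(N^{\rm o})$, which could also be read off directly from the shape of $q_{\beta_A\alpha}$ and the relation $[\alpha(N),\beta(N^{\rm o})]=0$. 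For (3), I would likewise take adjoints in the strong-continuity identity $[\delta_A(A)(1_A\tens S)]=q_{\beta_A\alpha}(A\tens S)$: the left-hand side becomes $[(1_A\tens S)\delta_A(A)]$ (using $S^*=S$, $A^*=A$ and $\delta_A(A)^*=\delta_A(A)$), while the right-hand side becomes $[(A\tens S)q_{\beta_A\alpha}]=(A\tens S)q_{\beta_A\alpha}$, since $(A\tens S)q_{\beta_A\alpha}$ is already norm-closed, being the range of the bounded idempotent $x\mapsto x\,q_{\beta_A\alpha}$ on $A\tens S$.

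The only step that is not purely formal is the identification $(\delta_A\tens\id_S)(q_{\beta_A\alpha})=q_{\beta_A\alpha,12}q_{\beta\alpha,23}$ in part~(1); once the description of $q_{\beta_A\alpha}$ from the appendix is available this is a brief direct computation, but it is exactly there that the compatibility among $\delta_A$, $\beta_A$, $\alpha$ and $\beta$ is genuinely used, so that is the point I would spell out in full.
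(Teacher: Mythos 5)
Your proposal is correct, and since the paper states Proposition \ref{prop39} without proof (it is recalled from the reference [BC]), your argument is exactly the expected one: the composition of the strictly continuous extensions furnished by condition 1 of Definition \ref{defactmqg} and Remark \ref{rk12}, the computation of $(\delta_A\tens\id_S)(q_{\beta_A\alpha})=q_{\beta_A\alpha,12}q_{\beta\alpha,23}$ from the explicit form of the projection in \ref{ProjectionCAlg} together with condition 3, and passage to adjoints for parts 2 and 3. You correctly isolate the one non-formal step and justify the commutation $[q_{\beta_A\alpha,12},q_{\beta\alpha,23}]=0$ via $[\alpha(N),\beta(N^{\rm o})]=0$ on the shared middle leg, so there is nothing to add.
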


Let us provide a more explicit definition of what an action of the dual measured quantum groupoid $\widehat{\cal G}$ on a C*-algebra $B$ is.

\begin{defin}
An action of $\widehat{\cal G}$ on a C*-algebra $B$ is a pair $(\alpha_B,\delta_B)$ consisting of a non-degenerate *-homomorphism $\alpha_B:N\rightarrow\M(B)$ and a faithful *-homomorphism $\delta_B:B\rightarrow\M(B\tens\widehat{S})$ such that:
\begin{enumerate}
\item $\delta_B$ extends to a strictly continuous *-homomorphism $\delta_B:\M(B)\rightarrow\M(B\tens\widehat{S})$ such that $\delta_B(1_B)=q_{\alpha_B\beta}$ (cf.\ \ref{ProjectionCAlg});
\item $(\delta_B\tens\id_{\widehat{S}})\delta_B=(\id_B\tens\widehat\delta)\delta_B$;
\item $\delta_B(\alpha_B(n))=q_{\alpha_B\beta}(1_B\tens\widehat\alpha(n))$, for all $n\in N$.
\end{enumerate}
We say that the action $(\alpha_B,\delta_B)$ is strongly continuous if we have  
\[
[\delta_B(B)(1_B\tens\widehat S)]=q_{\alpha_B\beta}(B\tens\widehat S).
\]
If $(\delta_B,\alpha_B)$ is a strongly continuous action of $\widehat{\cal G}$ on $B$, we say that the triple $(B,\alpha_B,\delta_B)$ is a  $\widehat{\cal G}$-C*-algebra. 
\end{defin}

\begin{rks} As for actions of ${\cal G}$, we have:
\begin{itemize}
\item the condition 1 is equivalent to requiring that for some $\vphantom{q_{\alpha_B\beta}}$(and then any) approximate unit $(u_{\lambda})_{\lambda}$ $\vphantom{\widehat{S}}$of $B$ we have $\delta_B(u_{\lambda})\rightarrow q_{\alpha_B\beta}$ with respect to the strict topology, which is also equivalent to the relation $[\delta_B(B)(B\tens\widehat{S})]=q_{\alpha_B\beta}(B\tens\widehat{S})$;
\item the *-homomorphisms $\id_B\tens\widehat\delta$ and $\delta_B\tens\id_{\widehat{S}}$ extend uniquely to strictly continuous *-homomorphisms from $\vphantom{\delta_B^2}$ $\M(B\tens \widehat{S})$ to $\M(B\tens\widehat{S}\tens\widehat{S})$ such that $(\id_B\tens\widehat{\delta})(1_{B\tens\widehat{S}})=q_{\widehat{\alpha}\beta,23}$ and $(\delta_B\tens\id_{\widehat{S}})(1_{B\tens\widehat{S}})=q_{\alpha_B\beta,12}\vphantom{\delta_B^2\widehat{S}}$. In particular, condition 2 does make sense and we denote by $\delta_B^2:=(\delta_B\tens\id_{\widehat{S}})\delta_B:B\rightarrow\M(B\tens\widehat{S}\tens\widehat{S})$ the iterated coaction map.\qedhere
\end{itemize}
\end{rks}

\begin{exs} Let us give two basic examples:
\begin{itemize}
\item $(\widehat{S},\widehat{\alpha},\widehat{\delta})$ is a $\widehat{\cal{G}}$-C*-algebra;
\item Let $\alpha_N:=\id_N$ and $\delta_{N}:N\rightarrow\M(N\tens \widehat{S})\,;\, n\mapsto q_{\alpha_N\beta}(1_{N}\tens\widehat{\alpha}(n))$; then, the pair $(\alpha_N,\delta_{N})$ is an action of $\widehat{\cal{G}}$ on $N$ called the trivial action.\qedhere
\end{itemize}
\end{exs}

\begin{prop}
Let $(\alpha_B,\delta_B)$ be an action of $\widehat{\cal G}$ on $B$. We have the following statements:
\begin{enumerate}
\item the iterated coaction map $\delta_B^2$ extends uniquely to a strictly continuous *-homomorphism $\delta_B^2:\M(B)\rightarrow\M(B\tens\widehat{S}\tens\widehat{S})$ such that $\delta_B^2(1_B)=q_{\alpha_B\beta,12}q_{\widehat{\alpha}\beta,23}$; moreover, we have $\delta_B^2(m)=(\delta_B\tens\id_{\widehat{S}})\delta_B(m)=(\id_B\tens\widehat{\delta})\delta_B(m)$ for all $m\in\M(B)$;
\item for all $n\in N$, we have $\delta_B(\alpha_B(n))=(1_B\tens\widehat{\alpha}(n))q_{\alpha_B\beta}$;$\vphantom{\widehat{S}}$ 
\item if $(\alpha_B,\delta_B)$ is strongly continuous, then we have $[(1_B\tens\widehat{S})\delta_B(B)]=(B\tens \widehat{S})q_{\alpha_B\beta}$.\qedhere
\end{enumerate}
\end{prop}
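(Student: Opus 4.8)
The plan is to transcribe, \emph{mutatis mutandis}, the proof of Proposition~\ref{prop39}, using the symmetry between the axioms for an action of $\cal G$ and those for an action of $\widehat{\cal G}$: one replaces the data $(\beta_A,\delta_A,q_{\beta_A\alpha},q_{\beta\alpha},\beta)$ by $(\alpha_B,\delta_B,q_{\alpha_B\beta},q_{\widehat\alpha\beta},\widehat\alpha)$ and the coproduct $\delta$ by $\widehat\delta$. I shall freely use the facts recorded in the remark preceding the statement: $\delta_B$ extends to a strictly continuous $*$-homomorphism $\M(B)\to\M(B\tens\widehat S)$ with $\delta_B(1_B)=q_{\alpha_B\beta}$, and $\delta_B\tens\id_{\widehat S}$, $\id_B\tens\widehat\delta$ extend to strictly continuous $*$-homomorphisms $\M(B\tens\widehat S)\to\M(B\tens\widehat S\tens\widehat S)$ with $(\delta_B\tens\id_{\widehat S})(1_{B\tens\widehat S})=q_{\alpha_B\beta,12}$ and $(\id_B\tens\widehat\delta)(1_{B\tens\widehat S})=q_{\widehat\alpha\beta,23}$.

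For~(1): the map $\delta_B^2=(\delta_B\tens\id_{\widehat S})\circ\delta_B$, being a composite of strictly continuous $*$-homomorphisms, is a strictly continuous $*$-homomorphism $\M(B)\to\M(B\tens\widehat S\tens\widehat S)$, and it is the unique such extension of $\restr{\delta_B^2}{B}$; likewise $(\id_B\tens\widehat\delta)\circ\delta_B$ is strictly continuous and, by axiom~2, agrees with $(\delta_B\tens\id_{\widehat S})\circ\delta_B$ on $B$, so the two maps coincide on all of $\M(B)$ by strict density of $B$ in $\M(B)$. This already gives the identity $\delta_B^2(m)=(\delta_B\tens\id_{\widehat S})\delta_B(m)=(\id_B\tens\widehat\delta)\delta_B(m)$ for $m\in\M(B)$. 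To evaluate $\delta_B^2$ at $1_B$, I would compute $\delta_B^2(1_B)=(\id_B\tens\widehat\delta)(q_{\alpha_B\beta})$ by writing $q_{\alpha_B\beta}$ via its explicit expression as a finite sum of elementary tensors $\alpha_B(\,\cdot\,)\tens\beta(\,\cdot\,)$ indexed by a system of matrix units of $N$, and applying on the second leg the relation $\widehat\delta(\beta(n^{\rm o}))=q_{\widehat\alpha\beta}(\beta(n^{\rm o})\tens 1_{\widehat S})$; this factors out $q_{\widehat\alpha\beta,23}$ and leaves precisely $q_{\alpha_B\beta,12}$, whence $\delta_B^2(1_B)=q_{\alpha_B\beta,12}q_{\widehat\alpha\beta,23}$ (the two projections commute because their common leg carries $\beta(N^{\rm o})$ on one side and $\widehat\alpha(N)$ on the other).

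For~(2): axiom~3 gives $\delta_B(\alpha_B(n))=q_{\alpha_B\beta}(1_B\tens\widehat\alpha(n))$, so it suffices to see that $q_{\alpha_B\beta}$ commutes with $1_B\tens\widehat\alpha(n)$; the first legs commute trivially, and on the second leg the commutator reduces to $[\beta(m^{\rm o}),\widehat\alpha(n)]$, which vanishes in $\M(\widehat S)$ since $\beta(N^{\rm o})\subset M$ and $\widehat\alpha(N)\subset M'$ act commutingly on $\s H$ (equivalently, this is condition~(1) of Definition~\ref{defMQG} read for $\widehat{\cal G}$). For~(3): assuming strong continuity, i.e.\ $[\delta_B(B)(1_B\tens\widehat S)]=q_{\alpha_B\beta}(B\tens\widehat S)$, I would take adjoints of both sides; using $\widehat S^*=\widehat S$, $\delta_B(B)^*=\delta_B(B)$, $(B\tens\widehat S)^*=B\tens\widehat S$, $q_{\alpha_B\beta}^*=q_{\alpha_B\beta}$, and the fact that $q_{\alpha_B\beta}(B\tens\widehat S)$ is closed (being the range of the idempotent $x\mapsto q_{\alpha_B\beta}x$), this yields $[(1_B\tens\widehat S)\delta_B(B)]=(B\tens\widehat S)q_{\alpha_B\beta}$, as claimed.

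I expect no real obstacle: the argument is a direct transcription of the proof of Proposition~\ref{prop39}. The one point demanding a little care is the evaluation of $\delta_B^2(1_B)$ in~(1), where one has to unwind the explicit shape of $q_{\alpha_B\beta}$ and keep the leg numbering straight, and check that $\beta(N^{\rm o})$ and $\widehat\alpha(N)$ commute in $\M(\widehat S)$ so that $q_{\alpha_B\beta,12}q_{\widehat\alpha\beta,23}$ is a genuine self-adjoint projection.
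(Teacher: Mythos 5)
Your argument is correct: the strict-continuity and uniqueness reasoning for $\delta_B^2$, the computation of $(\id_B\tens\widehat\delta)(q_{\alpha_B\beta})$ via the matrix-unit expansion together with $\widehat\delta(\beta(n^{\rm o}))=q_{\widehat\alpha\beta}(\beta(n^{\rm o})\tens 1_{\widehat S})$ and the commutation $[\beta(N^{\rm o}),\widehat\alpha(N)]=0$ (from $\beta(N^{\rm o})\subset M$, $\widehat\alpha(N)\subset M'$), and the adjoint trick for statement 3 all go through. The paper states this proposition without proof (it is the exact dual counterpart of Proposition \ref{prop39}, with details deferred to the references), and your transcription is precisely the routine argument intended, so there is nothing substantive to compare against.
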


\begin{defin}\label{defEquiHom}
For $i=1,2$, let $A_i$ {\rm(}resp.\ $B_i${\rm)} be a C*-algebra. For $i=1,2$, let $(\beta_{A_i},\delta_{A_i})$ {\rm(}resp.\ $(\alpha_{B_i},\delta_{B_i})${\rm)} be an action of $\cal{G}$ {\rm(}resp.\ $\widehat{\cal G}${\rm)} on $A_i$ {\rm(}resp.\,$B_i${\rm)}. A non-degenerate *-homomorphism 
$
f:A_1\rightarrow\M(A_2)
$ {\rm(}resp.\ 
$
f:B_1\rightarrow\M(B_2)
${\rm)} 
is said to be $\cal{G}$-equivariant {\rm(}resp.\ $\widehat{\cal G}$-equivariant{\rm)} if $(f\tens\id_S)\delta_{A_1}=\delta_{A_2}\circ f$ and $ f\circ\beta_{A_1}=\beta_{A_2}$ (resp.\ $(f\tens\id_{\widehat{S}})\delta_{B_1}=\delta_{B_2}\circ f$ and $f\circ\alpha_{B_1}=\alpha_{B_2}$).
\end{defin}

\begin{rk}\label{rkEqMorph} With the notations and hypotheses of \ref{defEquiHom}, if $f$ satisfies the relation $(f\tens\id_S)\delta_{A_1}=\delta_{A_2}\circ f$ (resp.\ $(f\tens\id_{\widehat{S}})\delta_{B_1}=\delta_{B_2}\circ f$), then $f$ satisfies necessarily the relation $ f\circ\beta_{A_1}=\beta_{A_2}$ (resp.\ $f\circ\alpha_{B_1}=\alpha_{B_2}$), {\it i.e.}\ $f$ is $\cal G$-equivariant (resp.\ $\widehat{\cal G}$-equivariant). Indeed, let $n\in N$. For all $a\in A_1$ and $x\in A_2$, we have
\begin{align*}
\delta_{A_2}(f(\beta_{A_1}(n^{\rm o}))f(a)x)&=(f\tens\id_S)\delta_{A_1}(\beta_{A_1}(n^{\rm o})a)\delta_{A_2}(x)\\
&=(1_{A_2}\tens\beta(n^{\rm o}))(f\tens\id_S)\delta_{A_1}(a)\delta_{A_2}(x)\\
&=(1_{A_2}\tens\beta(n^{\rm o}))\delta_{A_2}(f(a)x)\\
&=\delta_{A_2}(\beta_{A_2}(n^{\rm o})f(a)x).
\end{align*}
Hence, $f(\beta_{A_1}(n^{\rm o}))f(a)x=\beta_{A_2}(n^{\rm o})f(a)x$ for all $a\in {A_1}$ and $x\in {A_2}$ since $\delta_{A_2}$ is faithful. Hence, we have $f(\beta_{A_1}(n^{\rm o}))=\beta_{A_2}(n^{\rm o})$ since $f$ is non-degenerate.
\end{rk}

\begin{nb}
We denote by ${\sf Alg}_{\mathcal{G}}$\index[symbol]{ac@${\sf Alg}_{\mathcal{G}}$, category of $\cal G$-C*-algebras} the category whose objects are the $\cal{G}$-C*-algebras and whose set of arrows between $\cal{G}$-C*-algebras is the set of $\cal G$-equivariant non-degenerate *-homomorphisms.
\end{nb}

		\subsubsection{Crossed product and dual action}

Let us fix a strongly continuous action $(\beta_A,\delta_A)$ of $\cal G$ on a C*-algebra $A$. 

\begin{nbs}\label{notCrossedProduct}
The *-representation\index[symbol]{pc@$\pi_L$} 
\begin{center}
$
\pi_L:=(\id_A\tens L)\circ\delta_A:A\rightarrow\Lin(A\tens\s H)
$
\end{center}
of $A$ on the Hilbert $A$-module $A\tens\s H$ extends uniquely to a strictly/*-strongly continuous faithful *-representation $\pi_L:\M(A)\rightarrow\Lin(A\tens\s H)$ such that $\pi_L(1_A)=q_{\beta_A\alpha}$. Moreover, we have
$
\pi_L(m)=\pi_L(m)q_{\beta_A\alpha}=q_{\beta_A\alpha}\pi_L(m)
$ 
for all $m\in\M(A)$. Consider the Hilbert $A$-module
\[
{\cal E}_{A,L}:=q_{\beta_A\alpha}(A\tens\s H).
\]
\index[symbol]{eb@${\cal E}_{A,L}$}By restricting $\pi_L$, we obtain a strictly/*-strongly continuous faithful unital *-representation
\[
\pi:\M(A)\rightarrow\Lin({\cal E}_{A,L})\; ;\; m\mapsto \restr{\pi_L(m)}{{\cal E}_{A,L}}\!\!.
\]
We have $[1_A\tens T,\,q_{\beta_A\alpha}]=0$ for all $T\in\M(\widehat{S})$. We then obtain a strictly/*-strongly continuous unital *-representation
\[
\widehat{\theta}:\M(\widehat{S})\rightarrow\Lin({\cal E}_{A,L})\; ;\; T\mapsto \restr{(1_A\tens T)}{{\cal E}_{A,L}}\!\!.
\]
\index[symbol]{pd@$\pi$, $\widehat{\theta}$}Note that if $\beta_A$ is faithful, then so is $\widehat{\theta}$.
\end{nbs}

\begin{propdef}
The norm closed subspace of $\Lin({\cal E}_{A,L})$ spanned by the products of the form $\pi(a)\widehat{\theta}(x)$ for $a\in A$ and $x\in\widehat{S}$ is a C*-subalgebra called the (reduced) crossed product of $A$ by the strongly continuous action $(\beta_A,\delta_A)$ of $\cal G$ and denoted by $A\rtimes{\cal G}$.\index[symbol]{ad@$A\rtimes{\cal G}$, crossed product}
\end{propdef}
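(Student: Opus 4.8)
The plan is to prove that the norm-closed span $A\rtimes{\cal G}=[\pi(A)\widehat\theta(\widehat S)]$ is stable under the involution and under multiplication. Since $\pi$ and $\widehat\theta$ are $*$-homomorphisms and $A$, $\widehat S$ are $*$-closed, one has $[\pi(A)\widehat\theta(\widehat S)]^{*}=[\widehat\theta(\widehat S)\pi(A)]$, so everything reduces to the single identity
\[
[\pi(A)\,\widehat\theta(\widehat S)]=[\widehat\theta(\widehat S)\,\pi(A)].
\]
Granting it, $A\rtimes{\cal G}$ is self-adjoint, and then $[\pi(A)\widehat\theta(\widehat S)]\cdot[\pi(A)\widehat\theta(\widehat S)]=[\pi(A)\pi(A)\widehat\theta(\widehat S)\widehat\theta(\widehat S)]=[\pi(A)\widehat\theta(\widehat S)]$, where one uses the identity to interchange the two middle factors and the existence of bounded approximate units in $A$ and in $\widehat S$ (together with the fact that $\pi$, $\widehat\theta$ are $*$-representations, so that $\pi(A)$ and $\widehat\theta(\widehat S)$ are C*-subalgebras of $\Lin({\cal E}_{A,L})$) to collapse $[\pi(A)\pi(A)]=\pi(A)$ and $[\widehat\theta(\widehat S)\widehat\theta(\widehat S)]=\widehat\theta(\widehat S)$.

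To prove the identity it suffices to show $[\widehat\theta(\widehat S)\pi(A)]\subseteq[\pi(A)\widehat\theta(\widehat S)]$, the reverse inclusion following by applying the involution. I would work on $A\tens\s H$ rather than on ${\cal E}_{A,L}$, using that $\pi_L(a)=\delta_A(a)$ has range in ${\cal E}_{A,L}=q_{\beta_A\alpha}(A\tens\s H)$ and that $1_A\tens x$ commutes with $q_{\beta_A\alpha}$ for $x\in\widehat S$, so that the products of the operators in play descend to ${\cal E}_{A,L}$ by restriction. For $a\in A$ and $\omega\in\B(\s H)_{*}$, writing a generic slice $\widehat x:=(\id\tens\omega)(V)\in\widehat S$ (their closed span being $\widehat S$ by definition), a routine slicing computation gives $(1_A\tens\widehat x)\delta_A(a)=(\id_{A\tens\s H}\tens\omega)\big((1_A\tens V)(\delta_A(a)\tens 1_{\s H})\big)$. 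Combining the coassociativity condition $(\delta_A\tens\id_S)\delta_A=(\id_A\tens\delta)\delta_A$ with $\delta(y)=V(y\tens 1)V^{*}$ for $y\in S$ yields $(\delta_A\tens\id_S)(\delta_A(a))=(1_A\tens V)(\delta_A(a)\tens 1)(1_A\tens V^{*})$; multiplying on the right by $1_A\tens V$ and using $V^{*}V=q_{\widehat\alpha\beta}$ (cf.\ \ref{inifinproj}) produces $(\delta_A\tens\id_S)(\delta_A(a))(1_A\tens V)=(1_A\tens V)(\delta_A(a)\tens 1)(1_A\tens q_{\widehat\alpha\beta})$. Once the spurious projection $1_A\tens q_{\widehat\alpha\beta}$ is shown to be redundant, one rewrites $(1_A\tens\widehat x)\delta_A(a)=(\id_{A\tens\s H}\tens\omega)\big((\delta_A\tens\id_S)(\delta_A(a))(1_A\tens V)\big)$; slicing the third leg against $\omega$ and using $[\delta_A(A)(1_A\tens S)]=q_{\beta_A\alpha}(A\tens S)$ shows that this lies in $[\delta_A(A)(1_A\tens\widehat S)]$, whence, after restricting to ${\cal E}_{A,L}$ and passing to closed spans, $[\widehat\theta(\widehat S)\pi(A)]\subseteq[\pi(A)\widehat\theta(\widehat S)]$.

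The main obstacle is exactly the projection bookkeeping just flagged: because $V$ is only a partial isometry — in contrast with the genuine multiplicative unitary of the quantum group case (cf.\ 7.1 \cite{BS2}) — one must verify that the projection $q_{\widehat\alpha\beta}$, and, when the same computation is iterated, also $q_{\beta\alpha}$ and $q_{\beta_A\alpha}$ (the latter entering through $\delta_A(1_A)=q_{\beta_A\alpha}$), can be inserted or deleted without harm. I would carry this out using $V=Vq_{\widehat\alpha\beta}$, the commutation relations of \ref{prop34} and \ref{NonComRel} (in particular $[V,1\tens\widehat\alpha(n)]=0$, together with the inclusion $S\subseteq\widehat\alpha(N)'$ coming from $M'\cap\widehat M'=\widehat\alpha(N)$, and the fact that the relative tensor product projection commutes with $\widehat\alpha(N)'\tens 1$), and the compatibility of $\delta_A$ with $\alpha$ and $\beta_A$ recorded in \ref{defactmqg} and \ref{prop39}. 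With this bookkeeping in place, the remaining steps — the slicings, the density argument for $\widehat S$, and the final collapse of spans — are routine, so the proof reduces modulo this point to the argument of \cite{BS2}.
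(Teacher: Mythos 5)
Your argument is correct and is the expected one: the paper itself recalls this statement from \cite{BC} without proof, but your computation is exactly the pattern the paper uses in its own analogous arguments (the identity $\delta_A(a)_{13}W_{23}=W_{23}\delta_A^2(a)$ in the proofs of \ref{prop38} and \ref{theo6} is the $W$-counterpart of your $V_{23}\delta_A(a)_{12}=\delta_A^2(a)V_{23}$, with the same projection bookkeeping via $V^*V=q_{\widehat\alpha\beta}$, $\widehat\alpha(N)\subset M'$ and $Vq_{\widehat\alpha\beta}=V$). The only step you leave compressed is the final slicing: to land in $[\delta_A(A)(1_A\tens\widehat S)]$ one should factor $\omega=\omega' y$ with $y\in S$ (Cohen, using non-degeneracy of $S$), note that $(1\tens 1\tens y)\delta_A^2(a)=\bigl(\delta_A^2(a^*)(1\tens 1\tens y^*)\bigr)^*=\bigl((\delta_A\tens\id_S)(\delta_A(a^*)(1_A\tens y^*))\bigr)^*$ lies in $[\delta_A(A)_{12}(1_A\tens 1\tens S)]$ by strong continuity, and then slice to obtain elements $\delta_A(c)\,(1_A\tens(\id\tens\omega' y')(V))\in\delta_A(A)(1_A\tens\widehat S)$; this is routine and consistent with what you sketched, so there is no gap.
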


In particular, the morphism $\pi$ (resp.\ $\widehat{\theta}$) defines a faithful unital *-homomorphism (resp.\ unital *-homomorphism) $\pi:\M(A)\rightarrow\M(A\rtimes{\cal G})$ (resp.\  $\widehat{\theta}:\widehat{S}\rightarrow\M(A\rtimes{\cal G})$).

\medbreak

Since $[\widetilde{V},\,\alpha(n)\tens 1]=0$, we have $[\widetilde{V}_{23},q_{\beta_A\alpha,12}]=0$. The operator $\widetilde{V}_{23}\in\Lin(A\tens\s H\tens\s H)$ restricts to a partial isometry 
\[
X:=\restr{\widetilde{V}_{23}}{{\cal E}_{A,L}\tens\s H}\,\in\Lin({\cal E}_{A,L}\tens\s H),
\]
whose initial and final projections are $X^*X=\restr{q_{\widehat{\beta}\alpha,23}}{{\cal E}_{A,L}\tens\s H}$ and $XX^*=\restr{q_{\widehat{\alpha}\beta,23}}{{\cal E}_{A,L}\tens\s H}$.

\begin{propdef}\label{defDualAct}
Let
\[
\delta_{A\rtimes{\cal G}}:A\rtimes{\cal G}\rightarrow\Lin({\cal E}_{A,L}\tens\s H) \quad \text{and} \quad \alpha_{A\rtimes{\cal G}}:N\rightarrow\M(A\rtimes{\cal G})
\] 
be the linear maps defined by:
\begin{itemize}
\item $\delta_{A\rtimes{\cal G}}(b):=X(b\tens 1)X^*$, for all
$b\in A\rtimes{\cal G}$;
\item $\alpha_{A\rtimes{\cal G}}(n):=\widehat{\theta}(\widehat{\alpha}(n))=(1_A\tens\widehat{\alpha}(n))\!\!\restriction_{{\cal E}_{A,L}}$, for all $n\in N$.
\end{itemize}
Then, $\delta_{A\rtimes{\cal G}}$ is a faithful *-homomorphism and $\alpha_{A\rtimes{\cal G}}$ is a non-degenerate *-homomorphism. Moreover, we have the following statements:
\begin{enumerate}
\item $\delta_{A\rtimes{\cal G}}(\pi(a)\widehat{\theta}(x))=(\pi(a)\tens 1_{\widehat{S}})
(\widehat{\theta}\tens\id_{\widehat{S}})\widehat{\delta}(x)$, for all $a\in A$ and $x\in\widehat{S}$; in particular, $\delta_{A\rtimes{\cal G}}$ takes its values in $\M((A\rtimes{\cal G})\tens\widehat{S})$;
\item $\alpha_{A\rtimes{\cal G}}(n)\pi(a)\widehat{\theta}(x)=\pi(a)\widehat{\theta}(\widehat{\alpha}
(n)x)$ and $\pi(a)\widehat{\theta}(x)\alpha_{A\rtimes{\cal G}}(n)=\pi(a)\widehat{\theta}(x
\widehat{\alpha}(n))$ for all $n\in N$, $a\in A$ and $x\in\widehat{S}$.\qedhere
\end{enumerate}
\end{propdef}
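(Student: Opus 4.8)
The plan is to check each assertion by a direct computation, regarding $\delta_{A\rtimes{\cal G}}$ as the spatial map ${\rm Ad}_X$ and $\alpha_{A\rtimes{\cal G}}$ as the composite $\widehat\theta\circ\widehat\alpha$. That $\delta_{A\rtimes{\cal G}}$ is $*$-preserving, and that $\alpha_{A\rtimes{\cal G}}$ is a $*$-homomorphism (since $\widehat\theta$ and $\widehat\alpha$ are, and $\widehat\alpha(N)\subset\M(\widehat{S})$), is immediate; the work then concentrates on (a) the explicit formula in item~1 and the resulting statement on the range of $\delta_{A\rtimes{\cal G}}$, (b) multiplicativity and faithfulness of $\delta_{A\rtimes{\cal G}}$, (c) the intertwining relations of item~2, whence the remaining properties of $\alpha_{A\rtimes{\cal G}}$.

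For (a): write $\pi(a)\widehat\theta(x)\tens 1_{\s H}$, as an operator on ${\cal E}_{A,L}\tens\s H\subset A\tens\s H\tens\s H$, as the product of $(\id_A\tens L)(\delta_A(a))$ on the first two legs with $1_A\tens x\tens 1_{\s H}$. Since $\widetilde V\in M'\tens\widehat{M}'$, one has $[\widetilde V,\,m\tens 1_{\s H}]=0$ for $m\in M$, hence $\widetilde V_{23}$ commutes with $(\id_A\tens L)(\delta_A(a))$ (whose $\s H$-leg component lies in $L(S)\subset M$); therefore $\delta_{A\rtimes{\cal G}}(\pi(a)\widehat\theta(x))$ is the restriction to ${\cal E}_{A,L}\tens\s H$ of $(\id_A\tens L)(\delta_A(a))\cdot\bigl(1_A\tens\widetilde V(x\tens 1_{\s H})\widetilde V^*\bigr)$. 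By the coproduct formula $\widehat\Delta(x)=\widetilde V(x\tens 1_{\s H})\widetilde V^*$ on $\widehat{M}'\supset\widehat{S}$ and $\widehat\delta=\restr{\widehat\Delta}{\widehat{S}}$, the inner factor is $\widehat\delta(x)$; restricting, the first factor becomes $\pi(a)\tens 1_{\widehat{S}}$ and the second $(\widehat\theta\tens\id_{\widehat{S}})\widehat\delta(x)$, using $[1_A\tens T,\,q_{\beta_A\alpha}]=0$ for $T\in\M(\widehat{S})$ and the non-degeneracy of $\widehat\theta\colon\widehat{S}\to\M(A\rtimes{\cal G})$ (which follows from $[\pi(A)\widehat\theta(\widehat{S})]=[\widehat\theta(\widehat{S})\pi(A)]=A\rtimes{\cal G}$, since $A\rtimes{\cal G}$ is a $*$-algebra). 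This is the formula of item~1. As $\widehat\delta(x)\in\M(\widehat{S}\tens\widehat{S})$, the element $(\widehat\theta\tens\id_{\widehat{S}})\widehat\delta(x)$ lies in $\M((A\rtimes{\cal G})\tens\widehat{S})$, and multiplying on the left by $\pi(a)\in A\rtimes{\cal G}$ keeps it there; by norm-continuity of $\delta_{A\rtimes{\cal G}}$ and $A\rtimes{\cal G}=[\pi(A)\widehat\theta(\widehat{S})]$ we conclude $\delta_{A\rtimes{\cal G}}(A\rtimes{\cal G})\subset\M((A\rtimes{\cal G})\tens\widehat{S})$.

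For multiplicativity and item~2: $X$ is a partial isometry with $X^*X=\restr{q_{\widehat\beta\alpha,23}}{{\cal E}_{A,L}\tens\s H}$, so ${\rm Ad}_X$ will be multiplicative on $(A\rtimes{\cal G})\tens 1$ once one checks that $b\tens 1$ commutes with $X^*X$ for all $b\in A\rtimes{\cal G}$; for $b=\pi(a)\widehat\theta(x)$ this is a direct verification from the relations of \ref{prop34}, \ref{NonComRel} and the commutant relations $M\cap\widehat M=\alpha(N)$, $M'\cap\widehat M=\widehat\beta(N^{\rm o})$ (the constituent of $q_{\widehat\beta\alpha,23}$ on the $\s H$-leg shared with $b\tens 1$ commutes with the corresponding component both of $\pi(a)$, lying in $M$, and of $\widehat\theta(x)$, lying in $\widehat{S}\subset\widehat M'$, while the other constituent sits on a leg where $b\tens 1$ acts trivially). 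For item~2, $\alpha_{A\rtimes{\cal G}}(n)=\widehat\theta(\widehat\alpha(n))$ with $\widehat\alpha(N)\subset\M(\widehat{S})\subset\widehat M'$, so $\widehat\alpha(n)$ commutes with the $\s H$-leg component of $\pi(a)$ (which lies in $M$), hence $\alpha_{A\rtimes{\cal G}}(n)$ commutes with $\pi(a)$; multiplicativity of $\widehat\theta$ on $\M(\widehat{S})$ then yields $\alpha_{A\rtimes{\cal G}}(n)\pi(a)\widehat\theta(x)=\pi(a)\widehat\theta(\widehat\alpha(n)x)$ and, symmetrically, $\pi(a)\widehat\theta(x)\alpha_{A\rtimes{\cal G}}(n)=\pi(a)\widehat\theta(x\widehat\alpha(n))$. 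These relations exhibit $\alpha_{A\rtimes{\cal G}}(n)$ as a multiplier of $A\rtimes{\cal G}$, show $\alpha_{A\rtimes{\cal G}}$ is a $*$-homomorphism, and, via $[\widehat\alpha(N)\widehat{S}]=\widehat{S}$, that it is non-degenerate.

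I expect the faithfulness of $\delta_{A\rtimes{\cal G}}$ to be the main obstacle: unlike the quantum group case (7.3~\cite{BS2}), $X$ is only a partial isometry, so ${\rm Ad}_X$ is not injective on all of $\Lin({\cal E}_{A,L}\tens\s H)$ and one must use that $A\rtimes{\cal G}$ sits where $X^*X$ acts as a unit. If $\delta_{A\rtimes{\cal G}}(b)=0$, then, since $b\tens 1$ commutes with $X^*X$, applying ${\rm Ad}_{X^*}$ gives $(b\tens 1)X^*X=0$, so it remains to prove $(b\tens 1)\,q_{\widehat\beta\alpha,23}=b\tens 1$ for every $b\in A\rtimes{\cal G}$. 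I would derive this from $[\pi(A){\cal E}_{A,L}]={\cal E}_{A,L}=[\widehat\theta(\widehat{S}){\cal E}_{A,L}]$ together with the commutation relations of \ref{prop34}, \ref{NonComRel} (and $\widetilde V^*\widetilde V=q_{\widehat\beta\widehat\alpha}$ from \ref{inifinproj}, $\alpha(1_N)=1_M$), which let one compute the action of $q_{\widehat\beta\alpha,23}$ on ${\cal E}_{A,L}\tens\s H$ and conclude; alternatively, one can produce a left inverse of $\delta_{A\rtimes{\cal G}}$ on $[\pi(A)\widehat\theta(\widehat{S})]$ by slicing the formula of item~1 by the counit of the weak Hopf C*-algebra $\widehat{S}$.
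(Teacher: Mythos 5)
Your handling of item 1 (pushing $\widetilde V_{23}$ past $\pi_L(a)_{12}$ via $\widetilde V\in M'\tens\widehat M'$, then using $\widehat\delta(x)=\widetilde V(x\tens 1)\widetilde V^*$ and restricting), of multiplicativity of ${\rm Ad}_X$ via $[b\tens 1,\,X^*X]=0$, and of item 2 is the natural argument and is essentially correct. One small repair in item 2: the commutation of $\widehat\alpha(n)$ with the second-leg component of $\pi_L(a)$ (which lies in $M$) does not follow from $\widehat\alpha(N)\subset\widehat M'$ as you write, but from $\widehat\alpha(N)=M'\cap\widehat M'\subset M'$ (recalled in the paper after the definition of $\widehat{\cal G}$).

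The genuine gap is in the faithfulness step. The identity $(b\tens 1)X^*X=b\tens 1$ that you propose to establish is false whenever the basis $N$ is nontrivial: since $A\rtimes{\cal G}$ acts non-degenerately on ${\cal E}_{A,L}$ (because $[\pi(A)\widehat\theta(\widehat S){\cal E}_{A,L}]={\cal E}_{A,L}$), letting $b$ run over an approximate unit would force $X^*X=1$ on ${\cal E}_{A,L}\tens\s H$, i.e.\ $X$ would be an isometry; but $X^*X$ is the restriction of $\widetilde V_{23}^*\widetilde V_{23}$, a projection whose third-leg constituents ($\alpha(\varepsilon_{\overline I})$, resp.\ $\widehat\alpha(\varepsilon_{\overline I})$, cf.\ \ref{inifinproj}) impose a nontrivial constraint on a leg that is completely unconstrained in ${\cal E}_{A,L}\tens\s H$. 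Your fallback via "the counit of $\widehat S$" is not available either, since such weak Hopf C*-algebras admit no bounded counit in general. The correct repair is the slicing trick the paper uses repeatedly: choose $\omega\in\B(\s H)_*$ with $\omega\circ\alpha=\epsilon$ (extension of normal linear forms); then, writing $q_{\widehat\beta\alpha}=\sum_{I\in\s I}n_I^{-1}\widehat\beta(\varepsilon_I^{\rm o})\tens\alpha(\varepsilon_{\overline I})$ and using $\epsilon(\varepsilon_{\overline I})=n_I\delta$ on diagonal indices, one gets $(\id\tens\omega)(q_{\widehat\beta\alpha})=\widehat\beta(1^{\rm o})=1$, hence $(\id_{{\cal E}_{A,L}}\tens\omega)(X^*X)=1$ and therefore $(\id\tens\omega)\bigl(X^*\delta_{A\rtimes{\cal G}}(b)X\bigr)=(\id\tens\omega)\bigl((b\tens 1)X^*X\bigr)=b$ for all $b\in A\rtimes{\cal G}$. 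This exhibits a left inverse of $\delta_{A\rtimes{\cal G}}$ and gives faithfulness; the rest of your argument then goes through.
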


\begin{propdef}
With the notations of \ref{defDualAct}, the pair $(\alpha_{A\rtimes{\cal G}},\delta_{A\rtimes{\cal G}})$ is a strongly continuous action of $\widehat{\cal G}$ on ${A\rtimes{\cal G}}$ called the dual action of $(\beta_A,\delta_A)$.
\end{propdef}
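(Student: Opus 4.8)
Throughout write $B:=A\rtimes{\cal G}$, $\alpha_B:=\alpha_{A\rtimes{\cal G}}$ and $\delta_B:=\delta_{A\rtimes{\cal G}}$. By \ref{defDualAct} we already know that $\alpha_B$ is a non-degenerate $*$-homomorphism $N\to\M(B)$, that $\delta_B$ is a faithful $*$-homomorphism, and — by statement 1 of \ref{defDualAct} — that $\delta_B$ lands in $\M(B\tens\widehat S)$ via the intertwining formula $\delta_B(\pi(a)\widehat\theta(x))=(\pi(a)\tens 1_{\widehat S})(\widehat\theta\tens\id_{\widehat S})\widehat\delta(x)$. The plan is to deduce conditions 1--3 of the definition of an action of $\widehat{\cal G}$ on $B$ and strong continuity by transcribing everything from this formula and from the structure of the weak Hopf C$^*$-algebra $(\widehat S,\widehat\delta)$ recalled in \S\ref{WHC*A}. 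Two consequences of \ref{defDualAct}(1) will be used repeatedly: taking $x=1_{\widehat S}$ and using $\widehat\delta(1_{\widehat S})=q_{\widehat\alpha\beta}$ gives $\delta_B(\pi(m))=(\pi(m)\tens 1_{\widehat S})q_{\alpha_B\beta}$ for $m\in\M(A)$, while letting $a$ run through an approximate unit of $A$ gives $\delta_B\circ\widehat\theta=(\widehat\theta\tens\id_{\widehat S})\circ\widehat\delta$ on $\widehat S$, hence on $\M(\widehat S)$; here one also uses $(\widehat\theta\tens\id_{\widehat S})(q_{\widehat\alpha\beta})=q_{\alpha_B\beta}$, which is a straightforward consequence of $\alpha_B=\widehat\theta\circ\widehat\alpha$ and the balancing relation $q_{\widehat\alpha\beta}(\widehat\alpha(n)\tens 1)=q_{\widehat\alpha\beta}(1\tens\beta(n^{\rm o}))$ coming from $V^*V=q_{\widehat\alpha\beta}$ and \ref{NonComRel}. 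Finally, $\alpha_B(N)$ commutes with $\pi(\M(A))$: indeed $\alpha_B(n)=\restr{(1_A\tens\widehat\alpha(n))}{{\cal E}_{A,L}}$ and $\widehat\alpha(N)=M'\cap\widehat M'$ commutes with $S\subseteq M$ and with $\alpha(N)\subseteq M$, so the commutation survives the compression to ${\cal E}_{A,L}$.

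For strong continuity, use $1_B\tens y=(\widehat\theta\tens\id_{\widehat S})(1_{\widehat S}\tens y)$ to rewrite \ref{defDualAct}(1) as $\delta_B(\pi(a)\widehat\theta(x))(1_B\tens y)=(\pi(a)\tens 1_{\widehat S})(\widehat\theta\tens\id_{\widehat S})\bigl(\widehat\delta(x)(1_{\widehat S}\tens y)\bigr)$ for $a\in A$, $x,y\in\widehat S$. Since $\pi(A)\widehat\theta(\widehat S)$ spans a dense subspace of $B$, taking closed linear spans and invoking $[\widehat\delta(\widehat S)(1_{\widehat S}\tens\widehat S)]=q_{\widehat\alpha\beta}(\widehat S\tens\widehat S)$ (\S\ref{WHC*A}) together with $(\widehat\theta\tens\id_{\widehat S})(q_{\widehat\alpha\beta})=q_{\alpha_B\beta}$ and the commutation of $q_{\alpha_B\beta}$ with $\pi(\M(A))\tens 1_{\widehat S}$ yields $[\delta_B(B)(1_B\tens\widehat S)]=q_{\alpha_B\beta}[\pi(A)\widehat\theta(\widehat S)\tens\widehat S]=q_{\alpha_B\beta}(B\tens\widehat S)$. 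Multiplying on the right by $B\tens 1_{\widehat S}$ gives $[\delta_B(B)(B\tens\widehat S)]=q_{\alpha_B\beta}(B\tens\widehat S)$, so Lemma \ref{lem19} applied to $\delta_B:B\to\M(B\tens\widehat S)$ with $e=q_{\alpha_B\beta}$ furnishes condition 1 (the strictly continuous extension to $\M(B)$ with $\delta_B(1_B)=q_{\alpha_B\beta}$; concretely $\delta_B(1_B)=XX^*=\restr{q_{\widehat\alpha\beta,23}}{{\cal E}_{A,L}\tens\s H}=q_{\alpha_B\beta}$).

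Condition 3 is immediate: since $\widehat\delta(\widehat\alpha(n))=\widehat\delta(1_{\widehat S})(1_{\widehat S}\tens\widehat\alpha(n))=q_{\widehat\alpha\beta}(1_{\widehat S}\tens\widehat\alpha(n))$, we get $\delta_B(\alpha_B(n))=\delta_B(\widehat\theta(\widehat\alpha(n)))=(\widehat\theta\tens\id_{\widehat S})(q_{\widehat\alpha\beta})(1_B\tens\widehat\alpha(n))=q_{\alpha_B\beta}(1_B\tens\widehat\alpha(n))$ (alternatively from $\delta_B(b)=X(b\tens 1)X^*$ and $\widetilde V(\widehat\alpha(n)\tens 1)=(1\tens\widehat\alpha(n))\widetilde V$ of \ref{prop34}). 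For condition 2, note that $(\delta_B\tens\id_{\widehat S})\delta_B$ and $(\id_B\tens\widehat\delta)\delta_B$ are $*$-homomorphisms $B\to\M(B\tens\widehat S\tens\widehat S)$, so it suffices to check they agree on the two families of generators $\widehat\theta(\widehat S)$ and $\pi(\M(A))$ of $B$. On $\widehat\theta(x)$ one iterates $\delta_B\circ\widehat\theta=(\widehat\theta\tens\id_{\widehat S})\circ\widehat\delta$, obtaining $(\widehat\theta\tens\id_{\widehat S}\tens\id_{\widehat S})(\widehat\delta\tens\id_{\widehat S})\widehat\delta(x)$ on the one side and $(\widehat\theta\tens\id_{\widehat S}\tens\id_{\widehat S})(\id_{\widehat S}\tens\widehat\delta)\widehat\delta(x)$ on the other, which coincide by coassociativity of $\widehat\delta$ (\S\ref{WHC*A}). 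On $\pi(m)$ one uses $\delta_B(\pi(m))=(\pi(m)\tens 1_{\widehat S})q_{\alpha_B\beta}$ and the support identities $(\delta_B\tens\id_{\widehat S})(q_{\alpha_B\beta})=q_{\alpha_B\beta,12}\,q_{\widehat\alpha\beta,23}=(\id_B\tens\widehat\delta)(q_{\alpha_B\beta})$ — the first from condition 3 and $\widehat\delta(1_{\widehat S})=q_{\widehat\alpha\beta}$, the second from $\widehat\delta(\beta(n^{\rm o}))=q_{\widehat\alpha\beta}(\beta(n^{\rm o})\tens 1_{\widehat S})$ — to check that both sides equal $(\pi(m)\tens 1_{\widehat S}\tens 1_{\widehat S})\,q_{\alpha_B\beta,12}\,q_{\widehat\alpha\beta,23}$, using also that $q_{\alpha_B\beta,12}$ and $q_{\widehat\alpha\beta,23}$ commute (their common second leg carries $\beta(N^{\rm o})$ and $\widehat\alpha(N)$, which commute since $\widehat\alpha(N)\subseteq M'$, $\beta(N^{\rm o})\subseteq M$).

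This exhausts the axioms, so $(\alpha_B,\delta_B)$ is a strongly continuous action of $\widehat{\cal G}$ on $A\rtimes{\cal G}$. I expect the only genuinely delicate point to be condition 2: once the identities for $\delta_B$ on $\pi(\M(A))$ and on $\widehat\theta(\widehat S)$ are in place, the verification is pure leg-bookkeeping, but it requires keeping careful track of how the support projections $q_{\alpha_B\beta}$, $q_{\widehat\alpha\beta}$ transform under the two slots of the iterated coproduct, together with the commutation relations among $\alpha_B(N)$, $\beta(N^{\rm o})$, $\widehat\alpha(N)$ and $\pi(A)$; everything else is a direct transcription of \ref{defDualAct} and of the weak-Hopf-C$^*$-algebra structure of $(\widehat S,\widehat\delta)$.
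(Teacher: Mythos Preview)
The paper does not give its own proof of this statement: \S\ref{subsectionBiduality} is explicitly a review of results from \cite{BC}, and this Proposition-Definition is stated there without argument. Your proof is essentially correct and follows the natural line one would expect in \cite{BC}: reduce everything to the intertwining formula of \ref{defDualAct}~(1) together with the weak-Hopf structure of $(\widehat S,\widehat\delta)$.

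Two minor points of presentation deserve tightening. First, when you derive $\delta_B\circ\widehat\theta=(\widehat\theta\tens\id_{\widehat S})\circ\widehat\delta$ by ``letting $a$ run through an approximate unit of $A$'', this looks circular because you have not yet established the strict extension of $\delta_B$ to $\M(B)$; it is cleaner to obtain this formula directly from $\delta_B(b)=X(b\tens1)X^*$ and the identity $\widetilde V(x\tens1)\widetilde V^*=\widehat\delta(x)$ for $x\in\widehat S$ (which is the very definition of $\widehat\delta$, cf.\ \S\ref{MQGfinitebasis}), as you in fact indicate parenthetically for condition~3. Second, calling $\widehat\theta(\widehat S)$ and $\pi(\M(A))$ ``generators of $B$'' is imprecise since neither sits inside $B$; what you are really using is that both sides of condition~2 are strictly continuous $*$-homomorphisms on $\M(B)$, hence multiplicative, so agreement on the multipliers $\pi(m)$ and $\widehat\theta(x)$ forces agreement on the dense span of products $\pi(a)\widehat\theta(x)\in B$. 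With these clarifications the argument is complete.
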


In a similar way, we define the crossed product of a C*-algebra $B$ by a strongly continuous action $(\alpha_B,\delta_B)$ of the dual measured quantum groupoid $\widehat{\cal G}$. 

\begin{nbs}
The *-representation\index[symbol]{pe@$\widehat{\pi}_{\lambda}$} 
\begin{center}
$
\widehat\pi_\lambda:=(\id_B\tens\lambda)\circ\delta_B:B\rightarrow\Lin(B\tens\s H)
$
\end{center}
of $B$ on the Hilbert $B$-module $B\tens\s H$ extends uniquely to a strictly/*-strongly continuous faithful *-representation $\widehat\pi_\lambda:\M(B)\rightarrow\Lin(B\tens\s H)$ such that $\widehat\pi_\lambda(1_B)=q_{\alpha_B\widehat\beta}$. Moreover, we have
$
\widehat\pi_\lambda(m)=\widehat\pi_\lambda(m)q_{\alpha_B\widehat\beta}=q_{\alpha_B\widehat\beta}\widehat\pi_\lambda(m)
$, 
for all $m\in\M(B)$. Consider the Hilbert $B$-module
\[
{\cal E}_{B,\lambda}:=q_{\alpha_B\widehat\beta}(B\tens\s H).
\]
\index[symbol]{ec@${\cal E}_{B,\lambda}$}By restricting $\widehat\pi_\lambda$, we obtain a strictly/*-strongly continuous faithful unital *-representation
\[
\widehat\pi:\M(B)\rightarrow\Lin({\cal E}_{B,\lambda})\; ;\; m\mapsto \restr{\widehat\pi_\lambda(m)}{{\cal E}_{B,\lambda}}\!.
\]
We have $[1_B\tens T,\,q_{\alpha_B\widehat\beta}]=0$ for all $T\in\M(S)$. We then obtain a strictly/*-strongly continuous unital *-representation
\[
\theta:\M(S)\rightarrow\Lin({\cal E}_{B,\lambda})\; ;\; T\mapsto \restr{(1_B\tens T)}{{\cal E}_{B,\lambda}}\!.
\]
\index[symbol]{pf@$\widehat{\pi}$, $\theta$}Note that if $\alpha_B$ is faithful, then so is $\theta$.
\end{nbs}

\begin{propdef}
The norm closed subspace of $\Lin({\cal E}_{B,\lambda})$ spanned by the products of the form $\widehat{\pi}(b)\theta(x)$ for $b\in B$ and $x\in S$ is a C*-subalgebra called the (reduced) crossed product of $B$ by the strongly continuous action $(\alpha_B,\delta_B)$ of $\widehat{\cal G}$ and denoted by $B\rtimes\widehat{\cal G}$.\index[symbol]{bc@$B\rtimes\widehat{\cal G}$, crossed product}
\end{propdef}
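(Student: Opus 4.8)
The plan is to obtain this statement as the dual counterpart of the Proposition-Definition for $A\rtimes{\cal G}$ established just above, applied to the dual groupoid $\widehat{\cal G}$ in place of ${\cal G}$. The dual $\widehat{\cal G}$ is again a measured quantum groupoid on a finite basis (on $N^{\rm o}$), its weak Hopf C*-algebra is $(\widehat S,\widehat\delta)$, and its dual weak Hopf C*-algebra is $(R(S),\delta_R)$ with $R(S)=USU^*$ (cf.\ \S\ref{WHC*A}); moreover, by the very definition recalled above, a $\widehat{\cal G}$-C*-algebra $(B,\alpha_B,\delta_B)$ is exactly the input to which the crossed-product construction of \ref{notCrossedProduct} applies, with $(S,\delta)$ replaced by $(\widehat S,\widehat\delta)$ and the second factor $\widehat S$ replaced by $R(S)$. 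Carrying out that construction for $\widehat{\cal G}$ produces a $*$-subalgebra of $\Lin(q_{\alpha_B\beta}(B\tens\s H))$, namely the norm closure of the products $(\id_B\tens\rho)\delta_B(b)\,(1_B\tens y)$ for $b\in B$, $y\in R(S)$; and by the Proposition-Definition for $A\rtimes{\cal G}$ --- which is a statement about an arbitrary measured quantum groupoid on a finite basis and hence applies to $\widehat{\cal G}$ --- this norm closure is a C*-algebra.

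It then remains to identify this C*-algebra with the subspace $[\widehat\pi(B)\theta(S)]\subset\Lin({\cal E}_{B,\lambda})$ of the statement, and for this I would conjugate by the unitary $1_B\tens U$. Since $\widehat\beta={\rm Ad}_U\circ\beta$, this conjugation carries $q_{\alpha_B\beta}$ to $q_{\alpha_B\widehat\beta}$, hence restricts to a unitary $q_{\alpha_B\beta}(B\tens\s H)\to{\cal E}_{B,\lambda}=q_{\alpha_B\widehat\beta}(B\tens\s H)$. Under it, the first-factor term $(\id_B\tens\rho)\delta_B(b)$ is sent to $(\id_B\tens{\rm Ad}_U)\delta_B(b)=(\id_B\tens\lambda)\delta_B(b)$, i.e.\ to $\widehat\pi(b)$ (recall $\lambda={\rm Ad}_U\circ\rho$); and, writing $y=UxU^*$ with $x\in S$, the second-factor term $1_B\tens y$ is sent to $1_B\tens U^2xU^{*2}=1_B\tens\lambda^{{\rm i}/4}x\lambda^{-{\rm i}/4}=1_B\tens x=\theta(x)$, using $U^2=\lambda^{{\rm i}/4}\in{\cal Z}(M)$ and $S\subset M$. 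Thus ${\rm Ad}_{1_B\tens U}$ intertwines the two spanning families, so $[\widehat\pi(B)\theta(S)]$ is the image of a C*-algebra under a $*$-isomorphism, hence a C*-algebra; this also explains a posteriori why $\theta$ is well defined, i.e.\ why $1_B\tens\M(S)$ commutes with $q_{\alpha_B\widehat\beta}$.

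The only genuine content left is the Proposition-Definition for $A\rtimes{\cal G}$ itself; whether one cites \cite{BC} or reproves it, the argument follows the Baaj--Skandalis pattern, showing that $[(\id_A\tens L)\delta_A(A)(1_A\tens\widehat S)]$ is stable under the adjoint and under multiplication inside $\Lin({\cal E}_{A,L})$. Stability under the adjoint rests on the fact that $[S\widehat S]$ is a C*-algebra (a consequence of the irreducibility of $\cal G$, so that $[S\widehat S]=[\widehat S S]$) together with the commutation relations of \ref{prop34} and \ref{NonComRel} and the strong-continuity identity $[\delta_A(A)(1_A\tens S)]=q_{\beta_A\alpha}(A\tens S)$. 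Stability under multiplication is the usual computation: for a product of two generators one inserts the coaction identity $(\delta_A\tens\id_S)\delta_A=(\id_A\tens\delta)\delta_A$, uses that $\Delta$ is implemented by $V$ and by $W$ (so the pentagon and partial-isometry relations of \ref{inifinproj} apply), and then reabsorbs the surplus $\widehat S$-leg into the spanning family by strong continuity.

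I expect the main obstacle to be organisational rather than analytic: keeping straight which of $V,W,\widetilde V$ and which of the representations $L,R,\rho,\lambda$ intervenes at each step after the passage from ${\cal G}$ to $\widehat{\cal G}$ --- the roles of $S$ and $\widehat S$, and of $\alpha,\widehat\alpha,\beta,\widehat\beta$, are interchanged and twisted by ${\rm Ad}_U$ --- and being careful with the non-unitality of the coproducts, i.e.\ with the cut-down projections $q_{\alpha_B\widehat\beta},q_{\beta\alpha},q_{\widehat\alpha\beta}$ and with the strict continuity of the extensions to multiplier algebras (Lemma \ref{lem19}). No genuinely new difficulty arises beyond the already-established case of $A\rtimes{\cal G}$.
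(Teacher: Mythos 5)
Your proposal is correct, but note that the paper itself gives no proof of this Proposition--Definition: it is recalled verbatim from \S 3.2.2 of \cite{BC}, so there is no in-text argument to match against. Your route --- applying the already-stated construction of $A\rtimes{\cal G}$ to the dual groupoid $\widehat{\cal G}$ (weak Hopf C*-algebra $(\widehat S,\widehat\delta)$, dual $(R(S),\delta_R)$) and then conjugating by $1_B\tens U$ --- is a genuine alternative to what \cite{BC} does, namely rerunning the Baaj--Skandalis computation directly with $V$, $\lambda$ and $\widehat\pi_\lambda$ (the direct computation is the pattern this paper follows elsewhere, e.g.\ in the proofs of \ref{prop38} and \ref{theo6}, where the identity $\delta_A(a)_{13}W_{23}=W_{23}\delta_A^2(a)$ plays the role your $V_{23}$-conjugation would play). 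All your verifications check out: ${\rm Ad}_{1_B\tens U}$ carries $q_{\alpha_B\beta}$ to $q_{\alpha_B\widehat\beta}$ since $\widehat\beta={\rm Ad}_U\circ\beta$, carries $(\id_B\tens\rho)\delta_B$ to $\widehat\pi_\lambda$, and carries $1_B\tens UxU^*$ to $1_B\tens x$ because $U^2=\lambda^{{\rm i}/4}\in{\cal Z}(M)$ and $S\subset M$; and $R(S)\subset\beta(N^{\rm o})'$ (equivalently $M\subset\widehat\beta(N^{\rm o})'$) makes the untwisted picture well defined. What this buys is that the $\widehat{\cal G}$-statement costs nothing beyond the ${\cal G}$-statement. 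One point of emphasis is slightly off in your sketch of the base case: for a general $B$, both adjoint-stability and product-stability reduce to the single inclusion $[\theta(S)\widehat\pi(B)]\subset[\widehat\pi(B)\theta(S)]$, and that inclusion is driven by the implementation of the coproduct by the relevant multiplicative partial isometry together with the strong-continuity identity and the commutation relations of \ref{prop34} --- the C*-property of $[S\widehat S]$ is not needed there (it only enters when one identifies $S\rtimes{\cal G}$ with $[S\widehat S]$); also the ``surplus leg'' that gets reabsorbed is the $S$-leg coming off the coaction, its normal form being folded into the functional defining the $\widehat S$-element. Neither point affects the validity of your plan.
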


In particular, the morphism $\widehat\pi$ (resp.\ ${\theta}$) defines a faithful unital *-homomorphism (resp.\ unital *-homomorphism) $\widehat\pi:\M(B)\rightarrow\M(B\rtimes\widehat{\cal G})$ (resp.\  ${\theta}:S\rightarrow\M(B\rtimes\widehat{\cal G})$).

\medbreak

Since $[V,\,\beta(n^{\rm o})\tens 1]=0$, we have $[V_{23},q_{\alpha_B\beta,12}]=0$. The operator $V_{23}\in\Lin(B\tens\s H\tens\s H)$ restricts to a partial isometry 
\[
Y:=\restr{V_{23}}{{\cal E}_{B,\lambda}\tens\s H}\,\in\Lin({\cal E}_{B,\lambda}\tens\s H),
\]
whose initial and final projections are $Y^*Y=\restr{q_{\widehat{\alpha}\beta,23}}{{\cal E}_{B,\lambda}\tens\s H}$ and $YY^*=\restr{q_{\beta\alpha,23}}{{\cal E}_{B,\lambda}\tens\s H}$.

\begin{propdef}\label{defDualActBis}
Let 
\[
\delta_{B\rtimes\widehat{\cal G}}:B\rtimes\widehat{\cal G}\rightarrow\Lin({\cal E}_{B,\lambda}\tens\s H) \quad \text{and} \quad \beta_{B\rtimes\widehat{\cal G}}:N^{\rm o}\rightarrow\Lin({\cal E}_{B,\lambda})
\]
be the linear maps defined by:
\begin{itemize}
\item $\delta_{B\rtimes\widehat{\cal G}}(c):=Y(c\tens 1_{\s H})Y^*$, for all $c\in B\rtimes\widehat{\cal G}$;
\item $\beta_{ B\rtimes\widehat{\cal G}}(n^{\rm o}):=\theta(\beta(n^{\rm o}))=\restr{(1_B\tens\beta(n^{\rm o}))}{{\cal E}_{B,\lambda}\tens\s H}$, for all $n\in N$.
\end{itemize}
Then, $\delta_{B\rtimes\widehat{\cal G}}$ is a faithful *-homomorphism and $\beta_{ B\rtimes\widehat{\cal G}}$ is a non-degenerate *-homomorphism. Moreover, we have the following statements:
\begin{enumerate}
\item $\delta_{B\rtimes\widehat{\cal G}}(\widehat\pi(b)\theta(x))=(\widehat\pi(b)\tens 1_{S})
(\theta\tens\id_S)\delta(x)$, for all $b\in B$ and $x\in S$; in particular, $\delta_{B\rtimes\widehat{\cal G}}$ takes its values in $\M((B\rtimes\widehat{\cal G})\tens S)$;
\item $\beta_{B\rtimes\widehat{\cal G}}(n^{\rm o})\widehat\pi(b)\theta(x)=\widehat\pi(b)\theta(\beta(n^{\rm o})x)$ and $\widehat\pi(b)\theta(x)\beta_{B\rtimes\widehat{\cal G}}(n^{\rm o})=\widehat\pi(b)\theta(x\beta(n^{\rm o}))$ for all $n\in N$, $b\in B$ and $x\in S$.\qedhere
\end{enumerate}
\end{propdef}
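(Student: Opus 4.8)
The plan is to mirror the proof of \ref{defDualAct} from \cite{BC}, transported along the duality $\cal G\leftrightarrow\widehat{\cal G}$: one systematically replaces $\widetilde V$, $\widehat\delta$, $\widehat\theta$, $\widehat S$ by $V$, $\delta$, $\theta$, $S$, and the same manipulations apply. I will indicate the key steps.

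First I would prove the formula in statement~1 by a direct computation in $\Lin(B\tens\s H\tens\s H)$. Since $\widehat\pi(b)=\restr{\widehat\pi_\lambda(b)}{{\cal E}_{B,\lambda}}$ with $\widehat\pi_\lambda(b)=(\id_B\tens\lambda)\delta_B(b)$, $\theta(x)=\restr{(1_B\tens x)}{{\cal E}_{B,\lambda}}$, and $Y=\restr{V_{23}}{{\cal E}_{B,\lambda}\tens\s H}$, and since $\widehat\pi_\lambda(b)\tens1$ and $1_B\tens x\tens1$ preserve ${\cal E}_{B,\lambda}\tens\s H$, the operator $\delta_{B\rtimes\widehat{\cal G}}(\widehat\pi(b)\theta(x))$ is the restriction to ${\cal E}_{B,\lambda}\tens\s H$ of $V_{23}(\widehat\pi_\lambda(b)\tens1)(1_B\tens x\tens1)V_{23}^*$. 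Now the $\s H$-leg of $\widehat\pi_\lambda(b)$ lies in $\lambda(\M(\widehat S))\subset\widehat M$ (cf.\ $W\in\M(S\tens\lambda(\widehat S))$ and the description of $\widehat M$ in \S\ref{WHC*A}) while the first leg of $V$ lies in $\widehat M'$, so $V_{23}$ commutes with $\widehat\pi_\lambda(b)\tens1$; using this together with $V(x\tens1)V^*=\Delta(x)=\delta(x)$ for $x\in S$, we get
\[
V_{23}(\widehat\pi_\lambda(b)\tens1)(1_B\tens x\tens1)V_{23}^*=(\widehat\pi_\lambda(b)\tens1)\,V_{23}(1_B\tens x\tens1)V_{23}^*=(\widehat\pi_\lambda(b)\tens1)(1_B\tens\delta(x)).
\]
Restricting to ${\cal E}_{B,\lambda}\tens\s H$ and unfolding the definitions of $\widehat\pi$ and of $\theta\tens\id_S$ yields $\delta_{B\rtimes\widehat{\cal G}}(\widehat\pi(b)\theta(x))=(\widehat\pi(b)\tens1_S)(\theta\tens\id_S)\delta(x)$. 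Since $\widehat\pi(b)\in\M(B\rtimes\widehat{\cal G})$ and, as $\delta(x)\in\M(S\tens S)$ and $\theta$ extends strictly continuously to $\M(S)\to\M(B\rtimes\widehat{\cal G})$, also $(\theta\tens\id_S)\delta(x)\in\M((B\rtimes\widehat{\cal G})\tens S)$, the right-hand side lies in $\M((B\rtimes\widehat{\cal G})\tens S)$; as these elements span $B\rtimes\widehat{\cal G}$ densely and $\delta_{B\rtimes\widehat{\cal G}}$ is contractive, $\delta_{B\rtimes\widehat{\cal G}}$ has range in $\M((B\rtimes\widehat{\cal G})\tens S)$.

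Next I would check the remaining algebraic properties. The map $\delta_{B\rtimes\widehat{\cal G}}=Y(\,\cdot\,\tens1_{\s H})Y^*$ is a $*$-homomorphism because $Y$ is a partial isometry and $(B\rtimes\widehat{\cal G})\tens1_{\s H}$ commutes with $Y^*Y=\restr{q_{\widehat\alpha\beta,23}}{{\cal E}_{B,\lambda}\tens\s H}$: the $\s H$-leg of any element of $B\rtimes\widehat{\cal G}$ is generated by $\lambda(\widehat S)\subset\widehat M$ and $L(S)\subset M$, both lying in $\widehat\alpha(N)'$ (recall $M\cap\widehat M'=\beta(N^{\rm o})$ and $M'\cap\widehat M'=\widehat\alpha(N)$), and $q_{\widehat\alpha\beta}$ commutes with $\widehat\alpha(N)'\tens1$; the partial-isometry identities then give multiplicativity and $*$-preservation. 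For $\beta_{B\rtimes\widehat{\cal G}}=\theta\circ\beta$, I would note it is the composition of the unital faithful $*$-homomorphism $\beta:N^{\rm o}\to\M(S)$ with the unital $*$-homomorphism $\theta$, hence a unital (hence non-degenerate) $*$-homomorphism into $\M(B\rtimes\widehat{\cal G})$; statement~2 then reduces, via multiplicativity of $\theta$, to the commutation $[\theta(\beta(n^{\rm o})),\widehat\pi(b)]=0$, which holds since $\beta(N^{\rm o})=M\cap\widehat M'\subset\widehat M'$ commutes with the $\s H$-leg $\lambda(\widehat S)\subset\widehat M$ of $\widehat\pi(b)$.

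The main obstacle I anticipate is the faithfulness of $\delta_{B\rtimes\widehat{\cal G}}$. From the commutation just used, $\delta_{B\rtimes\widehat{\cal G}}(c)Y=Y(c\tens1_{\s H})Y^*Y=Y(c\tens1_{\s H})$, so $\delta_{B\rtimes\widehat{\cal G}}(c)=0$ forces $Y(c\tens1_{\s H})=0$ and hence $(c\tens1_{\s H})q_{\widehat\alpha\beta,23}=0$ on ${\cal E}_{B,\lambda}\tens\s H$; what remains is to show that $\restr{q_{\widehat\alpha\beta,23}}{{\cal E}_{B,\lambda}\tens\s H}$ is separating for $(B\rtimes\widehat{\cal G})\tens1_{\s H}$, whence $c=0$. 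I would carry this last point out exactly as for the corresponding property of $\delta_{A\rtimes{\cal G}}$ in \cite{BC}, using the strong continuity $[\delta_B(B)(1_B\tens\widehat S)]=q_{\alpha_B\beta}(B\tens\widehat S)$ of $(\alpha_B,\delta_B)$ together with the multiplicativity relation for $V$.
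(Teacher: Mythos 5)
The paper states this Proposition--Definition without proof (the whole of \S\ref{subsectionBiduality} is recalled from \cite{BC}), so there is no in-paper argument to compare against; your proof is correct and is exactly the expected dualization of \ref{defDualAct}, and every commutation relation you invoke ($\lambda(\M(\widehat S))\subset\widehat M$, $V\in\widehat M'\tens M$, $S\subset M$ and $\lambda(\widehat S)\subset\widehat M$ inside $\widehat\alpha(N)'$, $\beta(N^{\rm o})\subset\widehat M'$) is available from \S\ref{MQGfinitebasis} and \S\ref{WHC*A} and does what you claim. The one step you defer --- that $\restr{q_{\widehat\alpha\beta,23}}{{\cal E}_{B,\lambda}\tens\s H}$ is separating for $(B\rtimes\widehat{\cal G})\tens 1_{\s H}$ --- closes more easily than you suggest: neither strong continuity nor the pentagon relation is needed, since choosing $\omega\in\B(\s H)_*$ with $\omega\circ\beta=\epsilon^{\rm o}$ gives $(\id\tens\omega)(q_{\widehat\alpha\beta})=1$, so slicing the third leg of $(c\tens 1_{\s H})q_{\widehat\alpha\beta,23}=0$ returns $c=0$ directly.
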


\begin{propdef}
With the notations of \ref{defDualActBis}, the pair $(\beta_{B\rtimes\widehat{\cal G}},\delta_{B\rtimes\widehat{\cal G}})$ is a strongly continuous action of ${\cal G}$ on ${B\rtimes\widehat{\cal G}}$ called the dual action of $(\alpha_B,\delta_B)$.
\end{propdef}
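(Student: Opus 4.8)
The plan is to verify, one at a time, the three conditions of Definition~\ref{defactmqg} together with the strong-continuity relation, for the pair $(\beta_{B\rtimes\widehat{\cal G}},\delta_{B\rtimes\widehat{\cal G}})$, most of the preparatory work being already contained in \ref{defDualActBis}. The key structural remark is that $\delta_{B\rtimes\widehat{\cal G}}$ is implemented by conjugation by the partial isometry $Y=\restr{V_{23}}{{\cal E}_{B,\lambda}\tens\s H}$, whose initial and final projections are $\restr{q_{\widehat{\alpha}\beta,23}}{{\cal E}_{B,\lambda}\tens\s H}$ and $\restr{q_{\beta\alpha,23}}{{\cal E}_{B,\lambda}\tens\s H}$; hence $\delta_{B\rtimes\widehat{\cal G}}(c)=Y(c\tens 1_{\s H})Y^{*}$ makes sense and is strictly continuous for $c\in\M(B\rtimes\widehat{\cal G})$, with $\delta_{B\rtimes\widehat{\cal G}}(1)=YY^{*}$. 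Since $\beta_{B\rtimes\widehat{\cal G}}=\theta\circ\beta$ acts on the $\s H$-leg of ${\cal E}_{B,\lambda}$ through $\beta$, while $\alpha(N)\subset\M(S)$ acts on the new leg, unwinding the description of the balancing projection (cf.\ \ref{ProjectionCAlg}) identifies $YY^{*}$ with $q_{\beta_{B\rtimes\widehat{\cal G}}\alpha}\in\M((B\rtimes\widehat{\cal G})\tens S)$. Condition~1 then follows from \ref{lem19}: for an approximate unit $(u_{\lambda})$ of $B\rtimes\widehat{\cal G}$ one has $\delta_{B\rtimes\widehat{\cal G}}(u_{\lambda})=Y(u_{\lambda}\tens 1)Y^{*}\to YY^{*}=q_{\beta_{B\rtimes\widehat{\cal G}}\alpha}$ strictly, the range of $\delta_{B\rtimes\widehat{\cal G}}$ lying in $\M((B\rtimes\widehat{\cal G})\tens S)$ by \ref{defDualActBis}(1); by \ref{rk12} this in particular makes condition~2 meaningful.

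For condition~3 I would compute $\delta_{B\rtimes\widehat{\cal G}}(\beta_{B\rtimes\widehat{\cal G}}(n^{\rm o}))=Y(\theta(\beta(n^{\rm o}))\tens 1)Y^{*}$; since $\theta(\beta(n^{\rm o}))$ is $\beta(n^{\rm o})$ acting on the $\s H$-leg, this is the restriction of $1_{B}\tens\bigl(V(\beta(n^{\rm o})\tens 1)V^{*}\bigr)_{23}$, and the intertwining relation $V(\beta(n^{\rm o})\tens 1)=(1\tens\beta(n^{\rm o}))V$ of \ref{prop34}(2) together with $VV^{*}=q_{\beta\alpha}$ turns it into $(1_{B\rtimes\widehat{\cal G}}\tens\beta(n^{\rm o}))\,YY^{*}=q_{\beta_{B\rtimes\widehat{\cal G}}\alpha}(1_{B\rtimes\widehat{\cal G}}\tens\beta(n^{\rm o}))$, the last equality using $[\alpha(N),\beta(N^{\rm o})]=0$. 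Condition~2 is the main computational point, and I would establish it on the generating elements $\widehat\pi(b)\theta(x)$ ($b\in B$, $x\in S$) by means of the formula $\delta_{B\rtimes\widehat{\cal G}}(\widehat\pi(b)\theta(x))=(\widehat\pi(b)\tens 1_{S})(\theta\tens\id_{S})\delta(x)$ of \ref{defDualActBis}(1). Since $\widehat\pi(b)\theta(x')$ is again a generating element for $x'\in S$, applying $\delta_{B\rtimes\widehat{\cal G}}\tens\id_{S}$ to this formula and re-inserting it yields $(\widehat\pi(b)\tens 1_{S}\tens 1_{S})(\theta\tens\id_{S}\tens\id_{S})\bigl((\delta\tens\id_{S})\delta(x)\bigr)$, whereas applying $\id_{B\rtimes\widehat{\cal G}}\tens\delta$ and using $(\id\tens\delta)(\theta(y)\tens z)=\theta(y)\tens\delta(z)$ yields $(\widehat\pi(b)\tens 1_{S}\tens 1_{S})(\theta\tens\id_{S}\tens\id_{S})\bigl((\id_{S}\tens\delta)\delta(x)\bigr)$; the two coincide by the coassociativity of $\delta$ recalled in \S\ref{WHC*A} (working throughout with $\delta(x)(1_{S}\tens y)\in S\tens S$ to stay inside honest C*-algebras). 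One then extends the identity from the generators to $B\rtimes\widehat{\cal G}$ by norm continuity and to $\M(B\rtimes\widehat{\cal G})$ by strict continuity.

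Strong continuity is the easiest step: using \ref{defDualActBis}(1) once more, $[\delta_{B\rtimes\widehat{\cal G}}(B\rtimes\widehat{\cal G})(1\tens S)]=[(\widehat\pi(b)\tens 1_{S})(\theta\tens\id_{S})(\delta(x)(1_{S}\tens y))\,;\,b\in B,\ x,y\in S]$, and $[\delta(S)(1_{S}\tens S)]=\delta(1_{S})(S\tens S)=q_{\beta\alpha}(S\tens S)$ from \S\ref{WHC*A}; pushing this through the strictly continuous $\theta\tens\id_{S}$ (which carries $q_{\beta\alpha}$ to $q_{\beta_{B\rtimes\widehat{\cal G}}\alpha}$ and $S$ to $\theta(S)$), multiplying on the left by $\widehat\pi(B)\tens 1_{S}$, and invoking $[\widehat\pi(B)\theta(S)]=B\rtimes\widehat{\cal G}$ together with condition~1 for the reverse inclusion, one obtains $[\delta_{B\rtimes\widehat{\cal G}}(B\rtimes\widehat{\cal G})(1\tens S)]=q_{\beta_{B\rtimes\widehat{\cal G}}\alpha}((B\rtimes\widehat{\cal G})\tens S)$. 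I expect the genuine difficulty to lie entirely in the leg-numbering and relative-multiplier bookkeeping — in particular the clean identification of $q_{\beta_{B\rtimes\widehat{\cal G}}\alpha}$ with $YY^{*}$ and with $(\theta\tens\id_{S})(q_{\beta\alpha})$, and in justifying the extensions of $\theta\tens\id_{S}$ and $\delta_{B\rtimes\widehat{\cal G}}\tens\id_{S}$ to the relevant multiplier algebras. As a shortcut one may instead note that this statement is the exact mirror image, under the interchange $\cal G\leftrightarrow\widehat{\cal G}$ (and correspondingly $V\leftrightarrow\widetilde V$, $W\leftrightarrow V$, $\widehat S\leftrightarrow S$, $\widetilde V_{23}\leftrightarrow V_{23}$), of the already-recalled fact that $(\alpha_{A\rtimes{\cal G}},\delta_{A\rtimes{\cal G}})$ is a strongly continuous action of $\widehat{\cal G}$, so that the argument of \cite{BC} transcribes verbatim.
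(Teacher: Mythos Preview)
The paper does not prove this statement: the whole of \S\ref{subsectionBiduality} is an explicit recollection of definitions and results from \cite{BC} (\S\S~3.1, 3.2.1, 3.2.2, 3.3.1), and this Proposition-Definition is stated without proof, exactly as its counterpart for $(\alpha_{A\rtimes{\cal G}},\delta_{A\rtimes{\cal G}})$ just above. Your sketch is correct and is precisely the standard verification one would carry out; in particular the identifications $YY^{*}=\restr{q_{\beta\alpha,23}}{{\cal E}_{B,\lambda}\tens\s H}=q_{\beta_{B\rtimes\widehat{\cal G}}\alpha}$ and $(\theta\tens\id_{S})(q_{\beta\alpha})=q_{\beta_{B\rtimes\widehat{\cal G}}\alpha}$ are immediate from \ref{ProjectionCAlg} and the definition $\beta_{B\rtimes\widehat{\cal G}}=\theta\circ\beta$, and the coassociativity and strong-continuity steps reduce to those of $(S,\delta)$ exactly as you describe. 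Your final remark is in fact the most faithful reflection of the paper's own treatment: the statement is the $\cal G\leftrightarrow\widehat{\cal G}$ mirror of the preceding one, and in \cite{BC} the two are handled by a single argument, so there is nothing to add beyond that observation.
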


		\subsubsection{Takesaki-Takai duality}

Let $(\beta_A,\delta_A)$ be a strongly continuous action of the groupoid ${\cal G}$ on a C*-algebra $A$.

\begin{nbs}
The *-representation\index[symbol]{pg@$\pi_R$}
\[
\pi_R:=(\id_A\tens R)\circ\delta_A:A\rightarrow\Lin(A\tens\s H)
\]
of $A$ on the Hilbert $A$-module $A\tens\s H$ extends uniquely to a strictly/*-strongly continuous faithful *-representation $\pi_R:\M(A)\rightarrow\Lin(A\tens\s H)$ satisfying $\pi_R(m)=(\id_A\tens R)\delta_A(m)$ for all $m\in\M(A)$ and $\pi_R(1_A)=q_{\beta_A\widehat{\alpha}}$. Consider the Hilbert $A$-module
\[
\er:=q_{\beta_A\widehat{\alpha}}(A\tens\s H).
\]
\index[symbol]{ed@$\er$}We recall that the Banach space
\[
D:=[\pi_R(a)(1_A\tens\lambda(x)L(y))\,;\, a\in A,\, x\in\widehat{S},\, y\in S]
\]
is a C*-subalgebra of $\Lin(A\tens\s H)$ such that $dq_{\beta_A\widehat{\alpha}}=d=dq_{\beta_A\widehat{\alpha}}$ for all $d\in D$. Moreover, we have $D(A\tens\s H)=\er$. We also recall that there exists a unique strictly/*-strongly continuous faithful *-representation 
$
j_D:\M(D)\rightarrow\Lin(A\tens\s H)
$ 
\index[symbol]{ja@$j_D$}extending the inclusion map $D\subset\Lin(A\tens\s H)$ such that $j_D(1_D)=q_{\beta_A\widehat{\alpha}}$.
\end{nbs}

\begin{prop}\label{IsoTT}
There exists a unique *-isomorphism $\phi:(A\rtimes{\cal G})\rtimes\widehat{\cal G}\rightarrow D$ such that $\phi(\widehat{\pi}(\pi(a)\widehat{\theta}(x))\theta(y))=\pi_R(a)(1_A\tens\lambda(x)L(y))$ for all $a\in A$, $x\in\widehat{S}$ and $y\in S$.
\end{prop}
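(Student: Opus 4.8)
The plan is to follow the Baaj--Skandalis strategy for locally compact quantum groups: realize both $(A\rtimes{\cal G})\rtimes\widehat{\cal G}$ and $D$ as concrete C*-algebras of adjointable operators on sub-modules of $A\tens\s H\tens\s H$ and $A\tens\s H$ respectively, and then exhibit a single unitary of Hilbert $A$-modules conjugating one onto the other.

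First I would make the double crossed product explicit. Recalling that $\pi_L=(\id_A\tens L)\circ\delta_A$ with $L=\id$, that $\widehat\pi_\lambda=(\id\tens\lambda)\circ\delta_{A\rtimes{\cal G}}$, and that $\delta_{A\rtimes{\cal G}}(b)=\widetilde V_{23}(b\tens 1_{\s H})\widetilde V_{23}^{*}$ in legs $2,3$, I would rewrite the generators $g_{a,x,y}:=\widehat\pi(\pi(a)\widehat\theta(x))\theta(y)$ ($a\in A$, $x\in\widehat S$, $y\in S$), viewed on the sub-module ${\cal E}_{A\rtimes{\cal G},\lambda}=q_{\alpha_{A\rtimes{\cal G}}\widehat\beta}\bigl((A\rtimes{\cal G})\tens\s H\bigr)\subset A\tens\s H\tens\s H$, in the form
\[
g_{a,x,y}=(1\tens U\tens 1)\,[\pi_R(a)]_{12}\,\bigl[W(\lambda(x)\tens 1)W^{*}\bigr]_{23}\,(1\tens U^{*}\tens 1)\,(1\tens 1\tens y)
\]
(restricted). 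Here I use $\widetilde V=(U\tens U)W(U^{*}\tens U^{*})$, $R={\rm Ad}_U\circ L$, $\lambda={\rm Ad}_U\circ\rho$, $U^{*}=\lambda^{-{\rm i}/4}U$ with $\lambda^{{\rm i}t}$ central, the identity $({\rm Ad}_U\tens{\rm Ad}_U)\widehat\delta(x)=W(\lambda(x)\tens 1)W^{*}$ coming from $\widehat\delta(x)=\widetilde V(x\tens 1)\widetilde V^{*}$, and the fact that $[\pi_R(a)]_{12}$ commutes with $W_{23}$ because the second leg of $\pi_R(a)$ lies in $M'$ while $W\in M\tens\widehat M$.

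Next I would produce the intertwining unitary $\Theta\colon{\cal E}_{A\rtimes{\cal G},\lambda}\to\er\tens\s H$, where $\er=q_{\beta_A\widehat\alpha}(A\tens\s H)$, built from $W_{23}$ (equivalently $V_{23}$ or $\widetilde V_{23}$) together with the operators $U$, and check that it is a well-defined unitary of Hilbert $A$-modules between these two sub-modules. This requires carefully matching the various partial-isometry projections ($q_{\beta_A\alpha}$, $q_{\alpha_{A\rtimes{\cal G}}\widehat\beta}$, $q_{\beta_A\widehat\alpha}$, $q_{\widehat\alpha\beta}$, $q_{\widehat\beta\alpha}$, \dots), using Proposition \ref{inifinproj}, Propositions \ref{prop34}--\ref{NonComRel}, the commutations $[W_{12},V_{23}]=0$ and $[V_{12},\widetilde V_{23}]=0$, the pentagon equations and the relations $[\delta(S)(S\tens 1_S)]=\delta(1_S)(S\tens S)$. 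The decisive point is that conjugation by $\Theta$ sends $g_{a,x,y}$ to $\bigl(\pi_R(a)(1_A\tens\lambda(x)L(y))\bigr)\tens 1_{\s H}$ in $\Lin(\er\tens\s H)$: the block $W(\lambda(x)\tens 1)W^{*}$, together with the leg-$3$ factor $y$ and the defining projection of ${\cal E}_{A\rtimes{\cal G},\lambda}$, collapses via the multiplicativity of $W$ to $1_A\tens\lambda(x)L(y)$ in leg $2$ and the identity in leg $3$.

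Finally, since $D$ satisfies $D(A\tens\s H)=\er$ and $dq_{\beta_A\widehat\alpha}=d$ for all $d\in D$, the previous step shows that ${\rm Ad}_{\Theta}$ maps $(A\rtimes{\cal G})\rtimes\widehat{\cal G}$ onto $D\tens 1_{\s H}\subset\Lin(\er\tens\s H)$; composing with the injective *-homomorphism $D\to D\tens 1_{\s H}$, $d\mapsto d\tens 1_{\s H}$, yields the required *-isomorphism $\phi\colon(A\rtimes{\cal G})\rtimes\widehat{\cal G}\to D$, the stated formula holding by construction, and uniqueness being clear because the $g_{a,x,y}$ span a dense subspace. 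I expect the main obstacle to be the middle step: guessing the correct unitary $\Theta$ and, above all, the bookkeeping of the several projections so that $\Theta$ is genuinely unitary between the two sub-modules and every restriction occurring in the computation of ${\rm Ad}_{\Theta}(g_{a,x,y})$ fits; the underlying algebraic identity is then a finite manipulation with the pentagon and commutation relations.
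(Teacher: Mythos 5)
Your overall strategy is the right one, and it is in fact the one this paper leans on: note that Proposition \ref{IsoTT} is not proved here at all --- it is imported from \S 3.3 of \cite{BC}, and the in-paper model for the argument is the proof of the isomorphism $S\rtimes{\cal G}\simeq[S\widehat{S}]$ at the start of \S\ref{ActRegMQG}. Your computation of the generators is correct: writing $Y:=(\id_A\tens\id\tens\lambda)(\widetilde V_{23})=(1\tens U\tens 1)W_{23}(1\tens U^*\tens 1)$, one gets $g_{a,x,y}=Y\,\pi_R(a)_{12}(1\tens\lambda(x)\tens 1)\,Y^*\,(1\tens 1\tens y)$, and using $R(S)\subset M'$ and $W\in M\tens\widehat{M}$ this is exactly your displayed formula. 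So the first step is sound.

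The gap is that the entire content of the proposition sits in the step you leave as a black box. Two concrete points. First, the intertwiner cannot be ``a unitary built from $W_{23}$ and $U$'' without a flip: in $g_{a,x,y}$ the factor $y$ lives in leg $3$, while in $\pi_R(a)(1_A\tens\lambda(x)L(y))\tens 1_{\s H}$ it must land in leg $2$, so $\Theta$ necessarily contains $\Sigma_{23}$, and after inserting it you must recheck every commutation you used (e.g.\ $\pi_R(a)_{12}$ no longer obviously passes the leg-$2$ factor of the flipped copy of $W$). Second, $W$, $V$, $\widetilde V$ are only partial isometries, so your $\Theta$ is a priori a partial isometry on $A\tens\s H\tens\s H$; asserting it is ``genuinely unitary between the two sub-modules'' means proving that its initial and final projections are exactly $\widehat\pi_\lambda(1_{A\rtimes{\cal G}})$ (the realization of $q_{\alpha_{A\rtimes{\cal G}}\widehat\beta}$ on ${\cal E}_{A,L}\tens\s H$) and $q_{\beta_A\widehat\alpha,12}$. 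That identification is not bookkeeping around the proof --- it \emph{is} the proof, and you have not indicated how it goes. A cleaner route, which is the one the paper itself uses for $S\rtimes{\cal G}\simeq[S\widehat S]$, is to avoid claiming unitarity altogether: show that $d\mapsto \Theta^*(d\tens 1)\Theta$ is a $*$-homomorphism from $D$ onto the (faithfully represented) double crossed product and prove its injectivity by exhibiting a slice-map left inverse, i.e.\ an $\omega\in\B(\s H)_*$ with $\omega\circ\widehat\alpha=\epsilon$ (resp.\ $\omega\circ\alpha=\epsilon$) so that $(\id\tens\omega)$ recovers $d$ from $\Theta^*(d\tens 1)\Theta$. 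You should also say a word on why the interior-tensor-product representation of $(A\rtimes{\cal G})\rtimes\widehat{\cal G}$ on ${\cal E}_{A,L}\tens\s H$ is faithful (it follows from faithfulness of $A\rtimes{\cal G}\hookrightarrow\Lin({\cal E}_{A,L})$ and exactness of $-\tens\K$), since everything else is computed in that picture.
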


\begin{nbs}
We denote $\K:=\K(\s H)$ for short. Let $\delta_0:A\tens\K\rightarrow\M(A\tens\K\tens S)$ be the *-homomorphism defined by 
$
\delta_0(a\tens k)=\delta_A(a)_{13}(1_A\tens k\tens 1_S)
$
for all $a\in A$ and $k\in\K$. The morphism $\delta_0$ extends uniquely to a strictly continuous *-homomorphism $\delta_0:\M(A\tens\K)\rightarrow\M(A\tens\K\tens S)$ such that $\delta_0(1_{A\tens\K})=q_{\beta_A\alpha,13}$. Let ${\cal V}\in\Lin(\s H\tens S)$\index[symbol]{vb@${\cal V}$} be the unique partial isometry such that $(\id_{\K}\tens L)({\cal V})=V$.
\end{nbs}

\begin{thm}\label{BidulityTheo}
There exists a unique strongly continuous action $(\beta_D,\delta_D)$ of $\cal G$ on the C*-algebra $D=[\pi_R(a)(1_A\tens\lambda(x)L(y))\,;\,a\in A,\,x\in\widehat{S},\, y\in S]$ defined by the relations: 
\[
(j_D\tens\id_S)\delta_D(d)={\cal V}_{23}\delta_0(d){\cal V}_{23}^*,\quad d\in D ; \quad
j_D(\beta_D(n^{\rm o}))=q_{\beta_A\widehat{\alpha}}(1_A\tens\beta(n^{\rm o})),\quad n\in N.
\]
Moreover, the canonical *-isomorphism $\phi:(A\rtimes{\cal G})\rtimes\widehat{\cal G}\rightarrow D$ (cf.\ \ref{IsoTT}) is $\cal G$-equivariant. If the groupoid $\cal G$ is regular, then we have $D=q_{\beta_A\widehat{\alpha}}(A\tens\K)q_{\beta_A\widehat{\alpha}}$.
\end{thm}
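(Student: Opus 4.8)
The plan is to establish the three assertions in turn, relying on the $*$-isomorphism $\phi\colon(A\rtimes{\cal G})\rtimes\widehat{\cal G}\to D$ of \ref{IsoTT} and on the dual-action constructions of \S\ref{subsectionBiduality}.

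\textbf{Existence, uniqueness, and equivariance of $\phi$.} Since $(A\rtimes{\cal G},\alpha_{A\rtimes{\cal G}},\delta_{A\rtimes{\cal G}})$ is a $\widehat{\cal G}$-C*-algebra, \ref{defDualActBis} produces a strongly continuous action $(\beta_{(A\rtimes{\cal G})\rtimes\widehat{\cal G}},\delta_{(A\rtimes{\cal G})\rtimes\widehat{\cal G}})$ of ${\cal G}$ on the double crossed product. I would transport it along $\phi$: this automatically yields a strongly continuous action of ${\cal G}$ on $D$ making $\phi$ ${\cal G}$-equivariant, and it then suffices to recognise the transported action as the one given by the two displayed relations. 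Since $j_D$ is faithful, so is $j_D\tens\id_S$; hence $(j_D\tens\id_S)\delta_D$ determines $\delta_D$ and $j_D\circ\beta_D$ determines $\beta_D$, which gives the uniqueness clause. For the recognition, write $c=\widehat\pi(\pi(a)\widehat\theta(x))\theta(y)$, so that $\phi(c)=\pi_R(a)(1_A\tens\lambda(x)L(y))$ and $\delta_{(A\rtimes{\cal G})\rtimes\widehat{\cal G}}(c)=Y(c\tens 1_{\s H})Y^{*}$ with $Y=\restr{V_{23}}{{\cal E}_{A\rtimes{\cal G},\lambda}\tens\s H}$; substituting the defining formula of $\phi$, using $(\id_{\K}\tens L)({\cal V})=V$, the commutation $[V_{12},\widetilde V_{23}]=0$, the intertwining relations \ref{prop34}, \ref{NonComRel}, \ref{prop39} and the definition of $\delta_0$, one rewrites $(\phi\tens\id_S)\delta_{(A\rtimes{\cal G})\rtimes\widehat{\cal G}}(c)$ as ${\cal V}_{23}\delta_0(\phi(c)){\cal V}_{23}^{*}$; the formula for $\beta_D$ then follows from $\beta_{(A\rtimes{\cal G})\rtimes\widehat{\cal G}}(n^{\rm o})=\theta(\beta(n^{\rm o}))$ together with $\pi_R(\beta_A(n^{\rm o}))=(1_A\tens\widehat\beta(n^{\rm o}))q_{\beta_A\widehat\alpha}$.

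\textbf{The regular case: the inclusion $\subseteq$.} By \ref{propReg4}, regularity of ${\cal G}$ gives $[S\lambda(\widehat S)]=\K_{\widehat\alpha}\subseteq\K$, hence also $[\lambda(\widehat S)L(S)]=[\lambda(\widehat S)S]\subseteq\K$. Strong continuity of $\delta_A$ yields $[\pi_R(A)(1_A\tens R(S))]=(\id_A\tens R)[\delta_A(A)(1_A\tens S)]=q_{\beta_A\widehat\alpha}(A\tens R(S))$, and since $R(S)$ acts non-degenerately on $\s H$ one has $[R(S)\K]=\K$. Writing an arbitrary $k\in\K$ as a norm limit of sums $\sum_i R(s_i)k_i$ ($s_i\in S$, $k_i\in\K$) gives $\pi_R(a)(1_A\tens k)\in[q_{\beta_A\widehat\alpha}(A\tens R(S))(1_A\tens\K)]\subseteq A\tens\K$; in particular every generator of $D$ lies in $A\tens\K$. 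Combined with the already-recorded fact that $dq_{\beta_A\widehat\alpha}=d=q_{\beta_A\widehat\alpha}d$ for all $d\in D$, this gives $D\subseteq(A\tens\K)\cap\bigl(q_{\beta_A\widehat\alpha}\Lin(A\tens\s H)q_{\beta_A\widehat\alpha}\bigr)=q_{\beta_A\widehat\alpha}(A\tens\K)q_{\beta_A\widehat\alpha}$.

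\textbf{The regular case: the inclusion $\supseteq$, and the main obstacle.} The substantial part is $q_{\beta_A\widehat\alpha}(A\tens\K)q_{\beta_A\widehat\alpha}\subseteq D$. Here $q_{\beta_A\widehat\alpha}(A\tens\K)q_{\beta_A\widehat\alpha}=\K(\er)$, and since $\pi_R(A)$ acts non-degenerately on $\er=q_{\beta_A\widehat\alpha}(A\tens\s H)$ one has $\K(\er)=[\pi_R(A)(A\tens\K)\pi_R(A)^{*}]$; the task is thus to show the products $\pi_R(a)(1_A\tens\lambda(x)L(y))$ already span this algebra. This is where the new relations of Chapter~\ref{RegMQG} enter: in the regular case \ref{propReg1} gives $[(\K\tens1)\widetilde V(1\tens\K)]=[(\K\tens1)q_{\widehat\alpha}(1\tens\K)]$, and \ref{propReg2}, \ref{propReg3}, \ref{corReg} supply the analogous ``spreading'' identities involving $S$, $R(S)$, $\widehat S$ and $\lambda(\widehat S)$; feeding these into the pentagon and commutation relations for $V$, $W$, $\widetilde V$ and exploiting the coassociativity and strong continuity of $\delta_A$ — so that $\pi_R(A)$ itself absorbs factors of $R(S)$ and thereby reaches the matrix-unit blocks that $\K_{\widehat\alpha}$ alone does not — one identifies $[\pi_R(A)(1_A\tens\lambda(\widehat S)L(S))]$ with $[\pi_R(A)(A\tens\K)\pi_R(A)^{*}]$. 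This step is the genuine obstacle: in contrast with the locally compact quantum group case (cf.\ 7.5 \cite{BS2}), where $\K_{\widehat\alpha}=\K$ and the claim collapses at once to $[\pi_R(A)(1\tens\K)]=A\tens\K$, here $\K_{\widehat\alpha}$ is only a proper corner of $\K$, so the ``missing'' part must be recovered through the coaction $\delta_A$ together with the Chapter~\ref{RegMQG} formulas, and the bookkeeping of the various partial isometries and projections ($V$ versus $\widetilde V$ versus $W$, and $q_{\widehat\alpha}$, $q_{\beta_A\alpha}$, $q_{\beta_A\widehat\alpha}$) has to be carried through with care.
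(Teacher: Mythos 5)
Your overall architecture is sound, and for most of the statement it works: transporting the bidual action of $\cal G$ on $(A\rtimes{\cal G})\rtimes\widehat{\cal G}$ along $\phi$ does yield a strongly continuous action on $D$ for which $\phi$ is automatically equivariant; uniqueness follows from faithfulness of $j_D\tens\id_S$; and your argument for the inclusion $D\subseteq q_{\beta_A\widehat{\alpha}}(A\tens\K)q_{\beta_A\widehat{\alpha}}$ is complete, since regularity gives $[\lambda(\widehat{S})L(S)]=[S\lambda(\widehat{S})]^*=\K_{\widehat{\alpha}}\subset\K$ and strong continuity gives $[\pi_R(A)(1_A\tens R(S))]=q_{\beta_A\widehat{\alpha}}(A\tens R(S))$. (Be aware that the paper itself does not prove this theorem; it is quoted from \cite{BC}, so the only proof on the table is the one you supply.)

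The reverse inclusion, however, is not proved in your text: you correctly reduce it to showing $[\pi_R(A)(1_A\tens\lambda(\widehat{S})L(S))]\supseteq[\pi_R(A)(A\tens\K)\pi_R(A)^*]$, and then assert that "one identifies" the two sides by feeding the Chapter 4 relations into the pentagon and commutation identities. That single sentence is the entire content of the hard direction, and nothing in it is actually carried out. The difficulty is genuine and cannot be waved away: after applying regularity one has $D=[\pi_R(A)(1_A\tens\K_{\widehat{\alpha}})]$, which is a C*-subalgebra of $\K(\er)=q_{\beta_A\widehat{\alpha}}(A\tens\K)q_{\beta_A\widehat{\alpha}}$ acting nondegenerately on $\er$; but a nondegenerately acting C*-subalgebra of $\K(\s E)$ need not be all of $\K(\s E)$ (the diagonal subalgebra of ${\rm M}_2(\GC)$ acting on $\GC^2$ already shows this), so one must produce the "off-diagonal" part explicitly. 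Concretely, one has to use the analogue of Lemma \ref{lemReg} for $\widehat{\alpha}$ to write elements of $\K_{\widehat{\alpha}}$ as sums $\sum_I n_I^{-1}\widehat{\alpha}(\varepsilon_I)\theta_{\xi,\eta}\widehat{\alpha}(\varepsilon_{\overline{I}})$, move the factors $1_A\tens\widehat{\alpha}(\varepsilon_I)$ across $\pi_R(A)$ into $\beta_A(\varepsilon_I)\tens 1$, and then invoke strong continuity of $\delta_A$ to absorb the resulting multipliers and recover every $q_{\beta_A\widehat{\alpha}}(a\tens\theta_{\xi,\eta})q_{\beta_A\widehat{\alpha}}$; none of this bookkeeping appears in your proposal. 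As written, you have named the obstacle rather than overcome it, so the final assertion of the theorem remains unproved.
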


The $\cal G$-C*-algebra $D$ defined above will be referred to as the bidual $\cal G$-C*-algebra of $A$.\index[symbol]{dcb@$D$, bidual $\cal G$-C*-algebra}

	\subsection{Case of a colinking measured quantum groupoid}\label{colinkingMQG}

In this section, we fix a colinking measured quantum groupoid ${\cal G}:={\cal G}_{\QG_1,\QG_2}$ associated with two monoidally equivalent locally compact quantum groups $\QG_1$ and $\QG_2$. We follow all the notations recalled in \S \ref{sectionColinking} concerning the objects associated with $\cal G$.\newline

In the following, we recall the notations and the main results of \S 3.2.3 \cite{BC} concerning the equivalent description of the $\cal G$-C*-algebras in terms of $\QG_1$-C*-algebras and $\QG_2$-C*-algebras. Let us fix a $\cal G$-C*-algebra $(A,\beta_A,\delta_A)$.

\begin{nbs}\label{not12}
\begin{itemize}
\item The morphism $\beta_A:\GC^2\rightarrow\M(A)$ is central. Let $q_j:=\beta_A(\varepsilon_j)$\index[symbol]{qa@$q_j$, $q_{A,j}$} for $j=1,2$. Then, $q_j$ is a central self-adjoint projection of $\M(A)$ and $q_1+q_2=1_A$. Let $A_j:=q_jA$ for $j=1,2$. For $j=1,2$, $A_j$ is a C*-subalgebra (actually a closed two-sided ideal) of $A$ and we have $A=A_1\oplus A_2$.
\item For $j,k=1,2$, let 
$
\pi_j^k:\M(A_k\tens S_{kj})\rightarrow\M(A\tens S)
$ 
\index[symbol]{ph@$\pi_j^k$, $\pi_{A,j}^k$}be the unique strictly continuous extension of the inclusion $A_k\tens S_{kj}\subset A\tens S$ such that $\pi_{j}^k(1_{A_k\tens S_{kj}})=q_k\tens p_{kj}$.
\end{itemize}
In case of ambiguity, we will denote $\pi_{A,j}^k$ and $q_{A,j}$ instead of $\pi_j^k$ and $q_j$.
\end{nbs}

\begin{prop}\label{actprop}
For all $j,k=1,2$, there exists a unique faithful non-degenerate *-homo\-morphism 
\[\delta_{A_j}^k:A_j\rightarrow\M(A_k\tens S_{kj})\]
\index[symbol]{dd@$\delta_{A_j}^k$}such that for all $x\in A_j$, we have
\[
\pi_j^k\circ\delta_{A_j}^k(x)=(q_k\tens p_{kj})\delta_A(x)=(q_k\tens 1_S)\delta_A(x)=(1_A\tens \alpha(\varepsilon_k))\delta_A(x)=(1_A\tens p_{kj})\delta_A(x).
\] 
Moreover, we have:
\begin{enumerate}
\item $\displaystyle{\delta_A(a)=\sum_{k,j=1,2}\pi_j^k\circ\delta_{A_j}^k(q_ja)}$, for all $a\in A$;
\item $(\delta_{A_k}^l\tens\id_{S_{kj}})\delta_{A_j}^k=(\id_{A_l}\tens\delta_{lj}^k)\delta_{A_j}^l$, for all $j,k,l=1,2$;
\item $[\delta_{A_j}^k(A_j)(1_{A_k}\tens S_{kj})]=A_k\tens S_{kj}$, for all $j,k=1,2$; in particular, we have

{
\centering
$
A_k=[(\id_{A_k}\tens\omega)\delta_{A_j}^k(a)\,;\,a\in A_j,\,\omega\in\B(\s H_{kj})_*];
$
\par
}
\item $\delta_{A_j}^j:A_j\rightarrow\M(A_j\tens S_{jj})$ is a strongly continuous action of $\QG_j$ on $A_j$.\qedhere
\end{enumerate}
\end{prop}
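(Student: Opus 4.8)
The plan is to exploit the decompositions $A=A_1\oplus A_2$ and $S=\bigoplus_{i,l}S_{il}$ (recall $q_j=\beta_A(\varepsilon_j)\in\mathcal{Z}(\M(A))$, $p_{ij}\in\mathcal{Z}(\M(S))$, $1_A=q_1+q_2$, $1_S=\sum_{i,l}p_{il}$): $\delta_A$ restricted to $A_j$ is nothing but the family $(\delta_{A_j}^k)_{k=1,2}$ read off through these direct sums, and each item of the proposition will be a componentwise translation of a property of $\delta_A$.

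First I would record that, by Proposition~\ref{prop39}(2) together with the explicit form of the projection $q_{\beta_A\alpha}=\sum_{i=1,2}q_i\tens\alpha(\varepsilon_i)$ (cf.\ \ref{ProjectionCAlg}), one has for $x\in A_j$
\[
\delta_A(x)=\delta_A(\beta_A(\varepsilon_j))\,\delta_A(x)=(1_A\tens\beta(\varepsilon_j))\,q_{\beta_A\alpha}\,\delta_A(x)=\bigl(\textstyle\sum_{i=1,2}q_i\tens p_{ij}\bigr)\delta_A(x).
\]
Multiplying on the left successively by $q_k\tens 1_S$, by $1_A\tens\alpha(\varepsilon_k)$ and by $1_A\tens p_{kj}$, and using $q_kq_i=\delta_k^iq_i$, $p_{kj}p_{ij}=\delta_k^ip_{ij}$ and $\alpha(\varepsilon_k)p_{ij}=\delta_k^ip_{ij}$, yields the three displayed equalities. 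Since $q_k$ and $p_{kj}$ are central, $q_k\tens p_{kj}=\pi_j^k(1_{A_k\tens S_{kj}})$ is a central projection of $\M(A\tens S)$ and $(q_k\tens p_{kj})\M(A\tens S)=\M(A_k\tens S_{kj})$, identified inside $\M(A\tens S)$ precisely by the injective map $\pi_j^k$; so I would \emph{define} $\delta_{A_j}^k(x):=(\pi_j^k)^{-1}\bigl((q_k\tens p_{kj})\delta_A(x)\bigr)$. This is a $*$-homomorphism (composition of $\delta_A$, multiplication by a central projection, and $(\pi_j^k)^{-1}$), and uniqueness is forced by injectivity of $\pi_j^k$.

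Item~1 is then the identity $\delta_A(x)=\sum_i(q_i\tens p_{ij})\delta_A(x)=\sum_i\pi_j^i(\delta_{A_j}^i(x))$ for $x\in A_j$, applied to $a=\sum_jq_ja$. For item~2, I would restrict the coassociativity relation $(\delta_A\tens\id_S)\delta_A=(\id_A\tens\delta)\delta_A$ to $A_j$ and multiply by the central projection $q_l\tens p_{lk}\tens p_{kj}$ of $\M(A\tens S\tens S)$; using the definition of $\delta_{A_j}^\bullet$, the entirely analogous description of $\delta_{lj}^k$ (namely $\iota_{lj}^k\circ\delta_{lj}^k(y)=(p_{lk}\tens p_{kj})\delta(y)$) and the strict continuity of the extended homomorphisms, both sides collapse to the $\pi$-image of $(\delta_{A_k}^l\tens\id_{S_{kj}})\delta_{A_j}^k(x)$, respectively $(\id_{A_l}\tens\delta_{lj}^k)\delta_{A_j}^l(x)$, and injectivity of $\pi$ gives item~2. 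For item~3, I would restrict the strong-continuity relation $[\delta_A(A)(1_A\tens S)]=q_{\beta_A\alpha}(A\tens S)=\bigoplus_{i,l}A_i\tens S_{il}$ to the summand indexed by $(k,j)$, using that $\delta_A(A_j)(1_A\tens S_{kl})$ vanishes for $l\ne j$ and equals $\pi_j^k\bigl(\delta_{A_j}^k(A_j)(1_{A_k}\tens S_{kj})\bigr)$ for $l=j$; this gives $[\delta_{A_j}^k(A_j)(1_{A_k}\tens S_{kj})]=A_k\tens S_{kj}$, hence the non-degeneracy of $\delta_{A_j}^k$, and slicing the second leg by $\B(\s H_{kj})_*$ (using that $S_{kj}$ acts non-degenerately on $\s H_{kj}$) gives the stated description of $A_k$. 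Faithfulness of $\delta_{A_j}^k$ I would deduce from item~2: if $\delta_{A_j}^k(x)=0$ then $(\id_{A_l}\tens\delta_{lj}^k)\delta_{A_j}^l(x)=0$ for $l\ne k$, and since $\delta_{lj}^k$ is spatially implemented by the unitary $W_{lk}^j$ (Proposition~\ref{prop5}(2)) this map is injective, so $\delta_{A_j}^l(x)=0$ as well, whence $\delta_A(x)=0$ by item~1 and $x=0$ by faithfulness of $\delta_A$.

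Finally item~4: by Proposition~\ref{prop5}(4) the pair $(S_{jj},\delta_{jj}^j)$ is the Hopf C$^*$-algebra of $\QG_j$, and item~2 (with $j=k=l$) and item~3 (with $k=j$) say precisely that $\delta_{A_j}^j$ is coassociative with respect to $\delta_{jj}^j$ and strongly continuous, which together with non-degeneracy is exactly what it means for $\delta_{A_j}^j$ to be a strongly continuous action of $\QG_j$ on $A_j$. \textbf{The main obstacle} is the bookkeeping hidden in item~2: one must check carefully that the extensions of $\delta_A\tens\id_S$ and $\id_A\tens\delta$ to the multiplier algebras, once restricted to the summands cut out by the central projections $q_\bullet\tens p_{\bullet\bullet}$ and $q_\bullet\tens p_{\bullet\bullet}\tens p_{\bullet\bullet}$, really do decompose into the $\delta_{A_j}^k$ and the $\delta_{ij}^k$. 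Conceptually this is routine — $\M(-)$ commutes with finite direct sums and with corners by central projections, and every map in sight is strictly continuous — but it is the notationally heaviest part of the argument and the place where index errors are easiest to make.
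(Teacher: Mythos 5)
Your argument is correct, and it is the natural one: the paper itself states Proposition \ref{actprop} without proof (it is recalled from \S 3.2.3 of \cite{BC}), so there is no in-text proof to compare against, but cutting $\delta_A$ by the central projections $q_k\tens p_{kj}$ (using $\delta_A(q_j)=\sum_i q_i\tens p_{ij}$) and reading each assertion off componentwise is exactly what the construction of the $\delta_{A_j}^k$ amounts to. The only step requiring a genuine idea beyond bookkeeping is faithfulness, and your route --- coassociativity plus injectivity of the strict extension of $\id_{A_l}\tens\delta_{lj}^k$ (which holds because $\delta_{lj}^k$ is non-degenerate and unitarily implemented by $W_{lk}^j$, cf.\ \ref{prop5}) --- is sound.
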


From this concrete description of $\cal G$-C*-algebras we can also give a convenient description of the $\cal G$-equivariant *-homomorphisms. With the above notations, we have the result below.

\begin{prop}\label{lem7}
Let $A$ and $B$ be two $\cal G$-C*-algebras. For $k=1,2$, let $\iota_k:\M(B_k)\rightarrow\M(B)$ be the unique strictly continuous extension of the inclusion map $B_k\subset B$ such that $\iota_k(1_{B_k})=q_{B,k}$.
\begin{enumerate}
\item Let $f:A\rightarrow\M(B)$ be a non-degenerate $\cal G$-equivariant *-homomorphism. Then, for all $j=1,2$, there exists a unique non-degenerate *-homomorphism $f_j:A_j\rightarrow\M(B_j)$ such that for $k=1,2$ we have
\begin{equation}\label{eqmorpheq}
(f_k\tens\id_{S_{kj}})\circ\delta_{A_j}^k=\delta_{B_j}^k\circ f_j.
\end{equation}
Moreover, we have $f(a)=\iota_1\circ f_1(aq_{A,1})+\iota_2\circ f_2(aq_{A,2})$ for all $a\in A$.
\item Conversely, for $j=1,2$ let $f_j:A_j\rightarrow\M(B_j)$ be a non-degenerate *-homomorphism such that {\rm(}\ref{eqmorpheq}\,{\rm)} holds for all $j,k=1,2$. Then, the map $f:A\rightarrow\M(B)$, defined for all $a\in A$ by 
\[
f(a):=\iota_1\circ f_1(aq_{A,1})+\iota_2\circ f_2(aq_{A,2}),
\]
is a non-degenerate $\cal G$-equivariant *-homomorphism.\qedhere
\end{enumerate} 
\end{prop}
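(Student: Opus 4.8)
The plan is to show that the assignment $f\mapsto(f_1,f_2)$ of part 1 and the assignment $(f_1,f_2)\mapsto f$ of part 2 are mutually inverse. Throughout I use that $B=B_1\oplus B_2$ forces $\M(B)=\iota_1(\M(B_1))\oplus\iota_2(\M(B_2))$ with each $\iota_k$ injective, that for fixed $j$ the $*$-homomorphisms $\pi_{B,j}^1$ and $\pi_{B,j}^2$ are injective with orthogonal ranges (their units $q_{B,k}\tens p_{kj}$ being orthogonal), and the decomposition $\delta_A(m)=\sum_{k,j}\pi_j^k\circ\delta_{A_j}^k(q_{A,j}m)$ of \ref{actprop}, extended to $\M(A)$ by strict continuity, together with its analogue for $B$. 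The computational core of both parts is the bookkeeping identity
\[
(f\tens\id_S)\circ\pi_j^k=\pi_{B,j}^k\circ(f_k\tens\id_{S_{kj}})\qquad(j,k=1,2),
\]
which one checks on the dense subalgebra $A_k\tens S_{kj}$ (both sides send $a\tens s$ to $\iota_k(f_k(a))\tens s$) and then extends to $\M(A_k\tens S_{kj})$ by strict continuity; here $f_k:=\iota_k^{-1}\circ f|_{A_k}$, which makes sense once we check $f(A_k)\subseteq\iota_k(\M(B_k))$.

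For part 1, since $f$ is $\cal G$-equivariant we have $f\circ\beta_A=\beta_B$, so, extending $f$ strictly to $\M(A)$, $f(q_{A,j})=q_{B,j}$; as $q_{A,j}$ is central this gives $f(q_{A,j}a)=q_{B,j}f(a)$, hence $f(A_j)=q_{B,j}f(A)\subseteq\iota_j(\M(B_j))$, and there is a unique non-degenerate $*$-homomorphism $f_j\colon A_j\to\M(B_j)$ with $\iota_j\circ f_j=f|_{A_j}$. Its non-degeneracy follows by multiplying $[f(A)B]=B$ on the left by $q_{B,j}$, and summing $f(a)=f(q_{A,1}a)+f(q_{A,2}a)$ yields the ``moreover'' formula. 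To obtain (\ref{eqmorpheq}), apply $f\tens\id_S$ to $\delta_A(x)=\sum_k\pi_j^k\delta_{A_j}^k(x)$ for $x\in A_j$: using $\cal G$-equivariance $(f\tens\id_S)\delta_A=\delta_B\circ f$, the bookkeeping identity, and the decomposition of $\delta_B$ applied to $f(x)=\iota_j(f_j(x))\in\iota_j(\M(B_j))$, one gets $\sum_k\pi_{B,j}^k(f_k\tens\id_{S_{kj}})\delta_{A_j}^k(x)=\sum_k\pi_{B,j}^k\delta_{B_j}^k(f_j(x))$, and comparing $k$-components (the $\pi_{B,j}^k$ being injective with orthogonal ranges) gives (\ref{eqmorpheq}). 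Uniqueness of $f_j$ is immediate from the ``moreover'' formula and injectivity of $\iota_j$; subject to (\ref{eqmorpheq}) alone it follows because (\ref{eqmorpheq}) with $k\neq j$ together with the slicing description $A_k=[(\id_{A_k}\tens\omega)\delta_{A_j}^k(a)\,;\,a\in A_j,\,\omega\in\B(\s H_{kj})_*]$ of \ref{actprop}(3) expresses $f_k$ in terms of $f_j$, and unwinding the resulting self-referential system through coassociativity (\ref{actprop}(2)) and the $\QG_j$-action $\delta_{A_j}^j$ (\ref{actprop}(4)) pins $f_j$ down.

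For part 2, set $f(a):=\iota_1(f_1(q_{A,1}a))+\iota_2(f_2(q_{A,2}a))$. Since $a\mapsto q_{A,j}a$ is a $*$-homomorphism $A\to A_j$ and $\iota_1,\iota_2$ have orthogonal ranges, $f$ is a $*$-homomorphism; it is non-degenerate because $[f(A)B]=\bigoplus_j[\iota_j(f_j(A_j))B_j]=\bigoplus_jB_j=B$ (using non-degeneracy of the $f_j$), with $f(1_A)=\sum_j\iota_j(1_{\M(B_j)})=1_B$. To check $\cal G$-equivariance, apply $f\tens\id_S$ to $\delta_A(a)=\sum_{k,j}\pi_j^k\delta_{A_j}^k(q_{A,j}a)$; the bookkeeping identity and hypothesis (\ref{eqmorpheq}) turn this into $\sum_{k,j}\pi_{B,j}^k\delta_{B_j}^k(f_j(q_{A,j}a))$, while the decomposition of $\delta_B$ applied to each summand $\iota_j(f_j(q_{A,j}a))$ gives $\delta_B(f(a))=\sum_{k,j}\pi_{B,j}^k\delta_{B_j}^k(f_j(q_{A,j}a))$; the two coincide, so $(f\tens\id_S)\delta_A=\delta_B\circ f$, and $f\circ\beta_A=\beta_B$ is checked directly ($f(\beta_A(\varepsilon_j))=\iota_j(f_j(q_{A,j}))=q_{B,j}$) or deduced from \ref{rkEqMorph}. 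The constructions of parts 1 and 2 are then visibly inverse to each other. I expect the genuinely delicate point to be precisely the uniqueness in part 1 when one insists on characterising $f_j$ by (\ref{eqmorpheq}) alone — the cross-relations $k\neq j$ make $f_1$ and $f_2$ mutually determined, so the self-referential system must be unwound via coassociativity; everything else is strict-continuity bookkeeping organised around the single identity $(f\tens\id_S)\circ\pi_j^k=\pi_{B,j}^k\circ(f_k\tens\id_{S_{kj}})$ and the orthogonality of the ranges of the $\pi_{B,j}^k$.
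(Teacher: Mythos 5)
The paper does not actually prove this proposition --- it is recalled without proof from \S 3.2.3 of \cite{BC} --- so there is no internal argument to compare against; judged on its own terms, your proof is essentially correct and is the natural one. The decomposition $f_j:=\iota_j^{-1}\circ f|_{A_j}$ (legitimised by $f(q_{A,j})=q_{B,j}$, which follows from $f\circ\beta_A=\beta_B$), the bookkeeping identity $(f\tens\id_S)\circ\pi_j^k=\pi_{B,j}^k\circ(f_k\tens\id_{S_{kj}})$, and the component-by-component comparison of $(f\tens\id_S)\delta_A$ with $\delta_B\circ f$ using the orthogonality of the projections $q_{B,k}\tens p_{kj}$ are exactly the right mechanisms, and your verification of part 2 (gluing, non-degeneracy, equivariance via \ref{rkEqMorph}) is sound.

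The one point I would push back on is your treatment of uniqueness. Your primary argument --- uniqueness via the ``moreover'' formula and injectivity of $\iota_j$ --- is the correct one, and can be phrased intrinsically: once $f_k$ for $k\neq j$ is fixed (as $\iota_k^{-1}\circ f|_{A_k}$), the cross-equation $(f_k\tens\id_{S_{kj}})\circ\delta_{A_j}^k=\delta_{B_j}^k\circ f_j$ determines $f_j$ outright because $\delta_{B_j}^k$ is \emph{faithful} (\ref{actprop}); no unwinding is needed. Your secondary claim, that the bare system (\ref{eqmorpheq}) ``alone'' pins down the pair $(f_1,f_2)$ after unwinding through coassociativity, cannot be made to work: the system (\ref{eqmorpheq}) makes no reference to $f$, and by part 1 the component pair of \emph{any} non-degenerate $\cal G$-equivariant morphism $A\rightarrow\M(B)$ satisfies it, so whenever two distinct such morphisms exist the system has two distinct solutions. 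The uniqueness asserted in the proposition must therefore be read relative to $f$ (via the decomposition formula, or equivalently via the faithfulness argument just given), and that part of your proof should simply be deleted rather than repaired.
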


The above results show that for $j=1,2$ we have a functor 
\begin{center}
${\sf Alg}_{\cal G}\rightarrow{\sf Alg}_{\QG_j}\,;\,(A,\beta_A,\delta_A)\mapsto(A_j,\delta_{A_j}^j).$
\end{center}
In \S 4 \cite{BC}, it has been proved that if $\cal G$ is regular (cf.\ \ref{theo7}), then $(A,\delta_A,\beta_A)\rightarrow(A_1,\delta_{A_1}^1)$ is an equivalence of categories. Moreover, the authors build explicitly the inverse functor $(A_1,\delta_{A_1})\rightarrow(A,\beta_A,\delta_A)$. More precisely, to any $\QG_1$-C*-algebra $(A_1,\delta_{A_1})$ they associate a $\QG_2$-C*-algebra $(A_2,\delta_{A_2})$ in a canonical way. Then, the C*-algebra $A:=A_1\oplus A_2$ can be equipped with a strongly continuous action $(\beta_A,\delta_A)$ of the groupoid $\cal G$. This allowed them to build the inverse functor $(A_1,\delta_{A_1})\rightarrow(A,\beta_A,\delta_A)$. The equivalence of categories $(A_1,\delta_{A_1})\rightarrow(A_2,\delta_{A_2})$ generalizes the correspondence of actions for monoidally equivalent \textit{compact} quantum groups of De Rijdt and Vander Vennet \cite{RV}. We bring to the reader's attention that an induction procedure has been developed by De Commer in the von Neumann algebraic setting (cf.\ \S 8 \cite{DC}).

\medskip

In the following, we recall the notations and the main results of \S 4 \cite{BC}. We assume that the quantum groups $\QG_1$ and $\QG_2$ are regular.

\begin{nbs}
Let $\delta_{A_1}:A_1\rightarrow\M(A_1\tens S_{11})$ be a continuous action of $\QG_1$ on a C*-algebra $A_1$. Let us denote:\index[symbol]{de@$\delta_{A_1}^{(2)}$}
\[
\delta_{A_1}^1:=\delta_{A_1},\quad \delta_{A_1}^{(2)}:=(\id_{A_1}\tens\delta_{11}^2)\delta_{A_1}:A_1\rightarrow\M(A_1\tens S_{12}\tens S_{21}).
\]
Then, $\delta_{A_1}^{(2)}$ is a faithful non-degenerate *-homomorphism. In the following, we will identify $S_{21}$ with a C*-subalgebra of $\B(\s H_{21})$. Let\index[symbol]{ic@$\ind(A_1)$, induced C*-algebra}
\[
{\rm Ind}_{\QG_1}^{\QG_2}(A_1):=[(\id_{A_1\tens S_{12}}\tens\omega)\delta_{A_1}^{(2)}(a)\,;\,a\in A_1,\,\omega\in\B(\s H_{21})_*]\subset\M(A_1\tens S_{12}).\qedhere
\]
\end{nbs}

\begin{prop}\label{propind4}
The Banach space $A_2:={\rm Ind}_{\QG_1}^{\QG_2}(A_1)\subset\M(A_1\tens S_{12})$ is a C*-algebra. Moreover, we have:
\begin{enumerate}
	\item $[A_2(1_{A_1}\tens S_{12})]=A_1\tens S_{12}=[(1_{A_1}\tens S_{12})A_2]$; in particular, $A_2\subset\M(A_1\tens S_{12})$ defines a faithful non-degenerate *-homomorphism and $\M(A_2)\subset\M(A_1\tens S_{12})$;
	\item let $\delta_{A_2}:=\restr{(\id_{A_1}\tens\delta_{12}^2)}{A_2}$, we have $\delta_{A_2}(A_2)\subset\M(A_2\tens S_{22})$ and $\delta_{A_2}$ is a continuous action of $\QG_2$ on $A_2$;
	\item the correspondence ${\rm Ind}_{\QG_1}^{\QG_2}:{\sf Alg}_{\QG_1}\rightarrow {\sf Alg}_{\QG_2}$ is functorial.\qedhere
\end{enumerate}
\end{prop}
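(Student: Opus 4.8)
The plan is to follow the pattern for inducing a C*-algebra along a Galois object, as in the compact case of De Rijdt and Vander Vennet \cite{RV}; the only facts about $\cal G$ that enter are the coassociativity relations \ref{prop5}(1) for the structure maps $\delta_{ij}^k$, the strong-continuity relations \ref{prop5}(3), and the continuity of the $\QG_1$-action $\delta_{A_1}=\delta_{A_1}^1$. The backbone is that $\delta_{A_1}^{(2)}=(\id_{A_1}\tens\delta_{11}^2)\delta_{A_1}$ satisfies two ``partial-coaction'' identities, each obtained by feeding coassociativity of $\delta_{A_1}$ into an instance of \ref{prop5}(1) (namely $(\id_{S_{11}}\tens\delta_{11}^2)\delta_{11}^1=(\delta_{12}^1\tens\id_{S_{21}})\delta_{11}^2$ and $(\delta_{12}^2\tens\id_{S_{21}})\delta_{11}^2=(\id_{S_{12}}\tens\delta_{21}^2)\delta_{11}^2$):
\begin{align*}
(\delta_{A_1}\tens\id_{S_{12}}\tens\id_{S_{21}})\delta_{A_1}^{(2)}&=(\id_{A_1}\tens\delta_{12}^1\tens\id_{S_{21}})\delta_{A_1}^{(2)},\\
(\id_{A_1}\tens\delta_{12}^2\tens\id_{S_{21}})\delta_{A_1}^{(2)}&=(\id_{A_1\tens S_{12}}\tens\delta_{21}^2)\delta_{A_1}^{(2)}.
\end{align*}
Slicing off the last ($S_{21}$) leg of the first identity shows that every $x\in A_2$ satisfies $(\delta_{A_1}\tens\id_{S_{12}})(x)=(\id_{A_1}\tens\delta_{12}^1)(x)$, i.e.\ $A_2$ lies in the equaliser of two strictly continuous $*$-homomorphisms $\M(A_1\tens S_{12})\to\M(A_1\tens S_{11}\tens S_{12})$, which is a C*-subalgebra of $\M(A_1\tens S_{12})$.

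For Step 1 (that $A_2$ is a C*-algebra), $A_2$ is norm-closed by construction and self-adjoint, since $\delta_{A_1}^{(2)}$ is a $*$-homomorphism and $((\id\tens\omega)(z))^*=(\id\tens\overline{\omega})(z^*)$. The remaining point is $A_2A_2\subset A_2$; by the equaliser observation the product already lies in a C*-algebra containing $A_2$, so it suffices to prove that this equaliser coincides with $A_2$. This is the analogue of Landstad's theorem / the Podle\'s density condition: one reconstructs $x$ as a slice of $\delta_{A_1}^{(2)}$ by approximating $1_{S_{21}}$ on the sliced leg, using the density relations $[\delta_{11}^2(S_{11})(1_{S_{12}}\tens S_{21})]=S_{12}\tens S_{21}=[\delta_{11}^2(S_{11})(S_{12}\tens 1_{S_{21}})]$ of \ref{prop5}(3) (which encode the unitarity of the Galois isometry of $(M_{12},\delta_{12}^2)$, cf.\ \ref{prop36}(5)). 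This verification is a leg-numbering computation, and I expect it to be the main obstacle of the proposition.

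Step 2 (the density relations of item (1)) is then elementary: from $[\delta_{A_1}(A_1)(1_{A_1}\tens S_{11})]=A_1\tens S_{11}$ (continuity of the $\QG_1$-action) and $[\delta_{11}^2(S_{11})(S_{12}\tens 1_{S_{21}})]=S_{12}\tens S_{21}$ one obtains $[\delta_{A_1}^{(2)}(A_1)(1_{A_1}\tens S_{12}\tens 1_{S_{21}})]=A_1\tens S_{12}\tens S_{21}$, and slicing the $S_{21}$-leg gives $[A_2(1_{A_1}\tens S_{12})]=A_1\tens S_{12}$; the relation $[(1_{A_1}\tens S_{12})A_2]=A_1\tens S_{12}$ follows symmetrically from the other half of \ref{prop5}(3). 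Non-degeneracy of $A_2\subset\M(A_1\tens S_{12})$ and of $\M(A_2)\subset\M(A_1\tens S_{12})$ is then formal (cf.\ \ref{lem19}). For Step 3, set $\delta_{A_2}:=\restr{(\id_{A_1}\tens\delta_{12}^2)}{A_2}$: the second structural identity, sliced on the $S_{21}$-leg, shows $\delta_{A_2}(A_2)\subset\M(A_2\tens S_{22})$ (the $S_{22}$-leg being generated over $A_2$ by Step 2) and that $\delta_{A_2}$ is a non-degenerate $*$-homomorphism; the coassociativity $(\delta_{A_2}\tens\id_{S_{22}})\delta_{A_2}=(\id_{A_2}\tens\delta_{22}^2)\delta_{A_2}$ comes from the instance $(\delta_{12}^2\tens\id_{S_{22}})\delta_{12}^2=(\id_{S_{12}}\tens\delta_{22}^2)\delta_{12}^2$ of \ref{prop5}(1), and strong continuity $[\delta_{A_2}(A_2)(1_{A_2}\tens S_{22})]=A_2\tens S_{22}$ from $[\delta_{12}^2(S_{12})(1_{S_{12}}\tens S_{22})]=S_{12}\tens S_{22}$ combined with Step 2. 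As $\QG_2$ is a genuine locally compact quantum group (\ref{prop36}), this exhibits $(A_2,\delta_{A_2})$ as a $\QG_2$-C*-algebra.

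Finally, Step 4 (functoriality) is immediate: a $\QG_1$-equivariant $f_1\colon A_1\to\M(B_1)$ satisfies $(f_1\tens\id_{S_{12}}\tens\id_{S_{21}})\delta_{A_1}^{(2)}=\delta_{B_1}^{(2)}\circ f_1$, so $f_1\tens\id_{S_{12}}$ restricts to a non-degenerate $*$-homomorphism $\ind(f_1)\colon A_2\to\M(B_2)$ with $(\ind(f_1)\tens\id_{S_{22}})\delta_{A_2}=\delta_{B_2}\circ\ind(f_1)$, and composition and identities are visibly preserved, so $\ind\colon{\sf Alg}_{\QG_1}\to{\sf Alg}_{\QG_2}$ is a functor. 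The decisive point remains Step 1: recognising the closed span of slices of $\delta_{A_1}^{(2)}$ as the full equaliser C*-algebra, i.e.\ the Podle\'s/Landstad-type density; everything else is bookkeeping with the two identities above and the relations of \ref{prop5}.
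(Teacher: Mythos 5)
The paper itself offers no proof of this proposition --- it is recalled verbatim from \S 4 of \cite{BC} --- so the only thing to assess is your argument on its own terms. Your two structural identities for $\delta_{A_1}^{(2)}$ are correct consequences of coassociativity and of \ref{prop5} 1, and the density computations for item 1, the coassociativity and strong continuity of $\delta_{A_2}$ in item 2, and the functoriality in item 3 are the standard bookkeeping (compare the paper's own proof of the Hilbert-module analogue \ref{prop2}). The difficulty is exactly where you locate it: closure of $A_2$ under multiplication.

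Your proposed reduction of that step fails. You want to show that $A_2$ coincides with the equaliser $E=\{x\in\M(A_1\tens S_{12})\,;\,(\delta_{A_1}\tens\id_{S_{12}})(x)=(\id_{A_1}\tens\delta_{12}^1)(x)\}$. But both maps are strict extensions of non-degenerate $*$-homomorphisms, hence unital on multipliers, so $1_{\M(A_1\tens S_{12})}\in E$ while $A_2$ is non-unital whenever $A_1$ is; $E$ is at best a candidate for $\M(A_2)$ (or something larger), never for $A_2$ itself, so ``$E=A_2$'' is not a statement one can hope to prove. A Landstad/Podle\'s-type description would have to cut $E$ down by ideal-type conditions, and establishing that the resulting set is contained in $A_2$ is at least as hard as the original problem. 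More tellingly, your argument never invokes the standing hypothesis of this section that $\QG_1$ and $\QG_2$ are \emph{regular}, and regularity is precisely what makes $A_2A_2\subset A_2$ true. The working mechanism is the one displayed in the proof of \ref{prop38}: for $x=(\id_{A_1\tens S_{12}}\tens\omega)\delta_{A_1}^{(2)}(a)$ and $y=(\id_{A_1\tens S_{12}}\tens\omega')\delta_{A_1}^{(2)}(b)$ one writes $xy$ as a slice of $\delta_{A_1}^{(2)}(a)_{123}(1\tens 1\tens k\tens k')\delta_{A_1}^{(2)}(b)_{124}$ with $k,k'$ compact on $\s H_{21}$, uses a regularity relation of the type \ref{propReg1}--\ref{propReg3} (available for the components of $W$, $V$ by \ref{theo7}) to replace the compactly sandwiched projection on legs $3,4$ by slices of the relevant multiplicative partial isometry, and then absorbs that isometry into $\delta_{A_1}^{(2)}$ via a pentagon/intertwining identity so that the two $S_{21}$-legs collapse into one, exhibiting $xy$ as a limit of slices $(\id_{A_1\tens S_{12}}\tens\phi)\delta_{A_1}^{(2)}(c)$. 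Without this (or an equivalent averaging argument, which again needs regularity), Step 1 remains unproved and the proposition does not follow.
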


By exchanging the roles of the quantum groups $\QG_1$ and $\QG_2$, we obtain {\it mutatis mutandis} a functor ${\rm Ind}_{\QG_2}^{\QG_1}:{\sf Alg}_{\QG_2}\rightarrow {\sf Alg}_{\QG_1}$.

\begin{prop}\label{propind1}
Let $j,k=1,2$ with $j\neq k$. Let $(A_j,\delta_{A_j})$ be a $\QG_j$-C*-algebra. Let 
\[
A_k:={\rm Ind}_{\QG_j}^{\QG_k}(A_j)\subset\M(A_j\tens S_{jk}) \quad \text{and} \quad C:={\rm Ind}_{\QG_k}^{\QG_j}(A_k)\subset\M(A_k\tens S_{kj})
\]
endowed with the continuous actions $\delta_{A_k}:=\restr{(\id_{A_j}\tens\delta_{jk}^k)}{A_k}$ and $\delta_C:=\restr{(\id_{A_k}\tens\delta_{kj}^j)}{C}$ respectively. Then, we have:
\begin{enumerate}
	\item $C\subset\M(A_k\tens S_{kj})\subset\M(A_j\tens S_{jk}\tens S_{kj})$ and $C=\delta_{A_j}^{(k)}(A_j)$;
	\item $\pi_j:A_j\rightarrow C\,;\,a\mapsto \delta_{A_j}^{(k)}(a):=(\id_{A_j}\tens\delta_{jj}^k)\delta_{A_j}(a)$
	is a $\QG_j$-equivariant *-isomorphism;\index[symbol]{pi@$\pi_j$}
	\item $\delta_{A_j}^k:A_j\rightarrow\M(A_k\tens S_{kj})\,;\,a\mapsto\delta_{A_j}^{(k)}(a):=(\id_{A_j}\tens\delta_{jj}^k)\delta_{A_j}(a)$
	is a faithful non-degenerate *-homomorphism.\qedhere
\end{enumerate}
\end{prop}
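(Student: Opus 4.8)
The plan is to reduce the entire proposition to the single identity $C=\delta_{A_j}^{(k)}(A_j)$ in assertion (1); everything else follows by routine arguments. First I would dispose of the chain of inclusions in (1): $C\subset\M(A_k\tens S_{kj})$ is nothing but the definition of $C={\rm Ind}_{\QG_k}^{\QG_j}(A_k)$ together with \ref{propind4}(1) applied to the $\QG_k$-C*-algebra $(A_k,\delta_{A_k})$, and $\M(A_k\tens S_{kj})\subset\M(A_j\tens S_{jk}\tens S_{kj})$ because $A_k\subset\M(A_j\tens S_{jk})$ is a non-degenerate C*-subalgebra (again \ref{propind4}(1)). Assertion (3) is then immediate: granting $C=\delta_{A_j}^{(k)}(A_j)$, the map $\delta_{A_j}^{(k)}=(\id_{A_j}\tens\delta_{jj}^k)\delta_{A_j}$ is a $*$-homomorphism with closed range $C$; it is faithful because $\delta_{A_j}$ is injective (a standard property of strongly continuous locally compact quantum group actions) and $\id_{A_j}\tens\delta_{jj}^k$ is injective ($\delta_{jj}^k$ being conjugation by the unitary $W_{jk}^j$, cf.\ \ref{prop5}(2)), a property one transports through the strictly continuous multiplier extension by slicing the $S_{kj}$-leg and using $[\delta_{jj}^k(S_{jj})(1\tens S_{kj})]=S_{jk}\tens S_{kj}$ from \ref{prop5}(3); and it is non-degenerate into $\M(A_k\tens S_{kj})$ since, by \ref{propind4}(1) for $(A_k,\delta_{A_k})$, $C$ is a non-degenerate C*-subalgebra of $\M(A_k\tens S_{kj})$.

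For the core identity I would proceed via the bidirectional picture. By \S4 of \cite{BC} (recalled above), $A:=A_j\oplus A_k$ carries a strongly continuous action of $\cal G$, and its decomposition in the sense of \ref{actprop} has component maps which, by construction, are $\delta_{A_j}^k=\delta_{A_j}^{(k)}:A_j\to\M(A_k\tens S_{kj})$ and a companion $\delta_{A_k}^j:A_k\to\M(A_j\tens S_{jk})$ (this identification is the one point that must be checked against the \cite{BC} construction). Applying the coassociativity \ref{actprop}(2) with the suitable indices gives $(\id_{A_k}\tens\delta_{kk}^j)\delta_{A_k}=(\delta_{A_j}^{(k)}\tens\id_{S_{jk}})\delta_{A_k}^j$. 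Since a slice in the last leg commutes with $\delta_{A_j}^{(k)}$ applied to the first block, unfolding $C={\rm Ind}_{\QG_k}^{\QG_j}(A_k)=[(\id_{A_k\tens S_{kj}}\tens\omega)(\id_{A_k}\tens\delta_{kk}^j)\delta_{A_k}(b)\,;\,b\in A_k,\ \omega\in\B(\s H_{jk})_*]$ now yields
\[
C=[\delta_{A_j}^{(k)}\big((\id_{A_j}\tens\omega)\delta_{A_k}^j(b)\big)\,;\,b\in A_k,\ \omega\in\B(\s H_{jk})_*]=\delta_{A_j}^{(k)}(A_j),
\]
the last step using that a $*$-homomorphism has closed range together with the ``in particular'' clause of \ref{actprop}(3) (with $j$ and $k$ exchanged), namely $A_j=[(\id_{A_j}\tens\omega)\delta_{A_k}^j(b)\,;\,b\in A_k,\ \omega\in\B(\s H_{jk})_*]$. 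One could also argue directly, staying inside the weak Hopf C*-algebras: unfold both nested inductions, bookkeep the tensor legs, rewrite the inner expression with the coassociativity relations \ref{prop5}(1) so that the $S_{jk}$-leg carries the left coaction $\delta_{jk}^j$ of $\QG_j$ on the left Galois object $M_{jk}$ (cf.\ \ref{prop36}(3)), and collapse the two iterated slices using the density relations \ref{prop5}(3) and the strong continuity $[\delta_{A_j}(A_j)(1_{A_j}\tens S_{jj})]=A_j\tens S_{jj}$.

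Assertion (2) then follows quickly. Granted $C=\delta_{A_j}^{(k)}(A_j)$, the map $\pi_j=\delta_{A_j}^{(k)}$ is a surjective $*$-homomorphism onto $C$, injective by the argument of the first paragraph; a bijective $*$-homomorphism of C*-algebras is an isometric isomorphism, so $\pi_j$ is a $*$-isomorphism. For $\QG_j$-equivariance I would expand $(\pi_j\tens\id_{S_{jj}})\delta_{A_j}=\delta_C\circ\pi_j$ with $\delta_C=\restr{(\id_{A_k}\tens\delta_{kj}^j)}{C}$ (\ref{propind4}(2)) and use the coassociativity $(\delta_{A_j}\tens\id_{S_{jj}})\delta_{A_j}=(\id_{A_j}\tens\delta_{jj}^j)\delta_{A_j}$ of the $\QG_j$-action; both sides reduce to $(\id_{A_j}\tens X)\delta_{A_j}$, where $X=(\delta_{jj}^k\tens\id_{S_{jj}})\delta_{jj}^j$ on the left and $X=(\id_{S_{jk}}\tens\delta_{kj}^j)\delta_{jj}^k$ on the right, and these two maps $S_{jj}\to\M(S_{jk}\tens S_{kj}\tens S_{jj})$ coincide by one instance of \ref{prop5}(1). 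The faithfulness and non-degeneracy in (3) have already been established.

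The step I expect to be the real obstacle is the core identity $C=\delta_{A_j}^{(k)}(A_j)$. The bidirectional shortcut above is short but leans on the \S4 \cite{BC} construction and on matching its component maps with the present $\delta_{A_j}^{(k)}$; the self-contained alternative is considerably more laborious, the main difficulty being to keep the tensor-leg bookkeeping straight across two successive induced-algebra constructions and two slicings, and above all to make the density/Galois arguments rigorous at the level of the \emph{relative} multiplier algebras in which the induced algebras naturally live, so that the doubly-sliced span is pinned down to be exactly $\delta_{A_j}^{(k)}(A_j)$.
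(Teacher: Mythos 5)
The paper does not actually prove this proposition: it is recalled from \S 4 of \cite{BC} (it is 4.5 there), so there is no in-text argument to compare yours against, and your proposal has to stand on its own. The peripheral parts do: the two inclusions in (1) follow from \ref{propind4} as you say; and \emph{granted} the identity $C=\delta_{A_j}^{(k)}(A_j)$, your derivations of (2) and (3) are correct — injectivity of $\delta_{A_j}^{(k)}$ from injectivity of $\delta_{A_j}$ and of $\id_{A_j}\tens\delta_{jj}^k$, non-degeneracy from \ref{propind4} 1 applied to $(A_k,\delta_{A_k})$, and $\QG_j$-equivariance from the instance $(\delta_{jj}^k\tens\id_{S_{jj}})\delta_{jj}^j=(\id_{S_{jk}}\tens\delta_{kj}^j)\delta_{jj}^k$ of \ref{prop5} 1.

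The genuine gap is the core identity $C=\delta_{A_j}^{(k)}(A_j)$, which is the entire content of the proposition, and your primary argument for it is circular. You invoke the $\cal G$-C*-algebra structure on $A_j\oplus A_k$, the companion component $\delta_{A_k}^j$, the coassociativity \ref{actprop} 2 and the density $A_j=[(\id_{A_j}\tens\omega)\delta_{A_k}^j(b)\,;\,b\in A_k,\,\omega]$. But in the construction you are quoting (\ref{not11} and \ref{prop37}), the component $\delta_{A_k}^j$ is \emph{defined} by the relation $(\pi_j\tens\id_{S_{jk}})\delta_{A_k}^j(b)=\delta_{A_k}^{(j)}(b)$, i.e., its very definition presupposes that $\pi_j:A_j\to{\rm Ind}_{\QG_k}^{\QG_j}(A_k)$ is a *-isomorphism — which is assertion (2) of the proposition being proved. (Consistently, in \cite{BC} this result is 4.5 while the assembly of the $\cal G$-action on $A_1\oplus A_2$ is 4.10; the logical order is the reverse of the one you use, and \ref{prop4} is the \emph{consequence} for genuine $\cal G$-C*-algebras, not a tool available here.) Your fallback — the direct computation inside the weak Hopf C*-algebras, unfolding the two nested inductions and collapsing the iterated slices via \ref{prop5} — is the route that actually has to be carried out, but you leave it as a sketch and yourself flag it as the real obstacle; in particular you do not indicate where the standing regularity hypothesis on $\QG_1$ and $\QG_2$ enters, nor how the doubly sliced span is pinned down to be exactly $\delta_{A_j}^{(k)}(A_j)$ rather than merely comparable to it. As written, the central claim remains unproved.
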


The above result shows that the functors $\ind$ and $\iind$ are inverse of each other.

\begin{nbs}\label{not11}
Let $(B_1,\delta_{B_1})$ be a $\QG_1$-C*-algebra. Let $(B_2,\delta_{B_2})$ be the induced $\QG_2$-C*-algebra, that is to say $B_2={\rm Ind}_{\QG_1}^{\QG_2}(B_1)$ and $\delta_{B_2}=\restr{(\id_{B_1}\tens\delta_{12}^2)}{B_2}$. In virtue of \ref{propind1}, we have four *-homomorphisms:
\[
\delta_{B_j}^k:B_j\rightarrow\M(B_k\tens S_{kj}),\quad j,k=1,2.
\]
Let us give a precise description of them. We denote $\delta_{B_1}^1:=\delta_{B_1}$ and $\delta_{B_2}^2:=\delta_{B_2}$. The *-homomorphism $\delta_{B_1}^2:B_1\rightarrow\M(B_2\tens S_{21})$ is given by
\[
b\in B_1\mapsto \delta_{B_1}^2(b):=\delta_{B_1}^{(2)}(b)\in\M(B_2\tens S_{21}) \;\;\text{{\rm(}with }\delta_{B_1}^{(2)}(b):=(\id_{B_1}\tens\delta_{11}^2)\delta_{B_1}^1(b),\text{ for } b\in B_1\text{{\rm)}}
\]
whereas the *-homomorphism $\delta_{B_2}^1:B_2\rightarrow\M(B_1\tens S_{12})$ is defined by the relation
\[
(\pi_1\tens\id_{S_{12}})\delta_{B_2}^1(b)=\delta_{B_2}^{(1)}(b) \quad \text{for } b\in B_2,
\]
where
$
\delta_{B_2}^{(1)}:=(\id_{B_2}\tens \delta_{22}^1)\delta_{B_2}^2
$ 
and
$
\pi_1:B_1\rightarrow{\rm Ind}_{\QG_2}^{\QG_1}(B_2)\,;\,b \mapsto \delta_{B_1}^{(2)}(b)
$
(cf.\ \ref{propind1} 2).
\end{nbs}

\begin{prop}\label{prop4}
Let $(A,\beta_A,\delta_A)$ be a ${\cal G}$-C*-algebra. Let $j,k=1,\,2$ with $j\neq k$. With the notations of \ref{actprop}, let
\[
(\widetilde{A}_j,\delta_{\widetilde{A}_j}):={\rm Ind}_{\QG_k}^{\QG_j}(A_k,\delta_{A_k}^k).
\]
If $x\in A_j$, then we have $\delta_{A_j}^k(x)\in\widetilde{A}_j\subset\M(A_k\tens S_{kj})$ and the map 
$
\widetilde{\pi}_j:A_j\rightarrow\widetilde{A}_j\,;\,x\mapsto\delta_{A_j}^k(x)
$
is a $\QG_j$-equivariant *-isomorphism.\index[symbol]{pj@$\widetilde{\pi}_j$}
\end{prop}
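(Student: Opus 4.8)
The plan is to deduce all three assertions — that $\delta_{A_j}^k(x)\in\widetilde A_j$ for $x\in A_j$, that $\widetilde\pi_j\colon A_j\to\widetilde A_j$ is onto, and that it is $\QG_j$-equivariant — directly from the coassociativity relations of \ref{actprop}(2) and the density relations of \ref{actprop}(3), using \ref{prop5}(4) to identify $(S_{jj},\delta_{jj}^j)$ with the Hopf C*-algebra of $\QG_j$. Faithfulness of $\delta_{A_j}^k$ is already part of \ref{actprop}, so once $\widetilde\pi_j$ is shown to map $A_j$ onto $\widetilde A_j$ and to be equivariant, it is a bijective $*$-homomorphism of $\QG_j$-C*-algebras, hence a $*$-isomorphism. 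Throughout I use the standing assumption that $\QG_1$ and $\QG_2$ are regular, which is what guarantees (via \ref{propind4}) that $\widetilde A_j={\rm Ind}_{\QG_k}^{\QG_j}(A_k)\subset\M(A_k\tens S_{kj})$ and its action $\delta_{\widetilde A_j}=\restr{(\id_{A_k}\tens\delta_{kj}^j)}{\widetilde A_j}$ are well defined.

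First I would extract the two instances of the coassociativity identity $(\delta_{A_b}^c\tens\id_{S_{ba}})\delta_{A_a}^b=(\id_{A_c}\tens\delta_{ca}^b)\delta_{A_a}^c$ of \ref{actprop}(2) that are needed (valid for all $a,b,c\in\{1,2\}$; I rename the quantified indices $j,k,l$ of \ref{actprop}(2) to $a,b,c$ to avoid clashing with the ambient $j,k$). Taking $(a,b,c)=(j,j,k)$ gives
\[
(\delta_{A_j}^k\tens\id_{S_{jj}})\,\delta_{A_j}^j=(\id_{A_k}\tens\delta_{kj}^j)\,\delta_{A_j}^k ,
\]
and taking $(a,b,c)=(k,j,k)$ gives
\[
(\id_{A_k}\tens\delta_{kk}^j)\,\delta_{A_k}^k=(\delta_{A_j}^k\tens\id_{S_{jk}})\,\delta_{A_k}^j .
\]
Granting for the moment that $\delta_{A_j}^k(A_j)\subseteq\widetilde A_j$, the first identity is exactly the $\QG_j$-equivariance of $\widetilde\pi_j$: its right-hand side is $\delta_{\widetilde A_j}\circ\widetilde\pi_j$ (since $\delta_{\widetilde A_j}$ is the restriction of $\id_{A_k}\tens\delta_{kj}^j$ to $\widetilde A_j$), its left-hand side is $(\widetilde\pi_j\tens\id_{S_{jj}})\circ\delta_{A_j}^j$, and $\delta_{A_j}^j$ is the $\QG_j$-action on $A_j$ by \ref{actprop}(4). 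So equivariance is immediate.

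The substance is the identity $\widetilde A_j=\delta_{A_j}^k(A_j)$, and the second display above is the key, since its right-hand side is precisely the iterated map $\delta_{A_k}^{(j)}:=(\id_{A_k}\tens\delta_{kk}^j)\delta_{A_k}^k$ that defines $\widetilde A_j$ (cf.\ \ref{not11}, \ref{propind4}). Slicing the last ($S_{jk}$-)leg by an arbitrary $\omega\in\B(\s H_{jk})_*$ and using the slice-map compatibility
\[
(\id_{A_k\tens S_{kj}}\tens\omega)(\delta_{A_j}^k\tens\id_{S_{jk}})(T)=\delta_{A_j}^k\big((\id_{A_j}\tens\omega)(T)\big),\qquad T\in\M(A_j\tens S_{jk}),
\]
(which rests on the $\pi\tens\id_D$ construction recalled in \S\ref{sectionNotations}, applied to the strictly continuous multiplier extension of $\delta_{A_j}^k$) I obtain
\[
(\id_{A_k\tens S_{kj}}\tens\omega)\,\delta_{A_k}^{(j)}(a)=\delta_{A_j}^k\big((\id_{A_j}\tens\omega)\,\delta_{A_k}^j(a)\big),\qquad a\in A_k .
\]
Taking closed linear spans over $a\in A_k$ and $\omega\in\B(\s H_{jk})_*$, the left-hand side is by definition $\widetilde A_j$, while the right-hand side is $\delta_{A_j}^k$ applied to $[(\id_{A_j}\tens\omega)\delta_{A_k}^j(a)\,;\,a\in A_k,\,\omega\in\B(\s H_{jk})_*]$, which equals $A_j$ by the density statement of \ref{actprop}(3) with $j$ and $k$ exchanged. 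Since $\delta_{A_j}^k$ is faithful, hence isometric, it carries this closed span onto the closed set $\delta_{A_j}^k(A_j)$; thus $\widetilde A_j=\delta_{A_j}^k(A_j)$, which in particular gives the inclusion $\delta_{A_j}^k(A_j)\subseteq\widetilde A_j$ used above and shows that $\widetilde\pi_j$ is onto.

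The main obstacle I anticipate is bookkeeping rather than anything conceptual: matching the index pattern of \ref{actprop}(2) correctly in the two specializations, keeping track of which leg of which tensor factor is acted on or sliced, and making the slice-map manipulations legitimate at the level of multiplier algebras — that is, checking that $\id_{A_j}\tens\omega$ and $\delta_{A_j}^k\tens\id_{S_{jk}}$ genuinely interchange on $\M(A_j\tens S_{jk})$ and that one may pass to closed linear spans (here using that $\delta_{A_j}^k$ is norm-contractive and faithful, so $\delta_{A_j}^k(A_j)$ is closed and $\delta_{A_j}^k([X])=[\delta_{A_j}^k(X)]$ whenever $[X]=A_j$ in $\M(A_j)$). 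Once these technical points are settled, the argument is purely formal.
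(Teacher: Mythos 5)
The paper itself gives no proof of \ref{prop4}: the whole of \S\ref{colinkingMQG} is an unproved recollection of results from \cite{BC}, so there is nothing internal to compare your argument against. On its own terms your proof is correct and complete. The two specializations of \ref{actprop}~2 are instantiated with the right index pattern; the identity $(\id_{A_k}\tens\delta_{kk}^j)\delta_{A_k}^k=(\delta_{A_j}^k\tens\id_{S_{jk}})\delta_{A_k}^j$ does exhibit $\delta_{A_k}^{(j)}$ as $\delta_{A_j}^k$ post-composed on $\delta_{A_k}^j$, and slicing the last leg then reduces $\widetilde A_j$ to $\delta_{A_j}^k$ of the span $[(\id_{A_j}\tens\omega)\delta_{A_k}^j(a)]$, which is $A_j$ by \ref{actprop}~3 with $j$ and $k$ exchanged. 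The only points needing care are the ones you flag: the interchange $(\id\tens\omega)\circ(\delta_{A_j}^k\tens\id_{S_{jk}})=\delta_{A_j}^k\circ(\id\tens\omega)$ on $\M(A_j\tens S_{jk})$ (legitimate since $\delta_{A_j}^k$ is non-degenerate, so its strict extension exists and the identity holds on elementary tensors and passes to the strict closure), the fact that the individual slices $(\id_{A_j}\tens\omega)\delta_{A_k}^j(a)$ already lie in $A_j$ (which follows from $[\delta_{A_k}^j(A_k)(1\tens S_{jk})]=A_j\tens S_{jk}$ by factoring $\omega$), and the passage $[\delta_{A_j}^k(X)]=\delta_{A_j}^k([X])$, valid because a faithful *-homomorphism is isometric with closed range. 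With those settled, surjectivity, the inclusion $\delta_{A_j}^k(A_j)\subset\widetilde A_j$, and $\QG_j$-equivariance all fall out exactly as you describe.
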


\begin{prop}\label{prop37}
Let $(B_1,\delta_{B_1})$ be a $\QG_1$-C*-algebra. Let $B_2={\rm Ind}_{\QG_1}^{\QG_2}(B_1)$ be the induced $\QG_2$-C*-algebra. Let $B:=B_1\oplus B_2$. For $j,\,k=1,2$ with $j\neq k$, let $\pi_j^k:\M(B_k\tens S_{kj})\rightarrow\M(B\tens S)$ be the strictly continuous *-homomorphism extending the canonical injection $B_k\tens S_{kj}\rightarrow B\tens S$ and $\delta_{B_j}^k:B_j\rightarrow\M(B_k\tens S_{kj})$ the *-homomorphisms defined in \ref{not11}.
Let $\beta_B:\GC^2\rightarrow\M(B)$ and $\delta_B:B\rightarrow\M(B\tens S)$ be the *-homomorphisms defined by:
\[
\beta_B(\lambda,\mu):=\begin{pmatrix}\lambda & 0\\ 0 & \mu\end{pmatrix}\!,\quad (\lambda,\mu)\in\GC^2;\quad
\delta_B(b):=\sum_{k,j=1,2}\pi_j^k\circ\delta_{B_j}^k(b_j),\quad b=(b_1,b_2)\in B.\]
Therefore, we have:
\begin{enumerate}
	\item $(\beta_B,\delta_B)$ is a strongly continuous action of ${\cal G}$ on $B$;
	\item the correspondence ${\sf Alg}_{\QG_1}\rightarrow{\sf Alg}_{\cal G}\,;\,(B_1,\delta_{B_1})\mapsto(B,\beta_B,\delta_B)$ is functorial;
	\item the functors ${\sf Alg}_{\QG_1}\rightarrow{\sf Alg}_{\cal G}$ and ${\sf Alg}_{\cal G}\rightarrow{\sf Alg}_{\QG_1}$ are inverse of each other.\qedhere
\end{enumerate}
\end{prop}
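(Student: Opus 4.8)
The plan is to treat the construction as the inverse of the disassembly of a $\cal G$-C*-algebra performed in Propositions \ref{actprop} and \ref{prop4}. First I would isolate the three properties of the four maps $\delta_{B_j}^k\colon B_j\to\M(B_k\tens S_{kj})$ of \ref{not11} that make the assembly formula work: (a) each $\delta_{B_j}^k$ is a faithful non-degenerate $*$-homomorphism; (b) the relations $(\delta_{B_k}^l\tens\id_{S_{kj}})\delta_{B_j}^k=(\id_{B_l}\tens\delta_{lj}^k)\delta_{B_j}^l$ hold for all $j,k,l=1,2$; and (c) $[\delta_{B_j}^k(B_j)(1_{B_k}\tens S_{kj})]=B_k\tens S_{kj}$ for all $j,k$. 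Property (a) is contained in \ref{propind1} 3 together with the fact that $\delta_{B_j}^j=\delta_{B_j}$ is a continuous $\QG_j$-action. For (b) and (c), the diagonal cases $j=k=l$ (resp.\ $j=k$) are the coassociativity (resp.\ continuity) of the $\QG_j$-actions $\delta_{B_j}$; the off-diagonal cases are obtained by unwinding the definitions in \ref{not11} — where $\delta_{B_1}^2=(\id_{B_1}\tens\delta_{11}^2)\delta_{B_1}$ is explicit while $\delta_{B_2}^1$ is characterised through the $\QG_1$-equivariant isomorphism $\pi_1$ of \ref{propind1} 2 — and then appealing to the coassociativity and density relations of the weak Hopf C*-algebra (Proposition \ref{prop5} 1 and 3) and to the density relations $[B_2(1_{B_1}\tens S_{12})]=B_1\tens S_{12}=[(1_{B_1}\tens S_{12})B_2]$ of \ref{propind4} 1.

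Granting (a)--(c), part 1 is bookkeeping along the mutually orthogonal corners $q_k\tens p_{kj}$ of $\M(B\tens S)$, where $q_k=\beta_B(\varepsilon_k)$ and $p_{kj}=\alpha(\varepsilon_k)\beta(\varepsilon_j)$: the four summands of $\delta_B(b)=\sum_{k,j}\pi_j^k\circ\delta_{B_j}^k(b_j)$ lie in distinct corners, so faithfulness of $\delta_B$ follows from faithfulness of the $\delta_{B_j}^k$; applying $\delta_B$ to $1_B$ gives $\sum_{k,j}q_k\tens p_{kj}=\sum_k q_k\tens\alpha(\varepsilon_k)=q_{\beta_B\alpha}$, which via \ref{lem19} also provides the required strict-continuous unital extension of $\delta_B$; the identity $\delta_B(\beta_B(n^{\rm o}))=q_{\beta_B\alpha}(1_B\tens\beta(n^{\rm o}))$ is checked on $\varepsilon_1,\varepsilon_2$ using $\delta_B(q_l)=\sum_k q_k\tens p_{kl}=\sum_k q_k\tens\alpha(\varepsilon_k)\beta(\varepsilon_l)$; coassociativity $(\delta_B\tens\id_S)\delta_B=(\id_B\tens\delta)\delta_B$ follows by expanding both sides, writing $\delta$ restricted to $S_{kj}$ as $\sum_m\iota_{kj}^m\delta_{kj}^m$, and matching the components supported on $q_a\tens p_{ab}\tens p_{bc}$: there the left side contributes $(\delta_{B_b}^a\tens\id_{S_{bc}})\delta_{B_c}^b(b_c)$ and the right side $(\id_{B_a}\tens\delta_{ac}^b)\delta_{B_c}^a(b_c)$, so equality is exactly (b); and $[\delta_B(B)(1_B\tens S)]=q_{\beta_B\alpha}(B\tens S)$ is the orthogonal sum over $k,j$ of the relations (c), since $q_{\beta_B\alpha}(B\tens S)=\bigoplus_{k,j}B_k\tens S_{kj}$.

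For part 2, given a $\QG_1$-equivariant non-degenerate $*$-homomorphism $g\colon B_1\to\M(B_1')$ I would set $g_2:={\rm Ind}_{\QG_1}^{\QG_2}(g)\colon B_2\to\M(B_2')$ (functoriality of induction, \ref{propind4} 3) and $f:=\iota_1'\circ g\oplus\iota_2'\circ g_2\colon B\to\M(B')$ in the notation of \ref{lem7}; that $f$ is $\cal G$-equivariant follows from \ref{lem7} 2 once one checks $(g_k\tens\id_{S_{kj}})\circ\delta_{B_j}^k=\delta_{B_j'}^k\circ g_j$ for all $j,k$, which is $\QG_j$-equivariance when $j=k$ and, when $j\neq k$, a direct consequence of the descriptions of the $\delta_{B_j}^k$ in \ref{not11} and the naturality of the induction construction; compatibility with composition and identities is then immediate from the functoriality of ${\rm Ind}_{\QG_1}^{\QG_2}$.

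Finally, for part 3 I would check the two round-trips. The composite ${\sf Alg}_{\QG_1}\to{\sf Alg}_{\cal G}\to{\sf Alg}_{\QG_1}$ sends $(B_1,\delta_{B_1})$ to $(q_{B,1}B,\gamma)$ with $q_{B,1}B=B_1$ by construction, and for $x\in B_1$ one has $\pi_1^1\circ\gamma(x)=(q_1\tens p_{11})\delta_B(x)=(q_1\tens p_{11})\pi_1^1\delta_{B_1}^1(x)=\pi_1^1\circ\delta_{B_1}(x)$, whence $\gamma=\delta_{B_1}$ by faithfulness of $\pi_1^1$; naturality on morphisms follows from the same computation applied to the $f$ of part 2. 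For the composite ${\sf Alg}_{\cal G}\to{\sf Alg}_{\QG_1}\to{\sf Alg}_{\cal G}$, start from $(A,\beta_A,\delta_A)$ with $A=A_1\oplus A_2$; the output is $(B,\beta_B,\delta_B)$ built from $B_1:=A_1$ and $B_2:={\rm Ind}_{\QG_1}^{\QG_2}(A_1)$, and by Proposition \ref{prop4} the map $\widetilde{\pi}_2\colon A_2\to{\rm Ind}_{\QG_1}^{\QG_2}(A_1)=B_2$, $x\mapsto\delta_{A_2}^1(x)$, is a $\QG_2$-equivariant $*$-isomorphism. I would then show that $\Phi\colon B\to A$, the identity on $B_1=A_1$ and $\widetilde{\pi}_2^{-1}$ on $B_2$, is a $\cal G$-equivariant $*$-isomorphism natural in $A$; via \ref{lem7} this reduces to the four identities $(\Phi_k\tens\id_{S_{kj}})\circ\delta_{B_j}^k=\delta_{A_j}^k\circ\Phi_j$, each of which follows from \ref{prop4}, the generalized coassociativity \ref{actprop} 2, and the fact that ${\rm Ind}_{\QG_1}^{\QG_2}$ and ${\rm Ind}_{\QG_2}^{\QG_1}$ are mutually inverse (\ref{propind1}). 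The main obstacle throughout is the verification of the off-diagonal instances of (b) — the generalized coassociativity of the $\delta_{B_j}^k$, particularly those involving the implicitly defined $\delta_{B_2}^1$ — and the matching identities $(\Phi_k\tens\id_{S_{kj}})\delta_{B_j}^k=\delta_{A_j}^k\Phi_j$ in part 3: both require transporting relations along $\pi_1$ and $\widetilde{\pi}_2$ and invoking the inverse-functor property of induction, which is where the index bookkeeping concentrates.
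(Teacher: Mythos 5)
The paper does not actually prove this proposition: it is recalled verbatim from \S 4 of \cite{BC}, and no proof appears in this text, so there is no internal argument to compare yours against. That said, your reconstruction is sound and follows exactly the route the surrounding material is set up for: reducing everything to the three properties (a)--(c) of the four maps $\delta_{B_j}^k$ of \ref{not11}, checking the axioms of \ref{defactmqg} corner by corner against $q_k\tens p_{kj}$, getting the extension condition from \ref{lem19}, handling morphisms through \ref{lem7} and the functoriality of induction (\ref{propind4} 3), and identifying the round-trip isomorphisms with $\widetilde{\pi}_j$ from \ref{prop4} and $\pi_j$ from \ref{propind1}. Your corner computations for $\delta_B(1_B)=q_{\beta_B\alpha}$, for $\delta_B(\beta_B(n^{\rm o}))$, for coassociativity, and for strong continuity are all correct.

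The one place where the write-up is genuinely thin is the off-diagonal instances of (b) and (c) --- in particular the relations involving $\delta_{B_2}^1$, which is only defined implicitly through $(\pi_1\tens\id_{S_{12}})\delta_{B_2}^1=\delta_{B_2}^{(1)}$ --- and the four compatibility identities $(\Phi_k\tens\id_{S_{kj}})\delta_{B_j}^k=\delta_{A_j}^k\circ\Phi_j$ in part 3. You correctly name these as the crux and list the right ingredients (transport along $\pi_1$ and $\widetilde{\pi}_2$, coassociativity of the $\delta_{lj}^k$ from \ref{prop5}, the density relations of \ref{propind4} 1, and the inverse-functor property of \ref{propind1}), but you do not carry out the index-chasing; these verifications are precisely the content of \S 4 of \cite{BC} (e.g.\ 4.8 and 4.10 there) and are nontrivial, so a complete proof would need them written out. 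As an outline of the argument, however, nothing in your plan would fail.
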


	\subsection{Actions of (semi-)regular measured quantum groupoids}\label{ActRegMQG}	

In this section, we fix a measured quantum groupoid ${\cal G}=(N,M,\alpha,\beta,\Delta,T,T',\epsilon)$ on a finite-dimensional basis $N:=\bigoplus_{1\leqslant l\leqslant k}\,{\rm M}_{n_l}(\GC)$ endowed with the non-normalized Markov trace $\epsilon=\bigoplus_{1\leqslant l\leqslant k}n_l\cdot{\rm Tr}_l$ and we use all the notations introduced in \S\ref{MQGfinitebasis}.

\medskip

We begin this section by a characterization of the regularity (resp.\ semi-regularity) of $\cal G$ in terms of the action of $\cal G$ on itself (cf.\ \ref{ex2}), which generalizes 2.6 \cite{BSV} to the setting of measured quantum groupoids on a finite basis. 

\begin{prop}
Let $S\rtimes{\cal G}$ be the crossed product of $S$ by the strongly continuous action $(\beta,\delta)$ of $\cal G$. Then, we have a canonical *-isomorphism $S\rtimes{\cal G}\simeq[S\widehat{S}]$. In particular, $\cal G$ is regular {\rm(}resp.\ semi-regular{\rm)} if, and only if, we have $\K_{\widehat{\beta}}=S\rtimes{\cal G}$ {\rm(}resp.\ $\K_{\widehat{\beta}}\subset S\rtimes{\cal G}${\rm)}.
\end{prop}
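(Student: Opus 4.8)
The plan is to give a completely explicit description of $S\rtimes{\cal G}$ and then deduce the second assertion directly from \ref{propReg4}. First I would unwind the definition of the crossed product (cf.\ the construction recalled in \ref{notCrossedProduct} and the following proposition-definition) applied to the $\cal G$-C*-algebra $(S,\beta,\delta)$ of \ref{ex2}: writing ${\cal E}_{S,L}=q_{\beta\alpha}(S\tens\s H)$, the algebra $S\rtimes{\cal G}\subset\Lin({\cal E}_{S,L})$ is the closed linear span of the operators $\pi(a)\widehat\theta(x)$ for $a\in S$, $x\in\widehat S$, where $\pi(a)=\restr{\pi_L(a)}{{\cal E}_{S,L}}$, $\pi_L=(\id_S\tens L)\circ\delta$ and $\widehat\theta(x)=\restr{(1_S\tens x)}{{\cal E}_{S,L}}$. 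As $L$ is the identity representation of $S$ on $\s H$, the map $\id_S\tens L$ is the canonical inclusion $\M(S\tens S)\subset\M(S\tens\K)=\Lin(S\tens\s H)$ (legitimate since $[S\K]=\K$), so $\pi_L(a)=\delta(a)$, and hence $S\rtimes{\cal G}=\restr{[\,\delta(S)(1_S\tens\widehat S)\,]}{{\cal E}_{S,L}}$.

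The key computational step is then to simplify $[\delta(S)(1_S\tens\widehat S)]$. Using $\delta(a)=W^*(1\tens a)W$ for $a\in S\subset M$, and noting that $W\in M\tens\widehat M$ while $\widehat S\subset\widehat M'$, so that $[W,1\tens x]=0$ for all $x\in\widehat S$, I would obtain
\[
\delta(a)(1\tens x)=W^*(1\tens a)W(1\tens x)=W^*(1\tens ax)W,\qquad a\in S,\ x\in\widehat S .
\]
Since $\{ax\,;\,a\in S,\ x\in\widehat S\}$ spans $[S\widehat S]$ densely and $b\mapsto W^*(1\tens b)W$ is norm-contractive, this yields $S\rtimes{\cal G}=\restr{W^*(1\tens[S\widehat S])W}{{\cal E}_{S,L}}$. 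It then remains to verify that $\Phi\colon[S\widehat S]\to\Lin({\cal E}_{S,L})$, $b\mapsto\restr{W^*(1\tens b)W}{{\cal E}_{S,L}}$, is a $*$-isomorphism onto $S\rtimes{\cal G}$. It takes values in $\Lin({\cal E}_{S,L})$ because $W^*W=q_{\beta\alpha}$ and $WW^*=q_{\alpha\widehat\beta}$ give $W=q_{\alpha\widehat\beta}W=Wq_{\beta\alpha}$; it is multiplicative because $\widehat\beta(N^{\rm o})=M'\cap\widehat M$ forces $[S\widehat S]\subset\widehat\beta(N^{\rm o})'$ (as $S\subset M\subset\widehat\beta(N^{\rm o})'$ and $\widehat S\subset\widehat M'\subset\widehat\beta(N^{\rm o})'$) and $q_{\alpha\widehat\beta}$ commutes with $1\tens y$ for $y\in\widehat\beta(N^{\rm o})'$, whence $\Phi(b_1)\Phi(b_2)=W^*(1\tens b_1)q_{\alpha\widehat\beta}(1\tens b_2)W=\Phi(b_1b_2)$; it is onto by the previous line; and it is injective since $W^*(1\tens b)W=0$ gives, after compressing by $WW^*$ and using the commutation, $(1\tens b)q_{\alpha\widehat\beta}=0$, hence $b=0$ because the support of $q_{\alpha\widehat\beta}$ on the second leg is $1_{\s H}$ ($\widehat\beta$ being unital). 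This produces the canonical isomorphism $S\rtimes{\cal G}\simeq[S\widehat S]$.

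For the final assertion I would use that $[S\widehat S]$ is a C*-algebra, hence equals its adjoint $[\widehat S S]$, so under the identification above $S\rtimes{\cal G}$ is precisely $[\widehat S S]\subset\B(\s H)$; then ((i)$\Leftrightarrow$(ii)) of \ref{propReg4} states exactly that $\cal G$ is regular (resp.\ semi-regular) if, and only if, $\K_{\widehat\beta}=S\rtimes{\cal G}$ (resp.\ $\K_{\widehat\beta}\subset S\rtimes{\cal G}$). The only points I expect to need real care rather than bookkeeping are the two properties of the relative-tensor-product projection $q_{\alpha\widehat\beta}$ used to make $\Phi$ a $*$-isomorphism — that it commutes with $1\tens\widehat\beta(N^{\rm o})'$ and that its second-leg support is $1_{\s H}$ — for which one returns to the construction of $v_{\alpha\widehat\beta}$ and $q_{\alpha\widehat\beta}$ in the appendix; everything else follows from the relations already recorded in \S\S\ref{MQGfinitebasis}, \ref{WHC*A} and \ref{subsectionBiduality}.
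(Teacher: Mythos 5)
Your proposal is correct and follows essentially the same route as the paper: both identify $\pi(s)\widehat\theta(x)$ with $W^*(1\tens L(s)\rho(x))W$ using $W\in M\tens\widehat M$ and $\widehat S\subset\widehat M'$, both exploit $WW^*=q_{\alpha\widehat\beta}$ together with $[1\tens z,\,q_{\alpha\widehat\beta}]=0$ for $z\in[S\widehat S]$ to see that $z\mapsto W^*(1\tens z)W$ is a *-isomorphism, and both conclude by \ref{propReg4}. The only cosmetic differences are that you work directly in $\Lin(S\tens\s H)$ rather than via $(L\tens\id_\K)$, and you justify injectivity by the second-leg support of $q_{\alpha\widehat\beta}$ where the paper slices with $\omega$ satisfying $\omega\circ\alpha=\epsilon$ — two phrasings of the same fact.
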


\begin{proof}
Let us identify $\Lin({\cal E}_{S,L})=\{T\in\Lin(S\tens\s H)\,;\,Tq_{\beta\alpha}=T=q_{\beta\alpha}T\}$. Let us denote by $j_{S\rtimes{\cal G}}:S\rtimes{\cal G}\rightarrow\B(\s H\tens\s H)$, the faithful *-representation defined by $j_{S\rtimes{\cal G}}(u)=(L\tens\id_{\K})(u)$ for all $u\in S\rtimes{\cal G}\subset\Lin({\cal E}_{S,L})$. Let $\pi:S\rightarrow\M(S\rtimes{\cal G})$ and $\widehat{\theta}:\widehat{S}\rightarrow\M(S\rtimes{\cal G})$ be the canonical morphisms (cf.\ \ref{notCrossedProduct}). We claim that there exists a unique *-isomorphism $\phi:S\rtimes{\cal G}\rightarrow [S\widehat{S}]$ such that
\[
\phi(\pi(s)\widehat{\theta}(x))=L(s)\rho(x),\quad \text{for all } s\in S \text{ and } x\in\widehat{S}.
\]
Let $s\in S$ and $x\in\widehat{S}$. Since $W\in M\tens\widehat{M}$ and $\rho(\widehat{S})\subset\widehat{M}'$, we have 
\begin{align*}
j_{S\rtimes{\cal G}}(\pi(s)\widehat{\theta}(x))&=(L\tens L)\delta(s)(1\tens\rho(x))\\
&=W^*(1\tens L(s))W(1\tens\rho(x))\\
&=W^*(1\tens L(s)\rho(x))W.
\end{align*}
Let $C:={\rm im}(j_{S\rtimes{\cal G}})=\{W^*(1\tens z)W\,;\,z\in[S\widehat{S}]\}$. The representation $j_{S\rtimes{\cal G}}$ induces a *-isomorphism $\psi:S\rtimes{\cal G}\rightarrow C$. Since $WW^*=q_{\alpha\widehat{\beta}}$ and $[1\tens z,\,q_{\alpha\widehat{\beta}}]=0$ for all $z\in[S\widehat{S}]$, the map 
\[
\chi:[S\widehat{S}]\rightarrow C \; ; \; z\rightarrow W^*(1\tens z)W
\] 
is a *-homomorphism satisfying $W\chi(z)W^*=q_{\alpha\widehat{\beta}}(1\tens z)$ for all $z\in[S\widehat{S}]$. Let $\omega\in\B(\s H)_*$ such that $\omega\circ\alpha=\epsilon$. We have 
\[
(\omega\tens\id)(W\chi(z)W^*)=z,\quad \text{for all } z\in[S\widehat{S}].
\] 
Hence, $\chi$ is a *-isomorphism. Hence, $\phi:=\chi^{-1}\circ\psi:S\rtimes{\cal G}\rightarrow[S\widehat{S}]\,;\,\pi(s)\widehat{\theta}(x)\mapsto L(s)\widehat{\theta}(x)$ is a *-isomorphism. The second statement of the proposition follows from \ref{propReg4}. 
\end{proof}

Proposition \ref{prop38} and Theorem \ref{theo6} are the generalizations of 5.7 and 5.8 of \cite{BSV} to measured quantum groupoids on a finite basis.

\begin{nbs}\label{not15}
Let $(\beta_A,\delta_A)$ be an action of $\cal G$ on a C*-algebra $A$. With the notations of \ref{not13} and \ref{not14}, let $e_I:=\alpha(\varepsilon_I)$ and $q_I:=\beta_A(\varepsilon_I)$ for all $I\in\s I$.\index[symbol]{qb@$q_I$}
\end{nbs}

\begin{lem}\label{lem23}
Let $(\beta_A,\delta_A)$ be an action of $\cal G$ on a C*-algebra $A$. With the notations of \ref{not15}, we have: 
\begin{enumerate}
\item $\displaystyle q_{\beta_A\alpha}=\sum_{I\in\s I}n_I^{-1} q_I\tens e_{\overline{I}}$;
\item $(q_I\tens 1_S)\delta_A(a)=(1_A\tens e_I)\delta_A(a)$, for all $a\in A$ and $I\in\s I$;$\phantom{\displaystyle \sum_{I\in\s I}}$
\item $\delta_A(a)(q_I\tens 1_S)=\delta_A(a)(1_A\tens e_I)$, for all $a\in A$ and $I\in\s I$.\qedhere
\end{enumerate}
\end{lem}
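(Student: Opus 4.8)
The plan is to first identify the projection $q_{\beta_A\alpha}=\delta_A(1_A)$ explicitly (this is part 1), and then to deduce parts 2 and 3 from a single ``balancing'' identity relating $q_I\tens 1_S$ and $1_A\tens e_I$ after multiplication by $q_{\beta_A\alpha}$.

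For part 1 I would just rewrite the definition of $q_{\beta_A\alpha}$ from \ref{ProjectionCAlg} in the multi-index notation of \ref{not13}, \ref{not14}: the asserted formula $q_{\beta_A\alpha}=\sum_{I\in\s I}n_I^{-1}\,q_I\tens e_{\overline I}$ is the exact analogue, with $\alpha$ replaced by $\beta_A$ in the first leg, of the identity $q_\alpha=\sum_{I\in\s I}n_I^{-1}\,e_I\tens e_{\overline I}$ obtained in the proof of \ref{lemReg}. Working from scratch, it is a routine matrix-unit computation of exactly that type to verify that $\sum_{I}n_I^{-1}\,q_I\tens e_{\overline I}$ is a self-adjoint idempotent of $\M(A\tens S)$ with the characterizing properties of $q_{\beta_A\alpha}$, using $q_I^*=q_{\overline I}$, $e_I^*=e_{\overline I}$, $n_{\overline I}=n_I$, $e_Ie_J=e_{IJ}$ (matrix-unit product, zero unless $I,J$ are composable) and $q_Iq_J=q_{JI}$ --- the last with a product reversal, since $\beta_A$ is a $*$-homomorphism of $N^{\rm o}$ and not of $N$.

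For parts 2 and 3 the key step will be the identity
\[
(q_I\tens 1_S)\,q_{\beta_A\alpha}=(1_A\tens e_I)\,q_{\beta_A\alpha}\qquad\text{for all }I\in\s I,
\]
which is the finite-basis incarnation of the relation $\beta_A(n^{\rm o})\,a\tens s=a\tens\alpha(n)\,s$ underlying the relative tensor product. To prove it I would insert the formula of part 1 into both sides; using $q_Iq_J=q_{JI}$ and $e_Ie_{\overline J}=e_{I\overline J}$ and then discarding the vanishing terms, both sides reduce --- after relabelling the summation index --- to the same sum over the matrix units contained in the block of $I$, so they agree. Taking adjoints (and using that $I\mapsto\overline I$ is a bijection of $\s I$) gives likewise $q_{\beta_A\alpha}(q_I\tens 1_S)=q_{\beta_A\alpha}(1_A\tens e_I)$. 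Since $\delta_A$ is a $*$-homomorphism with $\delta_A(1_A)=q_{\beta_A\alpha}$ (cf.\ the extension property in \ref{defactmqg}), we have $\delta_A(a)=q_{\beta_A\alpha}\delta_A(a)=\delta_A(a)\,q_{\beta_A\alpha}$ for every $a\in A$; multiplying the first of these on the left by $q_I\tens 1_S$ and the second on the right by $q_I\tens 1_S$, and invoking the two forms of the identity above, yields parts 2 and 3 respectively.

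I expect the only mildly delicate point to be part 1, namely matching the normalization of $q_{\beta_A\alpha}$ from \ref{ProjectionCAlg} against the multi-index sum, and, throughout, keeping careful track of the product reversal caused by $\beta_A$ being a homomorphism of $N^{\rm o}$ rather than of $N$; once the formula of part 1 is established, parts 2 and 3 are purely formal consequences.
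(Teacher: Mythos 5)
Your proposal is correct and follows essentially the same route as the paper: part 1 is the definition of $q_{\beta_A\alpha}$ from \ref{ProjectionCAlg} written in multi-index notation, the heart is the balancing identity $(q_I\tens 1_S)q_{\beta_A\alpha}=(1_A\tens e_I)q_{\beta_A\alpha}$, and parts 2 and 3 follow from $\delta_A(1_A)=q_{\beta_A\alpha}$ and an adjoint. The only cosmetic difference is that the paper deduces part 3 by taking adjoints of part 2 directly, whereas you take adjoints of the balancing identity first --- the same computation.
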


\begin{proof}
Statement 1 is just restatement of \ref{ProjectionBis}. By a straightforward computation, we verify that $(q_I\tens 1_S)q_{\beta_A\alpha}=(1_A\tens e_I)q_{\beta_A\alpha}$ for all $I\in\s I$. Statement 2 then follows from the fact that $\delta_A(1_A)=q_{\beta_A\alpha}$. The last statement follows from the second one by taking the adjoint.
\end{proof}

\begin{prop}\label{prop38} Let $(\beta_A,\delta_A)$ be an action of $\cal G$ on a C$^*$-algebra $A$. If $\cal G$ is semi-regular, the Banach space $[(\id_A\tens\omega)\delta_A(a)\,;\, a\in A,\, \omega\in\B(\s H)_*]\subset{\cal M}(A)$ is a C$^*$-algebra.
\end{prop}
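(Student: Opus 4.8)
The space $A_{\delta_A}:=[(\id_A\tens\omega)\delta_A(a)\,;\,a\in A,\,\omega\in\B(\s H)_*]$ is, by construction, a norm-closed subspace of $\M(A)$, and it is self-adjoint, since $\big((\id_A\tens\omega)\delta_A(a)\big)^*=(\id_A\tens\omega')\delta_A(a^*)$ where $\omega'\in\B(\s H)_*$ is defined by $\omega'(x):=\overline{\omega(x^*)}$. Hence the only point to establish is that $A_{\delta_A}$ is stable under multiplication. The plan is to transpose the proof of 5.7 in \cite{BSV} to the groupoid setting: the regularity relations \ref{propReg1}, \ref{propReg2}, \ref{propReg3} of \S\ref{RegMQG} play the role of 3.2 b), 3.6 b), 3.6 d) of \cite{BS2}, and Lemma \ref{lem23} is used to absorb the factors that are specific to the groupoid framework.

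First I would record the elementary leg-slicing identity: for $a,b\in A$ and $\omega_1,\omega_2\in\B(\s H)_*$,
\[
\big((\id_A\tens\omega_1)\delta_A(a)\big)\big((\id_A\tens\omega_2)\delta_A(b)\big)=(\id_A\tens\omega_1\tens\omega_2)\big(\delta_A(a)_{12}\,\delta_A(b)_{13}\big),
\]
so that it suffices to show that the closed linear span of the right-hand terms is contained in $A_{\delta_A}$. The key manipulation is to re-express $\delta_A(b)_{13}$: using the coassociativity $(\delta_A\tens\id_S)\delta_A(b)=(\id_A\tens\delta)\delta_A(b)$ together with the two identities $\delta(y)=V(y\tens 1)V^*=W^*(1\tens y)W$ for $y\in S$, one rewrites $\delta_A(b)_{13}$ in terms of $\delta_A(b)_{12}$ conjugated by the partial isometries $W_{23}$ and $V_{23}$ (up to canonical projections arising from the non-unitality of $\delta$). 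Substituting, the product above becomes a word in $\delta_A(a)_{12}$, $\delta_A(b)_{12}$ and the factors $W_{23},V_{23}$, which one then slices against $\omega_1$ in the second leg.

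If the second-leg slice commuted through the factors $W_{23}$ and $V_{23}$, the expression would collapse to a scalar multiple of $(\id_A\tens\omega_2)\delta_A(ab)$ and nothing would remain to prove; the content of the proposition is that the resulting defect is controlled by semi-regularity. Concretely, I would invoke the commutation relations of \ref{prop34} together with the characterizations of \ref{propReg1} and \ref{propReg2} --- that semi-regularity of $\cal G$ is equivalent to inclusions such as $[(\K\tens 1)W(1\tens\K)]\supset[(\K\tens 1)q_\alpha(1\tens\K)]$ and $[(S\tens 1)W(1\tens\K)]\supset[(S\tens 1)q_\alpha(1\tens\K)]$ (and their analogues for $V$ and $\widetilde{V}$) --- in order to trade each occurrence of $W$ (resp.\ $V$) produced by the slice for a factor $q_\alpha$ (resp.\ $q_\beta$), which is in turn reabsorbed into a $\delta_A$-slice by means of Lemma \ref{lem23} (passing between the $\alpha(N)$-side, in the $S$-leg, and the $\beta_A(N^{\rm o})$-side of $\delta_A$, i.e.\ replacing $1_A\tens\alpha(\varepsilon_I)$ by $\beta_A(\varepsilon_I)\tens 1_S$ inside $\delta_A(\,\cdot\,)$). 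After this bookkeeping the original product is exhibited as an element of $[(\id_A\tens\mu)\delta_A(c)\,;\,c\in A,\,\mu\in\B(\s H)_*]=A_{\delta_A}$, which gives $A_{\delta_A}A_{\delta_A}\subseteq A_{\delta_A}$ and proves the proposition.

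The main obstacle is precisely this last step. In contrast with the strongly continuous case, one cannot use $[\delta_A(A)(1_A\tens S)]=q_{\beta_A\alpha}(A\tens S)$ --- only $[\delta_A(A)(A\tens S)]=q_{\beta_A\alpha}(A\tens S)$ is available --- so the failure of the slice to commute through $W_{23}$ and $V_{23}$ must be handled directly, and this is exactly where semi-regularity is consumed. The delicate part is to match the spanned subspaces that appear after slicing with those controlled by \ref{propReg1}--\ref{propReg3}, while keeping careful track of the non-unital projections $q_{\beta\alpha}$, $q_{\beta_A\alpha}$, $q_\alpha$ and of the $\beta_A$-component of the action.
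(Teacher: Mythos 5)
Your overall plan---transposing 5.7 of \cite{BSV} via the relations of \S\ref{RegMQG} and Lemma \ref{lem23}---is the right one, and the opening reductions (self-adjointness, the leg-slicing identity for the product) are correct. But the step where semi-regularity is actually consumed is applied in the wrong direction, and this is a genuine gap rather than bookkeeping. You correctly quote the semi-regular form of Proposition \ref{propReg1} as the inclusion $[(\K\tens 1)W(1\tens\K)]\supset[(\K\tens 1)q_{\alpha}(1\tens\K)]$: it lets you replace a factor $q_{\alpha}$ by norm limits of sums of factors $W$. Your argument, however, does the opposite: you first \emph{introduce} $W_{23}$ (and $V_{23}$) by expanding $\delta_A(b)_{13}$ through coassociativity and $\delta(y)=W^*(1\tens y)W$, and then propose to ``trade each occurrence of $W$ (resp.\ $V$) produced by the slice for a factor $q_{\alpha}$ (resp.\ $q_{\beta}$)''. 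That trade is the reverse inclusion $[(\K\tens 1)W(1\tens\K)]\subset[(\K\tens 1)q_{\alpha}(1\tens\K)]$, which amounts to $\C(W)\subset\K_{\alpha}$ and is not implied by semi-regularity (semi-regularity is $\K_{\alpha}\subset\C(W)$). As written, your argument only proves the statement under the stronger hypothesis that $\cal G$ is regular.

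The fix is to reverse the order of operations. Factor the normal forms so that the product reads $(\id_A\tens\phi\tens\omega)(\delta_A(a)_{13}(1_A\tens x\tens y)\delta_A(b)_{12})$ with $x,y\in\K$. The factor $q_{\alpha}$ then appears first, for free: by Lemma \ref{lem23} one has $\delta_A(a)_{13}(1_A\tens x\tens y)\delta_A(b)_{12}=\delta_A(a)_{13}\big((x\tens 1)q_{\alpha}(1\tens y)\big)_{23}\delta_A(b)_{12}$. Only now does semi-regularity enter, in its available direction, to approximate $(x\tens 1)q_{\alpha}(1\tens y)$ by sums of $(x'\tens 1)W(1\tens y')$; the intertwining relation $\delta_A(a)_{13}W_{23}=W_{23}\delta_A^2(a)$ (which follows from $\delta_A^2(a)=W_{23}^*\delta_A(a)_{13}W_{23}$, $WW^*=q_{\alpha\widehat{\beta}}$ and $[\delta_A(a)_{13},\,q_{\alpha\widehat{\beta},23}]=0$) then collapses each approximant into $(\id_A\tens\psi)(\delta_A^2(a)\,\delta_A(b)_{12})$ for a normal form $\psi$ on $\B(\s H\tens\s H)$, and decomposing $\psi$ into elementary tensors exhibits this as a limit of elements $(\id_A\tens\phi')\delta_A\big((\id_A\tens\omega')\delta_A(a)\,b\big)$ of the space in question. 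In this order no occurrence of $W$ ever needs to be removed, so only the semi-regular inclusion is used.
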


\begin{proof}
Let us denote $T:=[(\id_A\tens\omega)\delta_A(a)\,;\, a\in A,\, \omega\in\B(\s H)_*]$. For all $a\in A$ and $\omega\in\B(\s H)_*$, we have $(\id_A\tens\omega)(\delta_A(a))^*=(\id_A\tens\overline{\omega})\delta_A(a^*)$. Hence, $T^*\subset T$. Let us prove that $TT\subset T$. Let $\omega,\,\phi\in\B(\s H)_*$, $a,\, b\in A$ and $x,\, y\in\K$. We have
\[
(\id_A\tens y\omega )\delta_A(a)(\id_A\tens \phi x)\delta_A(b)=(\id_A\tens\phi\tens\omega)(\delta_A(a)_{13}(1_A\tens x\tens y)\delta_A(b)_{12}).
\] 
By \ref{lem23} 1,2, we have
\begin{align*}
\delta_A(a)_{13}(1_A\tens x\tens y)\delta_A(b)_{12} &=
\sum_{I\in\s I}n_I^{-1} \delta_A(a)_{13}(1_A\tens x \tens e_{\overline{I}}y)(q_I\tens 1\tens 1)\delta_A(b)_{12}\\
&=\sum_{I\in\s I}n_I^{-1} \delta_A(a)_{13}(1_A\tens xe_I \tens e_{\overline{I}}y)\delta_A(b)_{12}\\
&=\delta_A(a)_{13}((x\tens 1)q_{\alpha}(1\tens y))_{23}\delta_A(b)_{12}.
\end{align*}
It follows from \ref{propReg1} that $(\id_A\tens y\omega )\delta_A(a)(\id_A\tens \phi x)\delta_A(b)$ is the norm limit of finite sums of elements of the form 
\begin{align*}
c&:=(\id_A\tens\phi\tens\omega)(\delta_A(a)_{13}((x'\tens 1)W(1\tens y'))_{23}\delta_A(b)_{12})\\
&=(\id_A\tens\phi x'\tens y'\omega)(\delta_A(a)_{13}W_{23}\delta_A(b)_{12}),
\end{align*}
where $x',y'\in\K$. By combining the following formulas; 
\[
\delta_A^2(a)=W_{23}^*\delta_A(a)_{13}W_{23};\;\; WW^*=q_{\alpha\widehat{\beta}}; \;\, [\delta_A(a)_{13},\,q_{\alpha\widehat{\beta},23}]=0 \text{ (since } \widehat{\beta}(N^{\rm o})\subset M');
\] 
we obtain $\delta_A(a)_{13}W_{23}=W_{23}\delta_{A}^{2}(a)$. Hence, we have $c=(\id_A\tens\psi)(\delta_A^2(a)_{13}\delta_A(b)_{12})$, where $\psi:=(\phi x'\tens y\omega)W\in\B(\s H\tens\s H)_*$. Therefore, $c$ is the norm limit of finite sums of elements of the form
$
(\id_A\tens\phi'\tens\omega')(\delta_A^2(a)_{13}\delta_A(b)_{12})=
(\id\tens\phi')\delta_A((\id_A\tens\omega')\delta_A(a)b),
$
where $\phi',\omega'\in\B(\s H)_*$. Hence, $(\id_A\tens y\omega )\delta_A(a)(\id_A\tens \phi x)\delta_A(b)\in T$.
\end{proof}

\begin{defin} Let $(\beta_A,\delta_A)$ be an action of $\cal G$ on a C$^*$-algebra $A$. We say that $(\beta_A,\delta_A)$ is weakly continuous if we have 
$
A=[(\id_A\tens\omega)\delta_A(a)\,;\, a\in A,\, \omega\in\B(\s H)_*].
$
\end{defin}

Note that any strongly continuous action $(\beta_A,\delta_A)$ of $\cal G$ on a C$^*$-algebra $A$ is necessarily continuous in the weak sense. Indeed, if $(\beta_A,\delta_A)$ is strongly continuous we have the inclusion $\delta_A(A)(1_A\tens S)\subset A\tens S$. Hence, $[(\id_A\tens\omega)\delta_A(a)\,;\, a\in A,\, \omega\in\B(\s H)_*]\subset A$ since $S\subset\B(\s H)$ is non-degenerate. Conversely, let $\omega\in\B(\s H)_*$ such that $\omega\circ\alpha=\epsilon$. We have $(\id_A\tens\omega)(q_{\beta_A\alpha})=1_A$. By writing $\omega=y\omega'$ for some $\omega'\in\B(\s H)_*$ and $y\in S$, we obtain $(\id_A\tens\omega')(q_{\beta_A\alpha}(a\tens y))=a$ for all $a\in A$.

\begin{thm}\label{theo6} If the groupoid ${\cal G}$ is regular, then any weakly continuous action of $\cal G$ is strongly continuous.
\end{thm}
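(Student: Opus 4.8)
The plan is to follow the template of Proposition~\ref{prop38} and of 5.8 in~\cite{BSV}. Let $(\beta_A,\delta_A)$ be a weakly continuous action of the regular groupoid $\cal G$, so that $A=[(\id_A\tens\omega)\delta_A(a)\,;\,a\in A,\,\omega\in\B(\s H)_*]$; we must show $[\delta_A(A)(1_A\tens S)]=q_{\beta_A\alpha}(A\tens S)$. Since the identity $[\delta_A(A)(A\tens S)]=q_{\beta_A\alpha}(A\tens S)$ holds for any action (cf.~\ref{rk12}), it suffices to prove the two inclusions $q_{\beta_A\alpha}(A\tens S)\subset[\delta_A(A)(1_A\tens S)]$ and $[\delta_A(A)(1_A\tens S)]\subset q_{\beta_A\alpha}(A\tens S)$.

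For the first inclusion --- the crux --- I would use weak continuity to note that the elements $q_{\beta_A\alpha}\big((\id_A\tens\omega)\delta_A(a)\tens s\big)$, $a\in A$, $s\in S$, $\omega\in\B(\s H)_*$, have closed linear span $q_{\beta_A\alpha}(A\tens S)$, and then show that each of them lies in $[\delta_A(A)(1_A\tens S)]$. Such an element is a triple slice of an expression $\delta_A(a)_{13}\,q_{\alpha,23}\,(1_A\tens s\tens 1)$, the passage from the projection $q_{\beta_A\alpha}$ to $q_\alpha$ being Lemma~\ref{lem23} combined with $q_\alpha=\sum_{I\in\s I}n_I^{-1}e_I\tens e_{\overline I}$, which yields the commutation relation $q_{\beta_A\alpha,12}\,\delta_A(a)_{13}=\delta_A(a)_{13}\,q_{\alpha,23}$ (here one uses that $q_\alpha$ is invariant under the flip). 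Regularity of $\cal G$ then permits, through Proposition~\ref{propReg1}(ii) --- after absorbing suitable compact operators into the slicing functionals so that $q_\alpha$ is flanked by elements of $\K$ --- replacing $q_\alpha$ by $W$ inside the slice. At that stage the identity $\delta_A(a)_{13}W_{23}=W_{23}\delta_A^2(a)$, valid because $WW^*=q_{\alpha\widehat\beta}$ and $[\delta_A(a)_{13},q_{\alpha\widehat\beta,23}]=0$ (using $\widehat\beta(N^{\rm o})\subset M'$), together with $\delta_A^2=(\delta_A\tens\id_S)\delta_A$, reorganises the expression exactly as in the proof of~\ref{prop38} into a norm limit of finite sums of elements $\delta_A\big((\id_A\tens\psi)\delta_A(b)\big)(1_A\tens s')$ with $b\in A$, $s'\in S$; since $(\id_A\tens\psi)\delta_A(b)\in A$ by weak continuity, each such element belongs to $[\delta_A(A)(1_A\tens S)]$, as required.

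For the reverse inclusion I would again reduce, via weak continuity, to checking that $\delta_A\big((\id_A\tens\omega)\delta_A(c)\big)(1_A\tens s)=(\id_{A\tens S}\tens\omega)\big(\delta_A^2(c)(1_A\tens s\tens 1)\big)$ belongs to $A\tens S$; writing $\delta_A^2(c)=W_{23}^*\,\delta_A(c)_{13}\,W_{23}$ and invoking the regular-case conclusions of Proposition~\ref{propReg3} (equivalently Corollary~\ref{corReg}), such as $[(S\tens 1)W(1\tens\lambda(\widehat S))]\subset S\tens\lambda(\widehat S)$, together with $[\delta(S)(S\tens 1_S)]=q_{\beta\alpha}(S\tens S)$, one obtains the claim. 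It is here that regularity, as opposed to mere semi-regularity, is indispensable.

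The step I expect to be the main obstacle is the leg-numbering bookkeeping in the second paragraph: correctly tracking which tensor factor carries $A$, which carries the auxiliary copy of $\s H$ to be sliced and which carries $S$, while handling $q_{\beta_A\alpha}=\sum_I n_I^{-1}q_I\tens e_{\overline I}$, $WW^*=q_{\alpha\widehat\beta}$, $W^*W=q_{\beta\alpha}$ and the non-commutation relations~\ref{prop34}, and verifying that the reorganised expression genuinely has the form $\delta_A(\,\cdot\,)(1_A\tens\,\cdot\,)$ with the second factor in $S$. The conceptual skeleton --- reduce by weak continuity, trade $q_\alpha$ for $W$ via regularity, reorganise with $\delta_A(a)_{13}W_{23}=W_{23}\delta_A^2(a)$, reduce once more by weak continuity --- is dictated by the statement and proof of~\ref{prop38}.
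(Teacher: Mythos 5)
Your skeleton is exactly the paper's: the proof of Theorem \ref{theo6} is a single chain of equalities establishing $(A\tens S)q_{\beta_A\alpha}=[(1_A\tens S)\delta_A(A)]$ (the adjoint of the identity you aim at), driven by weak continuity at both ends, Lemma \ref{lem23} to convert $q_{\beta_A\alpha}$ on legs $1,2$ into $q_{\alpha}$ on legs $2,3$, the equality in Proposition \ref{propReg2} 1 to trade $q_{\alpha}$ for $W$, and $\delta_A(a)_{13}W_{23}=W_{23}\delta_A^{2}(a)$. Because every step there is an equality, both of your inclusions fall out of the one computation; your separate treatment of the second inclusion is superfluous, and as sketched it is the shakiest part --- the paper closes that direction with the same Proposition \ref{propReg2} 1 together with $(S\tens\K)W=(S\tens\K)q_{\beta\alpha}$ and $q_{\beta\alpha,23}\delta_A^{2}(a)=\delta_A^{2}(a)$, not with the inclusions of Corollary \ref{corReg}.

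One concrete slip to fix: the relation $q_{\beta_A\alpha,12}\,\delta_A(a)_{13}=\delta_A(a)_{13}\,q_{\alpha,23}$ is false in general. Lemma \ref{lem23} gives the two one-sided relations $(q_I\tens 1_S)\delta_A(a)=(1_A\tens e_I)\delta_A(a)$ and $\delta_A(a)(q_I\tens 1_S)=\delta_A(a)(1_A\tens e_I)$, so what you actually obtain is $q_{\beta_A\alpha,12}\,\delta_A(a)_{13}=q_{\alpha,23}\,\delta_A(a)_{13}$ and $\delta_A(a)_{13}\,q_{\beta_A\alpha,12}=\delta_A(a)_{13}\,q_{\alpha,23}$: the projection stays on the side it started on, and it would cross $\delta_A(a)_{13}$ only if $\beta_A(N^{\rm o})\tens 1_S$ commuted with $\delta_A(A)$, which fails in general. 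This is exactly the bookkeeping you flag as the danger point: to apply Proposition \ref{propReg2} 1 you need the configuration $(y\tens 1)q_{\alpha}(1\tens x)$ with $y\in S$ supplied by the $S$-leg and $x\in\K$ absorbed from the slicing functional, and it is the correct same-side version of the relation that produces it.
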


\begin{proof}
Let us fix an action $(\beta_A,\delta_A)$ of $\cal G$ on a C$^*$-algebra $A$. Let us assume that $(\beta_A,\delta_A)$ is weakly continuous. Since $W\in\M(S\tens\K)$ is a partial isometry such that $W^*W=q_{\beta\alpha}$, we have $(S\tens\K)W=(S\tens\K)q_{\beta\alpha}$. We recall that $\delta_A(a)_{13}W_{23}=W_{23}\delta_{A}^{2}(a)$ for all $a\in A$ (cf.~proof of \ref{prop38}). By \ref{prop39} 1, we have $q_{\beta\alpha,23}\delta_A^2(a)=\delta_A^2(a)$ for all $a\in A$.
By combining the assertions of the above discussion with \ref{lem23} 3 and \ref{propReg2} 1, we have 
\begin{align*}
(A & \tens S) q_{\beta_A\alpha} = [((\id_A\tens\omega)\delta_A(a)\tens y)q_{\beta_A\alpha} \, ; \, a\in A,\, y\in S,\, \omega\in\B(\s H)_*]\\
&=[((\id_A\tens\id_S\tens x\omega)(\delta_A(a)_{13}(1_A\tens y\tens 1_S)q_{\beta_A\alpha,12}) \, ; \, a\in A,\, x\in\K,\, y\in S,\, \omega\in\B(\s H)_*]\\
&= [(\id_A\tens\id_S\tens\omega)(\delta_A(a)_{13}((y\tens 1_{\K})q_{\alpha}(1_S\tens x))_{23}) \, ; \, a\in A,\, x\in\K,\, y\in S,\, \omega\in\B(\s H)_*]\\
&= [(\id_A\tens\id_S\tens\omega)(\delta_A(a)_{13}((y\tens 1_{\K})W(1_S\tens x))_{23})\, ; \, a\in A,\, x\in\K,\, y\in S,\, \omega\in\B(\s H)_*]\\
&= [(\id_A\tens\id_S\tens\omega)((1_A\tens y\tens 1)\delta_A(a)_{13}W_{23})\, ; \, a\in A,\, x\in\K,\, y\in S,\, \omega\in\B(\s H)_*]\\
&= [(\id_A\tens\id_S\tens\omega)((1_A\tens (y\tens x)W)\delta_A^2(a))\, ; \, a\in A,\, x\in\K,\, y\in S,\, \omega\in\B(\s H)_*]\\
&=[(\id_A\tens\id_S\tens\omega)((1_A\tens y\tens x)\delta_A^2(a))\, ; \, a\in A,\, x\in\K,\, y\in S,\, \omega\in\B(\s H)_*]\\
&=[(1_A\tens y)\delta_A((\id_A\tens\omega x)\delta_A(a))\, ; \, a\in A,\, x\in\K,\, y\in S,\, \omega\in\B(\s H)_*]\\
&=[(1_A\tens S)\delta_A(A)].\qedhere
\end{align*}
\end{proof}

As a first application, we have the result below.

\begin{prop}
If the groupoid $\cal G$ is regular, then the trivial action of $\cal G$ on $N^{\rm o}$ (cf.\ \ref{ex2}) is strongly continuous and there exists a unique $\widehat{\cal G}$-equivariant *-isomorphism $\tau:N^{\rm o}\rtimes{\cal G}\rightarrow\widehat{S}$ such that $\tau(\pi(n^{\rm o})\widehat{\theta}(x))=\beta(n^{\rm o})x$ for all $n\in N$ and $x\in\widehat{S}$.
\end{prop}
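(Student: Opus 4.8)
The plan is to deduce strong continuity of the trivial action from its (automatic) weak continuity via Theorem~\ref{theo6}, and then to realise $N^{\rm o}\rtimes{\cal G}$ explicitly on the Hilbert module ${\cal E}_{N^{\rm o},L}$, in the spirit of the preceding proposition, recovering $\tau$ as the inverse of a natural faithful representation.

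First I would check weak continuity. By \ref{ex2} and \ref{lem23}~1 one has $\delta_{N^{\rm o}}(n^{\rm o})=q_{\beta_{N^{\rm o}}\alpha}(1_{N^{\rm o}}\tens\beta(n^{\rm o}))$ with $q_{\beta_{N^{\rm o}}\alpha}=\sum_{I\in\s I}n_I^{-1}\,q_I\tens e_{\overline I}$, where the $q_I:=\beta_{N^{\rm o}}(\varepsilon_I)$ form a linear basis of $N^{\rm o}$ and $e_I:=\alpha(\varepsilon_I)$. Taking $n^{\rm o}=1_{N^{\rm o}}$ gives $(\id_{N^{\rm o}}\tens\omega)\delta_{N^{\rm o}}(1_{N^{\rm o}})=\sum_{I\in\s I}n_I^{-1}\omega(e_{\overline I})\,q_I$, and since the $e_{\overline I}$ are linearly independent in $\B(\s H)$ (images under the injective map $\alpha$ of a basis of $N$), these elements exhaust $N^{\rm o}$ as $\omega$ ranges over $\B(\s H)_*$. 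Hence $(\beta_{N^{\rm o}},\delta_{N^{\rm o}})$ is weakly continuous, and by Theorem~\ref{theo6} (here the regularity of $\cal G$ is used) it is strongly continuous; thus $(N^{\rm o},\beta_{N^{\rm o}},\delta_{N^{\rm o}})$ is a $\cal G$-C$^*$-algebra and $N^{\rm o}\rtimes{\cal G}$ is defined.

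Next I would build $\tau$. Write $q:=q_{\beta_{N^{\rm o}}\alpha}$, viewed in $\Lin(N^{\rm o}\tens\s H)$. From $\delta_{N^{\rm o}}(n^{\rm o})=q(1_{N^{\rm o}}\tens\beta(n^{\rm o}))$ one gets $\pi_L(n^{\rm o})=q(1_{N^{\rm o}}\tens\beta(n^{\rm o}))$, hence $\pi(n^{\rm o})\widehat\theta(x)=\restr{q(1_{N^{\rm o}}\tens\beta(n^{\rm o})x)}{{\cal E}_{N^{\rm o},L}}$ for $n\in N$, $x\in\widehat S$. The key point is that $\widehat S\subseteq\widehat M'$ commutes with $\alpha(N)=M\cap\widehat M$, so $1_{N^{\rm o}}\tens z$ commutes with $q$ for every $z\in\widehat S$. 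It follows that
\[
\chi:\widehat S\to\Lin({\cal E}_{N^{\rm o},L})\,;\qquad z\mapsto\restr{q(1_{N^{\rm o}}\tens z)}{{\cal E}_{N^{\rm o},L}}
\]
is a $*$-homomorphism (multiplicativity and $*$-preservation follow from $q=q^2=q^*$ and $[1_{N^{\rm o}}\tens z,q]=0$), with $\chi(\beta(n^{\rm o})x)=\pi(n^{\rm o})\widehat\theta(x)$. If $\chi(z)=0$, then $q(1_{N^{\rm o}}\tens z)=0$ on $N^{\rm o}\tens\s H$; evaluating on vectors $1_{N^{\rm o}}\tens\eta$ and using that $\{q_I\}$ is a basis of $N^{\rm o}$ forces $e_{\overline I}z=0$ for every $I$, and summing over the indices with $\overline I=I$, for which $\sum e_I=\alpha(1_N)=1$, yields $z=0$. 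So $\chi$ is an injective $*$-homomorphism of C$^*$-algebras, hence has closed range; since $\widehat S=[\beta(N^{\rm o})\widehat S]$ ($\beta:N^{\rm o}\to\M(\widehat S)$ being unital) and $\chi(\beta(n^{\rm o})x)=\pi(n^{\rm o})\widehat\theta(x)$, that range is exactly $N^{\rm o}\rtimes{\cal G}$. Setting $\tau:=\chi^{-1}$ gives a $*$-isomorphism $N^{\rm o}\rtimes{\cal G}\to\widehat S$ with $\tau(\pi(n^{\rm o})\widehat\theta(x))=\beta(n^{\rm o})x$, and uniqueness is forced by density of the products $\pi(n^{\rm o})\widehat\theta(x)$.

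Finally, for $\widehat{\cal G}$-equivariance it suffices, by Remark~\ref{rkEqMorph}, to verify $(\tau\tens\id_{\widehat S})\circ\delta_{N^{\rm o}\rtimes{\cal G}}=\widehat\delta\circ\tau$ on the generators. Using \ref{defDualAct}~1 together with $\tau\circ\widehat\theta=\id_{\widehat S}$ and $\tau\circ\pi=\beta$, the left-hand side applied to $\pi(n^{\rm o})\widehat\theta(x)$ equals $(\beta(n^{\rm o})\tens 1_{\widehat S})\widehat\delta(x)$; the right-hand side equals $\widehat\delta(\beta(n^{\rm o})x)=V^*(1\tens\beta(n^{\rm o}))(1\tens x)V=(\beta(n^{\rm o})\tens 1)V^*(1\tens x)V$ by the relation $V(\beta(n^{\rm o})\tens 1)=(1\tens\beta(n^{\rm o}))V$ of \ref{prop34}~2, and this is $(\beta(n^{\rm o})\tens 1)\widehat\delta(x)$, so the two sides coincide. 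The main obstacle is the concrete identification in the third step — making sense of $\chi$ and proving it multiplicative and injective — which rests entirely on the commutation $[1_{N^{\rm o}}\tens z,q]=0$; the care lies in tracking which leg of the coupling projection $q$ is tied to $\alpha(N)$ and which is free, so that $z\in\widehat S$ passes through while one retains enough room to recover $z$ from $q(1_{N^{\rm o}}\tens z)$.
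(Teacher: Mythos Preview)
Your proof is correct and follows essentially the same approach as the paper: weak continuity plus Theorem~\ref{theo6} for the first part, and the observation $\pi(n^{\rm o})\widehat\theta(x)=\widehat\theta(\beta(n^{\rm o})x)$ for the isomorphism. The paper's version is slightly leaner because your map $\chi$ is exactly the $\widehat\theta$ of \ref{notCrossedProduct}, whose faithfulness is already recorded there (since $\beta_{N^{\rm o}}=\id_{N^{\rm o}}$ is faithful), so the explicit injectivity computation via the basis $\{q_I\}$ is unnecessary.
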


\begin{proof}
In this proof, we use the notations of \ref{not15} with $A:=N^{\rm o}$. In this case, we have $q_I=\varepsilon_I^{\rm o}$ for all $I\in\s I$. According to Theorem \ref{theo6}, it suffices to show that the trivial action is weakly continuous. Since the C*-algebra $N$ is finite-dimensional, it amounts to proving that $\varepsilon_I^{\rm o}\in\langle(\id_{N^{\rm o}}\tens\omega)\delta_{N^{\rm o}}(n^{\rm o})\,;\,n\in N,\, \omega\in\B(\s H)_*\rangle$ for all $I\in\s I$. Let $I\in\s I$. For all $n'\in N$, there exists $\omega\in\B(\s H)_*$ such that $\omega(\alpha(n))=\epsilon(n'n)$ for all $n\in N$ (extension of normal linear forms). In particular, there exists $\omega\in\B(\s H)_*$ such that $\omega(\alpha(\varepsilon_J))=n_I\delta_J^I$ for all $J\in\s I$. By a straightforward computation, we have $\varepsilon_I^{\rm o}=(\id_{N^{\rm o}}\tens\omega)\delta_{N^{\rm o}}(1_{N^{\rm o}})$ and the weak continuity of the trivial action is then proved since $N^{\rm o}$ is unital. Since $N^{\rm o}$ is unital, we have $\widehat{\theta}(x)=\pi(1_{N^{\rm o}})\widehat{\theta}(x)\in N^{\rm o}\rtimes{\cal G}$ for all $x\in\widehat{S}$. Moreover, we have $\pi(n^{\rm o})\widehat{\theta}(x)=\widehat{\theta}(\beta(n^{\rm o})x)$ for all $n\in N$ and $x\in\widehat{S}$. Hence, the morphism $\widehat{\theta}$ induces a *-isomorphism $\widehat{S}\simeq N^{\rm o}\rtimes{\cal G}$. The equivariance is easily obtained from the definitions.
\end{proof}

\section{Notion of equivariant Hilbert C*-modules}
	\subsection{Actions of measured quantum groupoids on Hilbert C*-modules}\label{SectEqHilb}

In this section, we introduce a notion of ${\cal G}$-equivariant Hilbert C*-module for a measured quantum groupoid ${\cal G}$ on a finite basis in the spirit of \cite{BS1}. We fix a measured quantum groupoid $\cal G$ on a finite-dimensional basis $N=\bigoplus_{1\leqslant l\leqslant k}{\rm M}_{n_l}(\GC)$ endowed with the non-normalized Markov trace $\epsilon=\bigoplus_{1\leqslant l\leqslant k}n_l\cdot{\rm Tr}_l$. We use all the notations introduced in \S\S \ref{MQGfinitebasis}, \ref{WHC*A}. Let us fix a ${\cal G}$-C*-algebra $A$.

\paragraph{The three pictures.} Following \S 2 \cite{BS1}, an action of ${\cal G}$ on a Hilbert $A$-module $\s E$ will be defined by three equivalent data:
\begin{itemize}
\item a pair $(\beta_{\s E},\delta_{\s E})$ consisting of a *-homomorphism $\beta_{\s E}:N^{\rm o}\rightarrow\Lin(\s E)$ and a linear map $\delta_{\s E}:\s E\rightarrow\widetilde{\M}(\s E\tens S)$, cf.\ \ref{hilbmodequ};
\item a pair $(\beta_{\s E},\s V_{\s E})$ consisting of a *-homomorphism $\beta_{\s E}:N^{\rm o}\rightarrow\Lin(\s E)$ and an isometry $\s V\in\Lin(\s E\tens_{\delta_A}(A\tens S),\s E\tens S)$, cf.\ \ref{isometry};
\item an action $(\beta_J,\delta_J)$ of $\cal G$ on $J:=\K(\s E\oplus A)$, cf.\ \ref{compcoact};
\end{itemize}
satisfying some conditions.

\medskip

We have the following unitary equivalences of Hilbert modules: 
\begin{align}
A\tens_{\delta_A}(A\tens S) &\rightarrow q_{\beta_A \alpha}(A\tens S)\,;\, a\tens_{\delta_A}x \mapsto \delta_A(a)x;\label{ehmeq1}\\
(A\tens S)\tens_{\delta_A\tens\, \id_S}(A\tens S\tens S)&\rightarrow q_{\beta_A \alpha,12}(A\tens S\tens S) \, ;\, x\tens_{\delta_A\tens\, \id_S}y \mapsto (\delta_A\tens \id_S)(x)y;\label{ehmeq2}\\
(A\tens S)\tens_{\id_A\tens\,\delta}(A\tens S\tens S) &\rightarrow q_{\beta\alpha,23}(A\tens S\tens S)\,;\,x\tens_{\id_A\tens\,\delta}y\mapsto(\id_A\tens\delta)(x)y.\label{ehmeq3}
\end{align}

In the following, we fix a Hilbert $A$-module $\s E$. We will apply the usual identifications $\M(A\tens S)=\Lin(A\tens S)$, $\K(\s E)\tens S=\K(\s E\tens S)$ and $\M(\K(\s E)\tens S)=\Lin(\s E\tens S)$.

\begin{defin}\label{hilbmodequ}
An action of ${\cal G}$ on the Hilbert $A$-module $\s E$ is a pair $(\beta_{\s{E}},\delta_{\s{E}})$, where $\beta_{\s{E}}:N^{\rm o}\rightarrow\mathcal{L}(\s{E})$ is a non-degenerate *-homomorphism and $\delta_{\s{E}}:\s{E}\rightarrow\widetilde{\mathcal{M}}(\s{E}\tens S)$ is a linear map such that:
\begin{enumerate}
 \item for all $a\in A$ and $\xi,\,\eta\in \s{E}$, we have 
\[
\delta_\s{E}(\xi a)=\delta_\s{E}(\xi)\delta_A(a) \quad \text{and} \quad \langle\delta_\s{E}(\xi),\,\delta_\s{E}(\eta)\rangle=\delta_A(\langle\xi,\,\eta\rangle);
\]
 \item $[\delta_{\s{E}}(\s{E})(A\tens S)]=q_{\beta_{\s E}\alpha}(\s{E}\tens S)$;
 \item for all $\xi\in \s{E}$ and $n\in N$, we have $\delta_{\s{E}}(\beta_{\s{E}}(n^{\rm o})\xi)=(1_{\s E}\tens\beta(n^{\rm o}))\delta_{\s{E}}(\xi)$;
 \item the linear maps $\delta_{\s{E}}\tens \id_S$ and $\id_{\s{E}}\tens\delta$ extend to linear maps from $\mathcal{L}(A\tens S,\s{E}\tens S)$ to $\mathcal{L}(A\tens S\tens S,\s{E}\tens S\tens S)$ and we have 
\[
(\delta_\s{E}\tens \id_S)\delta_\s{E}(\xi)=(\id_\s{E}\tens\delta)\delta_{\s{E}}(\xi)\in\mathcal{L}(A\tens S\tens S,\s{E}\tens S\tens S),\quad \text{for all } \xi\in\s{E}.\qedhere
\]
\end{enumerate}
\end{defin}

\begin{rks}\label{rk2}
\begin{itemize}
	\item If the second formula of the condition 1 holds, then $\delta_{\s E}$ is isometric (cf.\ \cite{BS2}, \ref{rk4} 1).
	\item If the condition 1 holds, then the condition 2 is equivalent to:
	\[
	[\delta_{\s E}(\s E)(1_A\tens S)]=q_{\beta_{\s E}\alpha}(\s E\tens S).
	\]
	Indeed, if $(u_{\lambda})_{\lambda}$ is an approximate unit of $A$ we have
	\[
	\delta_{\s E}(\xi)=\lim_{\lambda}\,\delta_{\s E}(\xi u_{\lambda})=\lim_{\lambda}\,\delta_{\s E}(\xi)\delta_A(u_{\lambda})=\delta_{\s E}(\xi)q_{\beta_A\alpha},\quad \text{for all } \xi\in\s E.
	\]
	By strong continuity of the action $(\beta_A,\delta_A)$, the condition 1 of Definition \ref{hilbmodequ} and the equality $\s E A=\s E$, we then have $[\delta_{\s E}(\s E)(A\tens S)]=[\delta_{\s E}(\s E)(1_A\tens S)]$ and the equivalence follows.
	\item Note that we have $q_{\beta_{\s E}\alpha}\delta_{\s E}(\xi)=\delta_{\s E}(\xi)=\delta_{\s E}(\xi)q_{\beta_A\alpha}$ for all $\xi\in\s E$.
	\item We will prove (cf.\ \ref{rk9}) that if $\delta_{\s E}$ satisfies the conditions 1 and 2 of \ref{hilbmodequ}, then the extensions of $\delta_{\s E}\tens\id_S$ and $\id_{\s E}\tens\delta$ always exist and satisfy the formulas: 
	\begin{align*}
(\id_{\s E}\tens\delta)(T)(\id_A\tens\delta)(x)&=(\id_{\s E}\tens\delta)(Tx);\\
(\delta_{\s E}\tens\id_S)(T)(\delta_A\tens\id_S)(x)&=(\delta_{\s E}\tens\id_S)(Tx);
\end{align*}
for all $x\in A\tens S$ and $T\in\Lin(A\tens S,\s E\tens S)$.\qedhere
\end{itemize}
\end{rks}

\begin{nb}\label{not5}
For $\xi\in \s{E}$, let us denote by $T_{\xi}\in\mathcal{L}(A\tens S,\s{E}\tens_{\delta_A}(A\tens S))$ the operator defined by\index[symbol]{tb@$T_{\xi}$}
\[
T_{\xi}(x):=\xi\tens_{\delta_A}x,\quad\text{for all } x\in A\tens S.
\]
We have $T_{\xi}^*(\eta\tens_{\delta_A}y)=\delta_A(\langle\xi,\,\eta\rangle)y$ for all $\eta\in\s E$ and $y\in A\tens S$. In particular, we have $T_{\xi}^*T_{\eta}=\delta_A(\langle\xi,\,\eta\rangle)$ for all $\xi,\eta\in\s E$.
\end{nb}

\begin{defin}\label{isometry}
Let $\s{V}\in\mathcal{L}(\s{E}\tens_{\delta_A}(A\tens S),\s{E}\tens S)$ be an isometry and $\beta_{\s{E}}:N^{\rm o}\rightarrow\mathcal{L}(\s{E})$ a non-degenerate *-homomorphism such that:
\newcounter{saveenum}
\begin{enumerate}
 \item $\s{V}\s{V}^*=q_{\beta_{\s E}\alpha}$;
 \item $\s{V}(\beta_{\s{E}}(n^{\rm o})\tens_{\delta_A}1)=(1_{\s E}\tens\beta(n^{\rm o}))\s{V}$, for all $n\in N$. 
\setcounter{saveenum}{\value{enumi}}
\end{enumerate}
Then, $\s{V}$ is said to be admissible if we further have:
\begin{enumerate}
\setcounter{enumi}{\value{saveenum}}
 \item $\s{V}T_{\xi}\in\widetilde{\mathcal{M}}(\s{E}\tens S)$, for all $\xi\in \s{E}$;
 \item $(\s{V}\tens_{\mathbb{C}}\id_S)(\s{V}\tens_{\delta_A\tens\, \id_S}1)=\s{V}\tens_{\id_A\tens\,\delta}1\in\mathcal{L}(\s{E}\tens_{\delta_A^2}(A\tens S\tens S),\s{E}\tens S\tens S)$.\qedhere
\end{enumerate}
\end{defin}

The fourth statement in the previous definition makes sense since we have used the canonical identifications thereafter. By combining the associativity of the internal tensor product with the unitary equivalences (\ref{ehmeq2}) and (\ref{ehmeq3}), we obtain the following unitary equivalences of Hilbert $A\tens S$-modules:
\begin{align}
(\s{E}\tens_{\delta_A}(A\tens S))\tens_{\delta_A\tens\,\id_S}(A\tens S\tens S) &\rightarrow \s{E}\tens_{\delta_A^2}(A\tens S\tens S)\label{eq4}\\
(\xi\tens_{\delta_A}x)\tens_{\delta_A\tens\,\id_S}y &\mapsto \xi\tens_{\delta_A^2}(\delta_A\tens \id_S)(x)y;\notag\\[.5em]
(\s{E}\tens_{\delta_A}(A\tens S))\tens_{\id_A\tens\,\delta}(A\tens S\tens S) &\rightarrow \s{E}\tens_{\delta_A^2}(A\tens S\tens S)\label{eq5}\\
(\xi\tens_{\delta_A}x)\tens_{\id_A\tens\,\delta}y &\mapsto \xi\tens_{\delta_A^2}(\id_A\tens\delta)(x)y.\notag
\intertext{We also have the following:}
(\s{E}\tens S)\tens_{\delta_A\tens\,\id_S}(A\tens S\tens S) &\rightarrow (\s{E}\tens_{\delta_A}(A\tens S))\tens S \label{eq6}\\
(\xi\tens s)\tens_{\delta_A\tens\,\id_S}(x\tens t)&\mapsto(\xi\tens_{\delta_A}x)\tens st;\notag\\[.5em]
(\s{E}\tens S)\tens_{\id_A\tens\,\delta}(A\tens S\tens S) & \rightarrow q_{\beta\alpha,23}(\s{E}\tens S\tens S)\subset\s{E}\tens S\tens S \label{eq7}\\
\xi\tens_{\id_A\tens\,\delta}y &\mapsto(\id_{\s{E}}\tens\delta)(\xi)y.\notag
\end{align}
In particular, 
$\s{V}\tens_{\delta_A\tens\,\id_S}1\in\mathcal{L}(\s{E}\tens_{\delta_A^2}(A\tens S\tens S),(\s{E}\tens S)\tens_{\delta_A\tens\,\id_S}(A\tens S\tens S))$ (\ref{eq4}) and
$\s{V}\tens_{\mathbb{C}}\id_S\in\mathcal{L}((\s{E}\tens S)\tens_{\delta_A\tens\,\id_S}(A\tens S\tens S),\s{E}\tens S\tens S)$ (\ref{eq6}).
\vspace{10pt}

The next result provides an equivalence of the definitions \ref{hilbmodequ} and \ref{isometry}. 

\begin{prop}\label{prop27}
 \begin{enumerate}[label=\alph*)]
  \item Let $\delta_{\s{E}}:\s{E}\rightarrow\widetilde{\mathcal{M}}(\s{E}\tens S)$ be a linear map and $\beta_{\s{E}}:N^{\rm o}\rightarrow\mathcal{L}(\s{E})$ a non-degenerate *-homomorphism which satisfy the conditions 1, 2, and 3 of Definition \ref{hilbmodequ}. Then, there exists a unique isometry $\s{V}\in\mathcal{L}(\s{E}\tens_{\delta_A}(A\tens S),\s{E}\tens S)$ such that $\delta_\s{E}(\xi)=\s{V}T_{\xi}$ for all $\xi\in\s{E}$. 
  Moreover, the pair $(\beta_{\s{E}},\s V)$ satisfies the conditions 1, 2, and 3 of Definition \ref{isometry}.
  \item Conversely, let $\s{V}\in\mathcal{L}(\s{E}\tens_{\delta_A}(A\tens S),\s{E}\tens S)$ be an isometry and $\beta_{\s{E}}:N^{\rm o}\rightarrow\mathcal{L}(\s{E})$ a non-degenerate *-homomorphism, which satisfy the conditions 1, 2, and 3 of Definition \ref{isometry}. Let us consider the map $\delta_{\s{E}}:\s{E}\rightarrow\Lin(A\tens S,\s{E}\tens S)$ given by $\delta_{\s{E}}(\xi):=\s{V}T_{\xi}$ for all $\xi\in\s{E}$. Then, the pair $(\beta_{\s{E}},\delta_{\s{E}})$ satisfies the conditions 1, 2 and 3 of Definition \ref{hilbmodequ}.
  \item Let us assume that the above statements hold. Then, the pair $(\beta_{\s{E}},\delta_{\s{E}})$ is an action of $\cal G$ on $\s E$ if, and only if, $\s{V}$ is admissible.\qedhere
 \end{enumerate}
\end{prop}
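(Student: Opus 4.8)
The plan is to establish the three claims in order, building on the machinery of Hilbert modules over the internal tensor product and the canonical identifications (\ref{ehmeq1})--(\ref{ehmeq3}) and (\ref{eq4})--(\ref{eq7}) that are already recorded in the excerpt. For part a), given $\delta_{\s E}$ satisfying conditions 1--3 of \ref{hilbmodequ}, I would \emph{define} $\s V$ on the algebraic span of elements $\xi\tens_{\delta_A}x$ by $\s V(\xi\tens_{\delta_A}x):=\delta_{\s E}(\xi)x$, where the right-hand side is computed inside $\Lin(A\tens S,\s E\tens S)$ and then applied to $x\in A\tens S$; concretely $\delta_{\s E}(\xi)x\in\s E\tens S$ because $\delta_{\s E}(\xi)\in\widetilde{\M}(\s E\tens S)$. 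The second half of condition 1 of \ref{hilbmodequ}, namely $\langle\delta_{\s E}(\xi),\delta_{\s E}(\eta)\rangle=\delta_A(\langle\xi,\eta\rangle)$, together with $T_\xi^*T_\eta=\delta_A(\langle\xi,\eta\rangle)$ from \ref{not5}, shows that this assignment preserves the $(A\tens S)$-valued inner product on the dense submodule, hence extends to a well-defined isometry $\s V$ with $\s V T_\xi=\delta_{\s E}(\xi)$; uniqueness is immediate since the $T_\xi$ generate the module. Then $\s V\s V^*=q_{\beta_{\s E}\alpha}$ follows from condition 2 of \ref{hilbmodequ} (in the equivalent form from \ref{rk2}), condition 2 of \ref{isometry} follows from condition 3 of \ref{hilbmodequ} by applying both sides to $T_\xi$ and using $\beta_{\s E}(n^{\rm o})\xi\tens_{\delta_A}1 = $ the appropriate element, and condition 3 of \ref{isometry} is just $\s V T_\xi=\delta_{\s E}(\xi)\in\widetilde{\M}(\s E\tens S)$.

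For part b), the computation runs backwards: set $\delta_{\s E}(\xi):=\s V T_\xi$. Linearity is clear, and $\delta_{\s E}(\xi)\in\widetilde{\M}(\s E\tens S)$ is exactly condition 3 of \ref{isometry}. Condition 1 of \ref{hilbmodequ} splits into two parts: $\delta_{\s E}(\xi a)=\delta_{\s E}(\xi)\delta_A(a)$ follows because $T_{\xi a}(x)=\xi a\tens_{\delta_A}x=\xi\tens_{\delta_A}\delta_A(a)x=T_\xi(\delta_A(a)x)$, so $T_{\xi a}=T_\xi\circ L_{\delta_A(a)}$ and hence $\s V T_{\xi a}=(\s V T_\xi)\delta_A(a)$; and the inner-product identity follows from $\delta_{\s E}(\xi)^*\delta_{\s E}(\eta)=T_\xi^*\s V^*\s V T_\eta=T_\xi^*T_\eta=\delta_A(\langle\xi,\eta\rangle)$ using that $\s V$ is isometric. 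Condition 2 of \ref{hilbmodequ} comes from $[\delta_{\s E}(\s E)(1_A\tens S)]=[\s V T_\xi(1_A\tens S)]$; one checks $[T_\xi(1_A\tens S)\,;\,\xi]=[\s E\tens_{\delta_A}(1_A\tens S)]=\s E\tens_{\delta_A}(A\tens S)$ using strong continuity of $(\beta_A,\delta_A)$, and then applies the isometry $\s V$ with range projection $q_{\beta_{\s E}\alpha}$. Condition 3 of \ref{hilbmodequ} is condition 2 of \ref{isometry} read through $\delta_{\s E}(\xi)=\s V T_\xi$ and the identity $T_{\beta_{\s E}(n^{\rm o})\xi}=(\beta_{\s E}(n^{\rm o})\tens_{\delta_A}1)T_\xi$.

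For part c), the only remaining point is that condition 4 of \ref{hilbmodequ} (coassociativity of $\delta_{\s E}$, i.e. $(\delta_{\s E}\tens\id_S)\delta_{\s E}=(\id_{\s E}\tens\delta)\delta_{\s E}$ as maps into $\Lin(A\tens S\tens S,\s E\tens S\tens S)$) is equivalent, under the dictionary $\delta_{\s E}(\xi)=\s V T_\xi$, to condition 4 of \ref{isometry}, i.e. $(\s V\tens_{\GC}\id_S)(\s V\tens_{\delta_A\tens\id_S}1)=\s V\tens_{\id_A\tens\delta}1$. The strategy is to unwind both sides using the unitary equivalences (\ref{eq4})--(\ref{eq7}): composing $\s V\tens_{\delta_A\tens\id_S}1$ (which lands in $(\s E\tens S)\tens_{\delta_A\tens\id_S}(A\tens S\tens S)$ via (\ref{eq4})) with $\s V\tens_{\GC}\id_S$ (via (\ref{eq6})) gives, when applied to $\xi\tens_{\delta_A^2}(x\tens t)$, the element built from $(\delta_{\s E}\tens\id_S)\delta_{\s E}(\xi)$; whereas $\s V\tens_{\id_A\tens\delta}1$ applied via (\ref{eq5}) and (\ref{eq7}) produces the element built from $(\id_{\s E}\tens\delta)\delta_{\s E}(\xi)$. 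The key technical input — and the main obstacle — is exactly the compatibility formulas announced in \ref{rk2}, namely $(\id_{\s E}\tens\delta)(T)(\id_A\tens\delta)(x)=(\id_{\s E}\tens\delta)(Tx)$ and the analogue for $\delta_{\s E}\tens\id_S$; one must verify that the extensions of $\delta_{\s E}\tens\id_S$ and $\id_{\s E}\tens\delta$ to $\Lin(A\tens S,\s E\tens S)$ exist (this is the content of \ref{rk9}, forward-referenced in \ref{rk2}) and then that tracking an arbitrary generator $\xi\tens_{\delta_A^2}y$ through the chain of identifications produces matching outputs on both sides. Once these bookkeeping identities are in hand, the equivalence is a direct, if somewhat lengthy, diagram chase, and evaluating on the generators $T_\xi$ (which span the relevant modules) suffices by continuity and linearity.
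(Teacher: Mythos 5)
Your overall strategy coincides with the paper's: part a) constructs $\s V$ from the inner-product identity of condition 1, part b) runs the dictionary $\delta_{\s E}(\xi)=\s V T_{\xi}$ backwards, and part c) identifies condition 4 of \ref{hilbmodequ} with admissibility by unwinding the identifications (\ref{eq4})--(\ref{eq7}) and evaluating on the operators $T_{\xi}\tens 1$. Parts b) and c) are fine as planned; in c) the point you defer to a ``diagram chase'' is exactly what the paper does, the only substantive input for the converse being that the restrictions of $T_{\xi}\tens_{\delta_A\tens\,\id_S}1$ and $T_{\xi}\tens_{\id_A\tens\,\delta}1$ to $q_{\beta_A\alpha,12}q_{\beta\alpha,23}(A\tens S\tens S)$ have jointly dense range in $\s E\tens_{\delta_A^2}(A\tens S\tens S)$ (using $\s E A=\s E$), which your phrase ``the $T_{\xi}$ span the relevant modules'' correctly gestures at.

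There is one step in a) you pass over: you produce $\s V$ as an isometric $(A\tens S)$-linear map and then immediately write $\s V\s V^*$, but an isometric module map between Hilbert C*-modules need not be adjointable, so membership in $\Lin(\s E\tens_{\delta_A}(A\tens S),\s E\tens S)$ is not automatic and must be proved before $\s V^*$ makes sense. This is where condition 2 of \ref{hilbmodequ} actually enters in the paper's argument: it forces the range of $\s V$ to be the complemented submodule $q_{\beta_{\s E}\alpha}(\s E\tens S)$, and one then checks directly that $v^{-1}q_{\beta_{\s E}\alpha}$ (with $v$ the range restriction of $\s V$) is an adjoint, yielding both $\s V^*\s V=1$ and $\s V\s V^*=q_{\beta_{\s E}\alpha}$ in one stroke. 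Your plan uses condition 2 only to compute $\s V\s V^*$ after the fact; you should instead use it first to establish adjointability. The gap is fillable exactly along these lines, so the proof goes through once this is inserted.
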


In the proof, we will use the following notation.

\begin{nb}\label{not7}
Let ${\cal E}$ and ${\cal F}$ be Hilbert C*-modules. Let $q\in\Lin({\cal E})$ be a self-adjoint projection and $T\in\Lin(q{\cal E},{\cal F})$. Let $\widetilde{T}:{\cal E}\rightarrow{\cal F}$ be the map defined by $\widetilde{T}\xi:=Tq\xi$, for all $\xi\in{\cal E}$. Therefore, $\widetilde T\in\Lin({\cal E},{\cal F})$ and $\widetilde{T}^*=qT^*$. By abuse of notation, we will still denote by $T$ the adjointable operator $\widetilde T$.
\end{nb}

\begin{proof}[Proof of Proposition \ref{prop27}]
 $a)$ By definition of the internal tensor product and \ref{hilbmodequ} 1, there exists a unique isometric $(A\tens S)$-linear map $\s{V}:\s{E}\tens_{\delta_A}(A\tens S)\rightarrow \s{E}\tens S$ such that 
\[
 \s{V}(\xi\tens_{\delta_A}x)=\delta_{\s{E}}(\xi)x,\quad \text{for all } \xi\in \s{E} \text{ and } x\in A\tens S.
\]
In other words, we have $\s{V}T_{\xi}=\delta_{\s{E}}(\xi)$ for all $\xi\in \s{E}$. Now, it follows from  \ref{hilbmodequ} 2 that the ranges of $\s{V}$ and $q_{\beta_{\s E}\alpha}$ are equal. $\vphantom{v^{-1}}$Then, denote by $v$ the range restriction of $\s{V}$. Hence, the map $v^{-1}q_{\beta_{\s E}\alpha}$ is an adjoint for $\s{V}$. Indeed, for all $x\in \s{E}\tens S$ and $y\in \s{E}\tens_{\delta_A}(A\tens S)$ we have$\vphantom{v^{-1}}$
\begin{align*}
\langle v^{-1}q_{\beta_{\s E}\alpha}x,y\rangle &=\langle \s{V}v^{-1}(q_{\beta_{\s E}\alpha}x),\s{V}y\rangle & (\s V \text{ is isometric})\\
&=\langle q_{\beta_{\s E}\alpha}x,\s{V}y\rangle\\
&=\langle x,\s{V}y\rangle-\langle(1-q_{\beta_{\s E}\alpha})(x),\s{V}y\rangle\\
&=\langle x,\s{V}y\rangle. & (\s{V}y\in{\rm im}(q_{\beta_{\s E}\alpha}))
\end{align*}
Hence, $\s{V}\in\mathcal{L}(\s{E}\tens_{\delta_A}(A\tens S),\s{E}\tens S)$ and then $\s{V}^*\s{V}=1$ and $\s{V}\s{V}^*=\s{V}v^{-1}q_{\beta_{\s E}\alpha}=q_{\beta_{\s E}\alpha}$.

The conditions 1 and 3 of Definition \ref{isometry} are then fulfilled. Now, we have
\begin{align*}
\s{V}(\beta_{\s{E}}(n^{\rm o})\tens_{\delta_A}1)(\xi\tens_{\delta_A}x)&=\delta_{\s{E}}(\beta_{\s{E}}(n^{\rm o})\xi)x\\
&=(1_{\s E}\tens\beta(n^{\rm o}))\delta_{\s{E}}(\xi)x\\
&=(1_{\s E}\tens\beta(n^{\rm o}))\s{V}(\xi\tens_{\delta_A}x),
\end{align*}
for all $\xi\in\s{E}$, $x\in A\tens S$ and $n\in N$. Hence, the condition 2 of Definition \ref{isometry} holds. 

\medskip
 
$b)$ is straightforward.     

\medskip

$c)$ Let $T\in\mathcal{L}(A\tens S,\s{E}\tens S)$. By using \ref{not7} and the identifications (\ref{ehmeq3}, \ref{eq7}), we have $T\tens_{\id_A\tens\,\delta}1\in\mathcal{L}(A\tens S\tens S,\s{E}\tens S\tens S)$. Now, we can define the extension 
\[
\id_{\s{E}}\tens\delta:\mathcal{L}(A\tens S,\s{E}\tens S)\rightarrow\mathcal{L}(A\tens S\tens S,\s{E}\tens S\tens S)
\]
by setting 
\[
(\id_{\s{E}}\tens\delta)(T):=T\tens_{\id_A\tens\,\delta}1,\quad \text{for all } T\in\mathcal{L}(A\tens S,\s{E}\tens S).
\]
We also have $T\tens_{\delta_A\tens\,\id_S}1\in\mathcal{L}(A\tens S\tens S,(\s{E}\tens_{\delta_A}(A\tens S))\tens S)$ by using again \ref{not7} and the identifications (\ref{ehmeq2}, \ref{eq6}). Let us define the extension 
\[
\delta_{\s{E}}\tens \id_S:\mathcal{L}(A\tens S,\s{E}\tens S)\rightarrow\mathcal{L}(A\tens S\tens S,\s{E}\tens S\tens S)
\]
by setting 
\[
(\delta_{\s{E}}\tens \id_S)(T):=(\s{V}\tens_{\mathbb{C}}1_S)(T\tens_{\delta_A\tens\,\id_S}1),\quad \text{for all } T\in\mathcal{L}(A\tens S,\s{E}\tens S).
\]
Therefore, for all $\xi\in\s{E}$ we have:
\begin{align*}
(\delta_{\s{E}}\tens \id_S)\delta_{\s{E}}(\xi)&=(\s{V}\tens_{\mathbb{C}}1_S)(\s{V}\tens_{\delta_A\tens\,\id_S}1)(T_{\xi}\tens_{\delta_A\tens\,\id_S}1) \in \mathcal{L}(A\tens S\tens S,\s{E}\tens S\tens S);\\[0.3cm]
(\id_{\s{E}}\tens\delta)\delta_{\s{E}}(\xi)&=(\s{V}\tens_{\id_A\tens\,\delta}1)(T_{\xi}\tens_{\id_A\tens\,\delta}1)\in\mathcal{L}(A\tens S\tens S,\s{E}\tens S\tens S);
\end{align*}
where:
\begin{align*}
T_{\xi}\tens_{\delta_A\tens\,\id_S}1 &\in\mathcal{L}(A\tens S\tens S,\s{E}\tens_{\delta_A^2}(A\tens S\tens S));\\[0.3cm]
T_{\xi}\tens_{\id_A\tens\,\delta}1 &\in\mathcal{L}(A\tens S\tens S,\s{E}\tens_{\delta_A^2}(A\tens S\tens S));
\end{align*}
by using the identifications (\ref{ehmeq2}, \ref{eq4}) and (\ref{ehmeq3}, \ref{eq5}) respectively and \ref{not7}. In particular, if $\s{V}$ is admissible, then the condition 4 of Definition \ref{hilbmodequ} holds.

\medskip

Conversely, let us assume that the above condition is satisfied. In order to show that $\s V$ is admissible, we only have to prove that the restrictions of the operators $T_{\xi}\tens_{\delta_A\tens\,\id_S}1$ and $T_{\xi}\tens_{\id_A\tens\,\delta}1$ to the Hilbert submodule $q_{\beta_A\alpha,12}q_{\beta\alpha,23}(A\tens S\tens S)$ are surjective.\hfill\break
Let $a\in A$, $x\in A\tens S$ and $y\in A\tens S\tens S$. Let us set $z=(\delta_A\tens \id_S)(\delta_A(a)x)y$. It is clear that $z\in q_{\beta_A\alpha,12}q_{\beta\alpha,23}(A\tens S\tens S)$. By a straightforward computation, we have 
\begin{center}
$
(T_{\xi}\tens_{\delta_A\tens\,\id_S}1)(z)=\xi a\tens_{\delta_A^2}(\delta_A\tens \id_S)(x)y.
$ 
\end{center}
Hence, the restriction of $T_{\xi}\tens_{\delta_A\tens\,\id_S}1$ to $q_{\beta_A\alpha,12}q_{\beta\alpha,23}(A\tens S\tens S)$ is surjective in virtue of (\ref{eq4}) and the fact that $\s{E}A=\s{E}$. The same statement is also true for $T_{\xi}\tens_{\id_A\tens\,\delta}1$. 
\end{proof}

\begin{rks}\label{rk9} In the proof of Proposition \ref{prop27}, we have proved the statements below.
\begin{itemize}
\item  By applying \ref{not7} and the identifications (\ref{ehmeq3}, \ref{eq7}), we have obtained a linear map $\id_{\s E}\tens\delta:\Lin(A\tens S,\s E\tens S)\rightarrow \Lin(A\tens S\tens S,\s E\tens S\tens S)$ given by
\[
(\id_{\s E}\tens\delta)(T):=T\tens_{\id_A\tens\,\delta} 1,\quad \text{for all } T\in\Lin(A\tens S,\s E\tens S);
\]
\item If $\delta_{\s E}$ satisfies the conditions 1 and 2 of Definition \ref{hilbmodequ}, let $\s V$ be the isometry associated with $\delta_{\s E}$ (cf.\ \ref{prop27} a)). By applying \ref{not7} and the identifications (\ref{ehmeq2}, \ref{eq6}), the linear map $\delta_{\s E}\tens\id_S:\Lin(A\tens S,\s E\tens S)\rightarrow\Lin(A\tens S\tens S,\s E\tens S\tens S)$ is defined by
\[
(\delta_{\s E}\tens\id_S)(T):=(\s V \tens_{\GC} 1_S)(T\tens_{\delta_A\tens\,\id_S} 1),\quad \text{for all }  T\in\Lin(A\tens S,\s E\tens S).
\]
\end{itemize}
Note that the extensions $\id_{\s E}\tens\delta$ and $\delta_{\s E}\tens\id_S$ satisfy the following formulas:
\begin{equation}\label{eq26}
(\id_{\s E}\tens\delta)(T)(\id_A\tens\delta)(x)=(\id_{\s E}\tens\delta)(Tx); \,
(\delta_{\s E}\tens\id_S)(T)(\delta_A\tens\id_S)(x)=(\delta_{\s E}\tens\id_S)(Tx);
\end{equation}
for all $x\in A\tens S$ and $T\in\Lin(A\tens S,\s E\tens S)$.
\end{rks}

Let us denote by $J:=\K(\s E\oplus A)$ the linking C*-algebra associated with the Hilbert $A$-module $\s E$. In the following, we apply the usual identifications $\M(J)=\Lin(\s E\oplus A)$ and $\M(J\tens S)=\Lin((\s E\tens S)\oplus(A\tens S))$.

\begin{defin}\label{compcoact}
An action $(\beta_J,\delta_J)$ of ${\cal G}$ on $J$ is said to be compatible with the action $(\beta_A,\delta_A)$ if: 
\begin{enumerate}
 \item $\delta_J:J\rightarrow\M(J\tens S)$ is compatible with $\delta_A$, {\it i.e.} 
$ 
 \iota_{A\tens S}\circ\delta_A=\delta_J\circ\iota_A;
$
 \item $\beta_J:N^{\rm o}\rightarrow\M(J)$ is compatible with $\beta_A$, {\it i.e.} 
$ 
 \iota_A(\beta_A(n^{\rm o})a)=\beta_J(n^{\rm o})\iota_A(a)
$,
 for all $n\in N$ and $a\in A$.\qedhere
\end{enumerate}
\end{defin}

\begin{prop}\label{propfib}
Let $(\beta_J,\delta_J)$ be a compatible action of ${\cal G}$ on $J$. There exists a unique non-degenerate *-homomorphism $\beta_{\s E}:N^{\rm o}\rightarrow\Lin(\s{E})$ such that 
\[
\beta_J(n^{\rm o})=\begin{pmatrix}\beta_{\s E}(n^{\rm o})&0\\0&\beta_A(n^{\rm o})\end{pmatrix}\!,\quad\text{for all } n\in N.
\]
Moreover, we have
$
q_{\beta_J\alpha}=\begin{pmatrix}q_{\beta_{\s E}\alpha}&0\\0& q_{\beta_A\alpha}\end{pmatrix}\!.
$
\end{prop}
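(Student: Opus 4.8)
The plan is to deduce everything from the $2\times 2$ matrix description of $\M(J)=\Lin(\s E\oplus A)$ recalled in Remarks~\ref{rk3}. Put $p_A:=\iota_A(1_{\M(A)})$ and $p_{\s E}:=\iota_{\K(\s E)}(1_{\Lin(\s E)})$, two complementary self-adjoint projections of $\M(J)$, with $p_A$ the projection onto the $A$-summand. The first step is to show that $\beta_J(n^{\rm o})$ commutes with $p_A$ for every $n\in N$. Taking an approximate unit $(u_{\lambda})_{\lambda}$ of $A$ in the compatibility relation $\beta_J(n^{\rm o})\iota_A(a)=\iota_A(\beta_A(n^{\rm o})a)$ (condition 2 of Definition~\ref{compcoact}) and passing to the strict limit — using that $\iota_A$ extends strictly continuously to $\M(A)$ (cf.\ \ref{rk3} 2) and that multiplication by a fixed element is strictly continuous — gives $\beta_J(n^{\rm o})p_A=\iota_A(\beta_A(n^{\rm o}))$; taking adjoints, replacing $n$ by $n^{*}$, and using that $\iota_A$ is a $*$-homomorphism yield $p_A\beta_J(n^{\rm o})=\iota_A(\beta_A(n^{\rm o}))$ as well, so $[\beta_J(n^{\rm o}),p_A]=0$. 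Hence $\beta_J(n^{\rm o})=p_A\beta_J(n^{\rm o})p_A+p_{\s E}\beta_J(n^{\rm o})p_{\s E}$, with $p_A\beta_J(n^{\rm o})p_A=\iota_A(\beta_A(n^{\rm o}))$ and $p_{\s E}\beta_J(n^{\rm o})p_{\s E}\in p_{\s E}\Lin(\s E\oplus A)p_{\s E}=\iota_{\K(\s E)}(\Lin(\s E))$ (from the matrix description in \ref{rk3}, or Lemma~\ref{ehmlem1} 2).

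It follows that there is a unique map $\beta_{\s E}:N^{\rm o}\rightarrow\Lin(\s E)$ with $p_{\s E}\beta_J(n^{\rm o})p_{\s E}=\iota_{\K(\s E)}(\beta_{\s E}(n^{\rm o}))$, equivalently $\beta_J(n^{\rm o})=\iota_{\K(\s E)}(\beta_{\s E}(n^{\rm o}))+\iota_A(\beta_A(n^{\rm o}))$, which is exactly the displayed matrix formula; uniqueness comes from injectivity of $\iota_{\K(\s E)}$ (cf.\ \ref{rk3} 4). Since $p_{\s E}$ commutes with the range of $\beta_J$ by the first step, $n^{\rm o}\mapsto p_{\s E}\beta_J(n^{\rm o})p_{\s E}$ is a $*$-homomorphism, and $\iota_{\K(\s E)}$ being an injective $*$-homomorphism, $\beta_{\s E}$ is a $*$-homomorphism. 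Finally $N^{\rm o}$ is unital; non-degeneracy of $\beta_J$ and of $\beta_A$ forces $\beta_J(1_{N^{\rm o}})=1_{\M(J)}=p_{\s E}+p_A$ and $\beta_A(1_{N^{\rm o}})=1_{\M(A)}$, and evaluating the matrix formula at $1_{N^{\rm o}}$ gives $\beta_{\s E}(1_{N^{\rm o}})=1_{\Lin(\s E)}$, so $\beta_{\s E}$ is non-degenerate.

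For the last assertion I would invoke the explicit description of the projection attached to an action. Applying Lemma~\ref{lem23} 1 to the action $(\beta_J,\delta_J)$ of $\cal G$ on $J$ gives $q_{\beta_J\alpha}=\sum_{I\in\s I}n_I^{-1}\,\beta_J(\varepsilon_I^{\rm o})\tens\alpha(\varepsilon_{\overline{I}})$, while the same formula (cf.\ \ref{ProjectionBis}) defines $q_{\beta_{\s E}\alpha}=\sum_{I\in\s I}n_I^{-1}\,\beta_{\s E}(\varepsilon_I^{\rm o})\tens\alpha(\varepsilon_{\overline{I}})$ in $\Lin(\s E\tens S)=\M(\K(\s E)\tens S)$. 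By the previous paragraphs each summand $\beta_J(\varepsilon_I^{\rm o})\tens\alpha(\varepsilon_{\overline{I}})$ is, under $\M(J\tens S)=\Lin((\s E\tens S)\oplus(A\tens S))$, block-diagonal with corners $\beta_{\s E}(\varepsilon_I^{\rm o})\tens\alpha(\varepsilon_{\overline{I}})$ and $\beta_A(\varepsilon_I^{\rm o})\tens\alpha(\varepsilon_{\overline{I}})$; summing over $I$ and using Lemma~\ref{lem23} 1 once more for $q_{\beta_A\alpha}$ produces the claimed block form of $q_{\beta_J\alpha}$.

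There is no real obstacle here: the argument is essentially bookkeeping, and the only points demanding care are keeping the conventions of Remarks~\ref{rk3} straight (which corner of the linking algebra is which, and that $\iota_A(1_{\M(A)})$ is the projection onto the $A$-summand, not the unit of $\M(J)$), correctly passing to strict limits when extending the $\beta$-compatibility relation to multipliers, and handling the tacit identification $N\cong N^{\rm o}$ implicit in the matrix units $\varepsilon_I$.
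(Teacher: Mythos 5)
Your proposal is correct and follows essentially the same route as the paper: the paper likewise derives the two-sided compatibility relation $\iota_A(a)\beta_J(n^{\rm o})=\iota_A(a\beta_A(n^{\rm o}))$ from condition 2 of Definition~\ref{compcoact} and the *-homomorphism property, reads off the block-diagonal form of $\beta_J(n^{\rm o})$ from the matrix description of $\Lin(\s E\oplus A)$ in Remarks~\ref{rk3}, and obtains the formula for $q_{\beta_J\alpha}$ as an immediate consequence of the explicit expression of the projection. Your detour through the commutation $[\beta_J(n^{\rm o}),\iota_A(1_{\M(A)})]=0$ via strict limits is just a slightly more explicit way of packaging the same vanishing of the off-diagonal entries.
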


\begin{proof}
Note that since $\iota_A$, $\beta_A$ and $\beta_J$ are *-homomorphisms, the condition 2 of Definition \ref{compcoact} is equivalent to: 
\[
\iota_A(a\beta_A(n^{\rm o}))=\iota_A(a)\beta_J(n^{\rm o}),\quad\text{for all } a\in A,\,n\in N.
\]
Therefore, there exists a map $\beta_{\s E}:N^{\rm o}\rightarrow\Lin(\s E)$ necessarily unique such that
\[
\beta_J(n^{\rm o})=\begin{pmatrix}\beta_{\s E}(n^{\rm o})&0\\0&\beta_A(n^{\rm o})\end{pmatrix},
\]
for all $n\in N$. Then, it is clear that $\beta_{\s E}$ is a non-degenerate *-homomorphism and the last statement is then an immediate consequence.
\end{proof}

\begin{rks}\label{rk13}
Note that if $\beta_A$ is injective, then so is $\beta_J$. For all $n\in N$, $\xi\in\s E$ and $k\in\K(\s E)$, we have $\iota_{\K(\s E)}(\beta_{\s E}(n^{\rm o})k)=\beta_J(n^{\rm o})\iota_{\K(\s E)}(k)$ and $\iota_{\s E}(\beta_{\s E}(n^{\rm o})\xi)=\beta_J(n^{\rm o})\iota_{\s E}(\xi)$. In particular, we have $\beta_{\s E}(n^{\rm o})\theta_{\xi,\eta}=\theta_{\beta_{\s E}(n^{\rm o})\xi,\eta}$ for all $n\in N$ and $\xi,\eta\in\s E$ (cf.\ \ref{prop32} 2).
\end{rks}

\begin{prop}\label{prop1}
a) Let us assume that the C*-algebra $J$ is endowed with a compatible action $(\beta_J,\delta_J)$ of ${\cal G}$ such that $\delta_J(J)\subset\widetilde{\M}(J\tens S)$. Then, we have the following statements:
\begin{itemize}
 \item there exists a unique linear map $\delta_{\s E}:\s E\rightarrow\widetilde{\M}(\s E\tens S)$ such that 
$\iota_{\s E\tens S}\circ\delta_{\s E}=\delta_J\circ\iota_{\s E}$;
furthermore, the pair $(\beta_{\s E},\delta_{\s{E}})$ is an action of ${\cal G}$ on $\s E$, where $\beta_{\s E}:N^{\rm o}\rightarrow\Lin(\s E)$ is the *-homomorphism defined in \ref{propfib};
 \item there exists a unique faithful *-homomorphism $\delta_{\mathcal{K}(\s{E})}:\mathcal{K}(\s{E})\rightarrow\widetilde{\mathcal{M}}(\mathcal{K}(\s{E})\tens S)$ such that 
 $\iota_{\K(\s{E}\tens S)}\circ\delta_{\mathcal{K}(\s{E})}=\delta_J\circ\iota_{\mathcal{K}(\s{E})}$; moreover, the pair $(\beta_{\s E},\delta_{\mathcal{K}(\s{E})})$ is an action of ${\cal G}$ on $\mathcal{K}(\s{E})$.
\end{itemize}
b) Conversely, let $(\beta_{\s E},\delta_{\s{E}})$ be an action of ${\cal G}$ on the Hilbert $A$-module $\s E$. Then, there exists a faithful *-homomorphism $\delta_J:J\rightarrow\widetilde{\mathcal{M}}(J\tens S)$ such that 
$
\iota_{\s E\tens S}\circ\delta_\s{E}=\delta_J\circ\iota_\s{E}.
$
Moreover, we define a unique action $(\beta_J,\delta_J)$ of ${\cal G}$ on $J$ compatible with $(\beta_A,\delta_A)$ by setting
\[
\beta_J(n^{\rm o})=\begin{pmatrix}\beta_{\s{E}}(n^{\rm o})&0\\0&\beta_A(n^{\rm o})\end{pmatrix}\!, \quad \text{for all } n\in N. \qedhere
\]
\end{prop}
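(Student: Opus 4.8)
The strategy follows the scheme of 2.7 in \cite{BS1}, the new point being to keep track of the *-homomorphisms $\beta_{\bullet}$ and of the projections $q_{\beta_{\bullet}\alpha}$; recall that $A$ is a ${\cal G}$-C*-algebra, so $\delta_A(A)\subset\widetilde{\M}(A\tens S)$ (combine \ref{prop39} 3 with \ref{rk12} and take adjoints). Throughout I would use the matrix picture of \ref{rk3} for $\M(J)=\Lin(\s E\oplus A)$ and for $\M(J\tens S)=\Lin((\s E\tens S)\oplus(A\tens S))$ (via $\K((\s E\tens S)\oplus(A\tens S))=J\tens S$), writing $e_{11}:=\iota_{\K(\s E)}(1_{\K(\s E)})$ and $e_{22}:=\iota_A(1_A)$ for the two diagonal projections of $\M(J)$, so that $\iota_{\s E}(\s E)=e_{11}Je_{22}$ and $\iota_{\K(\s E)}(\K(\s E))=e_{11}Je_{11}$. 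For part a), I would first locate $\delta_J$ on the corners: extending $\iota_{A\tens S}\circ\delta_A=\delta_J\circ\iota_A$ strictly and using $\delta_A(1_A)=q_{\beta_A\alpha}$ gives $\delta_J(e_{22})=\iota_{A\tens S}(q_{\beta_A\alpha})$, and since $\delta_J(1_J)=q_{\beta_J\alpha}=\left(\begin{smallmatrix}q_{\beta_{\s E}\alpha}&0\\0&q_{\beta_A\alpha}\end{smallmatrix}\right)$ by \ref{propfib}, also $\delta_J(e_{11})=\left(\begin{smallmatrix}q_{\beta_{\s E}\alpha}&0\\0&0\end{smallmatrix}\right)$. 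Sandwiching $\delta_J(\iota_{\s E}(\xi))$ between $\delta_J(e_{11})$ and $\delta_J(e_{22})$, and $\delta_J(\iota_{\K(\s E)}(k))$ between $\delta_J(e_{11})$ on both sides, \ref{rk3} 4 then produces uniquely determined $\delta_{\s E}(\xi)\in\Lin(A\tens S,\s E\tens S)$ and $\delta_{\K(\s E)}(k)\in\Lin(\s E\tens S)$ with $\iota_{\s E\tens S}(\delta_{\s E}(\xi))=\delta_J(\iota_{\s E}(\xi))$ and $\iota_{\K(\s E\tens S)}(\delta_{\K(\s E)}(k))=\delta_J(\iota_{\K(\s E)}(k))$; the hypothesis $\delta_J(J)\subset\widetilde{\M}(J\tens S)$ forces $\delta_{\s E}(\xi)\in\widetilde{\M}(\s E\tens S)$ and $\delta_{\K(\s E)}(k)\in\widetilde{\M}(\K(\s E)\tens S)$ via \ref{not4}, while uniqueness and faithfulness of $\delta_{\K(\s E)}$ are immediate.

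Then I would verify the four conditions of \ref{hilbmodequ} for $(\beta_{\s E},\delta_{\s E})$. Conditions 1, 2, 3 are obtained by transporting, through the corner identifications, the relations of \ref{prop32} (for $\s E$ and for $\s E\tens S$) together with the compatibility of $\delta_J$ with $\delta_A$ and $\beta_A$: e.g.\ $\iota_{\s E}(\xi)^*\iota_{\s E}(\eta)=\iota_A(\langle\xi,\eta\rangle)$ gives $\langle\delta_{\s E}(\xi),\delta_{\s E}(\eta)\rangle=\delta_A(\langle\xi,\eta\rangle)$, and $[\delta_J(J)(J\tens S)]=q_{\beta_J\alpha}(J\tens S)$ restricted to the corner $e_{11}(\,\cdot\,)e_{22}$ gives $[\delta_{\s E}(\s E)(A\tens S)]=q_{\beta_{\s E}\alpha}(\s E\tens S)$. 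For condition 4 I would check the intertwining identities $(\delta_J\tens\id_S)\circ\iota_{\s E\tens S}=\iota_{\s E\tens S\tens S}\circ(\delta_{\s E}\tens\id_S)$ and $(\id_J\tens\delta)\circ\iota_{\s E\tens S}=\iota_{\s E\tens S\tens S}\circ(\id_{\s E}\tens\delta)$ on $\Lin(A\tens S,\s E\tens S)$ (with $\delta_{\s E}\tens\id_S$ and $\id_{\s E}\tens\delta$ as in \ref{rk9}, realised through the identifications (\ref{ehmeq1})--(\ref{eq7}) and \ref{not7}), so that $(\delta_J\tens\id_S)\delta_J=(\id_J\tens\delta)\delta_J$ applied to $\iota_{\s E}(\xi)$ descends to $(\delta_{\s E}\tens\id_S)\delta_{\s E}(\xi)=(\id_{\s E}\tens\delta)\delta_{\s E}(\xi)$; equivalently, via \ref{prop27}, the isometry $\s V$ attached to $\delta_{\s E}$ is the $e_{11}$-corner of the isometry attached to $\delta_J$, so admissibility descends. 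The same corner-extraction with $e_{11}(\,\cdot\,)e_{11}$ shows that $(\beta_{\s E},\delta_{\K(\s E)})$ is an action of ${\cal G}$ on $\K(\s E)$.

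For part b) I would run the construction backwards. Given an action $(\beta_{\s E},\delta_{\s E})$ with associated isometry $\s V$ (\ref{prop27} a)), put $\delta_{\K(\s E)}(k):=\s V(k\tens_{\delta_A}1)\s V^*$; since $\delta_{\K(\s E)}(\theta_{\xi,\eta})=\s VT_\xi T_\eta^*\s V^*=\delta_{\s E}(\xi)\delta_{\s E}(\eta)^*$, this is a faithful *-homomorphism $\K(\s E)\to\K(\widetilde{\M}(\s E\tens S))\subset\widetilde{\M}(\K(\s E)\tens S)$. Then define $\delta_J:J\to\M(J\tens S)$ by
\[
\delta_J\!\begin{pmatrix}k & \xi\\ \eta^* & a\end{pmatrix}:=\begin{pmatrix}\delta_{\K(\s E)}(k) & \delta_{\s E}(\xi)\\ \delta_{\s E}(\eta)^* & \delta_A(a)\end{pmatrix},\qquad k\in\K(\s E),\ \xi,\eta\in\s E,\ a\in A,
\]
i.e.\ the unique linear map with $\iota_{\s E\tens S}\circ\delta_{\s E}=\delta_J\circ\iota_{\s E}$, $\iota_{A\tens S}\circ\delta_A=\delta_J\circ\iota_A$ and $\iota_{\K(\s E\tens S)}\circ\delta_{\K(\s E)}=\delta_J\circ\iota_{\K(\s E)}$. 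That $\delta_J$ is a faithful *-homomorphism into $\widetilde{\M}(J\tens S)$ follows by checking on the generators $\iota_A(A)\cup\iota_{\s E}(\s E)$ of $J$ (\ref{prop32} 4) the relations of \ref{prop32}, which translate exactly into condition 1 of \ref{hilbmodequ} and the formula for $\delta_{\K(\s E)}$; conditions 1--3 of \ref{defactmqg} for $(\beta_J,\delta_J)$ with $\beta_J=\left(\begin{smallmatrix}\beta_{\s E}&0\\0&\beta_A\end{smallmatrix}\right)$, its compatibility with $(\beta_A,\delta_A)$, and uniqueness then reduce corner-by-corner to the corresponding facts for $(\beta_A,\delta_A)$ and $(\beta_{\s E},\delta_{\s E})$ (the coassociativity of $\delta_J$ using the admissibility relation for $\s V$).

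The main obstacle is the coassociativity bookkeeping, namely the two intertwining identities $(\delta_J\tens\id_S)\circ\iota_{\s E\tens S}=\iota_{\s E\tens S\tens S}\circ(\delta_{\s E}\tens\id_S)$ and $(\id_J\tens\delta)\circ\iota_{\s E\tens S}=\iota_{\s E\tens S\tens S}\circ(\id_{\s E}\tens\delta)$: since $\delta_{\s E}\tens\id_S$ and $\id_{\s E}\tens\delta$ are not literal ampliations but are built from the chain of unitary identifications (\ref{ehmeq1})--(\ref{eq7}) and the convention \ref{not7}, one must check carefully that taking the $e_{11}$-corner commutes with passing to the triple internal tensor products $\s E\tens_{\delta_A^2}(A\tens S\tens S)$ and $J\tens_{\delta_J^2}(J\tens S\tens S)$. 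Once this is in place, the remaining steps are routine manipulations with matrix units and with the projections $q_{\beta_{\bullet}\alpha}$.
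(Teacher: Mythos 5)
Your proposal is correct and follows essentially the same route as the paper: for a), corner extraction via Lemma \ref{ehmlem1} and the matrix picture of \ref{rk3}, transport of the relations of \ref{prop32} through the compatibility hypotheses, and the intertwining identities for the extended maps $\delta_{\s E}\tens\id_S$ and $\id_{\s E}\tens\delta$ to descend coassociativity. The only real divergence is in b): the paper forms the isometry $\s W:=\s V\oplus i\in\Lin((\s E\oplus A)\tens_{\delta_A}(A\tens S),(\s E\oplus A)\tens S)$ and sets $\delta_J(x):=\s W(x\tens_{\delta_A}1)\s W^*$, so that $\delta_J$ is a *-homomorphism for free and only its corner entries need to be identified, whereas you define $\delta_J$ entrywise and must then verify multiplicativity; your appeal to ``checking the relations of \ref{prop32} on generators'' does work (one verifies $\delta_J(gx)=\delta_J(g)\delta_J(x)$ for $g$ a generator and $x$ ranging over products of generators, using $\delta_{\s E}(\xi)\delta_{\s E}(\eta)^*=\delta_{\K(\s E)}(\theta_{\xi,\eta})$ and $\delta_{\s E}(\xi)^*\delta_{\s E}(\eta)=\delta_A(\langle\xi,\eta\rangle)$), but it is precisely the step the paper's conjugation trick renders automatic, and since you already have $\s V$ in hand the extra work buys nothing.
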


\begin{proof}
a) Let us assume that the C*-algebra $J$ is endowed with a compatible action $(\beta_J,\delta_J)$ of ${\cal G}$. Let $\beta_{\s E}:N^{\rm o}\rightarrow\Lin(\s E)$ be the *-homomorphism defined in Proposition \ref{propfib}.  
By strict continuity and \ref{compcoact} 1, we have $\delta_J(\iota_A(m))=\iota_{A\tens S}(\delta_A(m))$ for all $m\in\M(A)$. It then follows from \ref{propfib} that 
\[
\delta_J(\iota_{\K(\s E)}(1_{\s E}))=\delta_J(1_J)-\delta_J(\iota_A(1_A))=q_{\beta_J\alpha}-\iota_{A\tens S}(q_{\beta_A\alpha})=\iota_{\K(\s E\tens S)}(q_{\beta_{\s E}\alpha}).
\]
Let $\xi\in \s{E}$. We have $\iota_{\K(\s{E})}(1_{\s{E}})\iota_{\s{E}}(\xi)=\iota_{\s{E}}(\xi)$ and $\iota_{\s{E}}(\xi)\iota_{\K(\s{E})}(1_{\s{E}})=0$. Hence,
\[
\iota_{\K(\s{E}\tens S)}(q_{\beta_{\s E}\alpha})\delta_J(\iota_{\s{E}}(\xi))=\delta_J(\iota_{\s{E}}(\xi)) \quad \text{and} \quad
\delta_J(\iota_{\s{E}}(\xi))\iota_{\K(\s{E}\tens S)}(q_{\beta_{\s E}\alpha})=0.
\]
We have 
$
\iota_{A\tens S}(x)\delta_{J}(\iota_{\s{E}}(\xi))=\iota_{A\tens S}(x)\iota_{\mathcal{L}(\s{E}\tens S)}(q_{\beta_{\s E}\alpha})\delta_{J}(\iota_{\s{E}}(\xi))=0
$,
for all $x\in A\tens S$. Now, let $(u_{\lambda})_{\lambda}$ be an approximate unit of $A$. We have
\[
 \delta_J(\iota_{\s{E}}(\xi))=\lim_{\lambda}\ \delta_J(\iota_{\s{E}}(\xi u_{\lambda}))=\lim_{\lambda}\ \delta_J(\iota_{\s{E}}(\xi))\iota_{A\tens S}(\delta_A(u_{\lambda})).
\]
Hence, $\delta_J(\iota_{\s{E}}(\xi))\iota_{\s E\tens S}(\eta)=0$ for all $\eta\in \s{E}\tens S$. Hence, there exists a unique linear map 
$
\delta_{\s{E}}:\s{E}\rightarrow\mathcal{L}(A\tens S,\s{E}\tens S)
$
such that 
$
\iota_{\s E\tens S}\circ\delta_\s{E}=\delta_J\circ\iota_\s{E}
$
(cf.\ \ref{ehmlem1} 1). Moreover,$\vphantom{\widetilde{\M}}$ $\delta_{\s{E}}$ actually takes its values in the subspace $\widetilde{\mathcal{M}}(\s{E}\tens S)$ of $\Lin(A\tens S,\s{E}\tens S)$. Indeed, let us fix $\xi\in \s{E}$ and $s\in S$. By assumption, we have that$\vphantom{\widetilde{\M}}$
\[
\iota_{\s E\tens S}((1_{\s{E}}\tens s)\delta_{\s{E}}(\xi))=(1_J\tens s)\delta_J(\iota_{\s{E}}(\xi)) \; \text{ and } \;
\iota_{\s E\tens S}(\delta_{\s{E}}(\xi)(1_A\tens s))=\delta_J(\iota_{\s{E}}(\xi))(1_J\tens s)
\]
belong to $J\tens S=\K((\s{E}\tens S)\oplus(A\tens S))$. It then follows that $(1_{\s{E}}\tens s)\delta_{\s{E}}(\xi) \in \s{E}\tens S$ and $\delta_{\s{E}}(\xi)(1_A\tens s)\in \s{E}\tens S$. The first condition of \ref{hilbmodequ} is easily derived from the compatibility of $\delta_J$.
The vector subspace of $\delta_J(1_J)((\s{E}\oplus A)\tens S)$ spanned by 
\[
\{\delta_J(\theta_{\xi\,\oplus\, a,\,\eta\,\oplus\, b})(\zeta) \; ;\; \xi,\,\eta\in \s{E},\; a,\,b\in A,\; \zeta\in(\s{E}\oplus A)\tens S\}
\]
is dense. However, we have
\[
\delta_J(\theta_{\xi\,\oplus\, a,\,\eta\,\oplus\, b})(\zeta)=(\delta_{\s{E}}(\xi)\oplus\delta_A(a))(\delta_{\s{E}}(\eta)\oplus\delta_A(b))^*(\zeta),
\]
where $\delta_{\s{E}}(\xi)\oplus\delta_A(a),\,\delta_{\s{E}}(\eta)\oplus\delta_A(b)\in\mathcal{L}(A\tens S,\s{E}\tens S)\oplus\mathcal{L}(A\tens S)\subset\mathcal{L}(A\tens S,(\s{E}\oplus A)\tens S)$. In particular, the vector subspace of $\delta_J(1_J)((\s{E}\oplus A)\tens S)$ spanned by 
\[
\{\delta_{\s{E}}(\xi)x\oplus\delta_A(a)x \; ; \; \xi\in\s E,\, a\in A,\, x\in A\tens S\}
\]
is dense. Therefore, the relation $[\delta_{\s E}(\s E)(A\tens S)]=q_{\beta_{\s E}\alpha}(\s E\tens S)$ follows since we also have
$
\delta_J(1_J)((\s{E}\oplus A)\tens S)=q_{\beta_{\s E}\alpha}(\s{E}\tens S)\oplus q_{\beta_A\alpha}(A\tens S).
$\newline
Let us fix $\xi\in\s E$ and $n\in N$. We have 
\begin{multline*}
\iota_{\s E\tens S}(\delta_{\s E}(\beta_{\s E}(n^{\rm o})\xi))=\delta_J(\iota_{\s E}(\beta_{\s E}(n^{\rm o})\xi))=\delta_J(\beta_J(n^{\rm o})\iota_{\s E}(\xi))=(1_J\tens\beta(n^{\rm o}))\delta_J(\iota_{\s E}(\xi))\\
=\iota_{\s E\tens S}((1_{\s E}\tens\beta(n^{\rm o})\delta_{\s E}(\xi)).
\end{multline*} 
Hence, $\delta_{\s E}(\beta_{\s E}(n^{\rm o})\xi)=(1_{\s E}\tens\beta(n^{\rm o}))\delta_{\s E}(\xi)$.\newline
Let us consider the linear maps $\id_{\s E}\tens\delta:\Lin(A\tens S,\s E\tens S)\rightarrow\Lin(A\tens S\tens S,\s E\tens S\tens S)$ and $\delta_{\s E}\tens\id_S:\Lin(A\tens S,\s E\tens S)\rightarrow\Lin(A\tens S\tens S,\s E\tens S\tens S)$ (cf.\ \ref{rk9}). By using (\ref{eq26}) and the compatibility of $\delta_J$ with $\delta_A$ and $\delta_{\s E}$, it follows from a straightforward computation that
\[
\iota_{\s E\tens S\tens S}((\id_{\s E}\tens\delta)(T))=(\id_J\tens\delta)(\iota_{\s E\tens S}(T));\; 
\iota_{\s E\tens S\tens S}((\delta_{\s E}\tens\id_S)(T))=(\delta_J\tens\id_S)(\iota_{\s E\tens S}(T));
\]
for all $T\in\Lin(A\tens S,\s E\tens S)$. In particular, $\iota_{\s E\tens S\tens S}((\id_{\s E}\tens\delta)\delta_{\s E}(\xi))=(\id_J\tens\delta)\delta_J(\iota_{\s E}(\xi))$ and $\iota_{\s E\tens S\tens S}((\delta_{\s E}\tens\id_S)\delta_{\s E}(\xi))=(\delta_J\tens\id_S)\delta_J(\iota_{\s E}(\xi))$ for all $\xi\in\s E$. Hence, for all $\xi\in\s E$ we have $(\delta_{\s E}\tens\id_S)\delta_{\s E}(\xi)=(\id_{\s E}\tens\delta)\delta_{\s E}(\xi)$. Therefore, the pair $(\beta_{\s E},\delta_{\s E})$ is an action of $\cal G$ on $\s E$.

\medskip

We claim that there exists a unique *-homomor\-phism 
$
\delta_{\K(\s{E})}:\K(\s{E})\rightarrow\mathcal{M}(\mathcal{K}(\s{E})\tens S)
$
such that $\iota_{\K(\s E\tens S)}\circ\delta_{\K(\s{E})}=\delta_J\circ\iota_{\K(\s E)}$.  We recall that
$
\delta_J(\iota_{\K(\s E)}(1_{\s{E}}))=\iota_{\K(\s E\tens S)}(q_{\beta_{\s E}\alpha}). 
$
We also have 
$
\iota_{A\tens S}(x)\iota_{\K(\s E\tens S)}(q_{\beta_{\s E}\alpha})=0
$ 
and
$
\iota_{\K(\s E\tens S)}(q_{\beta_{\s E}\alpha})\iota_{A\tens S}(x)=0
$
for all $x\in A\tens S$. It follows that
$
\iota_{A\tens S}(x)\delta_J(\iota_{\K(\s E)}(k))=0
$
and
$
\delta_J(\iota_{\K(\s E)}(k))\iota_{A\tens S}(x)=0
$
for all $k\in\K(\s E)$ and $x\in A\tens S$. Hence, $\delta_J(\iota_{\K(\s E)}(k))\in\iota_{\K(\s E\tens S)}(\mathcal{L}(\s{E}\tens S))$ (cf.\ \ref{ehmlem1}) and the claim is proved since $\iota_{\K(\s E\tens S)}$ is faithful. Since $\iota_{\K(\s E\tens S)}$ is isometric and $\delta_J\circ\iota_{\K(\s E)}$ is strictly continuous, the *-homomorphism $\delta_{\K(\s{E})}$ is strictly continuous and extend uniquely to a strictly continuous *-homomorphism 
$
\delta_{\K(\s{E})}:\Lin(\s{E})\rightarrow\mathcal{M}(\mathcal{K}(\s{E})\tens S)
$
such that $\delta_{\K(\s E)}(1_{\s E})=q_{\beta_{\s E}\alpha}$. Moreover, for all $\xi,\eta\in \s{E}$ we have (cf.\ \ref{prop32} 2)
\begin{equation}\label{eq25}
\delta_{\mathcal{K}(\s{E})}(\theta_{\xi,\eta})=\delta_{\s{E}}(\xi)\circ\delta_{\s{E}}(\eta)^*=\theta_{\delta_{\s{E}}(\xi),\,\delta_{\s{E}}(\eta)}\in\mathcal{K}(\widetilde{\mathcal{M}}(\s{E}\tens S))\subset\widetilde{\mathcal{M}}(\mathcal{K}(\s{E})\tens S).
\end{equation}
Hence, $\delta_{\mathcal{K}(\s{E})}(\mathcal{K}(\s{E}))\subset \widetilde{\mathcal{M}}(\K(\s{E})\tens S)$. We have $\delta_{\K(\s E)}(\beta_{\s E}(n^{\rm o}))=(1_{\s E}\tens\beta(n^{\rm o}))q_{\beta_{\s E}\alpha}$, for all $n\in N$ (cf.\ \ref{eq25}, \ref{rk13}). By strict continuity, we have the formulas:
\begin{align*}
\iota_{\K(\s E\tens S\tens S)}(\id_{\K(\s E)}\tens\delta)(T)&=(\id_J\tens\delta)(\iota_{\K(\s E\tens S)}(T));\\
\iota_{\K(\s E\tens S\tens S)}(\delta_{\K(\s E)}\tens\id_S)(m))&=(\delta_J\tens\id_S)(\iota_{\K(\s E\tens S)}(T));
\end{align*}
for all $T\in\M(\K(\s E)\tens S)=\Lin(\s E\tens S)$. By applying the above formulas to $T:=\delta_{\K(\s E)}(k)$ for $k\in\K(\s E)$, we show that $(\delta_{\K(\s E)}\tens\id_S)\delta_{\K(\s E)}(k)=(\id_{\K(\s E)}\tens\delta)\delta_{\K(\s E)}(k)$.

\medskip

b) First, it is clear that $\beta_J$ is a non-degenerate *-homomorphism. It is also clear that $\beta_J$ is compatible with the fibration map $\beta_A$, {\it i.e.\ }$\beta_J(n^{\rm o})\iota_A(a)=\iota_A(\beta_A(n^{\rm o})a)$, for all $a\in A$ and $n\in N$. Let $\s{V}\in\mathcal{L}(\s{E}\tens_{\delta_A}(A\tens S),\s{E}\tens S)$ be the isometry associated with the action $\delta_{\s{E}}$. Let $i:q_{\beta_A\alpha}(A\tens S)\rightarrow A\tens S$ be the inclusion map. We verify that $i$ is an $(A\tens S)$-linear adjointable map and $i^*=q_{\beta_A\alpha}$. In particular, the map $i$ is an isometry since we have $i^*i(x)=q_{\beta_A\alpha}x=x$ for all $x\in q_{\beta_A\alpha}(A\tens S)$. Let
\[
\s W:=\s{V}\oplus i\in\mathcal{L}((\s{E}\tens_{\delta_A}(A\tens S))\oplus q_{\beta_A\alpha}(A\tens S),(\s E\tens S)\oplus (A\tens S)).
\] 
We have $\s W^*\s W=1$, then $\s W$ is an isometry. Henceforth, we will use the following identification (see (\ref{ehmeq1})):
\[
(\s{E}\tens_{\delta_A}\!(A\tens S))\oplus q_{\beta_A\alpha}(A\tens S)=(\s{E}\tens_{\delta_A}\!(A\tens S))\oplus(A\tens_{\delta_A}\!(A\tens S))=(\s{E}\oplus A)\tens_{\delta_A}\!(A\tens S).
\]
Hence, $\s W\in\mathcal{L}((\s{E}\oplus A)\tens_{\delta_A}(A\tens S),(\s{E}\oplus A)\tens S)$. Let us define 
\[
\delta_J(x):=\s W(x\tens_{\delta_A}1)\s W^*\in\M(J\tens S),\quad \text{for all } x\in J.
\]
In that way, we define a strictly continuous *-homomorphism $\delta_J:J\rightarrow\M(J\tens S)$ satisfying $\delta_J(1_J)=\s W\s W^*=q_{\beta_{\s E}\alpha}\oplus q_{\beta_A\alpha}=q_{\beta_J\alpha}$.
Let $a\in A$. Let us prove that $\iota_{A\tens S}(\delta_A(a))=\delta_J(\iota_A(a))$. Since $\iota_{A\tens S}(\delta_A(a))\s W\s W^*=\iota_{A\tens S}(\delta_A(a))$ and $\delta_J(\iota_A(a))\s W\s W^*=\delta_J(\iota_A(a))$, it amounts to proving that 
$
\iota_{A\tens S}(\delta_A(a))\s W=\delta_J(\iota_A(a))\s W,
$
for all $a\in A$.
Therefore, it is enough to prove that $\iota_{A\tens S}(\delta_A(a))\s W=\s W(\iota_A(a)\tens_{\delta_A}1)$ since $\s W^*\s W=1$. However, for all $\eta\in \s E$, $b\in A$ and $x\in A\tens S$ we have
\[
\s W((\eta\oplus b)\tens_{\delta_A}x)=\s{V}(\eta\tens_{\delta_A}x)\oplus\delta_A(b)x=\delta_{\s{E}}(\eta)x\oplus\delta_A(b)x.
\]
We finally obtain
\begin{align*}
\s W(\iota_A(a)\tens_{\delta_A}1)((\eta\oplus b)\tens_{\delta_A}x)&=\s W((0\oplus ab)\tens_{\delta_A}x)\\
&=(\s{V}\oplus i)(0\oplus\delta_A(ab)x)\\
&=0\oplus\delta_A(a)\delta_A(b)x\\
&=\iota_{A\tens S}(\delta_A(a))(\delta_{\s{E}}(\eta)x\oplus\delta_A(b)x)\\
&=\iota_{A\tens S}(\delta_A(a))\s W((\eta\oplus b)\tens_{\delta_A}x),
\end{align*}
for all $\eta\in \s{E}$, $b\in A$ and $x\in A\tens S$. By using similar arguments, we also prove that $\iota_{\s E\tens S}(\delta_{\s{E}}(\xi))=\delta_J(\iota_\s{E}(\xi))$ for all $\xi\in \s{E}$.
By strict continuity, we obtain the formulas: 
\[
(\delta_J\tens\id_S)\iota_{A\tens S}(m)=\iota_{A\tens S\tens S}(\delta_A\tens\id_S)(m);\; (\id_J\tens\delta)\iota_{A\tens S}(m)=\iota_{A\tens S\tens S}(\id_A\tens\delta)(m);
\] 
for all $m\in\M(A\tens S)$.
By applying the above formulas to $m:=\delta_A(a)$ for $a\in A$ and by using again the compatibility of $\delta_J$ with $\delta_A$, we obtain the formulas: 
\[
(\delta_J\tens\id_S)\delta_J(\iota_A(a))=\iota_{A\tens S\tens S}(\delta_A\tens\id_S)\delta_A(a); \; (\id_J\tens\delta)\delta_J(\iota_A(a))=\iota_{A\tens S\tens S}(\id_A\tens\delta)\delta_A(a).
\]
Hence, $(\delta_J\tens\id_S)\delta_J(\iota_A(a))=(\id_J\tens\delta)\delta_J(\iota_A(a))$ for all $a\in A$. In a similar way, we have $(\delta_J\tens\id_S)\delta_J(\iota_{\s E}(\xi))=(\id_J\tens\delta)\delta_J(\iota_{\s E}(\xi))$ for all $\xi\in\s E$. However, since $J$ is generated by $\iota_{\s{E}}(\s E)\cup\iota_A(A)$ as a C*-algebra, the coassociativity of $\delta_J$ is then proved.

\medskip

For all $\eta\in \s{E}$, $b\in A$, $x\in A\tens S$ and $n\in N$, we have
\begin{align*}
\delta_J(\beta_J(n^{\rm o}))\s W((\eta\oplus b)\tens_{\delta_A}x)
&=\s W(\beta_J(n^{\rm o})(\eta\oplus b)\tens_{\delta_A} x)\\
&=\s W((\beta_{\s{E}}(n^{\rm o})\eta \oplus \beta_A(n^{\rm o})b)\tens_{\delta_A} x)\\
&=\delta_{\s{E}}(\beta_{\s{E}}(n^{\rm o})\eta)x \oplus \delta_A(\beta_A(n^{\rm o})b)x\\
&=(1_J\tens\beta(n^{\rm o}))(\delta_{\s{E}}(\eta)x \oplus \delta_A(b)x)\\
&=(1_J\tens\beta(n^{\rm o}))\s W((\eta\oplus b)\tens_{\delta_A}x).
\end{align*}
Hence, 
$
\delta_J(\beta_J(n^{\rm o}))
=\delta_J(\beta_J(n^{\rm o}))\s W\s W^*=(1_J\tens\beta(n^{\rm o}))\s W\s W^*
=(1_J\tens\beta(n^{\rm o}))\delta_J(1_J),
$
for all $n\in N$. Therefore, $(\beta_J,\delta_J)$ is an action of $\cal G$ on $J$, compatible with $(\beta_A,\delta_A)$.
\end{proof}

\paragraph{Equivariant unitary equivalence. } In this paragraph, we define a notion of equivariance for unitary equivalences of Hilbert modules acted upon by $\cal G$. We refer the reader to \S \ref{UnitaryEq} for the definitions and notations used below.

\begin{defin}\label{def1}
Let $A$ and $B$ be two $\cal G$-C*-algebras and $\phi:A\rightarrow B$ a $\cal G$-equivariant *-isomorphism. Let $\s E$ and $\s F$ be two Hilbert modules over respectively $A$ and $B$ acted upon by $\cal G$. A $\phi$-compatible unitary operator $\Phi:\s E\rightarrow\s F$ is said to be $\cal G$-equivariant if we have
\[
\delta_{\s F}(\Phi\xi)=(\Phi\tens\id_S)\delta_{\s E}(\xi), \quad \text{for all } \xi\in\s E. \qedhere
\]
\end{defin}

We recall that the linear map $\Phi\tens\id_S:\Lin(A\tens S,\s E\tens S)\rightarrow\Lin(B\tens S,\s F\tens S)$ (\ref{not3}) is the extension of the $\phi\tens\id_S$-compatible unitary operator $\Phi\tens\id_S:\s E\tens S\rightarrow\s F\tens S$ (\ref{propdef6}).

\begin{prop}
With the notations and hypotheses of \ref{def1}, for all $n\in N$ we have $\beta_{\s F}(n^{\rm o})\circ\Phi=\Phi\circ\beta_{\s E}(n^{\rm o})$.
\end{prop}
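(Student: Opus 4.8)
The plan is to follow the pattern of Remark \ref{rkEqMorph}: deduce the compatibility of $\Phi$ with the fibration maps $\beta_{\s E}$, $\beta_{\s F}$ from the $\cal G$-equivariance relation $\delta_{\s F}(\Phi\xi)=(\Phi\tens\id_S)\delta_{\s E}(\xi)$ together with the injectivity of $\delta_{\s F}$. Since $(\beta_{\s F},\delta_{\s F})$ is an action of $\cal G$ on $\s F$, condition 1 of Definition \ref{hilbmodequ} holds for $\delta_{\s F}$, so by the first item of Remarks \ref{rk2} the map $\delta_{\s F}$ is isometric, in particular injective. Hence it suffices to prove that $\delta_{\s F}(\beta_{\s F}(n^{\rm o})\Phi\xi)=\delta_{\s F}(\Phi\beta_{\s E}(n^{\rm o})\xi)$ for all $\xi\in\s E$ and $n\in N$.

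The one preliminary point to establish is that the extension $\Phi\tens\id_S:\Lin(A\tens S,\s E\tens S)\rightarrow\Lin(B\tens S,\s F\tens S)$ (cf.\ \ref{not3}) intertwines left multiplication by $1_{\s E}\tens\beta(n^{\rm o})$ and by $1_{\s F}\tens\beta(n^{\rm o})$, i.e.\ $(1_{\s F}\tens\beta(n^{\rm o}))(\Phi\tens\id_S)(T)=(\Phi\tens\id_S)\big((1_{\s E}\tens\beta(n^{\rm o}))T\big)$ for all $T\in\Lin(A\tens S,\s E\tens S)$. Since $\Phi\tens\id_S:\s E\tens S\rightarrow\s F\tens S$ is the $\phi\tens\id_S$-compatible unitary of \ref{propdef6}, it sends $\xi'\tens s$ to $\Phi\xi'\tens s$; as $\beta(n^{\rm o})\in\M(S)$, the operators $1_{\s E}\tens\beta(n^{\rm o})$ and $1_{\s F}\tens\beta(n^{\rm o})$ act on elementary tensors by $\xi'\tens s\mapsto\xi'\tens\beta(n^{\rm o})s$, whence $(1_{\s F}\tens\beta(n^{\rm o}))(\Phi\tens\id_S)$ and $(\Phi\tens\id_S)(1_{\s E}\tens\beta(n^{\rm o}))$ coincide on $\s E\tens S$. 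Because the extension in \ref{not3} is $T\mapsto(\Phi\tens\id_S)\,T\,(\phi\tens\id_S)^{-1}$, the claimed identity on $\Lin(A\tens S,\s E\tens S)$ follows at once.

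With this in hand, fix $\xi\in\s E$ and $n\in N$ and compute, using condition 3 of Definition \ref{hilbmodequ} for $\delta_{\s F}$, then the $\cal G$-equivariance of $\Phi$, then the intertwining above applied to $T=\delta_{\s E}(\xi)$, then condition 3 of \ref{hilbmodequ} for $\delta_{\s E}$, and finally the equivariance of $\Phi$ again:
\[
\delta_{\s F}(\beta_{\s F}(n^{\rm o})\Phi\xi)=(1_{\s F}\tens\beta(n^{\rm o}))(\Phi\tens\id_S)\delta_{\s E}(\xi)=(\Phi\tens\id_S)(1_{\s E}\tens\beta(n^{\rm o}))\delta_{\s E}(\xi)=\delta_{\s F}(\Phi\beta_{\s E}(n^{\rm o})\xi).
\]
Injectivity of $\delta_{\s F}$ then yields $\beta_{\s F}(n^{\rm o})\Phi\xi=\Phi\beta_{\s E}(n^{\rm o})\xi$ for every $\xi\in\s E$, i.e.\ $\beta_{\s F}(n^{\rm o})\circ\Phi=\Phi\circ\beta_{\s E}(n^{\rm o})$. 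I expect the only mildly technical step to be the preliminary intertwining claim, which is nevertheless a routine unwinding of the definition of $\Phi\tens\id_S$ on $\Lin(A\tens S,\s E\tens S)$; everything else is a direct substitution.
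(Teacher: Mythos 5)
Your proof is correct and follows essentially the same route as the paper's: establish that $\Phi\tens\id_S$ intertwines left multiplication by $1\tens\beta(n^{\rm o})$, apply condition 3 of Definition \ref{hilbmodequ} together with the $\cal G$-equivariance of $\Phi$, and conclude by the injectivity (isometry) of $\delta_{\s F}$ from Remarks \ref{rk2}. The only difference is that you spell out the intertwining step, which the paper dismisses as clear.
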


\begin{proof}
It is clear that $(\Phi\tens\id_S)((1_{\s E}\tens s)T)=(1_{\s F}\tens s)(\Phi\tens\id_S)(T)$ for all $s\in S$ and $T\in\Lin(A\tens S,\s E\tens S)$. Let $n\in N$ and $\xi\in\s E$. We have
$
\delta_{\s F}(\Phi(\beta_{\s E}(n^{\rm o})\xi))=\delta_{\s F}(\beta_{\s F}(n^{\rm o})\Phi\xi)
$
by \ref{hilbmodequ} 3. Hence, $\Phi(\beta_{\s E}(n^{\rm o})\xi)=\beta_{\s F}(n^{\rm o})\Phi\xi$ by \ref{rk2} 1.
\end{proof}

\begin{defin}
Two Hilbert C*-modules $\s E$ and $\s F$ acted upon by $\cal G$ are said to be $\cal G$-equivariantly unitarily equivalent if there exists a $\cal G$-equivariant unitary operator from $\s E$ onto $\s F$.
\end{defin}

It is clear that the $\cal G$-equivariant unitary equivalence defines an equivalence relation on the class consisting of the Hilbert C*-modules acted upon by $\cal G$. In the following, we provide equivalent definitions of the $\cal G$-equivariant unitary equivalence in the two other pictures. 

\medskip

Let $A$ and $B$ be two $\cal G$-C*-algebras and $\phi:A\rightarrow B$ a $\cal G$-equivariant *-isomomorphism. Let $\s E$ and $\s F$ be two Hilbert C*-modules over $A$ and $B$ respectively and $\Phi:\s E\rightarrow\s F$ a $\phi$-compatible unitary operator. 

\begin{lem}\label{lem12}
The linear map
\[
\s E\tens_{\delta_A}(A\tens S)  \rightarrow \s F\tens_{\delta_B}(B\tens S)\; ;\; \xi\tens_{\delta_A}x  \mapsto \Phi\xi \tens_{\delta_B}(\phi\tens\id_S)(x)
\]
is a $\phi\tens\id_S$-compatible unitary operator.
\end{lem}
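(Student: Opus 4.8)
The plan is to verify the four properties that define a $\phi\tens\id_S$-compatible unitary operator: $\mathbb C$-linearity together with well-definedness on the balanced tensor product, compatibility of the $B\tens S$-valued inner products through $\phi\tens\id_S$, $(\phi\tens\id_S)$-linearity, and surjectivity. The only external ingredient needed is that the $\cal G$-equivariance of $\phi$ means exactly $(\phi\tens\id_S)\circ\delta_A=\delta_B\circ\phi$ (cf.\ \ref{defEquiHom}), so that $\phi\tens\id_S\colon A\tens S\to B\tens S$ intertwines the left module actions used to form the two internal tensor products.

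First I would introduce the map $\Psi$ on the algebraic tensor product $\s E\odot(A\tens S)$ by $\Psi(\xi\odot x):=\Phi\xi\tens_{\delta_B}(\phi\tens\id_S)(x)$ and compute, for $\xi,\eta\in\s E$ and $x,y\in A\tens S$,
\[
\langle\Phi\xi\tens_{\delta_B}(\phi\tens\id_S)(x),\,\Phi\eta\tens_{\delta_B}(\phi\tens\id_S)(y)\rangle=(\phi\tens\id_S)(x)^*\,\delta_B(\langle\Phi\xi,\Phi\eta\rangle)\,(\phi\tens\id_S)(y).
\]
Using $\langle\Phi\xi,\Phi\eta\rangle=\phi(\langle\xi,\eta\rangle)$, the equivariance identity $\delta_B(\phi(\langle\xi,\eta\rangle))=(\phi\tens\id_S)(\delta_A(\langle\xi,\eta\rangle))$, and the fact that $\phi\tens\id_S$ is a $*$-homomorphism, the right-hand side equals $(\phi\tens\id_S)\bigl(x^*\delta_A(\langle\xi,\eta\rangle)y\bigr)=(\phi\tens\id_S)(\langle\xi\tens_{\delta_A}x,\,\eta\tens_{\delta_A}y\rangle)$. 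This identity serves two purposes at once: it shows that $\Psi$ kills the subspace defining the balanced tensor product (so $\Psi$ descends to a map on $\s E\tens_{\delta_A}(A\tens S)$ given on elementary tensors by the stated formula), and it shows that $\Psi$ is isometric relative to $\phi\tens\id_S$, hence extends by continuity to all of $\s E\tens_{\delta_A}(A\tens S)$.

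It then remains to record $(\phi\tens\id_S)$-linearity, which is immediate: for $z\in A\tens S$ one has $\Psi\bigl((\xi\tens_{\delta_A}x)z\bigr)=\Psi(\xi\tens_{\delta_A}xz)=\Phi\xi\tens_{\delta_B}(\phi\tens\id_S)(x)(\phi\tens\id_S)(z)=\Psi(\xi\tens_{\delta_A}x)(\phi\tens\id_S)(z)$; and surjectivity, which follows since $\Phi$ and $\phi\tens\id_S$ are onto, so every elementary tensor $\eta\tens_{\delta_B}y\in\s F\tens_{\delta_B}(B\tens S)$ equals $\Phi\xi\tens_{\delta_B}(\phi\tens\id_S)(x)=\Psi(\xi\tens_{\delta_A}x)$ for suitable $\xi,x$, and the range of $\Psi$ is closed (being the isometric image of a complete module). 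Together these give that $\Psi$ is a $\phi\tens\id_S$-compatible unitary operator. There is no genuine obstacle here; the only step requiring attention is invoking the equivariance of $\phi$ at the right place in the inner-product computation.
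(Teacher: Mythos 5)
Your proposal is correct and follows essentially the same route as the paper: define the map on the algebraic tensor product, use $\langle\Phi\xi,\Phi\eta\rangle=\phi(\langle\xi,\eta\rangle)$ together with the equivariance identity $\delta_B\circ\phi=(\phi\tens\id_S)\circ\delta_A$ to get $\langle\Psi(t),\Psi(t')\rangle=(\phi\tens\id_S)(\langle t,t'\rangle)$, deduce isometry and well-definedness, then conclude surjectivity from dense range plus closed image. The only cosmetic difference is that the paper verifies the balancing relation directly before the inner-product computation, whereas you extract well-definedness from the inner-product identity itself; both are standard and valid.
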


\begin{proof}
For all $\xi\in\s E$, $a\in A$ and $x\in A\tens S$, we have
\begin{align*}
\Phi(\xi a) \tens_{\delta_B} (\phi\tens\id_S)(x) &= \Phi(\xi)\phi(a) \tens_{\delta_B} (\phi\tens\id_S)(x)\\
&= \Phi\xi \tens_{\delta_B} \delta_B(\phi(a))(\phi\tens\id_S)(x) \\
&= \Phi\xi \tens_{\delta_B} (\phi\tens\id_S)(\delta_A(a)x).
\end{align*}
Therefore, we have a well-defined linear map 
\[
\Psi:\s E\odot_{A}(A\tens S) \rightarrow \s F\tens_{\delta_B}(B\tens S)\,;\, \xi\odot_A x \mapsto \Phi\xi \tens_{\delta_B}(\phi\tens\id_S)(x).
\]
For all $\xi,\eta\in\s E$, we have $\delta_B(\langle\Phi\xi,\, \Phi\eta\rangle)=\delta_B(\phi(\langle\xi,\,\eta\rangle))=(\phi\tens\id_S)\delta_A(\langle\xi,\,\eta\rangle)$. 
Therefore, for all $\xi,\eta\in\s E$ and $x,y\in A\tens S$, we have 
\[
\langle \Phi\xi \tens_{\delta_B}(\phi\tens\id_S)(x),\, \Phi\eta \tens_{\delta_B}(\phi\tens\id_S)(y)\rangle=(\phi\tens\id_S)(\langle \xi\tens_{\delta_A}x,\, \eta\tens_{\delta_A}y\rangle).
\] 
Hence, $\langle \Psi(t),\, \Psi(t')\rangle=(\phi\tens\id_S)(\langle t,\, t'\rangle)$ for all $t,t'\in \s E\odot_{A}(A\tens S)$. In particular, we have $\|\Psi(t)\|=\|t\|$ for all $t\in \s E\odot_{A}(A\tens S)$ ($\phi\tens\id_S$ is isometric). Therefore, $\Psi$ extends uniquely to a bounded operator from $\s E\tens_{\delta_A}(A\tens S)$ to $\s F\tens_{\delta_B}(B\tens S)$ still denoted by $\Psi$. We have $\langle \Psi(t),\, \Psi(t')\rangle=(\phi\tens\id_S)(\langle t,\, t'\rangle)$ for all $t,t'\in \s E\tens_{\delta_A}(A\tens S)$. Since $\Psi$ is isometric and has a dense range, we conclude that $\Psi$ is surjective. A staightforward computation shows that $\Psi(tx)=\Psi(t)(\phi\tens\id_S)(x)$ for all  $t\in \s E\tens_{\delta_A}(A\tens S)$ and $x\in A\tens S$.
\end{proof}

\begin{prop}
Let $(\beta_{\s E},\delta_{\s E})$ {\rm(}resp.\ $(\beta_{\s F},\delta_{\s F})${\rm)} be an action of $\cal G$ on $\s E$ {\rm(}resp.\ $\s F${\rm)}. Denote by $\s V_{\s E}\in\Lin(\s E\tens_{\delta_A}(A\tens S),\s E\tens S)$ {\rm(}resp.\ $\s V_{\s F}\in\Lin(\s F\tens_{\delta_B}(B\tens S),\s F\tens S)${\rm)} the isometry associated with $(\beta_{\s E},\delta_{\s E})$ {\rm(}resp.\ $(\beta_{\s F},\delta_{\s F})${\rm)}. Assume that $\Phi\circ\beta_{\s E}(n^{\rm o})=\beta_{\s F}(n^{\rm o})\circ\Phi$ for all $n\in N$. Then, $\Phi$ is $\cal G$-equivariant if, and only if, we have
\[
\s V_{\s F}^*(\Phi\tens\id_S)\s V_{\s E}(\xi\tens_{\delta_A}x)=\Phi\xi \tens_{\delta_B}(\phi\tens\id_S)(x),
\] 
for all $\xi\in\s E$ and $x\in A\tens S$.
\end{prop}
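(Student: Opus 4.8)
The plan is to reduce both sides of the asserted equivalence to a single operator identity relating $\s V_{\s E}$, $\s V_{\s F}$ and the $\phi\tens\id_S$-compatible unitary $\Psi$ of Lemma~\ref{lem12}, and then to pass back and forth between the two formulations using that $\s V_{\s F}$ is an isometry with $\s V_{\s F}^*\s V_{\s F}=1$ and $\s V_{\s F}\s V_{\s F}^*=q_{\beta_{\s F}\alpha}$ (cf.\ \ref{prop27}). First I would fix notation: let $\Psi\in\Lin(\s E\tens_{\delta_A}(A\tens S),\s F\tens_{\delta_B}(B\tens S))$ be the unitary of \ref{lem12}, so $\Psi(\xi\tens_{\delta_A}x)=\Phi\xi\tens_{\delta_B}(\phi\tens\id_S)(x)$, and write $T_\xi\in\Lin(A\tens S,\s E\tens_{\delta_A}(A\tens S))$ and $T_{\Phi\xi}\in\Lin(B\tens S,\s F\tens_{\delta_B}(B\tens S))$ for the operators of \ref{not5} attached to $\s E$ and $\s F$. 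From the very definitions one reads off $\delta_{\s E}(\xi)=\s V_{\s E}T_\xi$, $\delta_{\s F}(\Phi\xi)=\s V_{\s F}T_{\Phi\xi}$, and $\Psi T_\xi=T_{\Phi\xi}(\phi\tens\id_S)$; moreover the identity to be proved, evaluated on the simple tensors $\xi\tens_{\delta_A}x$, is exactly $\Psi=\s V_{\s F}^*(\Phi\tens\id_S)\s V_{\s E}$, and since both sides are bounded and the simple tensors span a dense subspace, it holds on all such tensors iff it holds as an operator identity.

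Next I would unwind $\cal G$-equivariance of $\Phi$. By \ref{not3} the extension $\Phi\tens\id_S:\Lin(A\tens S,\s E\tens S)\to\Lin(B\tens S,\s F\tens S)$ is characterised by $\big((\Phi\tens\id_S)(T)\big)\big((\phi\tens\id_S)(x)\big)=(\Phi\tens\id_S)(Tx)$ for $T\in\Lin(A\tens S,\s E\tens S)$ and $x\in A\tens S$, the right-hand $\Phi\tens\id_S$ being the $\phi\tens\id_S$-compatible operator $\s E\tens S\to\s F\tens S$ of \ref{propdef6}. Since $\phi\tens\id_S$ is bijective, the equality $\delta_{\s F}(\Phi\xi)=(\Phi\tens\id_S)\delta_{\s E}(\xi)$ in $\Lin(B\tens S,\s F\tens S)$ is equivalent to its evaluation at every $(\phi\tens\id_S)(x)$: the left side gives $\s V_{\s F}T_{\Phi\xi}((\phi\tens\id_S)(x))=\s V_{\s F}\Psi(\xi\tens_{\delta_A}x)$, the right side gives $(\Phi\tens\id_S)(\delta_{\s E}(\xi)x)=(\Phi\tens\id_S)\big(\s V_{\s E}(\xi\tens_{\delta_A}x)\big)$. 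Hence $\Phi$ is $\cal G$-equivariant if and only if $\s V_{\s F}\Psi=(\Phi\tens\id_S)\s V_{\s E}$ as operators $\s E\tens_{\delta_A}(A\tens S)\to\s F\tens S$.

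Finally I would settle the two implications. If $\s V_{\s F}\Psi=(\Phi\tens\id_S)\s V_{\s E}$, applying $\s V_{\s F}^*$ on the left and using $\s V_{\s F}^*\s V_{\s F}=1$ gives $\Psi=\s V_{\s F}^*(\Phi\tens\id_S)\s V_{\s E}$, i.e.\ the displayed identity. Conversely, from $\Psi=\s V_{\s F}^*(\Phi\tens\id_S)\s V_{\s E}$, applying $\s V_{\s F}$ and using $\s V_{\s F}\s V_{\s F}^*=q_{\beta_{\s F}\alpha}$ yields $\s V_{\s F}\Psi=q_{\beta_{\s F}\alpha}(\Phi\tens\id_S)\s V_{\s E}$, and it remains to check $q_{\beta_{\s F}\alpha}(\Phi\tens\id_S)\s V_{\s E}=(\Phi\tens\id_S)\s V_{\s E}$. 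This follows from $\s V_{\s E}=q_{\beta_{\s E}\alpha}\s V_{\s E}$ (the range of $\s V_{\s E}$ being $q_{\beta_{\s E}\alpha}(\s E\tens S)$, cf.\ \ref{prop27}) together with the intertwining $(\Phi\tens\id_S)q_{\beta_{\s E}\alpha}=q_{\beta_{\s F}\alpha}(\Phi\tens\id_S)$, which in turn comes from the hypothesis $\Phi\circ\beta_{\s E}(n^{\rm o})=\beta_{\s F}(n^{\rm o})\circ\Phi$, from $(\Phi\tens\id_S)(1_{\s E}\tens x)=(1_{\s F}\tens x)(\Phi\tens\id_S)$ for $x\in\M(S)$, and from the fact that $q_{\beta_{\s E}\alpha}$ (resp.\ $q_{\beta_{\s F}\alpha}$) is the finite sum $\sum_{I\in\s I}n_I^{-1}\,\beta_{\s E}(\varepsilon_I)\tens\alpha(\varepsilon_{\overline I})$ (resp.\ with $\beta_{\s F}$), as in \ref{lem23}~1. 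The only point requiring care is keeping the two meanings of $\Phi\tens\id_S$ apart and verifying this $q$-projection intertwining; the rest is formal.
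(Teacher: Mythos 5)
Your proof is correct and follows essentially the same route as the paper's: introduce the unitary $\Psi$ of Lemma \ref{lem12}, reduce $\cal G$-equivariance to the operator identity $\s V_{\s F}\Psi=(\Phi\tens\id_S)\s V_{\s E}$, and pass to the displayed formula using $\s V_{\s F}^*\s V_{\s F}=1$, $\s V_{\s F}\s V_{\s F}^*=q_{\beta_{\s F}\alpha}$, $q_{\beta_{\s E}\alpha}\s V_{\s E}=\s V_{\s E}$ and the intertwining $(\Phi\tens\id_S)q_{\beta_{\s E}\alpha}=q_{\beta_{\s F}\alpha}(\Phi\tens\id_S)$. You merely spell out this last intertwining (which the paper dismisses as ``by assumption'') in more detail, which is a welcome but not substantive addition.
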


\begin{proof}
Let $\Psi:\s E\tens_{\delta_A}(A\tens S) \rightarrow  \s F\tens_{\delta_B}(B\tens S)$ be the $\phi\tens\id_S$-compatible unitary operator defined in the above lemma. For all $\xi\in\s E$ and $x\in A\tens S$, we have 
\[
\delta_{\s F}(\Phi\xi)(\phi\tens\id_S)(x)=\s V_{\s F}(\Phi\xi\tens_{\delta_B}x)=\s V_{\s F}\circ \Psi(\xi\tens_{\delta_A} 
x)
\]
and $(\Phi\tens\id_S)(\delta_{\s E}(\xi))(\phi\tens\id_S)(x)=(\Phi\tens\id_S)(\delta_{\s E}(\xi)x)=(\Phi\tens\id_S)\s V_{\s E}(\xi\tens_{\delta_A}x)$. Therefore, $\delta_{\s F}\circ \Phi=(\Phi\tens\id_S)\circ\delta_{\s E}$ if, and only if, $\s V_{\s F}\circ \Psi=(\Phi\tens\id_S)\s V_{\s E}$. In order for the formula $\s V_{\s F}\circ \Psi=(\Phi\tens\id_S)\s V_{\s E}$ to hold true, it is necessary and sufficient that $\Psi=\s V_{\s F}^*(\Phi\tens\id_S)\s V_{\s E}$. Indeed, it is necessary since $\s V_{\s F}^*\s V_{\s F}=1$. It is also sufficient since we have $\s V_{\s F}\s V_{\s F}^*=q_{\beta_{\s F}\alpha}$, $q_{\beta_{\s F}\alpha}(\Phi\tens\id_S)=(\Phi\tens\id_S)q_{\beta_{\s E}\alpha}$ (by assumption) and $q_{\beta_{\s E}\alpha}\s V_{\s E}=\s V_{\s E}$.
\end{proof}

\begin{rk}
Let $A$ be a $\cal G$-C*-algebra. Let $\s E$ and $\s F$ be two Hilbert $A$-modules acted upon by $\cal G$. Let $\Phi\in\Lin(\s E,\s F)$ be a unitary. The following statements are equivalent:
\begin{enumerate}[label=(\roman*)]
\item $\Phi$ is $\cal G$-equivariant;
\item $\Phi\circ\beta_{\s E}(n^{\rm o})=\beta_{\s F}(n^{\rm o})\circ\Phi$ for all $n\in N$ and $\s V_{\s F}^*(\Phi\tens 1_S)\s V_{\s E}=\Phi\tens_{\delta_A}1_{A\tens S}$;
\item $\Phi\circ\beta_{\s E}(n^{\rm o})=\beta_{\s F}(n^{\rm o})\circ\Phi$ for all $n\in N$ and $\s V_{\s F}(\Phi\tens_{\delta_A}1_{A\tens S})\s V_{\s E}^*=q_{\beta_{\s E}\alpha}(\Phi\tens 1_S)$.\qedhere
\end{enumerate}
\end{rk}

\begin{prop}\label{prop10}
Let $A$ and $B$ be two $\cal G$-C*-algebras and $\phi:A\rightarrow B$ a $\cal G$-equivariant *-isomorphism. Let $\s E$ and $\s F$ be two Hilbert modules over respectively $A$ and $B$ acted upon by $\cal G$. Let $\Phi:\s E\rightarrow\s F$ be a $\phi$-compatible unitary operator. Denote by $f:\K(\s E\oplus A)\rightarrow\K(\s F\oplus B)$ the unique *-homomorphism such that $f\circ\iota_{\s E}=\iota_{\s F}\circ T$ and $f\circ\iota_A=\iota_B\circ\phi$ (cf.\ \ref{prop7}). Then, $\Phi$ is $\cal G$-equivariant if, and only if, $f$ is $\cal G$-equivariant.
\end{prop}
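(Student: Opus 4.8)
The plan is to transport the statement to the linking C*-algebras by means of Proposition \ref{prop1}. Write $J:=\K(\s E\oplus A)$, $J':=\K(\s F\oplus B)$, and let $(\beta_J,\delta_J)$ (resp.\ $(\beta_{J'},\delta_{J'})$) denote the action of $\cal G$ on $J$ (resp.\ $J'$) compatible with $(\beta_A,\delta_A)$ (resp.\ $(\beta_B,\delta_B)$) that is associated with $(\beta_{\s E},\delta_{\s E})$ (resp.\ $(\beta_{\s F},\delta_{\s F})$) as in \ref{prop1}; thus
\[
\iota_{\s E\tens S}\circ\delta_{\s E}=\delta_J\circ\iota_{\s E},\qquad \iota_{A\tens S}\circ\delta_A=\delta_J\circ\iota_A,
\]
and similarly with $\s F,B,J'$. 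I would first record two elementary facts. (i) By \ref{prop7}, $f$ is the $*$-isomorphism induced by the $\phi$-compatible unitary $\Phi\oplus\phi:\s E\oplus A\to\s F\oplus B$, so $f$ is non-degenerate and $f\tens\id_S$ is conjugation by the $(\phi\tens\id_S)$-compatible unitary $(\Phi\oplus\phi)\tens\id_S$; comparing this with the definition of $\Phi\tens\id_S$ on $\Lin(A\tens S,\s E\tens S)$ recalled in \ref{not3} yields
\[
(f\tens\id_S)\circ\iota_{\s E\tens S}=\iota_{\s F\tens S}\circ(\Phi\tens\id_S),\qquad (f\tens\id_S)\circ\iota_{A\tens S}=\iota_{B\tens S}\circ(\phi\tens\id_S).
\]
(ii) $J$ is generated as a C*-algebra by $\iota_{\s E}(\s E)\cup\iota_A(A)$ (\ref{prop32} 4), so two $*$-homomorphisms out of $J$ coincide as soon as they agree on $\iota_{\s E}(\s E)$ and $\iota_A(A)$.

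For the implication ``$\Phi$ equivariant $\Rightarrow$ $f$ equivariant'' the idea is to compute, for $\xi\in\s E$,
\begin{align*}
\delta_{J'}(f(\iota_{\s E}(\xi)))&=\delta_{J'}(\iota_{\s F}(\Phi\xi))=\iota_{\s F\tens S}(\delta_{\s F}(\Phi\xi))=\iota_{\s F\tens S}((\Phi\tens\id_S)\delta_{\s E}(\xi))\\
&=(f\tens\id_S)(\iota_{\s E\tens S}(\delta_{\s E}(\xi)))=(f\tens\id_S)(\delta_J(\iota_{\s E}(\xi))),
\end{align*}
and, using that $\phi$ is $\cal G$-equivariant (hence $(\phi\tens\id_S)\delta_A=\delta_B\circ\phi$), the analogous computation with $\iota_A(a)$, $a\in A$, in place of $\iota_{\s E}(\xi)$. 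By fact (ii) this gives $(f\tens\id_S)\delta_J=\delta_{J'}\circ f$ on all of $J$, and then $f\circ\beta_J=\beta_{J'}$ comes for free from \ref{rkEqMorph}, so $f$ is $\cal G$-equivariant. For the converse one runs the same chain backwards: if $f$ is $\cal G$-equivariant then, for $\xi\in\s E$,
\begin{align*}
\iota_{\s F\tens S}(\delta_{\s F}(\Phi\xi))&=\delta_{J'}(\iota_{\s F}(\Phi\xi))=\delta_{J'}(f(\iota_{\s E}(\xi)))=(f\tens\id_S)(\delta_J(\iota_{\s E}(\xi)))\\
&=(f\tens\id_S)(\iota_{\s E\tens S}(\delta_{\s E}(\xi)))=\iota_{\s F\tens S}((\Phi\tens\id_S)\delta_{\s E}(\xi)),
\end{align*}
and since $\iota_{\s F\tens S}$ is injective, $\delta_{\s F}(\Phi\xi)=(\Phi\tens\id_S)\delta_{\s E}(\xi)$ for all $\xi$, i.e.\ $\Phi$ is $\cal G$-equivariant in the sense of \ref{def1}.

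The part that will need genuine care is fact (i): establishing the two intertwining identities for $f\tens\id_S$ and the maps $\iota_{\s E\tens S},\iota_{A\tens S}$. Everything there reduces to checking how the $(\phi\tens\id_S)$-compatible unitary $(\Phi\oplus\phi)\tens\id_S$ implements $f\tens\id_S$ on $\M(J\tens S)=\Lin((\s E\tens S)\oplus(A\tens S))$, together with unwinding the definition of $\Phi\tens\id_S$ as a map on $\Lin(A\tens S,\s E\tens S)$ (\ref{not3}); modulo this, the argument is a purely formal diagram chase built on Proposition \ref{prop1}, and in particular it uses nowhere any regularity hypothesis on $\cal G$.
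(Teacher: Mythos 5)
Your argument is correct and follows essentially the same route as the paper's proof: both rest on the two intertwining identities $(f\tens\id_S)\circ\iota_{\s E\tens S}=\iota_{\s F\tens S}\circ(\Phi\tens\id_S)$ and $(f\tens\id_S)\circ\iota_{A\tens S}=\iota_{B\tens S}\circ(\phi\tens\id_S)$, which the paper likewise records as immediate from the definitions, and then check the coaction identity on the generators $\iota_{\s E}(\s E)\cup\iota_A(A)$ of the linking algebra. The only (harmless) divergence is that for the compatibility $f\circ\beta_J=\beta_{L}$ you invoke Remark \ref{rkEqMorph}, whereas the paper verifies it directly by the $2\times 2$ matrix description of $\beta_J$; both are valid since $f$ is a *-isomorphism, hence non-degenerate.
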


\begin{proof}
Let $J:=\K(\s E\oplus A)$ and $L:=\K(\s F\oplus B)$. Assume that $\Phi$ is equivariant. The following formulas are immediate consequences of the definitions:
\begin{align*}
\iota_{B\tens S}\circ(\phi\tens\id_S)(m)&=(f\tens\id_S)\circ\iota_{A\tens S}(m),\quad m\in\M(A\tens S);\\
\iota_{\s F\tens S}\circ(\Phi\tens\id_S)(T)&=(f\tens\id_S)\circ\iota_{\s E\tens S}(T),\quad T\in\Lin(A\tens S,\s E\tens S).
\end{align*}
By combining the first (resp.\ second) formula with the $\cal G$-equivariance of $\phi$ (resp.\ $\Phi$) and the fact that $f\circ\iota_A=\iota_B\circ\phi$ (resp.\ $f\circ\iota_{\s E}=\iota_{\s F}\circ \Phi$), we obtain
\begin{align*}
\delta_{L}\circ f(\iota_A(a))&=(f\tens\id_S)\delta_{J}(\iota_A(a)),\quad \text{for all } a\in A\\[0.2cm]
(\text{resp.\ } \delta_{L}\circ f(\iota_{\s E}(\xi))&=(f\tens\id_S)\delta_{J}(\iota_{\s E}(\xi)),\quad \text{for all }\xi\in\s E).
\end{align*}
Therefore, we have $\delta_{L}(f(x))=(f\tens\id_S)\delta_{J}(x)$ for all $x\in J$. Moreover, by definition of the fibration map on a linking C*-algebra (cf.\ \ref{prop1}) and the $\cal G$-equivariance of $\Phi$, we have
\[
f(\beta_{J}(n^{\rm o}))=\begin{pmatrix} \beta_{\s E}(n^{\rm o}) & 0 \\ 0 & \beta_A(n^{\rm o})\end{pmatrix}=\begin{pmatrix}\Phi\circ\beta_{\s E}(n^{\rm o})\circ \Phi^{-1} & 0 \\ 0 & \phi(\beta_A(n^{\rm o}))\end{pmatrix}=\beta_{L}(n^{\rm o}),
\]
for all $n\in N$. The converse is proved in a similar way.
\end{proof}

\paragraph{Continuous actions.} In this paragraph, we introduce the notion of continuity for actions of the quantum groupoid $\cal G$ on Hilbert $A$-modules. If $\cal G$ is regular, we prove that any action of $\cal G$ is necessarily continuous.

\begin{defin}
An action $(\beta_{\s E},\delta_{\s E})$ of $\cal G$ on a Hilbert $A$-module $\s E$ is said to be continuous if we have $[(1_{\s E}\tens S)\delta_{\s E}(\s E)]=(\s E\tens S)q_{\beta_A\alpha}$. A $\cal G$-equivariant Hilbert $A$-module is a Hilbert $A$-module $\s E$ endowed with a continuous action of $\cal G$.
\end{defin}

\begin{prop}\label{prop31}
Let $\s E$ be a $\cal G$-equivariant Hilbert $A$-module. Let $B:=\K(\s E)$. We have the following statements:
\begin{enumerate}
\item the action $(\beta_B,\delta_B)$ of $\cal G$ on $B$ defined in \ref{prop1} is strongly continuous;
\item we define a continuous action of $\cal G$ on the Hilbert $B$-module $\s E^*$ by setting:
\begin{itemize}
\item $\beta_{\s E^*}(n^{\rm o})T:=\beta_{A}(n^{\rm o})\circ T$, for all $n\in N$ and $T\in\s E^*$,
\item $\delta_{\s E^*}(T)x:=\delta_{\s E}(T^*)^*\circ x$, for all $T\in\s E^*$ and $x\in B\tens S$;
\end{itemize}
where we have applied the usual identifications $B\tens S=\K(\s E\tens S)$ and $\s E=\K(A,\s E)$.\qedhere
\end{enumerate}
\end{prop}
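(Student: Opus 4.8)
The plan is to handle the two parts separately, reducing each to the already-established continuity properties of the action $(\beta_{\s E},\delta_{\s E})$ on $\s E$ and of the induced action $(\beta_{\s E},\delta_{\K(\s E)})$ on $\K(\s E)$ from Proposition \ref{prop1}, together with the identities recorded in Remarks \ref{rk2}. Throughout I use the standing identifications $\K(\s E)\tens S=\K(\s E\tens S)$, $\s E^*\tens S=(\s E\tens S)^*=\K(\s E\tens S,A\tens S)$ and $\s E=\K(A,\s E)$.

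For the first assertion, write $B=\K(\s E)=[\s E\s E^*]$ and use the formula $\delta_B(\theta_{\xi,\eta})=\delta_{\s E}(\xi)\circ\delta_{\s E}(\eta)^*$ from (\ref{eq25}), so that $\delta_B(B)=[\delta_{\s E}(\s E)\,\delta_{\s E}(\s E)^*]$ and hence $[\delta_B(B)(1_B\tens S)]=\big[\delta_{\s E}(\s E)\,\big((1_{\s E}\tens S)\delta_{\s E}(\s E)\big)^*\big]$. Continuity of $(\beta_{\s E},\delta_{\s E})$ gives $[(1_{\s E}\tens S)\delta_{\s E}(\s E)]=(\s E\tens S)q_{\beta_A\alpha}$, and since $\delta_{\s E}(\xi)q_{\beta_A\alpha}=\delta_{\s E}(\xi)$ by Remarks \ref{rk2}, the right-hand side collapses to $[\delta_{\s E}(\s E)(\s E\tens S)^*]$; factoring a copy of $1_A\tens S$ through on the right and invoking $[\delta_{\s E}(\s E)(1_A\tens S)]=q_{\beta_{\s E}\alpha}(\s E\tens S)$ (again Remarks \ref{rk2}) turns this into $q_{\beta_{\s E}\alpha}\K(\s E\tens S)=q_{\beta_B\alpha}(B\tens S)$, where $q_{\beta_B\alpha}=q_{\beta_{\s E}\alpha}$ because $\beta_B=\beta_{\s E}$. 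That is exactly the strong continuity of $\delta_B$.

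For the second assertion I would first check that the proposed formulas make sense. The map $\beta_{\s E^*}(n^{\rm o})$, left composition with $\beta_A(n^{\rm o})$ on $\s E^*$, is an adjointable operator on the Hilbert $B$-module $\s E^*$ (adjoint: left composition with $\beta_A(n^{\rm o})^*$), and $\beta_{\s E^*}$ is a non-degenerate *-homomorphism. The assignment $T\mapsto\delta_{\s E}(T^*)^*$ is linear in $T$, belongs to $\Lin(B\tens S,\s E^*\tens S)$ with adjoint $S'\mapsto\delta_{\s E}(T^*)\circ S'$, and lies in $\widetilde{\M}(\s E^*\tens S)$ precisely because $\delta_{\s E}(T^*)\in\widetilde{\M}(\s E\tens S)$: the two conditions defining $\widetilde{\M}(\s E^*\tens S)$ become, after taking adjoints, the two conditions defining the relative multiplier $\delta_{\s E}(T^*)$. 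Granting this, the first, third and fourth conditions of Definition \ref{hilbmodequ} are essentially formal. The first follows from $\langle\xi^*,\eta^*\rangle_B=\theta_{\xi,\eta}$, from (\ref{eq25}), and from $\delta_{\s E}(k\xi)=\delta_B(k)\delta_{\s E}(\xi)$ for $k\in\K(\s E)$ (itself a consequence of (\ref{eq25}) and the first condition of Definition \ref{hilbmodequ}). The third follows from $\delta_A(\beta_A(n^{\rm o}))=(1_A\tens\beta(n^{\rm o}))q_{\beta_A\alpha}$ (Proposition \ref{prop39}) together with $[\alpha(N),\beta(N^{\rm o})]=0$. The fourth I would deduce by identifying $\delta_{\s E^*}$ with the restriction of the linking-algebra action $\delta_J$ of Proposition \ref{prop1} to the corner $\iota_{\s E^*}(\s E^*)=\iota_{\s E}(\s E)^*$ of $J=\K(\s E\oplus A)$ (using $\iota_{\s E^*}(\xi^*)=\iota_{\s E}(\xi)^*$ from Proposition \ref{prop32}), so that coassociativity descends from that of $\delta_J$.

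Finally I would verify the second condition of Definition \ref{hilbmodequ} for $\s E^*$ and the continuity of the action on $\s E^*$; this is where the hypothesis that $\s E$ is $\cal G$-equivariant is actually used, in a ``dual'' fashion. On the one hand, the continuity condition $[(1_{\s E^*}\tens S)\delta_{\s E^*}(\s E^*)]=(\s E^*\tens S)q_{\beta_B\alpha}$ reduces, after taking adjoints, to $[\delta_{\s E}(\s E)(1_A\tens S)]=q_{\beta_{\s E}\alpha}(\s E\tens S)$, i.e. to the second condition of Definition \ref{hilbmodequ} for $\s E$. On the other hand the second condition for $\s E^*$, $[\delta_{\s E^*}(\s E^*)(1_B\tens S)]=q_{\beta_{\s E^*}\alpha}(\s E^*\tens S)$, reduces to $[(1_{\s E}\tens S)\delta_{\s E}(\s E)]=(\s E\tens S)q_{\beta_A\alpha}$, i.e. to the continuity of the action on $\s E$, together with the identity that $q_{\beta_{\s E^*}\alpha}$ equals left composition with $q_{\beta_A\alpha}$. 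I expect the only genuine obstacle to be keeping these identifications and the placement of adjoints straight; once that bookkeeping is fixed, every individual step is a one-line manipulation.
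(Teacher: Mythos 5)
Your proof of part 1 is essentially identical to the paper's: the same factorization $\delta_B(\theta_{\xi,\eta})(1_B\tens y)=\delta_{\s E}(\xi)\bigl((1_{\s E}\tens y^*)\delta_{\s E}(\eta)\bigr)^*$, the same use of continuity of $\delta_{\s E}$ to absorb $q_{\beta_A\alpha}$, and the same final factorization of $S$ through $[\delta_{\s E}(\s E)(1_A\tens S)]=q_{\beta_{\s E}\alpha}(\s E\tens S)$. For part 2 the paper only writes ``straightforward''; your adjoint-duality bookkeeping (conditions 2 and continuity for $\s E^*$ swapping with continuity and condition 2 for $\s E$, coassociativity descending from $\delta_J$ via $\iota_{\s E^*}(\xi^*)=\iota_{\s E}(\xi)^*$) is a correct way to fill that in.
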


\begin{proof}
1. We have $[\delta_B(B)(1_B\tens S)]=[\delta_B(\theta_{\xi,\eta})(1_B\tens y)\,;\,\xi,\, \eta\in\s E,\,y\in S]$. However, we have
$
\delta_{B}(\theta_{\xi,\eta})(1_B\tens y)=\delta_{\s E}(\xi)\delta_{\s E}(\eta)^*(1_B\tens y)=\delta_{\s E}(\xi)((1_B\tens y^*)\delta_{\s E}(\eta))^*
$
for all $y\in S$ and $\xi,\,\eta\in\s E$. It then follows from the continuity of the action $(\beta_{\s E},\delta_{\s E})$ and \ref{rk2} that 
\[
[\delta_B(B)(1_B\tens S)]=[\delta_{\s E}(\s E)q_{\beta_A\alpha}(\s E^*\tens S)]=[\delta_{\s E}(\s E)(\s E^*\tens S)].
\] 
Now, by combining the formulas $[\delta_{\s E}(\s E)(1_{\s E}\tens S)]=q_{\beta_{\s E}\alpha}(\s E\tens S)$ and $B=[\s E\s E^*]$ with the fact that any element of $S$ can be written as a product of two elements of $S$, we obtain 
$
[\delta_B(B)(1_B\tens S)]=[\delta_{\s E}(\s E)(1_{\s E}\tens S)(\s E^*\tens S)]=q_{\beta_{\s E}\alpha}(B\tens S).
$\newline
2. Straightforward.
\end{proof}

\begin{prop}\label{propcont}
Let $\s E$ be a Hilbert $A$-module endowed with an action $(\beta_{\s E},\delta_{\s E})$ of $\cal G$ on $\s E$. Let $J:=\K(\s E\oplus A)$ be the associated linking C*-algebra. Let $(\beta_J,\delta_J)$ be the action defined in \ref{prop1}. Then, the action $(\beta_{\s E},\delta_{\s E})$ is continuous if, and only if, the action $(\beta_J,\delta_J)$ is strongly continuous.
\end{prop}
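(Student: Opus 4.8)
The plan is to pass to the linking-algebra matrix picture and compare the two density conditions corner by corner. Write $q_{\beta_J\alpha}=\begin{pmatrix}q_{\beta_{\s E}\alpha}&0\\0&q_{\beta_A\alpha}\end{pmatrix}$ as in \ref{propfib}, and recall the decompositions $J=\K(\s E\oplus A)=\begin{pmatrix}\K(\s E)&\s E\\ \s E^{*}&A\end{pmatrix}$ and, under the usual identifications, $J\tens S=\K\big((\s E\tens S)\oplus(A\tens S)\big)=\begin{pmatrix}\K(\s E)\tens S&\s E\tens S\\ \s E^{*}\tens S&A\tens S\end{pmatrix}$. By \ref{prop1} the map $\delta_J$ is ``diagonal'' with respect to these decompositions: $\delta_J\circ\iota_A=\iota_{A\tens S}\circ\delta_A$, $\delta_J\circ\iota_{\s E}=\iota_{\s E\tens S}\circ\delta_{\s E}$, $\delta_J\circ\iota_{\K(\s E)}=\iota_{\K(\s E\tens S)}\circ\delta_{\K(\s E)}$, and, taking adjoints of the second identity and using \ref{prop32} 2 together with $\delta_{\s E^{*}}(\xi^{*})=\delta_{\s E}(\xi)^{*}$ (cf.\ \ref{prop31} 2), also $\delta_J\circ\iota_{\s E^{*}}=\iota_{\s E^{*}\tens S}\circ\delta_{\s E^{*}}$. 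Since $1_J\tens S$ is block-diagonal, right multiplication by it preserves each corner; hence $[\delta_J(J)(1_J\tens S)]$ is the direct sum of the corner pieces $\iota_{\K(\s E\tens S)}\big([\delta_{\K(\s E)}(\K(\s E))(1_{\s E}\tens S)]\big)$, $\iota_{\s E\tens S}\big([\delta_{\s E}(\s E)(1_A\tens S)]\big)$, $\iota_{\s E^{*}\tens S}\big([\delta_{\s E^{*}}(\s E^{*})(1_{\s E}\tens S)]\big)$ and $\iota_{A\tens S}\big([\delta_A(A)(1_A\tens S)]\big)$, while $q_{\beta_J\alpha}(J\tens S)$ is the direct sum of $q_{\beta_{\s E}\alpha}(\K(\s E)\tens S)$, $q_{\beta_{\s E}\alpha}(\s E\tens S)$, $q_{\beta_A\alpha}(\s E^{*}\tens S)$ and $q_{\beta_A\alpha}(A\tens S)$. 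By Definition \ref{defactmqg}, strong continuity of $(\beta_J,\delta_J)$ is the equality of these two subspaces, i.e.\ the conjunction of the four corner equalities.

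Now I compare the corners. The $(A,A)$-corner $[\delta_A(A)(1_A\tens S)]=q_{\beta_A\alpha}(A\tens S)$ holds because $A$ is a $\cal G$-C*-algebra; the $(\s E,A)$-corner $[\delta_{\s E}(\s E)(1_A\tens S)]=q_{\beta_{\s E}\alpha}(\s E\tens S)$ holds for any action, being condition 2 of \ref{hilbmodequ} (cf.\ \ref{rk2} 2). For the $(\s E^{*},\s E)$-corner, the identity $\delta_{\s E^{*}}(\xi^{*})(1_{\s E}\tens s)=\big((1_{\s E}\tens s^{*})\delta_{\s E}(\xi)\big)^{*}$ yields $[\delta_{\s E^{*}}(\s E^{*})(1_{\s E}\tens S)]=[(1_{\s E}\tens S)\delta_{\s E}(\s E)]^{*}$, and likewise $q_{\beta_A\alpha}(\s E^{*}\tens S)=\big((\s E\tens S)q_{\beta_A\alpha}\big)^{*}$; so this corner equality holds if and only if $[(1_{\s E}\tens S)\delta_{\s E}(\s E)]=(\s E\tens S)q_{\beta_A\alpha}$, which is precisely the continuity of $(\beta_{\s E},\delta_{\s E})$. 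Finally the $(\K(\s E),\K(\s E))$-corner $[\delta_{\K(\s E)}(\K(\s E))(1_{\s E}\tens S)]=q_{\beta_{\s E}\alpha}(\K(\s E)\tens S)$ is exactly the strong continuity of the action $(\beta_B,\delta_B)$ on $B:=\K(\s E)$, which is the content of \ref{prop31} 1 and holds whenever $(\beta_{\s E},\delta_{\s E})$ is continuous.

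Assembling: if $(\beta_{\s E},\delta_{\s E})$ is continuous, all four corner equalities hold (using \ref{prop31} 1 for the last one), so $(\beta_J,\delta_J)$ is strongly continuous; conversely, if $(\beta_J,\delta_J)$ is strongly continuous then in particular the $(\s E^{*},\s E)$-corner equality holds, which forces $(\beta_{\s E},\delta_{\s E})$ to be continuous. The only mildly delicate steps are verifying that $\delta_J$ is genuinely diagonal for the four corners -- which, apart from the $\s E^{*}$-entry obtained by adjunction, is already recorded in \ref{prop1} -- and the adjunction that identifies the $(\s E^{*},\s E)$-corner with the continuity condition for $\delta_{\s E}$; I expect no substantial obstacle beyond this routine bookkeeping.
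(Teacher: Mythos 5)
Your proof is correct and follows essentially the same route as the paper's: both decompose the density condition for $\delta_J$ corner by corner with respect to the matrix picture of $\K(\s E\oplus A)$ and invoke \ref{prop31} for the $\K(\s E)$- and $\s E^*$-corners. The only cosmetic difference is that you read off the continuity of $(\beta_{\s E},\delta_{\s E})$ from the lower-left corner of $[\delta_J(J)(1_J\tens S)]$ by adjunction, whereas the paper's converse passes to $[(1_J\tens S)\delta_J(J)]=(J\tens S)q_{\beta_J\alpha}$ and uses the upper-right corner; the two are equivalent.
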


\begin{proof}
Assume that the action $(\beta_{\s E},\delta_{\s E})$ is continuous. Let $B:=\K(\s E)$. Let us prove that $(\beta_J,\delta_J)$ is strongly continuous. Let $x\in J$ and $s\in S$. Let us write 
\[
x=\begin{pmatrix}b & \xi\\ \eta^* & a\end{pmatrix}\!,\quad \text{where }a\in A,\ b\in B\; \text{and} \; \xi,\,\eta\in\s E.
\]
Then, we have
\begin{multline*}
\delta_J(x)(1_J\tens s)=\iota_{B\tens S}(\delta_B(b)(1_B\tens s))+\iota_{\s E\tens S}(\delta_{\s E}(\xi)(1_A\tens s))+\iota_{\s E^*\tens S}(\delta_{\s E^*}(\eta^*)(1_B\tens s))\\
+\iota_{A\tens S}(\delta_A(a)(1_A\tens s)).
\end{multline*}
Then, the continuity of $(\beta_J,\delta_J)$ follows from the continuity of $(\beta_A,\delta_A)$, $(\beta_{\s E},\delta_{\s E})$ and the continuity of $(\beta_B,\delta_B)$ and $(\beta_{\s E^*},\delta_{\s E^*})$ (\ref{prop31}). Conversely, assume that $(\beta_J,\delta_J)$ is continuous. We have $\iota_{\s E\tens S}((\s E\tens S)q_{\beta_A\alpha})=(\iota_{\s E}(\s E)\tens S)q_{\beta_J\alpha}$. Let $\zeta\in\s E$ and $y\in S$. As in the above computation, we prove that $\iota_{\s E\tens S}((\zeta\tens y)q_{\beta_A\alpha})$ is the norm limit of finite sums of elements of the following forms:
$\iota_{B\tens S}((1_B\tens s)\delta_B(b))$, $\iota_{\s E\tens S}((1_{\s E}\tens s)\delta_{\s E}(\xi))$, $\iota_{\s E^*\tens S}((1_{\s E^*}\tens s)\delta_{\s E^*}(\eta^*))$ and $\iota_{A\tens S}((1_A\tens s)\delta_A(a))$. By multiplying by $\iota_{B\tens S}(1_{B\tens S})$ on the left and by $\iota_{A\tens S}(1_{A\tens S})$ on the right, we have that $\iota_{\s E\tens S}((\zeta\tens y)q_{\beta_A\alpha})$ is the norm limit of finite sums of elements of the form $\iota_{\s E\tens S}((1_{\s E}\tens s)\delta_{\s E}(\xi))$. The continuity of $(\beta_{\s E},\delta_{\s E})$ follows from the fact that $\iota_{\s E\tens S}$ is isometric.
\end{proof}

\begin{defin}\label{defLinkAlg}
A linking $\cal G$-C*-algebra is a quintuple $(J,\beta_J,\delta_J,e_1,e_2)$ consisting of a C*-algebra $J$ endowed with a continuous action $(\beta_J,\delta_J)$ and nonzero self-adjoint projections $e_1,e_2\in\M(J)$ satisfying the following conditions:
\begin{enumerate}
\item $e_1+e_2=1_J$;
\item $[Je_jJ]=J$, for all $j=1,2$;
\item $\delta_J(e_j)=q_{\beta_J\alpha}(e_j\tens 1_S)$, for all $j=1,2$.\qedhere
\end{enumerate} 
\end{defin}

\begin{rks}\label{rkLinkAlg} 
\begin{itemize}
\item Let $(A,\beta_A,\delta_A)$ be a $\cal G$-C*-algebra and $m\in\M(A)$ such that $\delta_A(m)=q_{\beta_A\alpha}(m\tens 1_S)$. Let $n\in N$, we have $[m,\,\beta_A(n^{\rm o})]=0$. Indeed, since $\alpha$ and $\beta$ commute pointwise we have $[q_{\beta_A\alpha}(1_A\tens\beta(n^{\rm o})),\,q_{\beta_A\alpha}(m\tens 1_S)]=0$. It then follows that $\delta_A([m,\,\beta_A(n^{\rm o})])=[\delta_A(m\beta_A(n^{\rm o})),\,\delta_A(\beta_A(n^{\rm o})m)]=0$. Hence, $[m,\,\beta_A(n^{\rm o})]=0$ by faithfulness of $\delta_A$. In particular, we have $[q_{\beta_A\alpha},\,m\tens 1_S]=0$.
\item Let $(J,\beta_J,\delta_J,e_1,e_2)$ be a linking $\cal G$-C*-algebra. By restriction of the action $(\beta_J,\delta_J)$, the corner $e_2Je_2$ (resp.\ $e_1Je_2$) turns into a $\cal G$-C*-algebra (resp.\ $\cal G$-equivariant Hilbert C*-module over $e_2Je_2$). Furthermore, we also have the identification of $\cal G$-C*-algebras $\K(e_1Je_2)=e_1Je_1$.
\item Conversely, if $(\s E,\beta_{\s E},\delta_{\s E})$ is a $\cal G$-equivariant Hilbert $A$-module, then the C*-algebra $J:=\K(\s E\oplus A)$ endowed with the continuous action $(\beta_J,\delta_J)$ (cf.\ \ref{prop1}, \ref{propcont}) and the projections $e_1:=\iota_{\s E}(1_{\s E})$ and $e_2:=\iota_{A}(1_A)$ is a linking $\cal G$-C*-algebra.\qedhere
\end{itemize}
\end{rks}

\begin{lem} \label{lem21}
Let $\s E$ be a Hilbert $A$-module endowed with an action $(\beta_{\s E},\delta_{\s E})$ of $\cal G$. We have
$
\s E=[(\id_{\s E}\tens\omega)\delta_{\s E}(\xi)\, ; \, \xi\in\s E,\, \omega\in\B(\s H)_*]
$ (cf.\ \ref{not4}).
\end{lem}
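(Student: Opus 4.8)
The plan is to prove the two inclusions separately. The inclusion $[(\id_{\s E}\tens\omega)\delta_{\s E}(\xi)\,;\,\xi\in\s E,\,\omega\in\B(\s H)_*]\subset\s E$ is immediate from \ref{not4}: since $\delta_{\s E}(\xi)\in\widetilde{\M}(\s E\tens S)$ and $S\subset\B(\s H)$ is a non-degenerate C*-subalgebra, each slice $(\id_{\s E}\tens\omega)\delta_{\s E}(\xi)$ already lies in $\s E$. The real content is the reverse inclusion.

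For that, I would first recall from \ref{rk2} that condition \ref{hilbmodequ} 2 can be written as $[\delta_{\s E}(\s E)(1_A\tens S)]=q_{\beta_{\s E}\alpha}(\s E\tens S)$. Next I would use the slice-map identity $(\id_{\s E}\tens\omega)(T(1_A\tens s))=(\id_{\s E}\tens\omega(\cdot\,s))(T)$ for $T\in\Lin(A\tens S,\s E\tens S)$, $s\in S$, $\omega\in\B(\s H)_*$ (standard in the relative multiplier formalism, cf.\ \S1 \cite{BS1}; it can also be read off from the defining relation of \ref{not4} via the linking C*-algebra $J:=\K(\s E\oplus A)$ and the matrix picture of \ref{rk3}), noting that $\omega(\cdot\,s)\in\B(\s H)_*$. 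In particular each $(\id_{\s E}\tens\omega)\big(\delta_{\s E}(\xi)(1_A\tens s)\big)$ equals $(\id_{\s E}\tens\omega')\delta_{\s E}(\xi)$ for some $\omega'\in\B(\s H)_*$, hence belongs to the right-hand side. Since $z\mapsto(\id_{\s E}\tens\omega)(z)$ is norm-continuous and $\{\delta_{\s E}(\xi)(1_A\tens s)\,;\,\xi\in\s E,\,s\in S\}$ has closed linear span $q_{\beta_{\s E}\alpha}(\s E\tens S)$, it therefore suffices to establish
\[
\big[(\id_{\s E}\tens\omega)(z)\,;\,z\in q_{\beta_{\s E}\alpha}(\s E\tens S),\,\omega\in\B(\s H)_*\big]=\s E .
\]

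To prove this equality I would use the explicit description $q_{\beta_{\s E}\alpha}=\sum_{I\in\s I}n_I^{-1}\,\beta_{\s E}(\varepsilon_I^{\rm o})\tens\alpha(\varepsilon_{\overline I})$, the Hilbert-module analogue of \ref{lem23} 1 (it follows from \ref{propfib} together with \ref{lem23} 1 applied to $J$, by comparing the corners of $q_{\beta_J\alpha}=q_{\beta_{\s E}\alpha}\oplus q_{\beta_A\alpha}$). For $\eta\in\s E$, $s\in S$, $\omega\in\B(\s H)_*$ this gives
\[
(\id_{\s E}\tens\omega)\big(q_{\beta_{\s E}\alpha}(\eta\tens s)\big)=\sum_{I\in\s I}n_I^{-1}\,\omega\big(\alpha(\varepsilon_{\overline I})s\big)\,\beta_{\s E}(\varepsilon_I^{\rm o})\eta .
\]
Because $\alpha$ is faithful, $(\varepsilon_{\overline I})_{I\in\s I}$ is a basis of $N$, and $S$ acts non-degenerately on $\s H$, no nonzero family $(c_I)_{I\in\s I}$ satisfies $\sum_I c_I\,\alpha(\varepsilon_{\overline I})s=0$ for all $s\in S$; hence the vectors $\big(n_I^{-1}\omega(\alpha(\varepsilon_{\overline I})s)\big)_{I\in\s I}$ span the finite-dimensional space $\GC^{\s I}$ as $s$ and $\omega$ vary. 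Taking a suitable finite combination equal to the $I_0$-th coordinate vector, one obtains $\beta_{\s E}(\varepsilon_{I_0}^{\rm o})\eta$ in the left-hand side of the displayed equation, for every $I_0\in\s I$ and $\eta\in\s E$. Since $\beta_{\s E}$ is non-degenerate, $[\beta_{\s E}(N^{\rm o})\s E]=\s E$, so the equality follows, and combining the two inclusions yields the lemma.

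The main obstacle is precisely this last equality. In the locally compact quantum group case ($N=\GC$, $q_{\beta_{\s E}\alpha}=1$) it is trivial, but here $q_{\beta_{\s E}\alpha}(\s E\tens S)$ is in general a proper submodule of $\s E\tens S$, and one must check that slicing it still recovers all of $\s E$; this is exactly where the finite-dimensionality of the basis, the faithfulness of $\alpha$, and the non-degeneracy of $S$ and of $\beta_{\s E}$ are used. Everything else — the easy inclusion and the slice-map identity — is routine bookkeeping with the relative multiplier module $\widetilde{\M}(\s E\tens S)$ and the linking algebra.
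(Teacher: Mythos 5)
Your proof is correct and follows essentially the same route as the paper: the easy inclusion from \ref{not4}, and the reverse inclusion from $[\delta_{\s E}(\s E)(1_A\tens S)]=q_{\beta_{\s E}\alpha}(\s E\tens S)$ together with the fact that slicing $q_{\beta_{\s E}\alpha}(\s E\tens S)$ recovers all of $\s E$. The only difference is that your final spanning argument over $\GC^{\s I}$ can be shortcut: the paper simply picks a single $\omega\in\B(\s H)_*$ with $\omega\circ\alpha=\epsilon$, for which $(\id_{\s E}\tens\omega)(q_{\beta_{\s E}\alpha})=1_{\s E}$, and writes $\omega=y\omega'$ with $y\in S$.
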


\begin{proof}
We have $\s E\supset[(\id_{\s E}\tens\omega)\delta_{\s E}(\xi)\, ; \, \xi\in\s E,\, \omega\in\B(\s H)_*]$ (cf.\ \ref{not4}). To obtain the converse inclusion, we combine the fact that there exists $\omega\in\B(\s H)_*$ such that $(\id_{\s E}\tens\omega)(q_{\beta_{\s E}\alpha})=1_{\s E}$ with the formula $[\delta_{\s E}(\s E)(1_B\tens S)]=q_{\beta_{\s E}\alpha}(\s E\tens S)$.
\end{proof}

Now, we can state the main results of this chapter.

\begin{thm}\label{theo5}
Let $A$ be a $\cal G$-C$^*$-algebra and $\s E$ a Hilbert $A$-module acted upon by $\cal G$. Let $J:=\K(\s E\oplus A)$ be the associated linking C*-algebra endowed with the action $(\beta_J,\delta_J)$ defined in \ref{prop1}. If $\cal G$ is semi-regular (resp.\ regular), then the action $(\beta_J,\delta_J)$ is weakly (resp.\ strongly) continuous.
\end{thm}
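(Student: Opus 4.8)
The plan is to reduce the statement about the linking algebra $J = \K(\s E \oplus A)$ to the already-established results about actions on C*-algebras, namely Proposition \ref{prop38} (semi-regular case) and Theorem \ref{theo6} (regular case). The key observation is that the weak-continuity space of $\delta_J$ can be decomposed using the four corners of the linking algebra. Concretely, by Remarks \ref{rk3} every element of $J$ is a $2\times 2$ matrix with entries in $\K(\s E)$, $\s E$, $\s E^*$ and $A$, and the morphisms $\iota_{\K(\s E)}, \iota_{\s E}, \iota_{\s E^*}, \iota_A$ intertwine $\delta_J$ with $\delta_{\K(\s E)}, \delta_{\s E}, \delta_{\s E^*}, \delta_A$ respectively (Proposition \ref{prop1}). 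So the space $T_J := [(\id_J \tens \omega)\delta_J(x)\,;\,x\in J,\,\omega\in\B(\s H)_*]$ splits as a sum of the corresponding spaces for the four corners.

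The steps, in order, are as follows. First I would show $q_{\beta_J\alpha}(J \tens S) q_{\beta_A \alpha}$-type inclusions reduce, under the matrix decomposition, to statements about $\delta_A(A)$, $\delta_{\s E}(\s E)$, $\delta_{\s E^*}(\s E^*)$ and $\delta_B(B)$ where $B := \K(\s E)$. For the semi-regular case: apply Proposition \ref{prop38} to the $\cal G$-C*-algebra $A$ to get that $[(\id_A \tens \omega)\delta_A(A)]$ is a C*-algebra (in fact contains $A$ since $A$ is a $\cal G$-C*-algebra, hence weakly continuous as noted after Theorem \ref{theo6}); apply it also to $B = \K(\s E)$, which is a $\cal G$-C*-algebra by Proposition \ref{prop31} 1; and for the off-diagonal corners use Lemma \ref{lem21}, which gives $\s E = [(\id_{\s E}\tens\omega)\delta_{\s E}(\xi)\,;\,\xi\in\s E,\,\omega]$ and the analogous identity for $\s E^*$ (applying Lemma \ref{lem21} to the Hilbert $B$-module $\s E^*$ with its action from Proposition \ref{prop31} 2). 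Summing the four pieces through the $\iota$-maps and using that $J$ is generated by these corners (Proposition \ref{prop32} 4), one obtains $[(\id_J\tens\omega)\delta_J(J)] = J$, i.e. weak continuity of $(\beta_J,\delta_J)$; combined with Proposition \ref{prop38} applied to $J$ directly, this space is also a C*-algebra, confirming consistency.

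For the regular case the argument is cleaner: $(\beta_J,\delta_J)$ is an action of $\cal G$ on the C*-algebra $J$ (Proposition \ref{prop1} b)), and it is weakly continuous by the semi-regular part just proved (regular implies semi-regular). Theorem \ref{theo6} then applies verbatim: any weakly continuous action of a regular $\cal G$ is strongly continuous. Hence $[\delta_J(J)(1_J \tens S)] = q_{\beta_J\alpha}(J \tens S)$, which is the desired conclusion.

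The main obstacle I anticipate is the bookkeeping in the semi-regular case: one must be careful that the weak-continuity space $T_J$ genuinely decomposes as the (closed linear span of the) sum of the images under $\iota_{\K(\s E)}, \iota_{\s E}, \iota_{\s E^*}, \iota_A$ of the respective weak-continuity spaces, and that no cross terms survive. This uses the intertwining relations $\iota_{\bullet\tens S}\circ\delta_\bullet = \delta_J\circ\iota_\bullet$ from Proposition \ref{prop1} together with the matrix calculus of Remarks \ref{rk3}; the slightly delicate point is that $(\id_J\tens\omega)$ applied to a matrix $\iota_{\s E\tens S}(T)$ yields $\iota_{\s E}((\id_{\s E}\tens\omega)(T))$ (cf.\ Proposition-Definition \ref{not4}), so the decomposition is compatible with slicing. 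Once this is set up, the four pieces are handled by the cited results with no further work; the regular case is then essentially immediate from Theorem \ref{theo6}.
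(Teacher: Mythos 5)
Your reduction of the regular case to Theorem \ref{theo6} via weak continuity is exactly the paper's argument and is fine. The semi-regular case, however, has a genuine gap: to put the corners $\iota_{\K(\s E)}(\K(\s E))$ and $\iota_{\s E^*}(\s E^*)$ inside the continuity space $T:=[(\id_J\tens\omega)\delta_J(x)\,;\,x\in J,\,\omega\in\B(\s H)_*]$, you invoke Proposition \ref{prop31} (strong continuity of $\delta_{\K(\s E)}$ and the action on $\s E^*$). But Proposition \ref{prop31} is stated for a $\cal G$-\emph{equivariant} Hilbert module, i.e.\ it assumes the action $(\beta_{\s E},\delta_{\s E})$ is already continuous — which is precisely what Theorem \ref{theo5} (via Corollary \ref{corActReg}) is meant to establish. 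In the setting of the theorem $\s E$ carries only an action, not a continuous one, so this step is circular. (Proposition \ref{prop38} applied to $\K(\s E)$ only tells you that its continuity space is a C*-algebra; it does not give you that $\K(\s E)$ is contained in it.)

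The paper closes this gap by using the C*-algebra structure of $T$ itself rather than a four-corner decomposition: by Proposition \ref{prop38} (this is where semi-regularity enters), $T$ is a C*-algebra; by compatibility of $\delta_J$ with $\delta_A$ and weak continuity of the $\cal G$-C*-algebra $A$ one gets $\iota_A(A)\subset T$, and by compatibility with $\delta_{\s E}$ together with Lemma \ref{lem21} one gets $\iota_{\s E}(\s E)\subset T$. Since $J$ is \emph{generated as a C*-algebra} by $\iota_A(A)\cup\iota_{\s E}(\s E)$ (Proposition \ref{prop32} 4), the inclusion $J\subset T$ follows — the other two corners come for free because $T$ is closed under products and adjoints (e.g.\ $\iota_{\K(\s E)}(\theta_{\xi,\eta})=\iota_{\s E}(\xi)\iota_{\s E}(\eta)^*$). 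The reverse inclusion $T\subset J$ holds because $\delta_J(J)\subset\widetilde{\M}(J\tens S)$. If you replace your corner-by-corner treatment of $\K(\s E)$ and $\s E^*$ by this generation argument, your proof becomes correct and coincides with the paper's.
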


\begin{proof}
Assume that $\cal G$ is semi-regular. Denote by $T:=[(\id_J\tens\omega)\delta_J(x)\,;\, x\in J,\, \omega\in\B(\s H)_*]$ the C$^*$-algebra of continuous elements (cf.\ref{prop38}). By combining the compatibility of $\delta_J$ with $\delta_A$ (resp.\ $\delta_{\s E}$) with the fact that $(\beta_A,\delta_A)$ is (weakly) continuous (resp.\ Lemma \ref{lem21}), we obtain $\iota_A(A)\subset T$ (resp.\ $\iota_{\s E}(\s E)\subset T$). Hence, $J\subset T$. The converse inclusion also holds since $\delta_J(J)\subset\widetilde{\cal M}(J\tens S)$. Hence, $(\beta_J,\delta_J)$ is weakly continuous. It follows from \ref{theo6} that the action $(\beta_J,\delta_J)$ is strongly continuous if $\cal G$ is regular.
\end{proof}

\begin{cor}\label{corActReg}
Let $\s E$ be a Hilbert $A$-module. If the quantum groupoid $\cal G$ is regular, then any action of ${\cal G}$ on $\s E$ is continuous.
\end{cor}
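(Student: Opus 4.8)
The plan is to reduce the statement to the corresponding fact on the linking C*-algebra, which has already been established. So, starting from an arbitrary action $(\beta_{\s E},\delta_{\s E})$ of $\cal G$ on the Hilbert $A$-module $\s E$, I would first invoke Proposition \ref{prop1} b) to produce the associated compatible action $(\beta_J,\delta_J)$ of $\cal G$ on the linking C*-algebra $J:=\K(\s E\oplus A)$, with $\beta_J$ the diagonal *-homomorphism $n^{\rm o}\mapsto\mathrm{diag}(\beta_{\s E}(n^{\rm o}),\beta_A(n^{\rm o}))$. Note that for this construction only an action is needed, not a continuous one, so there is no circularity.

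Next, since $A$ is the fixed $\cal G$-C*-algebra of this section and $\cal G$ is assumed regular, Theorem \ref{theo5} applies verbatim and tells us that the action $(\beta_J,\delta_J)$ on $J$ is strongly continuous. Finally, Proposition \ref{propcont} states precisely that $(\beta_{\s E},\delta_{\s E})$ is continuous if and only if $(\beta_J,\delta_J)$ is strongly continuous; combining this with the previous step yields the continuity of $(\beta_{\s E},\delta_{\s E})$, i.e.\ $[(1_{\s E}\tens S)\delta_{\s E}(\s E)]=(\s E\tens S)q_{\beta_A\alpha}$, which is the assertion.

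There is essentially no genuine obstacle here: the entire content has been front-loaded into Theorem \ref{theo5} (itself resting on Theorem \ref{theo6} and the semi-regularity estimate Proposition \ref{prop38}) and into the equivalence Proposition \ref{propcont}. If I wanted to make the argument self-contained I would instead unwind Theorem \ref{theo5}: one checks via Proposition \ref{prop1} that $\iota_A(A)$ and $\iota_{\s E}(\s E)$ both lie in the C*-algebra $T$ of continuous elements of $(\beta_J,\delta_J)$ (using weak/strong continuity of $(\beta_A,\delta_A)$ and Lemma \ref{lem21} respectively), so $J\subset T$, whence $(\beta_J,\delta_J)$ is weakly continuous, and then Theorem \ref{theo6} (regularity $\Rightarrow$ weakly continuous actions are strongly continuous) upgrades this to strong continuity. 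But as a corollary of the results already proved, the two-line argument above is the natural write-up.
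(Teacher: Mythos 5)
Your argument is correct and is exactly the paper's proof: the corollary is deduced by combining Theorem \ref{theo5} (regularity implies strong continuity of the induced action on the linking algebra $J=\K(\s E\oplus A)$) with the equivalence of Proposition \ref{propcont}. Your remark that Proposition \ref{prop1} b) only requires an action, not a continuous one, correctly addresses the only point where circularity could be suspected.
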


\begin{proof}
This is an immediate consequence of \ref{propcont} and \ref{theo5}.
\end{proof}

	\subsection{Case of a colinking measured quantum groupoid}\label{ActColinkHilb}

Let us fix a colinking measured quantum groupoid ${\cal G}:={\cal G}_{\QG_1,\QG_2}$ between two monoidally equivalent locally compact quantum groups $\QG_1$ and $\QG_2$. Let $(A,\beta_A,\delta_A)$ be a $\cal G$-C*-algebra. We follow all the notations of \S \ref{sectionColinking} (resp. \ref{not12} and \ref{actprop}) concerning the objects associated with $\cal G$ (resp.\ $(A,\beta_A,\delta_A)$).

\medbreak

In the following, we provide a description of Hilbert modules acted upon by $\cal G$ in terms of Hilbert modules acted upon by $\QG_1$ and $\QG_2$. Let us fix a Hilbert $A$-module $\s E$ endowed with an action $(\beta_{\s E},\delta_{\s E})$ of $\cal G$.

\begin{nbs}\label{not8}
We introduce some useful notations to describe the action $(\beta_{\s E},\delta_{\s E})$.
\begin{itemize}
\item Let $q_{\s E,j}:=\beta_{\s E}(\varepsilon_j)$ for $j=1,2$. Note that $q_{\s E,1}$ and $q_{\s E,2}$ are orthogonal self-adjoint projections of $\Lin(\s E)$ and $q_{\s E,1}+q_{\s E,2}=1_{\s E}$.\index[symbol]{qc@$q_{\s E,j}$}
\item Let $J:=\K(\s E\oplus A)$ be the linking C*-algebra associated with $\s E$ endowed with the action $(\beta_J,\delta_J)$ of $\cal G$ (cf.\ \ref{prop1} b)). Since $\beta_J(\GC^2)\subset{\cal Z}({\cal M}(J))$ (cf.\ 3.2.3 \cite{BC}), we have $\beta_{\s E}(n)\xi=\xi\beta_A(n)$ in $\Lin(A,\s E)$ for all $n\in\GC^2$ and $\xi\in\s E$, {\it i.e.\ }$(\beta_{\s E}(n)\xi)a=\xi(\beta_A(n)a)$ for all $n\in\GC^2$, $\xi\in\s E$ and $a\in A$. Hence, 
\begin{equation}\label{eq1.11}
(q_{\s E,j}\xi)a=\xi(q_{A,j}a), \quad \text{for all } \xi\in\s E,\, a\in A,\, j=1,2.
\end{equation}
In particular, we have
\begin{equation*}
\langle q_{\s E,j}\xi,\, q_{\s E,j}\eta\rangle=q_{A,j}\langle\xi,\, \eta\rangle,\quad \text{for all } \xi,\eta\in\s E.
\end{equation*}
Indeed, fix $\xi,\eta\in\s E$ and write $\xi=\xi'a$ and $\eta=\eta' b$ with $\xi',\eta'\in\s E$ and $a,b\in A$. Since the projection $q_{A,j}$ is central in $A$, we have 
$
\langle q_{\s E,j}\xi,\, q_{\s E,j}\eta\rangle=\langle (q_{\s E,j}\xi')a,\, (q_{\s E,j}\eta')b \rangle =\langle \xi'(q_{A,j}a),\, \eta'(q_{A,j}b) \rangle =q_{A,j}a^*\langle\xi',\, \eta'\rangle b=q_{A,j}\langle\xi,\, \eta\rangle.
$
For $j=1,2$, we then define the following Hilbert $A_j$-module $\s E_j:=q_{\s E,j}\s E$. Note that $\s E=\s E_1\oplus\s E_2$. 
\item For $j,k=1,2$, let
$
\Pi_j^k:\s E_k\tens S_{kj}\rightarrow\s E\tens S.
$
\index[symbol]{pk@$\Pi_j^k$}be the inclusion map. It is clear that $\Pi_j^k$ is a $\pi_j^k$-compatible operator (cf.\ \ref{def2}). We can consider its canonical linear extension 
$
\Pi_j^k:\Lin(A_k\tens S_{kj},\s E_k\tens S_{kj})\rightarrow \Lin(A\tens S,\s E\tens S),
$
up to the canonical injective maps $\s E_k\tens S_{kj}\rightarrow\Lin(A_k\tens S_{kj},\s E_k\tens S_{kj})$ and $\s E\tens S\rightarrow\Lin(A\tens S,\s E\tens S)$, $\vphantom{\Pi_j^k}$defined by
$
\Pi_j^k(T)(x):=\Pi_j^k\circ T((q_{A,k}\tens p_{kj})x)
$ for all $T\in\Lin(A_k\tens S_{kj},\s E_k\tens S_{kj})$ and $x\in A\tens S$.\qedhere
\end{itemize}
\end{nbs}

\begin{lem}
With the above notations and hypotheses, we have a canonical unitary equivalence of Hilbert $A\tens S$-modules
$
\s E\tens_{\delta_A}(A\tens S)=\bigoplus_{j,k=1,2}\s E_j\tens_{\delta_{A_j}^k}\!\!(A_k\tens S_{kj}).
$
\end{lem}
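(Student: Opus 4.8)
The plan is to write the canonical identification down explicitly on elementary tensors and check that it is a well-defined, inner-product-preserving surjection, hence a unitary. Recall from \ref{not12}, \ref{actprop} and \ref{not8} the decompositions $A=\bigoplus_jA_j$, $\s E=\bigoplus_j\s E_j$ and $S=\bigoplus_{k,j}S_{kj}$ attached to the central projections $q_{A,j}=\beta_A(\varepsilon_j)$, $q_{\s E,j}=\beta_{\s E}(\varepsilon_j)$ and $p_{kj}=\alpha(\varepsilon_k)\beta(\varepsilon_j)$; in particular $A\tens S=\bigoplus_{k,j}A_k\tens S_{kj}$ with $A_k\tens S_{kj}=(q_{A,k}\tens p_{kj})(A\tens S)$, and $\delta_A(a)=\sum_{k,j}\pi_j^k(\delta_{A_j}^k(q_ja))$ by \ref{actprop}. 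I would define
\[
\Psi:\bigoplus_{j,k=1,2}\s E_j\tens_{\delta_{A_j}^k}(A_k\tens S_{kj})\longrightarrow\s E\tens_{\delta_A}(A\tens S),\qquad
\Psi(\xi\tens_{\delta_{A_j}^k}x):=\xi\tens_{\delta_A}\pi_j^k(x)
\]
for $\xi\in\s E_j$ and $x\in A_k\tens S_{kj}$ (recall $\pi_j^k$ restricts to the inclusion $A_k\tens S_{kj}\hookrightarrow A\tens S$ on $A_k\tens S_{kj}$), and show that $\Psi$ realizes the asserted equality.

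For well-definedness: for $a\in A_j$ one has $\pi_j^k(\delta_{A_j}^k(a))=(q_{A,k}\tens p_{kj})\delta_A(a)=\delta_A(a)(q_{A,k}\tens p_{kj})$ in $\M(A\tens S)$, the middle equality from \ref{actprop} and the last one because $q_{A,k}\tens p_{kj}$ is central in $\M(A\tens S)$; since $\pi_j^k(x)\in(q_{A,k}\tens p_{kj})(A\tens S)$ this yields $\delta_A(a)\pi_j^k(x)=\pi_j^k(\delta_{A_j}^k(a))\pi_j^k(x)=\pi_j^k(\delta_{A_j}^k(a)x)$, whence
$\Psi(\xi a\tens_{\delta_{A_j}^k}x)=\xi\tens_{\delta_A}\delta_A(a)\pi_j^k(x)=\xi\tens_{\delta_A}\pi_j^k(\delta_{A_j}^k(a)x)=\Psi(\xi\tens_{\delta_{A_j}^k}\delta_{A_j}^k(a)x)$. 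So $\Psi$ is well defined on the algebraic balanced tensor product of each summand, and it is visibly $A\tens S$-linear.

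For the inner products: take $\xi\in\s E_j$, $\eta'\in\s E_{j'}$, $x\in A_k\tens S_{kj}$, $y'\in A_{k'}\tens S_{k'j'}$. If $j\neq j'$ then $\langle\xi,\eta'\rangle=0$ since $q_{\s E,j}q_{\s E,j'}=0$; if $j=j'$ then $\langle\xi,\eta'\rangle\in A_j$ by \ref{not8}, so $\delta_A(\langle\xi,\eta'\rangle)=\sum_l\pi_j^l(\delta_{A_j}^l(\langle\xi,\eta'\rangle))$, and, the corners $(q_{A,l}\tens p_{lj})$ being pairwise orthogonal, $\langle\xi\tens_{\delta_A}\pi_j^k(x),\eta'\tens_{\delta_A}\pi_{j'}^{k'}(y')\rangle=\pi_j^k(x)^*\delta_A(\langle\xi,\eta'\rangle)\pi_{j'}^{k'}(y')=\delta_j^{j'}\delta_k^{k'}\,\pi_j^k\big(x^*\delta_{A_j}^k(\langle\xi,\eta'\rangle)y'\big)$, which under the inclusion $\pi_j^k$ is precisely the inner product of $\xi\tens_{\delta_{A_j}^k}x$ with $\eta'\tens_{\delta_{A_j}^k}y'$ in the source when $(j,k)=(j',k')$, and $0$ otherwise. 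Hence $\Psi$ preserves the $A\tens S$-valued inner products, so it is isometric and extends to the module completions.

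For surjectivity it suffices to reach every elementary tensor $\xi\tens_{\delta_A}x$. Write $\xi=\sum_jq_{\s E,j}\xi$ and $x=\sum_{k,j'}(q_{A,k}\tens p_{kj'})x$; the claim is that $q_{\s E,j}\xi\tens_{\delta_A}(q_{A,k}\tens p_{kj'})x=0$ whenever $j\neq j'$. Indeed, with $z:=(q_{A,k}\tens p_{kj'})x$ and using \ref{prop39}(2), i.e.\ $\delta_A(q_{A,j})=q_{\beta_A\alpha}(1_A\tens\beta(\varepsilon_j))$, the squared norm of this vector is $z^*\delta_A(q_{A,j}\langle\xi,\xi\rangle)z=z^*q_{\beta_A\alpha}(1_A\tens\beta(\varepsilon_j))\delta_A(\langle\xi,\xi\rangle)z$; since $1_A\tens\beta(\varepsilon_j)$ is central in $\M(A\tens S)$ and $\beta(\varepsilon_j)p_{kj'}=\delta_j^{j'}p_{kj'}$ (as $p_{kj'}=\alpha(\varepsilon_k)\beta(\varepsilon_{j'})$ and $\beta$ is multiplicative), this equals $\delta_j^{j'}z^*q_{\beta_A\alpha}\delta_A(\langle\xi,\xi\rangle)z=0$ for $j\neq j'$. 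Consequently $\xi\tens_{\delta_A}x=\sum_{j,k}q_{\s E,j}\xi\tens_{\delta_A}(q_{A,k}\tens p_{kj})x=\sum_{j,k}\Psi\big(q_{\s E,j}\xi\tens_{\delta_{A_j}^k}(q_{A,k}\tens p_{kj})x\big)$, so $\Psi$ has dense range; an inner-product-preserving $A\tens S$-linear map with dense range between Hilbert $A\tens S$-modules is automatically a unitary (closed range, and its inverse is again an isometric module map, hence an adjoint). This $\Psi$ is the claimed canonical unitary equivalence. The main obstacle is purely organisational, namely keeping straight the several mutually orthogonal central projections and, above all, establishing the vanishing of the off-diagonal terms $q_{\s E,j}\xi\tens_{\delta_A}(q_{A,k}\tens p_{kj'})x$ for $j\neq j'$ — precisely the point that couples the second index of $S_{kj}$ to the summand $\s E_j$ of $\s E$.
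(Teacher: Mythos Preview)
Your argument is correct and is essentially the same as the paper's, which simply writes down the inverse map $\xi\tens_{\delta_A}x\mapsto\bigoplus_{j,k}q_{\s E,j}\xi\tens_{\delta_{A_j}^k}(q_{A,k}\tens p_{kj})x$ and declares the verification straightforward. You have supplied exactly that verification: the well-definedness via $\pi_j^k(\delta_{A_j}^k(a))=(q_{A,k}\tens p_{kj})\delta_A(a)$, the orthogonality of the corners, and the key vanishing $q_{\s E,j}\xi\tens_{\delta_A}(q_{A,k}\tens p_{kj'})x=0$ for $j\neq j'$ (which is the only non-trivial point and which you handle cleanly using $\delta_A(\beta_A(\varepsilon_j))=q_{\beta_A\alpha}(1_A\tens\beta(\varepsilon_j))$).
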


\begin{proof}
This is a straightforward verification to see that we define a unitary adjointable operator by the following formula:
\[
\s E\tens_{\delta_A}(A\tens S) \rightarrow \bigoplus_{j,k=1,2}\s E_j\tens_{\delta_{A_j}^k}\!\!(A_k\tens S_{kj})\; ;\; \xi\tens_{\delta_A} x \mapsto \bigoplus_{j,k=1,2} q_{\s E,j}\xi \tens_{\delta_{A_j}^k} (q_{A,k}\tens p_{kj})x.\qedhere
\]
\end{proof}

\begin{propdef}\label{propdef3}
Let $\s V\in\Lin(\s E\tens_{\delta_A}(A\tens S),\s E\tens S)$ be the isometry associated with the action $(\beta_{\s E},\delta_{\s E})$ (cf.\ \ref{prop27} a). For all $j,k=1,2$, there exists a unique unitary\index[symbol]{ve@$\s V_j^k$}
\[
\s V_j^k\in\Lin(\s E_j\tens_{\delta_{A_j}^k}\!\!(A_k\tens S_{kj}),\s E_k\tens S_{kj})\\[-.5em]
\]
such that 
\[
\s V(\xi\tens_{\delta_A}x)=\sum_{j,k=1,2}\s V_j^k(q_{\s E,j}\xi\tens_{\delta_{A_j}^k}\!\!(q_{A,k}\tens p_{kj})x),\quad \text{for all } \xi\in\s E \text{ and } x\in A\tens S.\qedhere
\]
\end{propdef}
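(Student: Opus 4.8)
The plan is to read off the blocks $\s V_j^k$ from the isometry $\s V\in\Lin(\s E\tens_{\delta_A}(A\tens S),\s E\tens S)$ attached to $(\beta_{\s E},\delta_{\s E})$ by \ref{prop27} a), which satisfies $\s V^*\s V=1$, $\s V\s V^*=q_{\beta_{\s E}\alpha}$ and $\s V(\xi\tens_{\delta_A}x)=\delta_{\s E}(\xi)x$. On the source side I use the unitary equivalence $\s E\tens_{\delta_A}(A\tens S)=\bigoplus_{j,k=1,2}\s E_j\tens_{\delta_{A_j}^k}(A_k\tens S_{kj})$ just established, noting that for $\zeta\in\s E_j$ and $y\in A_k\tens S_{kj}$ the vector $\zeta\tens_{\delta_{A_j}^k}y$ of the $(j,k)$-summand corresponds to $\zeta\tens_{\delta_A}y$ (the other components of $\zeta\tens_{\delta_A}y$ vanish since $q_{\s E,j'}\zeta=0$ for $j'\neq j$ and $(q_{A,k'}\tens p_{k'j'})y=0$ for $(k',j')\neq(k,j)$). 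On the target side I use the orthogonal decomposition $\s E\tens S=\bigoplus_{a,b,c}\s E_a\tens S_{bc}$ given by the mutually commuting projections $q_{\s E,a}\tens 1_S$ and $1_{\s E}\tens p_{bc}$. The claim I must verify is that $\s V$ maps the $(j,k)$-summand into $\s E_k\tens S_{kj}$; granting this, $\s V$ is block-diagonal with respect to the bijection $(j,k)\mapsto(k,j)$, and since (by \ref{lem23} applied to the linking-algebra action below, together with \ref{propfib}) $q_{\beta_{\s E}\alpha}=\sum_a q_{\s E,a}\tens\alpha(\varepsilon_a)$ with $\alpha(\varepsilon_a)=p_{a1}+p_{a2}$, we get $\mathrm{range}(\s V)=q_{\beta_{\s E}\alpha}(\s E\tens S)=\bigoplus_{a,c}\s E_a\tens S_{ac}=\bigoplus_{j,k}\s E_k\tens S_{kj}$; combined with $\s V^*\s V=1$ this forces each restriction $\s V_j^k:=\s V|_{\s E_j\tens_{\delta_{A_j}^k}(A_k\tens S_{kj})}$ to be an adjointable unitary onto $\s E_k\tens S_{kj}$, and uniqueness is immediate from the stated formula and density of the $\xi\tens_{\delta_A}x$.

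The one nontrivial ingredient is the Hilbert-module analogue of \ref{lem23}: for every $\xi\in\s E$ and $k=1,2$,
\[
(q_{\s E,k}\tens 1_S)\delta_{\s E}(\xi)=(1_{\s E}\tens\alpha(\varepsilon_k))\delta_{\s E}(\xi)=\delta_{\s E}(\xi)(1_A\tens\alpha(\varepsilon_k))=\delta_{\s E}(\xi)(q_{A,k}\tens 1_S).
\]
I would obtain this by transporting the identities of \ref{lem23} for the action $(\beta_J,\delta_J)$ of \ref{prop1} b) on the linking C*-algebra $J=\K(\s E\oplus A)$ along $\iota_{\s E\tens S}\circ\delta_{\s E}=\delta_J\circ\iota_{\s E}$, using the matrix form $\beta_J(\varepsilon_k)=\mathrm{diag}(q_{\s E,k},\beta_A(\varepsilon_k))$ and the fact that $\beta_J(\varepsilon_k)\tens 1_S$ is central in $\M(J\tens S)$ because $\beta_J(\GC^2)\subseteq{\cal Z}(\M(J))$ (cf.\ \ref{not8}); the same computation yields the formula for $q_{\beta_{\s E}\alpha}$ quoted above.

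With this in hand the block computation finishes quickly: fix $\zeta\in\s E_j$ and $y\in A_k\tens S_{kj}$. By \ref{hilbmodequ} (3), $\delta_{\s E}(\zeta)=(1_{\s E}\tens\beta(\varepsilon_j))\delta_{\s E}(\zeta)$, so $\delta_{\s E}(\zeta)y\in(1_{\s E}\tens\beta(\varepsilon_j))(\s E\tens S)=\bigoplus_{a,c}\s E_a\tens S_{cj}$, which pins the second $S$-index to $j$. Since $y\in A\tens p_{kj}S$ and $\alpha(\varepsilon_k)p_{kj}=p_{kj}$ we have $y=(1_A\tens\alpha(\varepsilon_k))y$, whence
\[
\delta_{\s E}(\zeta)y=\delta_{\s E}(\zeta)(1_A\tens\alpha(\varepsilon_k))y=(q_{\s E,k}\tens 1_S)\delta_{\s E}(\zeta)y=(1_{\s E}\tens\alpha(\varepsilon_k))\delta_{\s E}(\zeta)y,
\]
so the $\s E$-leg of $\delta_{\s E}(\zeta)y$ lies in $\s E_k$ and its first $S$-index is $k$; together with the $j$ above, $\delta_{\s E}(\zeta)y=\s V(\zeta\tens_{\delta_A}y)\in\s E_k\tens S_{kj}$. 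Thus $\s V$ restricts to an adjointable isometry $\s V_j^k$ of $\s E_j\tens_{\delta_{A_j}^k}(A_k\tens S_{kj})$ into $\s E_k\tens S_{kj}$, and the range count of the first paragraph upgrades it to a unitary. The main obstacle is precisely establishing the displayed identity — i.e.\ that the $\beta_{\s E}$-grading of the $\s E$-leg of $\delta_{\s E}$ is matched by the $\alpha$-grading of its $S$-leg — for which passing through the linking C*-algebra (and the centrality of $\beta_J(\GC^2)$) seems the cleanest route; everything else is bookkeeping with the four orthogonal gradings and the already-proved unitary equivalence.
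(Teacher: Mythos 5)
Your argument is correct and follows the same overall skeleton as the paper's proof: restrict $\s V$ to the summands of the source decomposition, show each block lands in $\s E_k\tens S_{kj}$, and then upgrade the resulting isometries to unitaries using ${\rm im}(\s V)=q_{\beta_{\s E}\alpha}(\s E\tens S)$ together with the orthogonality of the target summands. The only genuine divergence is in how the containment $\s V(\s E_j\tens_{\delta_{A_j}^k}(A_k\tens S_{kj}))\subset\s E_k\tens S_{kj}$ is verified. The paper uses the intertwining relation $\s V(\beta_{\s E}(n^{\rm o})\tens_{\delta_A}1)=(1_{\s E}\tens\beta(n^{\rm o}))\s V$ to extract $(1_{\s E}\tens\beta(\varepsilon_j))$, factors $x=x'x''$ to pull $(q_{A,k}\tens p_{kj})x''$ out by $A\tens S$-linearity, and concludes with a direct computation on elementary tensors, $(1_{\s E}\tens\beta(\varepsilon_j))(\eta\tens y)(q_{A,k}\tens p_{kj})(a\tens s)=q_{\s E,k}\eta a\tens p_{kj}ys$, resting on (\ref{eq1.11}). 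You instead isolate the grading identity $(q_{\s E,k}\tens 1_S)\delta_{\s E}(\xi)=(1_{\s E}\tens\alpha(\varepsilon_k))\delta_{\s E}(\xi)=\delta_{\s E}(\xi)(1_A\tens\alpha(\varepsilon_k))=\delta_{\s E}(\xi)(q_{A,k}\tens 1_S)$ by transporting \ref{lem23} through the linking C*-algebra; the two outer equalities come from \ref{lem23} applied to $\delta_J$, and the middle one from the centrality of $\beta_J(\GC^2)$ in $\M(J)$ — which is exactly the same fact the paper invokes in the guise of (\ref{eq1.11}). Your route produces a reusable module-level analogue of \ref{lem23} at the cost of a detour through $J$; the paper's computation is more elementary and self-contained. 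Both are sound, and your surjectivity bookkeeping (block-diagonality plus closed range of an adjointable isometry forces each block onto its target) matches the paper's conclusion.
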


\begin{proof}
Let $j,k=1,2$. Fix $\xi\in\s E$, $x\in A\tens S$ and write $x=x'x''$ with $x',x''\in A\tens S$. We have
\begin{align*}
\s V(q_{\s E,j}\xi\tens_{\delta_A}\!(q_{A,k}\tens p_{kj})x) & =
(1_{\s E}\tens\beta(\varepsilon_j))\s V(\xi\tens_{\delta_A}\!(q_{A,k}\tens p_{kj})x) & \text{{\rm(}\ref{isometry} 2{\rm)}}\\
& =(1_{\s E}\tens\beta(\varepsilon_j))\s V(\xi\tens_{\delta_A}\!x'(q_{A,k}\tens p_{kj})x'') & (q_{A,k},\, p_{kj} \text{ are central).} \\
&= (1_{\s E}\tens\beta(\varepsilon_j))\s V(\xi\tens_{\delta_A}\!x')(q_{A,k}\tens p_{kj})x'' & (\s V \text{ is }A\tens S\text{-linear).}
\end{align*}
Now if $\eta\in\s E$, $y, s\in S$ and $a\in A$, we have 
\begin{align*}
(1_{\s E}\tens\beta(\varepsilon_j))(\eta\tens y)(q_{A,k}\tens p_{kj})(a\tens s)&=\eta(q_{A,k} a)\tens\beta(\varepsilon_j) y p_{kj} s \\
&= \eta(q_{A,k} a)\tens p_{kj} y s \\
&= q_{\s E,k}\eta a\tens p_{kj} y s \in \s E_k \tens S_{kj}
\end{align*}
by using (\ref{eq1.11}) and the fact that $p_{kj}$ is central in $S$. $\vphantom{\delta_{A_j}^k}$In particular, for all $\xi\in\s E$ and $x\in A\tens S$ we have $\s V(q_{\s E,j}\xi\tens_{\delta_A}(q_{A,k}\tens p_{kj})x)\in\s E_k\tens S_{kj}$. $\vphantom{\delta_{A_j}^k}$By combining the fact that $\s V$ is isometric with the fact that $x\delta_{A_j}^k(a)=x\delta_A(a)$ for all $a\in A_j$ and $x\in A_k\tens S_{kj}$, we obtain a well-defined isometric $A_k\tens S_{kj}$-linear map$\vphantom{\delta_{A_j}^k}$
\[
\s V_j^k:\s E_j\tens_{\delta_{A_j}^k}(A_k\tens S_{kj})\rightarrow \s E_k\tens S_{kj}\; ; \; \xi\tens_{\delta_{A_j}^k}x \mapsto \s V(\xi\tens_{\delta_A}x).
\]
It follows from ${\rm im}(\s V)=q_{\beta_{\s E}\alpha}(\s E\tens S)$ (\ref{isometry} 1) that $\s V_j^k$ is surjective. As a result, we have $\s V_j^k\in\Lin(\s E_j\tens_{\delta_{A_j}^k}(A_k\tens S_{kj}),\s E_k\tens S_{kj})$ and $\s V_j^k$ is unitary.
\end{proof}

For $j,k,l=1,2$ we have the following set of unitary equivalences of Hilbert modules:
\begin{align}
A_j\tens_{\delta_{A_j}^k}\!\!(A_k\tens S_{kj}) & \rightarrow A_k\tens S_{kj} \\[-.5em]
a\tens_{\delta_{A_j}^k}x & \mapsto \delta_{A_j}^k(a)x; \notag\\[.5em]
(A_k\tens S_{kj})\tens_{\delta_{A_k}^l\tens\,\id_{S_{kj}}}\!\!(A_l\tens S_{lk}\tens S_{kj}) &  \rightarrow A_l\tens S_{lk}\tens S_{kj} \label{eq24}\\[-.5em]
x\tens_{\delta_{A_k}^l\tens\,\id_{S_{kj}}}y & \mapsto (\delta_{A_k}^l\tens\id_{S_{kj}})(x)y; \notag\\[.5em]
(A_l\tens S_{lj})\tens_{\id_{A_l}\tens\,\delta_{lj}^k}\!\!(A_l\tens S_{lk}\tens S_{kj}) & \rightarrow A_l\tens S_{lk}\tens S_{kj} \label{eq9} \\[-.5em]
x\tens_{\id_{A_l}\tens\,\delta_{lj}^k}y & \mapsto (\id_{A_l}\tens\delta_{lj}^k)(x)y; \notag\\[.5em]
(\s E_j\tens_{\delta_{A_j}^k}\!\!(A_k\tens S_{kj}))\tens_{\delta_{A_k}^l\tens\,\id_{S_{kj}}}\!\!(A_l\tens S_{lk}\tens S_{kj}) & \rightarrow \s E_j\tens_{(\delta_{A_k}^l\tens\,\id_{S_{kj}})\delta_{A_j}^k}\!\!(A_l\tens S_{lk}\tens S_{kj}) \label{eq1.14}\\[-.5em]
(\xi\tens_{\delta_{A_j}^k}x)\tens_{\delta_{A_k}^l\tens\,\id_{S_{kj}}}y & \mapsto \xi\tens_{(\delta_{A_k}^l\tens\,\id_{S_{kj}})\delta_{A_j}^k}(\delta_{A_k}^l\tens\id_{S_{kj}})(x)y;\notag \\[.5em]
(\s E_j\tens_{\delta_{A_j}^l}\!\!(A_l\tens S_{lj}))\tens_{\id_{A_l}\tens\,\delta_{lj}^k}\!\!(A_l\tens S_{lk}\tens S_{kj}) & \rightarrow \s E_j\tens_{(\id_{A_l}\tens\,\delta_{lj}^k)\delta_{A_j}^l}\!\!(A_l\tens S_{lk}\tens S_{kj}) \label{eq1.15} \\[-.5em]
(\xi\tens_{\delta_{A_j}^l}x)\tens_{\id_{A_l}\tens\,\delta_{lj}^k}y & \mapsto \xi\tens_{(\id_{A_l}\tens\,\delta_{lj}^k)\delta_{A_j}^l}(\id_{A_l}\tens\delta_{lj}^k)(x)y; \notag \\[.5em]
(\s E_k\tens S_{kj})\tens_{\delta_{A_k}^l\tens\,\id_{S_{kj}}}\!\!(A_l\tens S_{lk}\tens S_{kj}) &\rightarrow (\s E_k\tens_{\delta_{A_k}^l}\!\!(A_l\tens S_{lk}))\tens S_{kj} \label{eq1.16}\\[-.5em]
(\xi\tens s)\tens_{\delta_{A_k}^l\tens\,\id_{S_{kj}}}(x\tens t) & \mapsto (\xi\tens_{\delta_{A_k}^l} x)\tens st; \notag \\[.5em]
(\s E_l\tens S_{lj})\tens_{\id_{A_l}\tens\,\delta_{lj}^k}\!\!(A_l\tens S_{lk}\tens S_{kj}) & \rightarrow \s E_l\tens S_{lk}\tens S_{kj} \label{eq10} \\[-.5em]
\xi\tens_{\id_{A_l}\tens\,\delta_{lj}^k}y & \mapsto (\id_{\s E_l}\tens\delta_{lj}^k)(\xi)y\notag.
\end{align}	

\begin{prop}\label{prop9}
For all $j,k,l=1,2$, we have
\[
(\s V_k^l\tens_{\GC}\id_{S_{kj}})(\s V_j^k\tens_{\delta_{A_k}^l\tens\id_{S_{kj}}}1)=\s V_j^l\tens_{\id_{A_l}\tens\delta_{lj}^k}1.\qedhere
\]
\end{prop}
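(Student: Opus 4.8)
The plan is to deduce Proposition \ref{prop9} from the admissibility relation \ref{isometry} 4 satisfied by the isometry $\s V$ associated with the action $(\beta_{\s E},\delta_{\s E})$, namely
\[
(\s V\tens_{\GC}\id_S)(\s V\tens_{\delta_A\tens\,\id_S}1)=\s V\tens_{\id_A\tens\,\delta}1,
\]
by ``cutting'' this identity down to the four summands $\s H_{ij}$ of the basis decomposition. Concretely, using the decomposition $\s E\tens_{\delta_A}(A\tens S)=\bigoplus_{j,k}\s E_j\tens_{\delta_{A_j}^k}(A_k\tens S_{kj})$ established in the lemma just above and the identity $\s V(\xi\tens_{\delta_A}x)=\sum_{j,k}\s V_j^k(q_{\s E,j}\xi\tens_{\delta_{A_j}^k}(q_{A,k}\tens p_{kj})x)$ from \ref{propdef3}, I would apply the whole admissibility identity to a vector of the form $\xi\tens_{\delta_A^2}(\delta_A\tens\id_S)(\delta_A(a)x)y$ lying in the ``$j,k,l$-corner'' of $\s E\tens_{\delta_A^2}(A\tens S\tens S)$, i.e. after multiplying on the right by the central projections $q_{A,l}\tens p_{lk}\tens p_{kj}$.

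The key steps, in order, are: (1) record the double iterated isometry $\s V^2:=(\s V\tens_{\GC}\id_S)(\s V\tens_{\delta_A\tens\,\id_S}1)=\s V\tens_{\id_A\tens\,\delta}1$ and note, via the central projections $p_{ij}=\alpha(\varepsilon_i)\beta(\varepsilon_j)$ and $q_{A,j}=\beta_A(\varepsilon_j)$, $q_{\s E,j}=\beta_{\s E}(\varepsilon_j)$, that it respects the grading: both sides carry the $(j,k,l)$-corner $\s E_j\tens_{\delta_{A_j}^{\cdots}}(A_l\tens S_{lk}\tens S_{kj})$ to $\s E_l\tens S_{lk}\tens S_{kj}$; here one uses Proposition \ref{prop34} and \ref{NonComRel} (the commutation of $V$, hence of $\s V$ after transport, with $\alpha(n)\tens 1$ and $1\tens\beta(n^{\rm o})$ type elements) together with \ref{isometry} 2 exactly as in the proof of \ref{propdef3}; (2) compute the left-hand side $\s V^2$ in terms of the $\s V_j^k$: since $(\s V\tens_{\delta_A\tens\,\id_S}1)$ restricted to the $(j,k,l)$-corner is $\s V_j^k\tens_{\delta_{A_k}^l\tens\,\id_{S_{kj}}}1$ (using the identifications \ref{eq24}, \ref{eq1.14}) and then $(\s V\tens_{\GC}\id_S)$ restricted there is $\s V_k^l\tens_{\GC}\id_{S_{kj}}$ (using \ref{eq1.16}), one gets $(\s V_k^l\tens_{\GC}\id_{S_{kj}})(\s V_j^k\tens_{\delta_{A_k}^l\tens\,\id_{S_{kj}}}1)$; (3) compute the right-hand side $\s V\tens_{\id_A\tens\,\delta}1$ on the same corner: using $\delta_A^2=(\id_A\tens\delta)\delta_A$ and the fact (Proposition \ref{actprop} 2 / Proposition \ref{prop5} 1, coassociativity $(\id_{A_l}\tens\delta_{lj}^k)\delta_{A_j}^l=(\delta_{A_k}^l\tens\id_{S_{kj}})\delta_{A_j}^k$) that the two module balancings agree, together with \ref{eq1.15} and \ref{eq10}, one identifies this restriction with $\s V_j^l\tens_{\id_{A_l}\tens\,\delta_{lj}^k}1$; (4) conclude by comparing (2) and (3) and invoking that the $(j,k,l)$-corners exhaust $\s E\tens_{\delta_A^2}(A\tens S\tens S)$, so the corner-wise equality yields the stated global one.

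The main obstacle, as usual with these Baaj–Skandalis-style bookkeeping arguments, is step (1)–(3): keeping track of which relative tensor product (which balancing map $\delta_{A_j}^k$, $\delta_{A_k}^l\tens\id$, $\id_{A_l}\tens\delta_{lj}^k$, $\delta_A$, $\delta_A\tens\id_S$, $\id_A\tens\delta$) each vector lives over, and checking that the long list of unitary identifications (\ref{eq24}, \ref{eq9}, \ref{eq1.14}, \ref{eq1.15}, \ref{eq1.16}, \ref{eq10}) is being applied compatibly and that the central projections $q_{A,k}$, $q_{\s E,j}$, $p_{kj}$ slide through $\s V$ correctly. There is no conceptual difficulty — everything follows from the admissibility of $\s V$, the coassociativity of $\delta$ restricted to the blocks, and the centrality of the source/target projections in the colinking setting — but the proof is essentially a careful diagram chase, and the cleanest way to present it is to evaluate both sides of the admissibility relation on a general element $\xi\tens_{\delta_A^2}(\delta_A\tens\id_S)(\delta_A(a)x)y$ with $\xi=q_{\s E,j}\xi$ and the right tensor factor supported in $A_l\tens S_{lk}\tens S_{kj}$, reading off $\s V_j^k$, $\s V_k^l$ and $\s V_j^l$ directly from \ref{propdef3}.
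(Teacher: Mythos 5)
Your proposal is correct and follows exactly the route the paper takes: the paper's own proof consists of the single sentence that the identity is a ``straightforward consequence'' of the admissibility relation $(\s V\tens_{\GC}\id_{S})(\s V\tens_{\delta_{A}\tens\id_{S}}1)=\s V\tens_{\id_{A}\tens\,\delta}1$, and your steps (1)--(4) are precisely the corner-restriction argument that sentence leaves implicit. The bookkeeping you describe — sliding the central projections through $\s V$ and matching the balancing maps via coassociativity of the $\delta_{ij}^k$ — is the intended content.
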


For $j,k,l=1,2$, $\s V_k^l\tens_{\GC}\id_{S_{kj}}\in\Lin((\s E_k\tens S_{kj})\tens_{\delta_{A_k}^l\tens\,\id_{S_{kj}}}(A_l\tens S_{lk}\tens S_{kj}),\s E_l\tens S_{lk}\tens S_{kj})$ (\ref{eq1.16}), $\s V_j^k\tens_{\delta_{A_k}^l\tens\,\id_{S_{kj}}}1\in\Lin(\s E_j\tens_{(\delta_{A_k}^l\tens\,\id_{S_{kj}})\delta_{A_j}^k}(A_l\tens S_{lk}\tens S_{kj}),(\s E_k\tens S_{kj})\tens_{\delta_{A_k}^l\tens\,\id_{S_{kj}}}(A_l\tens S_{lk}\tens S_{kj}))$ (\ref{eq1.14}) and $\s V_j^l\tens_{\id_{A_l}\tens\,\delta_{lj}^k}1\in\Lin(\s E_j\tens_{(\id_{A_l}\tens\,\delta_{lj}^k)\delta_{A_j}^l}(A_l\tens S_{lk}\tens S_{kj}),\s E_l\tens S_{lk}\tens S_{kj})$ (\ref{eq10}).
Moreover, the composition $(\s V_k^l\tens_{\GC}\id_{S_{kj}})(\s V_j^k\tens_{\delta_{A_k}^l\tens\id_{S_{kj}}}1)$ does make sense since $(\delta_{A_k}^l\tens\id_{S_{kj}})\delta_{A_j}^k=(\id_{A_l}\tens\delta_{lj}^k)\delta_{A_j}^l$.

\begin{proof}
Straightforward consequence of $(\s V\tens_{\GC}\id_{S})(\s V\tens_{\delta_{A}\tens\id_{S}}1)=\s V\tens_{\id_{A}\tens\,\delta}1$.
\end{proof}

\begin{propdef}\label{propdef4}
For $j,k=1,2$, let 
$
\delta_{\s E_j}^k:\s E_j\rightarrow\Lin(A_k\tens S_{kj},\s E_k\tens S_{kj})
$
be the linear map defined by\index[symbol]{df@$\delta_{\s E_j}^k$}
\[
\delta_{\s E_j}^k(\xi)x:=\s V_j^k(\xi\tens_{\delta_{A_j}^k}x), \quad \text{for all } \xi\in\s E_j \text{ and } x\in A_k\tens S_{kj}.
\]
For all $j,\, k,\, l=1,2$, we have:
\begin{enumerate}[label=(\roman*)]
\item $\displaystyle{\delta_{\s E}(\xi)=\sum_{k,j=1,2} \Pi_j^k\circ\delta_{\s E_j}^k(q_{\s E,j}\xi)}$, for all $\xi\in\s E$;
\item $\delta_{\s E_j}^k(\s E_j)\subset\widetilde{\M}(\s E_k\tens S_{kj})$;$\vphantom{\displaystyle{\sum_{k,j=1,2}}}$ 
\item $\delta_{\s E_j}^k(\xi a)=\delta_{\s E_j}^k(\xi)\delta_{A_j}^k(a)$ and $\langle\delta_{\s E_j}^k(\xi),\, \delta_{\s E_j}^k(\eta)\rangle = \delta_{A_j}^k(\langle\xi,\, \eta\rangle)$, for all $\xi,\,\eta\in\s E_j$ and $a\in A_j$;$\vphantom{\displaystyle{\sum_{k,j=1,2}}}$
\item $[\delta_{\s E_j}^k(\s E_j)(1_{A_k}\tens S_{kj})]=\s E_k\tens S_{kj}$; in particular, we have
\[
\s E_k=[(\id_{\s E_k}\tens\omega)\delta_{\s E_j}^k(\xi) \; ; \; \omega\in\B(\s H_{kj})_*,\, \xi\in\s E_j] \quad (\text{cf.}\ \ref{not4}).
\]
\item $\delta_{\s E_k}^l\tens\id_{S_{kj}}$ {\rm(}resp.\ $\id_{\s E_l}\tens\delta_{lj}^k${\rm)} extends to a linear map from $\mathcal{L}(A_k\tens S_{kj},\s E_k\tens S_{kj})$ {\rm(}resp. $\Lin(A_l\tens S_{lj},\s E_l\tens S_{lj})${\rm)} to $\mathcal{L}(A_l\tens S_{lk}\tens S_{kj},\s E_l\tens S_{lk}\tens S_{kj})$ and for all $\xi\in\s E_j$ we have 
\[
(\delta_{\s E_k}^l\tens\id_{S_{kj}})\delta_{\s E_j}^k(\xi)=(\id_{\s E_l}\tens\delta_{lj}^k)\delta_{\s E_j}^l(\xi) \in \Lin(A_l\tens S_{lk}\tens S_{kj},\s E_l\tens S_{lk}\tens S_{kj});
\]
\item if $\s E$ is a $\cal G$-equivariant Hilbert $A$-module, then we have $[(1_{\s E_k}\tens S_{kj})\delta_{\s E_j}^k(\s E_j)]=\s E_k\tens S_{kj}$.
\end{enumerate}
If $\s E$ is a $\cal G$-equivariant Hilbert module, then $(\s E_j,\delta_{\s E_j}^j)$ is a $\QG_j$-equivariant Hilbert $A_j$-module and $\s V_j^j$ is the associated unitary.
\end{propdef}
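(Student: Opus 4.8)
\textbf{Proof plan for Proposition-Definition \ref{propdef4}.}

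The plan is to establish each of the statements (i)--(vi) by transporting the corresponding properties of the global isometry $\s V$ (and of the coaction $(\beta_{\s E},\delta_{\s E})$) through the block decomposition $\s E=\s E_1\oplus\s E_2$, $A=A_1\oplus A_2$, $S=\bigoplus S_{ij}$, using the unitary $\s V_j^k$ produced in \ref{propdef3} together with the already-established identities for $\delta_A$, $\delta_{A_j}^k$ and the $V_{kj}^i$, $W_{ik}^j$. The key structural tool is the decomposition $\s V(\xi\tens_{\delta_A}x)=\sum_{j,k}\s V_j^k(q_{\s E,j}\xi\tens_{\delta_{A_j}^k}(q_{A,k}\tens p_{kj})x)$, which together with the definition $\delta_{\s E_j}^k(\xi)x:=\s V_j^k(\xi\tens_{\delta_{A_j}^k}x)$ immediately gives (i): one checks that $\Pi_j^k\circ\delta_{\s E_j}^k(q_{\s E,j}\xi)$ is precisely the $(j,k)$-summand of $\delta_{\s E}(\xi)$ under the identification of \ref{not8}, exactly as in \ref{actprop} 1 for $\delta_A$. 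Statement (iii) follows by the same bilinearity computation as in the proof of \ref{propdef3}: for $\xi,\eta\in\s E_j$ one has $\langle\delta_{\s E_j}^k(\xi),\delta_{\s E_j}^k(\eta)\rangle=(\s V_j^k)^*(\s V_j^k(\cdot))$-type manipulations reducing to $\langle\delta_{\s E}(\xi),\delta_{\s E}(\eta)\rangle=\delta_A(\langle\xi,\eta\rangle)$ restricted to the $(j,k)$-corner, and the compatibility with $\delta_{A_j}^k$ comes from $\s V_j^k$ being $A_k\tens S_{kj}$-linear. Statement (ii), i.e.\ $\delta_{\s E_j}^k(\s E_j)\subset\widetilde{\M}(\s E_k\tens S_{kj})$, is obtained by localizing the relative multiplier condition $\delta_{\s E}(\s E)\subset\widetilde{\M}(\s E\tens S)$: multiplying by $1_{\s E_k}\tens x$ (resp.\ $1_{A_k}\tens x$) for $x\in S_{kj}$ and using that $S_{kj}$ is an ideal of $S$ and $p_{kj}$ is central, one reads off from \ref{propdef3} that $(1_{\s E_k}\tens x)\delta_{\s E_j}^k(\xi)$ and $\delta_{\s E_j}^k(\xi)(1_{A_k}\tens x)$ lie in $\s E_k\tens S_{kj}$.

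For (iv), I would start from the Podleś-type condition $[\delta_{\s E}(\s E)(1_A\tens S)]=q_{\beta_{\s E}\alpha}(\s E\tens S)$ (\ref{rk2}, valid since $(\beta_{\s E},\delta_{\s E})$ is an action), decompose both sides along the projections $q_{\s E,k}\tens p_{kj}$, and use $q_{\beta_{\s E}\alpha}=\sum_{I}n_I^{-1}q_{\s E,I}\tens e_{\overline I}$ (the Hilbert-module analogue of \ref{lem23} 1, which holds with the same proof) to identify the $(k,j)$-corner of the right-hand side with $\s E_k\tens S_{kj}$; the $(k,j)$-corner of the left-hand side is $[\delta_{\s E_j}^k(\s E_j)(1_{A_k}\tens S_{kj})]$, giving the claimed equality. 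The consequence $\s E_k=[(\id_{\s E_k}\tens\omega)\delta_{\s E_j}^k(\xi)]$ then follows exactly as in \ref{lem21}: pick $\omega\in\B(\s H_{kj})_*$ with $(\id\tens\omega)(q_{\beta_{\s E_j}^k}\cdots)=1_{\s E_k}$ and apply it to the previous identity. For the coassociativity statement (v), I would first define the extensions $\delta_{\s E_k}^l\tens\id_{S_{kj}}$ and $\id_{\s E_l}\tens\delta_{lj}^k$ by the internal-tensor-product construction of \ref{not7} and \ref{rk9} (the same recipe as for $\delta_{\s E}\tens\id_S$ and $\id_{\s E}\tens\delta$, now at the level of corners), and then deduce the identity $(\delta_{\s E_k}^l\tens\id_{S_{kj}})\delta_{\s E_j}^k=(\id_{\s E_l}\tens\delta_{lj}^k)\delta_{\s E_j}^l$ from Proposition \ref{prop9}, $(\s V_k^l\tens_{\GC}\id_{S_{kj}})(\s V_j^k\tens_{\delta_{A_k}^l\tens\id_{S_{kj}}}1)=\s V_j^l\tens_{\id_{A_l}\tens\delta_{lj}^k}1$, by composing with $T_\xi$ and using the chain of unitary equivalences (\ref{eq24})--(\ref{eq10}); this is the exact corner-wise transcription of the derivation that $(\delta_{\s E}\tens\id_S)\delta_{\s E}=(\id_{\s E}\tens\delta)\delta_{\s E}$ follows from admissibility of $\s V$ (\ref{prop27} c).

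Finally, (vi): if $\s E$ is $\cal G$-equivariant then $[(1_{\s E}\tens S)\delta_{\s E}(\s E)]=(\s E\tens S)q_{\beta_A\alpha}$ by definition of continuity, and decomposing along the central projections $q_{\s E,k}\tens p_{kj}$ (using again $q_{\beta_A\alpha}=\sum_I n_I^{-1}q_{A,I}\tens e_{\overline I}$, \ref{lem23} 1) identifies the $(k,j)$-corner as $[(1_{\s E_k}\tens S_{kj})\delta_{\s E_j}^k(\s E_j)]=\s E_k\tens S_{kj}$. The last assertion — that $(\s E_j,\delta_{\s E_j}^j)$ is a $\QG_j$-equivariant Hilbert $A_j$-module with associated unitary $\s V_j^j$ — then follows by specializing all of (i)--(vi) to $k=j$: (iii) and (ii) give the two conditions of \ref{hilbmodequ} 1 for $\QG_j$, (iv) with $k=j$ gives the non-degeneracy condition \ref{hilbmodequ} 2, (v) with $k=l=j$ gives the cocycle condition \ref{hilbmodequ} 4 (here $\delta_{jj}^j=\delta_{\QG_j}$ by \ref{prop5} 4), and (vi) with $k=j$ is exactly the continuity condition; that $\s V_j^j$ is the canonical unitary is immediate from $\delta_{\s E_j}^j(\xi)=\s V_j^j T_\xi$. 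I expect the main obstacle to be the bookkeeping in step (v): one must set up the seven unitary equivalences (\ref{eq24})--(\ref{eq10}) correctly, verify that the composite $(\s V_k^l\tens_{\GC}\id_{S_{kj}})(\s V_j^k\tens_{\delta_{A_k}^l\tens\id_{S_{kj}}}1)$ is well-defined over the right base algebra (which needs $(\delta_{A_k}^l\tens\id_{S_{kj}})\delta_{A_j}^k=(\id_{A_l}\tens\delta_{lj}^k)\delta_{A_j}^l$ from \ref{actprop} 2), and check that the extension maps $\delta_{\s E_k}^l\tens\id_{S_{kj}}$ satisfy the module-map compatibility analogous to (\ref{eq26}); everything else is a routine restriction of a statement already proved for the global action.
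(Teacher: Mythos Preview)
Your proposal is correct and follows essentially the same approach as the paper: both establish (i)--(vi) by restricting the global objects $(\s V,\delta_{\s E})$ to the $(j,k)$-corners via the projections $q_{\s E,j}\tens p_{kj}$ and the injective maps $\Pi_j^k$, $\pi_j^k$, with (v) derived from \ref{prop9} exactly as you outline. The only minor variation is in (iv): the paper obtains $[\delta_{\s E_j}^k(\s E_j)(A_k\tens S_{kj})]=\s E_k\tens S_{kj}$ directly from the surjectivity of the unitary $\s V_j^k$ established in \ref{propdef3}, and then upgrades $A_k$ to $1_{A_k}$ using (iii) together with $[\delta_{A_j}^k(A_j)(1_{A_k}\tens S_{kj})]=A_k\tens S_{kj}$, rather than decomposing the global Podle\'s condition as you propose --- but both routes are short and equivalent.
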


\begin{proof}
It is clear that $\delta_{\s E_j}^k:\s E_j\rightarrow\Lin(A_k\tens S_{kj},\s E_k\tens S_{kj})$ is a well-defined linear map. Moreover, statement (i) follows straightforwardly from \ref{propdef3} and the fact that $\delta_{\s E}(\xi)x=\s V(\xi\tens_{\delta_A}x)$ for all $\xi\in\s E$ and $x\in A\tens S$.
Let $\xi\in\s E_j$ and $s\in S_{kj}$. We have
\[
\Pi_j^k(\delta_{\s E_j}^k(\xi)(1_{A_k}\tens s))=\delta_{\s E}(\xi)(1_A\tens s) \quad \text{and} \quad
\Pi_j^k((1_{\s E_k}\tens s)\delta_{\s E_j}^k(\xi))=(1_{\s E}\tens s)\delta_{\s E}(\xi)
\]
It then follows from $\delta_{\s E}(\s E)\subset\widetilde{\M}(\s E\tens S)$ that $\Pi_j^k(\delta_{\s E_j}^k(\xi)(1_{A_k}\tens s))$ and $\Pi_j^k((1_{\s E_k}\tens s)\delta_{\s E_j}^k(\xi))$ belong to $\s E\tens S$. Moreover, $(q_{\s E_,k}\tens p_{kj})\Pi_j^k(T)=\Pi_j^k(T)$ for all $T\in\Lin(A_k\tens S_{kj},\s E_k\tens S_{kj})$; hence, $\Pi_j^k(\delta_{\s E_j}^k(\xi)(1_{A_k}\tens s))\in\Pi_j^k(\s E_k\tens S_{kj})$ and $\Pi_j^k((1_{\s E_k}\tens s)\delta_{\s E_j}^k(\xi))\in\Pi_j^k(\s E_k\tens S_{kj})$. It then follows that $\delta_{\s E_j}^k(\xi)(1_{A_k}\tens s)$ and $(1_{\s E_k}\tens s)\delta_{\s E_j}^k(\xi)$ belong to $\s E_k\tens S_{kj}$ by injectivity of $\Pi_j^k:\Lin(A_k\tens S_{kj},\s E_k\tens S_{kj})\rightarrow\Lin(A\tens S,\s E\tens S)$. Hence, statement (ii) is proved.\newline
Let $\xi,\eta\in\s E_j$. We have
\begin{align*}
\pi_j^k(\langle\delta_{\s E_j}^k(\xi),\, \delta_{\s E_j}^k(\eta)\rangle)&=\langle\Pi_j^k(\delta_{\s E_j}^k(\xi)),\, \Pi_j^k(\delta_{\s E_j}^k(\eta))\rangle\\
&=\langle(q_{\s E,k}\tens p_{kj})\delta_{\s E}(\xi),\, (q_{\s E,k}\tens p_{kj})\delta_{\s E}(\eta)\rangle\\
&=(q_{A,k}\tens p_{kj})\delta_A(\langle\xi,\,\eta\rangle)\\
&=\pi_j^k(\delta_{A_j}^k(\langle\xi,\,\eta\rangle)).
\end{align*}
Hence $\langle\delta_{\s E_j}^k(\xi),\, \delta_{\s E_j}^k(\eta)\rangle=\delta_{A_j}^k(\langle\xi,\,\eta\rangle)$ by injectivity of $\pi_j^k$. The first formula of statement (iii) is derived immediately from the definition of $\delta_{\s E_j}^k$.\newline
The surjectivity of $\s V_j^k$ is just a restatement of $[\delta_{\s E_j}^k(\s E_j)(A_k\tens S_{kj})]=\s E_k\tens S_{kj}$. The identity $[\delta_{\s E_j}^k(\s E_j)(1_{A_k}\tens S_{kj})]=\s E_k\tens S_{kj}$ follows by combining the previous formula with the first relation of (iii) and the relation $[\delta_{A_j}^k(A_j)(1_{A_k}\tens S_{kj})]=A_k\tens S_{kj}$. Let us prove the formula $\s E_k=[(\id_{\s E_k}\tens\omega)\delta_{\s E_j}^k(\xi) \; ; \; \omega\in\B(\s H_{kj})_*,\, \xi\in\s E_j]$. By statement (ii) and \ref{not4}, we already have the relation $\s E_k\supset[(\id_{\s E_k}\tens\omega)\delta_{\s E_j}^k(\xi) \; ; \; \omega\in\B(\s H_{kj})_*,\, \xi\in\s E_j]$. Conversely, let us fix $\eta\in\s E_k$. Let $\omega\in\B(\s H_{kj})_*$ and $s\in S_{kj}$ such that $\omega(s)=1$. $\vphantom{\delta_{\s E_j}^k}$It then follows from the formula $[\delta_{\s E_j}^k(\s E_j)(1_{A_k}\tens S_{kj})]=\s E_k\tens S_{kj}$ that $\eta=(\id_{\s E_k}\tens\omega)(\eta\tens s)$ is the norm limit of finite sums of elements of the form $(\id_{\s E_k}\tens\omega)(\delta_{\s E_j}^k(\xi)(1_{A_k}\tens y))=(\id_{\s E_k}\tens y\omega)\delta_{\s E_j}^k(\xi)$, where $\xi\in\s E_j$ and $y\in S$. Therefore, statement (iv) is proved.\newline
By using the identifications (\ref{eq24}) and (\ref{eq1.16}) (resp.\ (\ref{eq9}) and (\ref{eq10})), the linear map 
\begin{align*}
\delta_{\s E_k}^l\tens\id_{S_{kj}} & :\mathcal{L}(A_k\tens S_{kj},\s E_k\tens S_{kj})\rightarrow\mathcal{L}(A_l\tens S_{lk}\tens S_{kj},\s E_l\tens S_{lk}\tens S_{kj})\\ 
({\rm resp}.\ \id_{\s E_l}\tens\delta_{lj}^k & :\Lin(A_l\tens S_{lj},\s E_l\tens S_{lj})\rightarrow\mathcal{L}(A_l\tens S_{lk}\tens S_{kj},\s E_l\tens S_{lk}\tens S_{kj}))
\end{align*} 
is defined for all $T\in\mathcal{L}(A_k\tens S_{kj},\s E_k\tens S_{kj})$ (resp.\ $T\in\Lin(A_l\tens S_{lj},\s E_l\tens S_{lj})$) by
\[
(\delta_{\s E_k}^l\tens\id_{S_{kj}})(T)  :=(\s V_k^j\tens 1_{S_{kj}})(T\tens_{\delta_{A_k}^l\tens\,\id_{S_{kj}}}1) \quad
({\rm resp}.\ (\id_{\s E_l}\tens\delta_{lj}^k)(T) :=T\tens_{\id_{A_l}\tens\,\delta_{lj}^k}1).
\]
The relation $(\delta_{\s E_k}^l\tens\id_{S_{kj}})\delta_{\s E_j}^k(\xi)=(\id_{\s E_l}\tens\delta_{lj}^k)\delta_{\s E_j}^l(\xi)$ for $\xi\in\s E_j$ is then derived from \ref{prop9} as in the proof of \ref{prop27}.
Assume that $(\beta_{\s E},\delta_{\s E})$ is continuous. Since $p_{kj}$ is central in $S$, we have
\begin{align*}
\Pi_j^k(\s E_k\tens S_{kj})&=(q_{\s E,j}\tens p_{kj})(\s E\tens S)q_{\beta_A\alpha}\\
&=(q_{\s E,j}\tens p_{kj})[(1_{\s E}\tens S)\delta_{\s E}(\s E)]\\
&=\Pi_j^k[(1_{\s E_k}\tens S_{kj})\delta_{\s E_j}^k(\s E_j)]
\end{align*}
and statement (vi) is proved.
\end{proof}

From this concrete description of $\cal G$-equivariant Hilbert C*-modules, we can also provide a corresponding description of the $\cal G$-equivariant unitary equivalences between them.

\begin{lem}\label{lem5}
Let $A$ and $B$ be $\cal G$-C*-algebras. Let $\s E$ and $\s F$ be Hilbert C*-modules over $A$ and $B$ respectively acted upon by $\cal G$.
\begin{enumerate}
\item Let $\Phi:\s E\rightarrow\s F$ be a $\cal G$-equivariant unitary equivalence over a $\cal G$-equivariant *-isomorphism $\phi:A\rightarrow B$. For $j=1,2$, there exists a unique map $\Phi_j:\s E_j\rightarrow\s F_j$ satisfying the formula $\Phi(\xi)=\Phi_1(q_{\s E,1}\xi)+\Phi_2(q_{\s E,2}\xi)$ for all $\xi\in\s E$. Moreover, we have:
\begin{enumerate}[label=(\roman*)]
\item for $j=1,\,2$, the map $\Phi_j$ is a unitary equivalence over the *-isomorphism $\phi_j:A_j\rightarrow B_j$ (cf.\ \ref{lem7} 1);
\item for all $j,k=1,2$, we have
\begin{equation}\label{eq14}
(\Phi_k\tens\id_{S_{kj}})\circ\delta_{\s E_j}^k=\delta_{\s F_j}^k\circ\Phi_j.
\end{equation}
\end{enumerate}
In particular, $\Phi_j$ is a $\QG_j$-equivariant $\phi_j$-compatible unitary operator.
\item Conversely, for $j=1,\, 2$ let $\Phi_j:\s E_j\rightarrow\s F_j$ be a $\QG_j$-equivariant unitary equivalence over a $\QG_j$-equivariant *-isomorphism $\phi_j:A_j\rightarrow B_j$ such that (\ref{eqmorpheq}) and (\ref{eq14}) hold for all $j,k=1,2$. Then, the map $\Phi:\s E\rightarrow\s F$, defined  by $\Phi(\xi):=\Phi_1(q_{\s E,1}\xi)+\Phi_2(q_{\s E,2}\xi)$ for all $\xi\in\s E$, is a $\cal G$-equivariant unitary equivalence over the $\cal G$-equivariant *-isomorphism $\phi:A\rightarrow B$ (cf.\ \ref{lem7} 2).\qedhere
\end{enumerate}
\end{lem}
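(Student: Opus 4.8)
The plan is to work with the orthogonal decompositions $\s E=\s E_1\oplus\s E_2$ and $\s F=\s F_1\oplus\s F_2$ furnished by \ref{propdef3} and \ref{propdef4}, exploiting throughout that a $\cal G$-equivariant unitary must commute with the central projections $q_{\s E,j}=\beta_{\s E}(\varepsilon_j)$ and $q_{\s F,j}=\beta_{\s F}(\varepsilon_j)$. Everything then reduces to checking the stated relations component by component, in the spirit of \ref{lem7} and \ref{propdef4}.

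For the first assertion, the proposition following Definition \ref{def1} gives $\Phi\circ\beta_{\s E}(n^{\rm o})=\beta_{\s F}(n^{\rm o})\circ\Phi$ for all $n\in\GC^2$; with $n=\varepsilon_j$ this reads $\Phi q_{\s E,j}=q_{\s F,j}\Phi$. Moreover $\Phi^{-1}$ is again a $\cal G$-equivariant unitary equivalence, over $\phi^{-1}$ (apply $\Phi^{-1}\tens\id_S$ to the defining relation of \ref{def1} and substitute), so the same intertwining holds for $\Phi^{-1}$ and $\Phi$ restricts to a bijection $\Phi_j:\s E_j\to\s F_j$; the formula $\Phi=\Phi_1(q_{\s E,1}\,\cdot\,)+\Phi_2(q_{\s E,2}\,\cdot\,)$ together with its uniqueness is then immediate. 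Using $\phi$-compatibility of $\Phi$, the centrality of $q_{A,j}$ and (\ref{eq1.11}), one checks that $\Phi_j$ is $\phi_j$-compatible, where $\phi_j$ is the $*$-isomorphism of \ref{lem7} 1; being a surjective, inner-product-preserving map, $\Phi_j$ is a unitary equivalence over $\phi_j$, which is (i). For (ii) I would apply the cut-down by $q_{\s F,k}\tens p_{kj}$ to the equivariance relation $\delta_{\s F}(\Phi\xi)=(\Phi\tens\id_S)\delta_{\s E}(\xi)$, expand both sides via \ref{propdef4}(i), and use the identity $(\Phi\tens\id_S)\circ\Pi_j^k=\Pi_j^k\circ(\Phi_k\tens\id_{S_{kj}})$ on the relevant operator spaces, which follows from $\Phi q_{\s E,k}=q_{\s F,k}\Phi$ and the definition of the extensions of $\Pi_j^k$ in \ref{not8}; injectivity of $\Pi_j^k$ then yields (\ref{eq14}). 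Taking $k=j$ and recalling from \ref{propdef4} that $\delta_{\s E_j}^j$ is the action of $\QG_j$, relation (\ref{eq14}) becomes exactly the $\QG_j$-equivariance of $\Phi_j$.

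For the converse, I would set $\Phi:=\Phi_1(q_{\s E,1}\,\cdot\,)+\Phi_2(q_{\s E,2}\,\cdot\,)$. Since the $\phi_j$ satisfy (\ref{eqmorpheq}), \ref{lem7} 2 shows that $\phi:=\iota_1\circ\phi_1(\,\cdot\,q_{A,1})+\iota_2\circ\phi_2(\,\cdot\,q_{A,2})$ is a $\cal G$-equivariant $*$-isomorphism, and the $\phi_j$-compatibility of the $\Phi_j$ together with the orthogonality of the two summands makes $\Phi$ a $\phi$-compatible unitary with $\Phi q_{\s E,j}=q_{\s F,j}\Phi$. It then suffices to verify the coaction relation $\delta_{\s F}(\Phi\xi)=(\Phi\tens\id_S)\delta_{\s E}(\xi)$: expanding by \ref{propdef4}(i), the left side is the sum over $j,k$ of the terms $\Pi_j^k\circ\delta_{\s F_j}^k(\Phi_j q_{\s E,j}\xi)$ and the right side the sum of the terms $\Pi_j^k\circ(\Phi_k\tens\id_{S_{kj}})\delta_{\s E_j}^k(q_{\s E,j}\xi)$, and these match term by term by (\ref{eq14}). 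The compatibility of $\Phi$ with $\beta_{\s E}$ and $\beta_{\s F}$ holds by construction, so $\Phi$ is $\cal G$-equivariant.

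I expect the one genuinely delicate point to be the identity $(\Phi\tens\id_S)\circ\Pi_j^k=\Pi_j^k\circ(\Phi_k\tens\id_{S_{kj}})$ for the \emph{extended} maps on the $\Lin$-spaces, and more generally the interaction of $\Phi\tens\id_S$ with the maps $\delta_{\s E_j}^k$, which are only given indirectly through the unitaries $\s V_j^k$ of \ref{propdef3}; once this is pinned down, the remainder is the componentwise analogue of the computations already carried out in \ref{actprop}, \ref{lem7} and \ref{propdef4}. Alternatively, the whole lemma can be deduced by combining \ref{prop10} — which reformulates $\cal G$-equivariance of $\Phi$ as $\cal G$-equivariance of the induced morphism of linking C*-algebras $\K(\s E\oplus A)\rightarrow\K(\s F\oplus B)$ — with \ref{lem7} and its quantum-group analogue from \cite{BS1}, after identifying the $j$-th corner of $\K(\s E\oplus A)$ with the linking $\QG_j$-C*-algebra $\K(\s E_j\oplus A_j)$ of $(\s E_j,\delta_{\s E_j}^j)$.
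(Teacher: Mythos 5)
Your proposal is correct and follows essentially the same route as the paper: restriction of $\Phi$ along the central projections $q_{\s E,j}$ (using the intertwining $\Phi\circ\beta_{\s E}(n^{\rm o})=\beta_{\s F}(n^{\rm o})\circ\Phi$), the interchange identity between $(\Phi\tens\id_S)\circ\Pi_j^k$ and $\Pi_j^k\circ(\Phi_k\tens\id_{S_{kj}})$, and the expansion of the equivariance relation via \ref{propdef4}~(i) with a cut-down by the relevant projections. The point you flag as delicate is precisely the displayed identity the paper verifies, and the converse direction the paper dismisses as straightforward is handled by your term-by-term matching; the alternative via \ref{prop10} and linking C*-algebras is also viable but not the route taken.
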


\begin{proof}
1. Let $j=1,2$. Since $\Phi$ is $\cal G$-equivariant, we have $\Phi\circ q_{\s E,j}=q_{\s F,j}\circ\Phi$. It then follows that $\Phi(\s E_j)\subset\s F_j$. Let us denote $\Phi_j:=\restr{\Phi}{\s E_j}:\s E_j\rightarrow\s F_j$. For $\xi\in\s E$, we have $\xi=q_{\s E,1}\xi+q_{\s E,2}\xi$; hence, $\Phi(\xi)=\Phi_1(q_{\s E,1}\xi)+\Phi_2(q_{\s E,2}\xi)$. Moreover, such a decomposition of $\Phi$ is unique since $\s F_1$ and $\s F_2$ are orthogonal in $\s F$. Statement (i) is straightforward. Let $j,k=1,2$ and $x\in A_k\tens S_{kj}$. For all $T\in\Lin(A_k\tens S_{kj},\s E_k\tens S_{kj})$ we have 
\[
(\Phi\tens\id_S)(\Pi_j^k(T))(\phi\tens\id_S)(x)=(\Phi_k\tens\id_{S_{kj}})(T)(\phi_k\tens\id_{S_{kj}})(x).\]
In particular, $(\Phi\tens\id_S)(\Pi_j^k(\delta_{\s E_j}^k(\xi)))(\phi\tens\id_S)(x)=(\Phi_k\tens\id_{S_{kj}})(\delta_{\s E_j}^k(\xi))(\phi_k\tens\id_{S_{kj}})(x)$ for all $\xi\in\s E_j$; hence, $(\Phi\tens\id_S)(\delta_{\s E}(\xi))(\phi\tens\id_S)(x)=(\Phi_k\tens\id_{S_{kj}})(\delta_{\s E_j}^k(\xi))(\phi_k\tens\id_{S_{kj}})(x)$ (\ref{propdef4} (i)) for all $\xi\in\s E_j$. We also have $\delta_{\s F}(\Phi(\xi))(q_{A,k}\tens p_{kj})=\Pi_j^k(\delta_{\s F_j}^k(\Phi_j(\xi)))$ for all $\xi\in\s E_j$. Hence, $\delta_{\s F}(\Phi(\xi))(\phi\tens\id_S)(x)=\delta_{\s F_j}^k(\Phi_j(\xi))(\phi_k\tens\id_{S_{kj}})(x)$ for all $\xi\in\s E_j$ and statement (ii) is proved.\newline
2. Straightforward.
\end{proof}

\begin{ex}
Let $(\beta_N,\delta_N)$ be the trivial action (cf.\ \ref{ex2}). Let $i=1,2$. Consider the Hilbert $N$-module $\s E:=\s H_{i1}\oplus\s H_{i2}$. Let $\s V\in\Lin(\s E\tens_{\delta_N}(N\tens S),\s E\tens S)$ and $\beta_{\s E}:N\rightarrow\Lin(\s E)$ be the maps defined by the formulas:
\[
\s V(\xi\tens 1)=\sum_{k=1,2}V_{kj}^i(\xi\tens 1),\quad \xi\in\s H_{ij}\quad ;\quad \beta_{\s E}(\varepsilon_j)=p_{ij},\quad j=1,2.
\]
Then, the pair $(\s V,\beta_{\s E})$ is an action of $\cal G$ on $\s E$.
\end{ex}

	\subsection{Induction of equivariant Hilbert C*-modules}

Let $\QG_1$ and $\QG_2$ be two monoidally equivalent regular locally compact quantum groups.

\medskip
	
Fix a $\QG_1$-C*-algebra $(A_1,\delta_{A_1})$ and a $\QG_1$-equivariant Hilbert $A_1$-module $(\s E_1,\delta_{\s E_1})$. We denote by $J_1:=\K(\s E_1\oplus A_1)$ the associated linking C*-algebra endowed with the continuous action $\delta_{J_1}$ of $\QG_1$.

\begin{nbs} Let us fix some notations.
\begin{itemize}
\item Let $\id_{\s E_1}\tens\delta_{11}^2:\Lin(A_1\tens S_{11},\s E_1\tens S_{11})\rightarrow\Lin(A_1\tens S_{12}\tens S_{21},\s E_1\tens S_{12}\tens S_{21})$ be the unique linear extension of $\id_{\s E_1}\tens\delta_{11}^2:\s E_1\tens S_{11}\rightarrow\Lin(A_1\tens S_{12}\tens S_{21},\s E_1\tens S_{12}\tens S_{21})$ such that
$
(\id_{\s E_1}\tens\delta_{11}^2)(T)(\id_{A_1}\tens\delta_{11}^2)(x)=(\id_{\s E_1}\tens\delta_{11}^2)(Tx)
$
for all $x\in\M(A_1\tens S_{11})$ and $T\in\Lin(A_1\tens S_{11},\s E_1\tens S_{11})$.
\item Let $\delta_{\s E_1}^{(2)}:\s E_1\rightarrow\Lin(A_1\tens S_{12}\tens S_{21},\s E_1\tens S_{12}\tens S_{21})$ be the linear map defined by
$
\delta_{\s E_1}^{(2)}(\xi):=(\id_{\s E_1}\tens\delta_{11}^2)\delta_{\s E_1}(\xi)
$
for all $\xi\in\s E_1$.\index[symbol]{dg@$\delta_{\s E_1}^{(2)}$}
\item Consider the Banach subspace of $\Lin(A_1\tens S_{12},\s E_1\tens S_{12})$ defined by (cf.\ \ref{not4}):\index[symbol]{id@$\ind(\s E_1)$, induced Hilbert module} 
\[
\ind({\s E}_1):=\![(\id_{\s E_1\tens S_{12}}\tens\omega)\delta_{\s E_1}^{(2)}(\xi)\,;\,\xi\in\s E_1,\omega\in\B(\s H_{21})_*].\qedhere
\]
\end{itemize}
\end{nbs}

\begin{prop}\label{prop2} We have
$
[\ind({\s E}_1)(1_{A_1}\tens S_{12})]=\s E_1\tens S_{12}=[(1_{\s E_1}\tens S_{11})\ind({\s E}_1)].
$
In particular, $\ind({\s E}_1)\subset\widetilde{\M}(\s E_1\tens S_{12})$.
\end{prop}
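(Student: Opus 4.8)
The plan is to deduce the statement from its purely C$^*$-algebraic counterpart, Proposition~\ref{propind4}, by passing to the linking C$^*$-algebra of $\s E_1$ and observing that the induction functor is compatible with the matrix decomposition of a linking algebra.

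First I would set $J_1:=\K(\s E_1\oplus A_1)$ and equip it, via Proposition~\ref{prop1}, with the action $(\beta_{J_1},\delta_{J_1})$ of $\QG_1$ compatible with $(\delta_{A_1},\delta_{\s E_1})$. Since $\s E_1$ is $\QG_1$-equivariant, Proposition~\ref{propcont} shows that $\delta_{J_1}$ is strongly continuous, so $J_1$ is a $\QG_1$-C$^*$-algebra and Proposition~\ref{propind4} applies: with $J_2:=\ind(J_1)\subset\M(J_1\tens S_{12})$ one has
\[
[J_2(1_{J_1}\tens S_{12})]=J_1\tens S_{12}=[(1_{J_1}\tens S_{12})J_2].
\]

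The key step is to identify the four matrix corners of $J_2$ relative to the decomposition $(\s E_1\oplus A_1)\tens S_{12}$ (cf.\ Remark~\ref{rk3}). From the compatibility relations $\delta_{J_1}\circ\iota_{A_1}=\iota_{A_1\tens S_{11}}\circ\delta_{A_1}$, $\delta_{J_1}\circ\iota_{\s E_1}=\iota_{\s E_1\tens S_{11}}\circ\delta_{\s E_1}$ and their analogues for $\iota_{\K(\s E_1)}$ and $\iota_{\s E_1^*}$ (using the actions on $\K(\s E_1)$ and on $\s E_1^*$ provided by Propositions~\ref{prop1} and~\ref{prop31}), together with the functoriality of $\id\tens\delta_{11}^2$, one gets $\delta_{J_1}^{(2)}\circ\iota_\bullet=\iota_{\bullet\tens S_{12}\tens S_{21}}\circ\delta_\bullet^{(2)}$ for each $\bullet\in\{A_1,\s E_1,\K(\s E_1),\s E_1^*\}$; and the slice maps of~\ref{not4} commute with the embeddings by their very construction, $(\id_{\bullet\tens S_{12}}\tens\omega)\circ\iota_{\bullet\tens S_{12}\tens S_{21}}=\iota_{\bullet\tens S_{12}}\circ(\id_{\bullet\tens S_{12}}\tens\omega)$. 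Since $\delta_{J_1}^{(2)}$ and the slice maps are linear and $J_1$ is generated as a Banach space by $\iota_{A_1}(A_1)$, $\iota_{\s E_1}(\s E_1)$, $\iota_{\s E_1^*}(\s E_1^*)$ and $\iota_{\K(\s E_1)}(\K(\s E_1))$, each lying in one of the four corners, it follows that the $(1,2)$-corner of $J_2$ is exactly $\iota_{\s E_1\tens S_{12}}(\ind(\s E_1))$, while the $(1,2)$-corner of $J_1\tens S_{12}=\K(\s E_1\oplus A_1)\tens S_{12}$ is $\K(A_1\tens S_{12},\s E_1\tens S_{12})=\s E_1\tens S_{12}$.

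Finally, for $s\in S_{12}$ the multiplier $1_{J_1}\tens s=\iota_{\K(\s E_1)}(1_{\s E_1})\tens s+\iota_{A_1}(1_{A_1})\tens s$ is block-diagonal, so the two displayed identities for $J_2$ decompose corner by corner; reading off the $(1,2)$-corner yields
\[
[\ind(\s E_1)(1_{A_1}\tens S_{12})]=\s E_1\tens S_{12}=[(1_{\s E_1}\tens S_{12})\ind(\s E_1)],
\]
whence $\ind(\s E_1)\subset\widetilde\M(\s E_1\tens S_{12})$ at once, since for $T\in\ind(\s E_1)$ and $x\in S_{12}$ the elements $T(1_{A_1}\tens x)$ and $(1_{\s E_1}\tens x)T$ then lie in $\s E_1\tens S_{12}$, which is precisely the defining condition of the relative multiplier module. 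The main obstacle is the middle step: one has to handle carefully the various canonical identifications and check that the slice-map construction of~\ref{not4}, the amplification $\id\tens\delta_{11}^2$, and the embeddings $\iota_\bullet$ all intertwine as stated, so that the block structure of $J_1$ is faithfully transported to $\ind(J_1)$; once this is in place the remainder is formal bookkeeping. Alternatively one could argue directly, unwinding the definition of $\ind(\s E_1)$ and invoking $[\delta_{\s E_1}(\s E_1)(1_{A_1}\tens S_{11})]=\s E_1\tens S_{11}$, its left-handed counterpart, and the relations $[\delta_{11}^2(S_{11})(1_{S_{12}}\tens S_{21})]=S_{12}\tens S_{21}=[\delta_{11}^2(S_{11})(S_{12}\tens 1_{S_{21}})]$ of Proposition~\ref{prop5}, mimicking the proof of Proposition~\ref{propind4}; this route is computationally heavier.
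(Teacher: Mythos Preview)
Your argument is correct and takes a genuinely different route from the paper. The paper proves this proposition by a direct elementwise computation: it fixes $\xi\in\s E_1$, $s\in S_{12}$, $\omega\in\B(\s H_{21})_*$, writes $\omega=s'\omega'$, and uses $S_{12}\tens S_{21}=[\delta_{11}^2(S_{11})(1_{S_{12}}\tens S_{21})]$ together with $[\delta_{\s E_1}(\s E_1)(1_{A_1}\tens S_{11})]=\s E_1\tens S_{11}$ to push $(\id_{\s E_1\tens S_{12}}\tens\omega)\delta_{\s E_1}^{(2)}(\xi)(1_{A_1}\tens s)$ into $\s E_1\tens S_{12}$, and then runs the argument backwards for the converse inclusion; the left-handed identity is obtained similarly via the continuity of $\delta_{\s E_1}$. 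In other words, the paper does precisely what you describe at the end as the ``computationally heavier'' alternative.

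Your route---pass to the linking algebra $J_1$, apply Proposition~\ref{propind4} to get the two identities for $J_2=\ind(J_1)$, and then read off the $(1,2)$-corner---is more conceptual: it reduces the Hilbert-module statement to the already-established C$^*$-algebraic one, at the cost of checking the intertwining relations $\delta_{J_1}^{(2)}\circ\iota_\bullet=\iota_{\bullet\tens S_{12}\tens S_{21}}\circ\delta_\bullet^{(2)}$ and the compatibility of slice maps with the corner embeddings. These checks are exactly the content of Lemma~\ref{lem22}, which in the paper appears \emph{after} Proposition~\ref{prop2}; you are effectively absorbing that lemma into the proof. One small point you leave implicit: to conclude that the $(1,2)$-corner of $J_2$ is \emph{exactly} $\iota_{\s E_1\tens S_{12}}(\ind(\s E_1))$ from the generating set, you need that norm closure commutes with the corner projections $\iota_{\K(\s E_1)}(1_{\s E_1})\tens 1$ and $\iota_{A_1}(1_{A_1})\tens 1$, which is immediate since these are bounded idempotents. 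The trade-off is clear: the paper's proof is self-contained and needs no functorial machinery, while yours explains \emph{why} the result holds (it is the module shadow of Proposition~\ref{propind4}) and avoids repeating a computation already done there.
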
 

\begin{proof}
Let us prove the formula $[\ind({\s E}_1)(1_{A_1}\tens S_{12})]=\s E_1\tens S_{12}$. Fix $\xi\in\s E_1$, $s\in S_{12}$ and $\omega\in\B(\s H_{21})_*$. Write $\omega=s'\omega'$ with $s'\in S_{21}$ and $\omega'\in\B(\s H_{21})_*$. It follows from 
$S_{12}\tens S_{21}=[\delta_{11}^2(S_{11})(1_{S_{12}}\tens S_{21})]$ 
that 
\[
(\id_{\s E_1\tens S_{12}}\tens\omega)(\delta_{\s E_1}^{(2)}(\xi))(1_{A_1}\tens s)
=(\id_{\s E_1\tens S_{12}}\tens\omega')(\delta_{\s E_1}^{(2)}(\xi)(1_{\s E_1}\tens s\tens s'))
\] 
is the norm limit of finite sums of elements of the form
\[
\eta=(\id_{\s E_1}\tens\id_{S_{12}}\tens\omega')(\delta_{\s E_1}^{(2)}(\xi)(1_{\s E_1}\tens\delta_{11}^2(t')(1_{S_{12}}\tens t))), \quad \text{with } t'\in S_{11} \; \text{and} \; t\in S_{21}.
\]
It follows from $[\delta_{\s E_1}(\s E_1)(1_{A_1}\tens S_{11})]=\s E_1\tens S_{11}$ that
\[
\eta=(\id_{\s E_1\tens S_{12}}\tens\omega')((\id_{\s E_1}\tens\delta_{11}^2)(\delta_{\s E_1}(\xi)(1_{\s E_1}\tens t'))(1_{\s E_1\tens S_{12}}\tens t))
\] 
is the norm limit of finite sums of elements of the form
\[
\eta'=(\id_{\s E_1\tens S_{12}}\tens\omega')(\zeta\tens \delta_{11}^2(t'')(1_{S_{12}}\tens t)), \quad
\text{with } \zeta\in\s E_1 \; \text{and} \; t''\in S_{21}.
\]
By using 
$S_{12}=[(\id_{S_{12}}\tens\omega)(\delta_{11}^2(y))\,;\, \omega\in\B(\s H_{21})_*,\, y\in S_{11}]$,
we obtain 
\[
\eta'=\zeta\tens (\id_{S_{12}}\tens t\omega')(\delta_{11}^2(t''))\in \s E_1\tens S_{12}.
\] 
Hence, $(\id_{\s E_1\tens S_{12}}\tens\omega)(\delta_{\s E_1}^{(2)}(\xi))(1_{A_1}\tens s)\in\s E_1\tens S_{12}$ for all $\xi\in\s E_1$, $\omega\in\B(\s H_{21})_*$ and $s\in S_{12}$. Therefore, the inclusion \begin{center}
$[\ind({\s E}_1)(1_{A_1}\tens S_{12})] \subset \s E_1\tens S_{12}$
\end{center}
is proved. The converse inclusion is obtained by following backwards the above argument.\newline
By a similar argument, we prove by using the relation $(1_{\s E_1}\tens S_{12})\delta_{\s E_1}(\s E_1) \subset \s E_1\tens S_{12}$ that $(1_{\s E_1}\tens S_{12})\ind({\s E}_1)\subset \s E_1\tens S_{12}$. Hence, $[(1_{\s E_1}\tens S_{12})\ind({\s E}_1)]\subset \s E_1\tens S_{12}$. For the converse inclusion, it suffices to follow backwards the proof as above and to use the continuity of the action $\delta_{\s E_1}$.
\end{proof}

\begin{lem}\label{lem22} For all $a\in A_1$, $\xi\in\s E_1$, $k\in\K(\s E_1)$ and $\omega\in\B(\s H_{21})_*$, we have:
\begin{enumerate}
\item $\iota_{A_1\tens S_{11}}(\id_{A_1\tens S_{12}}\tens\omega)\delta_{A_1}^{(2)}(a)
=(\id_{J_1\tens S_{12}}\tens\omega)\delta_{J_1}^{(2)}(\iota_{A_1}(a))$;
\item $\iota_{\s E_1\tens S_{12}}(\id_{\s E_1\tens S_{12}}\tens\omega)\delta_{\s E_1}^{(2)}(\xi)
=(\id_{J_1\tens S_{12}}\tens\omega)\delta_{J_1}^{(2)}(\iota_{\s E_1}(\xi))$;
\item $\iota_{\K(\s E_1\tens S_{12})}(\id_{\K(\s E_1)\tens S_{12}}\tens\omega)\delta_{\K(\s E_1)}^{(2)}(k)=(\id_{J_1\tens S_{12}}\tens\omega)\delta_{J_1}^{(2)}(\iota_{\K(\s E_1)}(k))$.\qedhere
\end{enumerate}
\end{lem}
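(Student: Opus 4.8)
The three identities have the same structure, so I would treat them in parallel, carrying out the middle one (for $\iota_{\s E_1}$) in detail and indicating the trivial modifications for the other two. The plan is to first strip off the partial application $(\id\tens\omega)$, thereby reducing to an intertwining statement for the iterated coactions $\delta^{(2)}$, then split $\delta^{(2)}_?$ as $(\id_?\tens\delta_{11}^2)\circ\delta_?$ and invoke the compatibility relations already recorded in Proposition \ref{prop1}.

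First I would record that, for any $T\in\Lin(A_1\tens S_{12}\tens S_{21},\s E_1\tens S_{12}\tens S_{21})$ and $\omega\in\B(\s H_{21})_*$, one has $\iota_{\s E_1\tens S_{12}}\big((\id_{\s E_1\tens S_{12}}\tens\omega)(T)\big)=(\id_{J_1\tens S_{12}}\tens\omega)\big(\iota_{\s E_1\tens S_{12}\tens S_{21}}(T)\big)$, and similarly with $\iota_{A_1\tens S_{12}}$, $\iota_{\K(\s E_1\tens S_{12})}$ in place of $\iota_{\s E_1\tens S_{12}}$; this is nothing but the defining property of $(\id\tens\omega)(\cdot)$ in Proposition-Definition \ref{not4}, applied to the Hilbert module $\s E_1\tens S_{12}$ over $A_1\tens S_{12}$ and the algebra $S_{21}\subset\B(\s H_{21})$. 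Hence it suffices to prove the identities before evaluation, namely
\[
\iota_{\s E_1\tens S_{12}\tens S_{21}}\big(\delta_{\s E_1}^{(2)}(\xi)\big)=\delta_{J_1}^{(2)}\big(\iota_{\s E_1}(\xi)\big),\qquad \xi\in\s E_1,
\]
together with the analogous statements for $\iota_{A_1}$ and $\iota_{\K(\s E_1)}$.

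Next I would establish the ``leg-two naturality'' of the embedding maps with respect to $\delta_{11}^2$: for all $m\in\M(A_1\tens S_{11})$, $T\in\Lin(A_1\tens S_{11},\s E_1\tens S_{11})$ and $S'\in\M(\K(\s E_1)\tens S_{11})=\Lin(\s E_1\tens S_{11})$,
\[
\iota_{A_1\tens S_{12}\tens S_{21}}\big((\id_{A_1}\tens\delta_{11}^2)(m)\big)=(\id_{J_1}\tens\delta_{11}^2)\big(\iota_{A_1\tens S_{11}}(m)\big),
\]
\[
\iota_{\s E_1\tens S_{12}\tens S_{21}}\big((\id_{\s E_1}\tens\delta_{11}^2)(T)\big)=(\id_{J_1}\tens\delta_{11}^2)\big(\iota_{\s E_1\tens S_{11}}(T)\big),
\]
and the corresponding formula for $S'$. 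The first line is immediate, since $\iota_{A_1}$ and $\id\tens\delta_{11}^2$ are $*$-homomorphisms and both sides are strictly continuous and agree on $A_1\tens S_{11}$; the second and third lines are proved exactly as the two displayed identities $\iota_{\s E\tens S\tens S}((\id_{\s E}\tens\delta)(T))=(\id_J\tens\delta)(\iota_{\s E\tens S}(T))$ and its $\delta_{\s E}\tens\id$-analogue in the proof of Proposition \ref{prop1}, because that argument uses only that $\id_{\s E_1}\tens\delta_{11}^2$ is the linear extension characterised by $(\id_{\s E_1}\tens\delta_{11}^2)(T)(\id_{A_1}\tens\delta_{11}^2)(x)=(\id_{\s E_1}\tens\delta_{11}^2)(Tx)$ — which is precisely how it was defined above — the compatibility of $\delta_{J_1}$ with $\delta_{A_1}$, $\delta_{\s E_1}$, $\delta_{\K(\s E_1)}$ from Proposition \ref{prop1}, and strict continuity. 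Granting this, the desired un-evaluated identity for $\iota_{\s E_1}$ follows from the chain
\[
\iota_{\s E_1\tens S_{12}\tens S_{21}}\big((\id_{\s E_1}\tens\delta_{11}^2)\delta_{\s E_1}(\xi)\big)=(\id_{J_1}\tens\delta_{11}^2)\big(\iota_{\s E_1\tens S_{11}}(\delta_{\s E_1}(\xi))\big)=(\id_{J_1}\tens\delta_{11}^2)\big(\delta_{J_1}(\iota_{\s E_1}(\xi))\big),
\]
using the leg-two naturality and then $\iota_{\s E_1\tens S_{11}}\circ\delta_{\s E_1}=\delta_{J_1}\circ\iota_{\s E_1}$ (Proposition \ref{prop1}); the two ends are $\iota_{\s E_1\tens S_{12}\tens S_{21}}(\delta_{\s E_1}^{(2)}(\xi))$ and $\delta_{J_1}^{(2)}(\iota_{\s E_1}(\xi))$ by the definitions of the iterated coaction maps. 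The computations for $\iota_{A_1}$ and $\iota_{\K(\s E_1)}$ are identical (alternatively, identity 3 can be deduced from identity 2 via $\delta_{\K(\s E_1)}(\theta_{\xi,\eta})=\theta_{\delta_{\s E_1}(\xi),\,\delta_{\s E_1}(\eta)}$, cf.\ the proof of \ref{prop1}).

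The only genuinely non-formal step is the leg-two naturality of $\iota_{\s E_1}$ and $\iota_{\K(\s E_1)}$, and even there the work is pure bookkeeping: one carries the identifications $\K(\s E_1)\tens S=\K(\s E_1\tens S)$, $\s E_1\tens S=\K(A_1\tens S,\s E_1\tens S)$ and the relative-multiplier conventions through the $2\times2$-matrix description of Remarks \ref{rk3}, exactly as in the proof of Proposition \ref{prop1}. I do not expect any new idea to be needed beyond what already appears there.
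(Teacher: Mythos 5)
Your proof is correct and follows the same route the paper intends: the paper dismisses these identities as straightforward consequences of the defining property of the slice maps in \ref{not4} and the compatibility of $\delta_{J_1}$ with $\delta_{A_1}$, $\delta_{\s E_1}$, $\delta_{\K(\s E_1)}$, which is exactly the reduction you carry out (strip off $(\id\tens\omega)$ via \ref{not4}, then verify the un-evaluated intertwining by splitting $\delta^{(2)}$ and using compatibility plus the leg-two naturality of the embeddings, as in the proofs of \ref{prop1} and \ref{prop11}). No gap.
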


\begin{proof} These formulas are straightforward consequences of definitions and the compatibility of $\delta_{J_1}$ with $\delta_{A_1}$ and $\delta_{\s E_1}$ and $\delta_{\K(\s E_1)}$ (2.7 (b), 2.8 (a) \cite{BS1}, \ref{not4}).
\end{proof}

\begin{prop}\label{prop3}
Let $\ind(A_1)$ be the induced C*-algebra. Then $\ind(\s E_1)$ is a Hilbert $\ind(A_1)$-module for the right action by composition and the $\ind(A_1)$-valued inner product given by $\langle\xi,\, \eta\rangle:=\xi^*\circ\eta$ for $\xi,\eta\in\ind(\s E_1)$.
\end{prop}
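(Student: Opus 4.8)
The strategy is to realize $\ind(\s E_1)$ as an off-diagonal corner of an induced linking C$^*$-algebra. Since $\s E_1$ is a $\QG_1$-equivariant Hilbert $A_1$-module, the linking C$^*$-algebra $J_1=\K(\s E_1\oplus A_1)$ is a $\QG_1$-C$^*$-algebra for the continuous action $\delta_{J_1}$ (cf.\ 2.7 \cite{BS1}), so by Proposition \ref{propind4} applied to $J_1$ the Banach space
\[
\ind(J_1)=[(\id_{J_1\tens S_{12}}\tens\omega)\delta_{J_1}^{(2)}(x)\,;\,x\in J_1,\ \omega\in\B(\s H_{21})_*]\subset\M(J_1\tens S_{12})
\]
is a C$^*$-algebra, nondegenerately embedded in $\M(J_1\tens S_{12})=\Lin((\s E_1\tens S_{12})\oplus(A_1\tens S_{12}))$. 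Through the maps $\iota_{\s E_1\tens S_{12}},\iota_{A_1\tens S_{12}},\iota_{\K(\s E_1\tens S_{12})}$ of \ref{rk3}, the aim is to show that $\ind(J_1)$ is itself a linking C$^*$-algebra whose diagonal corners are (the images of) $\ind(\K(\s E_1))$ and $\ind(A_1)$ and whose off-diagonal corner is $\ind(\s E_1)$.

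First I would exhibit the invariant projections. Put $e_1:=\iota_{\s E_1}(1_{\s E_1})$ and $e_2:=\iota_{A_1}(1_{A_1})$ in $\M(J_1)$; then $e_1+e_2=1_{J_1}$, and by \ref{prop32} the corners of $J_1$ are $e_2J_1e_2=\iota_{A_1}(A_1)$, $e_1J_1e_1=\iota_{\K(\s E_1)}(\K(\s E_1))$, $e_1J_1e_2=\iota_{\s E_1}(\s E_1)$. Moreover $e_1,e_2$ are $\QG_1$-invariant, i.e.\ $\delta_{J_1}(e_i)=e_i\tens 1$ (this is built into the linking-algebra picture, 2.7 \cite{BS1}). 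Since $\delta_{J_1}^{(2)}=(\id_{J_1}\tens\delta_{11}^2)\delta_{J_1}$ one gets $(1_{J_1}\tens\delta_{11}^2(1_{S_{11}}))\delta_{J_1}^{(2)}(x)=\delta_{J_1}^{(2)}(x)$ and hence $(e_i\tens 1_{S_{12}})(\id_{J_1\tens S_{12}}\tens\omega)\delta_{J_1}^{(2)}(x)=(\id_{J_1\tens S_{12}}\tens\omega)\delta_{J_1}^{(2)}(e_ix)$, and symmetrically on the right. Thus $\bar e_i:=e_i\tens 1_{S_{12}}$ are complementary self-adjoint projections of $\M(\ind(J_1))$ with
\[
\bar e_i\,\ind(J_1)\,\bar e_j=[(\id_{J_1\tens S_{12}}\tens\omega)\delta_{J_1}^{(2)}(y)\,;\,y\in e_iJ_1e_j,\ \omega\in\B(\s H_{21})_*].
\]
Now Lemma \ref{lem22} identifies these corners: its three formulas say exactly that $(\id_{J_1\tens S_{12}}\tens\omega)\delta_{J_1}^{(2)}$ carries $\iota_{A_1}(a)$, $\iota_{\s E_1}(\xi)$, $\iota_{\K(\s E_1)}(k)$ to $\iota_{A_1\tens S_{12}}$, $\iota_{\s E_1\tens S_{12}}$, $\iota_{\K(\s E_1\tens S_{12})}$ of $(\id\tens\omega)$ of the corresponding $\delta_{A_1}^{(2)},\delta_{\s E_1}^{(2)},\delta_{\K(\s E_1)}^{(2)}$; since the $\iota$-maps are isometric this yields $\bar e_2\ind(J_1)\bar e_2=\iota_{A_1\tens S_{12}}(\ind(A_1))$, $\bar e_1\ind(J_1)\bar e_2=\iota_{\s E_1\tens S_{12}}(\ind(\s E_1))$ and $\bar e_1\ind(J_1)\bar e_1=\iota_{\K(\s E_1\tens S_{12})}(\ind(\K(\s E_1)))$.

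Finally I would invoke the elementary fact that for a C$^*$-algebra $L$ with complementary projections $p,p'\in\M(L)$ the corner $pLp'$ is a Hilbert $p'Lp'$-module for right multiplication and the inner product $\langle u,v\rangle:=u^*v$. Applying this with $L=\ind(J_1)$, $p=\bar e_1$, $p'=\bar e_2$, and transporting along the isometry $\iota_{\s E_1\tens S_{12}}$ — which by \ref{prop32} and the matrix calculus of \ref{rk3} sends $\xi\circ a$ to $\iota_{\s E_1\tens S_{12}}(\xi)\iota_{A_1\tens S_{12}}(a)$ and $\xi^*\circ\eta$ to $\iota_{A_1\tens S_{12}}(\xi^*\circ\eta)$ — one concludes that $\ind(\s E_1)\subset\Lin(A_1\tens S_{12},\s E_1\tens S_{12})$ is a Hilbert $\ind(A_1)$-module with right action by composition and inner product $\langle\xi,\eta\rangle=\xi^*\circ\eta$, as asserted; completeness is automatic because $\ind(\s E_1)$ is norm-closed by definition and its Hilbert-module norm coincides with the operator norm. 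The only delicate point is the middle step — checking that induction transports the linking structure of $J_1$ to that of $\ind(J_1)$, i.e.\ the compatibility of last-leg slicing with $\delta_{J_1}^{(2)}$ on the invariant projections and on the corners of $J_1$ — but the substantive part of this is precisely the content of Lemma \ref{lem22}, so what remains is bookkeeping with the $\iota$-maps. (As a by-product the third corner identification gives $\K(\ind(\s E_1))=\ind(\K(\s E_1))$.)
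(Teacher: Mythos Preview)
Your proposal is correct and follows essentially the same route as the paper: both arguments embed $\ind(\s E_1)$ and $\ind(A_1)$ into the induced linking C$^*$-algebra $\ind(J_1)$ via Lemma \ref{lem22}, use that $\ind(J_1)$ is a C$^*$-algebra (Proposition \ref{propind4}, resp.\ 4.2 a) \cite{BC}), and then extract the relevant pieces using the projections $e_i\tens 1_{S_{12}}$. The only difference is presentational: you identify the four corners of $\ind(J_1)$ globally at the outset and then invoke the standard fact that an off-diagonal corner is a Hilbert module over the diagonal one, whereas the paper verifies closure of $\ind(\s E_1)$ under the right $\ind(A_1)$-action and under the inner product element by element (computing $\iota_{\s E_1\tens S_{12}}(\eta x)$ and $\iota_{A_1\tens S_{12}}(\zeta^*\!\circ\chi)$ as products in $\ind(J_1)$ and then multiplying by $\iota_{\K(\s E_1\tens S_{12})}(1)$ and $\iota_{A_1\tens S_{12}}(1)$ to project onto the right corner). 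Your observation that the invariance $\delta_{J_1}(e_i)=e_i\tens 1$ forces $\bar e_i\,\ind(J_1)\,\bar e_j$ to coincide with the induction of $e_iJ_1e_j$ is exactly what the paper establishes later as 4.14 \cite{BC} and exploits in Proposition \ref{prop11}; pulling it forward here is harmless and makes the argument slightly cleaner.
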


\begin{proof}
Let $\omega,\omega'\in\B(\s H_{21})_*$, $a\in A_1$ and $\xi\in\s E_1$. Let $\eta:=(\id_{\s E_1\tens S_{12}}\tens\omega)\delta_{\s E_1}^{(2)}(\xi)$ and $x:=(\id_{A_1 \tens S_{12}}\tens\omega')\delta_{A_1}^{(2)}(a)$.
We have
\begin{align*}
\iota_{\s E_1\tens S_{12}}(\eta x) & = \iota_{\s E_1\tens S_{12}}(\eta)\iota_{A_1\tens S_{12}}(a) &\text{(\ref{prop32} 1)}\\
& = (\id_{J_1\tens S_{12}}\tens\omega)\delta_{J_1}^{(2)}(\iota_{\s E_1}(\xi))(\id_{J_1\tens S_{12}}\tens\omega')\delta_{J_1}^{(2)}(\iota_A(a)) &\text{(\ref{lem22})}\\
& = (\id_{J_1\tens S_{12}}\tens\omega\tens\omega')(\delta_{J_1}^{(2)}(\iota_{\s E_1}(\xi))_{123}\delta_{J_1}^{(2)}(\iota_A(a))_{124}).
\end{align*}
In virtue of 4.2 a) \cite{BC}, we have $\iota_{\s E_1\tens S_{12}}(\eta x)\in\ind(J_1)$. Therefore, $\iota_{\s E_1\tens S_{12}}(\eta x)$ is the norm limit of finite sums of elements of the form 
\[
y=(\id_{J_1\tens S_{12}}\tens\phi)\delta_{J_1}^{(2)}\begin{pmatrix} k & \zeta \\ \chi^* & b\end{pmatrix}\!,\; \text{with}\; k\in\K(\s E_1),\; \zeta\in\s E_1,\; \chi^*\in\s E_1^*,\; b\in A_1,\; \phi\in\B(\s H_{21})_*.
\]
We have (\ref{lem22})
\begin{multline*}
y  = \iota_{\K(\s E_1\tens S_{12})}(\id_{\K(\s E_1)\tens S_{12}}\tens\phi)\delta_{\K(\s E_1)}^{(2)}(k)+ \iota_{\s E_1\tens S_{12}}(\id_{\s E_1\tens S_{12}} \tens \phi)\delta_{\s E_1}^{(2)}(\zeta) \\
 +\iota_{\s E_1\tens S_{12}}(\id_{\s E_1\tens S_{12}} \tens \phi)\delta_{\s E_1}^{(2)}(\chi)^* + \iota_{A_1\tens S_{12}}(\id_{A_1\tens S_{12}}\tens\phi)\delta_{A_1}^{(2)}(b).
\end{multline*}
By multiplying on the left (resp.\ right) by $\iota_{\K(\s E_1\tens S_{12})}(1_{\s E_1\tens S_{12}})$ (resp.\ $\iota_{A_1\tens S_{12}}(1_{A_1\tens S_{12}})$), we obtain (\ref{ehmlem1}) that $\iota_{\s E_1\tens S_{12}}(\eta x)$ is the norm limit of finite sums of elements of the form 
\begin{center}
$\iota_{\s E_1\tens S_{12}}(\id_{\s E_1}\tens\id_{S_{12}}\tens\phi)\delta_{\s E_1}^{(2)}(\zeta)$,\quad with $\zeta\in\s E_1$ and $\phi\in\B(\s H_{21})_*$. 
\end{center}
Since $\iota_{\s E_1\tens S_{12}}$ is isometric, we  have proved that $\eta x\in\ind(\s E_1)$.

\medskip

Let us prove that $\zeta^*\circ\chi\in\ind(A_1)$ for all $\zeta,\chi\in\ind(\s E_1)\subset\Lin(A_1\tens S_{12},\s E_1\tens S_{12})$. Let us fix $\xi,\eta\in\s E_1$ and $\omega,\psi\in\B(\s H_{21})_*$. Let us denote
$
\zeta:=(\id_{\s E_1\tens S_{12}}\tens\omega)\delta_{\s E_1}^{(2)}(\xi)
$ 
and
$
\chi:=(\id_{\s E_1\tens S_{12}}\tens\psi)\delta_{\s E_1}^{(2)}(\eta).
$
We have
\begin{align*}
\iota_{A_1\tens S_{12}}(\zeta^*\circ\chi) & =\iota_{\s E_1\tens S_{12}}(\zeta)^*\iota_{\s E_1\tens S_{12}}(\chi) &\text{(\ref{prop32} 3)}\\
& = (\id_{J_1\tens S_{12}}\tens\omega)\delta_{J_1}^{(2)}(\iota_{\s E_1}(\xi)^*)(\id_{J_1\tens S_{12}}\tens\psi)\delta_{J_1}^{(2)}(\iota_{\s E_1}(\eta)) & \text{(\ref{lem22})}\\
& = (\id_{J_1\tens S_{12}}\tens\omega\tens\psi)(\delta_{J_1}^{(2)}(\iota_{\s E_1}(\xi)^*)_{123}\delta_{J_1}^{(2)}(\iota_{\s E_1}(\eta))_{124}).
\end{align*}
Hence, $\iota_{A_1\tens S_{12}}(\zeta^*\circ\chi)\in\ind(J_1)$ (4.2 a) \cite{BC}). As above, we prove that $\iota_{A_1\tens S_{12}}(\zeta^*\circ\chi)$ is the norm limit of finite sums of elements of the form $\iota_{A_1\tens S_{12}}(\id_{A_1\tens S_{12}}\tens\phi)\delta_{A_1}^{(2)}(a)$ with $a\in A_1$ and $\phi\in\B(\s H_{21})_*$. We have proved that $\zeta^*\circ\chi\in\ind(A_1)$ since $\iota_{A_1\tens S_{12}}$ is isometric.
\end{proof}

Let us denote $(A_2,\delta_{A_2}):=\ind(A_1,\delta_{A_1})$ and $(J_2,\delta_{J_2}):=\ind(J_1,\delta_{J_1})$ the induced $\QG_2$-C*-algebra of $(A_1,\delta_{A_1})$ and $(J_1,\delta_{J_1})$ respectively. We also denote $\s E_2:=\ind(\s E_1)$ the induced Hilbert $A_2$-module as defined above.$\vphantom{\ind}$

\medskip

In the technical lemma below, we make the identification $\M(A)=\Lin(A)$. We first recall a well-known corollary of Cohen-Hewitt factorization theorem.

\begin{lem}\label{lem3}
Let $A$ be a C*-algebra and $\cal E$ a Hilbert $A$-module. If $T:A\rightarrow{\cal E}$ is a map such that $T(ab)=T(a)b$ for all $a,b\in A$, then $T$ is linear and continuous.  
\end{lem}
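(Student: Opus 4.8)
\textbf{Proof plan for Lemma \ref{lem3}.}

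The statement is the standard consequence of the Cohen--Hewitt factorization theorem, so the plan is to reduce everything to factorization and to the algebraic identity $T(ab)=T(a)b$. First I would establish additivity: given $a,b\in A$, use Cohen--Hewitt to write $a=cx$ and $b=cy$ for a common element $c\in A$ and $x,y\in A$ (one may factor the two-element set $\{a,b\}$ simultaneously through a single $c$, or equivalently factor $a+b=cz$ and then absorb). Then
\[
T(a+b)=T(c(x+y))=T(c)(x+y)=T(c)x+T(c)y=T(cx)+T(cy)=T(a)+T(b).
\]
For homogeneity over $\GC$, fix a scalar $\lambda$ and write $a=cx$; then $T(\lambda a)=T(c(\lambda x))=T(c)(\lambda x)=\lambda\big(T(c)x\big)=\lambda T(cx)=\lambda T(a)$. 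Hence $T$ is linear.

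For continuity, I would again factor an arbitrary $a\in A$ as $a=cx$ with $\|x\|$ controlled by $\|a\|$ (the quantitative form of Cohen--Hewitt gives, for any $\varepsilon>0$, a factorization with $\|x-a\|\leqslant\varepsilon$ and $\|c\|\leqslant 1$, so in particular $\|x\|\leqslant\|a\|+\varepsilon$). Then $\|T(a)\|=\|T(c)x\|\leqslant\|T(c)\|\,\|x\|$. The delicate point — and the only real obstacle — is that $T(c)$ varies with $a$, so this does not immediately bound $\|T(a)\|$ by a constant times $\|a\|$. The clean way around this is the closed graph theorem: having shown $T$ is linear, suppose $a_n\to 0$ in $A$ and $T(a_n)\to\zeta$ in $\s E$; for any fixed $b\in A$ we have $T(a_nb)=T(a_n)b\to\zeta b$, while also $a_nb\to 0$, and writing $a_nb=T(\cdot)$ applied after a fixed factorization $b=cy$ shows $T(a_nb)=T(a_nc)y$; combining with linearity and the fact that $\{T(a_nc)\}$ is handled by the same argument, one gets $\zeta b=0$ for all $b$ in a dense subset obtained by factorization, whence $\zeta=0$ (using that $\s E A$ is dense in $\s E$, indeed equals $\s E$). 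Therefore the graph of $T$ is closed and $T$ is bounded.

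In writing this up I would simply cite Cohen--Hewitt factorization (which is already invoked implicitly in the surrounding text, e.g.\ in the remarks preceding \ref{rk2}) and the closed graph theorem, and spell out only the additivity computation and the closed-graph verification; the homogeneity and the density of $\s E A=\s E$ are immediate. The main obstacle is purely the continuity half, and the resolution is to avoid trying to get a uniform constant out of factorization directly and instead use the closed graph theorem once linearity is in hand.
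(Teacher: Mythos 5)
The paper itself offers no proof of this lemma — it is stated as ``a well-known corollary of Cohen--Hewitt'' — so the only question is whether your argument is correct. The linearity half is: factoring $a_1=cx_1$, $a_2=cx_2$ with a common left factor $c$ (Cohen--Hewitt for a finite set) and pulling the right factors out through $T(cx)=T(c)x$ gives additivity and homogeneity exactly as you write.

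The continuity half has a genuine gap. You correctly identify that the pointwise factorization $a=cx$ gives no uniform bound because $T(c)$ depends on $a$, but your closed-graph verification is circular: from $a_n\to 0$ and $T(a_n)\to\zeta$ you compute $T(a_nb)=T(a_n)b\to\zeta b$, and then factoring $b=cy$ only yields $T(a_nb)=T(a_nc)y=T(a_n)cy\to\zeta cy=\zeta b$ — the same limit again. Nothing in the argument ever produces $\zeta b=0$; ``handled by the same argument'' is precisely the statement you are trying to prove. The missing ingredient is the \emph{null-sequence} (power) form of Cohen--Hewitt: since $a_n\to 0$ in $A=[AA]$, there exist a \emph{single} $c\in A$ and a sequence $x_n\to 0$ with $a_n=cx_n$ for all $n$. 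Then $T(a_n)=T(c)x_n\to 0$ outright, so $T$ is sequentially continuous at $0$, and a linear map continuous at $0$ is bounded — the closed graph theorem is not needed at all. With that substitution (factor the whole null sequence rather than a single fixed $b$) the proof is complete.
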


\begin{lem}\label{lem1}
Let $A$ be a C*-algebra, $B\subset\M(A)$ a non-degenerate C*-subalgebra and ${\cal E}$ a Hilbert $A$-module. Let ${\cal F}\subset\Lin(A,{\cal E})$ be a Hilbert $B$-module (where $B$ is acting on the right by composition and the $B$-valued inner product is given by $\langle\eta_1,\eta_2\rangle:=\eta_1^*\circ\eta_2$, for all $\eta_1,\eta_2\in{\cal F}$) such that $[{\cal F} A]={\cal E}$. 
\begin{enumerate}[label=(\roman*)]
\item There exists a unique map 
$
i:\Lin(B,{\cal F})\rightarrow\Lin(A,{\cal E})
$
such that 
$
i(T)(ba)=(Tb)a
$
for all $T\in\Lin(B,{\cal F})$, $b\in B$ and $a\in A$.
Moreover, $i$ is an injective linear map whose image is
$
{\rm im}(i)=\{S\in\Lin(A,{\cal E})\,;\,SB\subset{\cal F},\,S^*{\cal F}\subset B\}.
$
\item There exists a unique map 
$
j:\Lin({\cal F}\oplus B)\rightarrow\Lin({\cal E}\oplus A)
$
such that
$
j(x)(\eta a)=(x\eta)a
$
for all $x\in\Lin({\cal F}\oplus B)$, $\eta\in{\cal F}\oplus B$ and $a\in A$.
Moreover, $j$ is a unital faithful *-homomorphism.\qedhere
\end{enumerate}
\end{lem}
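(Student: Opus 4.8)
The plan is to deduce both assertions from a single internal-tensor-product identification. View $A$ as a non-degenerate $C^*$-correspondence ${}_BA_A$, with $B\subset\M(A)$ acting on the left and $A$ its own Hilbert module; non-degeneracy is exactly the hypothesis $[BA]=A$. For any Hilbert $B$-submodule $\mathcal X\subset\Lin(A,\mathcal Y)$ whose $B$-valued inner product is $(\eta_1,\eta_2)\mapsto\eta_1^*\circ\eta_2$ and which satisfies $[\mathcal XA]=\mathcal Y$, the map $\mathcal X\odot_B A\to\mathcal Y$, $\eta\tens a\mapsto\eta(a)$, preserves inner products — since $\langle\eta_1(a_1),\eta_2(a_2)\rangle_A=a_1^*(\eta_1^*\circ\eta_2)a_2=a_1^*\langle\eta_1,\eta_2\rangle_B\,a_2$, using that adjointable operators are $A$-linear — and has dense range by hypothesis; hence it extends to a unitary $u_{\mathcal X}\colon\mathcal X\tens_B A\xrightarrow{\ \sim\ }\mathcal Y$. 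I will apply this with $(\mathcal X,\mathcal Y)$ equal to $(B,A)$ (here $B\subset\M(A)=\Lin(A)$ and one uses $[BA]=A$), to $(\mathcal F,\mathcal E)$ (here one uses $[\mathcal FA]=\mathcal E$), and to $(\mathcal F\oplus B,\mathcal E\oplus A)$, the last embedded in $\Lin(A,\mathcal E\oplus A)$ via $(\eta,b)\mapsto[a\mapsto(\eta(a),ba)]$.

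For (i): given $T\in\Lin(B,\mathcal F)$, set $i(T):=u_{\mathcal F}\,(T\tens_B\id_A)\,u_B^{-1}\in\Lin(A,\mathcal E)$, an adjointable operator with adjoint $u_B(T^*\tens_B\id_A)u_{\mathcal F}^{-1}$. Unwinding the definitions gives $i(T)(ba)=(Tb)(a)$ for $b\in B$, $a\in A$, and $i(T)^*(\eta a)=(T^*\eta)a$ for $\eta\in\mathcal F$, $a\in A$; since $BA=A$ this already yields uniqueness of $i$, and the (obvious) linearity together with the implication $T\tens_B\id_A=0\Rightarrow T=0$ gives injectivity. For the image description: if $S=i(T)$ then $S\circ b$ agrees with $Tb$ on $A$, so $SB\subset\mathcal F$, and $S^*\circ\eta$ agrees with $T^*\eta$ on $A$, so $S^*\mathcal F\subset B$; conversely, if $S\in\Lin(A,\mathcal E)$ satisfies $SB\subset\mathcal F$ and $S^*\mathcal F\subset B$, then $Tb:=S\circ b$ defines an element of $\Lin(B,\mathcal F)$ with adjoint $T^*\eta:=S^*\circ\eta$ — the required identity $\langle Tb,\eta\rangle_{\mathcal F}=b^*(S^*\circ\eta)=\langle b,T^*\eta\rangle_B$ being precisely what the two inclusions legitimise — and $i(T)=S$ on $BA=A$.

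For (ii): put $j(x):=u\,(x\tens_B\id_A)\,u^{-1}$ with $u=u_{\mathcal F\oplus B}$. Since $u$ is unitary and $x\mapsto x\tens_B\id_A$ is a unital *-homomorphism on $\Lin(\mathcal F\oplus B)$, $j$ is a unital *-homomorphism; it satisfies $j(x)(\eta a)=(x\eta)(a)$ for $\eta\in\mathcal F\oplus B$, $a\in A$, and is unique because $(\mathcal F\oplus B)A$ spans $\mathcal E\oplus A$ densely. The one point requiring a small argument is faithfulness: $j(x)=0$ forces $x\tens_B\id_A=0$, and then for every $\zeta\in\mathcal F\oplus B$ the positive element $c:=\langle x\zeta,x\zeta\rangle_B$ satisfies $a^*ca=\langle x\zeta\tens a,\,x\zeta\tens a\rangle_A=0$ for all $a\in A$, so $c^{1/2}A=0$, hence $c^{1/2}=0$ in $\M(A)$ (any multiplier annihilating $A$ is zero) and $x\zeta=0$; thus $x=0$. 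The same elementary fact ($mA=0\Rightarrow m=0$ in $\M(A)$) is what makes $T\tens_B\id_A=0\Rightarrow T=0$ work in (i).

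I expect the obstacles to be not conceptual but bookkeeping: keeping straight the three meanings of juxtaposition ($\eta(a)\in\mathcal E$ for $\eta\in\mathcal F\subset\Lin(A,\mathcal E)$ and $a\in A$; $S\circ b\in\Lin(A,\mathcal E)$ for $b\in B\subset\Lin(A)$; and the product $\M(A)\cdot A\subset A$), and checking that the canonical maps $\mathcal X\odot_B A\to\mathcal Y$ really do have dense range — which is exactly where the non-degeneracy of $B$ and the hypothesis $[\mathcal FA]=\mathcal E$ are spent. If one prefers to avoid internal tensor products, there is a parallel hands-on route: define $i(T)$ and $j(x)$ directly on the dense subspaces $BA$ and $\langle(\mathcal F\oplus B)A\rangle$ by the stated formulas; prove well-definedness by pairing against $\mathcal F$ (using that $\eta^*\xi=0$ for all $\eta\in\mathcal F$ implies $\langle\zeta,\xi\rangle_A=0$ for all $\zeta\in[\mathcal FA]=\mathcal E$, whence $\xi=0$); deduce $A$-linearity and invoke Lemma~\ref{lem3} for continuity; and exhibit the adjoints explicitly. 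I would keep this as a fallback and present the tensor-product argument as the main one.
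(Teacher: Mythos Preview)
Your proof is correct and takes a genuinely different route from the paper. The paper proceeds by hand: it defines $i(T)$ on $BA=A$ by the stated formula, checks well-definedness via an approximate unit of $B$, invokes Lemma~\ref{lem3} (the Cohen--Hewitt corollary) for boundedness, and then constructs the candidate adjoint $i'(S)$ on the dense subspace $\langle\mathcal{F}A\rangle$ and bounds it by a Cauchy--Schwarz argument; part~(ii) is obtained by splitting $\Lin(\mathcal{F}\oplus B)$ into its four corners and treating each one in the same fashion. Your approach packages all of this into the single identification $u_{\mathcal{X}}:\mathcal{X}\tens_B A\xrightarrow{\sim}\mathcal{Y}$, so that adjointability, the *-homomorphism property, and unitality come for free from the functoriality of $-\tens_B\id_A$, and the only residual work is the faithfulness of $x\mapsto x\tens_B\id_A$ (which you dispatch cleanly using that $\M(A)$ acts faithfully on $A$). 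What your argument buys is brevity and conceptual clarity; what the paper's buys is self-containment, since it avoids any appeal to internal tensor products beyond the statement of Lemma~\ref{lem3}. Amusingly, the fallback you sketch at the end is essentially the paper's actual proof.
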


\begin{proof}
(i) We have $A=BA$. Let $T\in\Lin(B,{\cal F})$. Let $(u_{\lambda})$ be an approximate unit of $B$, we have
$
(Tb)a=\lim_{\lambda}\,[T(u_{\lambda}b)]a=\lim_{\lambda}\,[T(u_{\lambda})b]a=\lim_{\lambda}\,T(u_{\lambda})ba,
$
for all $b\in B$ and $a\in A$.
In particular, we have $(Tb)a=(Tb')a'$ for all $b,b'\in B$ and $a,a'\in A$ such that $ba=b'a'$. Therefore, $i(T)$ is well defined. Moreover, we have $i(T)(aa')=(i(T)a)a'$ for all $a,a'\in A$. Indeed, let us fix $a,a'\in A$. Let us write $a=ba''$ with $b\in B$ and $a''\in A$. We have $i(T)(aa')=i(T)(b(a''a'))=(Tb)a''a'=i(T)(ba'')a'=(i(T)a)a'$. By Lemma \ref{lem3}, it then follows that $i(T)$ is a bounded linear map. By a straightforward computation, we have
$
\langle i(T)(ba'),\, \eta a\rangle=\langle ba',\, T^*(\eta)a\rangle,
$
for all $b\in B$, $a,\,a'\in A$ and $\eta\in{\cal F}$. Hence, $\langle i(T)x,\, \eta a\rangle = \langle x,\, T^*(\eta)a\rangle$ for all $x,a\in A$ and $\eta\in{\cal F}$. Let $S\in\Lin({\cal F},B)$. We have
\[
\langle\, x,\, \sum_{l=1}^n S(\eta_l)a_l \,\rangle = \langle\,  i(S^*)x,\, \sum_{l=1}^n\eta_l a_l\, \rangle, \quad \text{for all } \, a_1,\cdots a_n\in A,\, \eta_1,\cdots, \eta_n\in{\cal F} \; \text{ and } \; x\in A.
\]
As a consequence, the following map
\[
i'(S):\langle{\cal F} A\rangle \rightarrow A \; ; \; \sum_{l=1}^n\eta_l a_l \mapsto \sum_{l=1}^n S(\eta_l)a_l
\]
is well-defined and we have $\langle x,\, i'(S)(\xi)\rangle=\langle i(S^*)x,\, \xi\rangle$ for all $\xi\in\langle{\cal F} A\rangle$ and $x\in A$. It follows from the boundedness of the linear operator $i(S^*)$ and the Cauchy-Schwarz inequality that
$
\|i'(S)\xi\|^2=\|\langle i'(S)\xi,\, i'(S)\xi\rangle\|=\|\langle\xi,\, i(S^*)(i'(S)\xi)\rangle\|\leqslant  \|\xi\|\|i(S^*)(i'(S)\xi)\| \leqslant \|\xi\|\|i(S^*)\|\|i'(S)\xi\|
$
for all $\xi\in{\cal E}$. Hence, $\|i'(S)\xi\|\leqslant\|i(S^*)\| \|\xi\|$ for all $\xi\in{\cal E}$, which proves the continuity of $i'(S)$ since $i'(S)$ is linear by definition. In particular, $i'(S)$ extends uniquely to a bounded linear map $i'(S):{\cal E}\rightarrow A$. By continuity of the inner product, we have proved that $i'(S)\in\Lin({\cal E},A)$ and $i'(S)^*=i(S^*)$. As a result, we have well-defined maps $i:\Lin(A,{\cal E})\rightarrow\Lin(B,{\cal F})$ and $i':\Lin({\cal F},B)\rightarrow\Lin({\cal E},A)$ such that $i(T)^*=i'(T^*)$ for all $T\in\Lin(A,{\cal E})$. It is clear that $i$ is linear and injective.

\medbreak

It remains to prove that ${\rm im}(i)=\{S\in\Lin(A,{\cal E})\,;\,SB\subset{\cal F},\,S^*{\cal F}\subset B\}$. Let $T\in\Lin(B,{\cal F})$ and $b\in B\subset\Lin(A)$. For all $a\in A$, we have $[i(T)\circ b]a=i(T)(ba)=(Tb)a$. Hence $i(T)\circ b=T(b)\in{\cal F}$. Fix $\eta\in{\cal F}$. Write $\eta=\zeta b$ with $\zeta\in{\cal F}$ and $b\in B$. For all $a\in A$, we have 
$
[i(T)^*\circ \eta]a=[i'(T^*)\eta]a=i'(T^*)(\eta a)=i'(T^*)(\zeta(ba))=(T^*\zeta)ba=T^*(\zeta b)a=T^*(\eta)a.
$
Hence, $i(T)^*\circ\eta=T^*(\eta)\in B$. Conversely, let us fix $S\in\Lin(A,{\cal E})$ such that $SB\subset{\cal F}$ and $S^*{\cal F}\subset B$. Let $T:A\rightarrow{\cal E}$ and $T':{\cal F}\rightarrow B$ be the maps defined by: 
\[
T(b):=S\circ b,\quad b\in B ; \quad T'(\eta):=S^*\circ\eta,\quad \eta\in{\cal F}.
\]
For all $b\in B$ and $\eta\in{\cal F}$, we have
$
\langle T(b),\, \eta\rangle = (S\circ b)^*\circ\eta=b^* (S^*\circ\eta)=\langle b, T'(\eta)\rangle.
$
Hence $T\in\Lin(A,{\cal E})$ and $T^*=T'$. Moreover, we have  $i(T)(ba)=T(b)a=S(ba)$ for all $a\in A$ and $b\in B$. Thus, we have $S=i(T)$.\newline
(ii) Since $[{\cal F} A]={\cal E}$, we have $[({\cal F} \oplus B) A]={\cal E} \oplus A$, which proves the uniqueness of $j$. Let $i_{12}:=i$ and $i_{21}:=i'$. By a similar argument as in statement (i), we prove that there exists a unique map $i_{11}:\Lin({\cal F})\rightarrow\Lin({\cal E})$ such that $i_{11}(T)(\eta a)=(T\eta) a$ for all $\eta\in{\cal F}$ and $a\in A$. The non-degenerate inclusion of C*-algebras $B\subset\M(A)$ extends to a unital *-homomorphism $i_{22}:\M(B)\rightarrow\M(A)$. Then we consider the map $j:\Lin({\cal F}\oplus B)\rightarrow\Lin({\cal E}\oplus A)$ defined by $j(x):=(i_{kl}(x_{kl}))_{k,l=1,2}$ for all $x=(x_{kl})_{k,l=1,2}\in\Lin({\cal F}\oplus B)$.
It is clear that $j(x)(\eta a)=(x\eta)a$ for all $x\in\Lin({\cal F}\oplus B)$, $\eta\in{\cal F}\oplus B$ and $a\in A$. The fact that $j$ is a unital faithful *-homomorphism is then straightforward.
\end{proof}

\begin{rks}\label{rk10}
With the notations and hypotheses of the previous proposition, we have:
\begin{enumerate}[label=(\roman*)]
\item for all $T\in\Lin(B,{\cal F})$, $j(\iota_{{\cal F}}(T))=\iota_{{\cal E}}(i(T))$;
\item for all $m\in\M(B)$, $j(\iota_{B}(m))=\iota_{A}(m)$, where we identify $\M(B)\subset\M(A)$.\qedhere
\end{enumerate}
\end{rks}

\begin{lem}\label{lem2}
Let $j=1,2$. We have a canonical embedding
\[
\Lin(A_2\tens S_{2j},\s E_2\tens S_{2j})\rightarrow\Lin(A_1\tens S_{12}\tens S_{2j},\s E_1\tens S_{12}\tens S_{2j})\,;\,T\mapsto\widetilde{T},
\]
where for $T\in\Lin(A_2\tens S_{2j},\s E_2\tens S_{2j})$ the operator $\widetilde{T}\in\Lin(A_1\tens S_{12}\tens S_{2j},\s E_1\tens S_{12}\tens S_{2j})$ is defined by
$
\widetilde{T}(xa)=T(x)a
$
for all $x\in A_2\tens S_{2j}$ and $a\in A_1\tens S_{12}\tens S_{2j}$.
Moreover, the image of $\Lin(A_2\tens S_{2j},\s E_2\tens S_{2j})\rightarrow\Lin(A_1\tens S_{12}\tens S_{2j},\s E_1\tens S_{12}\tens S_{2j})$ is$\vphantom{\widetilde{T}}$
\begin{align*}
\{ X \in \Lin(A_1\tens S_{12}\tens S_{2j},\s E_1\tens S_{12}\tens S_{2j}) \, ; \, X(A_2 \tens S_{2j}) \subset \s E_2 \tens & S_{2j} \;\text{ and } \\
&X^*(\s E_2 \tens S_{2j}) \subset A_2 \tens S_{2j} \}.\qedhere
\end{align*}
\end{lem}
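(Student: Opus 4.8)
The statement is a direct application of Lemma \ref{lem1}\,(i), so the plan is to set up the correct dictionary and then verify its two hypotheses. I would take $A:=A_1\tens S_{12}\tens S_{2j}$, let ${\cal E}:=\s E_1\tens S_{12}\tens S_{2j}$ be the corresponding Hilbert $A$-module, put $B:=A_2\tens S_{2j}$ and ${\cal F}:=\s E_2\tens S_{2j}$. Here $A_2=\ind(A_1)\subset\M(A_1\tens S_{12})$ by \ref{propind4}\,1, whence $B=A_2\tens S_{2j}$ is a C$^*$-subalgebra of $\M(A_1\tens S_{12})\tens S_{2j}\subset\M(A)$; and $\s E_2=\ind(\s E_1)\subset\widetilde\M(\s E_1\tens S_{12})\subset\Lin(A_1\tens S_{12},\s E_1\tens S_{12})$ by \ref{prop2}, so that, tensoring with $S_{2j}$, one realizes ${\cal F}=\s E_2\tens S_{2j}$ as a subspace of $\Lin(A,{\cal E})$. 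By \ref{prop3}, $\s E_2$ is a Hilbert $A_2$-module for the right action by composition and the inner product $\langle\xi,\eta\rangle=\xi^*\circ\eta$; tensoring by $S_{2j}$ turns ${\cal F}$ into a Hilbert $B$-module for the analogous structure, and this structure coincides with the one induced by composition and by $(\cdot)^*\circ(\cdot)$ inside $\Lin(A,{\cal E})$.

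Next I would check the hypotheses of \ref{lem1}. For the non-degeneracy of $B$ in $\M(A)$: from $[A_2(1_{A_1}\tens S_{12})]=A_1\tens S_{12}$ (\ref{propind4}\,1) together with the factorization property of the C$^*$-algebras $A_1$, $S_{12}$, $S_{2j}$, one gets $[(A_2\tens S_{2j})(A_1\tens S_{12}\tens S_{2j})]=A_1\tens S_{12}\tens S_{2j}$. For the density requirement $[{\cal F}A]={\cal E}$: \ref{prop2} gives $[\ind(\s E_1)(1_{A_1}\tens S_{12})]=\s E_1\tens S_{12}$, and spreading the factor $S_{2j}$ exactly as above yields $[(\s E_2\tens S_{2j})(A_1\tens S_{12}\tens S_{2j})]=\s E_1\tens S_{12}\tens S_{2j}$.

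With these verified, Lemma \ref{lem1}\,(i) provides a unique injective linear map $i:\Lin(A_2\tens S_{2j},\s E_2\tens S_{2j})\to\Lin(A_1\tens S_{12}\tens S_{2j},\s E_1\tens S_{12}\tens S_{2j})$ determined by $i(T)(ba)=(Tb)a$ for $b\in A_2\tens S_{2j}$ and $a\in A_1\tens S_{12}\tens S_{2j}$; writing $\widetilde T:=i(T)$, this is precisely the defining relation $\widetilde T(xa)=T(x)a$ of the statement, and uniqueness of $\widetilde T$ is guaranteed by $[(A_2\tens S_{2j})(A_1\tens S_{12}\tens S_{2j})]=A_1\tens S_{12}\tens S_{2j}$. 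Moreover, \ref{lem1}\,(i) identifies the range of $i$ with $\{S\in\Lin(A,{\cal E})\,;\,SB\subset{\cal F},\ S^*{\cal F}\subset B\}$; it then remains to observe that, because $B$ is non-degenerate in $\M(A)$ and ${\cal F}$ is realized inside $\Lin(A,{\cal E})$ as above, the conditions $SB\subset{\cal F}$ and $S^*{\cal F}\subset B$ translate, for $S=X$, into $X(A_2\tens S_{2j})\subset\s E_2\tens S_{2j}$ and $X^*(\s E_2\tens S_{2j})\subset A_2\tens S_{2j}$, which is the image displayed in the lemma.

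The step I expect to be the most delicate is the bookkeeping underlying the first paragraph: checking that the external-tensor-product Hilbert $A_2\tens S_{2j}$-module ${\cal F}=\s E_2\tens S_{2j}$ embeds into $\Lin(A,{\cal E})$ in a way compatible with \ref{lem1}, i.e. that under the realization $\s E_2\subset\widetilde\M(\s E_1\tens S_{12})$ of \ref{prop2} followed by $\widetilde\M(\s E_1\tens S_{12})\tens S_{2j}\subset\Lin(A,{\cal E})$, the module action and inner product on ${\cal F}$ become composition of operators and $(\cdot)^*\circ(\cdot)$. This uses repeatedly the identifications $\s E\tens B=\K(A\tens B,\s E\tens B)$, $\s E_2=\K(A_2,\s E_2)$ and $S_{2j}=\K(S_{2j})$; once this is in place, everything else is formal.
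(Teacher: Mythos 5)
Your proposal is correct and follows exactly the paper's own route: the paper proves this lemma by invoking Lemma \ref{lem1} with the same dictionary $A:=A_1\tens S_{12}\tens S_{2j}$, $B:=A_2\tens S_{2j}$, ${\cal E}:=\s E_1\tens S_{12}\tens S_{2j}$, ${\cal F}:=\s E_2\tens S_{2j}$, and verifies the hypotheses via \ref{propind4}~1 and \ref{prop2}. Your write-up simply makes explicit the bookkeeping that the paper leaves implicit.
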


\begin{proof}
This follows from \ref{lem1} with $A:=A_1\tens S_{12}\tens S_{2j}$, $B:=A_2\tens S_{2j}$, ${\cal E}:=\s E_1\tens S_{12}\tens S_{2j}$ and ${\cal F}:=\s E_2\tens S_{2j}\subset\Lin(A_1\tens S_{12}\tens S_{2j},\s E_1\tens S_{12}\tens S_{2j})$. The assumptions of \ref{lem1} are satisfied in this case in virtue of \ref{propind4} 1 and \ref{prop2}.
\end{proof}

\begin{nb}
Let 
\[
\id_{\s E_1}\tens\delta_{12}^2:\Lin(A_1\tens S_{12},\s E_1\tens S_{12})\rightarrow\Lin(A_1\tens S_{12}\tens S_{22},\s E_1\tens S_{12}\tens S_{22})
\] 
be the unique linear extension of $\id_{\s E_1}\tens\delta_{12}^2:\s E_1\tens S_{12}\rightarrow\Lin(A_1\tens S_{12}\tens S_{22},\s E_1\tens S_{12}\tens S_{22})$ such that
$
(\id_{\s E_1}\tens\delta_{12}^2)(T)(\id_{A_1}\tens\delta_{12}^2)(x)=(\id_{\s E_1}\tens\delta_{12}^2)(Tx)
$
for all $x\in\M(A_1\tens S_{12})$ and $T\in\Lin(A_1\tens S_{12}, \s E_1\tens S_{12})$.
\end{nb}

\begin{propdef}\label{prop12}
There exists a unique linear map 
\begin{center}
$
\delta_{\s E_2}:\s E_2\rightarrow\Lin(A_2\tens S_{22},\s E_2\tens S_{22})
$
\end{center} 
satisfying the relation $[\delta_{\s E_2}(\xi)a]b=(\id_{\s E_1}\tens\delta_{12}^2)(\xi)(ab)$ for all $\xi\in\s E_2$, $a\in A_2\tens S_{22}$ and $b\in A_1\tens S_{12}\tens S_{22}$.
\end{propdef}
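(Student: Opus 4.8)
The plan is to read off $\delta_{\s E_2}(\xi)$, for $\xi\in\s E_2=\ind(\s E_1)\subset\widetilde\M(\s E_1\tens S_{12})$, from the operator
\[
X_\xi:=(\id_{\s E_1}\tens\delta_{12}^2)(\xi)\in\Lin(A_1\tens S_{12}\tens S_{22},\,\s E_1\tens S_{12}\tens S_{22})
\]
by means of the canonical embedding of Lemma \ref{lem2} (taken with $j=2$). The crux is to prove the two inclusions $X_\xi(A_2\tens S_{22})\subset\s E_2\tens S_{22}$ and $X_\xi^*(\s E_2\tens S_{22})\subset A_2\tens S_{22}$; granting them, Lemma \ref{lem2} produces a unique $\delta_{\s E_2}(\xi)\in\Lin(A_2\tens S_{22},\s E_2\tens S_{22})$ with $\widetilde{\delta_{\s E_2}(\xi)}=X_\xi$, that is, $[\delta_{\s E_2}(\xi)a]b=X_\xi(ab)=(\id_{\s E_1}\tens\delta_{12}^2)(\xi)(ab)$ for $a\in A_2\tens S_{22}$ and $b\in A_1\tens S_{12}\tens S_{22}$, and $\xi\mapsto\delta_{\s E_2}(\xi)$ is linear because $\xi\mapsto X_\xi$ is linear and the embedding of Lemma \ref{lem2} is linear. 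Uniqueness is immediate and costs nothing: if $\delta'$ also satisfies the stated relation, then for each $\xi$ and each $a\in A_2\tens S_{22}$ the elements $\delta'(\xi)a$ and $\delta_{\s E_2}(\xi)a$ of $\s E_2\tens S_{22}\subset\Lin(A_1\tens S_{12}\tens S_{22},\s E_1\tens S_{12}\tens S_{22})$ take the same value on every $b\in A_1\tens S_{12}\tens S_{22}$, hence coincide, and letting $a$ vary gives $\delta'(\xi)=\delta_{\s E_2}(\xi)$.

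To establish the two inclusions I would pass to the linking algebra, mirroring the proof of Proposition \ref{prop3}. Put $J_1:=\K(\s E_1\oplus A_1)$ and recall from the discussion following \ref{prop3} (and \ref{lem1} (ii), \ref{lem22}, 4.2 \cite{BC}) that $(J_2,\delta_{J_2}):=\ind(J_1,\delta_{J_1})$ is a $\QG_2$-C$^*$-algebra with $\delta_{J_2}=\restr{(\id_{J_1}\tens\delta_{12}^2)}{J_2}$ and $J_2=\K(\s E_2\oplus A_2)$, the embedding $J_2\subset\M(J_1\tens S_{12})=\Lin\big((\s E_1\oplus A_1)\tens S_{12}\big)$ carrying $\iota_{A_2}(a)$, $\iota_{\s E_2}(\xi)$ to $\iota_{A_1\tens S_{12}}(a)$, $\iota_{\s E_1\tens S_{12}}(\xi)$ (cf.\ \ref{rk10}). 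Since $\delta_{J_2}$ is an action, $\delta_{J_2}(\iota_{\s E_2}(\xi))\in\M(J_2\tens S_{22})=\Lin\big((\s E_2\tens S_{22})\oplus(A_2\tens S_{22})\big)$. On the other hand, because $J_1\tens S_{12}=\K\big((\s E_1\oplus A_1)\tens S_{12}\big)$ and $\id_{J_1}\tens\delta_{12}^2$ is the strictly continuous extension of $\id_{J_1}\odot\delta_{12}^2$, it acts ``entrywise'' on the matrix $\iota_{\s E_1\tens S_{12}}(\xi)$, so that
\[
\delta_{J_2}(\iota_{\s E_2}(\xi))=(\id_{J_1}\tens\delta_{12}^2)(\iota_{\s E_1\tens S_{12}}(\xi))=\iota_{\s E_1\tens S_{12}\tens S_{22}}(X_\xi).
\]
Thus $\iota_{\s E_1\tens S_{12}\tens S_{22}}(X_\xi)$ is an ``upper-right corner'' element of $\Lin\big((\s E_1\tens S_{12}\tens S_{22})\oplus(A_1\tens S_{12}\tens S_{22})\big)$ lying in the subalgebra $\Lin\big((\s E_2\tens S_{22})\oplus(A_2\tens S_{22})\big)$; by the matrix/corner description of linking algebras (\ref{rk3}, \ref{ehmlem1} 1, \ref{rk10}) it is therefore of the form $\iota_{\s E_1\tens S_{12}\tens S_{22}}(\widetilde T)$ for a unique $T\in\Lin(A_2\tens S_{22},\s E_2\tens S_{22})$, whence $X_\xi=\widetilde T$; in particular the two inclusions hold, and $\delta_{\s E_2}(\xi):=T$ is the required operator.

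I expect the main obstacle to be organisational rather than conceptual: setting up cleanly the identification $\ind(J_1)=\K(\s E_2\oplus A_2)$ and the ``entrywise'' behaviour of $\id_{J_1}\tens\delta_{12}^2$ on the linking algebra, i.e.\ the analogue for $\delta_{12}^2$ of \ref{lem22} and of 4.2 \cite{BC}, so that all the instances of ``$\id\tens\delta_{12}^2$'' on $J_1$, $\s E_1$, $\K(\s E_1)$ and $A_1$ are matched through the maps $\iota_{A_1\tens S_{12}}$, $\iota_{\s E_1\tens S_{12}}$, $\iota_{\K(\s E_1)\tens S_{12}}$. Should one prefer to avoid the linking algebra, there is a direct route: writing $\xi=(\id_{\s E_1\tens S_{12}}\tens\omega)\delta_{\s E_1}^{(2)}(\eta)$ with $\eta\in\s E_1$ and $\omega\in\B(\s H_{21})_*$, one applies the coassociativity $(\delta_{12}^2\tens\id_{S_{21}})\delta_{11}^2=(\id_{S_{12}}\tens\delta_{21}^2)\delta_{11}^2$ and the commutation of slice maps with $\id\tens\delta_{12}^2$ to rewrite $X_\xi$ as a slice of $(\id_{\s E_1\tens S_{12}}\tens\delta_{21}^2)\delta_{\s E_1}^{(2)}(\eta)$, and reads off the two inclusions from \ref{prop2} together with $[\delta_{\s E_1}(\s E_1)(1_{A_1}\tens S_{11})]=\s E_1\tens S_{11}$ and $S_{12}=[(\id_{S_{12}}\tens\omega)\delta_{11}^2(S_{11})\,;\,\omega\in\B(\s H_{21})_*]$. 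The linking-algebra argument is shorter and parallels \ref{prop3}, so it is the one I would write up.
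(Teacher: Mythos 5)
Your proposal is correct and follows essentially the same route as the paper: both establish the two inclusions $(\id_{\s E_1}\tens\delta_{12}^2)(\xi)(A_2\tens S_{22})\subset\s E_2\tens S_{22}$ and $(\id_{\s E_1}\tens\delta_{12}^2)(\xi)^*(\s E_2\tens S_{22})\subset A_2\tens S_{22}$ by passing to the linking algebra $J_2=\ind(J_1)$ via the identity $\iota_{\s E_1\tens S_{12}\tens S_{22}}((\id_{\s E_1}\tens\delta_{12}^2)(\xi)x)=\delta_{J_2}(\iota_{\s E_1\tens S_{12}}(\xi))\iota_{A_2\tens S_{22}}(x)$ and the corner decomposition as in \ref{prop3}, and then invoke \ref{lem2} with $j=2$. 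The only point to tidy is that you lean on the identification $\ind(J_1)=\K(\s E_2\oplus A_2)$, which the paper only establishes afterwards in \ref{prop11}; the paper sidesteps this by working with the elements $\delta_{J_2}(\cdot)\iota_{A_2\tens S_{22}}(x)\in J_2\tens S_{22}$ rather than at the multiplier level, so that only \ref{lem22} and the argument of \ref{prop3} are needed.
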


\begin{proof}
Let us prove the inclusion
$
(\id_{\s E_1}\tens\delta_{12}^2)(\s E_2)(A_2\tens S_{22})\subset\s E_2\tens S_{22}.
$
It follows from \ref{lem22} 2 that $\iota_{\s E_1\tens S_{12}}(\s E_2)\subset J_2$. Fix $\xi\in\s E_2$ and $x\in A_2\tens S_{22}$. We have
\begin{align*}
\iota_{\s E_1\tens S_{12} \tens S_{22}}((\id_{\s E_1}\tens\delta_{12}^2)(\xi)x)&=(\id_{J_1}\tens\delta_{12}^2)(\iota_{\s E_1 \tens S_{12}}(\xi))\iota_{A_1\tens S_{12}\tens S_{22}}(x)\\
&=\delta_{J_2}(\iota_{\s E_1 \tens S_{12}}(\xi))\iota_{A_2\tens S_{22}}(x)\in J_2 \tens S_{22}.
\end{align*}
As in the proof of \ref{prop3}, $\iota_{\s E_1\tens S_{12} \tens S_{22}}((\id_{\s E_1}\tens\delta_{12}^2)(\xi)x)$ is the norm limit of finite sums of elements of the form 
$
\iota_{\s E_1\tens S_{12} \tens S_{22}}((\id_{\s E_1\tens S_{12}} \tens \omega)\delta_{\s E_1}^{(2)}(\eta)\tens s)
$
with $\eta\in\s E_1$, $\omega\in\B(\s H_{21})_*$ and $s\in S_{22}$.
Hence, $(\id_{\s E_1}\tens\delta_{12}^2)(\xi)x \in \s E_2 \tens S_{22}$ since $\iota_{\s E_1\tens S_{12} \tens S_{22}}$ is isometric. $\vphantom{\delta_{\s E_1}^{(2)}}$Therefore, we have $(\id_{\s E_1}\tens\delta_{12}^2)(\s E_2)(A_2\tens S_{22})\subset\s E_2\tens S_{22}$. $\vphantom{\delta_{\s E_1}^{(2)}}$The inclusion $(\id_{\s E_1}\tens\delta_{12}^2)(\s E_2)^*(\s E_2\tens S_{22})\subset A_2\tens S_{22}$ is obtained by a similar argument. $\vphantom{\delta_{\s E_1}^{(2)}}$Then, the existence and uniqueness of the operator $\delta_{\s E_2}(\xi)\in\Lin(A_2\tens S_{22},\s E_2\tens S_{22})$ follows as an application of \ref{lem2} with $j=2$.
$\vphantom{\delta_{\s E_1}^{(2)}}$It is clear that the map $\delta_{\s E_2}:\s E_2\rightarrow\Lin(A_2\tens S_{22},\s E_2\tens S_{22})$ is linear.$\vphantom{\delta_{\s E_1}^{(2)}}$
\end{proof}

In the following, we prove that $\delta_{\s E_2}$ is a continuous action of $\QG_2$ on $\s E_2$. We also show that the induction procedure for equivariant Hilbert modules is equivalent to that of \S 4.3 \cite{BC}.

\begin{nbs}
Let $e_{1,1}:=\iota_{\K(\s E_1)}(1_{\s E_1})\in\M(J_1)$ and $e_{2,1}:=\iota_{A_1}(1_{A_1})\in\M(J_1)$, where we identify $\M(J_1)=\Lin(\s E_1\oplus A_1)$. Let $(J_2,\delta_{J_2},e_{1,2},e_{2,2})$ be the induced linking $\QG_2$-C*-algebra, with $e_{l,2}:=e_{l,1}\tens 1_{S_{12}}\in\M(J_2)$ for $l=1,2$ (cf.\ 4.14 \cite{BC}). Consider $e_{2,2}J_2e_{2,2}$ and $e_{1,2}J_2e_{2,2}$ endowed with their structure of $\QG_2$-C*-algebra and $\QG_2$-equivariant Hilbert $e_{2,2}J_2e_{2,2}$-module \cite{BS1}. Recall that the morphism $\ind\iota_{A_1}:A_2\rightarrow J_2\,;\, x\mapsto(\iota_{A_1}\tens\id_{S_{12}})(x)$ induces a $\QG_2$-equivariant *-isomormorphism $A_2\rightarrow e_{2,2}J_2e_{2,2}$ (cf.\ 4.17, 4.18 \cite{BC}).
\end{nbs}

\begin{prop}\label{prop11}
We use the above notations.
\begin{enumerate}[label=(\roman*)]
\item The map $\delta_{\s E_2}:\s E_2\rightarrow\Lin(A_2\tens S_{22},\s E_2\tens S_{22})$ is a continuous action of $\QG_2$ on $\s E_2$.
\item There exists a unique bounded linear map $\ind\iota_{\s E_1}:\s E_2\rightarrow J_2$ such that
\[
\ind\iota_{\s E_1}((\id_{\s E_1\tens S_{12}}\tens\omega)\delta_{\s E_1}^{(2)}(\xi))
=(\id_{J_1\tens S_{12}}\tens\omega)\delta_{J_1}^{(2)}(\iota_{\s E_1}(\xi)),
\]
for all $\xi\in\s E_1$ and $\omega\in\B(\s H_{21})_*$. Moreover, we have $\ind\iota_{\s E_1}(\s E_2)=e_{1,2}J_2e_{2,2}$ and $\ind\iota_{\s E_1}$ induces a $\QG_2$-equivariant unitary equivalence
$
\s E_2 \rightarrow e_{1,2}J_2e_{2,2} \; ; \; \xi \mapsto \ind\iota_{\s E_1}(\xi)
$
over the $\QG_2$-equivariant *-isomorphism 
$
A_2 \rightarrow e_{2,2}J_2 e_{2,2} \; ; \; a \mapsto \ind\iota_{A_1}(a).
$
\item There exists a unique *-homomorphism $\tau:\K(\s E_2\oplus A_2)\rightarrow J_2$ such that
$
\tau\circ\iota_{\s E_2}=\ind\iota_{\s E_1}
$
and
$
\tau\circ\iota_{A_2}=\ind\iota_{A_1}.
$ 
Moreover, $\tau$ is an isomorphism of linking $\QG_2$-C*-algebras.
\item If $T\in\ind(\K(\s E_1))\subset\Lin(\s E_1\tens S_{12})$ and $\eta\in\s E_2\subset\Lin(A_1\tens S_{12},\s E_1\tens S_{12})$, then we have $T\circ\eta\in\s E_2$. Moreover, for all $T\in\ind(\K(\s E_1))$, we have $[\eta\mapsto T\circ\eta]\in\K(\s E_2)$. More precisely, the map
$
\ind(\K(\s E_1))\rightarrow\K(\s E_2)\,;\, T \mapsto [\eta\mapsto T\circ\eta]
$
is a $\QG_2$-equivariant *-isomorphism.$\vphantom{\ind}$\qedhere
\end{enumerate}
\end{prop}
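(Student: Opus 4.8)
The plan is to identify, $\QG_2$-equivariantly, the induced Hilbert module $\s E_2=\ind(\s E_1)$ with the corner $e_{1,2}J_2e_{2,2}$ of the induced linking C*-algebra $J_2=\ind(J_1)$, and then to deduce (i), (iii) and (iv) from the general theory of linking C*-algebras. We use throughout that $(J_2,\delta_{J_2},e_{1,2},e_{2,2})$ is a linking $\QG_2$-C*-algebra (4.14 \cite{BC}), so that $e_{2,2}J_2e_{2,2}$ is a $\QG_2$-C*-algebra, $e_{1,2}J_2e_{2,2}$ a $\QG_2$-equivariant Hilbert module over it and $e_{1,2}J_2e_{1,2}=\K(e_{1,2}J_2e_{2,2})$ as $\QG_2$-C*-algebras \cite{BS1}, and that $\ind\iota_{A_1}$ is a $\QG_2$-equivariant *-isomorphism of $A_2$ onto $e_{2,2}J_2e_{2,2}$ (4.17, 4.18 \cite{BC}).

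For (ii), I would define $\ind\iota_{\s E_1}$ on the spanning subset $\{(\id_{\s E_1\tens S_{12}}\tens\omega)\delta_{\s E_1}^{(2)}(\xi)\,;\,\xi\in\s E_1,\ \omega\in\B(\s H_{21})_*\}$ of $\s E_2$ by the displayed formula. Lemma \ref{lem22}(ii) identifies $\iota_{\s E_1\tens S_{12}}$ of such an element with $(\id_{J_1\tens S_{12}}\tens\omega)\delta_{J_1}^{(2)}(\iota_{\s E_1}(\xi))\in J_2$ (the membership by 4.2 a) \cite{BC}); since $\iota_{\s E_1\tens S_{12}}$ is isometric, $\ind\iota_{\s E_1}$ is a well-defined isometric linear map, unique with the stated property, and $\ind\iota_{\s E_1}(\s E_2)=\iota_{\s E_1\tens S_{12}}(\s E_2)\subseteq J_2$. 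The relations $\ind\iota_{\s E_1}(\xi a)=\ind\iota_{\s E_1}(\xi)\ind\iota_{A_1}(a)$ and $\ind\iota_{\s E_1}(\xi)^*\ind\iota_{\s E_1}(\eta)=\ind\iota_{A_1}(\langle\xi,\eta\rangle)$ follow from Lemma \ref{lem22} together with \ref{prop32}(1),(3), as in the proof of \ref{prop3}; from $e_{1,1}\iota_{\s E_1}(\xi)=\iota_{\s E_1}(\xi)=\iota_{\s E_1}(\xi)e_{2,1}$ (matrix picture of \ref{rk3}) one gets $\ind\iota_{\s E_1}(\s E_2)\subseteq e_{1,2}J_2e_{2,2}$, and the reverse inclusion is obtained by cutting down a dense spanning set of $J_2$ (built as in the proof of \ref{prop3}) by $e_{1,2}$ on the left and $e_{2,2}$ on the right and recognizing the surviving terms via Lemma \ref{lem22}. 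For equivariance, recall that $\delta_{\s E_2}$ is characterized by $[\delta_{\s E_2}(\xi)a]b=(\id_{\s E_1}\tens\delta_{12}^2)(\xi)(ab)$ (\ref{prop12}) and $\delta_{J_2}$ is the restriction of $\id_{J_1}\tens\delta_{12}^2$, so combining the compatibility of $\delta_{J_1}$ with $\delta_{\s E_1}$, Lemma \ref{lem22}(ii) and the definition of $\ind\iota_{\s E_1}$ one checks on the spanning set that $\delta_{J_2}(\ind\iota_{\s E_1}(\xi))=(\ind\iota_{\s E_1}\tens\id_{S_{22}})(\delta_{\s E_2}(\xi))$. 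Hence $\ind\iota_{\s E_1}\colon\s E_2\to e_{1,2}J_2e_{2,2}$ is a $\QG_2$-equivariant unitary equivalence over $\ind\iota_{A_1}$, which proves (ii) and also (i): $\delta_{\s E_2}$ is then intertwined with the continuous canonical action of $\QG_2$ on the corner $e_{1,2}J_2e_{2,2}$, so $(\s E_2,\delta_{\s E_2})$ is a $\QG_2$-equivariant Hilbert $A_2$-module.

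For (iii), apply the universal property of the linking C*-algebra (\ref{prop7}) and its equivariant refinement (\ref{prop10}, or the quantum-group versions of \S 2 \cite{BS1}) to the $\ind\iota_{A_1}$-compatible unitary equivalence $\ind\iota_{\s E_1}\colon\s E_2\to e_{1,2}J_2e_{2,2}$ and the *-isomorphism $\ind\iota_{A_1}\colon A_2\to e_{2,2}J_2e_{2,2}$: one gets a $\QG_2$-equivariant *-isomorphism $\K(\s E_2\oplus A_2)\to\K\big((e_{1,2}J_2e_{2,2})\oplus(e_{2,2}J_2e_{2,2})\big)$ extending $\ind\iota_{\s E_1}$ and $\ind\iota_{A_1}$ on the legs; composing it with the canonical identification $\K\big((e_{1,2}J_2e_{2,2})\oplus(e_{2,2}J_2e_{2,2})\big)=J_2$ (valid since $(J_2,e_{1,2},e_{2,2})$ is a linking C*-algebra) yields the required $\tau$, with $\tau\circ\iota_{\s E_2}=\ind\iota_{\s E_1}$ and $\tau\circ\iota_{A_2}=\ind\iota_{A_1}$. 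Uniqueness of $\tau$ is clear since $\K(\s E_2\oplus A_2)$ is generated by $\iota_{\s E_2}(\s E_2)\cup\iota_{A_2}(A_2)$ (\ref{prop32}(4)); that $\tau$ carries $\iota_{\s E_2}(1_{\s E_2})$, $\iota_{A_2}(1_{A_2})$ to $e_{1,2}$, $e_{2,2}$ follows from the images of $\tau\circ\iota_{A_2}$ and $\tau\circ\iota_{\s E_2}$, so $\tau$ is an isomorphism of linking $\QG_2$-C*-algebras. For (iv): under $\tau$ the corner $\K(\s E_2)=\iota_{\K(\s E_2)}(\K(\s E_2))$ of $\K(\s E_2\oplus A_2)$ is carried $\QG_2$-equivariantly and isomorphically onto $e_{1,2}J_2e_{1,2}$, while $\iota_{\K(\s E_1\tens S_{12})}$ identifies $\ind(\K(\s E_1))$ with $e_{1,2}J_2e_{1,2}=\ind(e_{1,1}J_1e_{1,1})$ (same reasoning as 4.14, 4.17, 4.18 \cite{BC} applied to the corner $\K(\s E_1)=e_{1,1}J_1e_{1,1}$). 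Composing gives a $\QG_2$-equivariant *-isomorphism $\ind(\K(\s E_1))\to\K(\s E_2)$; evaluating on rank-one operators and using $\iota_{\K(\s E_1\tens S_{12})}(T)\iota_{\s E_1\tens S_{12}}(\eta)=\iota_{\s E_1\tens S_{12}}(T\circ\eta)$ together with $\tau(\iota_{\K(\s E_2)}(\theta_{\zeta,\chi}))=\ind\iota_{\s E_1}(\zeta)\ind\iota_{\s E_1}(\chi)^*$ (\ref{prop32}(2)) shows that this isomorphism is exactly $T\mapsto[\eta\mapsto T\circ\eta]$, and in particular that $T\circ\eta\in\s E_2$ whenever $T\in\ind(\K(\s E_1))$ and $\eta\in\s E_2$.

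The main obstacle is organizational rather than conceptual: three ambient spaces occur — $\Lin(A_1\tens S_{12},\s E_1\tens S_{12})$ (home of $\s E_2$), $\M(J_1\tens S_{12})$ (home of $J_2$) and $\Lin(\s E_2\oplus A_2)$ (home of $\K(\s E_2\oplus A_2)$) — and at each step one must check that $\iota_{\s E_1\tens S_{12}}$, $\delta_{J_1}^{(2)}$, the corner projections $e_{l,2}$ and the induction functor all fit together as claimed. The one genuinely delicate point is the reverse inclusion $e_{1,2}J_2e_{2,2}\subseteq\ind\iota_{\s E_1}(\s E_2)$ in (ii): as in the proof of \ref{prop3}, one must arrange the dense spanning set of $J_2$ so that the strong-continuity relations $[\delta_{\s E_1}(\s E_1)(1_{A_1}\tens S_{11})]=\s E_1\tens S_{11}$ and $S_{12}=[(\id_{S_{12}}\tens\omega)\delta_{11}^2(S_{11})]$ can be applied and the surviving terms are manifestly of the form $(\id_{\s E_1\tens S_{12}}\tens\phi)\delta_{\s E_1}^{(2)}(\zeta)$; once that bookkeeping is in place, the remaining verifications reduce to routine computations on spanning sets.
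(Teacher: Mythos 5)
Your proposal is correct and follows essentially the same route as the paper: identify $\ind\iota_{\s E_1}$ with the restriction of $\iota_{\s E_1\tens S_{12}}$ to $\s E_2$ via Lemma \ref{lem22}, compute the corner $e_{1,2}J_2e_{2,2}$ from $\delta_{J_1}^{(2)}(e_{l,1})=e_{l,2}\tens 1_{S_{12}}$, transport the action, and obtain $\tau$ and the identification of $\K(\s E_2)$ with $\ind(\K(\s E_1))$ by composing corner isomorphisms. The only cosmetic difference is in (iii), where you invoke the universal property \ref{prop7}/\ref{prop10} plus the canonical identification of the linking algebra of the corners with $J_2$, whereas the paper builds the embedding $\Lin(\s E_2\oplus A_2)\rightarrow\M(J_1\tens S_{12})$ directly via Lemma \ref{lem2}; both yield the same map.
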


\begin{proof}
Let us denote $B:=e_{2,2}J_2e_{2,2}$ and $\s F:=e_{1,2}J_2e_{2,2}$ for short. \newline
(i)-(ii) $\vphantom{\delta_{\s E_1}^{(2)}}$We have $\iota_{\s E_1\tens S_{12}}(\s E_2)\subset J_2$ (cf.\ \ref{lem22}). Let 
$
\ind\iota_{\s E_1}:=\restr{\iota_{\s E_1\tens S_{12}}}{\s E_2}:\s E_2\rightarrow J_2.
$
It also follows from the formulas $\delta_{J_1}^{(2)}(e_{l,1})=e_{l,2}\tens 1_{S_{21}}$ for $l=1,2$ (4.14 \cite{BC}) that 
\begin{align*}
e_{1,2}J_2e_{2,2}&=[(\id_{J_1\tens S_{12}}\tens\omega)\delta_{J_1}^{(2)}(e_{1,1}xe_{2,1})\,;\, x\in J_1,\, \omega\in\B(\s H_{21})_*] \\
&=[(\id_{J_1\tens S_{12}}\tens\omega)\delta_{J_1}^{(2)}(\iota_{\s E_1}(\xi)) \, ; \, \xi\in\s E_1,\, \omega\in\B(\s H_{21})_*]\\
&=\ind\iota_{\s E_1}(\s E_2).
\end{align*}
Let $\xi,\eta\in\s E_2$. Since $\langle\xi,\,\eta\rangle\in A_2$, we have (cf.\ \ref{prop32} 3)
\[
\langle\ind\iota_{\s E_1}(\xi),\, \ind\iota_{\s E_1}(\eta)\rangle=\iota_{\s E_1\tens S_{12}}(\xi)^*\iota_{\s E_1\tens S_{12}}(\eta)=\iota_{A_1\tens S_{12}}(\langle\xi,\eta\rangle)=\ind\iota_{A_1}(\langle\xi,\,\eta\rangle).
\]  
We also have $\ind\iota_{\s E_1}(\xi)=\ind\iota_{\s E_1}(\xi)\ind\iota_{A_1}(a)$ for all $a\in A_2$ and $\xi\in\s E_2$ (cf.\ \ref{prop32} 1). The map $\Phi:\s E_2\rightarrow \s F\,;\, \xi\mapsto\ind\iota_{\s E_1}(\xi)$ is a unitary equivalence of Hilbert modules over the *-isomorphism $\phi:A_2 \rightarrow B \; ; \; a \mapsto \ind\iota_{A_1}(a)$.

\medskip

Let us prove that $(\Phi\tens\id_{S_{22}})\circ\delta_{\s E_2}=\delta_{\s F}\circ\Phi$. It is immediately verified that for $\xi\in\s E_1\tens S_{12}$, the formula $(\iota_{\s E_1\tens S_{12}}\tens\id_{S_{22}})(\id_{\s E_1}\tens\delta_{12}^2)(\xi)=(\id_{J_1}\tens\delta_{12}^2)(\iota_{\s E_1\tens S_{12}}(\xi))$ holds true. Let us fix $\xi\in\Lin(A_1\tens S_{12},\s E_1\tens S_{12})$. For all $a\in A_1\tens S_{12}$ and $x\in A_1\tens S_{12}\tens S_{22}$, we have
\begin{multline*}
\iota_{\s E_1\tens S_{12}\tens S_{22}}((\id_{\s E_1}\tens\delta_{12}^2)  ( \xi )) \iota_{A_1\tens S_{12}\tens S_{22}} ((\id_{A_1}\tens\delta_{12}^2)(a)x)\\=(\id_{J_1}\tens\delta_{12}^2)(\iota_{\s E_1\tens S_{12}}(\xi a))\iota_{A_1\tens S_{12}\tens S_{22}}(x)
\end{multline*}
and 
$
(\id_{J_1}\tens\delta_{12}^2)(\iota_{\s E_1\tens S_{12}}(\xi a))=(\id_{J_1}\tens\delta_{12}^2)(\iota_{\s E_1\tens S_{12}}(\xi))\iota_{A_1\tens S_{12}\tens S_{22}}((\id_{A_1}\tens\delta_{12}^2)(a)).
$
Hence,
\begin{multline*}
\iota_{\s E_1\tens S_{12}\tens S_{22}}((\id_{\s E_1}\tens\delta_{12}^2)(\xi))\iota_{A_1\tens S_{12}\tens S_{22}} ((\id_{A_1}\tens\delta_{12}^2)(a)x)\\
=(\id_{J_1}\tens\delta_{12}^2)(\iota_{\s E_1\tens S_{12}}(\xi))\iota_{A_1\tens S_{12}\tens S_{22}}((\id_{A_1}\tens\delta_{12}^2)(a)x).
\end{multline*}
Thus,
$
\iota_{\s E_1\tens S_{12}\tens S_{22}}((\id_{\s E_1}\tens\delta_{12}^2)(\xi))\iota_{A_1\tens S_{12}\tens S_{22}}(x)
=(\id_{J_1}\tens\delta_{12}^2)(\iota_{\s E_1\tens S_{12}}(\xi))\iota_{A_1\tens S_{12}\tens S_{22}}(x)
$
for all $\xi\in\Lin(A_1\tens S_{12},\s E_1\tens S_{12})$ and $a\in A_1\tens S_{12}\tens S_{22}$ in virtue of the non-degeneracy of $\id_{A_1}\tens\delta_{12}^2$. Let us fix $\xi\in\s E_2$. For all $x\in A_2\tens S_{22}$ and $y\in A_1\tens S_{12}\tens S_{21}$ we have
\begin{align*}
[(\Phi\tens\id_{S_{22}})\delta_{\s E_2}(\xi)(\phi\tens\id_{S_{22}})(x)] \iota_{A_1\tens S_{12}\tens S_{22}}(y)
&=\iota_{\s E_1\tens S_{12}\tens S_{22}}((\delta_{\s E_2}(\xi)x)y)\\
&=\iota_{\s E_1\tens S_{12}\tens S_{22}}((\id_{\s E_1}\tens\delta_{12}^2)(\xi)(xy))\\
&=(\id_{J_1}\tens\delta_{12}^2)(\iota_{\s E_1\tens S_{12}}(\xi))\iota_{A_1\tens S_{12}\tens S_{22}}(xy)\\
&=[\delta_{\s F}(\Phi(\xi))(\phi\tens\id_{S_{22}})(x)]\iota_{A_1\tens S_{12}\tens S_{22}}(y),
\end{align*}
which also holds for all $y\in\M(A_1\tens S_{12}\tens S_{22})$ by strict continuity. In particular, by applying this formula for $y\in A_2\tens S_{22}$, we have then proved that
\[
(\Phi\tens\id_{S_{22}})(\delta_{\s E_2}(\xi))(\phi\tens\id_{S_{22}})(x)=\delta_{\s F}(\Phi(\xi))(\phi\tens\id_{S_{22}})(x)
\]
for all $x\in A_2\tens S_{22}$. Hence, $(\Phi\tens\id_{S_{22}})(\delta_{\s E_2}(\xi))=\delta_{\s F}(\Phi(\xi))$ for all $\xi\in\s E_2$. This proves that $\delta_{\s E_2}$ is a continuous action of $\QG_2$ on $\s E_2$ and $\Phi$ is $\QG_2$-equivariant.\newline
(iii) There exists a unique unital faithful *-homomorphism 
\begin{center}
$
j:\Lin(\s E_2\oplus A_2)\rightarrow\Lin((\s E_1\tens S_{12})\oplus(A_1\tens S_{12}))
$
\end{center}
such that $j(x)(\eta a)=(x\eta)a$ for all $x\in\Lin(\s E_2\oplus A_2)$, $\eta\in \s E_2\oplus A_2$ and $a\in A_1\tens S_{12}$ (\ref{lem2} (ii), with $A:=A_1\tens S_{12}$, $B:=A_2$, ${\cal E}:=\s E_1\tens S_{12}$ and ${\cal F}:=\s E_2$). Now, it should be noted that we have the following canonical identifications
\begin{center}
$
J_2\subset\M(J_1\tens S_{12})=\Lin((\s E_1\oplus A_1)\tens S_{12})=\Lin((\s E_1\tens S_{12})\oplus(A_1\tens S_{12})).
$
\end{center}
We have $j(\iota_{\s E_2}(\xi))=\ind\iota_{\s E_1}(\xi)$ for all $\xi\in\s E_2$ and $j(\iota_{A_2}(b))=\ind\iota_{A_1}(b)$ for all $b\in A_2$ (cf.\ \ref{rk10}). In particular, we have $j(\K(\s E_2\oplus A_2))\subset J_2$. Let $\tau:=\restr{j}{\K(\s E_2\oplus A_2)}:\K(\s E_2\oplus A_2)\rightarrow J_2$. Since $J_2$ is generated by $e_{1,2}J_2e_{2,2}$ and $e_{2,2}J_2e_{2,2}$ as a C*-algebra, $\tau$ has dense range (cf.\ (ii)); moreover, $\tau$ is also isometric (faithful), therefore $\tau$ is surjective. Thus, we have proved that $\tau$ is a *-isomorphism. The $\QG_2$-equivariance of $\tau$ is derived from straightforward computations.\newline
(iv) Consider the $\QG_2$-equivariant *-isomorphism 
\begin{center}
$\varphi:\ind(\K(\s E_1))\rightarrow e_{1,2}J_2e_{1,2}\,;\, k\mapsto\ind\iota_{\K(\s E_1)}(k)$
\end{center} 
(cf.\ 4.18 \cite{BC}, note that $\K(\s F)=e_{1,2}J_2e_{1,2}$). $\vphantom{\ind}$By statement (ii), $\tau$ induces by restriction a $\QG_2$-equivariant *-isomorphism$\vphantom{\ind}$ $\tau:f_{1,2}\K(\s E_2\oplus A_2)f_{1,2}\rightarrow e_{1,2}J_2e_{1,2}$, where $f_{1,2}:=\iota_{\s E_2}(1_{\s E_2})$ and $f_{2,2}:=\iota_{A_2}(1_{A_2})$. We have an isomorphism$\vphantom{\ind}$ 
$
\psi:\K(\s E_2)\rightarrow f_{1,2}\K(\s E_2\oplus A_2)f_{1,2}\; ;\; k\mapsto\iota_{\K(\s E_2)}(k)
$
of $\QG_2$-C*-algebras. Hence, $\chi:=\psi^{-1}\circ\tau^{-1}\circ\varphi:\ind(\K(\s E_1))\rightarrow\K(\s E_2)$ is an isomorphism of $\QG_2$-C*-algebras. It is clear that $\chi(T)\xi=T\circ\xi$ for all $T\in\ind(\K(\s E_1))\subset\Lin(\s E_1\tens S_{12})$ and $\xi\in\s E_2\subset\Lin(A_1\tens S_{12},\s E_1\tens S_{12})$.$\vphantom{\ind}$
\end{proof}

\begin{propdef}\label{prop14}
Let us fix some notations. Consider: 
\begin{itemize}
\item two $\QG_1$-C*-algebras $A_1$ and $B_1$; 
\item two $\QG_1$-equivariant Hilbert modules $\s E_1$ and $\s F_1$ over $A_1$ and $B_1$ respectively;
\item a $\QG_1$-equivariant unitary equivalence $\Phi_1:\s E_1\rightarrow\s F_1$ over a $\QG_1$-equivariant *-isomorphism $\phi_1:A_1\rightarrow B_1$.
\end{itemize} 
Denote by:
\begin{itemize}
\item $A_2:=\ind(A_1)$ and $B_2:=\ind(B_1)$ the induced $\QG_2$-C*-algebras;
\item $\ind(\phi_1):A_2\rightarrow B_2$ the induced $\QG_2$-equivariant *-isomorphism;
\item $\s E_2:=\ind(\s E_1)$ and $\s F_2:=\ind(\s F_1)$ the induced $\QG_2$-equivariant Hilbert modules over $A_2$ and $B_2$ respectively;
\item $
\Phi_1\tens\id_{S_{12}}:\Lin(A_1\tens S_{12},\s E_1\tens S_{12})\rightarrow\Lin(B_1\tens S_{12},\s F_1\tens S_{12})
$
the unique linear map such that $(\Phi_1\tens\id_{S_{12}})(T)(\phi_1\tens\id_{S_{12}})(x)=(\Phi_1\tens\id_{S_{12}})(Tx)$ for all $\Lin(A_1\tens S_{12},\s E_1\tens S_{12})$ and $x\in A_1\tens S_{12}$ (cf.\ \ref{not3}).
\end{itemize}
Then, $(\Phi_1\tens\id_{S_{12}})(\s E_2)\subset\s F_2$ and the map 
$
\ind(\Phi_1):=\restr{(\Phi_1\tens\id_{S_{12}})}{\s E_2}:\s E_2\rightarrow\s F_2
$ 
is a $\QG_2$-equivariant unitary equivalence over $\ind(\phi_1):A_2\rightarrow B_2$. Moreover, for all $\xi\in\s E_1$ and $\omega\in\B(\s H_{21})_*$ we have
$
\ind(\Phi_1)((\id_{\s E_1 \tens S_{12}}\tens\omega)\delta_{\s E_1}^{(2)}(\xi))=(\id_{\s F_1 \tens S_{12}}\tens\omega)\delta_{\s F_1}^{(2)}(\Phi_1\xi).
$
\end{propdef}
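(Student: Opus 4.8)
The statement to prove is Proposition-Definition \ref{prop14}: given a $\QG_1$-equivariant unitary equivalence $\Phi_1 : \s E_1 \to \s F_1$ over $\phi_1 : A_1 \to B_1$, the linear extension $\Phi_1 \tens \id_{S_{12}}$ restricts to a $\QG_2$-equivariant unitary equivalence $\ind(\Phi_1) : \s E_2 \to \s F_2$ over $\ind(\phi_1)$, and the explicit formula $\ind(\Phi_1)((\id_{\s E_1 \tens S_{12}}\tens\omega)\delta_{\s E_1}^{(2)}(\xi))=(\id_{\s F_1 \tens S_{12}}\tens\omega)\delta_{\s F_1}^{(2)}(\Phi_1\xi)$ holds. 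The plan is to deduce everything from the linking-algebra picture, exactly as the induction functor on Hilbert modules was built in \ref{prop11}, so that $\ind(\Phi_1)$ becomes the restriction of a single $\QG_2$-equivariant $*$-isomorphism between induced linking algebras.

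First I would set up the linking algebras: $J_1 := \K(\s E_1 \oplus A_1)$ with its $\QG_1$-action $\delta_{J_1}$, $L_1 := \K(\s F_1 \oplus B_1)$ with $\delta_{L_1}$, and let $g : J_1 \to L_1$ be the unique $*$-isomorphism with $g \circ \iota_{\s E_1} = \iota_{\s F_1} \circ \Phi_1$ and $g \circ \iota_{A_1} = \iota_{B_1} \circ \phi_1$ (cf.\ \ref{prop7}); by \ref{prop10} this $g$ is $\QG_1$-equivariant. Applying the (functorial, cf.\ \ref{propind4} 3 and its Hilbert-module analogue in \ref{prop11}) induction functor $\ind = \ind_{\QG_1}^{\QG_2}$, I get an induced $\QG_2$-equivariant $*$-isomorphism $\ind(g) : J_2 \to L_2$ between the induced linking $\QG_2$-C*-algebras, where $J_2 = \ind(J_1)$, $L_2 = \ind(L_1)$. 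The key identities are the naturality formulas, which follow from \ref{lem22}: for $\xi \in \s E_1$ and $\omega \in \B(\s H_{21})_*$,
\[
\ind(g)\big((\id_{J_1\tens S_{12}}\tens\omega)\delta_{J_1}^{(2)}(\iota_{\s E_1}(\xi))\big) = (\id_{L_1\tens S_{12}}\tens\omega)\delta_{L_1}^{(2)}(g(\iota_{\s E_1}(\xi))) = (\id_{L_1\tens S_{12}}\tens\omega)\delta_{L_1}^{(2)}(\iota_{\s F_1}(\Phi_1\xi)),
\]
using $g \circ \iota_{\s E_1} = \iota_{\s F_1}\circ\Phi_1$ and the fact that $\ind(g)$ intertwines the maps $(\id\tens\omega)\delta^{(2)}$ by construction of the induction of a $*$-homomorphism.

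Next I would transport this through the canonical embeddings $\ind\iota_{\s E_1} : \s E_2 \to J_2$ and $\ind\iota_{\s F_1} : \s F_2 \to L_2$ from \ref{prop11} (ii), which identify $\s E_2$ with $e_{1,2}J_2 e_{2,2}$ and $\s F_2$ with $e'_{1,2}L_2 e'_{2,2}$ compatibly with the $\QG_2$-equivariant structures. Since $g$ sends $e_{l,1}$ to the corresponding projection $e'_{l,1}$ in $L_1$ (because $g$ carries $\iota_{\K(\s E_1)}(1_{\s E_1}) \mapsto \iota_{\K(\s F_1)}(1_{\s F_1})$ and $\iota_{A_1}(1_{A_1}) \mapsto \iota_{B_1}(1_{B_1})$), $\ind(g)$ carries $e_{l,2} \mapsto e'_{l,2}$, hence restricts to a $\QG_2$-equivariant unitary equivalence $e_{1,2}J_2 e_{2,2} \to e'_{1,2}L_2 e'_{2,2}$ over the $*$-isomorphism $e_{2,2}J_2 e_{2,2} \to e'_{2,2}L_2 e'_{2,2}$, which is $\ind(\phi_1)$ under the identification $A_2 = e_{2,2}J_2 e_{2,2}$ of \ref{prop11}. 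Pulling back along $\ind\iota_{\s E_1}$, $\ind\iota_{\s F_1}$ and using the displayed naturality identity, I obtain a $\QG_2$-equivariant unitary equivalence $\Psi : \s E_2 \to \s F_2$ over $\ind(\phi_1)$ satisfying $\Psi((\id_{\s E_1\tens S_{12}}\tens\omega)\delta_{\s E_1}^{(2)}(\xi)) = (\id_{\s F_1\tens S_{12}}\tens\omega)\delta_{\s F_1}^{(2)}(\Phi_1\xi)$. Finally I would check that $\Psi$ coincides with the restriction of the purely algebraic map $\Phi_1 \tens \id_{S_{12}} : \Lin(A_1 \tens S_{12}, \s E_1 \tens S_{12}) \to \Lin(B_1 \tens S_{12}, \s F_1 \tens S_{12})$ from \ref{not3}: both send $(\id_{\s E_1\tens S_{12}}\tens\omega)\delta_{\s E_1}^{(2)}(\xi)$ to $(\id_{\s F_1\tens S_{12}}\tens\omega)\delta_{\s F_1}^{(2)}(\Phi_1\xi)$ — for the algebraic map this is because $\Phi_1 \tens \id_{S_{12}}$ commutes with slicing by $\omega \in \B(\s H_{21})_*$ and with $\id_{\s E_1}\tens\delta_{11}^2$, and sends $\delta_{\s E_1}(\xi)$ to $\delta_{\s F_1}(\Phi_1\xi)$ by the $\QG_1$-equivariance of $\Phi_1$ — so by density (Proposition \ref{prop2}) the two maps agree on all of $\s E_2$. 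In particular $(\Phi_1\tens\id_{S_{12}})(\s E_2) = \Psi(\s E_2) = \s F_2$ and $\ind(\Phi_1) = \Psi$, which is the assertion.

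The main obstacle I anticipate is bookkeeping rather than a conceptual difficulty: one must be careful that the several a priori different extension operations — the algebraic $\Phi_1\tens\id_{S_{12}}$ of \ref{not3}, the unitary equivalence $\Phi_1\tens\id_{S_{kj}}$ on the level of Hilbert modules (\ref{propdef6}), and the induction $\ind(g)$ of the linking-algebra isomorphism — are mutually compatible, and that the identification $J_2 \subset \M(J_1 \tens S_{12}) = \Lin((\s E_1\tens S_{12})\oplus(A_1\tens S_{12}))$ used in \ref{prop11} (iii) is the same one under which $\s E_2 \subset \Lin(A_1\tens S_{12}, \s E_1\tens S_{12})$ sits. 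Once these identifications are pinned down via \ref{lem22} and \ref{rk10}, each individual verification is a routine strict-continuity-plus-density argument of the kind already carried out in the proofs of \ref{prop3}, \ref{prop12} and \ref{prop11}; the $\QG_2$-equivariance of $\ind(\Phi_1)$ is then immediate from that of $\ind(g)$ together with \ref{prop10} applied in the $\QG_2$-setting.
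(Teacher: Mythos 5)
Your proposal is correct and follows essentially the same route as the paper: pass to the linking C*-algebras, induce the $\QG_1$-equivariant isomorphism $\tau_1:J_1\to K_1$ determined by $\Phi_1$ and $\phi_1$ (via \ref{prop7} and \ref{prop10}), restrict the induced isomorphism of linking $\QG_2$-C*-algebras to the off-diagonal corners, and transport back through the identifications of \ref{prop11} before matching the result with $\restr{(\Phi_1\tens\id_{S_{12}})}{\s E_2}$ by the naturality/density argument. The only difference is presentational: you spell out the naturality identity via \ref{lem22} where the paper compresses it into "a straightforward computation".
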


\begin{proof}
Denote by $J_1:=\K(\s E_1\oplus A_1)$ and $K_1:=\K(\s F_1\oplus B_1)$ the linking $\QG_1$-C*-algebras, whose linking structures are respectively defined by:
$e_{1,1}:=\iota_{\s E_1}(1_{\s E_1})$, $e_{2,1}:=\iota_{A_1}(1_{A_1})$;
$f_{1,1}:=\iota_{\s F_1}(1_{\s F_1})$, $f_{2,1}:=\iota_{B_1}(1_{B_1})$.
We also denote by $(J_2,\delta_{J_2},e_{1,2},e_{2,2})$ and $(K_2,\delta_{K_2},f_{1,2},f_{2,2})$ the induced linking $\QG_2$-C*-algebras, where $e_{l,2}:=e_{l,1}\tens 1_{S_{12}}$ and $f_{l,2}:=f_{l,1}\tens 1_{S_{12}}$ for $l=1,2$ (cf.\ 4.14 \cite{BC}).
There exists a unique *-isomorphism $\tau_1:J_1\rightarrow K_1$ such that $\tau_1\circ\iota_{\s E_1}=\iota_{\s F_1}\circ\Phi_1$ and $\tau_1\circ\iota_{A_1}=\iota_{B_1}\circ\phi_1$ (cf.\ \ref{prop7} and \ref{prop10}). We then denote by 
\begin{center}
$
\tau_2:=\ind \tau_1:J_2\rightarrow K_2
$
\end{center} 
the induced morphism. Since $\tau_2$ is an isomorphism of linking $\QG_2$-C*-algebras, it induces a $\QG_2$-equivariant unitary equivalence $\Psi:e_{1,2}J_2 e_{2,2}\rightarrow f_{1,2} K_2 f_{2,2}$ over the isomorphism of $\QG_2$-C*-algebras $\psi:e_{2,2}J_2e_{2,2}\rightarrow f_{2,2}K_2 f_{2,2}$. Since $\tau_1\circ\iota_{A_1}=\iota_{B_1}\circ\phi_1$, we have
\begin{center} 
$\tau_2\circ\ind\iota_{A_1}=\ind\iota_{B_1}\circ\phi_2$.
\end{center} 
Therefore, by composition of $\QG_2$-equivariant unitary equivalences (cf.\ \ref{rk4} 2) and by applying \ref{prop11}, we obtain a $\QG_2$-equivariant $\phi_2$-compatible unitary operator $\Phi_2:\s E_2\rightarrow\s F_2$. By a straightforward computation, we show that $\Phi_2=\restr{(\Phi_1\tens\id_{S_{12}})}{\s E_2}$.
\end{proof}

By exchanging the roles of $\QG_1$ and $\QG_2$, we define as above an induction procedure for $\QG_2$-equivariant Hilbert modules.

\medskip 

In the following, we investigate the composition of $\ind$ and $\iind$. Let $A_1$ be a $\QG_1$-C*-algebra and $\s E_1$ a $\QG_1$-equivariant Hilbert $A_1$-module. Denote by:
\begin{itemize}
\item $A_2:=\ind(A_1)$ and $\s E_2=\ind(\s E_1)\subset\Lin(A_1\tens S_{12},\s E_1\tens S_{12})$ the induced $\QG_2$-C*-algebra and the induced $\QG_2$-equivariant Hilbert $A_2$-module;
\item $C=\iind(A_2)$ and $\s F:=\iind(\s E_2)\subset\Lin(A_2\tens S_{21},\s E_2\tens S_{21})$ the induced $\QG_1$-C*-algebra  and the induced $\QG_1$-equivariant Hilbert $C$-module.
\end{itemize}

\begin{prop}\label{prop13}
With the above notations and hypotheses, we have the following statements:
\begin{enumerate}
\item there exists a unique map $\Pi_1:\s E_1 \rightarrow\s F$ such that
\[
(\Pi_1(\xi)x)a=\delta_{\s E_1}^{(2)}(\xi)(xa), \quad \text{for all } \; \xi\in\s E_1,\, x\in A_2\tens S_{21} \; \text{ and } \; a\in A_1\tens S_{12}\tens S_{21};
\]
moreover, $\Pi_1$ is a $\QG_1$-equivariant unitary equivalence over the $\QG_1$-equivariant *-isomorphism $\pi_1:A_1\rightarrow C\,;\,a\mapsto\delta_{A_1}^{(2)}(a)$;\index[symbol]{pl@$\Pi_j$}
\item  
$
\delta_{\s E_1}^2:\s E_1 \rightarrow \widetilde{\M}(\s E_2\tens S_{21})\,;\, \xi \mapsto \Pi_1(\xi)
$
is a well-defined linear map such that:
\begin{enumerate}[label=(\roman*)]
\item $\delta_{\s E_1}^2(\xi a)=\delta_{\s E_1}^2(\xi)\delta_{A_1}^2(a)$ and $\langle\delta_{\s E_1}^2(\xi),\, \delta_{\s E_1}^2(\eta)\rangle = \delta_{A_1}^{2}(\langle\xi,\, \eta\rangle)$ for all $\xi,\eta\in\s E_1$ and $a\in A_1$,
\item $[\delta_{\s E_1}^2(\s E_1)(1_{A_2}\tens S_{21})]=\s E_2\tens S_{21}=[(1_{\s E_2}\tens S_{21})\delta_{\s E_1}^2(\s E_1)]$.\qedhere
\end{enumerate}
\end{enumerate}
\end{prop}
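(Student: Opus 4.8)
The plan is to reduce everything to the already-established linking-algebra formalism, just as was done for the C*-algebra case in \ref{prop11} (iii) and the unitary equivalence case in \ref{prop14}. Recall that in \ref{prop11} we built a canonical isomorphism of linking $\QG_2$-C*-algebras $\tau:\K(\s E_2\oplus A_2)\rightarrow J_2$. Similarly, applying the induction functor $\iind$ to the $\QG_2$-equivariant linking structure of $\K(\s E_2\oplus A_2)$ (equivalently, applying $\iind$ to $J_2=\ind(J_1)$ and using \ref{prop13} for the C*-algebraic part, i.e.\ the identification $C=\delta_{A_1}^{(2)}(A_1)$ and the corresponding statements for $J_1$) yields a canonical isomorphism of linking $\QG_1$-C*-algebras $\K(\s F\oplus C)\simeq\iind(J_2)\simeq J_1=\K(\s E_1\oplus A_1)$, where the last identification comes from applying \ref{prop13} (1) to $J_1$ in place of $A_1$ (so that $\iind\ind(J_1)\simeq J_1$ via the map $x\mapsto\delta_{J_1}^{(2)}(x)$). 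Restricting this isomorphism of linking $\QG_1$-C*-algebras to the off-diagonal corner $e_{1}(\,\cdot\,)e_{2}$ produces the desired $\QG_1$-equivariant unitary equivalence $\Pi_1:\s E_1\rightarrow\s F$ over $\pi_1:A_1\rightarrow C$.

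More concretely, here is the order in which I would carry out the argument. First I would define $\delta_{\s E_1}^{(2)}:\s E_1\rightarrow\Lin(A_1\tens S_{12}\tens S_{21},\s E_1\tens S_{12}\tens S_{21})$ by $\delta_{\s E_1}^{(2)}(\xi):=(\id_{\s E_1}\tens\delta_{12}^2)\delta_{\s E_1}^2(\xi)$ — wait, rather $\delta_{\s E_1}^{(2)}(\xi):=(\id_{\s E_1\tens S_{12}}\tens\delta_{21}^{\ast})\cdots$; more safely, I would use the already-defined $\delta_{\s E_1}^{(2)}$ from the \texttt{nbs} block preceding \ref{prop2}, together with the relation $\iota_{\s E_1\tens S_{12}\tens S_{21}}\circ\delta_{\s E_1}^{(2)}=\delta_{J_1}^{(2)}\circ\iota_{\s E_1}$ (which follows from \ref{lem22} 2 by strict continuity, since $\delta_{J_1}^{(2)}=(\id_{J_1}\tens\delta_{11}^2)\delta_{J_1}$). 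Then the existence and uniqueness of $\Pi_1:\s E_1\rightarrow\s F=\iind(\s E_2)$ satisfying $(\Pi_1(\xi)x)a=\delta_{\s E_1}^{(2)}(\xi)(xa)$ follows from \ref{lem1} (i) applied with $A:=A_2\tens S_{21}$ (inside $\M(A_1\tens S_{12}\tens S_{21})$), ${\cal E}:=\s E_2\tens S_{21}$, $B:=C$ the induced C*-algebra $\iind(A_2)$, and ${\cal F}:=\s F$; the hypotheses of \ref{lem1} hold by \ref{propind4} 1 applied to $A_2$ and by \ref{prop2} applied to $\s E_1,\s E_2$. One then has to check that $\Pi_1$ lands in $\s F$: this amounts to showing $\delta_{\s E_1}^{(2)}(\xi)\in\widetilde{\M}(\s E_2\tens S_{21})$ with the right compatibilities, which I would verify by passing through $\iota_{\s E_1\tens S_{12}\tens S_{21}}$ to $J_2\tens S_{21}$ and invoking \ref{prop13} applied to $J_1$ (i.e.\ $\iota_{\s E_1\tens S_{12}\tens S_{21}}(\delta_{\s E_1}^{(2)}(\xi))=\delta_{J_1}^{(2)}(\iota_{\s E_1}(\xi))\in\iind(J_2)$) together with the matrix-corner argument of \ref{ehmlem1}, exactly as in the proof of \ref{prop3}.

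Once $\Pi_1$ is constructed, the equivariance and the unitarity over $\pi_1$ are obtained from the $\QG_1$-equivariance of $\tau_1:J_1\rightarrow\iind(J_2)$, namely $\tau_1=\delta_{J_1}^{(2)}$ regarded as a $\QG_1$-equivariant *-isomorphism onto $\iind(J_2)$ (this is \ref{prop13} 1 read for the linking algebra $J_1$; note $J_1$ is itself a linking $\QG_1$-C*-algebra so the cited statement applies verbatim once we identify $\K(\s F\oplus C)\simeq\iind(J_2)$ via \ref{prop11} (iii)). Then $\Pi_1=\restr{\tau_1}{e_{1,1}J_1e_{2,1}}$ up to the identifications $e_{1,1}J_1e_{2,1}=\iota_{\s E_1}(\s E_1)$ and $\iota_{\s F}(\s F)=e_{1}\iind(J_2)e_{2}$, and Proposition \ref{prop10} turns equivariance of $\tau_1$ into equivariance of $\Pi_1$. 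For part 2, the map $\delta_{\s E_1}^2:\s E_1\rightarrow\widetilde{\M}(\s E_2\tens S_{21})$ is then just $\xi\mapsto\Pi_1(\xi)$ viewed inside $\Lin(A_2\tens S_{21},\s E_2\tens S_{21})$ via \ref{lem2}; linearity is clear, (i) follows from $\Pi_1$ being a unitary equivalence over $\pi_1=\delta_{A_1}^2$ (the inner-product and module-map relations are precisely the defining properties of a $\phi$-compatible unitary, with $\phi=\delta_{A_1}^2$), and (ii) follows by combining $\langle\delta_{\s E_1}^2(\s E_1),\delta_{\s E_1}^2(\s E_1)\rangle=\delta_{A_1}^2(\langle\s E_1,\s E_1\rangle)$ with the relation $[\delta_{A_1}^2(A_1)(1_{A_2}\tens S_{21})]=A_2\tens S_{21}$ from \ref{propind1}, plus the density $[\s F(A_2\tens S_{21})]=\s E_2\tens S_{21}$ from \ref{prop2} applied to the pair $(\s E_2,\s F)$, and the continuity of the $\QG_1$-action on $\s F$ (Corollary \ref{corActReg}, using regularity of $\QG_1$) for the second equality $[(1_{\s E_2}\tens S_{21})\delta_{\s E_1}^2(\s E_1)]=\s E_2\tens S_{21}$.

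The main obstacle I anticipate is bookkeeping the nested internal tensor products and the leg-numbering inside $\M(A_1\tens S_{12}\tens S_{21})$ so that the identification $\K(\s F\oplus C)\simeq\iind(J_2)\simeq J_1$ is literally the one for which $\tau_1=\delta_{J_1}^{(2)}$ and its off-diagonal corner is $\Pi_1$; in particular one must be careful that the two a priori different descriptions of $\delta_{\s E_1}^2$ — the one via \ref{lem1}/\ref{lem2} and the one via restriction of $\delta_{J_1}^{(2)}$ — agree, which is where the relation $\iota_{\s E_1\tens S_{12}\tens S_{21}}\circ\delta_{\s E_1}^{(2)}=\delta_{J_1}^{(2)}\circ\iota_{\s E_1}$ and the injectivity of the various $\iota$-maps (Lemma \ref{ehmlem1}) do the real work. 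Everything else is a routine transcription of the C*-algebra arguments of \S 4 of \cite{BC} and of Propositions \ref{prop11}, \ref{prop13}, \ref{prop14} above into the Hilbert-module language.
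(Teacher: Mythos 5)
Your proposal is correct and follows essentially the same route as the paper: existence and uniqueness of $\Pi_1$ via Lemma \ref{lem2} with $j=1$ (checking the required inclusions by passing through $\iota_{\s E_1\tens S_{12}}$ into the induced linking algebra $J_2$, as in the proof of \ref{prop12}), equivariance and unitarity by identifying $\Pi_1$ with the off-diagonal corner of the linking-algebra isomorphism $x\mapsto\delta_{J_1}^{(2)}(x)$ from $J_1$ onto $\iind(J_2)$ composed with the $\tau$-identifications of \ref{prop11} (ii)--(iii), and part 2 from \ref{prop2} together with the bijectivity and $\pi_1$-compatibility of $\Pi_1$. One small correction: where you invoke ``\ref{prop13} applied to $J_1$'' for the C*-algebraic identifications $C=\delta_{A_1}^{(2)}(A_1)$ and the equivariance of $x\mapsto\delta_{J_1}^{(2)}(x)$, the intended reference is \ref{propind1} (the already-established C*-algebra version of the statement), not the proposition you are proving.
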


\begin{proof}
1.\ The existence and uniqueness of $\Pi_1$ is an immediate application of \ref{lem2} with $j=1$ and the proof is very similar to that of \ref{prop12}. The fact that $\Pi_1$ is a $\QG_1$-equivariant unitary equivalence over $\pi_1$ is a straightforward consequence of \ref{prop11} (ii), (iii) and \ref{propind1} 2.\newline
2.\ Statement (ii) and the fact that $\delta_{\s E_1}^2$ takes its values in $\widetilde{\M}(\s E_2\tens S_{21})$ are proved by combining the formulas $[\s F(1_{A_2}\tens S_{21})]=\s E_2\tens S_{21}=[(1_{\s E_2}\tens S_{21})\s F]$ (cf.\ \ref{prop2}) with the fact that $\Pi_1$ is bijective. Statement (i) follows from the compatibility of $\Pi_1$ with $\pi_1$.$\vphantom{\widetilde{\M}}$
\end{proof}

We have proved the following result:

\begin{thm}
Let $\QG_1$ and $\QG_2$ be two monoidally equivalent regular locally compact quantum groups. The map 
\begin{align*}
\ind &:(\s E_1,\delta_{\s E_1}) \mapsto (\s E_2:=\ind(\s E_1),\, \delta_{\s E_2}:\xi\in\s E_2\mapsto[x\in A_2\tens S_{22}\mapsto(\id_{\s E_1}\tens\delta_{12}^2)(\xi)x]),\\
\intertext{where $\s E_1$ is a Hilbert module over the $\QG_1$-C*-algebra $A_1$ and $A_2=\ind(A_1)$ denotes the induced $\QG_2$-C*-algebra, is a one-to-one correspondence up to unitary equivalence. The inverse map, up to unitary equivalence, is}
\iind &: (\s F_2,\delta_{\s F_2}) \mapsto (\s F_1:=\iind(\s F_2),\, \delta_{\s F_1}:\xi\in\s F_1\mapsto[x\in B_1\tens S_{11}\mapsto(\id_{\s F_2}\tens\delta_{21}^1)(\xi)x]),
\end{align*} 
where $\s F_2$ is a Hilbert module over the $\QG_2$-C*-algebra $B_2$ and $B_1=\ind(B_2)$ denotes the induced $\QG_1$-C*-algebra.
\end{thm}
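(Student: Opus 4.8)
The strategy is to assemble the statement from the structural results already obtained for the induction procedure on equivariant Hilbert modules. All of the analytic work has been carried out in Propositions \ref{prop3}, \ref{prop11}, \ref{prop13} and in Proposition-Definitions \ref{prop12}, \ref{prop14}; what remains is to check that $\ind$ and $\iind$ descend to mutually inverse bijections between unitary equivalence classes.

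First I would record that $\ind$ is well defined on unitary equivalence classes. Given a $\QG_1$-equivariant Hilbert $A_1$-module $(\s E_1,\delta_{\s E_1})$, Proposition \ref{prop3} and Proposition-Definition \ref{prop12} produce the Hilbert $\ind(A_1)$-module $\ind(\s E_1)$ together with the linear map $\delta_{\s E_2}$, and Proposition \ref{prop11} (i) shows that $(\ind(\s E_1),\delta_{\s E_2})$ is a $\QG_2$-equivariant Hilbert module over the induced $\QG_2$-C*-algebra $\ind(A_1)$. Moreover, Proposition-Definition \ref{prop14} shows that a $\QG_1$-equivariant unitary equivalence $\Phi_1\colon\s E_1\to\s F_1$ over a $\QG_1$-equivariant *-isomorphism $\phi_1\colon A_1\to B_1$ is sent to a $\QG_2$-equivariant unitary equivalence $\ind(\Phi_1)\colon\ind(\s E_1)\to\ind(\s F_1)$ over $\ind(\phi_1)$. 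Hence $\ind$ carries the $\QG_1$-equivariant Hilbert modules, considered up to $\QG_1$-equivariant unitary equivalence, into the analogous collection for $\QG_2$. Exchanging the roles of $\QG_1$ and $\QG_2$, the same holds for $\iind$.

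Next I would check that $\iind\circ\ind$ and $\ind\circ\iind$ are the identity up to unitary equivalence. This is precisely the content of Proposition \ref{prop13}: for a $\QG_1$-equivariant Hilbert $A_1$-module $\s E_1$, with $A_2=\ind(A_1)$, $\s E_2=\ind(\s E_1)$, $C=\iind(A_2)$ and $\s F=\iind(\s E_2)$, the map $\Pi_1\colon\s E_1\to\s F$ is a $\QG_1$-equivariant unitary equivalence over the $\QG_1$-equivariant *-isomorphism $\pi_1\colon A_1\to C$ of Proposition \ref{propind1}; thus $\iind(\ind(\s E_1))$ is $\QG_1$-equivariantly unitarily equivalent to $(\s E_1,\delta_{\s E_1})$. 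Applying Proposition \ref{prop13} with the roles of $\QG_1$ and $\QG_2$ interchanged gives, in the same way, that $\ind(\iind(\s F_2))$ is $\QG_2$-equivariantly unitarily equivalent to $(\s F_2,\delta_{\s F_2})$ for every $\QG_2$-equivariant Hilbert module $\s F_2$.

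Finally I would conclude. Injectivity of the induced class map $\ind$ follows from $\iind\circ\ind\cong\id$: if $\ind(\s E_1)$ and $\ind(\s E_1')$ are $\QG_2$-equivariantly unitarily equivalent, one applies the class map $\iind$. Surjectivity follows from $\ind\circ\iind\cong\id$: given a $\QG_2$-equivariant Hilbert module $\s F_2$ over a $\QG_2$-C*-algebra $B_2$, set $B_1:=\iind(B_2)$ and $\s F_1:=\iind(\s F_2)$, a $\QG_1$-equivariant Hilbert $B_1$-module, so that $\ind(\s F_1)$ is $\QG_2$-equivariantly unitarily equivalent to $\s F_2$. The displayed formulas for $\delta_{\s E_2}$ and $\delta_{\s F_1}$ are those of Proposition-Definition \ref{prop12} and of its analogue under the exchange $\QG_1\leftrightarrow\QG_2$. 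The only genuine subtlety — and the one to watch in writing this up cleanly — is the bookkeeping of base C*-algebras: iterated induction does not return to $A_1$ but to the isomorphic copy $C=\iind(\ind(A_1))$, so the correspondence is only up to $\QG_j$-equivariant unitary equivalence over a possibly nontrivial equivariant *-isomorphism, and one must consistently use the notion of equivalence of Definition \ref{def1}, which allows such isomorphisms of the coefficient algebras. Beyond this, I expect no real obstacle: with the cited propositions in hand, the theorem is a matter of assembly.
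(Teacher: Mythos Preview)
Your proposal is correct and matches the paper's own proof, which simply cites Propositions \ref{prop13} and \ref{prop14} together with their analogues obtained by exchanging $\QG_1$ and $\QG_2$. You have spelled out in more detail how these pieces fit together (well-definedness via \ref{prop3}, \ref{prop12}, \ref{prop11}; compatibility with unitary equivalence via \ref{prop14}; the mutual inverse relation via \ref{prop13}), but the substance is identical.
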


\begin{proof}
This is a consequence of Propositions \ref{prop13}, \ref{prop14} and the corresponding results obtained by exchanging the roles of $\QG_1$ and $\QG_2$.
\end{proof}

Let $B_1$ be a $\QG_1$-C*-algebra. Let us denote by $B_2:=\ind(B_1)$ the induced $\QG_2$-C*-algebra. Let
$
\delta_{B_j}^k:B_j\rightarrow\M(B_k\tens S_{kj})
$
for $j,k=1,2$ be the *-homomorphisms defined in \ref{not11}.

\begin{nbs}
Let $\s E_1$ be a $\QG_1$-equivariant Hilbert $B_1$-module. Let us denote by $\s F_2=\ind(\s F_1)$ the induced $\QG_2$-equivariant Hilbert $B_2$-module. We have four linear maps
\[
\delta_{\s F_j}^k:\s F_j\rightarrow\Lin(B_k\tens S_{kj},\s F_k\tens S_{kj}), \quad \text{for } j,k=1,2,
\]
defined as follows:
\begin{itemize}
\item $\delta_{\s F_1}^1:=\delta_{\s F_1}$ and $\delta_{\s F_2}^2:=\delta_{\s F_2}$;
\item $\delta_{\s F_1}^2:\s F_1\rightarrow\Lin(B_2\tens S_{21},\s F_2\tens S_{21})$ is the unique linear map such that 
\[
(\delta_{\s F_1}^2(\xi)x)b=\delta_{\s F_1}^{(2)}(\xi)(xb)
\] 
for all $\xi\in\s F_1$, $x\in B_2\tens S_{21}$ and $b\in B_1\tens S_{12}\tens S_{22}$, where $\delta_{\s F_1}^{(2)}(\xi):=(\id_{\s E_1}\tens\delta_{11}^2)\delta_{\s F_1}(\xi)$ (cf.\ \ref{prop13});
\item $\delta_{\s F_2}^1:\s F_2\rightarrow\Lin(B_1\tens S_{12},\s F_1\tens S_{12})$ is the unique linear map such that for all $\xi\in\s F_2$, $x\in\iind(B_2)\tens S_{12}$ and $y\in B_2\tens S_{21}\tens S_{12}$, we have
\[
[(\Pi_1\tens\id_{S_{12}})(\delta_{\s F_2}^1(\xi))x]y=\delta_{\s F_2}^{(1)}(\xi)(xy),
\] 
where $\delta_{\s F_2}^{(1)}(\xi):=(\id_{\s F_1}\tens\delta_{22}^1)\delta_{\s F_2}(\xi)$ and $\Pi_1:\s F_1\rightarrow\iind(\s F_2)$ (cf.\ \ref{prop13} 1).\qedhere
\end{itemize}
\end{nbs}

\begin{lem}\label{lem4}
For all $j,k,l=1,2$, we have the following statements:
\begin{enumerate}
\item $\delta_{\s F_j}^k(\s F_j)\subset\widetilde{\M}(\s F_k\tens S_{kj})$;
\item $\delta_{\s F_j}^k(\xi b)=\delta_{\s F_j}^k(\xi)\delta_{B_j}^k(b)$ and $\langle\delta_{\s F_j}^k(\xi),\, \delta_{\s F_j}^k(\eta)\rangle = \delta_{B_j}^k(\langle\xi,\, \eta\rangle)$ for all $\xi,\eta\in\s F_j$ and $b\in B_j$;
\item $[\delta_{\s F_j}^k(\s F_j)(1_{B_k}\tens S_{kj})]=\s F_k\tens S_{kj}=[(1_{\s F_k}\tens S_{kj})\delta_{\s F_j}^k(\s F_j)]$;
\item $\delta_{\s F_k}^l\tens\id_{S_{kj}}$ {\rm(}resp.\ $\id_{\s F_l}\tens\delta_{lj}^k)$ extends uniquely to a linear map from $\Lin(B_k\tens S_{kj},\s E_k\tens S_{kj})$ to $\Lin(B_l\tens S_{lk}\tens S_{kj},\s E_l\tens S_{lk}\tens S_{kj})$ such that
\begin{align*}
(\delta_{\s F_k}^l\tens\id_{S_{kj}})(T)(\delta_{B_k}^l\tens\id_{S_{kj}})(x)&=(\delta_{\s F_k}^l\tens\id_{S_{kj}})(Tx)\\
\text{{\rm(}resp.\ }(\id_{\s F_l}\tens\delta_{lj}^k)(T)(\id_{B_l}\tens\delta_{lj}^k)(x)&=\id_{\s F_l}\tens\delta_{lj}^k)(Tx)\text{{\rm)}}
\end{align*}
for all $T\in\Lin(B_k\tens S_{kj},\s E_k\tens S_{kj})$ and $x\in B_k\tens S_{kj}$;
\item $(\delta_{\s F_k}^l\tens\id_{S_{kj}})\delta_{\s F_j}^k=(\id_{\s F_l}\tens\delta_{lj}^k)\delta_{\s F_j}^l$.\qedhere
\end{enumerate}
\end{lem}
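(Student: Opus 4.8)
The idea is to reduce everything to the corresponding statements already established for the $\QG_1$-equivariant and $\QG_2$-equivariant Hilbert modules $\s E_1$, $\s F_2$ via the two induction procedures, exploiting the three identifications $\delta_{\s F_1}^1=\delta_{\s F_1}$, $\delta_{\s F_2}^2=\delta_{\s F_2}$, and the explicit descriptions of the mixed maps $\delta_{\s F_1}^2$ and $\delta_{\s F_2}^1$ in terms of $\delta_{\s F_1}^{(2)}$ and $\delta_{\s F_2}^{(1)}$. For the two ``diagonal'' indices $j=k$ statements (1)--(5) are exactly the axioms of a $\QG_j$-equivariant Hilbert module applied to $(\s F_j,\delta_{\s F_j}^j)$ together with the coassociativity relation $(\delta_{jj}^k\tens\id)\delta_{jj}^j=(\id\tens\delta_{jj}^k)\delta_{jj}^k$ from \ref{prop5}, so only the genuinely mixed cases ($j\neq k$, and the composite identities (4)--(5) involving the third index $l$) require work. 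I would treat $\delta_{\s F_1}^2$ and $\delta_{\s F_2}^1$ symmetrically: by construction $\delta_{\s F_1}^2=\Pi_1$ in the notation of \ref{prop13}, where $\Pi_1$ is a $\QG_1$-equivariant unitary equivalence onto $\iind(\s F_2)$ over $\pi_1$; hence statements (1)--(3) for $\delta_{\s F_1}^2$ are immediate restatements of \ref{prop13} 2(i), (ii) and \ref{prop2}, and likewise for $\delta_{\s F_2}^1$ using the $\QG_2$-analogue of \ref{prop13} applied to $\s F_2$.

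\textbf{Carrying out the mixed multiplicativity and inner-product relations (1)--(3).} First I would record that $\delta_{\s F_1}^2(\xi)=\delta_{\s F_1}^{(2)}(\xi)=(\id_{\s F_1}\tens\delta_{11}^2)\delta_{\s F_1}(\xi)$, viewed inside $\Lin(B_2\tens S_{21},\s F_2\tens S_{21})$ through the embedding of \ref{lem2} with $j=1$, and similarly $(\pi_1\tens\id_{S_{12}})\delta_{\s F_2}^1(\xi)=\delta_{\s F_2}^{(1)}(\xi)=(\id_{\s F_1}\tens\delta_{22}^1)\delta_{\s F_2}(\xi)$. The relations of type (2), namely $\delta_{\s F_j}^k(\xi b)=\delta_{\s F_j}^k(\xi)\delta_{B_j}^k(b)$ and $\langle\delta_{\s F_j}^k(\xi),\delta_{\s F_j}^k(\eta)\rangle=\delta_{B_j}^k(\langle\xi,\eta\rangle)$, then follow by applying $\id_{\s F_1}\tens\delta_{11}^2$ (resp.\ $\id_{\s F_1}\tens\delta_{22}^1$) to the already-known relations $\delta_{\s F_1}(\xi b)=\delta_{\s F_1}(\xi)\delta_{B_1}(b)$, $\langle\delta_{\s F_1}(\xi),\delta_{\s F_1}(\eta)\rangle=\delta_{B_1}(\langle\xi,\eta\rangle)$ (condition 1 of \ref{hilbmodequ} for $\s E_1=\s F_1$), and using that $\delta_{B_j}^k$ has precisely the corresponding description from \ref{not11} together with the fact that these extensions are multiplicative (using formula (\ref{eq26}) and its $\delta_{ij}^k$-analogue). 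The relations of type (1), $\delta_{\s F_j}^k(\s F_j)\subset\widetilde{\M}(\s F_k\tens S_{kj})$, and of type (3), the density relations, come from \ref{prop2} and \ref{propdef4} (iv), (vi): for instance $[\delta_{\s F_1}^2(\s F_1)(1_{B_2}\tens S_{21})]=[\ind(\s F_1)(1_{B_1}\tens S_{12})]$ reindexed is exactly \ref{prop2}.

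\textbf{The composite relations (4) and (5).} Here is the main obstacle: one must correctly interpret the extensions $\delta_{\s F_k}^l\tens\id_{S_{kj}}$ and $\id_{\s F_l}\tens\delta_{lj}^k$ to operator spaces $\Lin(B_k\tens S_{kj},\s F_k\tens S_{kj})\to\Lin(B_l\tens S_{lk}\tens S_{kj},\s F_l\tens S_{lk}\tens S_{kj})$ and verify they are well defined with the stated compatibility formulas in (4); the defining identity $(\delta_{\s F_k}^l\tens\id_{S_{kj}})(T)(\delta_{B_k}^l\tens\id_{S_{kj}})(x)=(\delta_{\s F_k}^l\tens\id_{S_{kj}})(Tx)$ forces uniqueness once existence is shown, and existence follows the pattern of \ref{rk9}/\ref{not7}: realize $T\tens_{\delta_{B_k}^l\tens\id_{S_{kj}}}1$ and $T\tens_{\id_{B_l}\tens\delta_{lj}^k}1$ as adjointable operators via the unitary equivalences (\ref{eq24}), (\ref{eq9}), (\ref{eq1.16}), (\ref{eq10}), (\ref{eq1.14}), (\ref{eq1.15}), exactly as in \ref{propdef4} (v). Then the pentagonal-type relation (5), $(\delta_{\s F_k}^l\tens\id_{S_{kj}})\delta_{\s F_j}^k=(\id_{\s F_l}\tens\delta_{lj}^k)\delta_{\s F_j}^l$, will be deduced from the coassociativity of the underlying actions: the three diagonal instances and the mixed ones all follow by sandwiching with the isometries $\s V_j^k$ and applying Proposition \ref{prop9} (or rather its analogue for the composed actions), together with the four-index coassociativity $(\delta_{ik}^l\tens\id_{S_{kj}})\delta_{ij}^k=(\id_{S_{il}}\tens\delta_{lj}^k)\delta_{ij}^l$ from \ref{prop5} 1. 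Concretely I would check (5) by evaluating both sides on a vector $x$ in the appropriate module and pushing through the compatibility formulas of (4) until both reduce to $(\id_{\s F_1}\tens\delta_{11}^2\delta$-iterated$)\delta_{\s F_1}(\xi)$ up to the canonical identifications; the bookkeeping of which tensor leg carries which $S_{ab}$ factor is the delicate part, but no new conceptual input beyond \ref{prop13}, \ref{prop14} and the already-proved coassociativity is needed.
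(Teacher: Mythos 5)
Your proposal is correct and follows essentially the same route as the paper: the diagonal cases come from the module axioms and \ref{prop11}, the mixed cases of (1)--(3) are read off from \ref{prop2}, \ref{prop13} and its $\QG_2$-analogue via the unitary equivalences $\Pi_1$ and $\Pi_2$, the extensions in (4) are built through the internal-tensor-product identifications exactly as in \ref{rk9} and \ref{propdef4}, and (5) reduces to coassociativity of the underlying coactions. The only slips are cosmetic: in your description of $\delta_{\s F_2}^1$ the intertwiner is $\Pi_1\tens\id_{S_{12}}$ (the module map) rather than $\pi_1\tens\id_{S_{12}}$, and the paper derives (5) from \ref{prop11} rather than from \ref{prop9}.
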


\begin{proof}
Let $C:=\iind(B_2)$, $D:=\ind(C)$, $\s K=\iind(\s F_2)$ and $\s L=\ind(\s K)$. There exists a unique $\QG_2$-equivariant unitary equivalence $\Pi_2:\s F_2\rightarrow\s L$ (\ref{prop13} 1, after exchanging the roles of $\QG_1$ and $\QG_2$) over the $\QG_2$-equivariant *-isomorphism $\pi_2:B_2\rightarrow D$.\newline
1.\ This statement will follow straightforwardly from the third one.\newline
2.\ This statement has already been proved for $(j,k)=(1,1)$ (by definition), $(j,k)=(2,2)$ (cf.\ \ref{prop11} (i)) and for $(j,k)=(1,2)$ (cf.\ \ref{prop13}). Moreover, the case $(j,k)=(2,1)$ follows from the formulas $\delta_{\s F_2}^1=(\Pi_1^{-1}\tens\id_{S_{12}})\Pi_2$ and $\delta_{B_2}^1=(\pi_1^{-1}\tens\id_{S_{12}})\pi_2$.\newline
3.\ This statement is true by assumption for $(j,k)=(1,1)$, for $(j,k)=(2,2)$ (cf.\ \ref{prop11} (i)) and for $(j,k)=(1,2)$ (cf.\ \ref{prop13} 2 (ii)). By \ref{prop2} and \ref{prop13}, we have $[\s L(C\tens S_{12})]=\s K\tens S_{12}$, $\s L=\Pi_2(\s F_2)$ and $\s K=\Pi_1(\s F_1)$.
Therefore, we have
\begin{align*}
[\delta_{\s F_2}^1(\s F_2)(B_1\tens S_{12})]&=[(\Pi_1^{-1}\tens\id_{S_{12}})(\s L)(\pi_1^{-1}\tens\id_{S_{12}})(D\tens S_{12})]\\
&=[(\Pi_1^{-1}\tens\id_{S_{12}})(\s L(D\tens S_{12}))]\\
&=\s F_1\tens S_{12}.
\end{align*}
It then follows from the second statement and the fact that $[\delta_{B_2}^1(B_2)(1_{B_1}\tens S_{12})]=B_1\tens S_{12}$ that $[\delta_{\s F_2}^1(\s F_2)(1_{B_1}\tens S_{12})]=\s F_1\tens S_{12}$, which is statement 3 for $(j,k)=(2,1)$.\newline
4.\ Let $j,k,l=1,2$. The uniqueness of the extensions is obvious by the non-degeneracy of $\delta_{lj}^k$ and $\delta_{B_k}^l$. The linear map $\id_{\s F_l}\tens\delta_{lj}^k:\Lin(B_k\tens S_{kj},\s E_k\tens S_{kj})\rightarrow\Lin(B_l\tens S_{lk}\tens S_{kj},\s E_l\tens S_{lk}\tens S_{kj})$ is defined by
\begin{center}
$(\id_{\s F_l}\tens\delta_{lj}^k)(T):=T\tens_{\id_{B_l}\tens\delta_{lj}^k}1$, \quad for all $T\in\Lin(B_k\tens S_{kj},\s E_k\tens S_{kj})$,
\end{center}
where we use the identifications (\ref{eq9}) and (\ref{eq10}). As in 2.4 (a) \cite{BS1}, there exists a unique unitary $\s V_k^l\in\Lin(\s F_k\tens_{\delta_{B_k}^l}(B_l\tens S_{lk}),\s F_l\tens S_{lk})$ such that\index[symbol]{ve@$\s V_j^k$} 
\begin{center}
$\s V_k^l(\xi\tens_{\delta_{B_k}^l}x)=\delta_{\s F_k}^l(\xi)x$, \quad for all $\xi\in\s F_k$ and  $x\in B_l\tens S_{lk}$.
\end{center}
$\vphantom{tens_{\delta_{B_k}^l\tens\,\id_{S_{kj}}}}$The linear extension $\delta_{\s F_k}^l\tens\id_{S_{kj}}:\Lin(B_k\tens S_{kj},\s E_k\tens S_{kj})\rightarrow\Lin(B_l\tens S_{lk}\tens S_{kj},\s E_l\tens S_{lk}\tens S_{kj})$ is defined by $(\delta_{\s F_k}^l\tens\id_{S_{kj}})(T):=(\s V_k^l\tens_{\GC}1)(T\tens_{\delta_{B_k}^l\tens\,\id_{S_{kj}}}1)$ for all $T\in\Lin(B_k\tens S_{kj},\s E_k\tens S_{kj})$, up to the identifications (\ref{eq24}) and (\ref{eq1.16}).$\vphantom{\delta_{\s F_k}^l}$\newline
5.\ The formula $(\delta_{\s F_k}^l\tens\id_{S_{kj}})\delta_{\s F_j}^k=(\id_{\s F_l}\tens\delta_{lj}^k)\delta_{\s F_j}^l$ is derived from \ref{prop11} after long but straightforward computations.
\end{proof}

Let us consider the C*-algebra $B:=B_1\oplus B_2$ endowed with the continuous action $(\beta_B,\delta_B)$ (cf.\ \ref{prop37}).

\begin{prop}\label{prop16}
Let $\s F_1$ be a $\QG_1$-equivariant Hilbert $B_1$-module. Let $\s F_2:=\ind(\s F_1)$ be the induced $\QG_2$-equivariant Hilbert $B_2$-module. Consider the Hilbert $B$-module $\s F:=\s F_1\oplus\s F_2$. Denote by
$
\Pi_j^k:\Lin(B_k\tens S_{kj},\s F_k\tens S_{kj})\rightarrow\Lin(B\tens S,\s F\tens S) 
$ 
\index[symbol]{pk@$\Pi_j^k$}the linear extension of the canonical injection $\s F_k\tens S_{kj}\rightarrow\s F\tens S$.
Let us consider the linear maps $\delta_{\s F}:\s F\rightarrow\Lin(B\tens S,\s F\tens S)$ and $\beta_{\s F}:\GC^2\rightarrow\Lin(\s F)$ defined by:
\[
\delta_{\s F}(\xi):=\sum_{k,j=1,2}\Pi_j^k\circ\delta_{\s F_j}^k(\xi_j), \quad \xi=(\xi_1,\xi_2)\in\s F ;\quad 
\beta_{\s F}(\lambda,\mu):=\begin{pmatrix} \lambda & 0 \\ 0 & \mu\end{pmatrix}\!, \quad (\lambda,\mu)\in\GC^2.
\]
Then, the triple $(\s F,\beta_{\s F},\delta_{\s F})$ is a $\cal G$-equivariant Hilbert $B$-module.
\end{prop}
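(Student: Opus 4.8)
The strategy is to verify directly that the pair $(\beta_{\s F},\delta_{\s F})$ satisfies the conditions of Definition \ref{hilbmodequ} together with the continuity condition, by reducing each claim to the corresponding statement about the four maps $\delta_{\s F_j}^k$ established in Lemma \ref{lem4}. The projections $p_{ij}=\alpha(\varepsilon_i)\beta(\varepsilon_j)$ and the decompositions $\s F=\s F_1\oplus\s F_2$, $B=B_1\oplus B_2$, $S=\bigoplus S_{ij}$ are the bookkeeping devices; the maps $\Pi_j^k$ are injective and commute with the central projections in the appropriate way, so proving an identity in $\Lin(B\tens S,\s F\tens S)$ can be done blockwise, i.e.\ after applying each $(\Pi_j^k)^{-1}$ to the relevant corner. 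This is exactly the pattern already used in the proof of Proposition \ref{propdef4} and in \ref{prop37} for the $\cal G$-C*-algebra case, so the overall shape of the argument is fixed in advance.

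First I would check that $\beta_{\s F}$ is a non-degenerate $*$-homomorphism $\GC^2\rightarrow\Lin(\s F)$; this is immediate from the block-diagonal formula since $\s F=\s F_1\oplus\s F_2$ and the corners are orthogonal. Next, the module-map and inner-product conditions (\ref{hilbmodequ} 1) for $\delta_{\s F}$ follow from \ref{lem4} 2 applied in each block, using that $\delta_B(b)=\sum\pi_j^k\delta_{B_j}^k(b_j)$ (cf.\ \ref{prop37}) and that $p_{kj}$ is central in $S$ and $q_{B,k}$ central in $B$, so the cross terms between different blocks vanish. The condition $\delta_{\s F}(\beta_{\s F}(n^{\rm o})\xi)=(1_{\s F}\tens\beta(n^{\rm o}))\delta_{\s F}(\xi)$ (\ref{hilbmodequ} 3) is checked blockwise as well, using that $\beta_{\s F}(\varepsilon_j)$ acts as $q_{\s F,j}$ and that $\Pi_j^k$ intertwines $1_{\s F_k}\tens p_{kj}$-multiplication with $(1_{\s F}\tens\beta(\varepsilon_k))$-multiplication on the nose. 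For condition 2, $[\delta_{\s F}(\s F)(B\tens S)]=q_{\beta_{\s F}\alpha}(\s F\tens S)$, I would combine \ref{lem4} 3 with the explicit block form $q_{\beta_{\s F}\alpha}=\sum_{j}\Pi_j^k(1_{\s F_k\tens S_{kj}})$-type decomposition (the analogue of \ref{lem23} 1 / \ref{not8}), noting $\s F_k\tens S_{kj}$ is spanned by $[\delta_{\s F_j}^k(\s F_j)(1_{B_k}\tens S_{kj})]$.

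The coassociativity condition 4 of Definition \ref{hilbmodequ} is where the real work is, and I expect it to be the main obstacle: one must show $(\delta_{\s F}\tens\id_S)\delta_{\s F}(\xi)=(\id_{\s F}\tens\delta)\delta_{\s F}(\xi)$ for all $\xi\in\s F$, first checking that both extensions make sense. The plan is to express the extensions $\delta_{\s F}\tens\id_S$ and $\id_{\s F}\tens\delta$ in terms of the partial extensions $\delta_{\s F_k}^l\tens\id_{S_{kj}}$ and $\id_{\s F_l}\tens\delta_{lj}^k$ from \ref{lem4} 4, exploiting $\delta(1_S)=q_{\beta\alpha}=\sum_{k}p_{lk}\tens p_{kj}$ and the fact that $\delta$ restricted to $S_{lj}$ splits as $\sum_k\iota_{lj}^k\delta_{lj}^k$ (cf.\ \S\ref{WHC*A}, \ref{prop5} 1). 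Then, summing over the indices, the desired identity collapses to the family of block identities $(\delta_{\s F_k}^l\tens\id_{S_{kj}})\delta_{\s F_j}^k=(\id_{\s F_l}\tens\delta_{lj}^k)\delta_{\s F_j}^l$, which is precisely \ref{lem4} 5. Care is needed with the unitary equivalences (\ref{eq24})--(\ref{eq10}) to see that the three-fold relative tensor products match up; this is routine but tedious, and mirrors the end of the proof of \ref{prop27} c). Finally, the continuity $[(1_{\s F}\tens S)\delta_{\s F}(\s F)]=(\s F\tens S)q_{\beta_B\alpha}$ follows by applying $\Pi_j^k$ to the identity $[(1_{\s F_k}\tens S_{kj})\delta_{\s F_j}^k(\s F_j)]=\s F_k\tens S_{kj}$ (from \ref{lem4} 3) and reassembling over $j,k$, using centrality of $p_{kj}$ once more; hence $(\s F,\beta_{\s F},\delta_{\s F})$ is a $\cal G$-equivariant Hilbert $B$-module.
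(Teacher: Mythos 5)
Your proof is correct, but it takes a genuinely different route from the paper. You verify the axioms of Definition \ref{hilbmodequ} (plus continuity) directly, block by block, reducing everything to the four maps $\delta_{\s F_j}^k$ and Lemma \ref{lem4}; in effect you run the decomposition of Proposition \ref{propdef4} in reverse. The paper instead argues by transport of structure at the level of linking C*-algebras: it forms $J_1:=\K(\s F_1\oplus B_1)$, the induced linking $\QG_2$-C*-algebra $J_2:=\ind(J_1)$, and $J:=J_1\oplus J_2$ with the continuous $\cal G$-action supplied by \ref{prop37}; it then identifies $L:=\K(\s F\oplus B)$ with $J_1\oplus\K(\s F_2\oplus B_2)$, pulls the action back along the isomorphism $f:=\id_{J_1}\oplus\tau$ of \ref{prop11} (ii)--(iii), checks that the resulting action $(\beta_L,\delta_L)$ is compatible with $(\beta_B,\delta_B)$ and satisfies $\delta_L(\iota_{\s F}(\xi))=\iota_{\s F\tens S}(\delta_{\s F}(\xi))$, and concludes from \ref{prop1} a) and \ref{propcont}. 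The paper's route buys coassociativity, the $\widetilde{\M}$-condition and continuity essentially for free, since $(\beta_J,\delta_J)$ is already known to be a strongly continuous action on a C*-algebra; the cost is reliance on the linking-algebra machinery. Your route is more self-contained at the Hilbert-module level but pushes the burden onto the bookkeeping with the identifications (\ref{eq24})--(\ref{eq10}) in the coassociativity step, exactly as you anticipate. Note, finally, that the two approaches are less independent than they look: your key ingredient, Lemma \ref{lem4} 5, is itself derived in the paper from \ref{prop11}, the same proposition the paper's transport-of-structure argument rests on. Since \ref{lem4} is established before \ref{prop16}, your use of it is legitimate, and the argument goes through.
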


\begin{proof}
Let us consider $J_1:=\K(\s F_1\oplus B_1)$ (resp.\ $\K(\s F_2\oplus B_2)$) the linking $\QG_1$-C*-algebra (resp.\ linking $\QG_2$-C*-algebra) associated with $\s F_1$ (resp.\ $\s F_2$). Let $J_2:=\ind(J_1)$ be the induced $\QG_2$-C*-algebra. Let us consider $J:=J_1\oplus J_2$ endowed with the continuous action $(\beta_J,\delta_J)$ of $\cal G$ (see above). We denote $L:=\K(\s F\oplus B)$ the linking C*-algebra associated with $\s F$ and we identify $L=J_1\oplus\K(\s F_2\oplus B_2)$. We have an isomorphism of linking C*-algebras $f:=\id_{J_1}\oplus\tau:L\rightarrow J$ (\ref{prop11} (ii)). Let $(\beta_L,\delta_L)$ be the continuous action of $\cal G$ on $L$ obtained by transport of structure, {\it i.e.}:
\[
\delta_L(x):=(f^{-1}\tens\id_S)\delta_J(f(x)), \quad x\in L ; \quad \beta_L(n):=f^{-1}(\beta_J(n)),\quad n\in\GC^2.
\]
By straightforward computations, we show that $(\beta_L,\delta_L)$ is compatible with $(\beta_B,\delta_B)$ (cf.\ \ref{compcoact}) and we prove that
$
\delta_L(\iota_{\s F}(\xi))=\iota_{\s F\tens S}(\delta_{\s F}(\xi)),
$
for all $\xi\in\s F$. Therefore, the result follows from \ref{prop1} a) and \ref{propcont}. 
\end{proof}

\begin{prop}\label{prop17}
Let $(\s E,\beta_{\s E},\delta_{\s E})$ be a $\cal G$-equivariant Hilbert $A$-module. In the following, we use the notations of \ref{propdef4}. Let $j,k=1,2$ with $j\neq k$. Let 
\[
\widetilde{A}_j:={\rm Ind}_{\QG_k}^{\QG_j}(A_k,\delta_{A_k}^k) \quad \text{and} \quad \widetilde{\s E}_j:={\rm Ind}_{\QG_k}^{\QG_j}(\s E_k,\delta_{\s E_k}^k).
\] 
If $\xi\in\s E_j$, then we have $\delta_{\s E_j}^k(\xi)\in\widetilde{\s E}_j\subset\widetilde{\M}(\s E_k\tens S_{kj})$ and the map $\widetilde{\Pi}_j:\s E_j \rightarrow\widetilde{\s E}_j \, ; \, \xi  \mapsto \delta_{\s E_j}^k(\xi)$ is a $\QG_j$-equivariant unitary equivalence over $\widetilde{\pi}_j:A_j\rightarrow\widetilde{A}_j$ (cf.\ \ref{prop4}).\index[symbol]{pm@$\widetilde{\Pi}_j$}
\end{prop}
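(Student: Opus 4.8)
The plan is to deduce the statement from its C*-algebraic counterpart, Proposition~\ref{prop4}, applied to the linking C*-algebra of $\s E$. Fix $j\neq k$ in $\{1,2\}$ and put $J:=\K(\s E\oplus A)$; by \ref{prop1}~b), \ref{propcont} and \ref{rkLinkAlg} this is a linking $\cal G$-C*-algebra, so in particular $(J,\beta_J,\delta_J)$ is a $\cal G$-C*-algebra with $\iota_A$ equivariant, $J$ generated by $\iota_{\s E}(\s E)\cup\iota_A(A)$, and $\delta_J\circ\iota_{\s E}=\iota_{\s E\tens S}\circ\delta_{\s E}$ (with the analogous identities for $\iota_A$ and $\iota_{\K(\s E)}$). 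Writing $q_{J,m}:=\beta_J(\varepsilon_m)=\mathrm{diag}(q_{\s E,m},q_{A,m})$ (cf.\ \ref{propfib}) one gets $J_m:=q_{J,m}J=\K(\s E_m\oplus A_m)$, and comparing the splitting $\delta_J(x)=\sum_{l,m}\pi_{J,m}^l\circ\delta_{J_m}^l(q_{J,m}x)$ of \ref{actprop}~1 with the splitting $\delta_{\s E}(\xi)=\sum_{l,m}\Pi_m^l\circ\delta_{\s E_m}^l(q_{\s E,m}\xi)$ of \ref{propdef4}~(i), via $\delta_J\circ\iota_{\s E}=\iota_{\s E\tens S}\circ\delta_{\s E}$ and the injectivity of the maps $\pi_{J,m}^l$, yields the ``block'' compatibilities $\delta_{J_m}^l\circ\iota_{\s E_m}=\iota_{\s E_l\tens S_{lm}}\circ\delta_{\s E_m}^l$ and $\delta_{J_m}^l\circ\iota_{A_m}=\iota_{A_l\tens S_{lm}}\circ\delta_{A_m}^l$ for all $l,m$. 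In particular $(J_j,\delta_{J_j}^j)$ is exactly the linking $\QG_j$-C*-algebra of the $\QG_j$-equivariant Hilbert $A_j$-module $(\s E_j,\delta_{\s E_j}^j)$.

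Then I would apply Proposition~\ref{prop4} to the $\cal G$-C*-algebra $J$: with $\widetilde J_j:={\rm Ind}_{\QG_k}^{\QG_j}(J_k,\delta_{J_k}^k)$, the map $\widetilde\pi_{J,j}:J_j\to\widetilde J_j$, $x\mapsto\delta_{J_j}^k(x)$, is a $\QG_j$-equivariant $*$-isomorphism (so in particular $\delta_{J_j}^k(J_j)\subset\widetilde J_j\subset\M(J_k\tens S_{kj})$). Since $J_k=\K(\s E_k\oplus A_k)$ is a linking $\QG_k$-C*-algebra, \ref{prop11}~(iii) (with the roles of $\QG_1,\QG_2$ exchanged as needed) identifies $\widetilde J_j$, as a linking $\QG_j$-C*-algebra, with $\K(\widetilde{\s E}_j\oplus\widetilde A_j)$, the $A$-corner being $\widetilde A_j$, the module corner being $\widetilde{\s E}_j$ and the $\K$-corner being $\K(\widetilde{\s E}_j)$. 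Using the block compatibilities one checks that $\widetilde\pi_{J,j}$ sends the corner projections $\iota_{A_j}(1_{A_j})$ and $\iota_{\K(\s E_j)}(1_{\s E_j})$ of $J_j$ to those of $\widetilde J_j$; hence $\widetilde\pi_{J,j}$ is an isomorphism of linking $\QG_j$-C*-algebras, and by the correspondence between such isomorphisms and equivariant unitary equivalences of the associated modules (cf.\ \ref{prop10} and \ref{rkLinkAlg}) its restriction to the $(1,2)$-corner is a $\QG_j$-equivariant unitary equivalence $\widetilde\Pi_j:\s E_j\to\widetilde{\s E}_j$ over the $\QG_j$-equivariant $*$-isomorphism $\widetilde\pi_j=\delta_{A_j}^k:A_j\to\widetilde A_j$ of \ref{prop4}. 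Finally, the chain $\iota_{\widetilde{\s E}_j}(\widetilde\Pi_j(\xi))=\widetilde\pi_{J,j}(\iota_{\s E_j}(\xi))=\delta_{J_j}^k(\iota_{\s E_j}(\xi))=\iota_{\s E_k\tens S_{kj}}(\delta_{\s E_j}^k(\xi))$, combined with \ref{prop2} (exchanged, which gives $\widetilde{\s E}_j\subset\widetilde{\M}(\s E_k\tens S_{kj})$ so that the last term makes sense), shows both that $\delta_{\s E_j}^k(\xi)\in\widetilde{\s E}_j$ and that $\widetilde\Pi_j(\xi)=\delta_{\s E_j}^k(\xi)$ for all $\xi\in\s E_j$, which is the assertion.

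The conceptual input is essentially exhausted by Propositions~\ref{prop4} and \ref{prop11}~(iii) (plus \ref{prop2}); the main obstacle is purely a matter of bookkeeping: one must manipulate simultaneously the three families of embeddings $\iota_\bullet$, the maps $\pi_j^k$ and $\Pi_j^k$, and the central projections $q_{\s E,m},q_{A,m},p_{lm}$ in order to extract the block compatibilities from the single global relation $\delta_J\circ\iota_{\s E}=\iota_{\s E\tens S}\circ\delta_{\s E}$ together with the splittings of \ref{actprop}~1 and \ref{propdef4}~(i), and then to verify that $\widetilde\pi_{J,j}$ genuinely matches the two linking structures. None of these steps is hard, but this is where all the care is needed.
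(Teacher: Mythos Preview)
Your proof is correct, but it takes a genuinely different route from the paper's. The paper argues directly at the module level, using only the properties of the maps $\delta_{\s E_j}^k$ established in \ref{propdef4}: from \ref{propdef4}~(iv) one has $\s E_j=[(\id_{\s E_j}\tens\omega)\delta_{\s E_k}^j(\eta)\,;\,\eta\in\s E_k,\ \omega\in\B(\s H_{jk})_*]$, and the coassociativity relation \ref{propdef4}~(v) (with indices $k,j,k$) gives
\[
\delta_{\s E_j}^k\bigl((\id_{\s E_j}\tens\omega)\delta_{\s E_k}^j(\eta)\bigr)=(\id_{\s E_k\tens S_{kj}}\tens\omega)(\id_{\s E_k}\tens\delta_{kk}^j)\delta_{\s E_k}^k(\eta),
\]
which is by definition a spanning element of $\widetilde{\s E}_j$; this simultaneously shows $\delta_{\s E_j}^k(\s E_j)\subset\widetilde{\s E}_j$ and the surjectivity of $\widetilde\Pi_j$. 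The $\widetilde\pi_j$-compatibility is then \ref{propdef4}~(iii), and the $\QG_j$-equivariance is another instance of \ref{propdef4}~(v), namely $(\id_{\s E_k}\tens\delta_{kj}^j)\delta_{\s E_j}^k=(\delta_{\s E_j}^k\tens\id_{S_{jj}})\delta_{\s E_j}^j$. This avoids the linking algebra entirely. Your approach, by contrast, reduces to the already-proved C*-algebraic statement \ref{prop4} via the linking algebra and then unpacks the result through \ref{prop11}~(iii); this is more conceptual (it makes transparent that the module statement is a formal consequence of the algebra statement) and is in fact the style the paper itself adopts for the closely related \ref{prop13}, but here it costs you the bookkeeping you correctly identify as the main burden. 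One small remark: when you extract the unitary equivalence from the linking-algebra isomorphism, you should also invoke \ref{prop7}~2 (to \emph{produce} $\widetilde\Pi_j$ from $\widetilde\pi_{J,j}$), not only \ref{prop10} (which handles the equivariance).
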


\begin{proof}
We have $\s E_j=[(\id_{\s E_j}\tens\omega)\delta_{\s E_k}^j(\xi) \; ; \; \omega\in\B(\s H_{jk})_*,\, \xi\in\s E_k]$ (cf.\ \ref{propdef4} (iv)) and for all $\xi\in\s E_j$ and $\omega\in\B(\s H_{jk})_*$ we have
\[
\delta_{\s E_j}^k(\id_{\s E_j}\tens\omega)\delta_{\s E_k}^j(\xi)=(\id_{\s E_k\tens S_{kj}}\tens\omega)(\delta_{\s E_j}^k\tens\id_{S_{jk}})\delta_{\s E_k}^j(\xi)=(\id_{\s E_k\tens S_{kj}}\tens\omega)\delta_{\s E_j}^{(k)}(\xi)
\]
(cf.\ \ref{propdef4} (v)), where $\delta_{\s E_j}^{(k)}(\xi):=(\id_{\s E_j}\tens\delta_{jj}^k)\delta_{\s E_j}^j(\xi)$. As a consequence, statement 1 is proved as well as the surjectivity of $\widetilde{\Pi}_j$. The fact that $\widetilde{\Pi}_j$ is a $\QG_j$-equivariant $\widetilde{\pi}_j$-compatible unitary operator is just a restatement of \ref{propdef4} (iii) and $(\id_{\s E_k}\tens\delta_{kj}^j)\delta_{\s E_j}^k=(\delta_{\s E_j}^k\tens\id_{S_{jj}})\delta_{\s E_j}^j$ (\ref{propdef4} (v)).
\end{proof}

\begin{thm}
Let ${\cal G}_{\QG_1,\QG_2}$ be a colinking measured quantum groupoid between two regular monoidally equivalent locally compact quantum groups $\QG_1$ and $\QG_2$. 
Let $j=1,2$. The map $(\s E,\beta_{\s E},\delta_{\s E})\mapsto(\s E_j,\delta_{\s E_j}^j)$ is a one-to-one correspondence up to unitary equivalence (cf.\ \ref{propdef4} and \ref{lem5} 1). The inverse map, up to unitary equivalence, is $(\s F_j,\delta_{\s F_j})\mapsto(\s F,\beta_{\s F},\delta_{\s F})$ (cf.\ \ref{prop16}, \ref{prop14} and \ref{lem5} 2).
\end{thm}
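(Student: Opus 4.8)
The plan is to present the two assignments as mutually inverse correspondences on unitary equivalence classes, reducing every nontrivial point to results already established for the underlying C*-algebras (\ref{lem7}, \ref{prop4}, \ref{actprop}) and for the ``off-diagonal'' coaction maps $\delta_{\s E_j}^k$ (\ref{propdef4}, \ref{prop17}, \ref{prop13}). First I would record that both assignments descend to equivalence classes. In the direction $(\s E,\beta_{\s E},\delta_{\s E})\mapsto(\s E_j,\delta_{\s E_j}^j)$ this is \ref{propdef4} (the $j$-component is a $\QG_j$-equivariant Hilbert $A_j$-module) together with \ref{lem5} 1 (a ${\cal G}$-equivariant unitary equivalence restricts to a $\QG_j$-equivariant one over the restricted $*$-isomorphism of \ref{lem7} 1). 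In the other direction, given a $\QG_j$-C*-algebra $B_j$ and a $\QG_j$-equivariant Hilbert module $(\s F_j,\delta_{\s F_j})$, I would put $k\neq j$, $\s F_k:={\rm Ind}_{\QG_j}^{\QG_k}(\s F_j)$, $\s F:=\s F_1\oplus\s F_2$, and invoke \ref{prop16} to get a ${\cal G}$-equivariant Hilbert module, then \ref{prop14} and \ref{lem5} 2 (with one leg ${\rm Ind}(\Phi_j)$ and the other the identity) to see this respects $\QG_j$-equivariant unitary equivalences; here I would use the $\QG_1\leftrightarrow\QG_2$ symmetry so that \ref{prop16} and \ref{prop14}, stated for $\QG_1$, apply for either value of $j$. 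It then remains to check that the two composites are the identity up to unitary equivalence.

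The composite ``reconstruct, then extract'' I expect to be the identity on the nose: if $(\s F,\beta_{\s F},\delta_{\s F})$ is built from $(\s F_j,\delta_{\s F_j})$ as above, then $\beta_{\s F}(\varepsilon_j)$ is exactly the projection onto the summand $\s F_j$, so $(\s F)_j=\s F_j$ over $(B)_j=B_j$, and the $(j,j)$-component of $\delta_{\s F}$ produced by \ref{propdef4} coincides with $\delta_{\s F_j}^j=\delta_{\s F_j}$ by the definition in \ref{not11} transported to Hilbert modules through \ref{prop13}. This amounts to unwinding definitions.

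The composite ``extract, then reconstruct'' is where the real work lies. Starting from $(\s E,\beta_{\s E},\delta_{\s E})$ I would form $(\s E_j,\delta_{\s E_j}^j)$, then reconstruct the ${\cal G}$-equivariant Hilbert module with $j$-component $\s E_j$ and $k$-component $\widetilde{\s E}_k:={\rm Ind}_{\QG_j}^{\QG_k}(\s E_j,\delta_{\s E_j}^j)$ (\ref{prop16}, with $\QG_j$ in the role of $\QG_1$). By \ref{prop17} with the roles of $j$ and $k$ exchanged, the map $\widetilde{\Pi}_k:\s E_k\rightarrow\widetilde{\s E}_k$, $\xi\mapsto\delta_{\s E_k}^j(\xi)$, is a $\QG_k$-equivariant unitary equivalence over the $\QG_k$-equivariant $*$-isomorphism $\widetilde{\pi}_k:A_k\rightarrow\widetilde{A}_k$ of \ref{prop4}. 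I would then apply \ref{lem5} 2 to the pair $(\id_{\s E_j},\widetilde{\Pi}_k)$ over $(\id_{A_j},\widetilde{\pi}_k)$: the compatibility conditions (\ref{eqmorpheq}), (\ref{eq14}) for the index pair $(k,k)$ are precisely the equivariance statements just quoted, while for the mixed pair $(j,k)$ they require identifying the reconstructed off-diagonal maps $\delta_{\widetilde{A}_j}^k$ and $\delta_{\widetilde{\s E}_j}^k$ (built via \ref{not11} and \ref{prop13}) with $(\widetilde{\pi}_k\tens\id_{S_{kj}})\delta_{A_j}^k$ and $(\widetilde{\Pi}_k\tens\id_{S_{kj}})\delta_{\s E_j}^k$ respectively. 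I expect this last identification — matching the two descriptions of the off-diagonal coaction maps — to be the main obstacle, and I would settle it using the coassociativity-type relations \ref{actprop} 2 and \ref{propdef4} (v) (together with $(\id_{\s E_k}\tens\delta_{kj}^j)\delta_{\s E_j}^k=(\delta_{\s E_j}^k\tens\id_{S_{jj}})\delta_{\s E_j}^j$). Once it is in place, \ref{lem5} 2 yields a ${\cal G}$-equivariant unitary equivalence $\s E\rightarrow\s E_j\oplus\widetilde{\s E}_k$ over the ${\cal G}$-equivariant $*$-isomorphism of \ref{lem7} 2, and combining this with the previous paragraph shows that $(\s E,\beta_{\s E},\delta_{\s E})\mapsto(\s E_j,\delta_{\s E_j}^j)$ is a bijection up to unitary equivalence with inverse $(\s F_j,\delta_{\s F_j})\mapsto(\s F,\beta_{\s F},\delta_{\s F})$.
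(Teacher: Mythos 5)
Your proposal is correct and follows the same overall architecture as the paper: extract $(\s E_j,\delta_{\s E_j}^j)$, reconstruct $\s F:=\s E_j\oplus{\rm Ind}(\s E_j)$ over $B:=A_j\oplus{\rm Ind}(A_j)$, and compare via the unitary assembled from $\id_{\s E_j}$ and $\widetilde{\Pi}_k$ of \ref{prop17} over the isomorphism assembled from $\id_{A_j}$ and $\widetilde{\pi}_k$ of \ref{prop4} (the paper writes this as $\Psi(\xi)=(q_{\s E,1}\xi,\widetilde{\Pi}_2(q_{\s E,2}\xi))$ over $\psi_A$). The one genuine divergence is in how the $\cal G$-equivariance of this comparison unitary is established. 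You propose to verify the component conditions (\ref{eqmorpheq}) and (\ref{eq14}) of \ref{lem5}~2 directly, and you correctly isolate the mixed-index case — matching the reconstructed off-diagonal maps with $(\widetilde{\pi}_k\tens\id_{S_{kj}})\delta_{A_j}^k$ and $(\widetilde{\Pi}_k\tens\id_{S_{kj}})\delta_{\s E_j}^k$ — as the real work; the relations \ref{actprop}~2 and \ref{propdef4}~(v) that you cite are indeed what that computation runs on, so this route closes. The paper instead sidesteps that verification entirely: by \ref{prop10} it suffices to show that the induced isomorphism of linking C*-algebras $f:\K(\s E\oplus A)\rightarrow\K(\s F\oplus B)$ is $\cal G$-equivariant, and after identifying $\K(\s F\oplus B)$ with $J_1\oplus{\rm Ind}(J_1)$ via the canonical $\tau$ of \ref{prop11}~(ii), the composite $(\id_{J_1}\oplus\tau)f$ is recognized, by evaluation on the generators $\iota_{\s E}(\xi)$ and $\iota_A(a)$, as the already-known $\cal G$-equivariant *-isomorphism $\psi_K$ of 4.10 \cite{BC} applied to the $\cal G$-C*-algebra $K=\K(\s E\oplus A)$. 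What the paper's route buys is that the mixed-index compatibility is inherited for free from the C*-algebraic biduality statement rather than re-derived by hand; what your route buys is that it stays entirely at the level of Hilbert modules and makes explicit which coassociativity identities are actually being used. Your observation that the composite ``reconstruct, then extract'' is the identity on the nose is correct and is left implicit in the paper.
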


\begin{proof}
Let $A$ be a $\cal G$-C*-algebra and $\s E$ a $\cal G$-equivariant Hilbert $A$-module. Let us use all the notations introduced in \S\ref{ActColinkHilb}. Let us denote:
\begin{align*}
(B_1,\delta_{B_1})&:=(A_1,\delta_{A_1}^1), \; (B_2,\delta_{B_2}):=\ind(B_1,\delta_{B_1});\\
(\s F_1,\delta_{\s F_1})&:=(\s E_1,\delta_{\s E_1}^1), \;\, (\s F_2,\delta_{\s F_2}):=\ind(\s F_1,\delta_{\s F_1}).
\end{align*}
Let us endow the C*-algebra $B:=B_1\oplus B_2$ with the continuous action $(\beta_B,\delta_B)$ of $\cal G$ and $\s F:=\s F_1\oplus\s F_2$ with the structure of $\cal G$-equivariant Hilbert $B$-module $(\beta_{\s F},\delta_{\s F})$ (cf.\ \ref{prop37}, \ref{prop16}). Let $\psi_A:A\rightarrow B$ the canonical $\cal G$-equivariant *-isomorphism defined for all $a\in A$ by $\psi_A(a):=(q_{A,1}a,\widetilde{\pi}_2(q_{A,2}a))$ (cf.\ 4.10 \cite{BC}). Then, we consider the map $\Psi:\s E\rightarrow\s F$ given by
\[
\Psi(\xi):=(q_{\s E,1}\xi,\widetilde{\Pi}_2(q_{\s E,2}\xi)),\; \text{ for all }\; \xi\in\s E.
\]
It is clear from \ref{prop17} that $\Psi$ is a $\psi_A$-compatible unitary operator. Let us consider the $\cal G$-C*-algebras $K:=\K(\s E\oplus A)$ and $L:=\K(\s F\oplus B)$. Let $f:K\rightarrow L$ be the associated isomorphism of linking C*-algebras (cf.\ \ref{prop7}). In virtue of  \ref{prop10}, it only remains to prove that $f$ is $\cal G$-equivariant. We also consider the $\QG_1$-C*-algebra $J_1:=\K(\s F_1\oplus B_1)$ and the induced $\QG_2$-C*-algebra $J_2:=\ind(J_1)$. We recall that we have a canonical isomorphism $\tau:\K(\s F_2\oplus B_2)\rightarrow J_2$ (cf.\ \ref{prop11} (ii)). Let us endow the C*-algebra $J:=J_1\oplus J_2$ with the continuous action $(\beta_J,\delta_J)$ of $\cal G$. Therefore, it amounts to proving that the *-isomorphism $(\id_{J_1}\oplus\tau)f:K\rightarrow J$ is $\cal G$-equivariant (we identify $L=J_1\oplus\K(\s F_2\oplus B_2)$). We apply the notations of \S\ref{ActColinkHilb} to the $\cal G$-C*-algebra $K$ and identify $K_j:=q_{K,j}K=\K(\s E_j\oplus A_j)$ for $j=1,2$. Let us consider as above (by exchanging the roles of $A$ and $K$) the $\cal G$-equivariant *-isomorphism $\psi_K:K\rightarrow J$. By evaluating on elements of the form $\iota_{\s E}(\xi)$ for $\xi\in\s E$ and $\iota_A(a)$ for $a\in A$, it is staightforward to see that $(\id_{J_1}\oplus\tau)f=\psi_K$. 
\end{proof}

\section{Takesaki-Takai duality and equivariant Morita equivalence}

\setcounter{thm}{0}

\numberwithin{thm}{section}
\numberwithin{prop}{section}
\numberwithin{lem}{section}
\numberwithin{cor}{section}
\numberwithin{propdef}{section}
\numberwithin{nb}{section}
\numberwithin{nbs}{section}
\numberwithin{rk}{section}
\numberwithin{rks}{section}
\numberwithin{defin}{section}
\numberwithin{ex}{section}
\numberwithin{exs}{section}
\numberwithin{noh}{section}

In this section, we fix a measured quantum groupoid ${\cal G}=(N,M,\alpha,\beta,\Delta,T,T',\epsilon)$ on the finite-dimensional basis $N=\bigoplus_{1\leqslant l\leqslant k}\,{\rm M}_{n_l}(\GC)$ and we use all the notations introduced in \S\S\ \ref{MQGfinitebasis} and \ref{WHC*A}. We will use the notations and results of \S\S \ref{subsectionBiduality}, \ref{ActRegMQG} and \ref{SectEqHilb}.

\paragraph{Equivariant Hilbert bimodules and Morita equivalence.} In this paragraph, we introduce the notion of equivariant representation of a $\cal G$-C*-algebra on a Hilbert module acted upon by $\cal G$. We then introduce the notion of equivariant Morita equivalence. 

\begin{nb}\label{rk8}
Let $A$ and $B$ be C*-algebras. Let $\s E$ be a Hilbert $B$-module. If $\gamma:A\rightarrow\Lin(\s E)$ is a *-homomorphism then, up to the identification $\M(\K(\s E)\tens S)=\Lin(\s E\tens S)$, we can extend $\gamma\tens\id_S$ to a *-homomorphism $\gamma\tens\id_S:\widetilde\M(A\tens S)\rightarrow\Lin(\s E\tens S)$ (cf.\ \S \ref{sectionNotations}).
\end{nb}

As in 2.9 \cite{BS1}, we have:

\begin{defin}\label{defbimod}
Let $A$ and $B$ be two ${\cal G}$-C*-algebras, $\s E$ a Hilbert $B$-module, $(\beta_{\s E},\delta_{\s E})$ an action of $\cal G$ on $\s E$ and $\gamma:A\rightarrow\Lin(\s E)$ a *-representation. We say that $\gamma$ is ${\cal G}$-equivariant if we have:
\begin{enumerate}
\item $\delta_{\s E}(\gamma(a)\xi)=(\gamma\tens\id_S)(\delta_A(a))\circ\delta_{\s E}(\xi)$, for all $a\in A$ and $\xi\in\s E$;
\item $\beta_{\s E}(n^{\rm o})\circ\gamma(a)=\gamma(\beta_A(n^{\rm o})a)$, for all $n\in N$ and $a\in A$.\qedhere
\end{enumerate}
\end{defin}

\begin{rks}\label{rk11}
\begin{enumerate}
\item Provided that the second condition in the above definition is verified, the first condition is equivalent to: 
\begin{equation}\label{eq21}
\s V(\gamma(a)\tens_{\delta_B}1)\s V^*=(\gamma\tens\id_S)\delta_A(a), \; \text{ for all } \; a\in A,
\end{equation}
where $\s V\in\Lin(\s E\tens_{\delta_B}(B\tens S),\s E\tens S)$ denotes the isometry defined in \ref{prop27} a). Indeed, we can interpret it as follows: $\s V(\gamma(a)\tens_{\delta_B}1)=(\gamma\tens\id_S)(\delta_A(a))\s V$, for all $a\in A$. Moreover, for all $a\in A$ we have 
\begin{align*}
(\gamma\tens\id_S)(\delta_A(a))\s V\s V^*&=(\gamma\tens\id_S)(\delta_A(a))q_{\beta_{\s E}\alpha}\\
&=(\gamma\tens\id_S)(\delta_A(a)q_{\beta_A\alpha})\\
&=(\gamma\tens\id_S)\delta_A(a).
\end{align*}
Hence, $(\s V(\gamma(a)\tens_{\delta_B}1)=(\gamma\tens\id_S)(\delta_A(a))\s V \Leftrightarrow \s V(\gamma(a)\tens_{\delta_B}1)\s V^*=(\gamma\tens\id_S)\delta_A(a))$, for all $a\in A$.
\item We recall that the action $\delta_{\K(\s E)}$ of $\cal G$ on $\K(\s E)$ is defined by $\delta_{\K(\s E)}(k):=\s V(k\tens_{\delta_B}1)\s V^*$ for all $k\in\K(\s E)$. Hence, (\ref{eq21}) can be restated as follows: $\delta_{\K(\s E)}(\gamma(a))=(\gamma\tens\id_S)\delta_A(a)$ for all $a\in A$. In particular, if $\gamma$ is non-degenerate, then Definition \ref{defbimod} simply means that the *-homomorphism $\gamma:A\rightarrow\M(\K(\s E))$ is $\cal G$-equivariant (cf.\ \ref{defEquiHom}).
\item If $\gamma:A\rightarrow\Lin(\s E)$ is a non-degenerate *-representation such that
\[
\delta_{\s E}(\gamma(a)\xi)=(\gamma\tens\id_S)(\delta_A(a))\circ\delta_{\s E}(\xi),\; \text{ for all } \; a\in A \; \text{ and } \; \xi\in\s E,
\]
then we have $\beta_{\s E}(n^{\rm o})\circ\gamma(a)=\gamma(\beta_A(n^{\rm o})a)$ for all $n\in N$ and $a\in A$. Indeed, this will be inferred from \ref{rkEqMorph} and the previous remark.\qedhere
\end{enumerate}
\end{rks}

\begin{defin}(cf.\ \S 6 \cite{Rie74})
Let $A$ and $B$ be two C*-algebras. An imprimitivity $A$-$B$-bimodule is an $A$-$B$-bimodule $\s E$, which is a full left Hilbert $A$-module for an $A$-valued inner product ${}_A\langle\cdot,\,\cdot\rangle$ and a full right Hilbert $B$-module for a $B$-valued inner product $\langle\cdot,\,\cdot\rangle_B$ such that ${}_A\langle\xi,\,\eta\rangle\zeta=\xi\langle\eta,\,\zeta\rangle_B$ for all $\xi,\,\eta,\,\zeta\in\s E$.
\end{defin}

\begin{rks}
Let $A$ and $B$ be two C*-algebras and $\s E$ an imprimitivity $A$-$B$-bimodule. We recall that the norms defined by the inner products ${}_A\langle\cdot,\,\cdot\rangle$ on ${}_A\s E$ and $\langle\cdot,\,\cdot\rangle_B$ on $\s E_B$ coincide. We also recall that the left (resp.\ right) action of $A$ (resp.\ $B$) on $\s E$ defines a non-degenerate *-homomorphism $\gamma:A\rightarrow\Lin(\s E_B)$ (resp.\ $\rho:B\rightarrow\Lin({}_A\s E)$).
\end{rks}

\begin{defin}
Let $A$ and $B$ be two $\cal G$-C*-algebras. A $\cal G$-equivariant imprimitivity $A$-$B$-bimodule is an imprimitivity $A$-$B$-bimodule $\s E$ endowed with a continuous action of $\cal G$ on $\s E_B$ such that the left action $\gamma:A\rightarrow\Lin(\s E_B)$ is $\cal G$-equivariant.
\end{defin}

\begin{exs}\label{ex1} Let $A$ and $B$ be two $\cal G$-C*-algebras. 
\begin{enumerate}[label=(\roman*)]
\item $B$ is a $\cal G$-equivariant imprimitivity $B$-$B$-bimodule for the inner products given by ${}_B\langle x,\, y\rangle:=xy^*$ and $\langle x,\, y\rangle_B:=x^*y$ for all $x,y\in B$.
\item Let $\s E$ be a $\cal G$-equivariant Hilbert $B$-module. If $\s E$ is full, then $\s E$ is a $\cal G$-equivariant imprimitivity $\K(\s E)$-$B$-bimodule for the natural left action and the inner product given by ${}_{\K(\s E)}\langle\xi,\,\eta\rangle:=\theta_{\xi,\eta}$ for all $\xi,\,\eta\in\s E$. Conversely, if $\s E$ is a $\cal G$-equivariant imprimitivity $A$-$B$-bimodule, then the the left action $\gamma:A\rightarrow\Lin(\s E_B)$ induces an isomorphism of $\cal G$-C*-algebras $A\simeq\K(\s E_B)$.
\item Let $(J,\beta_J,\delta_J,e_1,e_2)$ be a linking $\cal G$-C*-algebra (cf.\ \ref{defLinkAlg}). Let $A:=e_1Je_1$ and $B:=e_2Je_2$ be the corner C*-algebras endowed with the continuous actions of $\cal G$ induced by $(\beta_J,\delta_J)$. Let us endow $\s E:=e_1Je_2$ with its structure of $\cal G$-equivariant Hilbert $B$-module (cf.\ \ref{rkLinkAlg}). Then, $\s E$ is a $\cal G$-equivariant imprimitivity $A$-$B$-module whose actions and inner products are defined as in (i). 
\item If $\s E$ is a $\cal G$-equivariant imprimitivity $A$-$B$-bimodule, then $\s E^*$ turns into a $\cal G$-equivariant imprimitivity $B$-$A$-bimodule for the actions and inner products given by the following formulas: $b\xi^*a:=(a^*\xi b^*)^*$, for $\xi^*\in\s E^*$, $a\in A$ and $b\in B$; ${}_B\langle\xi^*,\,\eta^*\rangle:=\langle\xi,\,\eta\rangle_B$ and $\langle\xi^*,\,\eta^*\rangle_A:={}_A\langle\xi,\,\eta\rangle$, for $\xi^*,\,\eta^*\in\s E^*$.\qedhere
\end{enumerate}
\end{exs}

\begin{prop}
Let $A$ and $B$ be $\cal G$-C*-algebras. The following statements are equivalent:
\begin{enumerate}[label=(\roman*)]
\item there exists a $\cal G$-equivariant imprimitivity $A$-$B$-bimodule;
\item there exists a full $\cal G$-equivariant Hilbert $B$-module $\s E$ such that we have an isomorphism $A\simeq\K(\s E)$ of $\cal G$-C*-algebras;
\item there exists a linking $\cal G$-C*-algebra $(J,\beta_J,\delta_J,e_1,e_2)$ such that we have $\cal G$-equivariant *-isomorphisms $A\simeq e_1Je_1$ and $B\simeq e_2Je_2$.\qedhere
\end{enumerate}
\end{prop}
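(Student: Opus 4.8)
The plan is to prove the cycle of implications $(i)\Rightarrow(ii)\Rightarrow(iii)\Rightarrow(i)$, with each arrow being essentially a translation between the three pictures developed in \S\ref{SectEqHilb} together with the examples assembled in \ref{ex1}. None of the steps should require genuinely new constructions; the content is in checking that the $\cal G$-equivariance hypotheses match up under the identifications already established.

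\textbf{$(i)\Rightarrow(ii)$.} Let $\s E$ be a $\cal G$-equivariant imprimitivity $A$-$B$-bimodule. By definition $\s E$ is a full right Hilbert $B$-module carrying a continuous action of $\cal G$, and the left action $\gamma:A\rightarrow\Lin(\s E_B)$ is $\cal G$-equivariant. The first thing to observe is that, $\s E$ being an imprimitivity bimodule, $\gamma$ is a non-degenerate $*$-isomorphism onto $\K(\s E_B)$ (this is the classical Rieffel statement recalled before \ref{ex1}). By \ref{rk11} 2, the non-degenerate $\cal G$-equivariant $*$-homomorphism $\gamma:A\rightarrow\M(\K(\s E))=\Lin(\s E)$ satisfies $\delta_{\K(\s E)}\circ\gamma=(\gamma\tens\id_S)\circ\delta_A$, so $\gamma$ is an isomorphism of $\cal G$-C*-algebras $A\simeq\K(\s E)$, where $\K(\s E)$ carries the action $(\beta_{\s E},\delta_{\K(\s E)})$ from \ref{prop1}. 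Since $\s E$ is a $\cal G$-equivariant Hilbert $B$-module (i.e.\ the action $(\beta_{\s E},\delta_{\s E})$ is continuous), this is exactly statement $(ii)$. Conversely $(ii)\Rightarrow(i)$ is \ref{ex1}(ii): a full $\cal G$-equivariant Hilbert $B$-module $\s E$ is automatically a $\cal G$-equivariant imprimitivity $\K(\s E)$-$B$-bimodule, and transporting along the given isomorphism $A\simeq\K(\s E)$ of $\cal G$-C*-algebras yields a $\cal G$-equivariant imprimitivity $A$-$B$-bimodule.

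\textbf{$(ii)\Rightarrow(iii)$.} Given $\s E$ as in $(ii)$, form the linking C*-algebra $J:=\K(\s E\oplus B)$ with the continuous action $(\beta_J,\delta_J)$ of $\cal G$ furnished by \ref{prop1} and \ref{propcont}, and set $e_1:=\iota_{\s E}(1_{\s E})$, $e_2:=\iota_B(1_B)$. By \ref{rkLinkAlg}, the quintuple $(J,\beta_J,\delta_J,e_1,e_2)$ is a linking $\cal G$-C*-algebra, and restriction of $(\beta_J,\delta_J)$ identifies $e_2Je_2$ with $B$ and $e_1Je_1$ with $\K(\s E)$ as $\cal G$-C*-algebras; composing with $A\simeq\K(\s E)$ gives the required $\cal G$-equivariant $*$-isomorphisms $A\simeq e_1Je_1$, $B\simeq e_2Je_2$. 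For $(iii)\Rightarrow(i)$, start from a linking $\cal G$-C*-algebra $(J,\beta_J,\delta_J,e_1,e_2)$ with $A\simeq e_1Je_1$ and $B\simeq e_2Je_2$; by \ref{ex1}(iii) the corner $\s E:=e_1Je_2$, with its induced structure of $\cal G$-equivariant Hilbert $e_2Je_2$-module and the natural left action of $e_1Je_1$, is a $\cal G$-equivariant imprimitivity $(e_1Je_1)$-$(e_2Je_2)$-bimodule; transporting along the two $\cal G$-equivariant $*$-isomorphisms produces a $\cal G$-equivariant imprimitivity $A$-$B$-bimodule, which is $(i)$.

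\textbf{Expected obstacle.} The genuinely routine parts are the bookkeeping with $\iota$-maps and the matrix picture of \ref{rk3}; the one place that needs a little care is verifying that ``transport of structure'' along an isomorphism of $\cal G$-C*-algebras (in $(i)\Leftrightarrow(ii)$ and in $(iii)\Rightarrow(i)$) respects all of the data, in particular the $\beta$-homomorphisms and the relation $\delta(1)=q_{\beta\alpha}$ — but this is handled exactly as in \ref{prop10} and \ref{lem5}, and the projections $e_j$ behave well because of the first item of \ref{rkLinkAlg}, which forces $[e_j,\,\beta_J(n^{\rm o})]=0$ and $[q_{\beta_J\alpha},\,e_j\tens 1_S]=0$. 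So the main point to get right is simply that the definition of a linking $\cal G$-C*-algebra (\ref{defLinkAlg}) is preserved under corner decomposition in both directions, which is precisely the content of \ref{rkLinkAlg}; once that is in hand the proof is a short assembly of the cited results.
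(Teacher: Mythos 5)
Your proof is correct and follows essentially the same route as the paper, which disposes of the proposition as a direct consequence of \ref{ex1} (ii), (iii), \ref{prop1} b) and \ref{rkLinkAlg}; you have simply spelled out the cycle of implications that the paper leaves implicit. The extra care you take with transport of structure along $\cal G$-equivariant isomorphisms is sound and consistent with \ref{prop10}.
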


\begin{proof}
This is a straightforward consequence of \ref{ex1} (ii), (iii), \ref{prop1} b) and \ref{rkLinkAlg}. 
\end{proof}

Now, we investigate the tensor product construction (cf.\ 2.10 \cite{BS1} for the quantum group case).

\begin{prop}\label{prop18} Let $C$ {\rm(}resp.\ $B${\rm)} be a $\cal G$-C*-algebra. Let $\s E_1$ {\rm(}resp.\ $\s E_2${\rm)} be a Hilbert module over $C$ {\rm(}resp.\ $B${\rm)} endowed with an action $(\beta_{\s E_1},\delta_{\s E_1})$ {\rm(}resp.\ $(\beta_{\s E_2},\delta_{\s E_2})${\rm)} of $\cal G$. Let $\gamma_2:C\rightarrow\Lin(\s E_2)$ be a $\cal G$-equivariant *-representation. Consider the Hilbert $B$-module $\s E:=\s E_1\tens_{\gamma_2}\s E_2$. Denote by
\[
\Delta(\xi_1,\xi_2):=(\delta_{\s E_1}(\xi_1)\tens_{\widetilde{\gamma}_2\tens\id_S}1)\circ\delta_{\s E_2}(\xi_2), \; \text{ for } \; \xi_1\in\s E_1 \; \text{ and } \; \xi_2\in\s E_2.
\]
We have $\Delta(\xi_1,\xi_2)\in\widetilde{\M}(\s E\tens S)$ for all $\xi_1\in\s E_1$ and $\xi_2\in\s E_2$. Let $\beta_{\s E}:N^{\rm o}\rightarrow\Lin(\s E)$ be the *-homomorphism defined by
\[
\beta_{\s E}(n^{\rm o}):=\beta_{\s E_1}(n^{\rm o})\tens_{\gamma_2}1, \; \text{ for all } \; n\in N.
\]
There exists a unique map $\delta_{\s E}:\s E\rightarrow\widetilde{\M}(\s E\tens S)$ defined by the formula
$
\delta_{\s E}(\xi_1\tens_{\gamma_2}\xi_2):=\Delta(\xi_1,\xi_2)
$
for $\xi_1\in\s E_1$ and $\xi_2\in\s E_2$ such that the pair $(\beta_{\s E},\delta_{\s E})$ is an action of $\cal G$ on $\s E$. 
\end{prop}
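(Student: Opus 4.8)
The plan is to build the action $(\beta_{\s E},\delta_{\s E})$ on $\s E=\s E_1\tens_{\gamma_2}\s E_2$ in the ``isometry picture'' of \ref{isometry} and then invoke \ref{prop27} to deduce that the corresponding $\delta_{\s E}$ is an action in the sense of \ref{hilbmodequ}. Concretely, let $\s V_1\in\Lin(\s E_1\tens_{\delta_C}(C\tens S),\s E_1\tens S)$ and $\s V_2\in\Lin(\s E_2\tens_{\delta_B}(B\tens S),\s E_2\tens S)$ be the isometries associated with $(\beta_{\s E_1},\delta_{\s E_1})$ and $(\beta_{\s E_2},\delta_{\s E_2})$ (cf.\ \ref{prop27} a)). The first step is to produce from $\s V_1$ and $\s V_2$ an isometry
\[
\s V\in\Lin(\s E\tens_{\delta_B}(B\tens S),\s E\tens S),\qquad \s E=\s E_1\tens_{\gamma_2}\s E_2,
\]
by combining the internal-tensor-product associativity identifications with the $\cal G$-equivariance of $\gamma_2$. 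The natural candidate is $\s V:=(\s V_1\tens_{\gamma_2\tens\id_S}1)\circ(1\tens_{\gamma_2}\s V_2)$ read through the identification $\s E\tens_{\delta_B}(B\tens S)\simeq\s E_1\tens_{(\gamma_2\tens\id_S)\delta_C}(\s E_2\tens_{\delta_B}(B\tens S))$, which uses precisely the equivariance relation $\s V_2(\gamma_2(c)\tens_{\delta_B}1)\s V_2^*=(\gamma_2\tens\id_S)\delta_C(c)$ (cf.\ \ref{rk11} 1) to make the module tensor product over $\gamma_2$ compatible with the flip of $C\tens S$ past $\s E_2$. One checks $\s V T_{\xi_1\tens_{\gamma_2}\xi_2}=\Delta(\xi_1,\xi_2)$, so that the map $\delta_{\s E}$ defined by $\delta_{\s E}(\xi_1\tens_{\gamma_2}\xi_2)=\Delta(\xi_1,\xi_2)$ is well defined (the formula $\Delta$ is $B$-balanced in $(\xi_1,\xi_2)$ exactly because of the equivariance of $\gamma_2$), and that $\Delta(\xi_1,\xi_2)\in\widetilde{\M}(\s E\tens S)$ follows from \ref{prop27} a) applied to the isometry $\s V$ together with the defining property of $\widetilde{\M}$.

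Next I would verify that the pair $(\beta_{\s E},\s V)$ satisfies conditions 1--3 of \ref{isometry}. Condition 1, $\s V\s V^*=q_{\beta_{\s E}\alpha}$, comes from $\s V_1\s V_1^*=q_{\beta_{\s E_1}\alpha}$, $\s V_2\s V_2^*=q_{\beta_{\s E_2}\alpha}$, the identity $\delta_{\K(\s E_2)}(\gamma_2(q_{\beta_{\s E_1}\alpha}\ \text{analogue}))$, and a computation identifying $q_{\beta_{\s E}\alpha}$ (with $\beta_{\s E}=\beta_{\s E_1}\tens_{\gamma_2}1$) with the product of the two projections transported through the identifications; here one uses that $\gamma_2$ is $\cal G$-equivariant so that $\gamma_2(\beta_C(n^{\rm o}))=\beta_{\s E_2}(n^{\rm o})$ on the relevant corner, hence $\beta_{\s E_1}(n^{\rm o})\tens_{\gamma_2}1=1\tens_{\gamma_2}\beta_{\s E_2}(n^{\rm o})$ makes sense. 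Condition 2 is the intertwining $\s V(\beta_{\s E}(n^{\rm o})\tens_{\delta_B}1)=(1_{\s E}\tens\beta(n^{\rm o}))\s V$, which reduces to the corresponding relations for $\s V_1$ and $\s V_2$ after pushing $\beta(n^{\rm o})$ through the flip; condition 3, $\s V T_\xi\in\widetilde{\M}(\s E\tens S)$, was already noted. Then comes the coassociativity condition 4 of \ref{isometry}, namely $(\s V\tens_{\GC}\id_S)(\s V\tens_{\delta_B\tens\id_S}1)=\s V\tens_{\id_B\tens\delta}1$. This is obtained by writing both sides through the two-step decomposition of $\s V$ and applying condition 4 for $\s V_1$ and for $\s V_2$ separately, the cross terms being controlled by the coassociativity-type compatibility $(\gamma_2\tens\id_S\tens\id_S)\delta_C^2=\dots$ which itself follows from $\gamma_2$ being equivariant (so that $\delta_{\K(\s E_2)}\circ\gamma_2=(\gamma_2\tens\id_S)\delta_C$ and this iterates). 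Once conditions 1--4 hold, \ref{prop27} b)--c) gives that $(\beta_{\s E},\delta_{\s E})$ is an action of $\cal G$ on $\s E$, and by construction $\delta_{\s E}(\xi_1\tens_{\gamma_2}\xi_2)=\Delta(\xi_1,\xi_2)$, with uniqueness because elementary tensors span a dense submodule and $\delta_{\s E}$ is bounded (isometric).

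The main obstacle I expect is bookkeeping: correctly setting up the chain of unitary equivalences of Hilbert modules that identifies $\s E\tens_{\delta_B}(B\tens S)$, $(\s E_1\tens_{\gamma_2}\s E_2)\tens_{\delta_B}(B\tens S)$, $\s E_1\tens_{\gamma_2}(\s E_2\tens_{\delta_B}(B\tens S))$, and $\s E_1\tens_{(\gamma_2\tens\id_S)\delta_C}(\s E_2\tens_{\delta_B}(B\tens S))$, and then checking that under these identifications the two-leg composite $(\s V_1\tens_{\gamma_2\tens\id_S}1)(1\tens_{\gamma_2}\s V_2)$ is indeed an isometry with range $q_{\beta_{\s E}\alpha}(\s E\tens S)$ and satisfies the pentagon. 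Every individual verification is a routine (if lengthy) computation with internal tensor products in the style of \S 2 \cite{BS1}, the only genuinely substantive input being the $\cal G$-equivariance of $\gamma_2$ in the reformulation \ref{rk11} 1--2, which is exactly what allows the module tensor product over $\gamma_2$ to be threaded through $\delta_C$. Finally, the condition $\beta_{\s E}(n^{\rm o})\circ\gamma_1$-type statements and the fact that $\beta_{\s E}$ is a non-degenerate $*$-homomorphism are immediate from $\beta_{\s E_1}$ being one and $\gamma_2$ being non-degenerate on the relevant corner; I would dispatch these at the start.
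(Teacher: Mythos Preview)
Your approach is essentially the paper's: define $\s V$ as the composite of $\s V_1\tens_{\gamma_2\tens\id_S}1$ with the ``lift'' $\widetilde{\s V}_2$ of $\s V_2$ (your $1\tens_{\gamma_2}\s V_2$), check $\s V T_{\xi_1\tens_{\gamma_2}\xi_2}=\Delta(\xi_1,\xi_2)$, verify conditions 1--4 of \ref{isometry}, and invoke \ref{prop27}; the coassociativity is deferred to the computation in 2.10 \cite{BS1}.

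One point deserves more care than your sketch suggests. In your discussion of condition 1 you write that $\s V\s V^*=q_{\beta_{\s E}\alpha}$ comes from ``the product of the two projections'' $q_{\beta_{\s E_1}\alpha}$ and $q_{\beta_{\s E_2}\alpha}$. In fact only $q_{\beta_{\s E_1}\alpha}$ survives: one has $\s V\s V^*=(\s V_1\tens_{\gamma_2\tens\id_S}1)\widetilde{\s V}_2\widetilde{\s V}_2^{\,*}(\s V_1^*\tens_{\gamma_2\tens\id_S}1)$, and the crux is that $\widetilde{\s V}_2$ is a \emph{unitary}, not merely an isometry. Its surjectivity is where the equivariance of $\gamma_2$ genuinely enters: since ${\rm im}(\s V_2)={\rm im}(q_{\beta_{\s E_2}\alpha})$, one must show that $\xi_1\tens_{(\gamma_2\tens\id_S)\delta_C}q_{\beta_{\s E_2}\alpha}\eta=\xi_1\tens_{(\gamma_2\tens\id_S)\delta_C}\eta$ for all $\xi_1,\eta$, which follows from writing $\xi_1=\xi_1'c$ and using $(\gamma_2\tens\id_S)\delta_C(c)q_{\beta_{\s E_2}\alpha}=(\gamma_2\tens\id_S)\delta_C(c)$ (a consequence of \ref{rk11} 1 and $\s V_2\s V_2^*=q_{\beta_{\s E_2}\alpha}$). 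Once $\widetilde{\s V}_2\widetilde{\s V}_2^{\,*}=1$, you get $\s V\s V^*=q_{\beta_{\s E_1}\alpha}\tens_{\gamma_2\tens\id_S}1=q_{\beta_{\s E}\alpha}$ directly. Apart from this clarification, your outline matches the paper's argument.
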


The operator $\delta_{\s E_1}(\xi_1)$ is considered here as an element of $\Lin(\widetilde{C}\tens S,\s E_1\tens S)\supset\widetilde{\M}(\s E_1\tens S)$. In particular, we have $\delta_{\s E_1}(\xi_1)\tens_{\widetilde{\gamma}_2\tens\id_S}1\in\Lin(\s E_2\tens S,\s E\tens S)$ since we use the identifications:
\begin{align}
(\widetilde{C}\tens S)\tens_{\widetilde{\gamma}_2\tens\id_S}(\s E_2\tens S)&=\s E_2\tens S,\; x\tens_{\widetilde{\gamma}_2\tens\id_S}\eta\mapsto(\widetilde{\gamma}_2\tens\id_S)(x)\eta;\\[0.5em]
(\s E_1\tens S)\tens_{\widetilde{\gamma}_2\tens\id_S}(\s E_2\tens S)&=\s E\tens S,\; (\xi_1\tens s)\tens_{\widetilde{\gamma}_2\tens\id_S}(\xi_2\tens t)\mapsto (\xi_1\tens_{\gamma_2}\xi_2)\tens st.\label{eqId}
\end{align}

\begin{proof}
The proof is basically the same as that of 2.10 \cite{BS1}. For example, we refer the reader to it for the proof of the fact that $\Delta(\xi_1,\xi_2)\in\widetilde{\M}(\s E\tens S)$ for all $\xi_1\in\s E_1$ and $\xi_2\in\s E$. Let $\s V_1$ and $\s V_2$ be the isometries associated with $\delta_{\s E_1}$ and $\delta_{\s E_2}$. Since $\s V_2$ intertwines the left actions $c\mapsto\gamma_2(c)\tens_{\delta_B}1$ and $(\gamma_2\tens\id_S)\delta_C$ of $C$, there exists a unique isometry $\widetilde{\s V}_2\in\Lin(\s E\tens_{\delta_B}(B\tens S),\s E_1\tens_{(\gamma_2\tens\id_S)\delta_C}(\s E_2\tens S))$ such that
\[
\widetilde{\s V}_2((\xi_1\tens_{\gamma_2}\xi_2)\tens_{\delta_B}x)=\xi_1\tens_{(\gamma_2\tens\id_S)\delta_C}\s V_2(\xi_2\tens_{\delta_B}x), \text{ for all } \; \xi_1\in\s E_1,\, \xi_2\in\s E_2  \text{ and }  x\in B\tens S.
\]
Let us prove that $\widetilde{\s V}_2$ is a unitary. It amounts to proving that $\widetilde{\s V}_2$ is surjective. Since ${\rm im}(\s V_2)={\rm im}(q_{\beta_{\s E_2}\alpha})$, we have 
$
{\rm im}(\widetilde{\s V}_2)=[\xi\tens_{(\gamma_2\tens\id_S)\delta_C}q_{\beta_{\s E_2}\alpha}\eta\,;\,\xi\in\s E_1,\,\eta\in\s E_2\tens S].
$
Let $\xi\in\s E_1$ and $\eta\in\s E_2\tens S$. Write $\xi=\xi'c$ with $\xi'\in\s E_1$ and $c\in C$. Since $\s V_2^{\vphantom{*}}\s V_2^*=q_{\s E_2\alpha}$, we have 
$
(\gamma_2\tens\id_S)\delta_C(c)q_{\beta_{\s E_2}\alpha}=(\gamma_2\tens\id_S)\delta_C(c)
$
(cf.\ \ref{rk11}). Hence, 
\begin{align*}
\xi\tens_{(\gamma_2\tens\id_S)\delta_C}q_{\beta_{\s E_2}\alpha}\eta&=\xi'\tens_{(\gamma_2\tens\id_S)\delta_C}(\gamma_2\tens\id_S)\delta_C(c)q_{\beta_{\s E_2}\alpha}\eta\\
&=\xi'\tens_{(\gamma_2\tens\id_S)\delta_C}(\gamma_2\tens\id_S)\delta_C(c)\eta\\
&=\xi\tens_{(\gamma_2\tens\id_S)\delta_C}\eta.
\end{align*}
Therefore we have shown that ${\rm im}(\widetilde{\s V}_2)=\s E_1\tens_{(\gamma_2\tens\id_S)\delta_C}(\s E_2\tens S)$, which proves that $\widetilde{\s V}_2$ is unitary. Let us identify
\begin{align*}
(\s E_1\tens_{\delta_C}(C\tens S))\tens_{\gamma_2\tens\id_S}(\s E_2\tens S)&\rightarrow\s E_1\tens_{(\gamma_2\tens\id_S)\delta_C}(\s E_2\tens S)\\ 
(\xi_1\tens_{\delta_C}x)\tens_{\gamma_2\tens\id_S}\eta&\mapsto \xi_1\tens_{(\gamma_2\tens\id_S)\delta_C}(\gamma_2\tens\id_S)(x)\eta\\[-.5em]
\end{align*}
and $(\s E_1\tens S)\tens_{\gamma_2\tens\id_S}(\s E_2\tens S)=\s E\tens S$ (cf.\ (\ref{eqId})).
Let 
\[
\s V:=(\s V_1\tens_{\gamma_2\tens\id_S}1)\widetilde{\s V}_2\in\Lin(\s E\tens_{\delta_B}(B\tens S),\s E\tens S).
\]
It follows from the formulas $\widetilde{\s V}_2^*\widetilde{\s V}_2^{\vphantom{*}}=1$, $\widetilde{\s V}_2^{\vphantom{*}}\widetilde{\s V}_2^*=1$, $\s V_1^*\s V_1^{\vphantom{*}}=1$ and $\s V_1^{\vphantom{*}}\s V_1^*=q_{\beta_{\s E_1}\alpha}$ that $\s V^*\s V=1$ and $\s V\s V^*=q_{\beta_{\s E_1}\alpha}\tens_{\gamma_2\tens\id_S}1=q_{\beta_{\s E}\alpha}$ (by definition of $\beta_{\s E}$).\newline
Let $n\in N$. On one hand, we have 
\begin{center}
$
\widetilde{\s V}_2(\beta_{\s E}(n^{\rm o})\tens_{\delta_B}1)=(\beta_{\s E_1}(n^{\rm o})\tens_{(\gamma_2\tens\id_S)\delta_C}1)\widetilde{\s V}_2
$ 
\end{center}
(by definition of $\beta_{\s E}$ and $\widetilde{\s V}_2$). On the other, we have 
\begin{center}
$
(\s V_1\tens_{\gamma_2\tens\id_S}1)(\beta_{\s E_1}(n^{\rm o})\tens_{(\gamma_2\tens\id_S)\delta_C}1)=((1_{\s E_1}\tens\beta(n^{\rm o}))\tens_{\gamma_2\tens\id_S}1)(\s V_1\tens_{\gamma_2\tens\id_S}1).
$ 
\end{center}
Hence, we have proved that $\s V(\beta_{\s E}(n^{\rm o})\tens_{\delta_B}1)=(1\tens\beta(n^{\rm o}))\s V$ for all $n\in N$. Exactly as in the proof of 2.10 \cite{BS1}, $\vphantom{\widetilde{\M}}$we have $\s VT_{\xi_1\tens_{\gamma_2}\xi_2}=\Delta(\xi_1,\xi_2)$ for all $\xi_1\in\s E_1$ and $\xi_2\in\s E_2$ (cf.\ \ref{not5} for the definition of $T_{\xi}$). In particular, $\s VT_{\xi}\in\widetilde{\M}(\s E\tens S)$ for all $\xi\in\s E$. It then follows from \ref{prop27} b) that the pair $(\beta_{\s E},\delta_{\s E})$, where $\delta_{\s E}:\s E\rightarrow\widetilde{\M}(\s E\tens S)$ is defined for all $\xi\in\s E$ by $\delta_{\s E}(\xi):=\s VT_{\xi}$, satisfies the conditions 1, 2, and 3 of Definition \ref{isometry}. $\vphantom{\widetilde{\M}}$The coassociativity condition of $\delta_{\s E}$ is derived from those of $\delta_{\s E_1}$ and $\delta_{\s E_2}$ exactly as in the proof of 2.10 \cite{BS1}.$\vphantom{\widetilde{\M}}$
\end{proof}

\begin{prop}\label{propleftaction}
We use all the notations and hypotheses of \ref{prop18}. If $A$ is a $\cal G$-C*-algebra and $\gamma_1:A\rightarrow\Lin(\s E_1)$ is a $\cal G$-equivariant *-representation, then $\gamma:A\rightarrow\Lin(\s E_1\tens_{\gamma_2}\s E_2)$ the *-representation defined by $\gamma(a):=\gamma_1(a)\tens_{\gamma_2}1$ for all $a\in A$ is $\cal G$-equivariant.
\end{prop}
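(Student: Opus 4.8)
The plan is to verify the two conditions of Definition~\ref{defbimod} for $\gamma$. Condition~2 is immediate from the definitions $\beta_{\s E}=\beta_{\s E_1}\tens_{\gamma_2}1$ and $\gamma=\gamma_1\tens_{\gamma_2}1$ together with condition~2 of Definition~\ref{defbimod} for $\gamma_1$: for $n\in N$ and $a\in A$,
\[
\beta_{\s E}(n^{\rm o})\gamma(a)=\bigl(\beta_{\s E_1}(n^{\rm o})\gamma_1(a)\bigr)\tens_{\gamma_2}1=\gamma_1(\beta_A(n^{\rm o})a)\tens_{\gamma_2}1=\gamma(\beta_A(n^{\rm o})a).
\]
For condition~1, since $\delta_{\s E}$ is bounded and linear and finite sums of vectors $\xi_1\tens_{\gamma_2}\xi_2$ ($\xi_1\in\s E_1$, $\xi_2\in\s E_2$) are dense in $\s E$, it suffices to prove $\delta_{\s E}(\gamma(a)\xi)=(\gamma\tens\id_S)(\delta_A(a))\circ\delta_{\s E}(\xi)$ for $\xi=\xi_1\tens_{\gamma_2}\xi_2$.

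The one auxiliary fact to record first is that, under the identification $(\s E_1\tens S)\tens_{\widetilde{\gamma}_2\tens\id_S}(\s E_2\tens S)=\s E\tens S$ of~(\ref{eqId}), one has
\[
(\gamma\tens\id_S)(m)=(\gamma_1\tens\id_S)(m)\tens_{\widetilde{\gamma}_2\tens\id_S}1,\qquad m\in\widetilde{\M}(A\tens S).
\]
Both sides are strictly continuous $*$-homomorphisms $\widetilde{\M}(A\tens S)\to\Lin(\s E\tens S)$ (for the left-hand side by~\ref{rk8}; for the right-hand side, compose $\gamma_1\tens\id_S$ with the amplification $X\mapsto X\tens_{\widetilde{\gamma}_2\tens\id_S}1$, which is a strictly continuous $*$-homomorphism $\Lin(\s E_1\tens S)\to\Lin(\s E\tens S)$), so the identity follows from a direct computation on elementary tensors $a\tens s$ ($a\in A$, $s\in S$) using $\gamma=\gamma_1\tens_{\gamma_2}1$ and~(\ref{eqId}).

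With this in hand, fix $a\in A$, $\xi_1\in\s E_1$ and $\xi_2\in\s E_2$. Using $\gamma(a)(\xi_1\tens_{\gamma_2}\xi_2)=\gamma_1(a)\xi_1\tens_{\gamma_2}\xi_2$, the definition of $\delta_{\s E}$ from Proposition~\ref{prop18}, the $\cal G$-equivariance of $\gamma_1$ (condition~1 of Definition~\ref{defbimod}: $\delta_{\s E_1}(\gamma_1(a)\xi_1)=(\gamma_1\tens\id_S)(\delta_A(a))\circ\delta_{\s E_1}(\xi_1)$), the functoriality of $T\mapsto T\tens_{\widetilde{\gamma}_2\tens\id_S}1$, and finally the auxiliary identity with $m=\delta_A(a)$, we compute
\begin{align*}
\delta_{\s E}(\gamma(a)(\xi_1\tens_{\gamma_2}\xi_2))
&=(\delta_{\s E_1}(\gamma_1(a)\xi_1)\tens_{\widetilde{\gamma}_2\tens\id_S}1)\circ\delta_{\s E_2}(\xi_2)\\
&=\bigl(((\gamma_1\tens\id_S)(\delta_A(a))\circ\delta_{\s E_1}(\xi_1))\tens_{\widetilde{\gamma}_2\tens\id_S}1\bigr)\circ\delta_{\s E_2}(\xi_2)\\
&=\bigl((\gamma_1\tens\id_S)(\delta_A(a))\tens_{\widetilde{\gamma}_2\tens\id_S}1\bigr)\circ(\delta_{\s E_1}(\xi_1)\tens_{\widetilde{\gamma}_2\tens\id_S}1)\circ\delta_{\s E_2}(\xi_2)\\
&=(\gamma\tens\id_S)(\delta_A(a))\circ\delta_{\s E}(\xi_1\tens_{\gamma_2}\xi_2),
\end{align*}
where the last equality uses $(\delta_{\s E_1}(\xi_1)\tens_{\widetilde{\gamma}_2\tens\id_S}1)\circ\delta_{\s E_2}(\xi_2)=\delta_{\s E}(\xi_1\tens_{\gamma_2}\xi_2)$ from Proposition~\ref{prop18}. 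By the reduction of the first paragraph this yields condition~1 for every $\xi\in\s E$, so $\gamma$ is $\cal G$-equivariant. The only genuinely delicate point is the auxiliary identity of the second paragraph: it requires carefully tracking the successive identifications of internal tensor products (notably~(\ref{eqId}) and the one preceding it) and, since $\gamma$ and $\gamma_1$ are not assumed non-degenerate, the extension of $\gamma\tens\id_S$ and $\gamma_1\tens\id_S$ to the relative multiplier algebras via~\S\ref{sectionNotations}.
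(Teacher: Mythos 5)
Your proof is correct and follows essentially the same route as the paper's: the whole content is the identification $(\gamma\tens\id_S)(m)=(\gamma_1\tens\id_S)(m)\tens_{\widetilde{\gamma}_2\tens\id_S}1$ under (\ref{eqId}), first on $A\tens S$ and then extended to $\widetilde{\M}(A\tens S)$ (the paper does this via factorization of elements of $S$, you via strict continuity — the same mechanism), applied to $m=\delta_A(a)$ and combined with the definition of $\delta_{\s E}$. Your write-up merely makes explicit the density reduction and the functoriality step that the paper leaves implicit.
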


\begin{proof}
Through the identification (\ref{eqId}), for all $x\in A\tens S$ the operator $(\gamma_1\tens\id_S)(x)\tens_{\widetilde{\gamma}_2\tens\id_S}1$ is identified to $(\gamma\tens\id_S)(x)$. This identification also holds for $x\in\widetilde{\M}(A\tens S)$ (by using the fact that any element of $S$ can be written as a product of two elements of $S$). In particular, for all $a\in A$ the operator $(\gamma_1\tens\id_S)\delta_A(a)\tens_{\widetilde{\gamma}_2\tens\id_S}1$ is identified to $(\gamma\tens\id_S)\delta_A(a)$. Hence, $\delta_{\s E}(\gamma(a)\xi)=(\gamma\tens\id_S)\delta_A(a)\circ\delta_{\s E}(\xi)$ for all $\xi\in\s E$ and $a\in A$ by definition of $\delta_{\s E}$. The relation $\beta_{\s E}(n^{\rm o})\circ\gamma(a)=\gamma(\beta_A(n^{\rm o})a)$ for $n\in N$ and $a\in A$ is straightforward.
\end{proof}

From now on, we assume the quantum groupoid $\cal G$ to be regular. We recall that any action of the quantum groupoid $\cal G$ on a Hilbert module is necessarily continuous (cf.\ \ref{corActReg}).

\begin{propdef}\label{propImpProd}
Let $A$, $C$ and $B$ be $\cal G$-C*-algebras. Let $\s E_1$ {\rm(}resp.\ $\s E_2${\rm)} be a $\cal G$-equivariant imprimitivity $A$-$C$-bimodule {\rm(}resp.\ $C$-$B$-bimodule{\rm)}. Denote by $\s E_1\tens_C\s E_2$ the internal tensor product $\s E_1\tens_{\gamma_2}\s E_2$, where $\gamma_2:C\rightarrow\Lin(\s E_2)$ is the $\cal G$-equivariant *-representation defined by the left action of $C$ on $\s E_2$. The Hilbert $B$-module $\s E_1\tens_C\s E_2$ endowed with the action of $\cal G$ defined in \ref{prop18} is a $\cal G$-equivariant imprimitivity $A$-$B$-bimodule for the left action of $A$ and the $A$-valued inner product defined by the formulas:
\begin{itemize}
\item $a(\xi_1\tens_C\xi_2):=a\xi_1\tens_C\xi_2$,\, for all $a\in A$, $\xi_1\in\s E_1$ and $\xi_2\in\s E_2$;
\item ${}_A\langle\xi_1\tens_C\xi_2,\, \eta_1\tens_C\eta_2\rangle:={}_A\langle\xi_1,\, \eta_1\,{}_C\langle\xi_2,\,\eta_2\rangle\rangle$,\, for all $\xi_1,\,\eta_1\in\s E_1$ and $\xi_2,\,\eta_2\in\s E_2$.\qedhere
\end{itemize}
\end{propdef}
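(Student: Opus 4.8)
The plan is to verify the three structural requirements in turn: (i) the purely algebraic imprimitivity-bimodule axioms for $\s E_1\tens_C\s E_2$ over $A$ and $B$; (ii) that the left action $\gamma:A\rightarrow\Lin(\s E_1\tens_C\s E_2)$ is $\cal G$-equivariant; and (iii) that the whole package glues into a $\cal G$-equivariant imprimitivity bimodule. Step (i) is essentially classical (cf.\ \S 6 \cite{Rie74}): the internal tensor product $\s E_1\tens_C\s E_2$ is a full right Hilbert $B$-module since $\s E_1$ and $\s E_2$ are full and $\langle\s E_1\tens_C\s E_2,\s E_1\tens_C\s E_2\rangle_B=[\langle\s E_2,\langle\s E_1,\s E_1\rangle_C\,\s E_2\rangle_B]=[\langle\s E_2,C\s E_2\rangle_B]=[\langle\s E_2,\s E_2\rangle_B]=B$; the left $A$-valued inner product $_A\langle\xi_1\tens_C\xi_2,\eta_1\tens_C\eta_2\rangle:={}_A\langle\xi_1,\eta_1\,{}_C\langle\xi_2,\eta_2\rangle\rangle$ is well defined, $A$-sesquilinear and positive, makes $\s E_1\tens_C\s E_2$ a full left Hilbert $A$-module, and the imprimitivity identity $_A\langle\xi,\eta\rangle\zeta=\xi\langle\eta,\zeta\rangle_B$ follows from the corresponding identities in $\s E_1$ and $\s E_2$ by a direct computation on elementary tensors. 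I would recall rather than reprove these facts, citing \cite{Rie74}.

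For (ii) I would invoke Propositions \ref{prop18} and \ref{propleftaction} directly. By \ref{prop18}, $\s E:=\s E_1\tens_{\gamma_2}\s E_2$ carries an action $(\beta_{\s E},\delta_{\s E})$ of $\cal G$, where $\gamma_2:C\rightarrow\Lin(\s E_2)$ is the $\cal G$-equivariant $*$-representation given by the left action of $C$ on $\s E_2$ (equivariance of $\gamma_2$ is part of the hypothesis that $\s E_2$ is a $\cal G$-equivariant imprimitivity $C$-$B$-bimodule). The left action of $A$ on $\s E_1$ is a non-degenerate $\cal G$-equivariant $*$-representation $\gamma_1:A\rightarrow\Lin(\s E_1)$, again by hypothesis on $\s E_1$; hence \ref{propleftaction} gives that $\gamma:A\rightarrow\Lin(\s E)$, $\gamma(a):=\gamma_1(a)\tens_{\gamma_2}1$, is $\cal G$-equivariant in the sense of \ref{defbimod}. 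This $\gamma$ is exactly the left action in the statement of \ref{propImpProd}, since $\gamma(a)(\xi_1\tens_C\xi_2)=\gamma_1(a)\xi_1\tens_C\xi_2=a\xi_1\tens_C\xi_2$. Since $\cal G$ is assumed regular, the action $(\beta_{\s E},\delta_{\s E})$ is automatically continuous by \ref{corActReg}, so $\s E_1\tens_C\s E_2$ is a $\cal G$-equivariant Hilbert $B$-module.

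For (iii) it remains to check that the left $A$-module structure is compatible with the action in the sense required by the definition of a $\cal G$-equivariant imprimitivity bimodule, i.e.\ that $\gamma:A\rightarrow\Lin((\s E_1\tens_C\s E_2)_B)$ is $\cal G$-equivariant; but this is precisely what was obtained in step (ii). One should also note that $\gamma$ induces the identification $A\simeq\K((\s E_1\tens_C\s E_2)_B)$ as $\cal G$-C*-algebras (cf.\ \ref{ex1} (ii)), which is automatic once $\s E_1\tens_C\s E_2$ is full over $B$ and $\gamma$ is the imprimitivity left action. Assembling (i), (ii) and the fullness and inner-product formulas then yields the assertion. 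The main obstacle is bookkeeping rather than conceptual: one must carefully track the several canonical identifications of internal tensor products (the chain $(\s E_1\tens_{\delta_C}(C\tens S))\tens_{\gamma_2\tens\id_S}(\s E_2\tens S)=\s E_1\tens_{(\gamma_2\tens\id_S)\delta_C}(\s E_2\tens S)$ and $(\s E_1\tens S)\tens_{\gamma_2\tens\id_S}(\s E_2\tens S)=\s E\tens S$ from \ref{prop18}) in order to see that the left-action intertwiner produced by \ref{propleftaction} agrees with the map $a\mapsto\gamma_1(a)\tens_{\gamma_2}1$ at the level of the bimodule $\s E_1\tens_C\s E_2$, and to confirm that the $A$-valued inner product defined here is the one compatible with the right Hilbert $B$-module structure underlying \ref{prop18}. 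All of this is routine given \ref{prop18}, \ref{propleftaction}, \ref{ex1} and \ref{corActReg}, so the proof is short.
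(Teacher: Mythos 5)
Your proposal is correct and follows essentially the same route as the paper: the purely algebraic imprimitivity-bimodule structure on $\s E_1\tens_C\s E_2$ is quoted as classical (Rieffel), and the $\cal G$-equivariant structure together with the equivariance of the left $A$-action is obtained by direct appeal to \ref{prop18} and \ref{propleftaction}, with continuity supplied by regularity via \ref{corActReg}. Nothing essential is missing.
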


\begin{proof}
It is known that $\s E_1\tens_C\s E_2$ is an imprimitivity $A$-$B$-bimodule. The rest of the proof is contained in \ref{prop18} and \ref{propleftaction}.
\end{proof}

\begin{prop}
Let $A$ and $B$ be $\cal G$-C*-algebras. Let $\s E$ be a $\cal G$-equivariant imprimitivity $A$-$B$-bimodule. Then, the map 
$
\s E^*\tens_A\s E\rightarrow B\; ; \; \xi^*\tens_A\eta \mapsto \langle\xi,\,\eta\rangle_B
$
defines an isomorphism of $\cal G$-equivariant imprimitivity $B$-$B$-bimodules.
\end{prop}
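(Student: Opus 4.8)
The plan is to deduce the statement from classical Morita theory together with the equivariance axioms, doing the real work on elementary tensors and then extending by continuity. First I would recall that, by \ref{ex1} (iv), $\s E^*$ is a $\cal G$-equivariant imprimitivity $B$-$A$-bimodule, so \ref{propImpProd} equips the internal tensor product $\s E^*\tens_A\s E$ with a canonical structure of $\cal G$-equivariant imprimitivity $B$-$B$-bimodule; write $(\beta_{\s E^*\tens_A\s E},\delta_{\s E^*\tens_A\s E})$ for the corresponding action of $\cal G$ on the right Hilbert $B$-module $\s E^*\tens_A\s E$, and endow $B$ with its canonical structure of $\cal G$-equivariant imprimitivity $B$-$B$-bimodule (cf.\ \ref{ex1} (i)). By the theory of imprimitivity bimodules (cf.\ \S 6 \cite{Rie74}), the map $\Theta:\s E^*\tens_A\s E\rightarrow B$, $\xi^*\tens_A\eta\mapsto\langle\xi,\,\eta\rangle_B$, is a well-defined bijective $B$-$B$-bimodule map which is isometric for both inner products; equivalently, $\Theta$ is a unitary equivalence of the underlying right Hilbert $B$-modules over $\id_B$ which also intertwines the left actions of $B$. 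It therefore only remains to check that $\Theta$ is $\cal G$-equivariant in the sense of \ref{def1}, i.e.\ that $\delta_B\circ\Theta=(\Theta\tens\id_S)\circ\delta_{\s E^*\tens_A\s E}$.

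Since $\Theta\tens\id_S$ (cf.\ \ref{not3}) and $\delta_B$ are continuous linear maps and the elementary tensors $\xi^*\tens_A\eta$ with $\xi,\eta\in\s E$ span a dense subspace of $\s E^*\tens_A\s E$, it suffices to verify this identity on such elements. Denote by $\gamma:A\rightarrow\Lin(\s E)$ the left action of $A$ on $\s E$; under the isomorphism $A\simeq\K(\s E)$ (cf.\ \ref{ex1} (ii)) the action of $\cal G$ on the right Hilbert $A$-module $\s E^*$ is the one described in \ref{prop31} 2, so that $\delta_{\s E^*}(\xi^*)\,x=\delta_{\s E}(\xi)^*\circ x$. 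By \ref{prop18} we have $\delta_{\s E^*\tens_A\s E}(\xi^*\tens_A\eta)=(\delta_{\s E^*}(\xi^*)\tens_{\widetilde{\gamma}\tens\id_S}1)\circ\delta_{\s E}(\eta)$. Unwinding the chain of canonical identifications of internal tensor products used in \ref{prop18} and in the definition of $\Theta\tens\id_S$, the operator $\Theta\tens\id_S$ carries this product to the composition $\delta_{\s E}(\xi)^*\circ\delta_{\s E}(\eta)=\langle\delta_{\s E}(\xi),\,\delta_{\s E}(\eta)\rangle$ of elements of $\widetilde{\M}(\s E\tens S)$.

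Now condition 1 of Definition \ref{hilbmodequ} gives $\langle\delta_{\s E}(\xi),\,\delta_{\s E}(\eta)\rangle=\delta_B(\langle\xi,\,\eta\rangle)=\delta_B(\Theta(\xi^*\tens_A\eta))$, whence $(\Theta\tens\id_S)\delta_{\s E^*\tens_A\s E}(\xi^*\tens_A\eta)=\delta_B(\Theta(\xi^*\tens_A\eta))$ for all $\xi,\eta\in\s E$; by density and continuity the same equality holds on all of $\s E^*\tens_A\s E$. Finally $\Theta$ also intertwines $\beta_{\s E^*\tens_A\s E}$ and $\beta_B$, either by a direct computation starting from $\beta_{\s E^*\tens_A\s E}(n^{\rm o})=\beta_{\s E^*}(n^{\rm o})\tens_\gamma 1$, or by the argument of \ref{rkEqMorph} applied to the right-module equivariance just established. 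Together with the classical statement recalled in the first paragraph, this shows that $\Theta$ is an isomorphism of $\cal G$-equivariant imprimitivity $B$-$B$-bimodules.

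The main obstacle is the bookkeeping in the second paragraph: one has to track carefully the several canonical identifications that enter the definition of the internal tensor product $\s E^*\tens_{\gamma}\s E$ and of $\Theta\tens\id_S$, in order to justify rigorously that $\Theta\tens\id_S$ turns the twisted product $(\delta_{\s E^*}(\xi^*)\tens_{\widetilde{\gamma}\tens\id_S}1)\circ\delta_{\s E}(\eta)$ into the honest operator composition $\langle\delta_{\s E}(\xi),\,\delta_{\s E}(\eta)\rangle$; everything else is either classical Rieffel theory or an immediate application of the inner-product axiom for $\delta_{\s E}$. An alternative route that avoids part of this computation would be to realise $\s E$ as the off-diagonal corner $e_1Je_2$ of a linking $\cal G$-C*-algebra $J=\K(\s E\oplus B)$ (cf.\ \ref{rkLinkAlg}), to identify $\Theta$ with the multiplication map $e_2Je_1\tens_{e_1Je_1}e_1Je_2\rightarrow e_2Je_2$, and to deduce its $\cal G$-equivariance from the fact that $\delta_J$ is a $\cal G$-equivariant $*$-homomorphism; the price to pay is then a lemma identifying the action of \ref{prop18} on $\s E^*\tens_A\s E$ with the one coming from this corner description.
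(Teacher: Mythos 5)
Your proposal is correct and follows essentially the same route as the paper: the underlying bimodule isomorphism is classical Rieffel theory, and the $\cal G$-equivariance reduces to the identity $\delta_B(\langle\xi,\,\eta\rangle_B)=\delta_{\s E}(\xi)^*\circ\delta_{\s E}(\eta)$, which is exactly condition 1 of Definition \ref{hilbmodequ}. The paper states this in two lines; your extra bookkeeping of the identifications in \ref{prop18} and the alternative linking-algebra route are sound but not needed.
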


\begin{proof}
It is known that the map $\Phi:\s E^*\tens_A\s E\rightarrow B\; ; \; \xi^*\tens_A\eta \mapsto \langle\xi,\,\eta\rangle_B$ is an isomorphism of imprimitivity $B$-$B$-bimodules. The $\cal G$-equivariance of $\Phi$ is a restatement of the formula $\delta_B(\langle\xi,\,\eta\rangle_B)=\delta_{\s E}(\xi)^*\circ\delta_{\s E}(\eta)$ for $\xi,\,\eta\in\s E$.
\end{proof}

\begin{defin}
Let $A$ and $B$ be $\cal G$-C*-algebras. We say that $A$ and $B$ are $\cal G$-equivariantly Morita equivalent if there exists a $\cal G$-equivariant imprimitivity $A$-$B$-bimodule. The $\cal G$-equivariant Morita equivalence is a reflexive (\ref{ex1} (i)), symmetric (\ref{ex1} (iv)) and transitive (\ref{propImpProd}) relation on the class of $\cal G$-C*-algebras.\qedhere
\end{defin}

\paragraph{Biduality and equivariant Morita equivalence.} In this paragraph, the measured quantum groupoid $\cal G$ is assumed to be regular. Let us fix a $\cal G$-C*-algebras $A$. We show that there is a canonical $\cal G$-equivariant Morita equivalence between $A$ and the double crossed product $(A\rtimes{\cal G})\rtimes\widehat{\cal G}$.

\begin{nbs}
Denote by $\K:=\K(\s H)$ for short. Consider the Hilbert $A$-modules ${\cal E}_0:=A\tens\s H$ and $\er:=q_{\beta_A\widehat{\alpha}}(A\tens\s H)$. Let ${\cal V}\in\mathcal{L}(\s H\tens S)$ be the unique partial isometry such that $(\id_{\K}\tens L)({\cal V})=V$. 
\end{nbs}

\begin{prop}
There exists a unique bounded linear map $\delta_{\eo}:\eo\rightarrow\Lin(A\tens S,\eo\tens S)$ such that
$
\delta_{\eo}(a\tens\zeta)={\cal V}_{23}\delta_A(a)_{13}(1_A\tens\zeta\tens 1_S)
$,
for all $a\in A$ and $\zeta\in\s H$.
\end{prop}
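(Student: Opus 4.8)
The plan is to obtain $\delta_{\eo}$ by extending the prescribed formula from the algebraic tensor product $A\odot\s H$ (which is dense in $\eo=A\tens\s H$) to all of $\eo$ by continuity, so the real content is a single norm estimate. First I would check that the right-hand side makes sense: viewing $\zeta\in\s H$ as the element of $\K(\GC,\s H)$ it represents, $1_A\tens\zeta\tens 1_S$ lies in $\Lin(A\tens S,A\tens\s H\tens S)$; since $\delta_A(a)\in\M(A\tens S)=\Lin(A\tens S)$ we get $\delta_A(a)_{13}\in\Lin(A\tens\s H\tens S)$ under the standard leg identifications; and ${\cal V}_{23}\in\Lin(A\tens\s H\tens S)$ as well (it is $1_A\tens{\cal V}$ under $\Lin(\s H\tens S)=\M(\K\tens S)$). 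Hence, noting $\eo\tens S=A\tens\s H\tens S$, the operator $\Phi(a,\zeta):={\cal V}_{23}\,\delta_A(a)_{13}\,(1_A\tens\zeta\tens 1_S)$ is a well-defined element of $\Lin(A\tens S,\eo\tens S)$, bilinear in $(a,\zeta)$, and therefore induces a linear map $\widetilde\Phi$ on $A\odot\s H$.

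Next I would prove the estimate $\|\widetilde\Phi(u)\|\le\|u\|_{\eo}$ for every $u=\sum_i a_i\tens\zeta_i\in A\odot\s H$. Writing $\widetilde\Phi(u)={\cal V}_{23}\,\Psi(u)$ with $\Psi(u):=\sum_i\delta_A(a_i)_{13}(1_A\tens\zeta_i\tens 1_S)$ and using $\|{\cal V}_{23}\|\le 1$, it suffices to bound $\Psi(u)$. Using that $y_{13}(1_A\tens\zeta\tens 1_S)=(1_A\tens\zeta\tens 1_S)\,y$ for $y\in\M(A\tens S)$ and $\zeta\in\s H$, one computes
\[
\Psi(u)^*\Psi(u)=\sum_{i,j}(1_A\tens\zeta_i^*\tens 1_S)\,\delta_A(a_i^*a_j)_{13}\,(1_A\tens\zeta_j\tens 1_S)=\sum_{i,j}\langle\zeta_i,\zeta_j\rangle\,\delta_A(a_i^*a_j)=\delta_A(\langle u,u\rangle_A),
\]
where $\langle\cdot,\cdot\rangle_A$ denotes the Hilbert $A$-module inner product of $\eo=A\tens\s H$ (so that $\langle u,u\rangle_A=\sum_{i,j}\langle\zeta_i,\zeta_j\rangle a_i^*a_j$ and $\|u\|_{\eo}^2=\|\langle u,u\rangle_A\|$). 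Since the $*$-homomorphism $\delta_A$ is contractive, this gives $\|\Psi(u)\|^2=\|\delta_A(\langle u,u\rangle_A)\|\le\|\langle u,u\rangle_A\|=\|u\|_{\eo}^2$, hence $\|\widetilde\Phi(u)\|\le\|u\|_{\eo}$.

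With the estimate in hand, $\widetilde\Phi$ extends uniquely to a bounded linear map $\delta_{\eo}:\eo\to\Lin(A\tens S,\eo\tens S)$ (the target being a Banach space for the operator norm), and by construction it satisfies $\delta_{\eo}(a\tens\zeta)={\cal V}_{23}\delta_A(a)_{13}(1_A\tens\zeta\tens 1_S)$ on the simple tensors; uniqueness is immediate since a bounded linear map on $\eo$ is determined by its values on the dense subspace $A\odot\s H$. I expect the only slightly delicate point to be the leg-numbering bookkeeping behind the identity $(1_A\tens\zeta_i^*\tens 1_S)\,y_{13}\,(1_A\tens\zeta_j\tens 1_S)=\langle\zeta_i,\zeta_j\rangle\,y$ and the identification of $\langle u,u\rangle_A$ with $\sum_{i,j}\langle\zeta_i,\zeta_j\rangle a_i^*a_j$; everything else is routine, and note that the partial isometry ${\cal V}$ (equivalently the regularity hypotheses ensuring ${\cal V}$ exists) plays no role beyond being a contraction in this particular statement.
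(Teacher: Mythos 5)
Your proof is correct and follows essentially the same route as the paper: the paper defines $\delta_{\eo}(\xi)={\cal V}_{23}\sigma_{23}(\delta_A\tens\id_{\s H})(\xi)$ directly, using the identification $\M(A)\tens\s H\subset\Lin(A,A\tens\s H)$ so that boundedness is absorbed into the standard extension of $\delta_A\tens\id_{\s H}$, whereas you make the underlying estimate explicit via the identity $\Psi(u)^*\Psi(u)=\delta_A(\langle u,u\rangle_A)$ and extend by continuity from $A\odot\s H$. The two arguments rest on the same computation, so this is only a difference of packaging.
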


\begin{proof}
If $B$ is a C*-algebra and $\s K$ a Hilbert space, we identify $\M(B)\tens\s K$ with a closed vector subspace of $\Lin(B,B\tens\s K)$. We have $(\delta_A\tens \id_{\s H})(\xi)\in\Lin(A\tens S,A\tens S\tens\s H)$ and $(\delta_A\tens \id_{\s H})(\xi)^*=(\delta_A\tens \id_{\s H^*})(\xi^*)$ for $\xi\in{\cal E}_0$. Let $\sigma\in\Lin(S\tens\s H,\s H\tens S)$ be the flip map. Denote by $\delta_{\eo}:\eo\rightarrow\Lin(A\tens S,\eo\tens S)$ the map defined by
$
\delta_{\eo}(\xi):={\cal V}_{23}\sigma_{23}(\delta_A\tens \id_{\s H})(\xi)
$
for $\xi\in{\cal E}_0$. It is clear that $\delta_{\eo}:\eo\rightarrow\Lin(A\tens S,\eo\tens S)$ is linear map satsifying the formula $\delta_{\eo}(a\tens\xi)={\cal V}_{23}\delta_A(a)_{13}(1_A\tens\xi\tens 1_S)$ for all $a\in A$ and $\zeta\in\s H$. 
\end{proof}

\begin{prop}\label{exehmprop1}
We have the following statements:
\begin{enumerate}
\item $\delta_{\eo}(\xi)^*\delta_{\eo}(\eta)=\delta_A(\langle q_{\beta_A\widehat{\alpha}}\xi,\, q_{\beta_A\widehat{\alpha}}\eta\rangle)$, for all $\xi,\eta\in\eo$; 
\item $\delta_{\eo}(\xi a)=\delta_{\eo}(\xi)\delta_A(a)$, for all $\xi\in\eo$ and $a\in A$;
\item $\delta_{\eo}(q_{\beta_A\widehat{\alpha}}\xi)=\delta_{\eo}(\xi)$, for all $\xi\in\eo$;
\item $\delta_{\eo}(\eo)(A\tens S)\subset\er\tens S$.\qedhere
\end{enumerate}
\end{prop}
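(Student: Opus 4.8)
The plan is to verify the four identities by direct computation, using the leg-numbering relations, the axioms of the action $(\beta_A,\delta_A)$, and the known properties of $V$ (hence of $\mathcal V$) collected in \ref{inifinproj}, \ref{prop34} and \ref{NonComRel}. Throughout I would work with generators $\xi=a\tens\zeta$, $\eta=b\tens\chi$ of $\eo=A\tens\s H$ and extend by linearity and continuity; this is legitimate since all maps in sight are bounded.

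First I would establish (3), as it is the cleanest and is used implicitly afterwards. The operator $\mathcal V\in\Lin(\s H\tens S)$ satisfies $(\id_{\K}\tens L)(\mathcal V)=V$ and $VV^*=q_{\beta\alpha}$, $V^*V=q_{\widehat\alpha\beta}$ by \ref{inifinproj}; transporting along $(\id_\K\tens L)$, the initial projection of $\mathcal V$ is $q_{\widehat\alpha\beta}\in\Lin(\s H\tens S)$ and its final projection is $q_{\beta\alpha}$. In the formula $\delta_{\eo}(\xi)=\mathcal V_{23}\sigma_{23}(\delta_A\tens\id_{\s H})(\xi)$ the factor $\mathcal V_{23}$ forces $\delta_{\eo}(\xi)=q_{\beta\alpha,23}\,\delta_{\eo}(\xi)$, i.e.\ the $\s H$-leg lands in the range of $\widehat\alpha$ via the appropriate projection; then precomposing with $q_{\beta_A\widehat\alpha}$ on $\eo$ changes the middle leg of $(\delta_A\tens\id_{\s H})(\xi)$ from $\alpha$ to $\widehat\alpha$-compatible, which is exactly what $\mathcal V_{23}\sigma_{23}$ already imposes. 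More precisely, I would use \ref{lem23} (with the identification $q_I=\beta_A(\varepsilon_I)$, $e_I=\alpha(\varepsilon_I)$) to rewrite $\delta_A(a)_{13}(1_A\tens\zeta\tens1_S)$ so that the projection $q_{\beta_A\widehat\alpha}$ acting on $\zeta$ can be absorbed, using that $\mathcal V_{23}$ intertwines $\widehat\alpha$ and $\beta$ on the relevant legs (the analogue of \ref{prop34} 1--2 for $V$, hence for $\mathcal V$). Statement (2) is immediate from the formula, since $\delta_{\eo}(\xi a)=\mathcal V_{23}\sigma_{23}(\delta_A\tens\id_{\s H})(\xi a)=\mathcal V_{23}\sigma_{23}(\delta_A\tens\id_{\s H})(\xi)\,\delta_A(a)=\delta_{\eo}(\xi)\delta_A(a)$, using that $\delta_A$ is a $*$-homomorphism and that the extra $S$-leg of $\delta_A(a)$ commutes past $\mathcal V_{23}\sigma_{23}$ harmlessly (it sits in the same leg as the output $S$).

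For (1), I would compute
$\delta_{\eo}(\xi)^*\delta_{\eo}(\eta)=(\delta_A\tens\id_{\s H^*})(\xi^*)\sigma_{23}^*\mathcal V_{23}^*\mathcal V_{23}\sigma_{23}(\delta_A\tens\id_{\s H})(\eta)$,
then replace $\mathcal V_{23}^*\mathcal V_{23}$ by its initial projection $q_{\widehat\alpha\beta,23}$, push this projection through $\sigma_{23}$ and through $(\delta_A\tens\id_{\s H})(\eta)$ using \ref{lem23} to land it on the $\s H$-leg of $\eta$ as $q_{\beta_A\widehat\alpha}$, and finally use the second identity of \ref{hilbmodequ}-type, namely $(\delta_A\tens\id)(\xi^*)\,(\delta_A\tens\id)(\eta)=\delta_A(\langle\xi,\eta\rangle)$ evaluated in the Hilbert-module sense (this is just the inner-product compatibility $\delta_A(\langle\xi,\eta\rangle)=\langle\delta_A(\xi),\delta_A(\eta)\rangle$ applied to $\eo=A\tens\s H$), obtaining $\delta_A(\langle q_{\beta_A\widehat\alpha}\xi,q_{\beta_A\widehat\alpha}\eta\rangle)$. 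Statement (4) follows by combining (1) and (3): from (1) with $\xi=\eta$ we get that $\delta_{\eo}(\xi)^*\delta_{\eo}(\xi)\in\delta_A(A)\subset\M(A\tens S)$, so each $\delta_{\eo}(\xi)$ is an adjointable map $A\tens S\to\eo\tens S$ whose range lies in $q_{\beta_A\widehat\alpha}(A\tens\s H)\tens S=\er\tens S$ because the final projection of $\mathcal V_{23}$ combined with the $\widehat\alpha$-constraint carried by $\delta_A$ (again \ref{lem23}, \ref{prop34}) pins the $\s H$-leg of the output into $\er$; equivalently $q_{\beta_A\widehat\alpha,12}\,\delta_{\eo}(\xi)=\delta_{\eo}(\xi)$.

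The main obstacle I anticipate is bookkeeping the several projections ($q_{\beta_A\alpha}$, $q_{\beta_A\widehat\alpha}$, $q_{\widehat\alpha\beta}$, $q_{\beta\alpha}$) living on different tensor legs and making sure the commutation relations of \ref{prop34} and \ref{NonComRel} are applied to the correct legs of $\mathcal V_{23}$; in particular, verifying that $\mathcal V_{23}$ genuinely converts the $\alpha$-compatibility of the middle leg of $\delta_A(a)_{13}$ into $\widehat\alpha$-compatibility of the output leg requires care, since $\mathcal V$ lies in $\widehat M'\tens M$ and its intertwining property $V(1\tens\alpha(n))=(\widehat\alpha(n)\tens1)V$ must be read off in the $(2,3)$ legs. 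Once the leg indices are tracked correctly, each identity reduces to an application of one algebraic fact already available in the excerpt, so no genuinely new input is needed.
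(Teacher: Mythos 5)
Your proposal is correct and follows essentially the same route as the paper's proof: compute on elementary tensors $a\tens\zeta$, use $\sigma^*\mathcal V^*\mathcal V\sigma=q_{\beta\widehat\alpha}$ together with the compatibility $(1_A\tens\beta(n^{\rm o}))\delta_A(a)=\delta_A(\beta_A(n^{\rm o})a)$ to transfer projections between legs for (1) and (3), read (2) off the defining formula, and reduce (4) to $q_{\beta_A\widehat\alpha,12}\delta_{\eo}(\xi)=\delta_{\eo}(\xi)$ via the intertwining relation $(\widehat\alpha(n)\tens 1)\mathcal V=\mathcal V(1\tens\alpha(n))$ and $q_{\beta_A\alpha}=\delta_A(1_A)$. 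The only cosmetic difference is that the paper proves (4) directly from this last identity rather than presenting it as a consequence of (1) and (3), but your argument lands on the same key relation.
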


\begin{proof}
1. Let $\xi,\eta\in\eo$, we have
$
\delta_{\eo}(\xi)^*\delta_{\eo}(\eta)=(\delta_A\tens \id)(\xi^*)\sigma_{23}^*{\cal V}_{23}^*{\cal V}_{23}^{\vphantom{*}}\sigma_{23}^{\vphantom{*}}(\delta_A\tens \id)(\eta).
$
We have $\sigma^*{\cal V}^*{\cal V}\sigma=q_{\beta\widehat{\alpha}}$. Let $n,n'\in N$. For all $a\in A$ and $\zeta\in\s H$, we have
\begin{align*}
(1_A\tens\beta(n^{\rm o})\tens \widehat{\alpha}(n'))(\delta_A\tens \id_{\s H})(a\tens\zeta)&
=(1_A\tens\beta(n^{\rm o}))\delta_A(a)\tens \widehat{\alpha}(n')\zeta\\
&=\delta_A(\beta_A(n^{\rm o})a)\tens \widehat{\alpha}(n')\zeta\\
&=(\delta_A\tens \id_{\s H})((\beta_A(n^{\rm o})\tens \widehat{\alpha}(n'))(a\tens\zeta)).
\end{align*}
Hence, 
$
(1_A\tens\beta(n^{\rm o})\tens \widehat{\alpha}(n'))(\delta_A\tens \id_{\s H})(\eta)=(\delta_A\tens \id_{\s H})((\beta_A(n^{\rm o})\tens \widehat{\alpha}(n'))\eta).
$
It then follows that 
$
\sigma_{23}^*{\cal V}_{23}^*{\cal V}_{23}\sigma_{23}(\delta_A\tens \id_{\s H})(\eta)=(\delta_A\tens \id_{\s H})(q_{\beta_A\widehat{\alpha}}\eta).
$
We finally have
\begin{align*}
\delta_{\eo}(\xi)^*\delta_{\eo}(\eta)&=(\delta_A\tens \id_{\s H^*})(\xi^*)(\delta_A\tens \id_{\s H})(q_{\beta_A\widehat{\alpha}}\eta)\\
&=\delta_A(\langle\xi,\, q_{\beta_A\widehat{\alpha}}\eta\rangle)\\
&=\delta_A(\langle q_{\beta_A\widehat{\alpha}}\xi,\, q_{\beta_A\widehat{\alpha}}\eta\rangle),
\end{align*}
where the last equality follows from the fact that $q_{\beta_A\widehat{\alpha}}\in\Lin(\eo)$ is a self-adjoint projection.

\medskip

2. Let us fix $a,b\in A$ and $\zeta\in\s H$. We have $\delta_A(a)_{13}(1_A\tens\zeta\tens 1_S)=(1_A\tens\zeta\tens 1_S)\delta_A(a)$ in $\Lin(A\tens S,{\cal E}_0\tens S)$. Hence,
$
\delta_{\eo}((b\tens\zeta)a)=\delta_{\eo}(b\tens\zeta)\delta_A(a).
$

\medskip

3. Let $a\in A$ and $\zeta\in\s H$. For all $n,n'\in N$, we have
\begin{align*}
\delta_{\eo}(\beta_A(n^{\rm o})a\tens \widehat\alpha(n')\zeta)
&={\cal V}_{23}(1_A\tens\widehat\alpha(n')\tens 1_S)\delta_A(\beta_A(n^{\rm o})a)_{13}(1_A\tens\zeta\tens 1_S)\\
&={\cal V}_{23}(1_A\tens\widehat{\alpha}(n')\tens\beta(n^{\rm o}))\delta_A(a)_{13}(1_A\tens\zeta\tens 1_S).
\end{align*}
Hence, $\delta_{\eo}(q_{\beta_A\widehat{\alpha}}(a\tens\zeta))={\cal V}_{23}q_{\widehat{\alpha}\beta,23}\delta_A(a)_{13}(1_A\tens\zeta\tens 1_S)=\delta_{\eo}(a\tens\zeta)$.

\medskip

4. It suffices to show that $q_{\beta_A\widehat{\alpha},12}\delta_{\eo}(\xi)=\delta_{\eo}(\xi)$ for all $\xi\in\eo$. We recall (cf.\ \ref{prop34}) that $(\widehat\alpha(n)\tens 1_S){\cal V}={\cal V}(1_{\K}\tens\alpha(n))$ for all $n\in N$. Hence, $q_{\beta_A\widehat{\alpha},12}{\cal V}_{23}={\cal V}_{23}q_{\beta_A\alpha,13}$. It then follows from $q_{\beta_A\alpha}=\delta_A(1_A)$ that $q_{\beta_A\widehat{\alpha},12}{\cal V}_{23}\delta_A(a)_{13}={\cal V}_{23}\delta_A(a)_{13}$ for all $a\in A$. Hence, $q_{\beta_A\widehat{\alpha},12}\delta_{\eo}(a\tens\zeta)=\delta_{\eo}(a\tens\zeta)$ for all $a\in A$ and $\zeta\in\s H$.
\end{proof}

\begin{nbs}\label{not1}
According to the previous proposition, $\delta_{\eo}$ restricts to a linear map
\[\delta_{\er}:\er\rightarrow\Lin(A\tens S,\er\tens S),\] 
which satisfies the following statements:
\begin{itemize}
\item $\delta_{\er}(\xi)^*\delta_{\er}(\eta)=\delta_A(\langle\xi,\,\eta\rangle)$, for all $\xi,\,\eta\in\er$;
\item $\delta_{\er}(\xi a)=\delta_{\er}(\xi)\delta_A(a)$, for all $\xi\in\er$ and $a\in A$.\qedhere
\end{itemize}
Since $[\widehat{\alpha}(n'),\,\beta(n^{\rm o})]=0$ for all $n,n'\in N$, we have $[1_A\tens\beta(n^{\rm o}),\,q_{\beta_A\widehat{\alpha}}]=0$ for all $n\in N$. We then have a non-degenerate *-homomorphism
\[
\beta_{\er}:N^{\rm o}\rightarrow\Lin(\er)\; ; \; n \mapsto \restr{(1_A\tens\beta(n^{\rm o}))}{\er}\!.
\]
Since $\beta$ and $\widehat{\alpha}$ commute pointwise and ${\cal V}{\cal V}^*=q_{\beta\alpha}$, we have $[{\cal V}_{23}^{\phantom{*}}{\cal V}_{23}^*,\, q_{\beta_A\widehat{\alpha},12}]=0$. Hence, $q_{\beta_{\er}\alpha}=\restr{{\cal V}_{23}^{\phantom{*}}{\cal V}_{23}^*}{\er\tens S}\,\in\Lin(\er\tens S)$.
\end{nbs}

\begin{prop}
We have the following statements:
\begin{enumerate}
\item $\delr(\er)\subset\widetilde{\M}(\er\tens S)$;
\item $[\delr(\er)(A\tens S)]=q_{{\beta_{\er}}\alpha}(\er\tens S)$; 
$\phantom{\widetilde{\M}}$
\item $\delr(\betr(n^{\rm o})\xi)=(1_{\er}\tens\beta(n^{\rm o}))\delr(\xi)$, for all $\xi\in\er$ and $n\in N$.\qedhere
\end{enumerate}
\end{prop}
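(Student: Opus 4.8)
The plan is to prove the three assertions in the order (iii), (ii), (i), since (i) contains the only real difficulty. Throughout I would work on the generators $\xi=q_{\beta_A\widehat\alpha}(a\tens\zeta)$ of $\er$ and use that $\delr$ is the restriction of $\delo$ together with the identity $\delo(q_{\beta_A\widehat\alpha}\eta)=\delo(\eta)$ from \ref{exehmprop1}.

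\emph{Statement (iii).} Because $\beta(N^{\rm o})$ and $\widehat\alpha(N)$ commute pointwise we have $\betr(n^{\rm o})\xi=q_{\beta_A\widehat\alpha}(a\tens\beta(n^{\rm o})\zeta)$, whence $\delr(\betr(n^{\rm o})\xi)=\delo(a\tens\beta(n^{\rm o})\zeta)$. On the other side, the relation $V(\beta(n^{\rm o})\tens 1)=(1\tens\beta(n^{\rm o}))V$ of \ref{prop34} translates into $(1_{\s H}\tens\beta(n^{\rm o})){\cal V}={\cal V}(\beta(n^{\rm o})\tens 1_S)$; substituting this into the defining formula $\delo(a\tens\zeta)={\cal V}_{23}\delta_A(a)_{13}(1_A\tens\zeta\tens 1_S)$ and commuting $\beta(n^{\rm o})$ (placed on the middle, $\s H$-, leg) past $\delta_A(a)_{13}$ yields $(1_{\er}\tens\beta(n^{\rm o}))\delr(\xi)=\delo(a\tens\beta(n^{\rm o})\zeta)$ as well. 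Linearity and density over $\eo$ finish the argument.

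\emph{Statement (ii).} From the formula for $\delo$ one gets $\delr(q_{\beta_A\widehat\alpha}(a\tens\zeta))(b\tens t)={\cal V}_{23}$ applied to the vector obtained from $\delta_A(a)(b\tens t)$ by inserting $\zeta$ on the middle leg; since $(\beta_A,\delta_A)$ is strongly continuous, the elements $\delta_A(a)(b\tens t)$ span $q_{\beta_A\alpha}(A\tens S)$ (cf.\ \ref{rk12}), so $[\delr(\er)(A\tens S)]=[{\cal V}_{23}\,q_{\beta_A\alpha,13}(A\tens\s H\tens S)]$. The intertwining relation $q_{\beta_A\widehat\alpha,12}{\cal V}_{23}={\cal V}_{23}q_{\beta_A\alpha,13}$ established in the proof of \ref{exehmprop1}, together with ${\cal V}{\cal V}^*=q_{\beta\alpha}$ (\ref{inifinproj}), rewrites the right-hand side as $q_{\beta_A\widehat\alpha,12}\,q_{\beta\alpha,23}(A\tens\s H\tens S)$. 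Since $\er\tens S=q_{\beta_A\widehat\alpha,12}(A\tens\s H\tens S)$, $q_{\beta_{\er}\alpha}=\restr{q_{\beta\alpha,23}}{\er\tens S}$ (cf.\ \ref{not1}), and these two projections commute (as $\beta$, $\widehat\alpha$ commute and $\alpha$, $\widehat\alpha$ live on disjoint legs), this is precisely $q_{\beta_{\er}\alpha}(\er\tens S)$.

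\emph{Statement (i), right-hand condition.} For $s\in S$ and $\xi\in\er$ write $\xi=\xi'a$ with $\xi'\in\er$, $a\in A$. Then $\delr(\xi)(1_A\tens s)=\delr(\xi')\,\delta_A(a)(1_A\tens s)$ by \ref{not1}, and $w:=\delta_A(a)(1_A\tens s)\in A\tens S$ by strong continuity of $\delta_A$; hence $\delr(\xi)(1_A\tens s)$ is the composite of $\delr(\xi')\in\Lin(A\tens S,\er\tens S)$ with left multiplication by $w$, which lies in $\K(A\tens S)$, and therefore belongs to $\K(A\tens S,\er\tens S)=\er\tens S$.

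\emph{Statement (i), left-hand condition --- the main obstacle.} Here compactness of $(1_{\er}\tens s)\delr(\xi)$ is not visible from the formula, since $(1_{\s H}\tens s){\cal V}$ need not be compact on the $\s H$-leg; I would instead use the standing regularity hypothesis through Theorem \ref{BidulityTheo}. When $\cal G$ is regular, $D=q_{\beta_A\widehat\alpha}(A\tens\K)q_{\beta_A\widehat\alpha}=\K(\er)$ is a $\cal G$-C*-algebra whose action $\delta_D$ satisfies $(j_D\tens\id_S)\delta_D(d)={\cal V}_{23}\delta_0(d){\cal V}_{23}^*$. A short computation on generators $\theta_{\xi,\eta}$ ($\xi,\eta\in\er$) gives $\delr(\xi)\delr(\eta)^*={\cal V}_{23}\delta_0(\theta_{\xi,\eta}){\cal V}_{23}^*=(j_D\tens\id_S)\delta_D(\theta_{\xi,\eta})$. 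Factoring $\xi=k\xi'$ with $k\in\K(\er)=D$, $\xi'\in\er$, and combining this with $\delr(\xi'a)=\delr(\xi')\delta_A(a)$ and $\delr(\mu)^*\delr(\nu)=\delta_A(\langle\mu,\nu\rangle)$ (cf.\ \ref{not1}, \ref{exehmprop1}), one obtains $\delr(\xi)=(j_D\tens\id_S)\delta_D(k)\circ\delr(\xi')$, so that $(1_{\er}\tens s)\delr(\xi)=(j_D\tens\id_S)\bigl[(1_D\tens s)\delta_D(k)\bigr]\circ\delr(\xi')$. By strong continuity of $\delta_D$ and \ref{prop39}, $(1_D\tens s)\delta_D(k)\in D\tens S=\K(\er\tens S)$, and composing with $\delr(\xi')\in\Lin(A\tens S,\er\tens S)$ lands in $\K(A\tens S,\er\tens S)=\er\tens S$. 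The two points requiring care are the identification $D=\K(\er)$ and the equality $\delr(\xi)\delr(\eta)^*=(j_D\tens\id_S)\delta_D(\theta_{\xi,\eta})$; once these are in place the rest is routine manipulation of Hilbert-module identities.
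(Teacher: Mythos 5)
Your statements (ii) and (iii), and the right-multiplier half of (i), follow the paper's own argument in all essentials (the paper proves $\delo(a\tens\zeta)(1_A\tens s)\in\eo\tens S$ directly from $\delta_A(A)(1_A\tens S)\subset A\tens S$ rather than factoring $\xi=\xi'a$, but the computation is the same). The genuine divergence is in the left-multiplier half of (i). The paper writes $\zeta=\rho(x)\eta$ with $x\in\widehat{S}$, $\eta\in\s H$, observes that $(1_{\K}\tens y){\cal V}(\zeta\tens 1_S)=(\rho\tens\id_S)\bigl((1_{\widehat{S}}\tens y)V(x\tens 1_S)\bigr)(\eta\tens 1_S)$, and lets regularity enter only through the elementary relation $[(1\tens S)V(\widehat{S}\tens 1)]\subset\widehat{S}\tens S$ of \ref{corReg}; the conclusion then follows from $(1_A\tens S)\delta_A(A)\subset A\tens S$. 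You instead invoke regularity through the much heavier Theorem \ref{BidulityTheo} (namely $D=q_{\beta_A\widehat{\alpha}}(A\tens\K)q_{\beta_A\widehat{\alpha}}=\K(\er)$ and the strong continuity of $\delta_D$) together with the covariance identity $\delr(k\xi')=(j_D\tens\id_S)\delta_D(k)\circ\delr(\xi')$, which you correctly reduce to $\delr(\xi)\delr(\eta)^*={\cal V}_{23}\delta_0(\theta_{\xi,\eta}){\cal V}_{23}^*$. That identity does hold: the projections in $\theta_{\xi,\eta}=q_{\beta_A\widehat{\alpha}}(ab^*\tens\theta_{\zeta,\chi})q_{\beta_A\widehat{\alpha}}$ are absorbed because $\delta_0(q_{\beta_A\widehat{\alpha}})=q_{\beta_A\alpha,13}q_{\widehat{\alpha}\beta,23}$, ${\cal V}_{23}q_{\beta_A\alpha,13}=q_{\beta_A\widehat{\alpha},12}{\cal V}_{23}$ and ${\cal V}^*{\cal V}=q_{\widehat{\alpha}\beta}$. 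There is no circularity, since Theorem \ref{BidulityTheo} is imported from \cite{BC} independently of the present proposition; in substance your identity is the equivariance of the left $D$-action later recorded as Proposition \ref{prop33}, so your route proves part of that statement en passant. What the paper's argument buys is economy — it needs only the regularity relations of Chapter 4 and works vector by vector on the $\s H$-leg; what yours buys is a structural explanation of why left multiplication by $S$ preserves $\er\tens S$, namely that $\delr$ intertwines $\K(\er)=D$ with the strongly continuous bidual action.
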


\begin{proof}
1. Let us prove that $\delr(\xi)(1_A\tens s)\in\er\tens S$ for all $\xi\in\er$ and $s\in S$. It amounts to proving that $\delo(\xi)(1_A\tens s)\in\eo\tens S$ for all $\xi\in\eo$ and $s\in S$ (cf.\ \ref{exehmprop1} 3, 4). Let $a\in A$ and $\zeta\in\s H$. It follows from the relation $\delta_A(A)(1_A\tens S)\subset A\tens S$ that $\delo(a\tens\zeta)(1_A\tens s)=(1_A\tens{\cal V}(\zeta\tens 1_S))\delta_A(a)(1_A\tens s)$ is the norm limit of finite sums of the form
$
\sum_{i}a_i\tens{\cal V}(\zeta\tens s_i),
$
where $a_i\in A$ and $s_i\in S$. Hence, $\delo(a\tens\zeta)(1_A\tens s)\in\eo\tens S$.\hfill\break
Now, let us prove that $(1_{\er}\tens y)\delr(\xi)\in\er\tens S$ for all $\xi\in\er$ and $y\in S$. This also amounts to proving that $(1_{\eo}\tens y)\delta_{\eo}(\xi)\in\eo\tens S$ for all $\xi\in\eo$ and $y\in S$. Let $a\in A$, $\zeta\in\s H$ and $y\in S$, we have
$
(1_{\eo}\tens y)\delta_{\eo}(a\tens\zeta)=(1_A\tens(1_{\s H}\tens y){\cal V}(\zeta\tens 1_S))\delta_A(a).
$
Write $\zeta=\rho(x)\eta$ with $x\in\widehat{S}$ and $\eta\in\s H$. We have
$
(1_{\K}\tens y){\cal V}(\zeta\tens 1_S)=(\rho\tens \id_S)((1_{\widehat{S}}\tens y)V(x\tens 1_S))(\eta\tens 1_S).
$
Since $\cal G$ is regular, we have $(1_{\widehat{S}}\tens y)V(x\tens 1_S)\in\widehat{S}\tens S$ (cf.\ \ref{corReg} 2). Hence, $(1_{\eo}\tens y)\delta_{\eo}(a\tens\zeta)$ is the norm limit of finite sums of elements of the form $(1_A\tens\rho(x')\eta\tens y')\delta_A(a)$ with $x'\in\widehat{S}$ and $y'\in S$. Hence, $(1_{\eo}\tens y)\delta_{\eo}(a\tens\zeta)\in\eo\tens S$ since $(1_A\tens S)\delta_A(A)\subset A\tens S$.

\medskip

2. Since ${\cal V}{\cal V}^*{\cal V}={\cal V}$, we have ${\cal V}_{23}^{\phantom{*}}{\cal V}_{23}^*\delta_{\eo}(\xi)=\delta_{\eo}(\xi)$ for all $\xi\in\eo$. It then follows that $q_{{\beta_{\er}}\alpha}\delr(\xi)=\delr(\xi)$, for all $\xi\in\er$. By the first statement, we then obtain
\[
\delr(\er)(A\tens S)\subset q_{{\beta_{\er}}\alpha}(\er\tens S).
\]
Conversely, let $a\in A$, $\zeta\in\s H$ and $s\in S$. Since ${\cal V}_{23}q_{\beta_A\alpha,13}=q_{\beta_A\widehat{\alpha},12}{\cal V}_{23}$, we have
\[
q_{{\beta_{\er}}\alpha}(q_{\beta_A\widehat{\alpha}}(a\tens\zeta)\tens s)={\cal V}_{23}{\cal V}_{23}^*q_{\beta_A\widehat{\alpha},12}(a\tens\zeta\tens s)={\cal V}_{23}q_{\beta_A\alpha,13}(a\tens{\cal V}^*(\zeta\tens s)).
\]
Hence, $q_{{\beta_{\er}}\alpha}(q_{\beta_A\widehat{\alpha}}(a\tens\zeta)\tens s)$ is the norm limit of finite sums of elements of the form:
\[
{\cal V}_{23}q_{\beta_A\alpha,13}(a\tens\zeta'\tens s')={\cal V}_{23}(q_{\beta_A\alpha}(a\tens s'))_{23}(1_A\tens\zeta'\tens 1_S),\quad \text{where } \zeta'\in\s H,\ s'\in S.
\]
By continuity of the action $(\delta_A,\beta_A)$, ${\cal V}_{23}q_{\beta_A\alpha,13}(a\tens\zeta'\tens s')$ is the norm limit of finite sums of the form
$
\sum_i{\cal V}_{23}\delta_A(a_i)_{13}(1_A\tens\zeta'\tens s_i)=\sum_i\delr(q_{\beta_A\widehat{\alpha}}(a_i\tens\zeta'))(1_A\tens s_i),
$
where $a_i\in A$ and $s_i\in S$.
As a result, we have
\[
q_{{\beta_{\er}}\alpha}(q_{\beta_A\widehat{\alpha}}(a\tens\zeta)\tens s)\in[\delr(\er)(A\tens S)]
\] 
for all $a\in A$, $\zeta\in\s H$ and $s\in S$. Hence, $q_{\beta_{\er}\alpha}(\er\tens S)\subset[\delr(\er)(A\tens S)]$.

\medskip

3. Let $\xi=q_{\beta_A\widehat{\alpha}}(a\tens\zeta)$, with $a\in A$ and $\zeta\in\s H$. We have 
\[
\betr(n^{\rm o})\xi=(1_A\tens \beta(n^{\rm o}))q_{\beta_A\widehat{\alpha}}(a\tens\zeta)=q_{\beta_A\widehat{\alpha}}(a\tens \beta(n^{\rm o})\zeta).
\]
Moreover, we have ${\cal V}(\beta(n^{\rm o})\tens 1_S)=(1_{\K}\tens\beta(n^{\rm o})){\cal V}$ for all $n\in N$ (cf.\ \ref{prop34}). It then follows that
\[
\delr(\betr(n^{\rm o})\xi)\!=\!\delo(a\tens \beta(n^{\rm o})\zeta)\!=\!(1_{\eo}\!\tens\beta(n^{\rm o}))\delo(a\tens\zeta)\!=\!(1_{\er}\!\tens\beta(n^{\rm o}))\delr(\xi).\qedhere
\]
\end{proof}

Consequently, $\delr\tens \id_S$ and $\id_{\er}\tens\delta$ extend to linear maps from $\Lin(A\tens S,\er\tens S)$ to $\Lin(A\tens S\tens S,\er\tens S\tens S)$ (cf.\ \ref{rk9}) and we have: 
\begin{align*}
(\delr\tens \id_S)(T)(\delta_A\tens \id_S)(x)&=(\delr\tens \id_S)(Tx);\\[0.3cm]
(\id_{\er}\tens\delta)(T)(\id_A\tens\delta)(x)&=(\id_{\er}\tens\delta)(Tx);
\end{align*}
for all $x\in A\tens S$ and $T\in\Lin(A\tens S,\er\tens S)$.

\begin{prop}
For all $\xi\in\er$,
$
(\delr\tens \id_S)\delr(\xi)=(\id_{\er}\tens\delta)\delr(\xi).
$
\end{prop}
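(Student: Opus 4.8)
The plan is to reduce to the generators of $\er$. By \ref{exehmprop1} 3 the submodule $\er=q_{\beta_A\widehat{\alpha}}(A\tens\s H)$ is spanned by the elements $\xi=q_{\beta_A\widehat{\alpha}}(a\tens\zeta)$ with $a\in A$ and $\zeta\in\s H$, and for such $\xi$ one has the explicit formula $\delr(\xi)=\delo(a\tens\zeta)={\cal V}_{23}\,\delta_A(a)_{13}\,(1_A\tens\zeta\tens1_S)={\cal V}_{23}(1_A\tens\zeta\tens1_S)\delta_A(a)$ in $\Lin(A\tens S,\er\tens S)$. Since $\delr$, $\delr\tens\id_S$ and $\id_{\er}\tens\delta$ are linear and norm-continuous — the last two being well defined here because $\delr$ already satisfies conditions 1 and 2 of \ref{hilbmodequ} by the previous proposition, so that \ref{rk9} applies — it suffices to check $(\delr\tens\id_S)\delr(\xi)=(\id_{\er}\tens\delta)\delr(\xi)$ for these $\xi$.

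The next step is to unwind both extensions on this formula, using the product rules of \ref{rk9}, namely $(\id_{\er}\tens\delta)(T)(\id_A\tens\delta)(x)=(\id_{\er}\tens\delta)(Tx)$ and $(\delr\tens\id_S)(T)(\delta_A\tens\id_S)(x)=(\delr\tens\id_S)(Tx)$ for $x\in A\tens S$, together with the description $(\delr\tens\id_S)(T)=(\s V\tens_{\GC}1_S)(T\tens_{\delta_A\tens\id_S}1)$ in terms of the isometry $\s V$ associated with $\delr$ by \ref{prop27} a). One uses that, under the identifications (\ref{ehmeq1})--(\ref{eq7}), $\s V$ is (the restriction of) ${\cal V}_{23}$, that $\delta$ is conjugation by $V$ on $S$, so that $\delta$ turns a leg of ${\cal V}$ into a product of two copies of ${\cal V}$ via the pentagonal equation $V_{12}V_{13}=V_{23}V_{12}V_{23}^*$ (valid for ${\cal V}$ since $(\id_{\K}\tens L)({\cal V})=V$ and $L$ is faithful; cf.\ the proof of \ref{propReg3}), and that $(\delta_A\tens\id_S)\delta_A(a)=(\id_A\tens\delta)\delta_A(a)=\delta_A^2(a)$ by \ref{prop39} 1. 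Carried out, both $(\delr\tens\id_S)\delr(\xi)$ and $(\id_{\er}\tens\delta)\delr(\xi)$ become, after the identifications, a product of two copies of ${\cal V}$, of $\delta_A^2(a)$, and of $\zeta$ inserted in a fixed leg, sandwiched between the appropriate initial and final projections of ${\cal V}$; the two expressions differ precisely by one application of the pentagonal equation for ${\cal V}$ in the relevant legs, combined with $(\delta_A\tens\id_S)\delta_A=(\id_A\tens\delta)\delta_A$. This is the same mechanism that underlies the biduality Theorem \ref{BidulityTheo}, where the action on the bidual is $(j_D\tens\id_S)\delta_D(d)={\cal V}_{23}\delta_0(d){\cal V}_{23}^*$.

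I expect the only genuine difficulty to be bookkeeping: tracking which leg is which through the internal-tensor-product identifications (\ref{ehmeq1})--(\ref{eq7}), and — because ${\cal V}$ is a partial isometry rather than a unitary — checking that the initial/final projections produced on the two sides genuinely coincide rather than merely being compatible. The relations $q_{\beta_A\alpha}=\delta_A(1_A)$, the commutation identities ${\cal V}(1_{\K}\tens\alpha(n))=(\widehat{\alpha}(n)\tens1){\cal V}$ and ${\cal V}(\beta(n^{\rm o})\tens1)=(1\tens\beta(n^{\rm o})){\cal V}$ from \ref{prop34} (already used in the proof of \ref{exehmprop1} 4), the identity $q_{\beta_A\widehat{\alpha},12}{\cal V}_{23}={\cal V}_{23}q_{\beta_A\alpha,13}$ established there, and $\delta(1_S)=q_{\beta\alpha}$, are what let one reconcile the projections $q_{\beta_{\er}\alpha}$, $q_{\beta_A\widehat{\alpha}}$, $q_{\beta_A\alpha}$ and $q_{\widehat{\alpha}\beta}$ appearing across the computation. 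An essentially equivalent but more conceptual route would be to verify condition 4 of Definition \ref{isometry} for the isometry $\s V$ of $\delr$ directly — which, under the same identifications, is once more the pentagonal equation for ${\cal V}$ together with the coassociativity of $\delta_A$ — and then to invoke \ref{prop27} c).
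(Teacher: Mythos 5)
Your proposal is correct and follows essentially the same route as the paper: reduce to generators $\xi=q_{\beta_A\widehat{\alpha}}(a\tens\zeta)$, expand both extensions via \ref{rk9}, and conclude from the identity $(\id_{\K}\tens\delta)({\cal V})={\cal V}_{12}{\cal V}_{13}$ (the pentagon for ${\cal V}$) together with the coassociativity $(\delta_{A}\tens\id_S)\delta_A=(\id_A\tens\delta)\delta_A$, checking equality against elements of $q_{\beta_A\alpha,12}q_{\beta\alpha,23}(A\tens S\tens S)$. The projection bookkeeping you flag is exactly what the paper handles with $q_{\beta_A\alpha}=\delta_A(1_A)$ and the insertion of $q_{\beta\alpha,34}$ at the last step.
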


\begin{proof}
Let $a\in A$, $\zeta\in\s H$ and $x\in A\tens S$. Let $\xi:=q_{\beta_A\widehat{\alpha}}(a\tens\zeta)$. We have
\[
(\delr\tens \id_S)\delr(\xi)(\delta_A \tens \id_S)(x)=(\delo\tens \id_S)({\cal V}_{23}(\delta_A(a)x)_{13}(1_A\tens\zeta\tens 1_S)).
\]
For all $b\in A$, $\zeta'\in\s H$ and $s'\in S$, we have
\[
(\delo\tens \id_S)(b\tens\zeta'\tens s')={\cal V}_{23}\delta_A(b)_{13}(1_A\tens\zeta'\tens 1_S\tens s').
\]
Hence, $(\delo\tens \id_S)(b\tens X)={\cal V}_{23}\delta_A(b)_{13}X_{24}\in\Lin(A\tens S\tens S,A\tens\s H\tens S\tens S)$ for all $b\in A$ and $X\in\s H\tens S$. In particular, we have
\begin{align*}
(\delo\tens \id_S)({\cal V}_{23}(b\tens\zeta\tens s))&=(\delo\tens \id_S)(b\tens{\cal V}(\zeta\tens s))\\
&={\cal V}_{23}\delta_A(b)_{13}{\cal V}_{24}(1_A\tens\zeta\tens 1_S\tens s)\\
&={\cal V}_{23}{\cal V}_{24}\delta_A(b)_{13}(1_A\tens\zeta\tens 1_S\tens s).
\end{align*}
However, we have $(\id_{\K}\tens\delta)({\cal V})={\cal V}_{12}{\cal V}_{13}$. Hence, ${\cal V}_{23}{\cal V}_{24}=(\id_{A\tens\K}\tens\delta)({\cal V}_{23})$. Moreover, we have
$
\delta_A(b)_{13}(1_A\tens\zeta\tens 1_S\tens s)=(\delta_{A,13}\tens \id_S)(b\tens s)(1_A\tens\zeta\tens 1_S\tens 1_S),
$
for all $b\in A$ and $s\in S$, where $\delta_{A,13}:A\rightarrow\M(A\tens\K\tens S)$ is the strictly continuous *-homomorphism defined by $\delta_{A,13}(a):=\delta_A(a)_{13}$ for all $a\in A$. As a result, for all $Y\in A\tens S$ we have
\[
(\delo\tens \id_S)({\cal V}_{23}Y_{13}(1_A\tens\zeta\tens 1_S))=(\id_{A\tens\K}\tens\delta)({\cal V}_{23})(\delta_{A,13}\tens \id_S)(Y)(1_A\tens\zeta\tens 1_S\tens 1_S).
\]
In particular, we have 
\begin{multline*}
(\delo\tens \id_S)(\delo(a\tens\zeta)x)=(\id_{A\tens\K}\tens\delta)({\cal V}_{23})(\delta_{A,13}\tens \id_S)(\delta_A(a)x)(1_A\tens\zeta\tens 1_S\tens 1_S)\\
=(\id_{A\tens\K}\tens\delta)({\cal V}_{23})(\delta_{A,13}\tens \id_S)\delta_A(a)(1_A\tens\zeta\tens 1_S\tens 1_S)(\delta_A\tens \id_S)(x).
\end{multline*}
Moreover, we have $(\delta_{A,13}\tens \id_S)\delta_A=(\id_{A\tens\K}\tens\delta)\delta_{A,13}$. Hence,
\[
(\delr\tens \id_S)\delr(\xi)x=(\id_{A\tens\K}\tens\delta)({\cal V}_{23}\delta_A(a)_{13})(1_A\tens\zeta\tens 1_S\tens 1_S)x,
\]
for all $x\in q_{\beta_A\alpha,12}(A\tens S\tens S)$. In particular, if $x\in q_{\beta_A\alpha,12}q_{\beta\alpha,23}(A\tens S\tens S)$ we have
\begin{align*}
(\delr\tens \id_S)\delr(\xi)x &=(\id_{A\tens\K}\tens\delta)({\cal V}_{23}\delta_A(a)_{13})q_{\beta\alpha,34}(1_A\tens\zeta\tens 1_S\tens 1_S)x\\
&=(\id_{A\tens\K}\tens\delta)({\cal V}_{23}\delta_A(a)_{13})(\id_{\eo}\tens\delta)(1_A\tens\zeta\tens 1_S)x\\
&=(\id_{\eo}\tens\delta)\delo(a\tens\zeta)x\\
&=(\id_{\er}\tens\delta)\delr(\xi)x.
\end{align*}
Hence, $(\delr\tens \id_S)\delr(\xi)=(\id_{\er}\tens\delta)\delr(\xi)$.
\end{proof}

Now, we can assemble the previous results (see also \ref{corActReg}) in the statement below.

\begin{prop}
The triple $(\er,\betr,\delr)$ is a ${\cal G}$-equivariant Hilbert $A$-module.
\end{prop}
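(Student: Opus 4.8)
The plan is to verify, one requirement at a time, that the pair $(\betr,\delr)$ satisfies the four conditions of Definition \ref{hilbmodequ} for the Hilbert $A$-module $\er$, and then to invoke the regularity of $\cal G$ to upgrade the resulting action to a continuous one. Every ingredient has in fact already been isolated in the preceding propositions of this paragraph, so the argument amounts to collating them.

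First I would recall that $\delr:\er\to\widetilde{\M}(\er\tens S)$ is a well-defined linear map (established above) and that $\betr:N^{\rm o}\to\Lin(\er)$ is a non-degenerate *-homomorphism (cf.\ \ref{not1}). Condition~1 of \ref{hilbmodequ} is the pair of identities $\delr(\xi a)=\delr(\xi)\delta_A(a)$ and $\langle\delr(\xi),\delr(\eta)\rangle=\delta_A(\langle\xi,\eta\rangle)$ for $\xi,\eta\in\er$ and $a\in A$; these are exactly the two displayed formulas gathered in \ref{not1}, once one observes that the $\widetilde{\M}(A\tens S)$-valued inner product on $\widetilde{\M}(\er\tens S)$ is $\langle T,T'\rangle=T^*\circ T'$, so that $\langle\delr(\xi),\delr(\eta)\rangle=\delr(\xi)^*\delr(\eta)$. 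Conditions~2 and~3 --- namely $[\delr(\er)(A\tens S)]=q_{\betr\alpha}(\er\tens S)$ and $\delr(\betr(n^{\rm o})\xi)=(1_{\er}\tens\beta(n^{\rm o}))\delr(\xi)$ for all $n\in N$ --- are statements~2 and~3 of the three-part proposition just above. For condition~4 I would appeal to \ref{rk9}: since $\delr$ satisfies conditions~1 and~2, the maps $\delr\tens\id_S$ and $\id_{\er}\tens\delta$ automatically extend to $\Lin(A\tens S,\er\tens S)\to\Lin(A\tens S\tens S,\er\tens S\tens S)$, and the coassociativity relation $(\delr\tens\id_S)\delr(\xi)=(\id_{\er}\tens\delta)\delr(\xi)$ is the content of the proposition immediately preceding this one.

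With the four conditions checked, $(\betr,\delr)$ is an action of $\cal G$ on $\er$. Since $\cal G$ is regular, Corollary \ref{corActReg} shows the action is continuous, i.e.\ $[(1_{\er}\tens S)\delr(\er)]=(\er\tens S)q_{\beta_A\alpha}$; hence $(\er,\betr,\delr)$ is a $\cal G$-equivariant Hilbert $A$-module. There is no genuine obstacle --- the proposition is a packaging statement --- and the only point worth a line of care is matching the inner-product convention on $\widetilde{\M}(\er\tens S)$ with the formula for $\delr(\xi)^*\delr(\eta)$ obtained earlier, which is what allows the first condition of \ref{hilbmodequ} to be read off directly from \ref{exehmprop1} and \ref{not1}.
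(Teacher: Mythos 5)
Your proposal is correct and matches the paper exactly: the paper gives no separate proof, stating only that the proposition ``assembles the previous results (see also \ref{corActReg})'', which is precisely the collation you carry out — conditions 1--4 of Definition \ref{hilbmodequ} from \ref{not1}, the three-part proposition, and the coassociativity proposition, with continuity supplied by regularity via \ref{corActReg}.
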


Let $D$ be the bidual $\cal G$-C*-algebra of $A$. We have a canonical $\cal G$-equivariant *-isomorphism $\phi:(A\rtimes{\cal G})\rtimes{\widehat{\cal G}}\rightarrow D$ of ${\cal G}$-C*-algebras (cf.\ \ref{IsoTT}). Let $j_D:\M(D)\rightarrow\Lin(\eo)$ be the unique faithful continuous *-homomorphism for the strict/*-strong topologies such that $j_D(1_D)=q_{\beta_A\widehat{\alpha}}$.

\begin{prop}\label{prop33}
The *-representation $(A\rtimes{\cal G})\rtimes{\widehat{\cal G}}\rightarrow\Lin(\er)\,;\, x\mapsto \restr{\phi(x)}{\er}$ is $\cal G$-equivariant.
\end{prop}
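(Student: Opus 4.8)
The plan is to transport everything across the $\cal G$-equivariant $*$-isomorphism $\phi\colon(A\rtimes{\cal G})\rtimes\widehat{\cal G}\to D$ of \ref{IsoTT} and to recognise the given representation as $\phi$ followed by a canonical identification $D=\K(\er)$. Abbreviate $C:=(A\rtimes{\cal G})\rtimes\widehat{\cal G}$ and $\gamma\colon C\to\Lin(\er)$, $\gamma(x):=\restr{\phi(x)}{\er}$. First I would note that $\gamma$ is non-degenerate: since $\phi(C)=D$, $[D(A\tens\s H)]=\er$ and $DD$ is dense in $D$, one gets $[\gamma(C)\er]=[D\er]=[D(A\tens\s H)]=\er$. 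Because $\cal G$ is regular, every action of $\cal G$ on a Hilbert module is continuous (\ref{corActReg}), so $(\er,\betr,\delr)$ is a $\cal G$-equivariant Hilbert $A$-module and, by \ref{prop1} and \ref{prop31}, $\K(\er)$ is a $\cal G$-C*-algebra for the action $(\betr,\delta_{\K(\er)})$ of \ref{prop1}. By Remark \ref{rk11}~(2) and \ref{rkEqMorph}, proving that $\gamma$ is $\cal G$-equivariant (in the sense of \ref{defbimod}) amounts to proving the single identity $(\gamma\tens\id_S)\circ\delta_C=\delta_{\K(\er)}\circ\gamma$.

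Next I would make $D=\K(\er)$ explicit. Since $\cal G$ is regular, $D=q_{\beta_A\widehat{\alpha}}(A\tens\K)q_{\beta_A\widehat{\alpha}}$ by \ref{BidulityTheo}, while $\K(\er)=\K\!\big(q_{\beta_A\widehat{\alpha}}(A\tens\s H)\big)=q_{\beta_A\widehat{\alpha}}\K(A\tens\s H)q_{\beta_A\widehat{\alpha}}=q_{\beta_A\widehat{\alpha}}(A\tens\K)q_{\beta_A\widehat{\alpha}}$; as $j_D$ is the inclusion $D\subset\Lin(A\tens\s H)$ with $j_D(1_D)=q_{\beta_A\widehat{\alpha}}$, the resulting $*$-isomorphism $\iota\colon D\to\K(\er)$, $\iota(d)=\restr{d}{\er}$, satisfies $\gamma=\iota\circ\phi$. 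Since $\phi$ is $\cal G$-equivariant it remains to show $(\iota\tens\id_S)\circ\delta_D=\delta_{\K(\er)}\circ\iota$, which I would verify on the operators $\theta_{\xi,\eta}$ with $\xi=q_{\beta_A\widehat{\alpha}}(a\tens\zeta)$, $\eta=q_{\beta_A\widehat{\alpha}}(b\tens\chi)$ ($a,b\in A$, $\zeta,\chi\in\s H$), which span $\K(\er)$ densely. On the $D$-side, $\theta_{\xi,\eta}=q_{\beta_A\widehat{\alpha}}(ab^{*}\tens\theta_{\zeta,\chi})q_{\beta_A\widehat{\alpha}}$, and \ref{BidulityTheo} gives $(j_D\tens\id_S)\delta_D(\theta_{\xi,\eta})={\cal V}_{23}\,\delta_0(\theta_{\xi,\eta})\,{\cal V}_{23}^{*}$; expanding $\delta_0$ as a $*$-homomorphism with $\delta_0(a'\tens k')=\delta_A(a')_{13}(1_A\tens k'\tens1_S)$ and using that $\delta_A(b^{*})_{13}$ commutes with $1_A\tens\theta_{\zeta,\chi}\tens1_S$, this equals ${\cal V}_{23}\,\delta_0(q_{\beta_A\widehat{\alpha}})\,\delta_A(a)_{13}(1_A\tens\theta_{\zeta,\chi}\tens1_S)\delta_A(b^{*})_{13}\,\delta_0(q_{\beta_A\widehat{\alpha}})\,{\cal V}_{23}^{*}$. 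On the $\K(\er)$-side, \ref{exehmprop1}~(3) and the formula $\delo(a\tens\zeta)={\cal V}_{23}\delta_A(a)_{13}(1_A\tens\zeta\tens1_S)$ give $\delr(\xi)={\cal V}_{23}\delta_A(a)_{13}(1_A\tens\zeta\tens1_S)$, whence by \ref{eq25} $j_D\big(\delta_{\K(\er)}(\theta_{\xi,\eta})\big)=\delr(\xi)\circ\delr(\eta)^{*}={\cal V}_{23}\delta_A(a)_{13}(1_A\tens\theta_{\zeta,\chi}\tens1_S)\delta_A(b^{*})_{13}{\cal V}_{23}^{*}$.

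Thus the statement comes down to the operator identity ${\cal V}_{23}\,\delta_0(q_{\beta_A\widehat{\alpha}})={\cal V}_{23}\,q_{\beta_A\alpha,13}$ together with its adjoint $\delta_0(q_{\beta_A\widehat{\alpha}})\,{\cal V}_{23}^{*}=q_{\beta_A\alpha,13}\,{\cal V}_{23}^{*}$, after which the extra projections are absorbed via $q_{\beta_A\alpha}\delta_A(a)=\delta_A(a)=\delta_A(a)q_{\beta_A\alpha}$ (and the analogous relation with $b^{*}$). To prove it I would compute $\delta_0(q_{\beta_A\widehat{\alpha}})$: writing $q_{\beta_A\widehat{\alpha}}=\sum_{I\in\s I}n_I^{-1}\,\beta_A(\varepsilon_I^{\rm o})\tens\widehat{\alpha}(\varepsilon_{\overline I})$ (the analogue of \ref{lem23}~(1)) and using $\delta_0(\beta_A(\varepsilon_I^{\rm o})\tens1_{\K})=\delta_A(\beta_A(\varepsilon_I^{\rm o}))_{13}=(1_A\tens1_{\s H}\tens\beta(\varepsilon_I^{\rm o}))q_{\beta_A\alpha,13}$ (by \ref{prop39}~(2)) together with $\delta_0(1_A\tens\widehat{\alpha}(\varepsilon_{\overline I}))=q_{\beta_A\alpha,13}(1_A\tens\widehat{\alpha}(\varepsilon_{\overline I})\tens1_S)$ (by strict continuity of $\delta_0$), one finds $\delta_0(q_{\beta_A\widehat{\alpha}})=q_{\widehat{\alpha}\beta,23}\,q_{\beta_A\alpha,13}$; since $q_{\widehat{\alpha}\beta}={\cal V}^{*}{\cal V}$ (\ref{inifinproj}) and ${\cal V}{\cal V}^{*}{\cal V}={\cal V}$, this gives ${\cal V}_{23}\,\delta_0(q_{\beta_A\widehat{\alpha}})={\cal V}_{23}{\cal V}_{23}^{*}{\cal V}_{23}\,q_{\beta_A\alpha,13}={\cal V}_{23}\,q_{\beta_A\alpha,13}$. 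The remaining equivariance datum $\iota\circ\beta_D=\betr$ is immediate from $j_D(\beta_D(n^{\rm o}))=q_{\beta_A\widehat{\alpha}}(1_A\tens\beta(n^{\rm o}))$ (\ref{BidulityTheo}) and the definition of $\betr$ in \ref{not1}, so it is not even needed once the $\delta$-relation is known. The main obstacle is the leg-numbering bookkeeping around the multiplicative partial isometry ${\cal V}$ and the evaluation of $\delta_0$ on the non-elementary multiplier $q_{\beta_A\widehat{\alpha}}$; once $\delta_0(q_{\beta_A\widehat{\alpha}})={\cal V}_{23}^{*}{\cal V}_{23}\,q_{\beta_A\alpha,13}$ is in hand, everything else is formal.
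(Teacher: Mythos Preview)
Your argument is correct and takes a genuinely different route from the paper. The paper verifies the intertwining relation $\delo(d\xi)=(j_D\tens\id_S)\delta_D(d)\circ\delo(\xi)$ directly on the generators $d=\pi_R(a)(1_A\tens\lambda(x)L(y))$ of $D$: it treats the three factors $\pi_R(a)$, $1_A\tens\lambda(x)$, $1_A\tens L(y)$ separately, using commutation relations for $V$ (namely $[V,\lambda(x)\tens1]=0$, ${\cal V}(L(y)\tens1_S)=(L\tens\id_S)\delta(y){\cal V}$, and $(R(s)\tens1)V=V\Sigma(1\tens U)\delta(s)(1\tens U^*)\Sigma$) together with the formula for $(j_D\tens\id_S)\delta_D$ on such generators coming from \cite{BC}. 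Your approach is more structural: you invoke the regularity identification $D=\K(\er)=q_{\beta_A\widehat\alpha}(A\tens\K)q_{\beta_A\widehat\alpha}$ from \ref{BidulityTheo}, factor $\gamma$ as $\iota\circ\phi$ with $\phi$ already $\cal G$-equivariant, and then compare the two actions on $\K(\er)$ via rank-one operators $\theta_{\xi,\eta}$, reducing everything to the single projection identity $\delta_0(q_{\beta_A\widehat\alpha})=q_{\widehat\alpha\beta,23}q_{\beta_A\alpha,13}$ and the partial-isometry relation ${\cal V}{\cal V}^*{\cal V}={\cal V}$. The paper's proof is slightly more hands-on and does not explicitly appeal to the identification $D=\K(\er)$, while yours is more conceptual, self-contained (no external formula from \cite{BC} is needed), and isolates the essential obstruction as the computation of $\delta_0$ on the projection $q_{\beta_A\widehat\alpha}$.
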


\begin{proof}
We have to prove that $\delo(d\xi)=(j_D\tens \id_S)(\delta_D(d))\circ\delo(\xi)$ for all $d\in D$ and $\xi\in{\cal E}_0$ (cf.\ \ref{rk11} 3 and \ref{exehmprop1} 3).
Let us prove it in three steps:\hfill\break
$\bullet$ Let $b\in A$, $x\in\widehat{S}$ and $\zeta\in\s H$. We have
\[
\delo(b\tens\lambda(x)\zeta)=(1_A\tens{\cal V}(\lambda(x)\tens 1_S))\delta_A(b)_{13}(1_A\tens\zeta\tens 1_S).
\]
However, $[{\cal V},\,\lambda(x)\tens 1_S]=0$ (as $\lambda(\widehat{S})\subset\widehat{M}$ and $V\in\widehat{M}'\tens M$). Hence,
\[
\delo(b\tens\lambda(x)\zeta)=(1_A\tens\lambda(x)\tens 1_S)\delo(b\tens\zeta)
\]
and then $\delo((1_A\tens\lambda(x))\xi)=(1_A\tens\lambda(x)\tens 1_S)\delo(\xi)$, for all $x\in\widehat{S}$ and $\xi\in\eo$.\hfill\break
$\bullet$ Let $y\in S$. Since $L(y)\in M\subset\widehat{\alpha}(N)'$, we have 
\[
{\cal V}(L(y)\tens 1_S)\!=\!{\cal V}q_{\widehat{\alpha}\beta}(L(y)\tens 1_S)\!=\!{\cal V}(L(y)\tens 1_S)q_{\widehat{\alpha}\beta}\!=\!{\cal V}(L(y)\tens 1_S){\cal V}^*{\cal V}\!=\!(L\tens \id_S)\delta(y){\cal V}.
\]
For all $b\in A$ and $\zeta\in\s H$, we have
\begin{align*}
\delo((1_A\tens L(y))(b\tens\zeta))&=(1_A\tens{\cal V}(L(y)\tens 1_S))\delta_A(b)_{13}(1_A\tens\zeta\tens 1_S)\\
&=(1_A\tens(L\tens \id_S)\delta(y))\delo(b\tens\zeta).
\end{align*}
Hence, $\delo((1_A\tens L(y))\xi)=(1_A\tens(L\tens \id_S)\delta(y))\delo(\xi)$ for all $y\in S$ and $\xi\in\eo$.

\medskip

In virtue of the first two steps, for all $\xi\in\er$ we have 
\[
\delr((1_A\tens\lambda(x)L(y))\xi)=(1_A\tens\lambda(x)\tens 1_S)(1_A\tens(L\tens \id_S)\delta(y))\delr(\xi).
\]
$\bullet$ Let $s\in S$. We have (cf.\ \ref{inifinproj})
\[
(R(s)\tens 1)V=(U\tens 1)\Sigma(1\tens L(s))\Sigma(U^*\tens 1)V\\
=(U\tens 1)\Sigma(1\tens L(s))W\Sigma(U^*\tens 1).
\]
Besides, $(1\tens L(s))W=(1\tens L(s))WW^*W=WW^*(1\tens L(s))W=W\delta(s)$ since we have $WW^*=q_{\alpha\widehat{\beta}}$ and $L(s)\in M\subset\widehat{\beta}(N^{\rm o})'$. Therefore, since $(U\tens 1)\Sigma W=V(U\tens 1)\Sigma$ we have
$
(R(s)\tens 1)V=V\Sigma(1\tens U)\delta(s)(1\tens U^*)\Sigma.
$
Hence,
$
(R(s)\tens 1_S){\cal V}={\cal V}\sigma(\id_S\tens R)(\delta(s))\sigma^*
$
for all $s\in S$. We then have $((\id_A\tens R)(x)\tens 1_S){\cal V}_{23}={\cal V}_{23}\sigma_{23}(\id_{A\tens S}\tens R)((\id_A\tens\delta)(x))\sigma_{23}^*$ for all $x\in A\tens S$. But, since $R$ and $\delta$ are strictly continuous this equality also holds for all $x\in\M(A\tens S)$. In particular, we have
$
\pi_R(a)_{12}{\cal V}_{23}={\cal V}_{23}\sigma_{23}(\id_{A\tens S}\tens R)(\delta_A^2(a))\sigma_{23}^*
$
for all $a\in A$. By coassociativity of $\delta_A$, we have
\[
\pi_R(a)_{12}{\cal V}_{23}={\cal V}_{23}\sigma_{23}(\delta_A\tens \id_{\K})(\pi_R(a))\sigma_{23}^*,\quad \text{for all }a\in A.
\]
It then follows that
\begin{align*}
\pi_R(a)_{12}\delo(\xi)&={\cal V}_{23}\sigma_{23}(\delta_A\tens \id_{\K})(\pi_R(a))(\delta_A\tens \id_{\s H})(\xi)\\
&={\cal V}_{23}\sigma_{23}(\delta_A\tens \id_{\s H})(\pi_R(a)\xi)\\
&=\delo(\pi_R(a)\xi),
\end{align*}
for all $a\in A$ and $\xi\in\eo$. In particular, $\pi_R(a)_{12}\delo(\xi_0)=\delo(\pi_R(a)\xi_0)$ for all $a\in A$ and $\xi\in\eo$.

\medskip

We have proved that for all $ a\in A$, $x\in\widehat{S}$, $y\in S$ and $\xi\in\eo$, we have
\[
\delta_{\eo}(\pi_R(a)(1_A\tens\lambda(x)L(y))\xi)=\pi_R(a)_{12}(1_A\tens\lambda(x)\tens 1_S)(1_A\tens(L\tens\id_S)\delta(y))\delta_{\eo}(\xi).
\]
However, for all $a\in A$, $x\in\widehat{S}$ and $y\in S$ we have (cf.\ 3.37 d) \cite{BC})
\[
(j_D\tens\id_S)\delta_D(\pi_R(a)(1_A\tens\lambda(x)L(y)))=\pi_R(a)_{12}(1_A\tens\lambda(x)\tens 1_S)(1_A\tens(L\tens \id_S)\delta(y)).
\]
If $d=\pi_R(a)(1_A\tens\lambda(x)L(y))\in D$, where $a\in A$, $x\in\widehat{S}$ and $y\in S$, we have proved that $\delta_{\eo}(d\xi)=(j_D\tens\id_S)(\delta_D(d))\circ\delta_{\eo}(\xi)$ for all $\xi\in\eo$. Thus, the statement is proved since $D=[\pi_R(a)(1_A\tens\lambda(x)L(y))\,;\,a\in A,\,x\in\widehat{S},\,y\in S]$.
\end{proof}

\begin{thm}\label{prop21}
The $\cal G$-C*-algebras $(A\rtimes{\cal G})\rtimes\widehat{\cal G}$ and $A$ are Morita equivalent via the $\cal G$-equivariant imprimitivity $(A\rtimes{\cal G})\rtimes\widehat{\cal G}$-$A$-bimodule $\er$.
\end{thm}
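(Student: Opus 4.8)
The strategy is to exhibit $\er$ as a \emph{full} $\cal G$-equivariant Hilbert $A$-module whose C*-algebra of compact operators is $\cal G$-equivariantly isomorphic to $(A\rtimes{\cal G})\rtimes\widehat{\cal G}$, and then to quote Examples \ref{ex1} (ii). That $(\er,\betr,\delr)$ is a $\cal G$-equivariant Hilbert $A$-module has just been established (and continuity of the action is automatic since $\cal G$ is regular, cf.\ Corollary \ref{corActReg}), so only two points remain: fullness of $\er$ over $A$, and the identification of $\K(\er)$.

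For fullness: since $q:=q_{\beta_A\widehat{\alpha}}\in\Lin(\eo)$ is a self-adjoint projection with range $\er$, one has $\langle q\xi,\,q\eta\rangle=\langle q\xi,\,\eta\rangle$ for all $\xi,\eta\in\eo$, hence $[\langle\er,\er\rangle]=[\langle q(A\tens\s H),\,A\tens\s H\rangle]$; a direct computation — using, exactly as in the discussion following the definition of weak continuity, that there exists $\omega\in\B(\s H)_*$ with $(\id_A\tens\omega)(q_{\beta_A\widehat{\alpha}})=1_A$ — then yields $[\langle\er,\er\rangle]=A$. (Alternatively, fullness follows from the relation $[D(A\tens\s H)]=\er$ once the identification below is available.)

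For $\K(\er)$: since $\cal G$ is regular, Theorem \ref{BidulityTheo} gives $D=q_{\beta_A\widehat{\alpha}}(A\tens\K)q_{\beta_A\widehat{\alpha}}$ inside $\Lin(\eo)=\Lin(A\tens\s H)$. On the other hand $\K(\eo)=A\tens\K$, and for any projection $q\in\Lin(\s F)$ one has $\K(q\s F)=q\K(\s F)q$ (because $\theta_{q\xi,q\eta}=q\,\theta_{\xi,\eta}\,q$); applying this with $q=q_{\beta_A\widehat{\alpha}}$ gives $\K(\er)=q_{\beta_A\widehat{\alpha}}(A\tens\K)q_{\beta_A\widehat{\alpha}}=D$ as C*-subalgebras of $\Lin(\eo)$. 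Since $j_D$ extends the inclusion $D\subset\Lin(\eo)$ and $q_{\beta_A\widehat{\alpha}}d=d=dq_{\beta_A\widehat{\alpha}}$ for $d\in D$, the left action $x\mapsto\restr{\phi(x)}{\er}$ of $(A\rtimes{\cal G})\rtimes\widehat{\cal G}$ on $\er$ — with $\phi$ the canonical $\cal G$-equivariant $*$-isomorphism of Proposition \ref{IsoTT} — is nothing but $\phi$ followed by the identity inclusion $D=\K(\er)\hookrightarrow\Lin(\er)$; in particular it is a non-degenerate $*$-representation co-restricting to a $*$-isomorphism $(A\rtimes{\cal G})\rtimes\widehat{\cal G}\simeq\K(\er)$. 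By Proposition \ref{prop33} this representation is $\cal G$-equivariant, so by Remarks \ref{rk11} 2 the above is an isomorphism of $\cal G$-C*-algebras, $\K(\er)$ being equipped with the action induced by $\delr$ (cf.\ \ref{prop1}).

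Putting this together: $\er$ is a full $\cal G$-equivariant Hilbert $A$-module, hence by Examples \ref{ex1} (ii) a $\cal G$-equivariant imprimitivity $\K(\er)$-$A$-bimodule; transporting the left-hand algebra along the $\cal G$-equivariant $*$-isomorphism $(A\rtimes{\cal G})\rtimes\widehat{\cal G}\simeq\K(\er)$ makes $\er$ into a $\cal G$-equivariant imprimitivity $(A\rtimes{\cal G})\rtimes\widehat{\cal G}$-$A$-bimodule, which is the assertion. The main obstacle is the middle step — establishing $\K(\er)=D$ inside $\Lin(\eo)$ and matching it with the left action defined through $\phi$ and $j_D$ — since this is precisely where regularity of $\cal G$ is used (through the explicit description $D=q_{\beta_A\widehat{\alpha}}(A\tens\K)q_{\beta_A\widehat{\alpha}}$), together with the behaviour of compact operators under compression by the projection $q_{\beta_A\widehat{\alpha}}$.
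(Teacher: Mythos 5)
Your proof is correct and follows essentially the same route as the paper: fullness of $\er$ via a normal functional $\omega$ with $(\id_A\tens\omega)(q_{\beta_A\widehat{\alpha}})=1_A$, and the identification $\K(\er)=q_{\beta_A\widehat{\alpha}}(A\tens\K)q_{\beta_A\widehat{\alpha}}=D$ combined with Propositions \ref{IsoTT} and \ref{prop33} to get the $\cal G$-equivariant isomorphism $(A\rtimes{\cal G})\rtimes\widehat{\cal G}\simeq\K(\er)$. You merely make explicit the compression argument $\K(q\s F)=q\K(\s F)q$ that the paper leaves as "easily seen".
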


\begin{proof}
Let us prove that the Hilbert $A$-module ${\cal E}_{A,R}$ is full. Fix $x\in A$ and write $x=a^*b$ for $a,b\in A$. There exists $\omega\in\B(\s H)_*$ such that $(\id_A\tens\omega)(q_{\beta_A\widehat{\alpha}})=1_A$. Hence, $x$ is the norm limit of finite sums of elements of the form $a^*(\id_A\tens\omega_{\xi,\eta})(q_{\beta_A\widehat{\alpha}})b$, where $\xi,\eta\in\s H$. However, for all $\xi,\eta\in\s H$ we have 
\[
a^*(\id_A\tens\omega_{\xi,\eta})(q_{\beta_A\widehat{\alpha}})b=(a\tens\xi)^*q_{\beta_A\widehat{\alpha}}(b\tens\eta)=\langle q_{\beta_A\widehat{\alpha}}(a\tens\xi),\, q_{\beta_A\widehat{\alpha}}(b\tens\eta)\rangle.
\] 
Hence, $A=[\langle\xi,\, \eta\rangle\,;\, \xi,\eta\in{\cal E}_{A,R}]$. Now, we recall that $D=q_{\beta_A\widehat{\alpha}}(A\tens\K)q_{\beta_A\widehat{\alpha}}$ (cf.\ \ref{BidulityTheo}). It is easily seen that the left action of $(A\rtimes{\cal G})\rtimes\widehat{\cal G}$ (cf.\ \ref{prop33}) induces a $\cal G$-equivariant *-isomorphism $(A\rtimes{\cal G})\rtimes\widehat{\cal G}\simeq\K({\cal E}_{A,R})$.
\end{proof} 

\section{Appendix}\label{appendix}

	\subsection[Normal linear forms, weights and operator-valued weights]{Normal linear forms, weights and operator-valued weights on von Neumann algebras \cite{Co2}}\label{integration}

\setcounter{thm}{0}

\numberwithin{thm}{subsection}
\numberwithin{prop}{subsection}
\numberwithin{lem}{subsection}
\numberwithin{cor}{subsection}
\numberwithin{propdef}{subsection}
\numberwithin{nb}{subsection}
\numberwithin{nbs}{subsection}
\numberwithin{rk}{subsection}
\numberwithin{rks}{subsection}
\numberwithin{defin}{subsection}
\numberwithin{ex}{subsection}
\numberwithin{exs}{subsection}
\numberwithin{noh}{subsection}

Let $M$ be a von Neumann algebra. Denote by $M_*$ (resp.\ $M_*^+$) the Banach space (resp.\ positive cone) of the normal linear forms (resp.\ positive normal linear forms) on $M$. Let $\omega\in M_*$ and $a,b\in M$. Denote by $a\omega\in M_*$ and $\omega b\in M_*$ the normal linear functionals on $M$ given for all $x\in M$ by: 
\[
(a\omega)(x):=\omega(xa);\quad (\omega b)(x):=\omega(bx).
\]
We have $a'(a\omega)=(a'a)\omega$ and $(\omega b)b'=\omega(bb')$, for all $a,a,b,b'\in M$. We also denote 
\[
a\omega b:=a(\omega b)=(a \omega) b ; \quad \omega_a:=a^*\omega a.
\]
If $\omega\in M_*^+$, then $\omega_a\in M_*^+$. Note that $(\omega_a)_b=\omega_{ab}$ for all $a,b\in M$. If $\omega\in M_*$ we define $\overline{\omega}\in M_*$\index[symbol]{oa@$\overline{\omega}$} by setting
\[
\overline{\omega}(x):=\overline{\omega(x^*)},\quad \text{for all } x\in M.
\]
Let $\s H$ be a Hilbert space and let us fix $\xi,\eta\in\s H$. Denote by $\omega_{\xi,\eta}\in\B(\s H)_*$\index[symbol]{ob@$\omega_{\xi,\eta}$} the normal linear form defined by
\[
\omega_{\xi,\eta}(x):=\langle\xi,\, x\eta\rangle,\quad \text{for all } x\in\B(\s H).
\]
Note that we have $\overline{\omega}_{\xi,\eta}=\omega_{\eta,\xi}$, $a\omega_{\xi,\eta}=\omega_{\xi,a\eta}$ and $\omega_{\xi,\eta}a=\omega_{a^*\xi,\eta}$ for all $a\in\B(\s H)$.

\begin{noh}{\it Tensor product of normal linear forms.} Let $M$ and $N$ be von Neumann algebras, $\phi\in M_*$ and $\psi\in N_*$. There exists a unique $\phi\tens\psi\in(M\tens N)_*$ such that $(\phi\tens\psi)(x\tens y)=\phi(x)\psi(y)$ for all $x\in M$ and $y\in N$. Moreover, $\|\phi\tens\psi\|\leqslant\|\phi\|\cdot\|\psi\|$. Actually, it is known that we have an (completely) isometric identification $M_*\widehat{\tens}_{\pi}N_*=(M\tens N)_*$, where $\widehat{\tens}_{\pi}$ denotes the projective tensor product of Banach spaces. In particular, any $\omega\in(M\tens N)_*$ is the norm limit of finite sums of the form $\sum_i\phi_i\tens\psi_i$, where $\phi_i\in M_*$ and $\psi_i\in N_*$.
\end{noh}

\begin{noh}{\it Slicing with normal linear forms.} We will also need to slice maps with normal linear forms. Let $M_1$ and $M_2$ be von Neumann algebras, $\omega_1\in (M_1)_*$ and $\omega_2\in(M_2)_*$. Therefore, the maps $\omega_1\odot\id:M_1\odot M_2\rightarrow M_1$ and $\id\odot\omega_2:M_1\odot M_2\rightarrow M_2$ extend uniquely to norm continuous normal linear maps $\omega_1\tens\id:M_1\tens M_2\rightarrow M_2$ and $\id\tens\omega_2:M_1\tens M_2\rightarrow M_1$. 
Let $\s H$ and $\s K$ be Hilbert spaces, for $\xi\in\s H$ and $\eta\in\s K$ we define 
$\theta_{\xi}\in\B(\s K,\s H\tens\s K)$ and $\theta'_{\eta}\in\B(\s H,\s H\tens\s K)$
by setting: 
\begin{center}
$\theta_{\xi}(\zeta):=\xi\tens\zeta$, \quad for all $\zeta\in\s K$;\quad $\theta'_{\eta}(\zeta):=\zeta\tens\eta$, \quad for all $\zeta\in\s H$.
\end{center} 
If $T\in\B(\s H\tens\s K)$, $\phi\in\B(\s K)_*$ and $\omega\in\B(\s H)_*$, then the operators $(\id\tens\phi)(T)\in\B(\s H)$ and $(\omega\tens\id)(T)\in\B(\s K)$ are determined by the formulas:
\begin{align*}
\langle\xi_1,\,(\id\tens\phi)(T)\xi_2\rangle &=\phi(\theta_{\xi_1}^*T\theta_{\xi_2}),\quad \xi_1,\,\xi_2\in\s H;\\[0.3cm]
\langle\eta_1,\,(\omega\tens\id)(T)\eta_2\rangle &=\omega(\theta_{\eta_1}^{\prime *}T\theta'_{\eta_2}),\quad \eta_1,\,\eta_2\in\s K.
\end{align*}
In particular, we have:
\[
(\id\tens\omega_{\eta_1,\eta_2})(T)=\theta_{\eta_1}^{\prime *}T\theta'_{\eta_2},\quad \eta_1,\,\eta_2\in\s K;\quad (\omega_{\xi_1,\xi_2}\tens\id)(T)=\theta_{\xi_1}^*T\theta_{\xi_2},\quad \xi_1,\,\xi_2\in\s H.
\]
Let us recall some formulas that will be used several times. For all $\phi\in\B(\s K)_*$, $\omega\in\B(\s H)_*$ and $T\in\B(\s H\tens\s K)$, we have:
\[
x(\id\tens\phi)(T)y=(\id\tens\phi)((x\tens 1)T(y\tens 1)),\ (y\omega x\tens\id)(T)=(\omega\tens\id)((x\tens 1)T(y\tens 1))
\]
for all $x,y\in\B(\s H)$;
\[
a(\omega\tens\id)(T)b=(\omega\tens\id)((1\tens a)T(1\tens b)), \ (\id\tens b\phi a)(T)=(\id\tens\phi)((1\tens a)T(1\tens b))
\]
for all $a,b\in\B(\s K)$. We also have 
\[
(\id\tens\phi)(T)^*=(\id\tens\overline{\phi})(T^*),\quad (\omega\tens\id)(T)^*=(\overline{\omega}\tens\id)(T^*),
\]
\[
(\phi\tens\id)(\Sigma_{\s H\tens\s K} T\Sigma_{\s K\tens\s H})=(\id\tens\phi)(T),\quad (\id\tens\omega)(\Sigma_{\s H\tens\s K} T\Sigma_{\s K\tens\s H})=(\omega\tens\id)(T),
\]
for all $T\in\B(\s H\tens\s K)$, $\phi\in\B(\s K)_*$ and $\omega\in\B(\s H)_*$.
\end{noh}

\begin{defin}\label{weight}
A weight $\varphi$ on $M$ is a map $\varphi:M_+\rightarrow[0,\infty]$ such that: 
\begin{itemize}
	\item for all $x,y\in M_+$, $\varphi(x+y)=\varphi(x)+\varphi(y)$;
	\item for all $x\in M_+$ and $\lambda\in\GR_+$, $\varphi(\lambda x)=\lambda\varphi(x)$.
\end{itemize}
We denote by $\f N_{\varphi}:=\{x\in M\,;\,\varphi(x^*x)<\infty\}$
\index[symbol]{na@$\f N_{\varphi}$, $\f M_{\varphi}^+$} 
the left ideal of square $\varphi$-integrable elements of $M$, $\f M_{\varphi}^+:=\{x\in M_+\,;\,\varphi(x)<\infty\}$ the cone of positive $\varphi$-integrable elements of $M$ and $\f M_{\varphi}:=\langle\f M_{\varphi}^+\rangle$ the space of $\varphi$-integrable elements of $M$.
\end{defin}	

\begin{defin}
Let $\varphi$ be a weight on $M$. The opposite weight of $\varphi$ is the weight $\varphi^{\rm o}$\index[symbol]{pn@$\varphi^{\rm o}$, opposite weight} on $M^{\rm o}$ given by $\varphi ^{\rm o}(x^{\rm o}):=\varphi(x)$ for all $x\in M_+$. Then, we have $\f N_{\varphi^{\rm o}}=(\f N_{\varphi}^*)^{\rm o}$, $\f M_{\varphi^{\rm o}}^+=(\f M_{\varphi}^+)^{\rm o}$ and $\f M_{\varphi^{\rm o}}=(\f M_{\varphi})^{\rm o}$.
\end{defin}

\begin{defin}\label{nsf}
A weight $\varphi$ on $M$ is called:
\begin{itemize}
	\item semi-finite, if $\f N_{\varphi}$ is $\sigma$-weakly dense in $M$;
	\item faithful, if for $x\in M_+$ the condition $\varphi(x)=0$ implies $x=0$;
	\item normal, if $\varphi(\sup_{i\in\cal I} x_i)=\sup_{i\in\cal I}\varphi(x_i)$ for all increasing bounded net $(x_i)_{i\in\cal I}$ of $M_+$.\qedhere
\end{itemize}
\end{defin}

From now on, we will mainly use normal semi-finite faithful (n.s.f.)\ weights. Fix a \nsf weight $\varphi$ on $M$.

\begin{defin}\label{GNS}
We define an inner product on $\f N_{\varphi}$ by setting
\[
\langle x,\,y\rangle_{\varphi}:=\varphi(x^*y),\quad \text{for all } x,\,y\in\f N_{\varphi}.
\]
We denote by $(\s H_{\varphi},\Lambda_{\varphi})$ the Hilbert space completion of $\f N_{\varphi}$ with respect to this inner product, where $\Lambda_{\varphi}:\f N_{\varphi}\rightarrow\s H_{\varphi}$ is the canonical map. There exists a unique unital normal *-representation $\pi_{\varphi}:M\rightarrow\B(\s H_{\varphi})$ such that
\[
\pi_{\varphi}(x)\Lambda_{\varphi}(y)=\Lambda_{\varphi}(xy),\quad \text{for all } x\in M \text{ and } y\in\f N_{\varphi}.
\]
The triple $(\s H_{\varphi},\pi_{\varphi},\Lambda_{\varphi})$ is called the \GNS construction for $(M,\varphi)$.
\end{defin}

\begin{rks}
The linear map $\Lambda_{\varphi}$ is called the \GNS map. We have that $\Lambda_{\varphi}(\f N_{\varphi})$ is dense in $\s H_{\varphi}$ and $\langle\Lambda_{\varphi}(x),\Lambda_{\varphi}(y)\rangle_{\varphi}=\varphi(x^*y)$ for all $x,y\in\f N_{\varphi}$. In particular, $\Lambda_{\varphi}$ is injective. Moreover, we also call $\pi_{\varphi}$ the \GNS representation.
\end{rks}

We recall below the main objects of the Tomita-Takesaki modular theory.

\begin{propdef}
Let $M$ be a von Neumann algebra and $\varphi$ a \nsf weight on $M$. The anti-linear map
$
\Lambda_{\varphi}(\f N_{\varphi}^*\cap\f N_{\varphi}) \rightarrow \Lambda_{\varphi}(\f N_{\varphi}^*\cap\f N_{\varphi}) \; ; \; \Lambda_{\varphi}(x) \mapsto \Lambda_{\varphi}(x^*)
$
is closable and its closure is a possibly unbounded anti-linear map ${\cal T}_{\varphi}:D({{\cal T}_{\varphi}})\subset\s H_{\varphi}\rightarrow\s H_{\varphi}$ such that $D({{\cal T}_{\varphi}})={\rm im}\,{{\cal T}_{\varphi}}$ and ${\cal T}_{\varphi}\circ{\cal T}_{\varphi}(x)=x$ for all $x\in D({{\cal T}_{\varphi}})$.\newline
Let
$
{\cal T}_{\varphi}=J_{\varphi}\nabla_{\varphi}^{1/2}
$
be the polar decomposition of ${\cal T}_{\varphi}$. The anti-unitary $J_{\varphi}:\s H_{\varphi}\rightarrow\s H_{\varphi}$ is called the modular conjugation for $\varphi$ and the injective positive self-adjoint operator $\nabla_{\varphi}$ is called the modular operator for $\varphi$.
\end{propdef}

\begin{propdef}
There exists a unique one-parameter group $(\sigma_t^{\varphi})_{t\in\GR}$ of automorphisms on $M$, called the modular automorphism group of $\varphi$, such that
\[
\pi_{\varphi}(\sigma_t^{\varphi}(x))=\nabla_{\varphi}^{{\rm i}t}\pi_{\varphi}(x)\nabla_{\varphi}^{-{\rm i}t},\quad \text{for all } t\in\GR \text{ and } x\in M.
\]
Then, for all $t\in\GR$ and $x\in M$ we have $\sigma_t^{\varphi}(x)\in\f N_{\varphi}$ and 
$\Lambda_{\varphi}(\sigma_t^{\varphi}(x))=\nabla_{\varphi}^{{\rm i}t}\Lambda_{\varphi}(x)$.
\end{propdef}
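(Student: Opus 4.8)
The plan is to deduce the statement from the Tomita--Takesaki theory of left Hilbert algebras, since it is precisely the existence-and-uniqueness part of that theory specialized to the left Hilbert algebra attached to $(M,\varphi)$. First I would endow the dense subspace $\f A:=\Lambda_{\varphi}(\f N_{\varphi}\cap\f N_{\varphi}^*)$ of $\s H_{\varphi}$ with its canonical left Hilbert algebra structure: product $\Lambda_{\varphi}(x)\Lambda_{\varphi}(y):=\Lambda_{\varphi}(xy)$ and involution $\Lambda_{\varphi}(x)^{\sharp}:=\Lambda_{\varphi}(x^*)$, the sharp operation being exactly the closable map whose closure is the anti-linear operator ${\cal T}_{\varphi}$ of the preceding Proposition-Definition. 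Bounded left multiplication by $\Lambda_{\varphi}(x)$ is implemented by $\pi_{\varphi}(x)$, and, using that $\varphi$ is normal and semi-finite (so that $\f N_{\varphi}\cap\f N_{\varphi}^*$ is $\sigma$-weakly dense in $M$) together with the faithfulness of $\pi_{\varphi}$, one identifies the left von Neumann algebra of $\f A$ with $\pi_{\varphi}(M)$.

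The decisive input is Tomita's fundamental theorem applied to $\f A$: writing the polar decomposition ${\cal T}_{\varphi}=J_{\varphi}\nabla_{\varphi}^{1/2}$ recalled above, one has $J_{\varphi}\pi_{\varphi}(M)J_{\varphi}=\pi_{\varphi}(M)'$ and, crucially, $\nabla_{\varphi}^{{\rm i}t}\pi_{\varphi}(M)\nabla_{\varphi}^{-{\rm i}t}=\pi_{\varphi}(M)$ for every $t\in\GR$. Granting this, I would define
\[
\sigma_t^{\varphi}(x):=\pi_{\varphi}^{-1}\big(\nabla_{\varphi}^{{\rm i}t}\pi_{\varphi}(x)\nabla_{\varphi}^{-{\rm i}t}\big),\qquad x\in M,\ t\in\GR,
\]
which is meaningful precisely because the conjugation preserves $\pi_{\varphi}(M)$ and $\pi_{\varphi}$ is a normal $*$-isomorphism onto its image. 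Each $\sigma_t^{\varphi}$ is then a normal $*$-automorphism of $M$; the identity $\nabla_{\varphi}^{{\rm i}(s+t)}=\nabla_{\varphi}^{{\rm i}s}\nabla_{\varphi}^{{\rm i}t}$ from the Borel functional calculus gives the group law $\sigma_s^{\varphi}\circ\sigma_t^{\varphi}=\sigma_{s+t}^{\varphi}$, and strong continuity of $t\mapsto\nabla_{\varphi}^{{\rm i}t}$ yields the requisite $\sigma$-weak continuity. Uniqueness is immediate, since any one-parameter group of automorphisms satisfying the displayed intertwining relation must coincide with $\sigma_t^{\varphi}$ after applying the injective map $\pi_{\varphi}$.

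For the last assertion, fix $t$ and first take $x\in\f N_{\varphi}\cap\f N_{\varphi}^*$. Since $\nabla_{\varphi}^{{\rm i}t}$ commutes with both $\nabla_{\varphi}^{1/2}$ and $J_{\varphi}$, it preserves the $\f A$-structure, and the bounded left multiplication operator attached to the vector $\nabla_{\varphi}^{{\rm i}t}\Lambda_{\varphi}(x)$ equals $\nabla_{\varphi}^{{\rm i}t}\pi_{\varphi}(x)\nabla_{\varphi}^{-{\rm i}t}=\pi_{\varphi}(\sigma_t^{\varphi}(x))$. Hence $\nabla_{\varphi}^{{\rm i}t}\Lambda_{\varphi}(x)$ is the canonical \GNS vector of $\sigma_t^{\varphi}(x)$, which simultaneously shows $\sigma_t^{\varphi}(x)\in\f N_{\varphi}\cap\f N_{\varphi}^*$ and $\Lambda_{\varphi}(\sigma_t^{\varphi}(x))=\nabla_{\varphi}^{{\rm i}t}\Lambda_{\varphi}(x)$. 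The extension to arbitrary $x\in\f N_{\varphi}$ (in particular $\sigma_t^{\varphi}(x)\in\f N_{\varphi}$) then follows from the closedness of $\Lambda_{\varphi}$, the boundedness of $\nabla_{\varphi}^{{\rm i}t}$, and the fact that $\sigma_t^{\varphi}$ maps $\f N_{\varphi}$ bijectively onto itself.

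The main obstacle is the invariance $\nabla_{\varphi}^{{\rm i}t}\pi_{\varphi}(M)\nabla_{\varphi}^{-{\rm i}t}=\pi_{\varphi}(M)$, i.e.\ Tomita's theorem, which is the deep analytic heart of modular theory; in the present appendix it is quoted from \cite{Co2}, and everything above merely records how the stated existence, uniqueness and \GNS-intertwining formula follow once it is available.
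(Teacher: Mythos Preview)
The paper does not prove this Proposition-Definition at all: it sits in the appendix \S\ref{integration}, whose purpose is merely to recall standard terminology and facts from Tomita--Takesaki theory, with a global citation to \cite{Co2}. Your outline is the standard left-Hilbert-algebra argument and is essentially correct; you even note yourself that the appendix is quoting this from \cite{Co2}.

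One small point in your final paragraph: when passing from $x\in\f N_{\varphi}\cap\f N_{\varphi}^*$ to arbitrary $x\in\f N_{\varphi}$, you invoke ``the fact that $\sigma_t^{\varphi}$ maps $\f N_{\varphi}$ bijectively onto itself'' as an input, but that is part of what is being claimed. The clean way to close this is to first record the invariance $\varphi\circ\sigma_t^{\varphi}=\varphi$ (an immediate consequence of the modular theory you are already using), which gives $\sigma_t^{\varphi}(\f N_{\varphi})=\f N_{\varphi}$ directly from the definition of $\f N_{\varphi}$; the identity $\Lambda_{\varphi}(\sigma_t^{\varphi}(x))=\nabla_{\varphi}^{{\rm i}t}\Lambda_{\varphi}(x)$ then follows for all $x\in\f N_{\varphi}$ by the closedness of $\Lambda_{\varphi}$ and a $\sigma$-strong$^*$/norm approximation of $x$ by elements of $\f N_{\varphi}\cap\f N_{\varphi}^*$.
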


\begin{propdef}
The map $C_M:M\rightarrow M'\,;\, x \mapsto J_{\varphi}\pi_{\varphi}(x)^*J_{\varphi}$ is a normal unital *-antihomomorphism.\index[symbol]{cb@$C_M$}
\end{propdef}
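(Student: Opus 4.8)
The plan is to present $C_{M}$ as a composition of elementary maps and to reduce the only substantive point --- that its range lies in $M'$ --- to the commutation theorem of Tomita--Takesaki. Recall that the modular conjugation $J_{\varphi}$ comes from the polar decomposition ${\cal T}_{\varphi}=J_{\varphi}\nabla_{\varphi}^{1/2}$, so that $J_{\varphi}$ is an antiunitary involution on $\s H_{\varphi}$: conjugate-linear, isometric, bijective, with $J_{\varphi}^{2}=1$ and $J_{\varphi}^{*}=J_{\varphi}$. First I would record the properties of the conjugation ${\rm Ad}_{J_{\varphi}}:\B(\s H_{\varphi})\rightarrow\B(\s H_{\varphi})$, $T\mapsto J_{\varphi}TJ_{\varphi}$: it is conjugate-linear (because $J_{\varphi}$ is), unital, multiplicative (using $J_{\varphi}^{2}=1$), and $*$-preserving, i.e.\ $(J_{\varphi}TJ_{\varphi})^{*}=J_{\varphi}T^{*}J_{\varphi}$, which is a short computation from antiunitarity; it is also $\sigma$-weakly continuous, hence normal, since $\langle\xi,J_{\varphi}TJ_{\varphi}\eta\rangle=\overline{\langle J_{\varphi}\xi,\,TJ_{\varphi}\eta\rangle}$ depends $\sigma$-weakly continuously on $T$.

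Next I would write $C_{M}$ as the composition of the normal unital $*$-homomorphism $\pi_{\varphi}$, the adjoint operation $a\mapsto a^{*}$ on $\pi_{\varphi}(M)$ (which is conjugate-linear, antimultiplicative, $*$-preserving and normal), and the map ${\rm Ad}_{J_{\varphi}}$ above. Since a composition of two conjugate-linear maps is linear, $C_{M}$ is linear; it is normal as a composition of normal maps; it is unital since $C_{M}(1_{M})=J_{\varphi}\pi_{\varphi}(1_{M})J_{\varphi}=J_{\varphi}^{2}=1$; it satisfies $C_{M}(x^{*})={\rm Ad}_{J_{\varphi}}(\pi_{\varphi}(x))=C_{M}(x)^{*}$ by the $*$-preservation of ${\rm Ad}_{J_{\varphi}}$; and it reverses products, $C_{M}(xy)={\rm Ad}_{J_{\varphi}}(\pi_{\varphi}(y)^{*}\pi_{\varphi}(x)^{*})={\rm Ad}_{J_{\varphi}}(\pi_{\varphi}(y)^{*})\,{\rm Ad}_{J_{\varphi}}(\pi_{\varphi}(x)^{*})=C_{M}(y)C_{M}(x)$. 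Hence $C_{M}$ is a normal unital $*$-antihomomorphism of $M$ into $\B(\s H_{\varphi})$.

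It remains to see that $C_{M}$ takes values in $M'$. Under the standing identification of $M$ with $\pi_{\varphi}(M)\subset\B(\s H_{\varphi})$, one has $M'=\pi_{\varphi}(M)'$, and the Tomita--Takesaki commutation theorem --- available precisely because $\varphi$ is normal, semi-finite and faithful --- gives $J_{\varphi}\pi_{\varphi}(M)J_{\varphi}=\pi_{\varphi}(M)'$. Thus for $x\in M$ we have $\pi_{\varphi}(x)^{*}\in\pi_{\varphi}(M)$ and therefore $C_{M}(x)=J_{\varphi}\pi_{\varphi}(x)^{*}J_{\varphi}\in J_{\varphi}\pi_{\varphi}(M)J_{\varphi}=\pi_{\varphi}(M)'=M'$, which finishes the proof. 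The one real obstacle is this appeal to the commutation theorem $J_{\varphi}MJ_{\varphi}=M'$; all the remaining steps are routine manipulations with the antiunitary involution $J_{\varphi}$, and in an actual write-up one would simply cite Takesaki's treatise for that input.
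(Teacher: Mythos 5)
Your argument is correct. Note that the paper states this fact without proof: it appears in the appendix (\S\ref{integration}) as part of the recalled background on the Tomita--Takesaki modular theory, so there is no proof of record to compare against. Your write-up --- factoring $C_M$ through $\pi_{\varphi}$, the adjoint, and ${\rm Ad}_{J_{\varphi}}$, checking the routine algebraic and continuity properties of the antiunitary conjugation, and invoking the commutation theorem $J_{\varphi}\pi_{\varphi}(M)J_{\varphi}=\pi_{\varphi}(M)'$ for the one non-trivial point --- is exactly the standard justification the paper implicitly relies on, and you correctly identify the commutation theorem as the sole substantive input.
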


\begin{defin}
Let $N$ be a von Neumann algebra. The extended positive cone of $N$ is the set $N_+^{\ext}$ consisting of the maps $m:N_*^+\rightarrow[0,\infty]$, which satisfy the following conditions:\index[symbol]{nb@$N_+^{{\rm ext}}$, extended positive cone}
\begin{itemize}
	\item for all $\omega_1,\omega_2\in N_*^+$, $m(\omega_1+\omega_2)=m(\omega_1)+m(\omega_2)$;
	\item for all $\omega\in N_*^+$ and $\lambda\in\GR_+$, $m(\lambda\omega)=\lambda m(\omega)$;
	\item $m$ is lower semicontinuous with respect to the norm topology on $N_*$.\qedhere
\end{itemize}
\end{defin}

\begin{nbs}
Let $N$ be a von Neumann algebra.
\begin{enumerate}
	\item From now on, we will identify $N_+$ with its part inside $N_+^{\rm ext}$. Accordingly, if $m\in N_+^{\ext}$ and $\omega\in N_*^+$ we will denote by $\omega(m)$ the evaluation of $m$ at $\omega$.
	\item Let $a\in N$ and $m\in N_+^{\ext}$, we define $a^*m a\in N_+^{\ext}$ by setting
	$\omega(a^*ma):=a\omega a^*(m)$ for all $\omega\in N_*^+$.
If $m,n\in N_+^{\ext}$ and $\lambda\in\GR_+$, we also define $m+n\in N_+^{\ext}$ and $\lambda m\in N_+^{\ext}$ by setting
	$\omega(m+n):=\omega(m)+\omega(n)$ and $\omega(\lambda m):=\lambda\omega(m)$ for all $\omega\in N_*^+$.\qedhere
\end{enumerate}
\end{nbs}

\begin{defin}
Let $N\subset M$ be a unital normal inclusion of von Neumann algebras. An operator-valued weight from $M$ to $N$ is a map $T:M_+\rightarrow N_+^{\rm ext}$ such that:
\begin{itemize}
	\item for all $x,y\in M_+$, $T(x+y)=T(x)+T(y)$;
	\item for all $x\in M_+$, $\forall\lambda\in\GR_+$, $T(\lambda x)=\lambda T(x)$;
	\item for all $x\in M_+$ and $a\in N$, $T(a^*xa)=a^*T(x)a$.
\end{itemize}
Let $\f N_T:=\{x\in M\,;\,T(x^*x)\in N_+\}$, $\f M_T^+:=\{x\in M_+\,;\,T(x)\in N_+\}$ and ${\f M}_T:=\langle{\f M}_T^+\rangle$.\index[symbol]{nc@${\f N}_T$, ${\f M}_T^+$, ${\f M}_T$}
\end{defin}

\begin{defin}
Let $N\subset M$ be a unital normal inclusion of von Neumann algebras. An operator-valued weight $T$ from $M$ to $N$ is said to be:
\begin{itemize}
	\item semi-finite, if $\f N_T$ is $\sigma$-weakly dense in $M$;
	\item faithful, if for $x\in M_+$ the condition $T(x)=0$ implies $x=0$;
	\item normal, if for every increasing bounded net $(x_i)_{i\in\cal I}$ of elements of $M_+$ and $\omega\in N_*^+$, we have $\omega(T(\sup_{i\in\cal I} x_i))=\lim_{i\in\cal I}\omega(T(x_i))$.\qedhere
\end{itemize}
\end{defin}

Note that if $T:M_+\rightarrow N_+^{\ext}$ is an operator-valued weight, it extends uniquely to a semi-linear map $\overline{T}:M_+^{\ext}\rightarrow N_+^{\ext}$. This will allow us to compose \nsf operator-valued weights. Indeed, let $P\subset N\subset M$ be unital normal inclusions of von Neumann algebras. Let $S$ (resp.\ $T$) be an operator-valued weight from $N$ (resp.\ $M$) to $P$ (resp.\ $N$). We define an operator-valued weight from $M$ to $P$ by setting $(S\circ T)(x):=\overline{S}(T(x))$ for all $x\in N_+$.
	
	\subsection[Relative tensor product and fiber product]{Relative tensor product of Hilbert spaces and fiber product of von Neumann algebras}\label{tensorproduct}
	
In this paragraph, we will recall the definitions, notations and important results concerning the  relative tensor product and the fiber product which are the main technical tools of the theory of measured quantum groupoids. For more information, we refer the reader to \cite{Co}.\newline

In the whole section, $N$ is a von Neumann algebra endowed with a \nsf weight $\varphi$. Let $\pi$ (resp.\ $\gamma$) be a normal unital *-representation of $N$ (resp.\ $N^{\rm o}$) on a Hilbert space $\cal H$ (resp.\ $\cal K$).

\paragraph{Relative tensor product.} The Hilbert space $\cal H$ (resp.\ $\cal K$) may be considered as a left (resp.\ right) $N$-module. Moreover, $\s H_{\varphi}$ is an $N$-bimodule whose actions are given by
\begin{center}
$x\xi:=\pi_{\varphi}(x)\xi$ \quad and \quad $\xi y:=J_{\varphi}\pi_{\varphi}(y^*)J_{\varphi}\xi$,\quad for all $\xi\in\s H_{\varphi}$ and $x,\,y\in N$.
\end{center}

\begin{defin}\label{defLeftBoundedVector}
We define the set of right (resp.\ left) bounded vectors with respect to $\varphi$ and $\pi$ (resp.\ $\gamma$) to be:
\begin{align*}
_{\varphi}(\pi,{\cal H})&:=\{\xi\in{\cal H}\,;\,\exists\,C\in\GR_+,\,\forall\,x\in\f N_{\varphi},\,\|\pi(x)\xi\|\leqslant C\|\Lambda_{\varphi}(x)\|\},\\[0.3cm]
\text{(resp.\ }({\cal K},\gamma)_{\varphi}&:=\{\xi\in{\cal K}\,;\,\exists\,C\in\GR_+,\,\forall\,x\in\f N_{\varphi}^*,\,\|\gamma(x^{\rm o})\xi\|\leqslant C\|\Lambda_{\varphi^{\rm o}}(x^{\rm o})\|\}\text{{\rm)}}.
\end{align*}

If $\xi\in{}_{\varphi}(\pi,{\cal H})$, we denote by $R^{\pi,\varphi}_{\xi}\in\B(\s H_{\varphi},{\cal H})$\index[symbol]{r@$R_{\xi}^{\pi}$, $L_{\eta}^{\gamma}$} (or simply $R^{\pi}_{\xi}$ if $\varphi$ is understood) the unique bounded operator such that 
\[
R^{\pi,\varphi}_{\xi}\Lambda_{\varphi}(x)=\pi(x)\xi, \quad \text{for all }x\in\f N_{\varphi}.
\]
Similarly, if $\xi\in({\cal K},\gamma)_{\varphi}$ we denote $L^{\gamma,\varphi}_{\xi}\in\B(\s H_{\varphi},{\cal K})$ (or simply $L_{\xi}^{\gamma}$ if $\varphi$ is understood) the unique bounded operator such that
\[
L^{\gamma,\varphi}_{\xi}J_{\varphi}\Lambda_{\varphi}(x^*)=\gamma(x^{\rm o})\xi, \quad \text{for all }x\in\f N_{\varphi}^*,
\]
where we have used the identification $\s H_{\varphi^{\rm o}}\rightarrow\s H_{\varphi}\,;\,\Lambda_{\varphi^{\rm o}}(x^{\rm o})\mapsto J_{\varphi}\Lambda_{\varphi}(x^*)$.
\end{defin}

Note that $\xi\in\cal K$ is left bounded with respect to $\varphi$ and $\gamma$ if, and only if, it is right bounded with respect to the \nsf weight $\varphi^{\rm c}:=\varphi\circ C_N^{-1}$\index[symbol]{po@$\varphi^{\rm c}$, commutant weight} on $N'$ and the normal unital *-representation $\gamma^{\rm c}:=\gamma\circ C_N^{-1}$ of $N'$. It is important to note that $(\K,\gamma)_{\varphi}$ (resp.\ $_{\varphi}(\pi,\cal H)$) is dense in $\cal K$ (resp.\ $\cal H$) (cf.\ Lemma 2 of \cite{Co}).

\medskip

If $\xi\in{}_{\varphi}(\pi,{\cal H})$ (resp.\ $\xi\in({\cal K},\gamma)_{\varphi}$), we have that $R^{\pi,\varphi}_{\xi}$ (resp.\ $L^{\gamma,\varphi}_{\xi}$) is left (resp.\ right) $N$-linear. Therefore, for all $\xi,\eta\in{}_{\varphi}(\pi,{\cal H})$ (resp.\ $({\cal K},\gamma)_{\varphi}$) we have
\[
(R^{\pi,\varphi}_{\xi})^*R^{\pi,\varphi}_{\eta}\in\pi_{\varphi}(N)'=C_N(N) \;\; \text{and} \;\;
R^{\pi,\varphi}_{\xi}(R^{\pi,\varphi}_{\eta})^*\in\pi(N)'
\]
\[
\text{(resp.\ }(L^{\gamma,\varphi}_{\xi})^*L^{\gamma,\varphi}_{\eta}\in\pi_{\varphi}(N)  \;\; \text{and} \;\; 
L^{\gamma,\varphi}_{\xi}(L^{\gamma,\varphi}_{\eta})^*\in\gamma(N^{\rm o})'\text{{\rm)}}.
\]

\begin{nbs}
(cf.\ 2.1 \cite{E05}) Let\index[symbol]{kf@$\K_{\pi}$, $\K_{\gamma}$}
\[
\K_{\pi,\varphi}:=[R^{\pi,\varphi}_{\xi} (R^{\pi,\varphi}_{\eta})^*\,;\,\xi,\eta\in{}_{\varphi}(\pi,{\cal H})]\quad
\text{(resp.\ }\K_{\gamma,\varphi}:=[L^{\gamma,\varphi}_{\xi}(L^{\gamma,\varphi}_{\eta})^*\,;\, \xi,\eta\in({\cal H},\gamma)_{\varphi}]\text{{\rm)}}.
\]
Note that $\K_{\pi,\varphi}$ {\rm(}resp.\ $\K_{\gamma,\varphi}${\rm )} is a weakly dense ideal of $\pi(N)'$ {\rm(}resp.\ $\gamma(N^{\rm o})'${\rm)} {\rm(}cf. Proposition 3 of \cite{Co}{\rm)}. If $\varphi$ is understood, we denote $\K_{\pi}$ {\rm(}resp.\ $\K_{\gamma})$ instead of $\K_{\pi,\varphi}$ {\rm(}resp.\ $\K_{\gamma,\varphi})$.
\end{nbs}

\begin{nbs}
Let $\xi,\eta\in{}_{\varphi}(\pi,{\cal H})$ {\rm(}resp.\ $({\cal K},\gamma)_{\varphi})$, we denote
\[
\langle\xi,\,\eta\rangle_{N^{\rm o}}:=C_N^{-1}((R^{\pi,\varphi}_{\xi})^*R^{\pi,\varphi}_{\eta})^{\rm o}\in N^{\rm o}\quad 
\text{(resp.\ }\langle\xi,\,\eta\rangle_N:=\pi_{\varphi}^{-1}((L^{\gamma,\varphi}_{\xi})^*L^{\gamma,\varphi}_{\eta})\in N\text{{\rm)}}.\qedhere
\]
\end{nbs}

\begin{prop}
For all $\xi , \eta \in {}_\varphi(\pi,{\cal H})$ {\rm(}resp.\ $\xi , \eta \in ({\cal K}, \gamma)_\varphi)$ and $y\in N$ analytic for $(\sigma_t^\varphi)_{t\in\GR}$,  we have:
\begin{enumerate}
	\item $\langle \xi , \eta \rangle_{N^{\rm o}}^* = \langle \eta, \xi \rangle_{N^{\rm o}}$ {\rm(}resp.\ $\langle \xi , \eta \rangle_{ N}^* = \langle \eta, \xi \rangle_{N}${\rm)};
	\item $\langle \xi , \eta y^{\rm o}\rangle_{N^{\rm o}} =  \langle \xi , \eta \rangle_{N^{\rm o}} \sigma_{{\rm i}/2}^\varphi(y)^{\rm o}$ {\rm(}resp.\ $\langle \xi , \eta y \rangle_N =  \langle \xi , \eta \rangle_N\sigma_{-{{\rm i}/2}}^\varphi(y)${\rm)}.\qedhere
\end{enumerate}
\end{prop}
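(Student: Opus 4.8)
The plan is to unwind the definitions of $\langle\cdot,\cdot\rangle_{N^{\rm o}}$ and $\langle\cdot,\cdot\rangle_N$ and combine them with the standard Tomita--Takesaki formula for right multiplication by an analytic element; the modular shift in item~2 appears precisely because one passes through the maps $C_N^{-1}$ and $x\mapsto x^{\rm o}$, both of which reverse products. I would carry out the right-bounded case (the operators $R_{\xi}^{\pi}$, the pairing $\langle\cdot,\cdot\rangle_{N^{\rm o}}$) in detail; the left-bounded case is entirely parallel and could alternatively be obtained from it by replacing $\varphi$ and $\gamma$ with the commutant weight $\varphi^{\rm c}$ and the representation $\gamma^{\rm c}$. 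Item~1 is immediate: $(R_{\xi}^{\pi})^*R_{\eta}^{\pi}\in\pi_{\varphi}(N)'=C_N(N)$, the map $C_N^{-1}:C_N(N)\rightarrow N$ is a $*$-antihomomorphism and hence preserves adjoints, and $x\mapsto x^{\rm o}$ preserves adjoints; taking the adjoint of $(R_{\xi}^{\pi})^*R_{\eta}^{\pi}$ therefore gives $\langle\xi,\eta\rangle_{N^{\rm o}}^{*}=\langle\eta,\xi\rangle_{N^{\rm o}}$ at once, and the same computation with $L_{\xi}^{\gamma}$, $\pi_{\varphi}^{-1}$ and $(L_{\xi}^{\gamma})^*L_{\eta}^{\gamma}\in\pi_{\varphi}(N)$ gives $\langle\xi,\eta\rangle_N^{*}=\langle\eta,\xi\rangle_N$.

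For item~2, the crucial intermediate step is a module-compatibility of the assignment $\xi\mapsto R_{\xi}^{\pi}$. Since ${\cal H}$ is a left $N$-module via $\pi$, the right $N^{\rm o}$-action is $\eta y^{\rm o}=\pi(y)\eta$. I would then invoke the Tomita--Takesaki identity: for $y$ analytic with respect to $(\sigma^{\varphi}_t)_{t\in\GR}$ and $a\in\f N_{\varphi}$ one has $ay\in\f N_{\varphi}$ and $\Lambda_{\varphi}(ay)=C_N(\sigma^{\varphi}_{{\rm i}/2}(y))\Lambda_{\varphi}(a)$ (recall $C_N(z)=J_{\varphi}\pi_{\varphi}(z)^*J_{\varphi}$). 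From this and the definition of $R^{\pi}_{\eta}$, the bounded operator $R_{\eta}^{\pi}\,C_N(\sigma^{\varphi}_{{\rm i}/2}(y))\in\B(\s H_{\varphi},{\cal H})$ satisfies
\[
R_{\eta}^{\pi}\,C_N(\sigma^{\varphi}_{{\rm i}/2}(y))\,\Lambda_{\varphi}(a)=R_{\eta}^{\pi}\,\Lambda_{\varphi}(ay)=\pi(ay)\eta=\pi(a)\bigl(\pi(y)\eta\bigr),\qquad a\in\f N_{\varphi},
\]
so by density of $\Lambda_{\varphi}(\f N_{\varphi})$ we get $\eta y^{\rm o}\in{}_{\varphi}(\pi,{\cal H})$ with $R_{\eta y^{\rm o}}^{\pi}=R_{\eta}^{\pi}\,C_N(\sigma^{\varphi}_{{\rm i}/2}(y))$. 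Hence $(R_{\xi}^{\pi})^*R_{\eta y^{\rm o}}^{\pi}=(R_{\xi}^{\pi})^*R_{\eta}^{\pi}\,C_N(\sigma^{\varphi}_{{\rm i}/2}(y))\in C_N(N)$, and applying $C_N^{-1}$ and then $x\mapsto x^{\rm o}$ — each of which reverses products — yields
\[
\langle\xi,\eta y^{\rm o}\rangle_{N^{\rm o}}=\Bigl(\sigma^{\varphi}_{{\rm i}/2}(y)\,C_N^{-1}\bigl((R_{\xi}^{\pi})^*R_{\eta}^{\pi}\bigr)\Bigr)^{\rm o}=\Bigl(C_N^{-1}\bigl((R_{\xi}^{\pi})^*R_{\eta}^{\pi}\bigr)\Bigr)^{\rm o}\,\sigma^{\varphi}_{{\rm i}/2}(y)^{\rm o}=\langle\xi,\eta\rangle_{N^{\rm o}}\,\sigma^{\varphi}_{{\rm i}/2}(y)^{\rm o}.
\]

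For the left-bounded case I would argue symmetrically, now with $\eta y=\gamma(y^{\rm o})\eta$: starting from $L_{\eta}^{\gamma}J_{\varphi}\Lambda_{\varphi}(x^*)=\gamma(x^{\rm o})\eta$ (for $x^*\in\f N_{\varphi}$), using $\gamma(x^{\rm o})\gamma(y^{\rm o})=\gamma((yx)^{\rm o})$ and the Tomita--Takesaki identity applied to $x^*y^*$ (together with $\sigma^{\varphi}_{{\rm i}/2}(y^*)=\sigma^{\varphi}_{-{\rm i}/2}(y)^*$), one finds $L_{\eta y}^{\gamma}=L_{\eta}^{\gamma}\,\pi_{\varphi}(\sigma^{\varphi}_{-{\rm i}/2}(y))$; since $(L_{\xi}^{\gamma})^*L_{\eta}^{\gamma}\in\pi_{\varphi}(N)$ and $\pi_{\varphi}^{-1}$ is a homomorphism, this gives $\langle\xi,\eta y\rangle_N=\langle\xi,\eta\rangle_N\,\sigma^{\varphi}_{-{\rm i}/2}(y)$. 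The only genuinely delicate points here are bookkeeping ones: pinning down the direction of the modular shift $\sigma^{\varphi}_{\pm{\rm i}/2}$ — which is exactly why analyticity of $y$ is imposed, as it is what makes $\sigma^{\varphi}_{\pm{\rm i}/2}(y)$ meaningful and ensures that $ay\in\f N_{\varphi}$, that $x^*y^*\in\f N_{\varphi}$, and that $\pi(y)\eta$ and $\gamma(y^{\rm o})\eta$ are again bounded vectors — and checking at each step that the element fed to $C_N^{-1}$ (resp.\ $\pi_{\varphi}^{-1}$) really lies in $C_N(N)=\pi_{\varphi}(N)'$ (resp.\ $\pi_{\varphi}(N)$), which holds because these are algebras containing the $\sigma^{\varphi}_{\pm{\rm i}/2}(y)$-images. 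No input beyond the standard modular calculus is required.
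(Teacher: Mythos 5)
The paper states this proposition without proof — it is recalled background from the spatial theory of Connes and from Enock--Lesieur — so there is nothing to compare against; judged on its own, your argument is correct and is exactly the standard one. In particular the key identities $R^{\pi}_{\eta y^{\rm o}}=R^{\pi}_{\eta}\,C_N(\sigma^{\varphi}_{{\rm i}/2}(y))$ and $L^{\gamma}_{\eta y}=L^{\gamma}_{\eta}\,\pi_{\varphi}(\sigma^{\varphi}_{-{\rm i}/2}(y))$, derived from $\Lambda_{\varphi}(ay)=J_{\varphi}\sigma^{\varphi}_{{\rm i}/2}(y)^{*}J_{\varphi}\Lambda_{\varphi}(a)$, together with the product-reversing properties of $C_N^{-1}$ and of $x\mapsto x^{\rm o}$, give both items with the correct modular shifts.
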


\begin{lem}
For all $\xi_1 ,\, \xi_2 \in{}_\varphi(\pi,{\cal H})$ and $\eta_1 ,\, \eta_2 \in ({\cal K}, \gamma)_\varphi$, we have 
\[
\langle \eta_1 ,\, \gamma(\langle \xi_1 ,\,  \xi_2 \rangle_{N^{\rm o}})\eta_2 \rangle_{\cal K} = 
\langle \xi_1 , \pi(\langle \eta_1 ,\,  \eta_2 \rangle_N)\xi_2 \rangle_{\cal H}.\qedhere
\]
\end{lem}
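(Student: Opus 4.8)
The plan is to convert the identity into a computation of two inner products in the GNS Hilbert space $\s H_\varphi$. Put $n:=C_N^{-1}\big((R^{\pi}_{\xi_1})^{*}R^{\pi}_{\xi_2}\big)\in N$, so that $\langle\xi_1,\xi_2\rangle_{N^{\rm o}}=n^{\rm o}$ and $\gamma(\langle\xi_1,\xi_2\rangle_{N^{\rm o}})=\gamma(n^{\rm o})$, and put $m:=\pi_\varphi^{-1}\big((L^{\gamma}_{\eta_1})^{*}L^{\gamma}_{\eta_2}\big)\in N$, so that $\pi(\langle\eta_1,\eta_2\rangle_N)=\pi(m)$. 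The key step is to establish the two ``balancing'' identities
\[
\gamma(\langle\xi_1,\xi_2\rangle_{N^{\rm o}})\,\eta=L^{\gamma}_{\eta}\,(R^{\pi}_{\xi_1})^{*}\xi_2
\qquad\text{and}\qquad
\pi(\langle\eta_1,\eta_2\rangle_N)\,\xi=R^{\pi}_{\xi}\,(L^{\gamma}_{\eta_1})^{*}\eta_2,
\]
for arbitrary $\xi,\xi_1,\xi_2\in{}_\varphi(\pi,{\cal H})$ and $\eta,\eta_1,\eta_2\in({\cal K},\gamma)_\varphi$; both sides of each identity are vectors in ${\cal K}$, resp.\ ${\cal H}$.

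To prove the first identity I would fix a net $(e_j)$ of self-adjoint elements of $\f N_\varphi$ lying in the centralizer of $\varphi$ (so $\sigma^{\varphi}_t(e_j)=e_j$ for all $t\in\GR$), with $0\le e_j\le 1$ and $e_j\to 1$ $\sigma$-weakly; such a net exists because $\varphi$ restricted to its centralizer is a semifinite normal trace. Then $\pi_\varphi(e_j)\to 1$, $\gamma(e_j^{\rm o})\to 1$ and $\pi(e_j)\to 1$ strongly, and moreover $\Lambda_\varphi(e_j)$ is fixed by $\nabla_\varphi^{{\rm i}t}$ (since $\nabla_\varphi^{{\rm i}t}\Lambda_\varphi(e_j)=\Lambda_\varphi(\sigma^{\varphi}_t(e_j))=\Lambda_\varphi(e_j)$), hence by $\nabla_\varphi^{1/2}$, so ${\cal T}_\varphi\Lambda_\varphi(e_j)=J_\varphi\Lambda_\varphi(e_j)$ together with ${\cal T}_\varphi\Lambda_\varphi(e_j)=\Lambda_\varphi(e_j^{*})=\Lambda_\varphi(e_j)$ gives $J_\varphi\Lambda_\varphi(e_j)=\Lambda_\varphi(e_j)$. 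Now start from $\gamma(n^{\rm o})\eta=\lim_j\gamma(n^{\rm o})\gamma(e_j^{\rm o})\eta$, rewrite $\gamma(e_j^{\rm o})\eta=L^{\gamma}_{\eta}J_\varphi\Lambda_\varphi(e_j)$ by the defining relation of $L^{\gamma}_{\eta}$ (using that $e_j$ is self-adjoint), apply the right $N$-linearity $\gamma(n^{\rm o})L^{\gamma}_{\eta}=L^{\gamma}_{\eta}C_N(n)$, replace $C_N(n)$ by $(R^{\pi}_{\xi_1})^{*}R^{\pi}_{\xi_2}$, use $J_\varphi\Lambda_\varphi(e_j)=\Lambda_\varphi(e_j)$ and then $R^{\pi}_{\xi_2}\Lambda_\varphi(e_j)=\pi(e_j)\xi_2$, and finally move $\pi(e_j)$ past $(R^{\pi}_{\xi_1})^{*}$ by the left $N$-linearity in the form $(R^{\pi}_{\xi_1})^{*}\pi(e_j)=\pi_\varphi(e_j)(R^{\pi}_{\xi_1})^{*}$; this produces $\gamma(n^{\rm o})\eta=\lim_j L^{\gamma}_{\eta}\pi_\varphi(e_j)(R^{\pi}_{\xi_1})^{*}\xi_2=L^{\gamma}_{\eta}(R^{\pi}_{\xi_1})^{*}\xi_2$ by strong convergence. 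The second balancing identity is obtained by the same manipulation applied to $\pi_\varphi(m)=(L^{\gamma}_{\eta_1})^{*}L^{\gamma}_{\eta_2}$, moving $C_N(e_j)=J_\varphi\pi_\varphi(e_j)J_\varphi$ past $R^{\pi}_{\xi}$ and using $C_N(e_j)\to 1$ strongly at the end.

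Granting the two identities, the lemma follows at once. For the left-hand side I would first use $\gamma(\langle\xi_1,\xi_2\rangle_{N^{\rm o}})^{*}=\gamma(\langle\xi_2,\xi_1\rangle_{N^{\rm o}})$ (the self-adjointness relation of the Proposition recalled above) to pass the operator to the other vector, then invoke the first balancing identity with the indices $1$ and $2$ interchanged:
\[
\langle\eta_1,\gamma(\langle\xi_1,\xi_2\rangle_{N^{\rm o}})\eta_2\rangle_{\cal K}
=\langle\gamma(\langle\xi_2,\xi_1\rangle_{N^{\rm o}})\eta_1,\eta_2\rangle_{\cal K}
=\langle L^{\gamma}_{\eta_1}(R^{\pi}_{\xi_2})^{*}\xi_1,\eta_2\rangle_{\cal K}
=\langle (R^{\pi}_{\xi_2})^{*}\xi_1,(L^{\gamma}_{\eta_1})^{*}\eta_2\rangle_{\s H_\varphi}.
\]
For the right-hand side, the second balancing identity gives directly $\langle\xi_1,\pi(\langle\eta_1,\eta_2\rangle_N)\xi_2\rangle_{\cal H}=\langle\xi_1,R^{\pi}_{\xi_2}(L^{\gamma}_{\eta_1})^{*}\eta_2\rangle_{\cal H}=\langle (R^{\pi}_{\xi_2})^{*}\xi_1,(L^{\gamma}_{\eta_1})^{*}\eta_2\rangle_{\s H_\varphi}$, the same scalar, so the two sides agree. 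The only genuinely delicate point I foresee — and the one I would treat most carefully — is the interplay with the modular structure: one has to work with a net $(e_j)$ inside the centralizer of $\varphi$ precisely so that $J_\varphi\Lambda_\varphi(e_j)=\Lambda_\varphi(e_j)$ and the limits can be taken freely; a general $\sigma^{\varphi}$-analytic approximate unit is of no use here, as it would make $\sigma^{\varphi}_{-{\rm i}/2}(e_j)$ appear with no control on its limit. Everything else is a routine unwinding of the left/right $N$-linearity of $R^{\pi}_{\xi}$ and $L^{\gamma}_{\eta}$ and of the defining relations of these operators recalled in this subsection.
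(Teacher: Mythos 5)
Your reduction of the lemma to the two balancing identities $\gamma(\langle\xi_1,\xi_2\rangle_{N^{\rm o}})\eta=L^{\gamma}_{\eta}(R^{\pi}_{\xi_1})^{*}\xi_2$ and $\pi(\langle\eta_1,\eta_2\rangle_{N})\xi=R^{\pi}_{\xi}(L^{\gamma}_{\eta_1})^{*}\eta_2$, and the final pairing of both sides of the lemma against $(R^{\pi}_{\xi_2})^{*}\xi_1$ and $(L^{\gamma}_{\eta_1})^{*}\eta_2$ in $\s H_{\varphi}$, are correct, as are all the manipulations with the left/right $N$-linearity of $R^{\pi}_{\xi}$ and $L^{\gamma}_{\eta}$. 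The gap sits exactly at the point you single out as delicate: the net $(e_j)$ does not exist for a general \nsf weight. A net of self-adjoint elements of $\f N_{\varphi}$ lying in the centralizer $\{a\in N\,;\,\sigma^{\varphi}_{t}(a)=a\ \text{for all}\ t\}$ with $0\leqslant e_j\leqslant 1$ and $e_j\rightarrow 1$ exists if, and only if, the restriction of $\varphi$ to its centralizer is semifinite, i.e.\ if, and only if, $\varphi$ is \emph{strictly} semifinite --- a strictly stronger property than semifiniteness. The assertion that the restriction of an \nsf weight to its centralizer is always a semifinite trace is false: it is a normal faithful trace, but semifiniteness can fail. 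For a counterexample take $N=\B({\rm L}^{2}([1,2]))$ and $\varphi=\mathrm{Tr}(h\,\cdot\,)$, where $h$ is the bounded invertible operator of multiplication by the coordinate function: $\varphi$ is an \nsf weight with modular group ${\rm Ad}(h^{{\rm i}t})$, its centralizer is the maximal abelian algebra ${\rm L}^{\infty}([1,2])$ of multiplication operators $M_f$, and for $f\geqslant 0$ one has $\varphi(M_f)=\mathrm{Tr}(M_{xf})\in\{0,+\infty\}$ because a nonzero positive multiplication operator is never trace class. Hence the centralizer contains no nonzero positive $\varphi$-integrable element and your approximation cannot even start.

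Everything after the construction of $(e_j)$ is sound, so your argument does prove the lemma under the additional hypothesis that $\varphi$ is strictly semifinite; this covers the finite-dimensional bases with Markov trace actually used in this paper, where the centralizer is all of $N$ and the trace is finite. But the lemma is stated for an arbitrary \nsf weight, and in that generality the modular twist you are trying to sidestep has to be confronted: the bimodule structure genuinely involves $\sigma^{\varphi}_{{\rm i}/2}$, as the relation $\langle\xi,\eta y^{\rm o}\rangle_{N^{\rm o}}=\langle\xi,\eta\rangle_{N^{\rm o}}\sigma^{\varphi}_{{\rm i}/2}(y)^{\rm o}$ recalled just before the lemma already shows. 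The standard route (this is a quoted result of Connes \cite{Co}) runs the approximation through the Tomita algebra of $\sigma^{\varphi}$-analytic elements of $\f N_{\varphi}\cap\f N_{\varphi}^{*}$, for which $J_{\varphi}\Lambda_{\varphi}(a)=\Lambda_{\varphi}(\sigma^{\varphi}_{-{\rm i}/2}(a)^{*})$, and checks that the resulting twist enters the two sides of the identity symmetrically and cancels. As written, your proof needs either the hypothesis of strict semifiniteness or that replacement of the key approximation step.
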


\begin{defin}
The relative tensor product 
\[
\cst{\cal K}{\gamma}{\varphi}{\pi}{\cal H}\quad \text{(or simply denoted by }\reltens{\cal K}{\gamma}{\pi}{\cal H}\text{{\rm)}}
\]
is the Hausdorff completion of the pre-Hilbert space $({\cal K}, \gamma)_\varphi \odot {}_\varphi(\pi,{\cal H})$, whose inner product is given by
\[
\langle \eta_1 \otimes \xi_1 ,\, \eta_2 \otimes \xi_2 \rangle := \langle \eta_1 ,\, \gamma(\langle \xi_1 ,\,  \xi_2 \rangle_{N^{\rm o}})\eta_2 \rangle_{\cal K} = 
\langle \xi_1 ,\, \pi(\langle \eta_1 ,\,  \eta_2 \rangle_N)\xi_2 \rangle_{\cal H},
\]
for all $\eta_1,\,\eta_2\in({\cal K}, \gamma)_\varphi$ and $\xi_1,\xi_2\in{}_\varphi(\pi,{\cal H})$. If $\eta\in({\cal K}, \gamma)_\varphi$ and $\xi\in{}_\varphi(\pi,{\cal H})$, we will denote by
\[
\cst{\eta}{\gamma}{\varphi}{\pi}{\xi} \quad \text{(or simply }\reltens{\eta}{\gamma}{\pi}{\xi}\text{{\rm)}}
\]
the image of $\eta\tens\xi$ by the canonical map $({\cal K}, \gamma)_\varphi \odot {}_\varphi(\pi,{\cal H})\rightarrow\reltens{\cal K}{\gamma}{\pi}{\cal H}$ (isometric dense range).
\end{defin}

\begin{rks}
\begin{enumerate}
	\item By applying this construction to $(N^{\rm o},\varphi^{\rm o})$ instead of $(N,\varphi)$ we obtain the relative tensor product $\cst{\cal H}{\pi}{\varphi^{\rm o}}{\gamma}{\cal K}$.
	\item The relative tensor product $\reltens{\cal K}{\gamma}{\pi}{\cal H}$ is also the Hausdorff completion of the pre-Hilbert space $({\cal K},\gamma)_{\varphi}\odot{\cal H}$ (resp.\ ${\cal K}\odot{}_{\varphi}(\pi,{\cal H})$), whose inner product is given by:
	\begin{align*}	
	\langle \eta_1 \otimes \xi_1 ,\, \eta_2 \otimes \xi_2 \rangle & :=
	\langle \xi_1 ,\, \pi(\langle \eta_1 ,\,  \eta_2 \rangle_N)\xi_2 \rangle_{\cal H}\\[0.3cm]
	(\text{resp. }\langle \eta_1 \otimes \xi_1 ,\, \eta_2 \otimes \xi_2 \rangle & := \langle \eta_1 ,\, \gamma(\langle \xi_1 ,\,  \xi_2 \rangle_{N^{\rm o}})\eta_2 \rangle_{\cal K}).
	\end{align*}
	\item Moreover, for all $\eta\in{\cal K} ,\, \xi\in {}_\varphi(\pi,{\cal H})$ and $y\in N$ analytic for $(\sigma_t^\varphi)_{t\in\GR}$ we have
\[
\reltens{\gamma(y^{\rm o})\eta}{\gamma}{\pi}{\xi} =  \reltens{\eta}{\gamma}{\pi}{\pi(\sigma_{-{{\rm i}/2}}^\varphi(y))\xi}.
\qedhere\]
\end{enumerate}
\end{rks}

\begin{noh}
The relative flip map is the isomorphism $\sigma_{\varphi}^{\gamma\pi}$ from $\cst{\cal K}{\gamma}{\varphi}{\pi}{\cal H}$ onto $\cst{\cal H}{\pi}{\varphi^{\rm o}}{\gamma}{\cal K}$ given by:
\[
\sigma_{\varphi}^{\gamma\pi}(\cst{\eta}{\gamma}{\varphi}{\pi}{\xi}):=\cst{\xi}{\pi}{\varphi^{\rm o}}{\gamma}{\eta},\quad \text{for all } \xi\in({\cal K},\gamma)_{\varphi} \text{ and } \eta\in{}_{\varphi}(\pi,{\cal H})\quad \text{(or simply }\sigma_{\gamma\pi}\text{{\rm)}}.
\]
Note that $\sigma_{\varphi}^{\gamma\pi}$ is unitary and $(\sigma_{\varphi}^{\gamma\pi})^*=\sigma_{\varphi^{\rm o}}^{\pi\gamma}$. Then, we can define a relative flip *-homomorphism 
\[
\varsigma_{\varphi}^{\gamma\pi}:\B(\cst{\cal K}{\gamma}{\varphi}{\pi}{\cal H})\rightarrow\B(\cst{\cal H}{\pi}{\varphi^{\rm o}}{\gamma}{\cal K})\quad \text{(or simply denoted by } \varsigma_{\gamma\pi}\text{{\rm)}}
\]
by setting
$
\varsigma_{\varphi}^{\gamma\pi}(X):=\sigma_{\varphi}^{\gamma\pi}X(\sigma_{\varphi}^{\gamma\pi})^*
$
for all $X\in\B(\cst{\cal K}{\gamma}{\varphi}{\pi}{\cal H})$.\index[symbol]{sg@$\sigma_{\gamma\pi}$/$\varsigma_{\gamma\pi}$, relative flip map/*-homomorphism}
\end{noh}

\paragraph{Fiber product of von Neumann algebras.} We continue to use the notations of the previous paragraph.

\begin{propdef}\label{propdef2}
Let ${\cal K}_i$ and ${\cal H}_i$ be Hilbert spaces, and $\gamma_i:N^{\rm o}\rightarrow\B({\cal K}_i)$ and $\pi_i:N\rightarrow\B({\cal H}_i)$ be unital normal *-homomorphisms for $i=1,2$. Let $T\in\B({\cal K}_1,{\cal K}_2)$ and $S\in\B({\cal H}_1,{\cal H}_2)$ such that 
$T\circ\gamma_1(n^{\rm o})=\gamma_2(n^{\rm o})\circ T$ and $S\circ\pi_1(n)=\pi_2(n)\circ S$ for all $n\in N$.
Then, the linear map
\begin{center}
$
({\cal K}_1,\gamma_1)_{\varphi}\odot{}_{\varphi}(\pi_1,{\cal H}_1) \rightarrow \reltens{{\cal K}_2}{\gamma_2}{\pi_2}{{\cal H}_2} \; ; \; \xi\odot\eta \mapsto \reltens{T\xi}{\gamma_2}{\pi_2}{S\eta}
$
\end{center}
extends uniquely to a bounded operator 
$
_{\gamma_2}\reltens{T}{\gamma_1}{\pi_2}{S}_{\pi_1}\in\B(\reltens{{\cal K}_1}{\gamma_1}{\pi_1}{{\cal H}_1},\reltens{{\cal K}_2}{\gamma_2}{\pi_2}{{\cal H}_2})
$
(or simply denoted by $\reltens{T}{\gamma_1}{\pi_2}{S}$), whose adjoint operator is $_{\gamma_1}\reltens{T^*}{\gamma_2}{\pi_1}{S^*}_{\pi_2}$ {\rm(}or simply $\reltens{T^*}{\gamma_2}{\pi_1}{S^*})$. In particular, if $x\in\gamma(N^{\rm o})'$ and $y\in\pi(N)'$, then the linear map
\[
({\cal K},\gamma)_{\varphi}\odot{}_{\varphi}(\pi,{\cal H}) \rightarrow \reltens{\cal K}{\gamma}{\pi}{\cal H} \; ; \; \xi\odot\eta \mapsto \reltens{x\xi}{\gamma}{\pi}{y\eta}
\]
extends uniquely to a bounded operator on $\reltens{\cal K}{\gamma}{\pi}{\cal H}$ denoted by $\reltens{x}{\gamma}{\pi}{y}\in\B(\reltens{\cal K}{\gamma}{\pi}{\cal H})$.
\end{propdef}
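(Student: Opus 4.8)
The plan is to run the standard amplification argument for relative tensor products, peeling off the contributions of $T$ and $S$ one at a time. I would begin by recording the routine consequences of the intertwining hypotheses: taking adjoints in $T\circ\gamma_1(n^{\rm o})=\gamma_2(n^{\rm o})\circ T$ and using that $\gamma_1,\gamma_2$ are $*$-homomorphisms gives $T^{*}\circ\gamma_2(n^{\rm o})=\gamma_1(n^{\rm o})\circ T^{*}$ for all $n\in N$, and likewise $S^{*}\circ\pi_2(n)=\pi_1(n)\circ S^{*}$; in particular $T^{*}T\in\gamma_1(N^{\rm o})'$ and $S^{*}S\in\pi_1(N)'$. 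Next, if $\xi\in({\cal K}_1,\gamma_1)_\varphi$ then $\|\gamma_2(x^{\rm o})T\xi\|=\|T\gamma_1(x^{\rm o})\xi\|\le\|T\|\,\|\gamma_1(x^{\rm o})\xi\|$ for $x\in\f N_\varphi^{*}$, so $T\xi\in({\cal K}_2,\gamma_2)_\varphi$ with $L^{\gamma_2}_{T\xi}=T\,L^{\gamma_1}_{\xi}$, and symmetrically $S\eta\in{}_\varphi(\pi_2,{\cal H}_2)$ with $R^{\pi_2}_{S\eta}=S\,R^{\pi_1}_{\eta}$ for $\eta\in{}_\varphi(\pi_1,{\cal H}_1)$. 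Hence $\reltens{T\xi}{\gamma_2}{\pi_2}{S\eta}$ is a legitimate element of $\reltens{{\cal K}_2}{\gamma_2}{\pi_2}{{\cal H}_2}$, so the prescription of the statement does define a linear map on the algebraic tensor product $({\cal K}_1,\gamma_1)_\varphi\odot{}_\varphi(\pi_1,{\cal H}_1)$.

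The heart of the proof is the norm estimate. For $v=\sum_{i=1}^{n}\xi_i\odot\eta_i$ I would expand, using the inner-product formula for the relative tensor product, $\big\|\sum_i \reltens{T\xi_i}{\gamma_2}{\pi_2}{S\eta_i}\big\|^{2}=\sum_{i,j}\langle S\eta_i,\pi_2(\langle T\xi_i,T\xi_j\rangle_N)S\eta_j\rangle_{{\cal H}_2}$. Writing $L_i:=L^{\gamma_1}_{\xi_i}$, one has $\langle T\xi_i,T\xi_j\rangle_N=\pi_\varphi^{-1}(L_i^{*}T^{*}TL_j)$; since the $n\times n$ matrix $(L_i^{*}T^{*}TL_j)_{i,j}$ equals $\tilde{\mathbf L}^{*}(T^{*}T\otimes E)\tilde{\mathbf L}$ with $\tilde{\mathbf L}=\mathrm{diag}(L_1,\dots,L_n)$ and $E$ the all-ones matrix in $M_n$, the estimate $T^{*}T\le\|T\|^{2}$ gives $T^{*}T\otimes E\le\|T\|^{2}(1\otimes E)$, hence $(L_i^{*}T^{*}TL_j)_{i,j}\le\|T\|^{2}(L_i^{*}L_j)_{i,j}$ as positive elements of $M_n(\pi_\varphi(N))$; applying the $*$-isomorphism $\pi_\varphi^{-1}$ entrywise yields $(\langle T\xi_i,T\xi_j\rangle_N)_{i,j}\le\|T\|^{2}(\langle\xi_i,\xi_j\rangle_N)_{i,j}$ in $M_n(N)$. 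Amplifying $\pi_2$ entrywise (a positive map) and contracting against the column $(S\eta_i)_i$ bounds the sum by $\|T\|^{2}\sum_{i,j}\langle S\eta_i,\pi_2(\langle\xi_i,\xi_j\rangle_N)S\eta_j\rangle$; using $\pi_2(\cdot)S=S\pi_1(\cdot)$ this becomes $\|T\|^{2}\sum_{i,j}\langle\eta_i,S^{*}S\,\pi_1(\langle\xi_i,\xi_j\rangle_N)\eta_j\rangle$, and since $S^{*}S\le\|S\|^{2}$ lies in $\pi_1(N)'$, which commutes with the positive matrix $(\pi_1(\langle\xi_i,\xi_j\rangle_N))_{i,j}$, a square-root argument bounds this in turn by $\|T\|^{2}\|S\|^{2}\sum_{i,j}\langle\eta_i,\pi_1(\langle\xi_i,\xi_j\rangle_N)\eta_j\rangle=\|T\|^{2}\|S\|^{2}\|v\|^{2}$. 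Thus the map extends to a bounded operator $_{\gamma_2}(T\,{}_{\gamma_1}\otimes_{\pi_2}S)_{\pi_1}$ of norm at most $\|T\|\,\|S\|$, the extension being unique since the algebraic tensor product has dense range.

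For the adjoint, I would first note that, by the part just proved with the indices $1$ and $2$ interchanged and using the intertwining relations for $T^{*}$ and $S^{*}$, the operator $_{\gamma_1}(T^{*}\,{}_{\gamma_2}\otimes_{\pi_1}S^{*})_{\pi_2}$ is defined. On elementary tensors, $\langle (T\,{}_{\gamma_1}\otimes_{\pi_2}S)(\reltens{\xi}{\gamma_1}{\pi_1}{\eta}),\reltens{\xi'}{\gamma_2}{\pi_2}{\eta'}\rangle=\langle S\eta,\pi_2(\langle T\xi,\xi'\rangle_N)\eta'\rangle$; the identity $T^{*}L^{\gamma_2}_{\xi'}=L^{\gamma_1}_{T^{*}\xi'}$ gives $\langle T\xi,\xi'\rangle_N=\langle\xi,T^{*}\xi'\rangle_N$, and moving $S$ across $\pi_2$ via $S^{*}\pi_2=\pi_1S^{*}$ turns the expression into $\langle\reltens{\xi}{\gamma_1}{\pi_1}{\eta},\reltens{T^{*}\xi'}{\gamma_2}{\pi_1}{S^{*}\eta'}\rangle$; density finishes the verification. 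Finally, the ``in particular'' assertion is the special case ${\cal K}_1={\cal K}_2={\cal K}$, $\gamma_1=\gamma_2=\gamma$, ${\cal H}_1={\cal H}_2={\cal H}$, $\pi_1=\pi_2=\pi$, $T=x\in\gamma(N^{\rm o})'$, $S=y\in\pi(N)'$, for which the intertwining hypotheses hold by the definition of the commutants. I expect the norm estimate — keeping straight the $N$-valued (and $M_n(N)$-valued) inner products, the entrywise $*$-isomorphism $\pi_\varphi^{-1}$, and the positivity of the various amplifications — to be the only step requiring care; the bounded-vector bookkeeping and the adjoint computation are routine once the intertwining relations for $T^{*}$ and $S^{*}$ are on the table.
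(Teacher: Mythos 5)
Your proof is correct. Note that the paper states this Proposition--Definition as recalled background in the appendix (it is a standard fact from the spatial theory of relative tensor products, cf.\ \cite{Co}) and gives no proof of its own, so there is no argument to compare against; what you supply is the standard one, and all the delicate points are handled properly — the identities $L^{\gamma_2}_{T\xi}=T\,L^{\gamma_1}_{\xi}$ and $R^{\pi_2}_{S\eta}=S\,R^{\pi_1}_{\eta}$ guaranteeing that bounded vectors are preserved, the matrix inequality $(\langle T\xi_i,T\xi_j\rangle_N)_{i,j}\leqslant\|T\|^{2}(\langle\xi_i,\xi_j\rangle_N)_{i,j}$ in ${\rm M}_n(N)$ via conjugation and the entrywise $*$-isomorphism $\pi_\varphi^{-1}$, and the commutation of $S^*S\in\pi_1(N)'$ with the positive matrix $(\pi_1(\langle\xi_i,\xi_j\rangle_N))_{i,j}$ for the final square-root estimate.
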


\begin{rk}
With the notations of \ref{propdef2}, let $T:{\cal K}_1\rightarrow{\cal H}_2$ and $S:{\cal H}_1\rightarrow{\cal K}_2$ be bounded antilinear maps such that $T\circ\gamma_1(n^{\rm o})^*=\pi_2(n)\circ T$ and $S\circ\pi_1(n)=\gamma_2(n^{\rm o})^*\circ S$ for all $n\in N$.
In a similar way, we define 
$
_{\pi_2}\reltens{T}{\gamma_1}{\gamma_2}{S}_{\pi_1}\in\B(\reltens{{\cal K}_1}{\gamma_1}{\pi_1}{{\cal H}_1},\reltens{{\cal H}_2}{\pi_2}{\gamma_2}{{\cal K}_2})
$ 
(or simply $\reltens{T}{\gamma_1}{\gamma_2}{S}$). Note that these notations are different from those used in \cite{E08,Le}.
\end{rk}

Let $M\subset\B({\cal K})$ and $P\subset\B({\cal H})$ be two von Neumann algebras. Let us assume that $\pi(N)\subset P$ and $\gamma(N^{\rm o})\subset M$.

\begin{defin}
The fiber product 
$
\fprod{M}{\gamma}{\pi}{P}
$
of $M$ and $P$ over $N$ is the commutant of 
$
\{\reltens{x}{\gamma}{\pi}{y}\,;\, x\in M',\,y\in P'\}\subset \B(\reltens{\cal K}{\gamma}{\pi}{\cal H}).
$
Then, $\fprod{M}{\gamma}{\pi}{P}$ is a von Neumann algebra. 
\end{defin}

Note that we have $\varsigma_{\gamma\pi}(\fprod{M}{\gamma}{\pi}{P})=\fprod{P}{\pi}{\gamma}{M}$.
We still denote by $\varsigma_{\gamma\pi}:\fprod{M}{\gamma}{\pi}{P}\rightarrow\fprod{P}{\pi}{\gamma}{M}$ the restriction of $\varsigma_{\gamma\pi}$ to $\fprod{M}{\gamma}{\pi}{P}$.

\begin{noh}\textit{Slicing with normal linear forms.} Now, let us recall how to slice with normal linear forms. For $\xi\in({\cal K},\gamma)_{\varphi}$ and $\eta\in{}_{\varphi}(\pi,{\cal H})$, we consider the following bounded linear maps:
\[
\lambda_{\xi}^{\gamma\pi}:{\cal H}\rightarrow\reltens{\cal K}{\gamma}{\pi}{\cal H},\; 
\zeta \mapsto \reltens{\xi}{\gamma}{\pi}{\zeta}  ; \quad
\rho_{\eta}^{\gamma\pi}:{\cal K}\rightarrow\reltens{\cal K}{\gamma}{\pi}{\cal H},\; 
\zeta \mapsto \reltens{\zeta}{\gamma}{\pi}{\eta}.
\]
Let $T\in\B(\reltens{{\cal K}}{\gamma}{\pi}{{\cal H}})$ and $\omega\in\B({\cal H})_*$ (resp.\ $\omega\in\B({\cal K})_*$). By using the fact that $({\cal K},\gamma)_{\varphi}$ (resp.\ $_{\varphi}(\pi,{\cal H})$) is dense in ${\cal H}$ (resp.\ ${\cal K}$), there exists a unique $(\fprod{\id}{\gamma}{\pi}{\omega})(T)\in\B({\cal K})$ (resp.\ $(\fprod{\omega}{\gamma}{\pi}{\id})(T)\in\B({\cal H})$) such that
\begin{align*}
\langle\xi_1,\,(\fprod{\id}{\gamma}{\pi}{\omega})(T)\xi_2\rangle &=\omega((\lambda_{\xi_1}^{\gamma\pi})^*T\lambda_{\xi_2}^{\gamma\pi}),\quad \text{for all } \xi_1,\,\xi_2\in({\cal K},\gamma)_{\varphi}\\[.5em]
\text{(resp. }\langle\eta_1,\,(\fprod{\omega}{\gamma}{\pi}{\id})(T)\eta_2\rangle &=\omega((\rho_{\eta_1}^{\gamma\pi})^*T\rho_{\eta_2}^{\gamma\pi}),\quad \text{for all } \eta_1,\,\eta_2\in{}_{\varphi}(\pi,{\cal H})\text{{\rm)}}.
\end{align*}
In particular, we have:
\begin{align*}
(\fprod{\id}{\gamma}{\pi}{\omega_{\eta_1,\eta_2}})(T)&=(\rho_{\eta_1}^{\gamma\pi})^*T\rho_{\eta_2}^{\gamma\pi}\in\B({\cal K}),\quad \text{for all } \eta_1,\,\eta_2\in{}_{\varphi}(\pi,{\cal H});\\[.5em]
(\fprod{\omega_{\xi_1,\xi_2}}{\gamma}{\pi}{\id})(T)&=(\lambda_{\xi_1}^{\gamma\pi})^*T\lambda_{\xi_2}^{\gamma\pi}\in\B({\cal H}),\quad \text{for all } \xi_1,\,\xi_2\in({\cal K},\gamma)_{\varphi}.
\end{align*}
If $x\in\fprod{M}{\gamma}{\pi}{P}$, then for all $\omega\in\B(\cal H)_*$ (resp.\ $\omega\in\B(\cal K)_*$) we have
$
(\fprod{\id}{\gamma}{\pi}{\omega})(x)\in M
$
(resp.\ $(\fprod{\omega}{\gamma}{\pi}{\id})(x)\in P$).
We refrain from writing the details but we can easily define the slice maps if $T$ takes its values in a different relative tensor product. Note that we can extend the notion of slice maps for normal linear forms to normal semi-finite weights.
\end{noh}

\paragraph{Fiber product over a finite-dimensional von Neumann algebra.} Now, let us assume that 
\[
N:=\bigoplus_{1\leqslant l\leqslant k}{\rm M}_{n_l}(\GC) \quad \text{and} \quad \varphi:=\bigoplus_{1\leqslant l\leqslant k}{\rm Tr}_l(F_l-),
\]
where $F_l$ is a positive invertible matrix of ${\rm M}_{n_l}(\GC)$ and ${\rm Tr}_l$ is the non-normalized trace on ${\rm M}_{n_l}(\GC)$. Denote by $(F_{l,i})_{1\leqslant i\leqslant n_l}$ the eigenvalues of $F_l$. 

\begin{propdef}($\S 7$ \cite{DC2})
The bounded linear map\index[symbol]{vh@$v_{\gamma\pi}$, canonical coisometry}
\[
v_{\varphi}^{\gamma\pi}:{\cal K}\tens{\cal H}\rightarrow\cst{\cal K}{\gamma}{\varphi}{\pi}{\cal H} \; ;\; \xi\tens\eta \mapsto \cst{\xi}{\gamma}{\varphi}{\pi}{\eta} \quad \text{(or simply denoted by }v_{\gamma\pi}\text{{\rm)}}
\]
is a coisometry if, and only if, we have $\sum_{1\leqslant i\leqslant n_l}F_{l,i}^{-1}=1$ for all $1\leqslant l\leqslant k$. 
\end{propdef}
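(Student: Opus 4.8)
The plan is to reduce the statement to a single matrix block, compute the positive operator $q_{\gamma\pi}:=v_{\gamma\pi}^{*}v_{\gamma\pi}$ on ${\cal K}\tens{\cal H}$ explicitly, and recognise the coisometry condition as the condition that $q_{\gamma\pi}$ be an orthogonal projection.

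\textbf{Reduction.} Writing $z_l\in N$ for the central projection onto the $l$-th block, we have $\pi=\bigoplus_l\pi_l$, $\gamma=\bigoplus_l\gamma_l$, and from the defining inner product of the relative tensor product one checks that $\reltens{u}{\gamma}{\pi}{v}=0$ whenever $u$ and $v$ lie in blocks of different index; hence $\reltens{{\cal K}}{\gamma}{\pi}{{\cal H}}$ splits as the orthogonal direct sum over $l$ of the analogous relative tensor products for $(\pi_l,\gamma_l,\varphi_l)$, and $v_{\gamma\pi}v_{\gamma\pi}^{*}=\bigoplus_l v_{\gamma_l\pi_l}v_{\gamma_l\pi_l}^{*}$. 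Since in every use of this construction $\pi$ and $\gamma$ are faithful (e.g.\ $\pi=\alpha$, $\gamma=\beta$), each $\pi_l,\gamma_l$ is a nonzero, hence faithful, representation of a matrix algebra, so it suffices to treat $N={\rm M}_n(\GC)$ with $\varphi={\rm Tr}(F\,\cdot\,)$, and after a unitary change of coordinates on ${\cal H}_\varphi$ (which transports everything covariantly) we may take $F=\mathrm{diag}(F_1,\dots,F_n)$. Now $N$ is finite dimensional, so $({\cal K},\gamma)_\varphi={\cal K}$, ${}_\varphi(\pi,{\cal H})={\cal H}$, the bounded map $v_{\gamma\pi}$ is everywhere defined, and its range contains the total set $\{\reltens{u}{\gamma}{\pi}{v}\}$, hence is dense. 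Consequently $v_{\gamma\pi}$ is a coisometry if and only if it is a partial isometry, i.e.\ if and only if $q_{\gamma\pi}\in\B({\cal K}\tens{\cal H})$ is an orthogonal projection.

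\textbf{Computing $q_{\gamma\pi}$.} By definition $\langle v_{\gamma\pi}(u\tens v),v_{\gamma\pi}(u'\tens v')\rangle=\langle u,\gamma(\langle v,v'\rangle_{N^{\rm o}})u'\rangle$, so everything reduces to an explicit description of $\langle v,v'\rangle_{N^{\rm o}}=\big(C_N^{-1}((R^\pi_v)^{*}R^\pi_{v'})\big)^{\rm o}$. Using the standard identification ${\cal H}_\varphi\cong{\rm M}_n(\GC)$ with Hilbert--Schmidt inner product under which $\Lambda_\varphi(x)\leftrightarrow xF^{1/2}$ — so that $\pi_\varphi$ is left multiplication, $J_\varphi$ is $Y\mapsto Y^{*}$, and $C_N(x)$ is right multiplication by $x$ — together with the orthonormal basis $(F_j^{-1/2}\Lambda_\varphi(e_{ij}))_{i,j}$ of ${\cal H}_\varphi$, a direct matrix-unit computation gives
\[
(R^\pi_v)^{*}R^\pi_{v'}=C_N\Big(\sum_{i,j=1}^{n}(F_iF_j)^{-1/2}\langle v,\pi(e_{ij})v'\rangle\,e_{ji}\Big),
\]
hence $\langle v,v'\rangle_{N^{\rm o}}=\sum_{i,j}(F_iF_j)^{-1/2}\langle v,\pi(e_{ij})v'\rangle\,(e_{ji})^{\rm o}$ and, after substitution,
\[
q_{\gamma\pi}=\sum_{i,j=1}^{n}(F_iF_j)^{-1/2}\,\gamma((e_{ij})^{\rm o})\tens\pi(e_{ji}).
\]

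\textbf{Conclusion.} From $((e_{ij})^{\rm o})^{*}=(e_{ji})^{\rm o}$ and $(e_{ji})^{*}=e_{ij}$ one sees at once that $q_{\gamma\pi}^{*}=q_{\gamma\pi}$, while the relations $(e_{ij})^{\rm o}(e_{kl})^{\rm o}=\delta_{li}(e_{kj})^{\rm o}$ and $e_{ji}e_{lk}=\delta_{il}e_{jk}$ give
\[
q_{\gamma\pi}^{2}=\Big(\sum_{i=1}^{n}F_i^{-1}\Big)\,q_{\gamma\pi}.
\]
Since $\gamma$ and $\pi$ are unital and nonzero, $q_{\gamma\pi}\neq 0$; and a nonzero self-adjoint operator $q$ with $q^{2}=cq$, $c>0$, equals $c$ times its range projection, hence is itself a projection precisely when $c=1$. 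Therefore $v_{\gamma\pi}$ is a coisometry if and only if $\sum_{i=1}^{n}F_i^{-1}=1$, and reassembling the blocks yields the criterion $\sum_{1\leqslant i\leqslant n_l}F_{l,i}^{-1}=1$ for all $1\leqslant l\leqslant k$. (The forward implication is in fact unconditional; faithfulness of $\pi,\gamma$ is only needed so that the converse is not spoiled by a block on which $\pi$ or $\gamma$ vanishes.) The one genuinely delicate point is the explicit form of $\langle v,v'\rangle_{N^{\rm o}}$: one must pin down $J_\varphi$, $C_N$ and the operators $R^\pi_v$ in the finite-dimensional GNS picture while keeping careful track of the half-powers $F^{\pm1/2}$, since it is exactly the way these factors combine with the matrix-unit products in $q_{\gamma\pi}^{2}$ that produces the scalar $\sum_i F_i^{-1}$, and hence the normalisation condition.
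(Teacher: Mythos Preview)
The paper does not give its own proof of this statement; it is stated with a citation to \S 7 of \cite{DC2} and no argument is supplied. Your proof is therefore being compared against the bare statement, and it is correct: the reduction to a single matrix block, the identification of the GNS space with $({\rm M}_n(\GC),\langle\cdot,\cdot\rangle_{HS})$ via $\Lambda_\varphi(x)\leftrightarrow xF^{1/2}$, the computation of $\langle v,v'\rangle_{N^{\rm o}}$, and the resulting formula
\[
q_{\gamma\pi}=\sum_{i,j}(F_iF_j)^{-1/2}\,\gamma((e_{ij})^{\rm o})\tens\pi(e_{ji})
\]
are all right. In fact this formula is precisely what the paper records (again from \cite{DC2}) in the subsequent Proposition--Definition, so your argument also supplies a proof of that. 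The conclusion via $q_{\gamma\pi}^{2}=(\sum_i F_i^{-1})\,q_{\gamma\pi}$ is clean and correct.

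One small slip: your parenthetical has the directions reversed. If some $\pi_l$ or $\gamma_l$ vanishes, the $l$-th block of $q_{\gamma\pi}$ is zero, which is already a projection regardless of $\sum_i F_{l,i}^{-1}$; so the implication ``conditions $\Rightarrow$ coisometry'' holds without any faithfulness assumption, while the implication ``coisometry $\Rightarrow$ conditions'' is the one that can fail on a vanishing block (the zero block imposes no constraint on $\sum_i F_{l,i}^{-1}$). This does not affect the main line of the argument.
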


In the following, we assume the above condition to be satisfied. 

\begin{propdef}\label{propcoiso}($\S 7$ \cite{DC2})
Let us denote\index[symbol]{qd@$q_{\gamma\pi}$, $q_{\pi\gamma}$} 
\[
q_{\varphi}^{\gamma\pi}:=(v_{\varphi}^{\gamma\pi})^*v_{\varphi}^{\gamma\pi} \quad (\text{or simply }q_{\gamma\pi}).
\]
Then, $q_{\varphi}^{\gamma\pi}$ is a self-adjoint projection of $\B({\cal K}\tens{\cal H})$ such that
\[
q_{\varphi}^{\gamma\pi}=\sum_{1\leqslant l\leqslant k}\sum_{1\leqslant i,j\leqslant n_l}F_{l,i}^{-1/2}F_{l,j}^{-1/2}\gamma(e_{ij}^{(l)\,{\rm o}})\tens\pi(e_{ji}^{(l)}),
\] 
where, for all $1\leqslant l\leqslant k$, $(e_{ij}^{(l)})_{1\leqslant i,j\leqslant n_l}$ is a system of matrix units (s.m.u.) diagonalizing $F_l$, {\it i.e.} $F_l=\sum_{1\leqslant i\leqslant n_l}F_{l,i}e_{ii}^{(l)}$. Moreover,
$
\fprod{M}{\gamma}{\pi}{P} \rightarrow q_{\varphi}^{\gamma\pi}(M\tens P)q_{\varphi}^{\gamma\pi} \; ; \; x \mapsto (v_{\varphi}^{\gamma\pi})^*xv_{\varphi}^{\gamma\pi}
$
is a unital normal *-isomorphism.$\vphantom{e_{ij}^{(l)}}$
\end{propdef}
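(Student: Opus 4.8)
The plan is to prove the two assertions in turn, relying throughout on the immediately preceding Proposition-Definition, which (under the standing hypothesis $\sum_{1\leqslant i\leqslant n_l}F_{l,i}^{-1}=1$) says that $v:=v_\varphi^{\gamma\pi}\colon{\cal K}\tens{\cal H}\to\cst{{\cal K}}{\gamma}{\varphi}{\pi}{{\cal H}}$ is a coisometry, i.e.\ $vv^*=1$. This gives the first claim at once: $q_\varphi^{\gamma\pi}=v^*v$ satisfies $(v^*v)^*=v^*v$ and $(v^*v)^2=v^*(vv^*)v=v^*v$. For the explicit formula, write $Q$ for the displayed right-hand side. Since $N$ is finite-dimensional one has ${}_\varphi(\pi,{\cal H})={\cal H}$ and $({\cal K},\gamma)_\varphi={\cal K}$, so by density of simple tensors it suffices to compare matrix coefficients: for $\xi_1,\xi_2\in{\cal K}$ and $\eta_1,\eta_2\in{\cal H}$,
\[
\langle\xi_1\tens\eta_1,\ q_\varphi^{\gamma\pi}(\xi_2\tens\eta_2)\rangle=\langle v(\xi_1\tens\eta_1),\ v(\xi_2\tens\eta_2)\rangle=\big\langle\reltens{\xi_1}{\gamma}{\pi}{\eta_1},\ \reltens{\xi_2}{\gamma}{\pi}{\eta_2}\big\rangle=\big\langle\xi_1,\ \gamma(\langle\eta_1,\eta_2\rangle_{N^{\rm o}})\xi_2\big\rangle_{{\cal K}}
\]
by the very definition of the inner product on the relative tensor product. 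Thus everything reduces to the identity
\[
\langle\eta_1,\eta_2\rangle_{N^{\rm o}}=\sum_{l,i,j}F_{l,i}^{-1/2}F_{l,j}^{-1/2}\ \langle\eta_1,\ \pi(e_{ji}^{(l)})\eta_2\rangle_{{\cal H}}\ e_{ij}^{(l)\,{\rm o}}\qquad\text{in }N^{\rm o},
\]
since inserting it above and comparing with the definition of $Q$ yields exactly $\langle\xi_1\tens\eta_1,\,Q(\xi_2\tens\eta_2)\rangle$.

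To establish this identity I would unwind $\langle\eta_1,\eta_2\rangle_{N^{\rm o}}=C_N^{-1}\big((R^{\pi}_{\eta_1})^*R^{\pi}_{\eta_2}\big)^{\rm o}$. On one hand $R^{\pi}_{\eta}\Lambda_\varphi(x)=\pi(x)\eta$ for $x\in\f N_\varphi=N$, so $(R^{\pi}_{\eta_1})^*R^{\pi}_{\eta_2}$ is the operator on $\s H_\varphi$ determined by $\langle\Lambda_\varphi(x),\,(R^{\pi}_{\eta_1})^*R^{\pi}_{\eta_2}\Lambda_\varphi(y)\rangle=\langle\eta_1,\,\pi(x^*y)\eta_2\rangle$. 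On the other hand, for the weight $\varphi=\bigoplus_l{\rm Tr}_l(F_l\,\cdot\,)$ the modular automorphism group is $\sigma_t^\varphi={\rm Ad}_{F^{{\rm i}t}}$ with $F:=\bigoplus_l F_l$, whence $\nabla_\varphi^{1/2}\Lambda_\varphi(y)=\Lambda_\varphi(F^{1/2}yF^{-1/2})$ and $J_\varphi\Lambda_\varphi(y)=\Lambda_\varphi(F^{1/2}y^*F^{-1/2})$; consequently $C_N(b)=J_\varphi\pi_\varphi(b^*)J_\varphi$ acts as right multiplication by $F^{1/2}bF^{-1/2}$. Setting $b:=\sum_{l,i,j}F_{l,i}^{-1/2}F_{l,j}^{-1/2}\langle\eta_1,\pi(e_{ji}^{(l)})\eta_2\rangle\,e_{ij}^{(l)}$, a short matrix-unit computation — testing $(R^{\pi}_{\eta_1})^*R^{\pi}_{\eta_2}=C_N(b)$ on $\Lambda_\varphi(e_{pq}^{(m)})$ and using ${\rm Tr}_m(F_m e_{pj}^{(m)})=F_{m,p}\delta_{pj}$ — yields precisely this, i.e.\ $\langle\eta_1,\eta_2\rangle_{N^{\rm o}}=b^{\rm o}$, which is the identity above. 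Applying $\gamma$ and feeding the result back then gives the formula for $q_\varphi^{\gamma\pi}$.

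For the last assertion, the formula just proved shows $q:=q_\varphi^{\gamma\pi}\in M\tens P$ (since $\gamma(N^{\rm o})\subset M$ and $\pi(N)\subset P$) and that $q$ commutes with $M'\tens P'$ (as $\gamma(N^{\rm o})$ commutes with $M'$ and $\pi(N)$ with $P'$). Using only $vv^*=1$ and $v^*v=q$ (hence $vq=v$, $qv^*=v^*$, and $q\cdot v^*xv=v^*xv=v^*xv\cdot q$), the map $\Psi\colon x\mapsto v^*xv$ on $\B(\cst{{\cal K}}{\gamma}{\varphi}{\pi}{{\cal H}})$ is a normal $*$-homomorphism ($\Psi(xy)=v^*x(vv^*)yv=v^*xyv$), injective because $v\,\Psi(x)\,v^*=(vv^*)x(vv^*)=x$, and $\Psi(1)=q$. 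Next I would check $\reltens{a}{\gamma}{\pi}{b}=v(a\tens b)v^*$ for $a\in M'\subset\gamma(N^{\rm o})'$, $b\in P'\subset\pi(N)'$: applying $v(a\tens b)v^*$ to $\reltens{\xi}{\gamma}{\pi}{\eta}=v(\xi\tens\eta)$ gives $v(a\tens b)q(\xi\tens\eta)=vq(a\tens b)(\xi\tens\eta)=v(a\xi\tens b\eta)=\reltens{a\xi}{\gamma}{\pi}{b\eta}$, which characterizes $\reltens{a}{\gamma}{\pi}{b}$ by \ref{propdef2}. Hence $\fprod{M}{\gamma}{\pi}{P}=\{v(a\tens b)v^*:a\in M',\,b\in P'\}'$. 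For $x$ in this commutant, $\Psi(x)$ commutes with $\Psi(v(a\tens b)v^*)=(a\tens b)q$, and since $\Psi(x)=q\Psi(x)=\Psi(x)q$ this forces $\Psi(x)$ to commute with every $a\tens b\in M'\tens P'$; so $\Psi(x)\in(M'\tens P')'\cap q\B({\cal K}\tens{\cal H})q=q(M\tens P)q$ by the commutation theorem for tensor products. Conversely, for $y\in q(M\tens P)q$ the operator $vyv^*$ commutes with each $v(a\tens b)v^*$ (because $y$ commutes with $a\tens b\in(M\tens P)'$ and with $q$), so $vyv^*\in\fprod{M}{\gamma}{\pi}{P}$, and $\Psi(vyv^*)=qyq=y$. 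Thus $\Psi$ restricts to a unital normal $*$-isomorphism from $\fprod{M}{\gamma}{\pi}{P}$ onto $q(M\tens P)q$.

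The main obstacle is the middle step: correctly identifying the modular conjugation $J_\varphi$ — equivalently the canonical antihomomorphism $C_N$ — of the non-tracial weight $\varphi=\bigoplus_l{\rm Tr}_l(F_l\,\cdot\,)$, and then carrying the eigenvalue factors $F_{l,i}^{\pm1/2}$ through the matrix-unit bookkeeping so that they emerge symmetrically as $F_{l,i}^{-1/2}F_{l,j}^{-1/2}$; everything else is formal manipulation with the coisometry $v$ and the commutation theorem.
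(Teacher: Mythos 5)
Your proof is correct. The paper itself gives no proof of this statement (it is quoted from \S 7 of \cite{DC2}), but your route is the expected one and every step checks out: the projection property is immediate from $v_{\varphi}^{\gamma\pi}(v_{\varphi}^{\gamma\pi})^*=1$; the explicit formula correctly reduces to identifying $\langle\eta_1,\eta_2\rangle_{N^{\rm o}}$ via the modular data of the non-tracial weight ${\rm Tr}(F\,\cdot\,)$, where $C_N(b)$ acting as right multiplication by $F^{1/2}bF^{-1/2}$ is exactly what turns the factors $F_{l,j}^{-1}$ into the symmetric $F_{l,i}^{-1/2}F_{l,j}^{-1/2}$; and the last part uses only $v^*v=q_{\gamma\pi}$, the identity $\reltens{a}{\gamma}{\pi}{b}=v(a\tens b)v^*$ for $a\in M'$, $b\in P'$, and the tensor-product commutation theorem, as it should.
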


Since $N$ is finite-dimensional, the inner product given by $\langle x,\,y\rangle:=\varphi(x^*y)$ for all $x,y\in N$ defines a structure of finite-dimensional Hilbert space on $N$. We have a (bounded) linear map $\mu_{\varphi}:N\tens N\rightarrow N$ defined for all $x,y\in N$ by $\mu_{\varphi}(x\tens y)=xy$, where $N\tens N$ is endowed with its canonical structure of finite-dimensional Hilbert space. 

\begin{propdef}\label{propdef5}
For $i=1,2$, let $\pi_i:N\rightarrow\B({\cal H}_i)$ be a unital normal *-representation of $N$ on a Hilbert space ${\cal H}_i$. Let us denote\index[symbol]{qe@$q_{\pi_1\pi_2}$, $q_{\pi_1}$}
\[
q_{\varphi}^{\pi_1\pi_2}:=(\pi_1\tens\pi_2)(\mu_{\varphi}^*(1_N))\in\B({\cal H}_1\tens{\cal H}_2) \quad \text{{\rm(}or simply } q_{\pi_1\pi_2}\text{{\rm)}}.
\]
We denote $q_{\varphi}^{\pi_1}:=q_{\varphi}^{\pi_1\pi_1}$ {\rm(}or simply $q_{\pi_1}${\rm)} for short. Then, we have
\[
q_{\varphi}^{\pi_1\pi_2}=\sum_{1\leqslant l\leqslant k}\sum_{1\leqslant i,j\leqslant n_l}F_{l,j}^{-1}\pi_1(e_{ij}^{(l)})\tens\pi_2(e_{ji}^{(l)}),
\]
where, for all $1\leqslant l\leqslant k$, $(e_{ij}^{(l)})_{1\leqslant i,j\leqslant n_l}$ is a s.m.u.\ diagonalizing $F_l$.
\end{propdef}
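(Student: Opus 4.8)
The plan is to compute the vector $\mu_{\varphi}^*(1_N)\in N\tens N$ explicitly and then apply the (bounded, linear) map $\pi_1\tens\pi_2$. First I would make the Hilbert space structure on $N$ induced by $\langle x,\,y\rangle=\varphi(x^*y)$ completely concrete. For each $l$ fix a system of matrix units $(e_{ij}^{(l)})_{1\leqslant i,j\leqslant n_l}$ diagonalizing $F_l$, so that $F_l=\sum_i F_{l,i}e_{ii}^{(l)}$. Then $\varphi(e_{ij}^{(l)})={\rm Tr}_l(F_l e_{ij}^{(l)})=F_{l,i}\delta_{ij}$, whence $\langle e_{ij}^{(l)},\,e_{pq}^{(l')}\rangle=\varphi(e_{ji}^{(l)}e_{pq}^{(l')})=\delta_{ll'}\delta_{ip}\delta_{jq}F_{l,j}$. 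In particular the family $(F_{l,j}^{-1/2}e_{ij}^{(l)})_{l,i,j}$ is an orthonormal basis of $N$, and the elementary tensors $e_{pq}^{(m)}\tens e_{rs}^{(m')}$ form an orthogonal basis of $N\tens N$; so it will be enough to verify the desired identity after pairing with the latter.

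Next I would check that the candidate
\[
T:=\sum_{l}\sum_{i,j}F_{l,j}^{-1}\,e_{ij}^{(l)}\tens e_{ji}^{(l)}\in N\tens N
\]
equals $\mu_{\varphi}^*(1_N)$, i.e.\ that $\langle T,\,x\tens y\rangle_{N\tens N}=\langle 1_N,\,\mu_{\varphi}(x\tens y)\rangle_N=\varphi(xy)$ for all $x,y\in N$. Testing on $x=e_{pq}^{(m)}$, $y=e_{rs}^{(m')}$ and using the inner-product formula above, the left-hand side collapses --- the coefficient $F_{l,j}^{-1}$ cancelling one of the two eigenvalue factors produced by the pairing --- to $\delta_{mm'}\delta_{ps}\delta_{qr}F_{m,p}$, while $\varphi(e_{pq}^{(m)}e_{rs}^{(m')})=\delta_{mm'}\delta_{qr}\varphi(e_{ps}^{(m)})=\delta_{mm'}\delta_{qr}\delta_{ps}F_{m,p}$; the two agree. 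Since $\mu_{\varphi}^*(1_N)$ is intrinsically defined, this computation also shows that the right-hand side of the proposition is independent of the chosen systems of matrix units.

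Finally, applying the linear map $\pi_1\tens\pi_2\colon N\tens N\to\B({\cal H}_1\tens{\cal H}_2)$ to the equality $\mu_{\varphi}^*(1_N)=T$ yields at once $q_{\varphi}^{\pi_1\pi_2}=(\pi_1\tens\pi_2)(\mu_{\varphi}^*(1_N))=\sum_{l}\sum_{i,j}F_{l,j}^{-1}\,\pi_1(e_{ij}^{(l)})\tens\pi_2(e_{ji}^{(l)})$, which is the asserted formula (the statement for $q_{\varphi}^{\pi_1}$ being the case $\pi_2=\pi_1$). There is no genuine obstacle here: the only points requiring a little care are the index bookkeeping in the pairing and the preliminary observation that $N$, being finite-dimensional, carries a canonical Hilbert space structure making $\mu_{\varphi}$ automatically bounded, so that $\mu_{\varphi}^*$ is well defined. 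If one prefers not to guess $T$, an equivalent route is to expand $\mu_{\varphi}^*(1_N)$ directly in the orthonormal basis $(F_{l,j}^{-1/2}e_{ij}^{(l)})$ of $N$ using $\langle u\tens v,\,\mu_{\varphi}^*(1_N)\rangle=\langle uv,\,1_N\rangle=\varphi((uv)^*)$, which produces the same formula after the same elementary calculation.
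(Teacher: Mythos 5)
Your proposal is correct and follows essentially the same route as the paper: both reduce the statement to the identity $\mu_{\varphi}^{*}(1_N)=\sum_{l}\sum_{i,j}F_{l,j}^{-1}\,e_{ij}^{(l)}\tens e_{ji}^{(l)}$ and establish it by an elementary computation with the orthonormal basis $(F_{l,j}^{-1/2}e_{ij}^{(l)})$ and the evaluation $\varphi(e_{sq}^{(l)})=F_{l,s}\delta_{q}^{s}$. The paper expands $\mu_{\varphi}^{*}(e_{rs}^{(l)})$ in that basis and sums over the diagonal matrix units, whereas you verify the guessed candidate by pairing against elementary tensors; these are the same calculation read in opposite directions, as your closing remark already acknowledges.
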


\begin{proof}
For $1\leqslant l\leqslant k$, fix a s.u.m.\ $(e_{ij}^{(l)})_{1\leqslant i,j\leqslant n_l}$ of ${\rm M}_{n_l}(\GC)$ diagonalizing $F_l$. It suffices to prove that
\[
\mu_{\varphi}^*(1_N)=\sum_{1\leqslant l\leqslant k}\sum_{1\leqslant i,j\leqslant n_l}F_{l,j}^{-1}e_{ij}^{(l)}\tens e_{ji}^{(l)}.
\]
Since $1_N=\sum_{1\leqslant l\leqslant k}\sum_{1\leqslant i\leqslant n_l}e_{ii}^{(l)}$, it is enough to prove that
\[
\mu_{\varphi}^*(e_{rs}^{(l)})=\sum_{1\leqslant j\leqslant n_l}F_{l,j}^{-1} e_{rj}^{(l)}\tens e_{js}^{(l)}, \quad \text{for all } 1\leqslant r,s\leqslant n_l.
\]
Let $(f_{ij}^{(l)})$ be the family of $N$ given by 
$
f_{ij}^{(l)}:=F_{l,j}^{-1/2}e_{ij}^{(l)}
$
for all $1\leqslant l\leqslant k$ and $1\leqslant i,j\leqslant n_l$. It is clear that $(f_{ij}^{(l)})$ is an orthonormal basis of $N$. We have
\[
\varphi(e_{sq}^{(l)})={\rm Tr}_l(F_l \, e_{sq}^{(l)})=\sum_{i\,=\,1}^{n_l}F_{l,i} \, {\rm Tr}_l(e_{ii}^{(l)}e_{sq}^{(l)})=F_{l,s} \, {\rm Tr}_l(e_{sq}^{(l)})= F_{l,s} \, \delta_q^s.
\]
We have
\begin{align*}
\mu_{\varphi}^*(e_{rs}^{(l)}) &= \sum_{l'\!,\,l''=\,1}^k \,\sum_{i,\,j^{\phantom{a}}\!=\,1}^{n_{l'}} \,\sum_{p,\,q^{\phantom{a}}\!=\,1}^{n_{l''}} \langle\mu_{\varphi}^*(e_{rs}^{(l)}),\, f_{ij}^{(l')}\tens f_{pq}^{(l'')}\rangle \, f_{ij}^{(l')}\tens f_{pq}^{(l'')}\\
&=\sum_{l'\!,\,l''=\,1}^k \,\sum_{i,\,j^{\phantom{a}}\!=\,1}^{n_{l'}} \,\sum_{p,\,q^{\phantom{a}}\!=\,1}^{n_{l''}} \delta_{l''}^{l'} \, \delta_p^j \, F_{l'\!,\,j}^{-1} \, F_{l'' \! ,\,q}^{-1} \, \langle e_{rs}^{(l)},\, e_{iq}^{(l')}\rangle \, e_{ij}^{(l')}\tens e_{pq}^{(l'')}\\
&=\sum_{l'=\,1}^k \,\sum_{i,j,\, q^{\phantom{a}}\!=\,1}^{n_{l'}} \delta_{l'}^l \, \delta_i^r \, \, F_{l'\!,\,j}^{-1} \, F_{l' \! ,\,q}^{-1} \, \varphi(e_{sq}^{(l)}) \, e_{ij}^{(l')}\tens e_{jq}^{(l')}\\
&=\sum_{j,\, q\,=\,1}^{n_{l}} F_{l,\,j}^{-1} \, F_{l ,\,q}^{-1} \, F_{l ,\,s}\, \delta_q^s \, e_{rj}^{(l)}\tens e_{jq}^{(l)}\\
&= \sum_{j\,=\,1}^{n_{l}} F_{l,\,j}^{-1} \,  e_{rj}^{(l)}\tens e_{js}^{(l)}. \qedhere
\end{align*}
\end{proof}

\begin{rks}
\begin{enumerate}
\item For $i=1,2$, let $\gamma_i:N^{\rm o}\rightarrow\B({\cal K}_i)$ be a unital normal *-representa\-tion of $N^{\rm o}$ on a Hilbert space ${\cal K}_i$. In a similar way, we define $q_{\varphi^{\rm o}}^{\gamma_1\gamma_2}\in\B(\K_1\tens\K_2)$ (or simply $q_{\gamma_1\gamma_2}$) such that
\[
q_{\varphi^{\rm o}}^{\gamma_1\gamma_2}=\sum_{1\leqslant l\leqslant k}\sum_{1\leqslant i,j\leqslant n_l}F_{l,j}^{-1}\gamma_1(e_{ij}^{(l){\rm o}})\tens\gamma_2(e_{ji}^{(l){\rm o}}),
\]
where, for all $1\leqslant l\leqslant k$, $(e_{ij}^{(l)})_{1\leqslant i,j\leqslant n_l}$ is a s.m.u.\ diagonalizing $F_l$.
\item It should be noted that $q_{\varphi}^{\pi_1\pi_2}$ and $q_{\varphi^{\rm o}}^{\gamma_1\gamma_2}$ are self-adjoint but not idempotent in general. If $N$ is commutative ({\it i.e.\ }$N=\GC^k$), then $q_{\varphi}^{\pi_1\pi_2}$ and $q_{\varphi^{\rm o}}^{\gamma_1\gamma_2}$ are projections.\qedhere
\end{enumerate}
\end{rks}

\paragraph{Case of the non-normalized Markov trace.} In this paragraph, we take for $\varphi$ the non-normalized Markov trace on $\displaystyle{N=\oplus_{1\leqslant l\leqslant k}\,{\rm M}_{n_l}(\GC)}$, {\it i.e.\ }$\epsilon=\oplus_{1\leqslant l\leqslant k}\, n_l\cdot{\rm Tr}_l$.
From now on, the operators $q_{\epsilon}^{\gamma\pi}$, $q_{\epsilon^{\rm o}}^{\pi\gamma}$ $q_{\epsilon}^{\pi_1\pi_2}$ and $q_{\epsilon^{\rm o}}^{\gamma_1\gamma_2}$ will be simply denoted by $q_{\gamma\pi}$, $q_{\pi\gamma}$, $q_{\pi_1\pi_2}$ and $q_{\gamma_1\gamma_2}$. As a corollary of \ref{propcoiso}, we have:

\begin{prop}\label{Projection}
For all s.u.m.\ $(e_{ij}^{(l)})_{1\leqslant l\leqslant k,\, 1\leqslant i,j\leqslant n_l}$ of $N$, we have
\[
q_{\gamma\pi}=\sum_{1\leqslant l\leqslant k}n_l^{-1}\!\sum_{1\leqslant i,j\leqslant n_l}\!\gamma(e_{ij}^{(l){\rm o}})\tens\pi(e_{ji}^{(l)}) \;\; \text{and} \;\;
q_{\pi\gamma}=\sum_{1\leqslant l\leqslant k}n_l^{-1}\!\sum_{1\leqslant i,j\leqslant n_l}\!\pi(e_{ij}^{(l)})\tens\gamma(e_{ji}^{(l){\rm o}}).\qedhere
\]
\end{prop}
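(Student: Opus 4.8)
The plan is to deduce the statement from Proposition-Definition~\ref{propcoiso} by a direct specialisation. First I would pin down the matrices associated with the non-normalized Markov trace: writing $\epsilon=\bigoplus_{1\leqslant l\leqslant k}n_l\cdot{\rm Tr}_l$ in the form $\bigoplus_{1\leqslant l\leqslant k}{\rm Tr}_l(F_l\,\cdot\,)$ forces $F_l=n_l\cdot 1_{{\rm M}_{n_l}(\GC)}$, so that every eigenvalue of $F_l$ equals $n_l$, i.e.\ $F_{l,i}=n_l$ for all $1\leqslant i\leqslant n_l$. The coisometry hypothesis required by \ref{propcoiso} holds automatically here, since $\sum_{1\leqslant i\leqslant n_l}F_{l,i}^{-1}=n_l\cdot n_l^{-1}=1$ for each $l$; in particular $v_{\gamma\pi}$ is a coisometry and $q_{\gamma\pi}=(v_{\gamma\pi})^{*}v_{\gamma\pi}$ is given by the formula of \ref{propcoiso}.

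The key observation --- and the reason the identity holds for \emph{every} system of matrix units, not merely for one adapted to $F_l$ --- is that a scalar matrix is diagonalised by an arbitrary system of matrix units of ${\rm M}_{n_l}(\GC)$. Hence, for any s.m.u.\ $(e_{ij}^{(l)})_{1\leqslant l\leqslant k,\,1\leqslant i,j\leqslant n_l}$ of $N$, Proposition-Definition~\ref{propcoiso} applies and reads
\[
q_{\gamma\pi}=\sum_{1\leqslant l\leqslant k}\sum_{1\leqslant i,j\leqslant n_l}F_{l,i}^{-1/2}F_{l,j}^{-1/2}\,\gamma(e_{ij}^{(l){\rm o}})\tens\pi(e_{ji}^{(l)}).
\]
Substituting $F_{l,i}^{-1/2}F_{l,j}^{-1/2}=n_l^{-1}$ immediately yields the first asserted formula for $q_{\gamma\pi}$.

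For the formula for $q_{\pi\gamma}$ I would run the same argument over the opposite algebra: $q_{\pi\gamma}=q_{\epsilon^{\rm o}}^{\pi\gamma}$, and $\epsilon^{\rm o}$ is again the non-normalized Markov trace on $N^{\rm o}\cong\bigoplus_{1\leqslant l\leqslant k}{\rm M}_{n_l}(\GC)$, with the same scalar matrices $n_l\cdot 1$. The only bookkeeping to carry out is the index relabelling when passing to $N^{\rm o}$: if $(e_{ij}^{(l)})_{i,j}$ is a s.m.u.\ of $N$, then $f_{ij}^{(l)}:=e_{ji}^{(l){\rm o}}$ defines a s.m.u.\ of $N^{\rm o}$, since $f_{ij}^{(l)}f_{rs}^{(l)}=(e_{sr}^{(l)}e_{ji}^{(l)})^{\rm o}=\delta_r^{j}\,f_{is}^{(l)}$, $(f_{ij}^{(l)})^{*}=f_{ji}^{(l)}$ and $\sum_i f_{ii}^{(l)}=1_{N^{\rm o}}$. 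Feeding this s.m.u.\ into the version of \ref{propcoiso} for $q_{\epsilon^{\rm o}}^{\pi\gamma}$, using $F_{l,i}=n_l$, and relabelling $i\leftrightarrow j$ gives $q_{\pi\gamma}=\sum_l n_l^{-1}\sum_{i,j}\pi(e_{ij}^{(l)})\tens\gamma(e_{ji}^{(l){\rm o}})$, as claimed.

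There is no genuine obstacle: the whole argument is a substitution into \ref{propcoiso} together with the elementary remark that scalar matrices impose no constraint on the choice of matrix units. The only point demanding a little care is the index convention for matrix units of $N^{\rm o}$ needed for the second identity.
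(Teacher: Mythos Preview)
Your proof is correct and follows exactly the approach the paper indicates: the paper presents this proposition simply ``as a corollary of \ref{propcoiso}'' with no further argument, and you have spelled out precisely the intended specialisation (namely $F_l=n_l\cdot 1$, so that every s.m.u.\ diagonalises $F_l$ and $F_{l,i}^{-1/2}F_{l,j}^{-1/2}=n_l^{-1}$).
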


As a corollary of \ref{propdef5}, we have:

\begin{prop}\label{ProjectionBis}
For all s.u.m.\ $(e_{ij}^{(l)})_{1\leqslant l\leqslant k,\, 1\leqslant i,j\leqslant n_l}$ of $N$, we have
\begin{align*}
q_{\pi_1\pi_2}&=\sum_{1\leqslant l\leqslant k}n_l^{-1}\sum_{1\leqslant i,j\leqslant n_l}\pi_1(e_{ij}^{(l)})\tens\pi_2(e_{ji}^{(l)})\;\; \text{and}\\
q_{\gamma_1\gamma_2}&=\sum_{1\leqslant l\leqslant k}n_l^{-1}\sum_{1\leqslant i,j\leqslant n_l}\gamma_1(e_{ij}^{(l){\rm o}})\tens\gamma_2(e_{ji}^{(l){\rm o}}).\qedhere
\end{align*}
\end{prop}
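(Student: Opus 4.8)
The plan is to specialize Proposition-Definition \ref{propdef5} (and the remark following it) to the weight $\varphi=\epsilon$. First I would observe that the non-normalized Markov trace $\epsilon=\bigoplus_{1\leqslant l\leqslant k}n_l\cdot{\rm Tr}_l$ on $N=\bigoplus_{1\leqslant l\leqslant k}{\rm M}_{n_l}(\GC)$ is precisely a weight of the form $\bigoplus_l{\rm Tr}_l(F_l\,-)$ with $F_l=n_l\cdot 1_{{\rm M}_{n_l}(\GC)}$. Hence every eigenvalue of $F_l$ equals $n_l$, i.e.\ $F_{l,i}=n_l$ for all $1\leqslant i\leqslant n_l$, so $F_{l,i}^{-1}=n_l^{-1}$. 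In particular $\sum_{1\leqslant i\leqslant n_l}F_{l,i}^{-1}=n_l\cdot n_l^{-1}=1$, so the standing hypothesis that makes $v_{\epsilon}^{\gamma\pi}$ a coisometry is automatically satisfied for $\epsilon$ (although, strictly, \ref{propdef5} does not even require it).

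Next I would note that since $F_l=n_l\cdot 1$ is a scalar matrix, \emph{every} system of matrix units $(e_{ij}^{(l)})$ of ${\rm M}_{n_l}(\GC)$ diagonalizes $F_l$, trivially, via $F_l=\sum_{1\leqslant i\leqslant n_l}n_l\,e_{ii}^{(l)}$. Therefore Proposition-Definition \ref{propdef5} applies verbatim with an arbitrary s.m.u.\ of $N$, and substituting $F_{l,j}^{-1}=n_l^{-1}$ into
\[
q_{\epsilon}^{\pi_1\pi_2}=\sum_{1\leqslant l\leqslant k}\sum_{1\leqslant i,j\leqslant n_l}F_{l,j}^{-1}\,\pi_1(e_{ij}^{(l)})\tens\pi_2(e_{ji}^{(l)})
\]
yields the first claimed identity for $q_{\pi_1\pi_2}=q_{\epsilon}^{\pi_1\pi_2}$.

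For the second identity I would run the same argument over the opposite algebra: the opposite weight $\epsilon^{\rm o}$ on $N^{\rm o}$ is again of Markov type governed by the same scalar matrices $F_l=n_l\cdot 1$, so applying Remark~(1) after \ref{propdef5} (the $q_{\varphi^{\rm o}}^{\gamma_1\gamma_2}$ formula) and substituting $F_{l,j}^{-1}=n_l^{-1}$ gives
\[
q_{\gamma_1\gamma_2}=q_{\epsilon^{\rm o}}^{\gamma_1\gamma_2}=\sum_{1\leqslant l\leqslant k}n_l^{-1}\sum_{1\leqslant i,j\leqslant n_l}\gamma_1(e_{ij}^{(l){\rm o}})\tens\gamma_2(e_{ji}^{(l){\rm o}}),
\]
which is the assertion. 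There is essentially no obstacle here: the only point that deserves an explicit word is that a scalar $F_l$ is diagonalized by every s.m.u., which is exactly what upgrades the conclusion from ``for some s.m.u.'' to ``for all s.m.u.'' as stated; everything else is a direct substitution of $F_{l,j}^{-1}=n_l^{-1}$.
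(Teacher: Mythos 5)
Your proof is correct and is exactly the argument the paper intends: Proposition \ref{ProjectionBis} is stated there as an immediate corollary of \ref{propdef5} (and the remark following it), obtained by specializing to $\varphi=\epsilon$, i.e.\ $F_l=n_l\cdot 1$, so that $F_{l,j}^{-1}=n_l^{-1}$ and every s.m.u.\ diagonalizes the scalar matrix $F_l$. Your explicit remark that the scalarity of $F_l$ is what upgrades the conclusion from ``some s.m.u.'' to ``all s.m.u.'' is the only substantive point, and you have it right.
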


The following result is a slight generalization of \ref{Projection} to the setting of C*-algebras.

\begin{propdef}\label{ProjectionCAlg}(2.6 \cite{BC})
Let $A$, $B$ be two C*-algebras. We consider two non-degener\-ate *-homomorphisms $\gamma_A:N^{\rm o}\rightarrow\M(A)$ and $\pi_B:N\rightarrow\M(B)$. There exists a unique self-adjoint projection $q_{\gamma_A\pi_B}\in\M(A\tens B)$ {\rm(}resp.\ $q_{\pi_B\gamma_A}\in\M(B\tens A)${\rm)} such that
\begin{align*}
q_{\gamma_A\pi_B}&=\sum_{1\leqslant l\leqslant k}n_l^{-1}\sum_{1\leqslant i,j\leqslant n_l}\gamma_A(e_{ij}^{(l){\rm o}})\tens\pi_B(e_{ji}^{(l)})\\
\text{{\rm(}resp.\ }q_{\pi_B\gamma_A}&=\sum_{1\leqslant l\leqslant k}n_l^{-1}\sum_{1\leqslant i,j\leqslant n_l}\pi_B(e_{ij}^{(l)})\tens\gamma_A(e_{ji}^{(l){\rm o}})\text{{\rm)},}
\end{align*}
for all s.u.m.\ $(e_{ij}^{(l)})_{1\leqslant l\leqslant k,\, 1\leqslant i,j\leqslant n_l}$ of $N$.
\end{propdef}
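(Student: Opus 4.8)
The plan is to deduce the statement from its von Neumann–algebraic counterpart, Proposition \ref{Projection}, by passing to faithful representations. First I would fix faithful non-degenerate representations $A\subset\B(\s H_A)$ and $B\subset\B(\s H_B)$. These extend to unital faithful embeddings $\M(A)\subset\B(\s H_A)$ and $\M(B)\subset\B(\s H_B)$, and give the usual description $\M(A\tens B)=\{T\in\B(\s H_A\tens\s H_B)\,;\,T(A\tens B)\subset A\tens B,\ (A\tens B)T\subset A\tens B\}$, $A\tens B$ acting non-degenerately and faithfully on the Hilbert space tensor product $\s H_A\tens\s H_B$. Since $N^{\rm o}$ and $N$ are finite-dimensional, $\gamma_A\colon N^{\rm o}\to\B(\s H_A)$ and $\pi_B\colon N\to\B(\s H_B)$ are (automatically normal) unital $*$-representations. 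Writing $q:=\sum_{1\le l\le k}n_l^{-1}\sum_{1\le i,j\le n_l}\gamma_A(e_{ij}^{(l){\rm o}})\tens\pi_B(e_{ji}^{(l)})$ and viewing it in $\B(\s H_A\tens\s H_B)$, this is exactly the operator $q^{\gamma_A\pi_B}_{\epsilon}$ of Proposition-Definition \ref{propcoiso} attached to the non-normalized Markov trace $\epsilon$ (with $\gamma:=\gamma_A$ acting on $\K:=\s H_A$ and $\pi:=\pi_B$ acting on $\s H:=\s H_B$), by the explicit formula in \ref{Projection}.

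With this identification the two substantive claims come for free. Since $q^{\gamma_A\pi_B}_{\epsilon}=(v^{\gamma_A\pi_B}_{\epsilon})^{*}v^{\gamma_A\pi_B}_{\epsilon}$ is defined intrinsically from the relative tensor product, Proposition \ref{Projection} shows that the displayed sum equals $q^{\gamma_A\pi_B}_{\epsilon}$ for \emph{every} system of matrix units of $N$, so it does not depend on the chosen s.u.m. Moreover, for the Markov trace one has $F_{l,i}=n_l$, so the coisometry criterion $\sum_{1\le i\le n_l}F_{l,i}^{-1}=n_l\cdot n_l^{-1}=1$ recalled just before \ref{propcoiso} is satisfied; hence $v^{\gamma_A\pi_B}_{\epsilon}$ is a coisometry and $q^{\gamma_A\pi_B}_{\epsilon}$ is a self-adjoint projection of $\B(\s H_A\tens\s H_B)$. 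Finally $q$ lies in $\M(A\tens B)$ because it is a finite sum of elementary tensors $\gamma_A(e_{ij}^{(l){\rm o}})\tens\pi_B(e_{ji}^{(l)})$ with $\gamma_A(e_{ij}^{(l){\rm o}})\in\M(A)$ and $\pi_B(e_{ji}^{(l)})\in\M(B)$, and $\M(A)\odot\M(B)\subset\M(A\tens B)$. Thus $q\in\M(A\tens B)$ is a self-adjoint projection satisfying the required formula for all s.u.m., and uniqueness is immediate since that formula determines the element outright. The statement for $q_{\pi_B\gamma_A}$ then follows by applying the flip $\Sigma_{\s H_A\tens\s H_B}$, which carries $\M(A\tens B)$ onto $\M(B\tens A)$ and self-adjoint projections to self-adjoint projections and sends the defining sum of $q_{\gamma_A\pi_B}$ to that of $q_{\pi_B\gamma_A}$ after relabelling $i\leftrightarrow j$; alternatively one repeats the argument with the roles of $(A,\gamma_A)$ and $(B,\pi_B)$ exchanged.

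As an alternative route bypassing the relative tensor product, one can argue directly in $\M(A\tens B)$: the identity $e_{ij}^{(l)}e_{i'j'}^{(l)}=\delta_{ji'}e_{ij'}^{(l)}$ (together with the fact that $\gamma_A$ reverses multiplication, so that $\gamma_A(e_{ij}^{(l){\rm o}})\gamma_A(e_{i'j'}^{(l){\rm o}})=\gamma_A((e_{i'j'}^{(l)}e_{ij}^{(l)})^{\rm o})$) gives $q^{*}=q$ and $q^{2}=q$ by a short computation, while independence of the s.u.m.\ follows from the fact that any two systems of matrix units of $\mathrm{M}_{n_l}(\GC)$ are conjugate by a unitary, after a change of summation index. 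I expect the only place requiring care — and it is bookkeeping rather than a genuine obstacle — to be keeping the $N$/$N^{\rm o}$ conventions aligned so that the Kronecker deltas coming from the two tensor legs in $q^{2}$ contract in the right pattern; everything else is routine.
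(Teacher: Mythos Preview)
Your proposal is correct and follows essentially the same route as the paper: embed $A$ and $B$ faithfully via Gelfand--Naimark, identify the displayed sum with the von Neumann--algebraic projection $q_{\gamma\pi}$ of Proposition~\ref{Projection}, and use that identification together with faithfulness to deduce both that the element is a self-adjoint projection and that it is independent of the chosen s.u.m. You spell out a few points the paper leaves implicit (the coisometry criterion for $\epsilon$, membership in $\M(A\tens B)$ via $\M(A)\odot\M(B)$), and your alternative direct computation is a valid extra route, but the core argument is the same.
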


\begin{proof}
The uniqueness of such a self-adjoint projection is straightforward. In virtue of the Gelfand-Naimark theorem, we can consider faithful non-degenerate *-homomorphisms $\theta_A:A\rightarrow\B(\K)$ and $\theta_B:B\rightarrow\B({\cal H})$. Let us denote $\gamma:=\theta_A\circ\gamma_A$ and $\pi:=\theta_B\circ\pi_B$. Then, $\gamma:N^{\rm o}\rightarrow\B(\K)$ and $\pi:N\rightarrow\B({\cal H})$ are normal unital *-representations. Let us fix an arbitrary s.u.m.\ $(e_{ij}^{(l)})_{1\leqslant i,j\leqslant n_l}$ for ${\rm M}_{n_l}(\GC)$ for each $1\leqslant l\leqslant k$. We define a self-adjoint projection $q_{\gamma_A\pi_B}\in\M(A\tens B)$ by setting:
\[
q_{\gamma_A\pi_B}:=\sum_{1\leqslant l\leqslant k}n_l^{-1}\sum_{1\leqslant i,j\leqslant n_l}\gamma_A(e_{ij}^{(l){\rm o}})\tens\pi_B(e_{ji}^{(l)}).
\]
By \ref{Projection}, we have $q_{\gamma\pi}=(\theta_A\tens\theta_B)(q_{\gamma_A\pi_B})$. By using again \ref{Projection} and the fact that $\theta_A\tens\theta_B$ is faithful, we obtain that $q_{\gamma_A\pi_B}$ is independent of  the chosen systems of matrix units. Moreover, the definition of $q_{\gamma_A\pi_B}$ shows that $q_{\gamma_A\pi_B}$ is also independent of the chosen faithful non-degenerate *-homomorphisms $\theta_A$ and $\theta_B$.
\end{proof}

In a similar way, we have the following generalization of \ref{ProjectionBis} to the setting of C*-algebras.

\begin{prop}
For $i=1,2$, let $B_i$ {\rm(}resp.\ $A_i${\rm)} be a C*-algebra and $\pi_i:N\rightarrow\M(B_i)$ {\rm(}resp.\ $\gamma_i:N^{\rm o}\rightarrow\M(A_i)${\rm)} a non-degenerate *-homomorphism. Then, there exists a unique $q_{\pi_1\pi_2}\in\M(B_1\tens B_2)$ {\rm(}resp.\ $q_{\gamma_1\gamma_2}\in\M(A_1\tens A_2)${\rm)} such that 
\begin{align*}
q_{\pi_1\pi_2}&=\sum_{1\leqslant l\leqslant k}n_l^{-1}\sum_{1\leqslant i,j\leqslant n_l}\pi_1(e_{ij}^{(l)})\tens\pi_2(e_{ji}^{(l)})\\
\text{{\rm(}resp.\ }q_{\gamma_1\gamma_2}&=\sum_{1\leqslant l\leqslant k}n_l^{-1}\sum_{1\leqslant i,j\leqslant n_l}\gamma_1(e_{ij}^{(l){\rm o}})\tens\gamma_2(e_{ji}^{(l){\rm o}})\text{{\rm)}},
\end{align*}
for all s.u.m.\ $(e_{ij}^{(l)})_{1\leqslant l\leqslant k,\, 1\leqslant i,j\leqslant n_l}$ of $N$.
\end{prop}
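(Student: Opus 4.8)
The plan is to reproduce, almost verbatim, the argument used for \ref{ProjectionCAlg}, replacing the appeal to \ref{Projection} by an appeal to \ref{ProjectionBis}. The uniqueness of $q_{\pi_1\pi_2}$ (resp.\ $q_{\gamma_1\gamma_2}$) is immediate: once a system of matrix units of $N$ has been fixed, the displayed formula determines the element completely, so the only real content is existence together with the independence of the chosen s.u.m.

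First I would invoke the Gelfand--Naimark theorem to choose faithful non-degenerate $*$-homomorphisms $\theta_{B_i}\colon B_i\rightarrow\B(\s H_i)$ for $i=1,2$, and set $\rho_i:=\theta_{B_i}\circ\pi_i\colon N\rightarrow\B(\s H_i)$, which are normal unital $*$-representations. Fixing an arbitrary s.u.m.\ $(e_{ij}^{(l)})$ of $N$, I would \emph{define} $q_{\pi_1\pi_2}\in\M(B_1\tens B_2)$ by the displayed formula; this is a finite sum of elementary tensors, hence unambiguous, and it is self-adjoint since taking adjoints and relabelling $(i,j)\mapsto(j,i)$ leaves the sum unchanged. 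Applying $\theta_{B_1}\tens\theta_{B_2}$ and using the formula of \ref{ProjectionBis} for $q_{\rho_1\rho_2}$ gives $(\theta_{B_1}\tens\theta_{B_2})(q_{\pi_1\pi_2})=q_{\rho_1\rho_2}$; since by \ref{ProjectionBis} the right-hand side is independent of the chosen s.u.m., and since $\theta_{B_1}\tens\theta_{B_2}$ is injective, so is $q_{\pi_1\pi_2}$. The definition also makes clear that $q_{\pi_1\pi_2}$ does not depend on the auxiliary faithful representations $\theta_{B_i}$. Equivalently, one may observe that in the finite-dimensional algebra $N\tens N$ one has $\mu_{\epsilon}^*(1_N)=\sum_{l}n_l^{-1}\sum_{i,j}e_{ij}^{(l)}\tens e_{ji}^{(l)}$ for every s.u.m.\ (as in the proof of \ref{propdef5} with $F_l=n_l\cdot 1$, all such s.u.m.\ diagonalising $F_l$ trivially), and simply set $q_{\pi_1\pi_2}:=(\pi_1\tens\pi_2)(\mu_{\epsilon}^*(1_N))$, the finite-dimensionality of $N\tens N$ making $\pi_1\tens\pi_2$ unambiguous.

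For the statement about $q_{\gamma_1\gamma_2}$, I would run exactly the same argument with $N^{\rm o}$ in place of $N$: the $\gamma_i\colon N^{\rm o}\rightarrow\M(A_i)$ compose with faithful non-degenerate representations of the $A_i$ to give normal unital $*$-representations of $N^{\rm o}$, and the corresponding half of \ref{ProjectionBis} (the one treating $q_{\gamma_1\gamma_2}$) yields existence, self-adjointness and independence of the s.u.m.\ in the same way. There is no genuine obstacle here; the only point requiring a little care is the bookkeeping that all the relevant tensor products are taken at the level of the finite-dimensional algebra $N$ (resp.\ $N^{\rm o}$), so that no completion issue arises and the transfer from the $\s H_i$ back to the $B_i$ (resp.\ $A_i$) via Gelfand--Naimark is literal.
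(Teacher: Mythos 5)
Your proof is correct and is essentially the argument the paper intends: the paper omits the proof with the remark ``in a similar way,'' referring to the proof of \ref{ProjectionCAlg}, which you reproduce verbatim with \ref{ProjectionBis} in place of \ref{Projection}. Your closing observation that one may simply set $q_{\pi_1\pi_2}:=(\pi_1\tens\pi_2)(\mu_{\epsilon}^*(1_N))$ is a clean shortcut consistent with \ref{propdef5}, but it is not needed.
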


In the following, we adopt a multi-index notation to simplify formulas and computations.

\begin{nbs}\label{not13}
\begin{enumerate}
\item Consider the index sets $\s I:=\{(l,i,j)\,;\, 1\leqslant l\leqslant k,\, 1\leqslant i,j\leqslant n_l\}$ and $\s I_0:=\s I\sqcup\{\varnothing\}$.\index[symbol]{id@$\s I$, multi-index set} 
\item For $I=(l,i,j)\in\s I$, we denote $\overline{I}:=(l,j,i)\in\s I$. Denote also $\overline{\varnothing}:=\varnothing$. The map $\s I_0\rightarrow\s I_0\,;\, I\mapsto\overline{I}$ is involutive.
\item A pair of indices $(I,J)\in\s I\times\s I$ is said to be composable if we have $I=(l,i,m)$ and $J=(l,m,j)$ for some indices $1\leqslant l\leqslant k$ and $1\leqslant i,m,j\leqslant n_l$. In this case, we denote $I J:=(l,i,j)\in\s I$. We also denote $IJ=\varnothing$ if $I$ and $J$ are not composable, $I=\varnothing$ or $J=\varnothing$. This defines a map $\s I_0\times\s I_0\rightarrow\s I_0\,;\, (I,J)\mapsto IJ$. It is clear that $\overline{IJ}=\overline{J}\,\overline{I}$ for all $I,J\in\s I_0$.\qedhere
\end{enumerate}
\end{nbs}

Let us fix a s.u.m.\ $(e_{ij}^{(l)})_{1\leqslant l\leqslant k,\, 1\leqslant i,j\leqslant n_l}$ of $N$.

\begin{nbs}\label{not14}
\begin{enumerate}
\item Denote by $\varepsilon_I:=e_{ij}^{(l)}$\index[symbol]{ek@$\varepsilon_I$} for $I=(l,i,j)\in\s I$ and $\varepsilon_{\varnothing}:=0$. Denote by $e_I:=\pi(\varepsilon_I)$ and $f_I:=\gamma(\varepsilon_I^{\rm o})$ for $I\in\s I_0$. Denote by $n_I:=n_l$ for $I=(l,i,j)\in\s I$ and $n_{\varnothing}:=1$. Notice that we have $n_{\overline{I}}=n_I$ for all $I\in\s I_0$.\index[symbol]{el@$e_I$, $f_I$}
\item Since $(\varepsilon_I)_{I\in\s I}$ is a basis of $N$, for $x\in N$ we denote $x=\sum_{I\in\s I}x_I\cdot \varepsilon_I$, with $x_I\in\GC$ for $I\in\s I$. Note that $x^*=\sum_{I\in\s I}\overline{x_{\overline{I}}}\cdot \varepsilon_I$.\qedhere
\end{enumerate}
\end{nbs}

\begin{rks}\label{rk14}
\begin{enumerate}
\item For all $I,J\in\s I_0$, we have $\varepsilon_I^*=\varepsilon_{\overline{I}}$ and $\varepsilon_I\varepsilon_J=\varepsilon_{IJ}$. For all $I,J\in\s I_0$, we have:
\[
e_I^*=e_{\overline{I}},\quad e_Ie_J=e_{IJ} ; \quad f_I^*=f_{\overline{I}},\quad f_If_J=f_{JI}.
\]
\item We have $q_{\gamma\pi}=\sum_{I\in\s I} n_I^{-1} f_I \tens e_{\overline{I}}$, $q_{\pi\gamma}=\sum_{I\in\s I} n_I^{-1} e_I \tens f_{\overline{I}}$, $q_{\pi}=\sum_{I\in\s I} e_I\tens e_{\overline{I}}$ and $q_{\gamma}=\sum_{I\in\s I} f_I\tens f_{\overline{I}}$.\qedhere
\end{enumerate}
\end{rks}

	\subsection{Unitary equivalence of Hilbert C*-modules}\label{UnitaryEq}

In the following, we recall the notion of morphism between Hilbert modules over possibly different C*-algebras. 

\begin{defin}\label{def2} 
Let $A$ and $B$ be two C*-algebras and $\phi:A\rightarrow B$ a *-homomorphism. Let $\s E$ and $\s F$ be two Hilbert C*-modules over $A$ and $B$ respectively. A $\phi$-compatible operator from $\s E$ to $\s F$ is a linear map $\Phi:\s E\rightarrow\s F$ such that:
\begin{enumerate}[label=(\roman*)]
\item for all $\xi\in\s E$ and $a\in A$, $\Phi(\xi a)=\Phi(\xi)\phi(a)$;
\item for all $\xi,\eta\in\s E$, $\langle \Phi\xi,\, \Phi\eta\rangle=\phi(\langle\xi,\,\eta\rangle)$.
\end{enumerate}
Furthermore, if $\phi$ is a *-isomorphism and $\Phi$ is surjective, we say that $\Phi$ is $\phi$-compatible unitary operator (or a unitary equivalence over $\phi$) from $\s E$ onto $\s F$.
\end{defin}

\begin{rks}\label{rk4}
\begin{enumerate}
\item It follows from (ii) that $\Phi:\s E\rightarrow\s F$ is bounded and even isometric if $\phi$ is faithful. Indeed, we have $\|\langle\Phi\xi,\, \Phi\eta\rangle\|=\|\phi(\langle\xi,\,\eta\rangle)\|=\|\langle\xi,\,\eta\rangle\|$ for all $\xi,\eta\in\s E$. Then,  for all $\xi\in\s E$ we have
	 $
	 \|\Phi\xi\|^2=\|\langle\Phi\xi,\, \Phi\xi\rangle\|=\|\langle\xi,\, \xi\rangle\|=\|\xi\|^2
	 $.
In particular, if $\phi$ is a *-isomorphism and $\Phi$ is a $\phi$-compatible unitary operator, then $\Phi$ is bijective and the inverse map $\Phi^{-1}:\s F\rightarrow\s E$ is a $\phi^{-1}$-compatible unitary operator.
\item It is clear that $\id_{\s E}$ is a $\id_A$-compatible unitary operator. Let $A$, $B$ and $C$ be C*-algebras and $\s E$, $\s F$ and $\s G$ be Hilbert modules over $A$, $B$ and $C$ respectively. Let $\phi:A\rightarrow B$ and $\psi:B\rightarrow C$ be *-homomorphisms (resp.\ *-isomorphisms). If $\Phi:\s E\rightarrow\s F$ is a $\phi$-compatible operator (resp.\ unitary operator) and $\Psi:\s F\rightarrow\s G$ a $\psi$-compatible operator (resp.\ unitary operator), then $\Psi\circ \Phi:\s E\rightarrow\s G$ is a $\psi\circ\phi$-compatible operator (resp.\ unitary operator).
\item Let $\Phi:\s E\rightarrow\s F$ be a unitary equivalence over a given *-isomorphism $\phi$. If $T\in\Lin(\s E)$, then the map $\Phi\circ T\circ\Phi^{-1}:\s F\rightarrow\s F$ is an adjointable operator whose adjoint operator is $\Phi^{-1}\circ T^* \circ \Phi$. We define a *-isomorphism $\Lin(\s E)\rightarrow\Lin(\s F)\,;\, T\mapsto\Phi\circ T\circ\Phi^{-1}$. Note that $\Phi\circ\theta_{\xi,\eta}\circ\Phi^{-1}=\theta_{\Phi\xi,\Phi\eta}$ for all $\xi,\eta\in\s E$. In particular,  for all $k\in\K(\s E)$ we have $\Phi\circ k\circ\Phi^{-1}\in\K(\s F)$. More precisely, the map $\K(\s E)\rightarrow\K(\s F)\,;\,k\mapsto\Phi\circ k\circ\Phi^{-1}$ is a *-isomorphism.\qedhere
\end{enumerate} 
\end{rks}

The notion of unitary equivalence defines an equivalence relation on the class consisting of all Hilbert C*-modules (cf.\ \ref{rk4} 1, 2). Actually, this notion of morphism between Hilbert modules over possibly different C*-algebra can be understood in terms of unitary adjointable operator between two Hilbert modules over the same C*-algebra.

\begin{prop}\label{prop8}
Let $A$ and $B$ be two C*-algebras and $\phi:A\rightarrow B$ a *-isomorphism. Let $\s E$ and $\s F$ be two Hilbert C*-modules over $A$ and $B$ respectively. 
\begin{enumerate}
\item If $\Phi:\s E\rightarrow\s F$ is a surjective $\phi$-compatible unitary operator, then there exists a unique unitary adjointable operator $U\in\Lin(\s E\tens_{\phi}B,\s F)$ such that
$U(\xi\tens_{\phi}b)=\Phi(\xi)b$, for all $\xi\in\s E$ and $b\in B$.
\item Conversely, if $U\in\Lin(\s E\tens_{\phi}B,\s F)$ is a unitary, then there exists a unique $\phi$-compatible unitary operator $\Phi:\s E\rightarrow\s F$ such that $\Phi(\xi)b=U(\xi\tens_{\phi}b)$ for all $\xi\in\s E$ and $b\in B$.\qedhere
\end{enumerate}
\end{prop}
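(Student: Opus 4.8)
The plan is to handle the two directions separately, in both cases working with the interior tensor product $\s E\tens_\phi B$ (where $A$ acts on $B$ on the left through $\phi$, i.e.\ through $\phi:A\to B\subset\Lin(B)$) and exploiting the factorization $\s E=\s E A$ (Cohen--Hewitt, as in the discussion preceding \ref{lem3}) together with the fact that $\s F=[\s F B]$, so that an element of $\s F$ is determined by its right action on $B$.

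For statement 1, I would first define on the algebraic balanced tensor product $\s E\odot_A B$ the map $U_0(\xi\odot_A b):=\Phi(\xi)b$; it respects the balancing relation since $\Phi(\xi a)b=\Phi(\xi)\phi(a)b$ by $\phi$-compatibility. The one computation that matters is that $U_0$ preserves the $B$-valued (semi-)inner product:
\[
\langle U_0(\xi_1\odot_A b_1),U_0(\xi_2\odot_A b_2)\rangle=b_1^*\langle\Phi(\xi_1),\Phi(\xi_2)\rangle b_2=b_1^*\phi(\langle\xi_1,\xi_2\rangle)b_2=\langle\xi_1\odot_A b_1,\xi_2\odot_A b_2\rangle.
\]
Hence $U_0$ kills the null space, is isometric on the quotient, and extends to an isometry $U:\s E\tens_\phi B\to\s F$ which is visibly $B$-linear. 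Its image is closed (isometric image of the complete space $\s E\tens_\phi B$) and contains the closed linear span of $\{\Phi(\xi)b\}=[\Phi(\s E)B]=[\s F B]=\s F$ because $\Phi$ is onto; so $U$ is surjective. A surjective $B$-linear map preserving the inner product is automatically adjointable with $U^*=U^{-1}$ (one checks $\langle Ux,y\rangle=\langle Ux,UU^{-1}y\rangle=\langle x,U^{-1}y\rangle$), hence unitary. Uniqueness is clear since $U$ is prescribed on a set with dense linear span.

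For statement 2, given a unitary $U\in\Lin(\s E\tens_\phi B,\s F)$, I would define $\Phi$ by factorization: for $\xi\in\s E$ write $\xi=\eta a$ with $\eta\in\s E$, $a\in A$, and put $\Phi(\xi):=U(\eta\tens_\phi\phi(a))\in\s F$. This is independent of the factorization: if $\eta a=\eta'a'$ then for all $\zeta\in\s E$, $b\in B$ one has $\langle\zeta\tens_\phi b,\eta\tens_\phi\phi(a)\rangle=b^*\phi(\langle\zeta,\eta a\rangle)=b^*\phi(\langle\zeta,\eta'a'\rangle)=\langle\zeta\tens_\phi b,\eta'\tens_\phi\phi(a')\rangle$, so $\eta\tens_\phi\phi(a)=\eta'\tens_\phi\phi(a')$ and their images under $U$ agree. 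Using the balancing relation $\eta a\tens_\phi b=\eta\tens_\phi\phi(a)b$ and $B$-linearity of $U$ gives $\Phi(\xi)b=U(\xi\tens_\phi b)$ for every $b\in B$; from this pointwise identity, together with injectivity of the map $\s F\to\Lin(B,\s F)$ (valid since $\s F=[\s F B]$), I read off $\GC$-linearity of $\Phi$, condition (i) of \ref{def2} (because $\Phi(\xi a)b=U(\xi\tens_\phi\phi(a)b)=\Phi(\xi)\phi(a)b$), and condition (ii) (because $b^*\langle\Phi(\xi),\Phi(\eta)\rangle b'=\langle U(\xi\tens_\phi b),U(\eta\tens_\phi b')\rangle=b^*\phi(\langle\xi,\eta\rangle)b'$ for all $b,b'$). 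Finally $\Phi$ is isometric since $\phi$ is faithful, so $\Phi(\s E)$ is closed; it is dense because $\Phi(\s E)=\Phi(\s E A)=\Phi(\s E)\phi(A)=\Phi(\s E)B$ has closed span $[\Phi(\s E)B]=[U(\s E\tens_\phi B)]=\s F$; hence $\Phi(\s E)=\s F$, i.e.\ $\Phi$ is a $\phi$-compatible unitary operator, and it is unique by the same "determined by its action on $B$" argument.

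The only genuinely delicate point is the bookkeeping for non-unital $B$: one cannot simply set $\Phi(\xi):=U(\xi\tens_\phi 1_B)$, which is why the Cohen--Hewitt factorization $\xi=\eta a$ (and the identification $\s F=[\s F B]=\K(B,\s F)$) is needed to land inside $\s F$ rather than in the larger space $\Lin(B,\s F)$. Everything else is routine manipulation of balanced tensors and $B$-valued inner products.
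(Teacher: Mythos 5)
Your proof is correct and complete; note that the paper itself states Proposition \ref{prop8} without proof, treating it as routine, and your argument (descending to the quotient of $\s E\odot_A B$ via the inner-product computation for part 1, and using Cohen--Hewitt factorization together with the embedding $\s F\hookrightarrow\Lin(B,\s F)$ to handle the non-unital case in part 2) is exactly the standard one the authors are implicitly relying on. The surjectivity step in part 2 correctly uses that $\phi$ is onto, so that $\Phi(\s E)B=\Phi(\s E A)=\Phi(\s E)$; nothing is missing.
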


As an application of the above proposition, we can the state the following result.

\begin{propdef}\label{propdef6}
Let $A_1$, $B_1$, $A_2$ and $B_2$ be C*-algebras, $\phi_1:A_1\rightarrow B_1$ and $\phi_2:A_2\rightarrow B_2$ *-isomorphisms. Let $\s E_1$, $\s F_1$, $\s E_2$ and $\s F_2$ be Hilbert C*-modules over $A_1$, $B_1$, $A_2$ and $B_2$ respectively. Let $\Phi_1:\s E_1 \rightarrow \s F_1$ and $\Phi_2:\s E_2 \rightarrow \s F_2$ be unitary equivalences over $\phi_1$ and $\phi_2$ respectively. Then, the linear map
$
\s E_1 \odot \s E_2 \rightarrow \s F_1\tens \s F_2\; ;\; \xi_1 \tens \xi_2 \mapsto \Phi_1(\xi_1) \tens \Phi_2(\xi_2)
$
extends to a bounded linear map $\Phi_1\tens\Phi_2:\s E_1 \tens \s E_2\rightarrow\s F_1 \tens \s F_2$. Moreover, $\Phi_1\tens \Phi_2$ is a $\phi_1\tens\phi_2$-compatible unitary operator.
\end{propdef}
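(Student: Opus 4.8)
The plan is to construct the extension directly from the universal property of the algebraic tensor product and the formula for the inner product on an external tensor product of Hilbert modules. First I would record the standing fact that $\phi_1\tens\phi_2\colon A_1\tens A_2\to B_1\tens B_2$ is a $*$-isomorphism (the minimal tensor product of two $*$-isomorphisms has $\phi_1^{-1}\tens\phi_2^{-1}$ as an inverse), hence in particular isometric; this is what makes the phrase ``$\phi_1\tens\phi_2$-compatible'' meaningful and will supply the norm estimate below. By bilinearity, $\xi_1\tens\xi_2\mapsto\Phi_1(\xi_1)\tens\Phi_2(\xi_2)$ defines a linear map $\Psi_0\colon\s E_1\odot\s E_2\to\s F_1\tens\s F_2$.

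Next I would verify that $\Psi_0$ intertwines the inner products over $\phi_1\tens\phi_2$. On elementary tensors, using $\langle\xi_1\tens\xi_2,\,\eta_1\tens\eta_2\rangle=\langle\xi_1,\,\eta_1\rangle\tens\langle\xi_2,\,\eta_2\rangle$ and condition (ii) of \ref{def2} for $\Phi_1$ and $\Phi_2$,
\[
\langle\Psi_0(\xi_1\tens\xi_2),\,\Psi_0(\eta_1\tens\eta_2)\rangle=\langle\Phi_1\xi_1,\,\Phi_1\eta_1\rangle\tens\langle\Phi_2\xi_2,\,\Phi_2\eta_2\rangle=(\phi_1\tens\phi_2)\big(\langle\xi_1\tens\xi_2,\,\eta_1\tens\eta_2\rangle\big),
\]
and by sesquilinearity in both variables this gives $\langle\Psi_0(t),\,\Psi_0(t')\rangle=(\phi_1\tens\phi_2)(\langle t,\,t'\rangle)$ for all $t,t'\in\s E_1\odot\s E_2$. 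Since $\phi_1\tens\phi_2$ is isometric, $\|\Psi_0(t)\|^2=\|(\phi_1\tens\phi_2)(\langle t,\,t\rangle)\|=\|\langle t,\,t\rangle\|=\|t\|^2$, where $\|\cdot\|$ is the norm inherited from $\s E_1\tens\s E_2$; thus $\Psi_0$ is isometric and extends uniquely to an isometric linear map $\Psi\colon\s E_1\tens\s E_2\to\s F_1\tens\s F_2$, and by continuity of the inner products the identity $\langle\Psi(t),\,\Psi(t')\rangle=(\phi_1\tens\phi_2)(\langle t,\,t'\rangle)$ persists on all of $\s E_1\tens\s E_2$.

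For the module-compatibility (i) of \ref{def2}, I would check on elementary tensors that $\Psi\big((\xi_1\tens\xi_2)(a_1\tens a_2)\big)=\Phi_1(\xi_1a_1)\tens\Phi_2(\xi_2a_2)=(\Phi_1\xi_1\tens\Phi_2\xi_2)(\phi_1(a_1)\tens\phi_2(a_2))=\Psi(\xi_1\tens\xi_2)\,(\phi_1\tens\phi_2)(a_1\tens a_2)$, using condition (i) for $\Phi_1$ and $\Phi_2$; since elementary tensors are total and both sides are norm-continuous in their arguments, $\Psi(ta)=\Psi(t)(\phi_1\tens\phi_2)(a)$ for all $t\in\s E_1\tens\s E_2$ and $a\in A_1\tens A_2$. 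Finally, surjectivity: the range of $\Psi$ contains every $\Phi_1\xi_1\tens\Phi_2\xi_2$, hence (as $\Phi_1,\Phi_2$ are onto) every $\eta_1\tens\eta_2$ with $\eta_i\in\s F_i$, and these span a dense subspace of $\s F_1\tens\s F_2$; the range is closed because $\Psi$ is isometric, so $\Psi$ is surjective. This identifies $\Psi$ with $\Phi_1\tens\Phi_2$ and shows it is a $\phi_1\tens\phi_2$-compatible unitary operator. There is no genuinely delicate step; the only points that deserve a line of justification are that $\phi_1\tens\phi_2$ is a $*$-isomorphism (so it is isometric) and that all the elementary-tensor identities propagate to the completions by density together with the isometry property — alternatively, one could route the whole argument through \ref{prop8} by passing to internal tensor products, but the direct computation above is shorter.
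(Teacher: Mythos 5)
Your proof is correct, and all the steps you flag as needing justification (injectivity, hence isometry, of $\phi_1\tens\phi_2$ on the minimal tensor product; propagation of the elementary-tensor identities to the completions) are handled properly. The paper itself gives no proof of \ref{propdef6}: it merely introduces the statement with the phrase ``as an application of the above proposition,'' i.e.\ the intended route is through \ref{prop8}, converting $\Phi_1$ and $\Phi_2$ into unitaries $U_i\in\Lin(\s E_i\tens_{\phi_i}B_i,\s F_i)$, forming the external tensor product $U_1\tens U_2$, identifying $(\s E_1\tens_{\phi_1}B_1)\tens(\s E_2\tens_{\phi_2}B_2)$ with $(\s E_1\tens\s E_2)\tens_{\phi_1\tens\phi_2}(B_1\tens B_2)$, and applying \ref{prop8}~2 to come back. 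That route buys a statement-level derivation from already-established machinery, but the identification of the two internal tensor products it relies on is itself proved by exactly the kind of density-plus-isometry argument you give, so nothing is saved. Your direct computation is the more economical option and is in fact the same pattern the paper uses in its proof of Lemma \ref{lem12} (define the map on the algebraic tensor product, verify the inner-product intertwining on elementary tensors, extend by sesquilinearity, deduce isometry from injectivity of the carrying $*$-homomorphism, and get surjectivity from dense range plus closed range of an isometry). Either approach is acceptable; yours is self-contained and matches the paper's own style elsewhere.
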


The notion of unitary equivalence can also be understood in terms of isomorphism between the associated linking C*-algebras.

\begin{prop}\label{prop7}
Let $A$ and $B$ be two C*-algebras and $\phi:A\rightarrow B$ a *-isomorphism. Let $\s E$ and $\s F$ be two Hilbert C*-modules over $A$ and $B$ respectively. 
\begin{enumerate}
\item If $\Phi:\s E\rightarrow\s F$ is a $\phi$-compatible unitary operator, then there exists a unique *-homomorphism 
$
f:\K(\s E\oplus A)\rightarrow\K(\s F\oplus B)
$
such that 
$f\circ\iota_{\s E}=\iota_{\s F}\circ \Phi$ and $f\circ\iota_A=\iota_B\circ\phi$.
Moreover, $f$ is a *-isomorphism.
\item Conversely, let $f:\K(\s E\oplus A)\rightarrow\K(\s F\oplus B)$ be a *-isomorphism such that $f\circ\iota_A=\iota_B\circ\phi$. Then, there exists a unique map $\Phi:\s E\rightarrow\s F$ such that $f\circ\iota_{\s E}=\iota_{\s F}\circ \Phi$. Moreover, $\Phi$ is a $\phi$-compatible unitary operator.\qedhere
\end{enumerate}
\end{prop}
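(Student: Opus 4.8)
I would treat the two parts separately, but both through the matrix picture of the linking algebra recorded in \ref{rk3} and the identities of \ref{prop32}. For part 1 the plan is to package $\Phi$ and $\phi$ into a single unitary equivalence of Hilbert modules over the \emph{same} base algebra and then quote \ref{rk4} 4; for part 2 the plan is to observe that $f$ is forced to respect the corner decomposition of the two linking algebras, and to read off $\Phi$ from the off-diagonal corner.

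\textbf{Part 1.} First I would form the direct sum map $\Phi\oplus\phi:\s E\oplus A\rightarrow\s F\oplus B$, $\xi\oplus a\mapsto\Phi\xi\oplus\phi(a)$, where $\s E\oplus A$ and $\s F\oplus B$ carry their canonical Hilbert module structures over $A$ and $B$. A coordinatewise check against Definition \ref{def2} shows $\Phi\oplus\phi$ is a $\phi$-compatible unitary operator: $A$-linearity and the inner-product identity reduce to those for $\Phi$ and for $\phi$, and surjectivity is clear. Since $\K(\s E\oplus A)$ is by definition the compact operators on the Hilbert $A$-module $\s E\oplus A$, \ref{rk4} 4 applied to $\Phi\oplus\phi$ yields a $*$-isomorphism
\[
f:\K(\s E\oplus A)\rightarrow\K(\s F\oplus B),\qquad f(k)=(\Phi\oplus\phi)\circ k\circ(\Phi\oplus\phi)^{-1}.
\]
Next I would verify $f\circ\iota_{\s E}=\iota_{\s F}\circ\Phi$ and $f\circ\iota_A=\iota_B\circ\phi$ by evaluating both sides on a vector $\zeta\oplus b\in\s F\oplus B$, using the explicit formulas for $\iota_{\s E},\iota_A$ from \ref{rk3} 1, the identity $(\Phi\oplus\phi)^{-1}=\Phi^{-1}\oplus\phi^{-1}$, and the compatibility relations $\Phi(\xi\phi^{-1}(b))=\Phi(\xi)b$ and $\phi(a\phi^{-1}(b))=\phi(a)b$; this is a two-line computation each. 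Uniqueness of $f$ is immediate from \ref{prop32} 4, since $\iota_A(A)\cup\iota_{\s E}(\s E)$ generates $\K(\s E\oplus A)$ as a C$^*$-algebra.

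\textbf{Part 2.} Let $p\in\Lin(\s E\oplus A)=\M(\K(\s E\oplus A))$ be the projection onto the $A$-summand; by \ref{rk3} 2 one has $p=\iota_A(1_{\M(A)})$, and from the matrix picture $\iota_{\s E}(\s E)=(1-p)\,\K(\s E\oplus A)\,p$. Similarly put $q=\iota_B(1_{\M(B)})\in\Lin(\s F\oplus B)$, so $\iota_{\s F}(\s F)=(1-q)\,\K(\s F\oplus B)\,q$. The $*$-isomorphism $f$ extends uniquely to a strictly continuous $*$-isomorphism of the multiplier algebras, and since $\iota_A,\iota_B$ extend strictly continuously as well (\ref{rk3} 2) and $f\circ\iota_A=\iota_B\circ\phi$ holds on the strictly dense subalgebra $A$, the relation persists on $\M(A)$; evaluating at $1_{\M(A)}$ gives $f(p)=\iota_B(\phi(1_{\M(A)}))=\iota_B(1_{\M(B)})=q$, hence $f(1-p)=1-q$. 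Using surjectivity of $f$ this forces
\[
f(\iota_{\s E}(\s E))=f(1-p)\,f(\K(\s E\oplus A))\,f(p)=(1-q)\,\K(\s F\oplus B)\,q=\iota_{\s F}(\s F).
\]
Since $\iota_{\s F}$ is isometric (\ref{prop32} 3), hence injective, I would then \emph{define} $\Phi:=\iota_{\s F}^{-1}\circ f\circ\iota_{\s E}:\s E\rightarrow\s F$; it is the unique map with $f\circ\iota_{\s E}=\iota_{\s F}\circ\Phi$, and it is surjective because $f(\iota_{\s E}(\s E))=\iota_{\s F}(\s F)$. Finally, $\phi$-compatibility of $\Phi$ comes from transporting \ref{prop32} 1 and 3 through $f$ and $\phi$: $\iota_{\s F}(\Phi(\xi a))=f(\iota_{\s E}(\xi)\iota_A(a))=\iota_{\s F}(\Phi\xi)\iota_B(\phi(a))=\iota_{\s F}(\Phi(\xi)\phi(a))$ and $\iota_B(\langle\Phi\xi,\Phi\eta\rangle)=f(\iota_{\s E}(\xi)^*\iota_{\s E}(\eta))=f(\iota_A(\langle\xi,\eta\rangle))=\iota_B(\phi(\langle\xi,\eta\rangle))$, after which one cancels the injective maps $\iota_{\s F}$ and $\iota_B$.

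\textbf{Anticipated obstacle.} The one step that is not purely formal is the corner identity $f(\iota_{\s E}(\s E))=\iota_{\s F}(\s F)$ in part 2, i.e.\ arguing that an isomorphism $f$ a priori only constrained on $\iota_A$ must nonetheless carry the off-diagonal corner onto the off-diagonal corner; this is where the passage to the multiplier algebras and the strict continuity of the various extensions must be handled with a little care. Everything else is a direct application of \ref{rk4} 4 or a routine transport of the identities in \ref{prop32}. (Alternatively one could try to characterise $\iota_{\s E}(\s E)$ intrinsically via \ref{ehmlem1} 1 and push that through $f$, but this seems to lead in a circle, so I would keep the corner-projection argument.)
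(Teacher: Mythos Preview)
Your argument is correct. In Part 1 the conjugation $f(k)=(\Phi\oplus\phi)\,k\,(\Phi\oplus\phi)^{-1}$ is exactly the paper's matrix formula, obtained there by citing \ref{rk4} 3 (your ``\ref{rk4} 4'' is a slip; there are only three items).

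For Part 2 the paper takes a different and much terser route, simply invoking \ref{ehmlem1} 1. Your suspicion that this ``leads in a circle'' is reasonable at first sight, since the characterization there mentions $\iota_{\s F}$; but it can be unwound as follows. The left-annihilator $X=\{x\in\K(\s E\oplus A):\iota_A(a)x=0\ \text{for all }a\}$ is defined purely through $\iota_A$, so the hypothesis $f\circ\iota_A=\iota_B\circ\phi$ gives $f(X)=\{y:\iota_B(b)y=0\ \text{for all }b\}$. Now $\iota_{\s F}(\s F)$ sits inside $f(X)$, while every $\iota_{\s E}(\xi)$ right-annihilates $X$ (upper-right corner times any zero-bottom-row element vanishes); hence $f(\iota_{\s E}(\xi))\cdot\iota_{\s F}(\eta)=0$ without knowing $\Phi$ in advance, and \ref{ehmlem1} 1 then yields $f(\iota_{\s E}(\xi))\in\iota_{\s F}(\s F)$. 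Your corner-projection argument via $f(p)=q$ reaches the same conclusion $f(\iota_{\s E}(\s E))=\iota_{\s F}(\s F)$ more transparently, at the small price of passing to multipliers and checking the strict extension; the paper's route stays inside $\K$ but leaves the annihilator gymnastics to the reader.
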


\begin{proof}
1. The *-homomorphism $f:\K(\s E\oplus A)\rightarrow\K(\s F\oplus B)$ is defined by (cf.\ \ref{rk4} 3):
\begin{center}
$
f\begin{pmatrix} k & \xi \\ \eta^* & a\end{pmatrix}:=\begin{pmatrix} \Phi\circ k\circ \Phi^{-1} & \Phi\xi \\ (\Phi\eta)^* & \phi(a)\end{pmatrix}\!,\quad \text{for all } k\in\K(\s E),\; \xi,\,\eta\in\s E \text{ and } a\in A.
$
\end{center}
2. This is a straightforward consequence of \ref{ehmlem1} 1.
\end{proof}

\begin{nb}\label{not3}
Let $A$, $B$ be C*-algebras and $\s E$ and $\s F$ be two Hilbert C*-modules over $A$ and $B$ respectively. Let $\phi:A\rightarrow B$ be a *-isomorphism and $\Phi:\s E\rightarrow\s F$ a $\phi$-compatible unitary operator. If $T\in\Lin(A,\s E)$, we define the map 
$
\widetilde{\Phi}(T):= \Phi\circ T\circ\phi^{-1}:B\rightarrow\s F.
$
By a straightforward computation, we show that $\widetilde{\Phi}(T)\in\Lin(B,\s F)$ whose adjoint operator is $\widetilde{\Phi}(T)^*=\phi\circ T^*\circ \Phi^{-1}$. We have a bounded linear map 
$
\widetilde{\Phi}:\Lin(A,\s E)\rightarrow\Lin(B,\s F),
$
which is an extension of $\Phi$ up to the canonical injections $\s E\rightarrow\Lin(A,\s E)$ and $\s F\rightarrow\Lin(B,\s F)$.
\end{nb}


\bigbreak

\bigbreak

\noindent{\sc Vrije Universiteit Brussel}, Vakgroep Wiskunde, Pleinlaan 2, B-1050 Brussels (Belgium).\hfill\break 
{\it E-mail addresses}: \texttt{jonathan.crespo@wanadoo.fr}, \texttt{jonathan.crespo@vub.ac.be}\hfill\break
Supported by the FWO grant G.0251.15N

\printindex[symbol]
\addcontentsline{toc}{section}{Index of notations and symbols}

\end{document}